%

\documentclass[11pt,twoside]{amsart}
\usepackage{latexsym,amssymb,amsmath}
\usepackage[all]{xy}
\usepackage{tikz,pgfplots}
\usepackage{extpfeil} 
\usepackage{afterpage}
\usepackage{float}
\usepackage{hyperref}


\makeatletter  
\@namedef{subjclassname@2020}{%
\textup{2020} Mathematics Subject Classification}
\makeatother

\textwidth=16.00cm
\textheight=22.00cm
\topmargin=0.00cm
\oddsidemargin=0.00cm 
\evensidemargin=0.00cm
\headheight=0cm
\headsep=1cm
\headsep=0.5cm 
\numberwithin{equation}{section}
\hyphenation{semi-stable}
\setlength{\parskip}{3pt}

\newtheorem{theorem}{Theorem}[section]
\newtheorem{lemma}[theorem]{Lemma}
\newtheorem{proposition}[theorem]{Proposition}
\newtheorem{corollary}[theorem]{Corollary}
\newtheorem{conjecture}[theorem]{Conjecture}

\theoremstyle{definition}
\newtheorem{definition}[theorem]{Definition}

\newtheorem{remark}[theorem]{Remark}
\newtheorem{example}[theorem]{Example}

\begin{document}

\title[Graph rings and ideals]
{Graph rings and ideals: Wolmer Vasconcelos' contributions} 

\thanks{The first author
was partially supported by the Center for Mathematical Analysis, Geometry and
Dynamical Systems of Instituto Superior T\'ecnico, Universidade de
Lisboa. The second author was supported by SNII, M\'exico.}

\author[M. Vaz Pinto]{Maria Vaz Pinto}
\address{Departamento de Matem\'atica, Instituto Superior T\'ecnico,
Universidade de Lisboa, Avenida Rovisco Pais, 1, 1049-001 Lisboa,
Portugal.
} 
\email{vazpinto@math.tecnico.ulisboa.pt}

\author[R. H. Villarreal]{Rafael H. Villarreal}
\address{
Departamento de
Matem\'aticas\\
Cinvestav, Av. IPN 2508, 07360, CDMX, M\'exico.
}
\email{rvillarreal@cinvestav.mx}

\keywords{Koszul homology, conormal module, 
monomial ideal, blowup algebras, integral closure,
symbolic power, linear programming, resurgence, covering polyhedra,
normal ideal, regularity, Ehrhart rings}
\subjclass[2020]{Primary 13F55; Secondary 05E40, 13A30, 13B22} 

\dedicatory{Dedicated to the memory of Wolmer V. Vasconcelos}  

\begin{abstract}
This is a survey article featuring some of Wolmer
Vasconcelos' contributions to commutative
algebra, and explaining how Vasconcelos' work and
insights have contributed to the development of commutative
algebra and its interaction with other areas to the present. 
We discuss the Vasconcelos' function and the Vasconcelos'
number ({\rm v}-{number} for short) of graded ideals and their
relation to coding theory, and the interplay of Simis and normal monomial
ideals with combinatorial optimization problems, blowup algebras, and
resurgence theory.  
The regularity of subrings 
of normal $k$-uniform monomial ideals is shown to be a
monotone function, and we give a 
normality criterion for edge ideals of graphs using Ehrhart rings.
\end{abstract}

\maketitle 

\section{Introduction}\label{intro-section}  

In this work, we present some of Wolmer
Vasconcelos' contributions to computational methods in commutative algebra and
algebraic geometry, Koszul homology and the conormal module, 
graph rings and ideals, and explain how Vasconcelos' work and
insights have contributed to the development of commutative
algebra and its interaction with other areas to the present.

In Section~\ref{section-computational-methods}, 
we give a brief introduction to some of Vasconcelos' results 
that contributed to the development of computational methods in 
commutative algebra and algebraic geometry, see his book on the
subject \cite{Vas1}. 

One of Vasconcelos' research interests was the study of Koszul
homology and the conormal module, and especially the 1st Koszul
homology module of an ideal. His famous conjecture in the 1970' on the conormal
module \cite{Wolmer-conormal} was recently proved by Briggs
\cite[Theorem~A]{Briggs}, it was an open problem for more than forty
years. We discuss some of
his contributions and conjectures in this area in
Section~\ref{section-H1-conormal}, including a theorem of Huneke,
Ulrich and Vasconcelos that relates the 
normality of an ideal with residual intersections and other algebraic
properties \cite{HUV}.  

The origins of edge ideals are discussed in
Section~\ref{section-the-origins}. Vasconcelos introduced a criterion
using Gr\"obner bases to decide whether a graded ideal is syzygetic or
of linear type (Proposition~\ref{mar6-23}), and showed a criterion 
to determine whether the first Koszul homology module of a
graded ideal is Cohen--Macaulay (Proposition~\ref{mar6-23-1}). Using this criteria, for polynomial
rings in a small number of variables, one can determine the full list 
of ideals generated by squarefree monomials of degree two forming a
simple graph that
are strongly Cohen--Macaulay and of linear type
\cite[p.~25]{vila-tesis}. In the 1980', this motivated the
introduction and study of edge ideals of graphs using 
methods coming from commutative algebra and graph
theory \cite{cmg}. 

Stanley--Reisner rings or face rings of simplicial complexes were
defined 
independently by Hochster and Stanley in the 1970' to study 
combinatorial problems, Betti numbers, and Cohen--Macaulay 
face rings, using topology and commutative algebra. Their initial
contributions \cite{Ho3,Sta3,Sta1} and the work of Reisner \cite{Rei}
were crucial for the growth and interest in the area. 

To complement the interplay between the algebra 
of monomial ideals and the combinatorics of 
simplicial complexes, Simis and Vasconcelos point
of view in the 1980' was
that the algebraic properties and invariants of a \textit{graph ring} (e.g., 
a quotient ring of the edge ideal of a graph, a blowup algebra of
an edge ideal, or a monomial subring of a graph) 
could be naturally related to the combinatorial properties and
invariants of the graph defining the graph ring. 

Graph rings were developed and
explored by Simis, Vasconcelos and the second author in \cite{ITG}.
The results of this paper are discussed throughout
Sections~\ref{edge-ideals-all-flavors}-\ref{blowuprings-section}. 
Simis and Vasconcelos, and their
students and co-workers, in the
1990' studied algebraic properties and invariants 
of commutative graph rings using commutative algebra, graph theory and 
polyhedral geometry \cite{BVV,CVV,luisa-tor,ELV,EV,Koszul-linear,aron-lnm,aron-hoyos,simis-ulrich,
ITG,bowtie}, \cite{cmg}--\cite{canela}. Other central 
papers in this period are
\cite{jesus-etal,hibi-ohsugi-normal,aron-jac}. The landmark books of Stanley \cite{Sta2} 
and Sturmfels \cite{Stur1} in the 1990'   
underline the powerful techniques of commutative algebra and Gr\"obner bases 
in the interplay with combinatorics, convex polytopes, and topology.

Edge ideals of graphs and clutters and their Alexander duals are
introduced in Section~\ref{edge-ideals-all-flavors}. A theorem of Simis, Vasconcelos
and the second author shows that all Koszul 
homology modules of the edge ideal of a Cohen--Macaulay tree are
Cohen--Macaulay \cite{ITG}. We recall some of the theorems in
this area 
to the present, including the classification of Cohen--Macaulay trees,
criteria for Cohen-Macaulay, unmixed, and
sequentially Cohen-Macaulay edge ideals, the Lyubeznik and Fr\"oberg theorems relating chordal
graphs with algebraic properties of edge ideals, the duality theorem
of Eagon and Reiner on edge ideals, a classification of the simplicial trees, introduced by Faridi, generalizing the
notion of trees for graphs, the duality formula of Terai for edge
ideals, the formula for the projective dimension of a sequentially
Cohen--Macaulay edge ideal, the formula of Mahmoudi, Mousivand, Crupi, Rinaldo, Terai
and Yassemi for the regularity of edge
ideals of very well-covered graphs, the description of Francisco,
H$\rm \grave{a}$ and Van Tuyl of the associated primes of the second power of ideals of covers of 
graphs and their formula for computing the chromatic number of a
simple graph. 

In Section~\ref{normality-symbolic-powers}, we present a formula of
Vasconcelos for computing the symbolic powers of a prime ideal of 
a polynomial ring $S$ (Proposition~\ref{effesymbvas}). Symbolic powers of ideals of $S$ can be computed using the
algorithms implemented in \textit{Macaulay}$2$ \cite{mac2} by Drabkin, Grifo, 
Seceleanu and Stone \cite{Grifo-etal}. One of these algorithms uses
the methods of  Eisenbud, Huneke, and Vasconcelos for finding 
primary decompositions of ideals of $S$ \cite{EHV}.  
A main result of Simis, Vasconcelos
and the second author \cite{ITG} shows that the edge ideal of a graph is normally torsion
free if and only if the graph is bipartite. A complementary result of
\cite{alexdual} shows that the ideal of covers of a graph is normally
torsion free if and only if the graph is bipartite. In this setting, 
normally torsion free is equivalent to equality of ordinary and
symbolic powers.    
We recall how to find the
normalization of the edge ideal of a graph using the formula of
Vasconcelos in terms of Hochster configurations \cite[p.~459]{monalg-rev}, and we also recall
how to find the normalization of an edge subring of a graph using 
a formula, in terms of
bowties, that was shown independently by Hibi and Ohsugi \cite{hibi-ohsugi-normal},
and by Simis, Vasconcelos and the second author \cite{bowtie}. Then, we give
normality criteria for edge ideals and for edge subrings of
graphs. 

Blowup algebras associated to ideals, see
Eqs.~\eqref{jan21-25-1}-\eqref{jan21-25-4}, appear in many
constructions in  
commutative algebra and algebraic geometry.  
These algebras were 
 introduced and systematically studied in Vasconcelos' book
 \cite{Vas}, 
where many significant results and methods are included. 
In the 2000' blowup algebras, rings, and ideals associated to
monomials were studied 
using linear programming and combinatorial
optimization methods \cite{unimod,ainv,BG-book,mc8,roundp,cm-mfmc,Lisboar,alexdual,reesclu,wolmer65,
clutters,hemmecke,cover-algebras,unimodular-hyp,hhtz,birational,birational-linear,
Trung,shiftcon,accota,perfect,matrof}.
This theory was further developed in the 2010' 
\cite{Waldschmidt-Bocci-etal,symbolic-powers-survey,
Francisco-TAMS,roundp,poset,mfmc,ringraphs,HaM,Ha-Trung-19,depth-monomial,cremona,monalg-rev}. For some
recent results, see 
\cite{Alilooee-Benerjee,Seceleanu-packing,Seceleanu-convex-bodies,
icdual,intclos,Ha-Nguyen,Haase-Santos,hoa,Aron-bookII,ic-resurgence} and references
therein. In
Sections~\ref{normality-section}-\ref{blowuprings-section}, we will examine these methods. 
Some essential features of this theory are 
the normality of ideals and rings, the equality of ordinary and symbolic
powers of ideals, and the use of polyhedral geometry to study these
properties for ideals and algebras defined by monomials \cite{graphs,handbook,webster}. 

The normality of monomial ideals is discussed in 
Section~\ref{normality-section}. We  
present the linear programming (LP) membership
test of Delfino, Taylor, Vasconcelos, Weininger and
the second author that determines whether or not a given monomial lies in the integral
closure of a monomial ideal \cite{mc8}. 
We examine normality criteria
for monomial ideals using linear algebra \cite{icdual}. 
Other normality criteria can be found in the paper of Brennan and
Vasconcelos \cite{BreVas1}, in the paper of Brummati, Simis
and Vasconcelos \cite{BSV}, in the paper of Escobar, Yoshino
and the second author \cite{Lisboar}, in the book of Huneke
and Swanson \cite{huneke-swanson-book}, and in the book of Vasconcelos
\cite{bookthree}. A monomial ideal is 
\textit{uniform} if it is minimally generated by monomials of the same degree. 
For a normal uniform monomial ideal, its
monomial subring is normal (Theorem~\ref{feb16-23}), this is called
the \textit{descent of normality 
criterion}, and is due to Simis, Vasconcelos and the
second author \cite{ITG}. 

Ehrhart rings are introduced in Section~\ref{regularity-section}. 
These rings are normal. We present a \textit{generalized descent of
normality criterion} showing that subrings of normal
uniform monomial ideals are Ehrhart rings (Theorem~\ref{mar3-01}). 
Hochster's theorem shows that normal
monomial subrings are Cohen--Macaulay
\cite{Ho1}. Bruns, Vasconcelos and the
second author gave a generating set for the canonical module of 
the $k$-th {squarefree Veronese subring} (Theorem~\ref{main-t}), computed  the
$a$-invariant and the regularity of this subring (Proposition~\ref{a-invariant}), and found 
a sharp upper bound for the $a$-invariant of the normalization 
of a subring generated by squarefree 
monomials of the same degree which is attained at a squarefree
Veronese subring (Theorem~\ref{main-in}).  
 For graded Cohen--Macaulay
monomial subrings, computing 
the $a$-invariant is equivalent to computing the
regularity (Corollary~\ref{jan27-25}). We discuss these two
invariants for edge subrings of bipartite graphs. Then, we study the regularity and the $a$-invariant of
normal subrings generated by monomials of the same degree 
using Hochster's theorem, Ehrhart theory, Stanley's 
monotonicity theorem for polytopes, and the descent of normality
criterion. Fixing an integer $k\geq 1$, these results allow us to show that the function 
$I\mapsto {\rm reg}(K[I])$, $K$ a field, $I$ a $k$-uniform normal
monomial ideal, is a monotone function (Theorem~\ref{vila-obs}). 
For edge ideals of graphs, we give
a normality criterion in terms of Ehrhart rings showing that the
converse of the generalized descent of normality criterion 
holds in this case (Theorem~\ref{normality-criterion-ehrhart}).  

In Section~\ref{reduction-section}, we discuss some of Vasconcelos'
work on normalizations of monomial ideals and reduction numbers.
Bruns, Vasconcelos and the second author gave degree bounds for the
generators of normalizations of Rees algebras and uniform monomial subrings
(Theorems~\ref{may4-23} and \ref{feb13-25}). Vasconcelos proved that the filtration 
of integral closures of powers of a monomial ideal stabilizes at the dimension of the base ring
 (Theorem~\ref{may3-23}). 
Polini, Ulrich, Vasconcelos and the second author \cite[Theorem~2.4]{PUVV}
gave upper bounds for the \textit{normalization index} (Definition~\ref{norm-index-def}) of homogeneous
$\mathfrak{m}$-primary ideals over a field of characteristic zero. 
Reduction numbers of ideals, introduced by Northcott and Rees
\cite{Northcott-Rees}, 
were used by Aberbach, Huneke, Polini, Trung,
Ulrich, Vasconcelos and Vaz
Pinto to study blowup algebras, see \cite{Huneke-etal,Polini-Ulrich,Vas,Vas5,Maria} and references
therein. Techniques from the theory of Rees algebra of modules were 
introduced by Vasconcelos to produce estimates for the \textit{reduction
number} $r(I)$ of an ideal $I$
(Definition~\ref{reduction-number-def}) for classes of ideals 
of dimension one and two \cite{Vas6}. 
Ghezzi, Goto, Hong and Vasconcelos \cite{ghezzi-etal} 
examined the relationship between the reduction number and the multiplicity of
the Sally module, and obtained upper bounds for the reduction 
number. The
\textit{core} of an ideal is the intersection of all of its 
reductions. Recently, Fouli, Monta\~no, Polini and Ulrich \cite{FMPU} 
give an explicit description for the core of a monomial ideal
$I$, satisfying certain residual 
conditions, showing that the core of $I$ is the largest
monomial ideal contained 
in a general reduction of the ideal $I$. A description of the core of a module is given by
Costantini, Fouli and Hong \cite{CFH}. 

Some of Vasconcelos' work on multiplicities, Hilbert functions and $\mathfrak{m}$-fullness
of $\mathfrak{m}$-primary monomial ideals is
introduced in Section~\ref{multiplicity-section}. Delfino, Taylor, Vasconcelos, Weininger and
the second author \cite{mc8}, \cite[Section~7.3]{bookthree}, gave a
Monte-Carlo-based approach for the computation of volumes of lattice
polytopes, and multiplicities of monomial ideals
\cite[p.~131]{Teissier}. Gimenez, Simis,
Vasconcelos and the second author classified the 
$\mathfrak{m}$-full $\mathfrak{m}$-primary monomial ideals in 
dimension two (Theorem~\ref{p-w-v}), and showed that an $\mathfrak{m}$-primary
$\mathfrak{m}$-full ideal with special fiber
ring Cohen-Macaulay has Cohen-Macaulay Rees algebra (Theorem~\ref{CMviafiber}).

In Section~\ref{blowuprings-section}, we discuss the 
relation of a result of Huneke, Simis and Vasconcelos---classifying
the reducedness of the associated graded ring of an ideal---with the theory of
symbolic powers and the packing problem for edge ideals of clutters, and present some 
recent results on the resurgence theory of edge ideals that are
related to the containment problem for ordinary and symbolic 
powers of ideals. The equality $I^n=I^{(n)}$ for all $n\geq 1$ of
ordinary and symbolic powers of the edge ideal $I=I(\mathcal{C})$ of a clutter
$\mathcal{C}$ is equivalent to the max-flow min-cut property of the
clutter $\mathcal{C}$ (Theorem~\ref{ntf-char}). A monomial ideal $I$ is called a \textit{Simis ideal} 
if $I^{(n)}=I^n$ for all $n\geq 1$. The term \textit{Simis ideal} is
introduced in \cite{weightedma} to recognize the
pioneering work of Aron Simis on symbolic powers of monomial ideals 
\cite{bahiano,HuSV,aron-hoyos,Aron-bookII,simis-ulrich,ITG}. 

The \textit{Vasconcelos' function} (v-\textit{function} for short) of a graded
ideal was introduced in \cite{rth-footprint}, 
see Eq.~\eqref{vas-function}, as an extension of the generalized Hamming
weights of projective Reed--Muller-type codes
(Theorem~\ref{rth-min-dis-vi}), and the 
\textit{Vasconcelos' number} ({\rm v}-\textit{number} for short) of a
graded ideal was introduced by Cooper, Seceleanu, Toh\v{a}neanu, 
Vaz Pinto and the second author \cite{min-dis-generalized}, see
Eq.~\eqref{v-number-def}. These two notions are discussed in
Section~\ref{coding-theory-section}. We explain
why the v-number plays a role in the theory of error-correcting codes
and linear codes \cite{MacWilliams-Sloane} 
(Theorem~\ref{min-dis-v-number}) and in the theory of graphs
\cite{Har} (Theorem~\ref{v-numbers-w2}). 

\section{Computational methods}\label{section-computational-methods}

Let $S=K[t_1,\ldots,t_s]$ be a polynomial ring over a field $K$. 
The discovery of Buchberger's algorithm \cite{Buch} to compute
Gr\"obner bases of ideals of $S$ and the
implementation of this algorithm in computer algebra systems
\cite{magma,CNR,mac2,singular,cocoa-book} is nowadays a widely used tool in
commutative algebra and algebraic geometry with multiple applications
\cite{coding-theory-package,BG-book,CLO,Ene-Herzog,weights-matroid,Stur1}. Vasconcelos 
made substantial contributions to advance the
computational 
methods in these areas \cite{BreVas,Vas4}, \cite[Chapter~10]{Vas},
\cite{Vas1,bookthree}, and together with Eisenbud and 
Huneke gave methods implemented in \textit{Macaulay}$2$ \cite{mac2}
for finding the primary decomposition of an 
ideal of $S$ \cite{EHV}. 

In the 1980'  Brummati, Simis and Vasconcelos \cite{BSV} 
studied the question---in the boundary between commutative algebra
and computer algebra---of deciding 
the completeness of all the powers of an ideal of a polynomial ring.
For monomial ideals \textit{Normaliz} \cite{normaliz2} solves this problem.

The computer program \textit{Normaliz} was developed
by Bruns and Koch in the late 1990' \cite{BK} and by Bruns, 
Ichim, R\"omer, Sieg and S\"oger since the mid 2000' \cite{normaliz2}.
The program \textit{Normaliz} has been continuously improving to
date. 
It provides an invaluable effective 
tool to compute normalizations of monomial subrings 
and their algebraic invariants, see \cite[Section~1.7]{monalg-rev} for
a list of integer programming properties that can be solved using
\textit{Normaliz}. One of the first third-party publications citing
\textit{Normaliz} is the paper of Vasconcelos et al. \cite{mc8}, see also Vasconcelos 
joint paper with Bruns and the second author \cite{BVV} where
\textit{Normaliz} was used in 
\cite[Example~3.9]{BVV} to compute a normalization.

Volumes of polytopes can be computed with \textit{Normaliz}
\cite{normaliz2} and they can be used to compute multiplicities of
 monomial ideals (Proposition~\ref{teissier-e}). For a 
 recent survey on the algorithms of \textit{Normaliz} for volumes, see Bruns
 \cite{Bruns-Volume}.   
Delfino, Taylor, Vasconcelos, Weininger and
the second author \cite{mc8} gave a
probabilistic Monte-Carlo-based approach to compute multiplicities of
zero-dimensional monomial ideals and the volume of the underlying
lattice polytopes, see Section~\ref{multiplicity-section}. Years later, methods using Ehrhart functions
of polytopes and polynomial interpolation were given to compute the
Hilbert function of 
the normalization of these
ideals (Proposition~\ref{may29-1-05}).

\section{The first Koszul homology module and the conormal
module}\label{section-H1-conormal}

Let $S$ be a commutative Noetherian ring, let $I\neq(0)$ be an ideal of
$S$, and let $H_i(I)$ be the $i$-th  
homology module of the Koszul complex $H_\star(\underline{x}, S)$ associated to a set
$\underline{x}=\{x_1,\ldots x_n\}$ of
generators of $I$. The first Koszul homology module $H_1(I)$ of $I$ is related to $I/I^2$, 
the {\it conormal module\/} of $I$, 
by the following exact sequence \cite{SV1}: 
\begin{equation}\label{feb5-23}
H_1(I)\stackrel{f}{\longrightarrow}S^n\otimes (S/I)
\stackrel{h}{\longrightarrow}I \otimes (S/I)= I/I^2 
\longrightarrow 0,
\end{equation}
where $f([z]) = z \otimes 1$ and $h(e_i \otimes 1) = x_i \otimes 1$. 
Ring theoretic properties of $S/I$ are
often reflected in module theoretic properties of $I/I^2$, for
instance Ferrand and Vasconcelos proved that if $S$ is a local Noetherian
ring and $I$ has finite projective dimension, 
then the conormal module is projective over $S/I$ if and only if $I$ is 
locally generated by a regular sequence \cite{Ferrand,Vas-old}. 

Vasconcelos made the conjecture: ``Let $I$ be an ideal of finite
projective dimension 
in a Noetherian local ring $S$, if the conormal module
$I/I^2$  has finite projective dimension over $S/I$, 
then $I$ must be locally generated by a regular sequence''
\cite{Wolmer-conormal}. The problem has inspired several interesting
research directions in the last four decades, see 
\cite{Briggs,Herzog-conjectures} and references therein. The conjecture 
was recently proved by Briggs
\cite[Theorem~A]{Briggs}, where he also proves a similar result for the first
Koszul homology module of $I$ \cite[Theorem~B]{Briggs}. 

The ideal $I$ is called {\it syzygetic\/} if $\ker(f)=0$ \cite{SV1}.
For syzygetic Cohen--Macaulay ideals of codimension $3$, another
conjecture of Vasconcelos states that an ideal in this class 
is Gorenstein if its conormal module is Cohen--Macaulay 
\cite[Conjecture~(B)]{Vas2}.

Let $I$ and $J$ be two ideals in a Cohen--Macaulay local ring $S$. The
ideals $I$ and $J$ are said to be (algebraically) \textit{linked} 
if there is an $S$-regular sequence $\underline{x}=\{x_1,\ldots,x_n\}$ in $I\cap
J$ such that $I=((\underline{x})\colon J)$ and
$J=((\underline{x})\colon I)$. 
When $I$ and $J$ are linked one writes $I\sim J$. For the notion of
\textit{geometrically linked ideals}, see \cite[p.~327]{Vas1}. Another notion of
linkage, \textit{residual intersection}, replaces the regular sequence
$\underline{x}$ by more general ideals \cite{Artin-Nagata,Huneke-TAMS}. We 
say that $J$ is in the {\it linkage class\/}
 of $I$ if there are ideals $I_1,\ldots, I_{m}$ such
that 
$$I\sim I_1 \sim\cdots\sim I_{m}\sim J.$$ 
\quad The ideal $J$ is 
said to be in the {\it even linkage class} of $I$ if $m$ is odd. 
Let $S$ be a Gorenstein ring and let $I$ be a 
Cohen--Macaulay ideal of $S$. If $J$ is linked to $I$, then Peskine and Szpiro \cite{PS1} 
showed  that $J$ is Cohen--Macaulay. 

The ideal $I$ is called \textit{strongly Cohen-Macaulay} (SCM for
short) if $H_i(I)$ is
Cohen-Macaulay for all $i\geq 0$ \cite{Huneke-TAMS}. Ideals in a Gorenstein local ring that are
in the linkage 
class of a
complete intersection are SCM \cite{Hu2} and so are 
the perfect ideals of codimension two and perfect Gorenstein ideals of
codimension three \cite[pp.~75--76]{Vas}. Vasconcelos was interested in the 
Koszul homology of $I$ and especially in the Koszul homology
module $H_1(I)$ on some system of generators of $I$ and in the conormal
module $I/I^2$, both considered over $S/I$. If $I$ is a perfect ideal
of codimension three, then the Cohen-Macaulayness of $H_1(I)$ is an invariant of the
whole linkage class of $I$ \cite[Theorem~2.4]{Vas2}. A fundamental result
of Huneke asserts that the SCM property of $I$ is
invariant under even linkage \cite{Hu2}. 
If $S$ is a Gorenstein local
ring and $I$ is a Gorenstein ideal of
codimension four, then a theorem of Vasconcelos shows that $H_1(I)$
is Cohen--Macaulay if and only if
$I/I^2$ is Cohen--Macaulay \cite[Theorem~3.1]{Vas2}. The if part of
this theorem was shown by Vasconcelos and the second author
\cite[Theorem~1.1]{VV}. 

Several numerical computations and
\cite[Theorem~3.1.5]{vila-tesis-ja}, \cite[Theorem~2.1]{Koszul-linear}, support the
following conjecture of
Vasconcelos which is still open
(cf.~\cite[Conjecture~3.1.4]{vila-tesis}). Recall that 
the \textit{deviation} 
of an ideal $I$ is the deficit $n-g$, where
$n=\mu(I)$ is the minimum number of generators
of $I$ and $g={\rm ht}(I)$ is the height of $I$.

\begin{conjecture}\cite[Conjecture (A)]{Vas2}
Let $I$ be a homogeneous ideal of a polynomial ring $S$. 
 Assume that $I$ has height $3$, deviation at least $3$, and is
generically a complete intersection. If $I$ is not a Gorenstein ideal
and the resolution of $S/I$ is
pure, then $I$ is not SCM.
\end{conjecture}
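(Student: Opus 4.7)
The plan is to argue by contradiction: assume $I$ satisfies all the hypotheses of the conjecture and is also SCM, then derive a rigidity forcing $I$ to be Gorenstein. The first step is to extract the numerical constraints. Since $H_0(I)=S/I$ is Cohen--Macaulay, $S/I$ is perfect of codimension three with a pure resolution of type $(d_1,d_2,d_3)$, and the Herzog--K\"uhl equations determine the Betti numbers $b_1,b_2,b_3$ of $S/I$ up to a common rational scalar. The deviation hypothesis gives $b_1=\mu(I)\geq 6$; the Buchsbaum--Eisenbud structure theorem for codimension three Gorenstein ideals forces $b_3=1$ and a self-dual resolution with odd $b_1$, so the non-Gorenstein hypothesis yields $b_3\geq 2$ and a canonical module $\omega_{S/I}$ of type at least two.

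The second step is to exploit the sequence \eqref{feb5-23} to analyse $H_1(I)$ against the conormal module. Because $I$ is generically a complete intersection of height three, $I/I^2$ has rank three at each associated prime of $S/I$, so the kernel of the map $f$ in \eqref{feb5-23} can be read off the Koszul complex and the presentation matrix of $I$. The SCM hypothesis makes every $H_i(I)$ a maximal Cohen--Macaulay module over $S/I$; combined with Vasconcelos' invariance of the Cohen--Macaulayness of $H_1(I)$ under linkage for codimension three perfect ideals \cite[Theorem~2.4]{Vas2}, one would link $I$ through a regular sequence of length three sitting inside $I$ (available thanks to the generic complete intersection assumption) to an ideal $J$ inheriting the non-Gorenstein and SCM properties.

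The third step would be to iterate this linkage, or alternatively to pass to the approximation complex machinery of Herzog--Simis--Vasconcelos \cite{SV1} so as to translate the depths of the higher $H_i(I)$ into depths of symmetric powers of $I/I^2$. The numerical data from the pure resolution should then be combined with the rigidity imposed by SCM on $\omega_{S/I}$ to produce an inequality between the Betti numbers that violates the Herzog--K\"uhl relations unless $b_3=1$, contradicting the non-Gorenstein hypothesis.

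The main obstacle, and presumably the reason the conjecture is still open, is this last step: producing a genuine numerical contradiction. SCM by itself does not cap $\mu(I)$, and the pure resolution hypothesis alone is consistent with many candidate Betti diagrams with $b_3\geq 2$. The hard part will be showing that, in the presence of deviation at least three, SCM forces enough extra vanishing to be incompatible with a non-Gorenstein pure resolution; this likely requires a refined analysis of the Fitting and Buchsbaum--Eisenbud ideals of the first syzygy matrix, which so far has only been carried out in small cases (cf.\ \cite[Theorem~3.1.5]{vila-tesis-ja}, \cite[Theorem~2.1]{Koszul-linear}).
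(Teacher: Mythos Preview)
This statement is a \emph{conjecture}, not a theorem, and the paper does not prove it. Immediately before the statement the paper says explicitly that it ``is still open'' and is only supported by numerical computations and the partial results \cite[Theorem~3.1.5]{vila-tesis-ja}, \cite[Theorem~2.1]{Koszul-linear}. There is therefore no proof in the paper against which to compare your proposal.

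Your proposal is also not a proof, and you say as much: the third step is where the argument has to happen, and you correctly identify that you cannot close it. The first two steps are reasonable setup (pure resolution plus Cohen--Macaulay gives Herzog--K\"uhl relations; SCM plus codimension three gives access to linkage and the approximation complex), but nothing you have written produces an actual contradiction. Linking to an ideal $J$ and saying it ``inherits the non-Gorenstein and SCM properties'' does not advance the argument unless you can show $J$ is forced into a strictly smaller or more constrained class, and you have not done that. The appeal to ``rigidity imposed by SCM on $\omega_{S/I}$'' and to a hoped-for inequality among Betti numbers is a wish, not a step. As you yourself note in the final paragraph, the hard part remains untouched; what you have is a plausible plan of attack, not a proof.
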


Let $I$ be an ideal in a Gorenstein local ring $S$ and 
let $\underline{x} = \{x_1,\ldots, x_n\}$ be a generating 
set for $I$. We say that $I$ satisfies \textit{sliding depth} if 
\begin{equation*}
{\rm depth}\, H_{i}(\underline{x})
\geq  \dim(S) - n + i,\ \ i\geq 0.
\end{equation*}
\quad This property localizes and depends solely on the number of elements in the sequence
$\underline{x}$ \cite[p.~676]{HSV2}, and is an invariant of even
linkage (cf. \cite{HVV,Hu2,vila-tesis}).

The strongly Cohen--Macaulay and sliding depth conditions are often necessary for
the study of the properties of blowup rings, see 
\cite[Corollaries~3.3.21, 3.3.24]{Vas}. The
approximation complex $\mathcal{M}(I)$ of an ideal $I$ is a complex of graded modules introduced
by Simis and Vasconcelos \cite{SV1} to study blowup rings, its
acyclicity bears a striking resemblance to that of an ordinary Koszul
complex, we refer to
Vasconcelos book \cite[Chapter~3]{Vas} for the theory of this
complex. The notion of a $d$-\textit{sequence},  
introduced by Huneke \cite{Hu5}, plays a key role here. A result of Herzog, Simis and
Vasconcelos shows that $\mathcal{M}(I)$ is acyclic if and only if $I$ is
generated by a $d$-sequence \cite{HSV1}. Let $S$ be a local 
Cohen--Macaulay domain and let $I$ be an ideal of $S$. If $I$ is Cohen--Macaulay
and has sliding depth, then the Rees algebra and the associated graded
ring of $I$ are Cohen--Macaulay and $I$ is generated by a $d$-sequence
\cite{HSV1,Hu0}, \cite[Remark~2.1.5]{vila-tesis}. 

We close this section with a theorem that relates the 
normality of an ideal with residual intersections and other algebraic
properties (cf. \cite[Corollary~1.7]{HUV}). 

\begin{theorem}{\rm(Huneke, Ulrich, Vasconcelos
\cite[Theorem~1.6]{HUV})} Let $(S,\mathfrak{m})$ be a regular local
ring with infinite residue class field, and let $I\neq\mathfrak{m}$ be a
reduced strongly Cohen--Macaulay $S$-ideal such that
$\mu(I)= \dim(S)$ and $\mu(I_\mathfrak{p})\leq\max\{{\rm
ht}(I),\dim(S_\mathfrak{p})-1\}$ for all $\mathfrak{p}\in
V(I)\setminus\{\mathfrak{m}\}$. Then, 
the following are equivalent:
\begin{enumerate}
\item[(a)] $I$ is a normal ideal. {\rm(b)} $I^k$ is integrally closed
for some $k>\dim(S/I)$. 
\item[(c)] $I$ has a residual intersection $J$ with $S/J$ a normal Gorenstein domain.
\item[(d)] $I$ has a geometric residual intersection $J$ with $S/J$ a
discrete valuation ring.
\end{enumerate}
\end{theorem}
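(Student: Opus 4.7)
The plan is to attack the four-way equivalence by reducing everything to the structure of a carefully chosen residual intersection of $I$. The easy direction (a) $\Rightarrow$ (b) is immediate, since a normal ideal has every power integrally closed; in particular, $I^k$ is integrally closed for every $k>\dim(S/I)$.

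To produce the residual intersections required in (c) and (d), I would exploit the equality $\mu(I)=\dim(S)=:n$. Writing $I=(x_1,\ldots,x_n)$ and using the infinite residue field $K=S/\mathfrak{m}$, I would take $n-1$ generic $K$-linear combinations $a_1,\ldots,a_{n-1}$ of the $x_i$, set $\mathfrak{a}=(a_1,\ldots,a_{n-1})$, and define $J=\mathfrak{a}:I$. The local bound $\mu(I_\mathfrak{p})\leq\max\{{\rm ht}(I),\dim(S_\mathfrak{p})-1\}$ at every $\mathfrak{p}\neq\mathfrak{m}$ is precisely the $G_n$ condition, which combined with the strongly Cohen--Macaulay hypothesis places us in the Artin--Nagata--Huneke--Ulrich framework. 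That machinery gives a genuine $(n-1)$-residual intersection with $S/J$ Cohen--Macaulay of dimension $\leq 1$, and the genericity of the $a_i$ delivers geometricity for (d).

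Next, I would reinterpret normality of $I$ as a property of $\operatorname{gr}_I(S)=\bigoplus_{i\geq 0}I^i/I^{i+1}$. The sliding depth arguments recalled in Section~\ref{section-H1-conormal}, together with SCM and $G_n$, imply that $\operatorname{gr}_I(S)$ is Cohen--Macaulay; Serre's criterion then makes normality of $I$ equivalent to $\operatorname{gr}_I(S)$ being generically reduced. The fiber $\operatorname{gr}_I(S)\otimes_S S/J$ controls exactly this generic reducedness because the residual intersection $J$ cuts out the non-flat locus of the blowup, so normality of $I$ translates into $S/J$ being a normal Gorenstein domain, proving (a) $\Leftrightarrow$ (c); and because $\dim(S/J)\leq 1$, a normal Gorenstein $S/J$ of positive dimension is a discrete valuation ring, which yields (a) $\Leftrightarrow$ (d).

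The main obstacle is the implication (b) $\Rightarrow$ (a): propagating one integrally closed power to all of them. My plan is a Brian\c{c}on--Skoda-style bootstrap. Since $I$ is generated by $n$ elements in a regular local ring of dimension $n$, one has the inclusion $\overline{I^{m+n-1}}\subseteq I^m$ for every $m\geq 1$, and the Cohen--Macaulayness of $\operatorname{gr}_I(S)$ together with the reduction-number bound $r(I)\leq\dim(S/I)$ coming from the residual intersection setup restricts the possible deviation of the filtration $\{\overline{I^m}\}$ from $\{I^m\}$. The strategy is to show that a single equality $\overline{I^k}=I^k$ with $k>\dim(S/I)$ forces $\overline{I^m}=I^m$ in every degree by descending induction, using the reducedness of $I$ and the Gorenstein structure of $S/J$ to rule out embedded components in the integral closure filtration. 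Making the descent work at the smallest powers, where one cannot yet appeal to Brian\c{c}on--Skoda, is the delicate point where the full strength of SCM together with the residual intersection data must be used.
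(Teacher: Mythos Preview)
The paper is a survey article and does not prove this theorem; it is stated without argument and attributed to Huneke, Ulrich, and Vasconcelos via the citation to \cite{HUV}. So there is no ``paper's own proof'' to compare your proposal against.

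On the merits of your sketch: the overall architecture is the right one---the result does run through the Artin--Nagata/Huneke--Ulrich machinery for residual intersections under SCM plus $G_n$, and the Cohen--Macaulayness of $\operatorname{gr}_I(S)$ is a genuine ingredient. But several steps are underdetermined. The sentence ``the fiber $\operatorname{gr}_I(S)\otimes_S S/J$ controls exactly this generic reducedness because the residual intersection $J$ cuts out the non-flat locus of the blowup'' is too vague to function as an argument; the actual link in \cite{HUV} between normality of $I$ and the structure of $S/J$ passes through a careful analysis of how $J$ sits relative to the Rees algebra and uses duality for $S/J$, not a bare tensor product. Your passage from (c) to (d) also quietly assumes $\dim(S/J)=1$: if $\dim(S/J)=0$ a normal Gorenstein domain is a field, not a DVR, so you need to argue why the residual intersection cannot collapse to height $n$ (this is where $I\neq\mathfrak{m}$ and the geometricity of the construction enter). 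Finally, for (b) $\Rightarrow$ (a) you correctly identify the Brian\c{c}on--Skoda direction and the reduction-number bound as the relevant tools, but you explicitly leave the descent to small powers open, and that is precisely the substantive content of this implication. To close these gaps you would need to reproduce the argument from \cite{HUV} itself.
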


\section{Origins of graph ideals: Syzygetic and linear type ideals}\label{section-the-origins}

Let $S=K[t_1,\ldots,t_s]$ be a polynomial ring over a field $K$ and 
let $I$ be a non-zero ideal of $S$. 
We set $\delta(I)=\ker(f)$, where $f$ is the map that appears in 
Eq.~\eqref{feb5-23}. Recall that the ideal $I$ is called {\it syzygetic\/}
if $\delta(I)=0$. Simis and Vasconcelos \cite{SV1} proved that
\[
\delta(I)\simeq\ker({\rm Sym}_2(I)\rightarrow I^2),
\]
where ${\rm Sym}_2(I)$ is the symmetric algebra of $I$ in degree $2$ and 
${\rm Sym}_2(I)\rightarrow I^2$ is 
the surjection induced by the multiplication map. In particular, 
$\delta(I)$ depends only on $I$. The ideal $I$ is said to be 
{\it generically a complete intersection\/} if $IS_{\mathfrak p}$ 
is a  
complete intersection for all ${\mathfrak p}\in {\rm Ass}_S(S/I)$.
If $I$ is unmixed, that is, all associated primes of $I$ have the
same height, and is generically a complete intersection, then
the $S/I$-torsion of $H_1(I)$ is equal to $\delta(I)$ 
\cite[Remark~4.2.4]{monalg-rev}.

Let $\underline{x}=\{x_1,\ldots, x_n\}$ be a set of generators of $I$. 
The {\it Rees algebra\/} of $I$, denoted 
by  $S[Iz]$, is the subring of $S[z]$ given 
by
\[
S[Iz]=S[x_1z,\ldots,x_nz]\subset S[z],
\]
where $z$ is a new variable. There is an 
epimorphism of    
$S$-algebras
\begin{equation}\label{feb6-23}
\varphi\colon\, B=S[y_1,\ldots,y_n]\longrightarrow
S[Iz]\longrightarrow 0,\ \ \ y_i\stackrel{\varphi}{\longmapsto} x_iz, 
\end{equation}
where $B=S[y_1,\ldots,y_n]$ is a polynomial ring over the ring $S$
with the standard grading induced by setting $\deg(y_i)=1$
for $i=1,\ldots,n$. The kernel 
of $\varphi$, denoted by $J$, is the {\em presentation
ideal\/} of $S[Iz]$ with respect to
$\underline{x}$. The mapping 
$\psi\colon S^n \longrightarrow I$ given by 
$\psi(a_1,\ldots, a_n) = \sum_{i=1}^n a_ix_i$ induces an 
$S$-algebra epimorphism 
$$\beta\colon S[y_1,\ldots,y_n] \longrightarrow 
{\rm Sym}(I),$$ 
where ${\rm Sym}(I)$ is the symmetric algebra of $I$ as an
$S$-module. Thus, 
\[
{\rm Sym}(I)\simeq S[y_1,\ldots,y_n]/{\rm ker}(\beta),
\]
where ${\rm ker}(\beta)$ is an ideal of $S[y_1,\ldots,y_n]$ 
generated by linear forms: 
$$
{\rm ker}(\beta)=
\left.\left(\left\{\sum_{i=1}^nb_iy_i\right\vert\, 
\sum_{i=1}^nb_ix_i = 0\mbox{ and }b_i\in S
\right\}\right).
$$ 
\quad The
kernel of $\varphi$ is generated by all forms $F(y_1,\ldots, y_n)$
such that $F(x_1,\ldots,x_n)=0$ and one may factor
$\varphi$ through ${\rm Sym}(I)$ and obtain the 
commutative diagram:
$$
\begin{matrix} S[y_1,\ldots, y_n]&\buildrel \varphi \over \longrightarrow &
S[Iz]\cr
&&&\cr
\downarrow^{\beta} &^{\alpha} \nearrow \cr
&&&\cr
{\rm Sym}(I)\cr
\end{matrix}
$$
\quad For a beautiful treatment of symmetric algebras of modules and 
explicit methods to compute ideal transforms, see 
Vasconcelos' paper \cite{Wolmer-symmetric}.

We say that $I$ is an {\it ideal of linear
type\/} if $\alpha$ is an isomorphism. An important module-theoretic obstruction to 
``${\rm Sym}(I) \simeq S[Iz]$'' is given by the following 
result. 

\begin{proposition}{\rm(Herzog, Simis, Vasconcelos \cite{HSV1})}
\label{obst-to-lin-type} 
If ${\rm Sym}(I) \simeq S[Iz]$, then for 
each prime ideal ${\mathfrak p}$ containing $I$, $I_{\mathfrak p}$ 
can be generated 
by ${\rm ht}({\mathfrak p})$ elements.
\end{proposition}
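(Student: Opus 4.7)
The plan is to reduce to the local case and then compare Krull dimensions of the special fibers of the two $S$-algebras in the isomorphism ${\rm Sym}(I)\simeq S[Iz]$. Since both symmetric algebras and Rees algebras commute with localization at a prime, the hypothesis descends to ${\rm Sym}(I_{\mathfrak{p}})\simeq S_{\mathfrak{p}}[I_{\mathfrak{p}}z]$ for every prime $\mathfrak{p}\supseteq I$. Replacing $S$ by $S_{\mathfrak{p}}$, I may therefore assume $(S,\mathfrak{m})$ is a Noetherian local ring with $\mathfrak{p}=\mathfrak{m}$ and $I\subseteq\mathfrak{m}$ a nonzero proper ideal, reducing the claim to $\mu(I)\leq\dim S={\rm ht}(\mathfrak{m})$, where $\mu(I)$ denotes the minimal number of generators of $I$.

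Next, I fix a minimal generating set $x_1,\ldots,x_n$ of $I$, so $n=\mu(I)$, and tensor ${\rm Sym}(I)\simeq S[Iz]$ with the residue field $K=S/\mathfrak{m}$. On the symmetric algebra side, base change together with the standard identity ${\rm Sym}(I)\otimes_S K\simeq {\rm Sym}_K(I/\mathfrak{m}I)$ yields the polynomial ring $K[y_1,\ldots,y_n]$ of Krull dimension $n$; that the fiber has exactly $n$ free generators comes from the minimality of the chosen $x_i$ via Nakayama's lemma, which forces every element $\sum b_i y_i \in \ker(\beta)$ to have coefficients $b_i\in\mathfrak{m}$ and hence collapse modulo $\mathfrak{m}$. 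On the Rees algebra side, $S[Iz]\otimes_S K$ is by definition the fiber cone $F(I)=\bigoplus_{j\geq 0} I^j/\mathfrak{m}I^j$, whose Krull dimension is the analytic spread $\ell(I)$.

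Equating the Krull dimensions of these two isomorphic graded $K$-algebras yields $\mu(I)=n=\ell(I)$. To finish, I invoke the classical bound $\ell(I)\leq\dim S$, which holds because $F(I)$ is a residue ring of the associated graded ring ${\rm gr}_I(S)$ (modding out by the image of $\mathfrak{m}$) and $\dim {\rm gr}_I(S)=\dim S$ for any proper ideal in a Noetherian local ring. Combining, $\mu(I)=\ell(I)\leq\dim S={\rm ht}(\mathfrak{p})$, as required. The only subtle point is identifying the fiber of the symmetric algebra with $K[y_1,\ldots,y_n]$: this is precisely where minimality of the generating set is used, via Nakayama, to make every linear relation of the presentation vanish modulo $\mathfrak{m}$. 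Once this identification is secured, the result is a direct dimension count using the standard inequality for analytic spreads.
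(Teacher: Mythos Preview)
The paper does not supply a proof of this proposition; it is quoted as a cited result from Herzog--Simis--Vasconcelos, so there is nothing in the paper to compare against directly. Your argument is correct and is in fact the standard one: after localizing, tensoring the $S$-algebra isomorphism $\alpha$ with the residue field identifies the polynomial ring $K[y_1,\ldots,y_n]$ (of Krull dimension $\mu(I)$) with the fiber cone $F(I)$ (of Krull dimension $\ell(I)$), and then one invokes $\ell(I)\leq\dim S_{\mathfrak p}={\rm ht}(\mathfrak p)$. The identification of ${\rm Sym}(I)\otimes_S K$ with a polynomial ring via Nakayama is handled cleanly. The only step where you might tighten the exposition is the justification of $\dim{\rm gr}_I(S)=\dim S$: in a Noetherian local ring this follows from the extended Rees algebra $S[Iz,z^{-1}]$ having dimension $\dim S+1$ together with $z^{-1}$ being a nonzerodivisor, but you could just as well cite $\ell(I)\leq\dim S$ as a standard fact (e.g.\ from Huneke--Swanson or Matsumura) and skip the intermediate step.
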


\begin{theorem}{\rm(Huneke \cite[Theorem~3.1]{Hu-linear-type})} If the
ideal $I=(x_1,\ldots,x_n)$ is generated by a $d$-sequence, then $I$ is of linear type.
\end{theorem}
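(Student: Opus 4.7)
The plan is to establish that the surjection $\alpha\colon\mathrm{Sym}(I)\to S[Iz]$ in the commutative diagram is an isomorphism, equivalently, that the presentation ideal $J=\ker(\varphi)$ coincides with the ideal $L=\ker(\beta)$ of linear forms in the $y_i$. I proceed by induction on $n$, with an inner induction on the $y$-degree of a homogeneous element of $J$.

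For the base case $n=1$, the $d$-sequence condition on $\{x_1\}$ reads $(0):x_1^2=(0):x_1$, and iterating gives $(0):x_1^k=(0):x_1$ for all $k\geq 1$: if $ax_1^{k+1}=0$ with $k\geq 1$, then $(ax_1^{k-1})x_1^2=0$ yields $ax_1^k=0$ by induction on $k$. A homogeneous form in $J$ of $y$-degree $k$ is $ay_1^k$ with $ax_1^k=0$, hence $ax_1=0$, so $ay_1\in L$ and $ay_1^k=y_1^{k-1}(ay_1)\in L$. For the inductive step ($n\geq 2$), I invoke the standard fact that the images of $x_2,\ldots,x_n$ in $\bar S:=S/(x_1)$ again form a $d$-sequence. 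Given a homogeneous $F\in J$, its image $\bar F$ in $\bar S[y_2,\ldots,y_n]$ lies in the analogous presentation ideal, which by the outer induction is generated by its linear forms. Each such linear form $\sum_{i\geq 2}\bar a_i\bar y_i$ corresponds to a relation $\sum_{i\geq 2}a_ix_i=cx_1$ in $S$, which lifts to the honest linear form $\sum_{i\geq 2}a_iy_i-cy_1\in J$. Subtracting a lift of the expression for $\bar F$ from $F$ leaves an element of $J\cap(x_1,y_1)B$, and the problem is reduced to showing $J\cap(x_1,y_1)B\subseteq L$.

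The main obstacle is controlling this intersection. Writing a representative as $x_1P+y_1Q$ and applying $\varphi$ yields $x_1\bigl(\varphi(P)+z\,\varphi(Q)\bigr)=0$ in $S[z]$, and the full $d$-sequence relation $(x_1,\ldots,x_{i-1}):x_i\cap I=(x_1,\ldots,x_{i-1})$ is precisely what decouples the two contributions, forcing $\varphi(P)$ and $\varphi(Q)$ to vanish modulo terms already accounted for by $L$; this drops the $y$-degree and closes the inner induction. A conceptually cleaner route, granted the results quoted above, is to invoke the Herzog--Simis--Vasconcelos theorem that $\mathcal{M}(I)$ is acyclic if and only if $I$ is generated by a $d$-sequence: since the approximation complex is built so that its bottom part realizes the surjection $\mathrm{Sym}(I)\to S[Iz]$, acyclicity of $\mathcal{M}(I)$ forces this map to be an isomorphism, from which Huneke's theorem follows at once.
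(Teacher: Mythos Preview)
The paper does not supply a proof of this theorem; it is merely quoted from Huneke's original article. So there is no ``paper's own proof'' to compare against, and your attempt must stand on its own.

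Your inductive framework is in the spirit of Huneke's original argument: pass to $S/(x_1)$, use that the tail of a $d$-sequence remains a $d$-sequence there, and reduce to controlling $J\cap(x_1,y_1)B$. However, the crucial step is not carried out. You assert that from $x_1(\varphi(P)+z\varphi(Q))=0$ the $d$-sequence condition ``decouples the two contributions, forcing $\varphi(P)$ and $\varphi(Q)$ to vanish modulo terms already accounted for by $L$,'' but this is precisely the heart of the matter and you have not shown how the colon conditions $((x_1,\ldots,x_{i-1}):x_ix_j)=((x_1,\ldots,x_{i-1}):x_j)$ yield this. Huneke's actual argument is more delicate: one shows, by a careful induction, that if $F(y_1,\ldots,y_n)$ is homogeneous of $y$-degree $d$ with $F(x_1,\ldots,x_n)\in(x_1,\ldots,x_{i-1})I^{d-1}$, then $F$ is congruent modulo $L$ to a form in $y_1,\ldots,y_{i-1}$ only. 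Your sketch gestures at this but does not execute it.

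Your alternative route through the approximation complex is not a valid shortcut here. First, acyclicity of $\mathcal{M}(I)$ gives $\mathrm{Sym}(I/I^2)\cong\mathrm{gr}_I(S)$, not $\mathrm{Sym}(I)\cong S[Iz]$; passing from the former to linear type requires further argument (e.g., a Valabrega--Valla type statement or lifting through the filtration). Second, and more seriously, the Herzog--Simis--Vasconcelos theorem you invoke is logically downstream of Huneke's result: their proof that $d$-sequences give acyclic $\mathcal{M}$-complexes uses linear type as input, so this route is circular.
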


The presentation ideal $J$ of the Rees algebra $S[Iz]$ can be obtained as follows \cite{BSV}:
$$
J = (y_1- zx_1,\ldots, y_n- zx_n)\cap B.
$$
\quad As $J$ is a graded ideal in the $y_i$-variables, 
$J=\bigoplus_{i\geq 1}J_i$. The relationship between $J$ and the
first Koszul homology module $H_1(I)$ of $I$ is tight. The exact sequence 
of Eq.~\eqref{feb5-23} can be made precise:
\[
0 \longrightarrow J_2/B_1J_1 = \delta(I) \longrightarrow H_1(I)
\longrightarrow (S/I)^{n} \longrightarrow I/I^2 \longrightarrow 0,
\]
and we obtain the following result:
\begin{proposition}{\rm(Vasconcelos \cite[Chapter~7]{Vas1})}\label{mar6-23}
Using Gr\"obner basis, one can decide whether $I$ is 
syzygetic---that is, $J_2=B_1J_1$---or of linear type, that is,
$J=J_1B$.
\end{proposition}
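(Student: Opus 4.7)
The plan is to reduce both conditions to routine Gröbner basis operations: elimination, extraction of a bigraded generating set, and ideal membership. The starting observation is the explicit description
\[
J = (y_1 - z x_1, \ldots, y_n - z x_n) \cap B
\]
in the extended polynomial ring $B[z] = S[y_1, \ldots, y_n, z]$ recorded just before the proposition. Fixing any monomial order on $B[z]$ in which $z$ is larger than all $t_i$ and $y_j$ (for instance a lexicographic order with $z$ first, or an appropriate product order), the standard Elimination Theorem produces generators of $J$: compute a Gröbner basis of $(y_1 - z x_1, \ldots, y_n - z x_n)$ in $B[z]$ and discard the elements that involve $z$.

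I would then exploit the $y$-grading on $B$: set $\deg_y(t_i) = 0$ and $\deg_y(y_j) = 1$, so that $B = \bigoplus_{d \geq 0} B_d$, and, since the presentation map $\varphi$ preserves $y$-degree (assigning $\deg_y(z) = 0$ makes each $\varphi(y_j) = x_j z$ live in $y$-degree $1$), the ideal $J$ is $y$-homogeneous, $J = \bigoplus_{d \geq 1} J_d$. Recomputing a Gröbner basis $G$ of $J$ in $B$ with respect to a monomial order that refines the $y$-grading, the output is $y$-homogeneous and partitions as $G = \bigsqcup_{d \geq 1} G_d$ with $G_d \subset J_d$. In particular $G_1$ generates the $S$-module $J_1$.

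Both decisions are now immediate from ideal membership. Since $J_1 B \subseteq J$ always, $I$ is of linear type iff every element of $G$ lies in the ideal $(G_1) \subset B$, a finite list of membership tests settled by a single Gröbner basis of $(G_1)$. Since $B_1 J_1 \subseteq J_2$ always, $I$ is syzygetic iff every element of $G_2$ belongs to the $B$-ideal generated by $\{ y_j g : 1 \leq j \leq n,\ g \in G_1 \}$, again a single ideal-membership test. No deep theorem enters the argument; the one conceptual point that needs attention, and what I view as the main issue to nail down, is the compatibility of the monomial order with the $y$-grading. This is what legitimizes splitting $G$ degree by degree and testing the conditions componentwise, and once it is in place the whole procedure is a direct call to Buchberger's algorithm followed by two ideal-containment queries.
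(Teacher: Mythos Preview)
The paper does not prove this proposition; it is stated with attribution to Vasconcelos' book and then used as a computational tool. Your argument is correct and is the standard procedure one finds in the cited source: compute $J$ by eliminating $z$ from $(y_1 - z x_1,\ldots,y_n - z x_n)$, pass to a $y$-homogeneous Gr\"obner basis $G=\bigsqcup_d G_d$, and reduce both questions to finitely many ideal-membership tests in $B$.

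Two small remarks. First, the crucial fact that $G_1$ generates $J_1$ as an $S$-module follows already from $G$ consisting of $y$-homogeneous elements (so that reducing an element of $J_1$ only ever uses elements of $G_1$); compatibility of the order with the $y$-grading is convenient but not essential once you have homogeneous generators of $J$ to feed into Buchberger's algorithm. Second, your test for syzygetic via the $B$-ideal generated by $\{y_j g' : g'\in G_1\}$ is legitimate because any $g\in G_2$ is $y$-homogeneous of degree $2$, and the degree-$2$ component of that ideal is exactly $B_1 J_1$; you might note this explicitly, since at first glance the test looks coarser than the $S$-module condition $J_2=B_1J_1$ being checked.
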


Vasconcelos gave a procedure to compute
Noether normalizations and 
systems of parameters \cite[p.~609]{Vas2} and showed the following
criterion to determine whether the first Koszul homology module of a
graded ideal is 
Cohen--Macaulay.

\begin{proposition}{\rm(Vasconcelos
\cite[p.~610]{Vas2})}\label{mar6-23-1} 
Let $I$ be a homogeneous, Cohen--Macaulay, syzygetic ideal of height $g$. Let
$K[y_1,\ldots,y_{n-g}]$ be a Noether normalization of
$K[y_1,\ldots,y_n]/I$. Then $H_1(I)$ is Cohen--Macaulay if and only
if the following condition holds:
$$
I^2\cap((y_1,\ldots,y_{i-1})I^2\colon
y_i)=(y_1,\ldots,y_{i-1})I^2\quad\mbox{for }i=1,\ldots,n-g.
$$
\end{proposition}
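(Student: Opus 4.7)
The plan is to reinterpret the stated condition as Koszul-homology vanishing on $I^2$, and then to use the exact sequence of Eq.~\eqref{feb5-23} (available because $I$ is syzygetic) to transfer that vanishing to $H_1(I)$. Throughout, set $\underline{y}=y_1,\ldots,y_{n-g}$.

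First I would observe that the displayed equalities are just a reformulation of ``$\underline{y}$ is a regular sequence on $I^2$''. Indeed, for each $i$ the identity $I^2\cap((y_1,\ldots,y_{i-1})I^2\colon y_i)=(y_1,\ldots,y_{i-1})I^2$ says exactly that $y_i$ is a nonzerodivisor on the graded module $I^2/(y_1,\ldots,y_{i-1})I^2$. Since $\underline{y}$ consists of linear forms of positive degree and $I^2$ is finitely generated and graded, this is in turn equivalent to the Koszul-homology vanishing $H_j(\underline{y};I^2)=0$ for every $j\geq 1$.

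Next I would use that $K[y_1,\ldots,y_{n-g}]\hookrightarrow K[y_1,\ldots,y_n]/I$ is a Noether normalization and that $S/I$ is Cohen--Macaulay, so $\underline{y}$ is regular on $S$ and on $S/I$, and hence on the free $S/I$-module $(S/I)^{\mu(I)}$. From $0\to I\to S\to S/I\to 0$, a long-exact-sequence argument in $H_{\bullet}(\underline{y};-)$ then forces $\underline{y}$ to be regular on $I$ as well. The heart of the proof is to apply $H_{\bullet}(\underline{y};-)$ to the two short exact sequences
\[
0\to H_1(I)\to (S/I)^{\mu(I)}\to I/I^2\to 0\quad\text{and}\quad 0\to I^2\to I\to I/I^2\to 0,
\]
where the first comes from Eq.~\eqref{feb5-23} together with the syzygetic hypothesis. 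The vanishings just established on the middle terms produce, for every $j\geq 1$, the pair of isomorphisms
\[
H_j(\underline{y};H_1(I))\;\cong\;H_{j+1}(\underline{y};I/I^2)\;\cong\;H_j(\underline{y};I^2).
\]

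Combining with the first step, the condition in the statement becomes the assertion that $H_j(\underline{y};H_1(I))=0$ for every $j\geq 1$. To identify this with the Cohen--Macaulay property of $H_1(I)$, I would note that $\mathrm{Ass}(H_1(I))\subseteq\mathrm{Ass}((S/I)^{\mu(I)})=\mathrm{Min}(I)$, and that every minimal prime of $I$ has height $g$ by the Cohen--Macaulay hypothesis; hence either $H_1(I)=0$ or $\dim H_1(I)=n-g$. In the nontrivial case $\underline{y}$ is then a homogeneous system of parameters for $H_1(I)$, so the Koszul vanishing is equivalent to $\underline{y}$ being a regular sequence on $H_1(I)$, and by the dimension equality to $H_1(I)$ being Cohen--Macaulay. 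The main obstacle I expect is the careful bookkeeping through the two long exact sequences, and making sure the graded-depth dictionary (``regular sequence on $M$'' $\Leftrightarrow$ ``$H_j(\underline{y};M)=0$ for $j\geq 1$'') is applied correctly, so that the Koszul transfer genuinely encodes the ideal-theoretic condition in the statement.
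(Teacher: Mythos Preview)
The survey paper does not actually supply a proof of this proposition; it is only stated with a citation to Vasconcelos's original article \cite[p.~610]{Vas2}. Your argument is correct and is precisely the natural route: the displayed condition is the statement that $\underline{y}$ is regular on $I^2$; the syzygetic hypothesis turns Eq.~\eqref{feb5-23} into the short exact sequence $0\to H_1(I)\to (S/I)^n\to I/I^2\to 0$; and comparing Koszul homology along this sequence and along $0\to I^2\to I\to I/I^2\to 0$ (using that $\underline{y}$ is regular on $S$, $S/I$, and hence on $I$) yields $H_j(\underline{y};H_1(I))\cong H_j(\underline{y};I^2)$ for all $j\geq 1$. Your closing step---that $H_1(I)\hookrightarrow (S/I)^n$ forces $\dim H_1(I)=n-g$ (or $H_1(I)=0$), so that vanishing of $H_{\geq 1}(\underline{y};H_1(I))$ is equivalent to the Cohen--Macaulayness of $H_1(I)$---is exactly right. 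This is the argument one expects from Vasconcelos's original paper.
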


For $s=6,7,8$, using the methods of Propositions~\ref{mar6-23} and
\ref{mar6-23-1}, in his Ph.D. thesis the second author determined the full list 
of ideals of $S$ generated by squarefree monomials of degree two 
forming a connected graph that
are strongly Cohen--Macaulay  and of linear type
\cite[p.~25]{vila-tesis}. This motivated the second author to 
introduce and study edge ideals of graphs using 
methods coming from commutative algebra \cite{Mats} and graph
theory \cite{cmg}. The Koszul homology of monomial ideals was
studied in \cite{eduardo}.  

\section{Edge ideals of graphs and clutters}\label{edge-ideals-all-flavors}

Stanley--Reisner rings or face rings were  defined independently by
Hochster and Stanley to use commutative algebra on
combinatorial problems, see
\cite{BHer,froberg-survey,Ho3,Rei,RT,Sta3,Sta2,handbook} and
references therein.   In this section, following the approach of Simis and Vasconcelos
\cite{ITG,bowtie,cmg,raei}, we relate the algebraic properties and invariants of edge ideals of graphs and 
clutters with those of the graphs and clutters
defining the edge ideals.  

Let $S=K[t_1,\ldots,t_s]$ be a polynomial ring over a field $K$ and
let $G$ be a simple graph with vertex set $V(G)=\{t_1,\ldots,t_s\}$ 
and edge set $E(G)$. As usual, the monomials of $S$ are denoted
by $t^a:=t_1^{a_1}\cdots t_s^{a_s}$, $a=(a_1,\dots,a_s)$ in $\mathbb{N}^s$, where
$\mathbb{N}=\{0,1,\ldots\}$. 

The \textit{edge ideal} or \textit{graph ideal} of $G$, denoted $I(G)$,  
is the ideal of $S$ given by:
$$I(G):=(\{t_it_j \mid \{t_i,t_j\}\in E(G)\}),$$
and the \textit{edge ring} of $G$ is the quotient ring $S/I(G)$. 
Edge ideals were introduced and studied by the second author in
\cite{cmg}. 
Since the 1990', many authors have been interested in using the edge ideal
construction to build a dictionary between graph theory and
commutative algebra and numerous papers on edge ideals have been 
written, see  \cite{barile,BM,CV,fhv,fhv1,edgeideals,FVT2,froberg-jaco,froberg-vila-eliahou,bounds,graphs,
Herzog-Hibi-book,v-number,kimura-non-v2,Kimura-Terai,kimura-terai, edge-ideals,van-tuyl-survey,monalg-rev} and references
therein. 

There is a very active area initiated in the 2010' that studies the
algebraic properties of binomial edge ideals
of graphs \cite{herzog-binomial-edge-ideal}, see the paper of Conca,
De Negri and E. Gorla \cite{conca-etal}, the book of Herzog,
Hibi and Ohsugi
\cite{herzog-hibi-ohsugi}, the survey paper of Saeedi Madani
\cite{madani}, the paper of Lerda, Mascia, Rinaldo and
Romeo \cite{rinaldo-jaco}, and  references therein. 

Aside from edge ideals of graphs and graph rings,
there has been a lot of work done in the 
intersection of ring theory and graph theory since the 1990' after
the introduction of zero-divisor graphs, see
\cite{Coykendall,Anderson-book,Anderson} and references therein. 

A \textit{tree} is a connected graph without cycles and a
\textit{bipartite} graph is a graph without odd cycles.  The following
theorem shows a family of ideals that are strongly Cohen--Macaulay.

\begin{theorem}{\rm(Simis, Vasconcelos, -\,, \cite[Theorem~3.1]{ITG})} 
For any tree $G$ the ideal $I(G)$ has sliding depth. In particular, if
$G$ is a tree and $I(G)$ is Cohen--Macaulay, then $I(G)$ is strongly Cohen--Macaulay.
\end{theorem}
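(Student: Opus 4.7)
The plan is induction on $s = |V(G)|$. The base case $s \leq 2$ is immediate, so assume $s \geq 3$, pick a leaf $v$ of $G$ with unique neighbor $w$, and let $G' = G \setminus v$, a tree on $s-1$ vertices. Write $S = S'[v]$ with $S' = K[V(G')]$, so that $I(G) = I(G') + (vw)$. Order the minimal generators of $I(G)$ so that $\underline{x}' = (e_1, \ldots, e_{n-1})$ generates $I(G')$ and $e_n = vw$, where $n = s-1$. The sliding-depth inequality to verify is ${\rm depth}_S\, H_i(\underline{x}; S) \geq \dim S - n + i = i+1$ for all $i \geq 0$.

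Since $v$ does not occur in any element of $\underline{x}'$, the Koszul complex on $\underline{x}'$ over $S$ is obtained from the one over $S'$ by flat base change along $S' \hookrightarrow S = S'[v]$, giving
$$H_i(\underline{x}'; S) \cong H_i(\underline{x}'; S') \otimes_{S'} S = H_i(\underline{x}'; S')[v].$$
Adjoining a polynomial variable raises depth by one, so the inductive hypothesis applied to $G'$ (a tree with $s-2$ edges) yields ${\rm depth}_S\, H_i(\underline{x}'; S) \geq i+2$. The mapping cone relating the Koszul complexes on $\underline{x}'$ and $\underline{x} = \underline{x}' \cup \{vw\}$ produces the long exact sequence
$$\cdots \to H_i(\underline{x}'; S) \xrightarrow{\pm vw} H_i(\underline{x}'; S) \to H_i(\underline{x}; S) \to H_{i-1}(\underline{x}'; S) \to \cdots,$$
and provided multiplication by $vw$ is injective on each $H_j(\underline{x}'; S)$, this collapses to $H_i(\underline{x}; S) \cong H_i(\underline{x}'; S)/vw\, H_i(\underline{x}'; S)$, whence ${\rm depth}_S\, H_i(\underline{x}; S) \geq (i+2)-1 = i+1$, as required.

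The main obstacle is verifying that $vw$ is a non-zerodivisor on $H_i(\underline{x}'; S)$ for every $i$. Since $v$ is a fresh polynomial variable, it is automatically regular on $H_i(\underline{x}'; S) = H_i(\underline{x}'; S')[v]$, so the task reduces to showing that $w$ avoids every associated prime of $H_i(\underline{x}'; S')$. This is the delicate combinatorial step and should drive the choice of which leaf to strip: associated primes of Koszul homology of tree edge ideals are controlled by vertex covers of certain induced subtrees of $G'$, and an appropriate choice of $v$ (for instance, one chosen so that $w$ retains a pendant edge in $G'$, which is guaranteed when $v$ is taken at the end of a longest path of $G$) forces $w$ to lie outside all such primes. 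For the ``in particular'' clause, sliding depth together with Cohen--Macaulayness of $I(G)$ upgrades to strong Cohen--Macaulayness by a standard argument on Koszul homology of Cohen--Macaulay ideals in the framework of approximation complexes and linkage theory developed in \cite{Vas}.
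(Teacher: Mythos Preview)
Your inductive strategy via leaf removal and the Koszul mapping-cone long exact sequence is the natural one, but the step you flag as ``delicate'' is false as stated, and no choice of leaf repairs it. The reduction to showing that $w$ avoids every associated prime of $H_i(\underline{x}';S')$ already fails for $i=0$: here $H_0(\underline{x}';S')=S'/I(G')$, whose associated primes are generated by the minimal vertex covers of $G'$. Since $|V(G)|\geq 3$ and $G$ is a tree, the vertex $w$ is not isolated in $G'$, and any non-isolated vertex of a graph lies in \emph{some} minimal vertex cover (extend a neighbor of $w$ to a maximal independent set and take the complement). Hence $w$ is \emph{always} a zerodivisor on $S'/I(G')$, so $vw$ is a zerodivisor on $H_0(\underline{x}';S)$, whichever leaf you strip. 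Concretely, for the path on four vertices with $v=t_4$ (an endpoint of the unique longest path) and $w=t_3$, one has $(t_1,t_3)\in{\rm Ass}(S'/I(G'))$ and $t_3\cdot t_2=0$ in $S'/I(G')$; note also that $w=t_3$ has no leaf neighbor in $G'$, so your parenthetical criterion is not even guaranteed by the longest-path choice.

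The long exact sequence therefore does not collapse, and one would need independent control of the depths of both $\mathrm{coker}(vw\mid H_i(\underline{x}';S))$ and $\ker(vw\mid H_{i-1}(\underline{x}';S))$; the bare inductive hypothesis on $G'$ does not supply this. The present survey only cites \cite{ITG} for this theorem and does not reproduce its proof, so there is no argument here to compare against directly, but the original proof necessarily proceeds through a finer mechanism---colon ideals, $d$-sequences, and the approximation-complex framework of \cite{HSV1,HVV}---rather than through regularity of a single Koszul generator. Your handling of the ``in particular'' clause is essentially correct: over a Gorenstein ring a Cohen--Macaulay ideal with sliding depth is strongly Cohen--Macaulay, via the canonical duality on Koszul homology \cite{HVV}, \cite[Chapter~3]{Vas}.
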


\begin{definition}{\rm\cite{Faridi}} Let $\Delta$ be a simplicial
complex with vertex set $\{t_1,\ldots,t_s\}$. The 
{\it facet ideal} of $\Delta$, denoted $I(\Delta)$, is the ideal of
$S$ generated by all $\prod_{t_i\in e}t_i$ such that $e$ is a facet of $\Delta$.
\end{definition}

Faridi \cite{faridi2} introduced the notion of a tree for simplicial
complexes and generalized the fact stated above that edge ideals of trees have
sliding depth. 

\begin{theorem}{\rm(Faridi \cite[Theorem~1]{faridi2})}\label{faridi-sd} Let $I = I(\Delta)$ be the
facet ideal of a simplicial complex $\Delta$. If $\Delta$ is a tree, 
then $I(\Delta)$ has sliding depth.
\end{theorem}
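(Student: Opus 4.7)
The plan is to proceed by induction on the number $q$ of facets of $\Delta$, using Faridi's defining property that every nonempty subcollection of a simplicial tree admits a leaf. The base case $q=1$ is immediate: $I(\Delta)=(x_F)$ is principal and generated by a nonzerodivisor, so $H_0=S/I$ has depth $\dim S - 1$ and $H_i=0$ for $i\ge 1$, which satisfies the sliding depth inequality $\mathrm{depth}\,H_i\ge \dim S - 1 + i$ trivially.

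For the inductive step, fix a leaf $F$ of $\Delta$ with joint facet $G$ and free vertex $v\in F$, and let $\Delta'$ be the subcomplex of $\Delta$ obtained by deleting $F$. Faridi's structural lemma says $\Delta'$ is again a simplicial forest, so its facet ideal $I'=I(\Delta')$ satisfies sliding depth on its generating list $\underline{y}=(x_{F_1},\ldots,x_{F_{q-1}})$ by the inductive hypothesis. Setting $\underline{x}=(\underline{y},x_F)$, the standard one-step short exact sequence of Koszul complexes yields, after passing to homology, the short exact sequences
\[
0 \longrightarrow H_i(\underline{y})/x_F\,H_i(\underline{y}) \longrightarrow H_i(\underline{x}) \longrightarrow \bigl(0:_{H_{i-1}(\underline{y})}x_F\bigr) \longrightarrow 0,
\]
for each $i\ge 0$. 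The inductive hypothesis combined with the depth lemma already shows that the left-hand term has depth at least $\dim S - q + i$, so the remaining task is to bound the colon module $(0:_{H_{i-1}(\underline{y})}x_F)$ below by the same quantity.

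This colon module is the main obstacle, and the point where the leaf and joint structure must be used substantively. The two defining features of the leaf $F$---the free vertex $v\in F$ that appears in no generator of $\underline{y}$ (so that $v$ is a nonzerodivisor on $S/I'$), and the joint relation $F\cap F'\subseteq F\cap G$ for every other facet $F'$ of $\Delta$ (which forces $F\setminus\{v\}\subseteq G$)---together provide a factorization of multiplication by $x_F=v\cdot m$ on $H_{i-1}(\underline{y})$ through multiplication by $v$ followed by multiplication by a divisor $m$ of $x_G$. Translating this factorization into the required depth estimate, most likely via a secondary induction on a refined leaf-decomposition of $\Delta'$ in which $G$ becomes a leaf, or via an explicit diagram chase on Koszul cycles that exploits the nonzerodivisor $v$, is the technical heart of Faridi's argument and the principal hurdle in completing the proof.
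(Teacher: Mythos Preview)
The paper does not prove this theorem; it is a survey and simply cites Faridi's result \cite{faridi2} without reproducing or sketching her argument. So there is no proof in the paper to compare against, and your proposal must be judged on its own.

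Your inductive framework and the one-generator Koszul exact sequence are the right scaffolding, and your identification of the colon module $(0:_{H_{i-1}(\underline{y})}x_F)$ as the crux is accurate. But the proposal is not a proof: you explicitly stop at the hard step, saying only that it is ``the principal hurdle'' and gesturing at a secondary induction or a diagram chase without carrying either out. That is precisely where all the content lies, so as written this is an outline rather than an argument.

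There is also a factual slip in your setup. The leaf condition $F\cap F'\subseteq F\cap G$ for all facets $F'\neq F$ does \emph{not} force $F\setminus\{v\}\subseteq G$: a leaf may have several free vertices, none of which lie in $G$. What the joint condition does give is a factorization $x_F = u\cdot m$ where $u$ is the product of \emph{all} free vertices of $F$ (each a nonzerodivisor on $S/I'$, hence $u$ is regular on every $H_i(\underline{y})$) and $m$ divides $x_G$. This corrected factorization is what one actually needs: regularity of $u$ reduces the colon by $x_F$ to the colon by $m$, and since $m\mid x_G$ with $x_G\in I'$, multiplication by $m$ on $H_{i-1}(\underline{y})$ factors through multiplication by a generator already in the sequence $\underline{y}$. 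Turning that observation into the required depth bound still demands real work---this is where Faridi's argument (or the analogous one in \cite[Theorem~3.1]{ITG} for graph trees) does the heavy lifting---and your proposal does not supply it.
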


Using a formula of Huneke and Rossi for the Krull dimension of the
symmetric algebra of a module \cite{HR,SV2},
\cite[Theorem~1.1.1]{Wolmer-symmetric}, 
a formula for the
Krull dimension of the symmetric algebra of $I(G)$ is given in
\cite{cmg} along
with a description of when this algebra is a domain. A compact
expression for the canonical module of the Rees algebras of 
 edge ideals of complete bipartite graphs, and some formulas for the Cohen--Macaulay type and
Hilbert series of those algebras are presented in
\cite{canela}. Hilbert series of edge rings are studied by Brennan,
Morey, Renteln and Watkins in \cite{BM,renteln,watk}.

The following proposition shows that Cohen-Macaulay edge ideals have a
linear resolution if and only if they have the maximum possible number
of generators.

\begin{proposition}\cite[pp.~54--56]{Koszul-linear}
Let $G$ be a graph with $m$ edges and let $I(G)\subset S$ be its edge ideal. If
$I(G)$ is a Cohen--Macaulay ideal of height $g$, then
$m\leq{g(g+1)}/{2}$, 
with equality if and only if $S/I(G)$  has a $2$-linear resolution.
\end{proposition}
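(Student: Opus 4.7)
The plan is to reduce the statement to an elementary dimension count in a polynomial ring in $g$ variables, after modding out by a regular sequence of linear forms. First I would assume, extending the base field if necessary, that $K$ is infinite; this preserves every invariant appearing in the statement. Since $S/I(G)$ is Cohen--Macaulay of Krull dimension $s-g$ (with $s=\dim S$), there exists a regular sequence $\ell_1,\ldots,\ell_{s-g}$ of linear forms on $S/I(G)$. Set $R:=S/(\ell_1,\ldots,\ell_{s-g})\simeq K[u_1,\ldots,u_g]$ and let $\bar I$ denote the image of $I(G)$ in $R$, so that $R/\bar I$ is Artinian. The pivotal observation is that quotienting by a regular sequence of linear forms preserves graded Betti numbers, so
\[
\beta^{S}_{i,j}(S/I(G))\;=\;\beta^{R}_{i,j}(R/\bar I)\qquad\text{for all } i,j.
\]
In particular $\bar I$ is minimally generated by $m$ forms of degree $2$, and $R/\bar I$ has a $2$-linear resolution over $R$ if and only if $S/I(G)$ does over $S$.

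Since $\bar I$ is generated in degree $2$, it is contained in the square $\mathfrak{m}_R^{\,2}$ of the irrelevant maximal ideal of $R$, and a direct dimension count in $R=K[u_1,\ldots,u_g]$ yields
\[
m\;=\;\dim_K\bar I_2\;\leq\;\dim_K R_2\;=\;\binom{g+1}{2}\;=\;\frac{g(g+1)}{2},
\]
which is the asserted inequality. Equality forces $\bar I_2=R_2$ and, because $\bar I$ is generated in degree $2$, $\bar I=\mathfrak{m}_R^{\,2}$. On the one hand, $\mathfrak{m}_R^{\,2}$ is well known to admit a $2$-linear resolution (for instance via the Eliahou--Kervaire formula for stable ideals), so $\bar I=\mathfrak{m}_R^{\,2}$ gives $R/\bar I$, and hence $S/I(G)$, a $2$-linear resolution. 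Conversely, if $S/I(G)$ has a $2$-linear resolution then so does $R/\bar I$, whence $\mathrm{reg}(R/\bar I)=1$; since $R/\bar I$ is Artinian this forces $(R/\bar I)_j=0$ for $j\geq 2$, i.e., $\mathfrak{m}_R^{\,2}\subseteq\bar I$, and the equality $\bar I=\mathfrak{m}_R^{\,2}$ follows, yielding $m=\binom{g+1}{2}$.

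The step I expect to require the most care is the preservation of graded Betti numbers under the Artinian reduction. This rests on the sequence $\ell_1,\ldots,\ell_{s-g}$ being a genuine regular sequence on $S/I(G)$, not merely a system of parameters, which is precisely where the Cohen--Macaulayness of $I(G)$ enters; once that is secured, a minimal $S$-free resolution of $S/I(G)$ tensored with $R$ remains minimal and resolves $R/\bar I$. After that, the whole statement becomes a bookkeeping exercise comparing $\bar I$ and $\mathfrak{m}_R^{\,2}$ in the Artinian quotient of a polynomial ring in $g$ variables.
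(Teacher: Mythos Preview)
Your argument is correct. The survey paper does not actually supply a proof of this proposition; it merely records the statement with a citation to \cite{Koszul-linear}. Your Artinian reduction via a linear regular sequence, followed by the comparison of $\bar I$ with $\mathfrak{m}_R^{\,2}$ inside a polynomial ring in $g$ variables, is exactly the standard route and is the approach taken in the cited source: one passes to a zero-dimensional quotient where the bound $m\le\binom{g+1}{2}$ is the obvious dimension count on quadrics, and equality is identified with $\bar I=\mathfrak{m}_R^{\,2}$, whose $2$-linear resolution is classical. The only point worth tightening in your write-up is purely expository: the preservation of graded Betti numbers under reduction by a linear regular sequence is immediate from the fact that tensoring a minimal free $S$-resolution with $R=S/(\ell_1,\ldots,\ell_{s-g})$ stays exact (the $\ell_i$ are $S/I(G)$-regular, hence ${\rm Tor}$-vanishing gives acyclicity), so you may simply invoke this rather than flag it as the delicate step.
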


Let $G_0$ be a graph with vertex set 
$Y=\{y_1,\ldots,y_n\}$ and let 
$X=\{x_1,\ldots, x_n\}$ be a new set of vertices. The \textit{whisker
graph} or \textit{suspension} of $G_0$, denoted by
$G_0\cup W(Y)$, is the graph obtained from $G_0$ by attaching to each 
vertex $y_i$ a new vertex $x_i$ and the edge $\{x_i,y_i\}$.  

The significance of the notion of a whisker graph lies 
partly in the next result. 

\begin{theorem}\cite[Theorem~2.4]{cmg}\label{mainVi2}
If $G$ is a tree, then $I(G)$ is Cohen--Macaulay 
 if and only if $G=G_0\cup W(Y)$ for some tree $G_0$ with vertex
set $Y$.
\end{theorem}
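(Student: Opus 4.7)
Assume $G = G_0 \cup W(Y)$ with $|Y| = n$. Since $Y$ is a vertex cover of $G$ and every vertex cover must contain at least one endpoint of each whisker edge, ${\rm ht}(I(G)) = n$ and $\dim(S/I(G)) = n$. I proceed by induction on $n$; the case $n = 1$ is immediate, as $I(G) = (x_1 y_1)$ is a principal complete intersection. For $n \geq 2$, pick a leaf $y_1$ of the tree $G_0$ with unique $G_0$-neighbor $y_2$ and whisker $x_1$. The short exact sequence
\begin{equation*}
0 \longrightarrow (S/(I(G){:}y_1))(-1) \stackrel{\cdot y_1}{\longrightarrow} S/I(G) \longrightarrow S/(I(G), y_1) \longrightarrow 0
\end{equation*}
has outer terms that one computes as $(I(G), y_1) = (y_1) + I((G_0 \setminus y_1) \cup W(Y \setminus \{y_1\}))$ (with $x_1$ free in the quotient) and $(I(G){:}y_1) = (x_1, y_2) + I((G_0 \setminus \{y_1, y_2\}) \cup W(Y \setminus \{y_1, y_2\}))$ (with $y_1$ and $x_2$ free). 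Since the edge ideal of a disjoint union of graphs corresponds to the tensor product of the individual edge rings, the inductive hypothesis---applied to the whiskered tree $G_0 \setminus y_1$ and componentwise to the whiskered forest $G_0 \setminus \{y_1, y_2\}$---gives that both outer modules are Cohen--Macaulay of depth $n$. The depth lemma then forces ${\rm depth}(S/I(G)) \geq n = \dim(S/I(G))$, proving Cohen--Macaulayness.

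\textbf{Necessity $(\Rightarrow)$.} Assume $I(G)$ is Cohen--Macaulay, hence unmixed. Since $G$ is bipartite, König's theorem together with $\alpha(G) + \tau(G) = |V(G)|$ and $\alpha(G) = \dim(S/I(G))$ forces $|V(G)| = 2\nu(G)$, so $G$ admits a perfect matching. The core claim is that every non-leaf of $G$ has exactly one leaf neighbor; granting this, setting $X$ to be the leaves and $Y = V(G) \setminus X$ yields a bijection $X \to Y$, and $G_0 := G[Y]$---still a tree, since leaf-removal preserves tree-connectivity---gives $G = G_0 \cup W(Y)$. I prove the core claim by induction on $|V(G)|$ in two steps. \emph{(a)} If a non-leaf $u$ had two leaf neighbors $v, v'$, comparing the minimal vertex covers (MVCs) of $G$ containing $u$ to those avoiding $u$ would force $\sum_j [\tau(T_j) - \tau(T_j - w_j)] = |N(u)| - 1$, where the sum runs over the non-trivial components $T_j$ of $G \setminus u$ (with $w_j$ the $u$-neighbor in $T_j$); since there are only $|N(u)| - 2$ such components and each summand lies in $\{0, 1\}$, the sum is at most $|N(u)| - 2$, a contradiction. \emph{(b)} If a non-leaf $u^*$ had no leaf neighbor, each component $T_j$ of $G \setminus u^*$ is itself unmixed (otherwise varying MVC sizes in $T_j$ would lift to varying MVC sizes in $G$), hence by the inductive hypothesis has whisker structure; a direct computation then yields $\tau(T_j) - \tau(T_j - w_j) = 1$ for every $j$, so $\sum_j = |N(u^*)|$ versus the unmixedness-forced value $|N(u^*)| - 1$, again a contradiction.

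\textbf{Main obstacle.} The crux lies in the verification within step (b) that $\tau(T_j) - \tau(T_j - w_j) = 1$ regardless of whether the connection vertex $w_j$ is a leaf or non-leaf of $T_j$ (the cases $w_j \in X^{(j)}$ or $w_j \in Y^{(j)}$ in the whisker decomposition $T_j = G_0^{(j)} \cup W(Y^{(j)})$ furnished by the inductive hypothesis). When $w_j$ is a leaf of $T_j$, its whisker partner $y_j^* \in Y^{(j)}$ loses its whisker in $T_j - w_j$, yet $Y^{(j)} \setminus \{y_j^*\}$ remains a vertex cover of $T_j - w_j$ of size $|Y^{(j)}| - 1$; when $w_j$ is a non-leaf, the whisker $x_{w_j}$ becomes isolated in $T_j - w_j$ and $Y^{(j)} \setminus \{w_j\}$ is a vertex cover of the same size. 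Combined with the general bound $\tau(T_j - w_j) \geq \tau(T_j) - 1$, equality holds in both cases, closing the argument.
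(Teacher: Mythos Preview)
This survey states the theorem without proof, merely citing the original source~\cite{cmg}; there is no in-paper argument to compare against, so I can only assess your proof on its own merits. It is correct, and in fact your necessity direction establishes the slightly stronger statement that for trees \emph{unmixedness} already forces the whisker structure (Cohen--Macaulayness enters only through the implication CM $\Rightarrow$ unmixed).

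Two points deserve one additional line each. First, the perfect-matching sentence is too compressed: the missing observation is that in a bipartite graph without isolated vertices both colour classes are \emph{minimal} vertex covers, so unmixedness forces them to have equal size $\tau(G)$, whence $|V(G)|=2\tau(G)=2\nu(G)$. (In any case you never use the perfect matching downstream, so this is cosmetic.) Second, in step~(b) the claim that each $T_j$ is unmixed needs the following: for every $i\neq j$ fix an MVC $D_i$ of $T_i$ with $w_i\notin D_i$---such a $D_i$ exists because $V(T_i)\setminus\{w_i\}$ is a vertex cover and hence contains an MVC---so that $\{u^*\}\cup D_j\cup\bigcup_{i\neq j}D_i$ is a minimal vertex cover of $G$ for \emph{every} MVC $D_j$ of $T_j$, and unmixedness of $G$ then forces $|D_j|$ to be constant. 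The same device (choosing at least one $C_j$ with $w_j\notin C_j$) is what guarantees minimality of the cover $\{u^*\}\cup\bigcup_j C_j$ you use on the other side of the comparison. With these details made explicit, the argument is complete.
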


Below, we present some generalizations of this theorem to edge ideals of bipartite
graphs and to edge ideals of weighted oriented graphs. 

A set of vertices of a graph $G$ is called 
{\it independent\/} or {\it
stable\/} if no two of them are adjacent. 
The \textit{independence complex} of $G$, denoted $\Delta_G$,
is the simplicial complex whose faces are the stable sets of $G$. Note
that the Stanley--Reisner ideal of $\Delta_G$ is $I(G)$. 
We say that a graph $G$ is \textit{Cohen--Macaulay} (resp. sequentially
Cohen--Macaulay) if $S/I(G)$ is Cohen--Macaulay (resp. sequentially
Cohen--Macaulay), and $G$ is called \textit{shellable} if $\Delta_G$
is a shellable simplicial complex.

The following was the first classification of Cohen--Macaulay
bipartite graphs. In particular, for this family the Cohen--Macaulay
property does not depend on the field $K$. 

\begin{theorem}{\rm(Estrada, -\,, \cite[Theorem~2.9]{EV})}  A bipartite graph $G$ is
Cohen--Macaulay if and only if $\Delta_G$ is pure shellable 
\end{theorem}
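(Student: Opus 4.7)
The $(\Leftarrow)$ implication is immediate from Stanley's classical theorem \cite{Sta2} that every pure shellable simplicial complex is Cohen--Macaulay over any field, applied to $\Delta_G$ whose Stanley--Reisner ideal is $I(G)$.

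For $(\Rightarrow)$, the plan is to induct on $|V(G)|$, where $G$ is a Cohen--Macaulay bipartite graph with bipartition $V(G)=X\cup Y$. The first step is to record pureness of $\Delta_G$: Reisner's criterion forces all maximal independent sets of $G$ to have the same cardinality, and since $X$ and $Y$ are themselves maximal independent sets (after discarding isolated vertices, handled separately), pureness yields $|X|=|Y|=n$. König's theorem together with the height equality $\operatorname{ht} I(G)=n$ then produces a perfect matching of $G$.

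The key structural step is to prove that any such $G$ contains a vertex $x$ of degree one. I would do this by choosing $v\in V(G)$ minimizing $|N(v)|$ and exploiting the short exact sequence
\begin{equation*}
0 \longrightarrow (S/(I(G):v))(-1) \stackrel{v}{\longrightarrow} S/I(G) \longrightarrow S/(I(G),v) \longrightarrow 0,
\end{equation*}
where $(I(G):v)=(I(G\setminus N[v]),N(v))$ and $(I(G),v)=(I(G\setminus v),v)$; comparing depths of the outer terms via the depth lemma, together with the Cohen--Macaulay hypothesis, forces $|N(v)|=1$. Given such a leaf $x\in X$ with unique neighbor $y\in Y$, every facet of $\Delta_G$ contains exactly one of $\{x,y\}$ (at least one by maximality, at most one since $\{x,y\}\in E(G)$), so the facets split into two blocks: those of the form $\{x\}\cup F$ with $F$ a facet of $\Delta_{G\setminus\{x,y\}}$, and those of the form $\{y\}\cup F'$ with $F'$ a facet of $\Delta_{G\setminus N[y]}$. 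After showing that both $G\setminus\{x,y\}$ and $G\setminus N[y]$ inherit the Cohen--Macaulay bipartite property from $G$, the inductive hypothesis furnishes pure shellings $F_1,\ldots,F_p$ of $\Delta_{G\setminus\{x,y\}}$ and $F'_1,\ldots,F'_q$ of $\Delta_{G\setminus N[y]}$, and I would verify that the concatenation
\begin{equation*}
\{x\}\cup F_1,\ \ldots,\ \{x\}\cup F_p,\ \{y\}\cup F'_1,\ \ldots,\ \{y\}\cup F'_q
\end{equation*}
is a shelling of $\Delta_G$.

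The principal obstacle is twofold: the leaf existence lemma, and the verification of the shelling condition at the seam between the two blocks. The latter amounts to checking that for each $j$, the intersection $\langle \{x\}\cup F_i : 1\leq i\leq p\rangle \cap \langle\{y\}\cup F'_j\rangle$ is pure of codimension one in $\{y\}\cup F'_j$; this should follow from the fact that $F'_j\subset V(G)\setminus N[y]$ avoids $N(y)$ entirely, so any face common to $\{y\}\cup F'_j$ and some $\{x\}\cup F_i$ is forced to omit $y$ and thus lie in the codimension-one face $F'_j$ itself, reducing the verification to the inductive shelling of $\Delta_{G\setminus N[y]}$. The bipartite hypothesis is essential throughout: it underlies the König-type matching arguments, rules out odd cycles that would obstruct the splitting of facets, and is what allows Cohen--Macaulayness to propagate down to $G\setminus N[y]$.
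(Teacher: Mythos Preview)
Your overall inductive architecture---leaf vertex, split the facets into the $x$-block and the $y$-block, inherit Cohen--Macaulayness via links, and concatenate shellings---is exactly the strategy of Estrada--Villarreal. Two points, however, need repair.

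\textbf{The leaf lemma.} The depth-lemma argument you sketch does not force $|N(v)|=1$. With $v$ any vertex, the sequence $0\to S/(I:v)(-1)\to S/I\to S/(I,v)\to 0$ together with $\text{depth}\,S/I=n$ and $\text{depth}\,S/(I:v)=n$ (the latter because $G\setminus N[v]$ is the link of $v$, hence CM of dimension $n-1$, plus the free variable $v$) yields only $\text{depth}\,S/(I,v)\geq n-1$, while $\dim S/(I,v)=n$; nothing collapses. The actual argument is combinatorial: in an unmixed bipartite graph with perfect matching $\{x_iy_i\}$, if no vertex has degree~$1$ one produces two maximal independent sets of different sizes in a carefully chosen minor (or equivalently exhibits a disconnection in $\Delta_G$ or a link thereof, violating Reisner's criterion). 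You should replace the depth sketch with this combinatorial step, or simply cite it as the key lemma of \cite{EV}.

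\textbf{The seam.} Your justification that the cross-intersection ``reduces to the inductive shelling of $\Delta_{G\setminus N[y]}$'' is misdirected. What you actually need at the facet $\{y\}\cup F'_j$ is that the codimension-one face $F'_j$ already lies in some \emph{first-block} facet $\{x\}\cup F_{i'}$. This requires the extra observation that every facet $F'_j$ of $\Delta_{G\setminus N[y]}$ is in fact a facet of $\Delta_{G\setminus\{x,y\}}$: since $|F'_j|=n-1$ and $\Delta_{G\setminus\{x,y\}}$ is pure of dimension $n-2$, it suffices to rule out $F'_j\cup\{z\}$ independent for some $z\in N(y)\setminus\{x\}$, and indeed such a set would be a facet of $\Delta_G$ of size $n$ containing neither $x$ nor $y$, contradicting your own splitting. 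Once $F'_j=F_{i'}$ for some $i'$, the seam condition is immediate. The second-block shelling handles only the within-block comparisons, not the passage across blocks.
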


The following nice result gives a graph theoretical 
classification of
the family of
Cohen--Macaulay bipartite graphs by looking at the combinatorial
structure of the graphs that define the edge ideals. 
This family of graphs is contained in the class of uniform 
admissible clutters studied in
\cite{floystad,linearquotients,MRV}.  

\begin{theorem}{\rm(Herzog and Hibi 
\cite{herzog-hibi-crit})}\label{herzog-hibi-teo} Let $G$ be a bipartite graph 
without isolated vertices. Then, $G$ is a Cohen--Macaulay graph if and only 
if there is a bipartition $V_1=\{x_1,\ldots,x_g\}$,
$V_2=\{y_1,\ldots,y_g\}$ of $G$ such that: 
{\rm (i)} $\{x_i,y_i\}\in E(G)$ for all $i$, {\rm (ii)} 
if $\{x_i,y_j\}\in E(G)$, then $i\leq j$, and {\rm (iii)} 
if $\{x_i,y_j\}$ and $\{x_j,y_k\}$ are in $E(G)$ and $i<j<k$, 
then $\{x_i,y_k\}\in E(G)$.
\end{theorem}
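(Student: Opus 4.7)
The plan is to prove both implications using the Estrada--Villarreal theorem quoted immediately above: for bipartite graphs, Cohen--Macaulayness of $S/I(G)$ is equivalent to pure shellability of the independence complex $\Delta_G$.

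For the forward direction, I would first establish (i). Cohen--Macaulayness forces $I(G)$ to be unmixed, so every minimal vertex cover has cardinality $g := \mathrm{ht}(I(G))$; König's theorem then says $G$ has a matching of size $g$, and since $G$ has no isolated vertices a short counting argument yields $|V_1| = |V_2| = g$ together with a perfect matching. Relabel so $\{x_i, y_i\} \in E(G)$ for all $i$. Now introduce the binary relation $\preceq$ on $[g]$ given by $i \preceq j$ iff $\{x_i, y_j\} \in E(G)$; reflexivity holds by (i). Conditions (ii) and (iii) together are equivalent to saying that $\preceq$ is a partial order (antisymmetry and transitivity), modulo relabeling by a linear extension. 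Both antisymmetry and transitivity would be proved by contradiction via unmixedness. For antisymmetry, from $i \preceq j$ and $j \preceq i$ with $i \neq j$ I would construct a vertex cover of cardinality less than $g$ by swapping $x_i, x_j$ with $y_i, y_j$ inside the cover $V_1$, after neutralizing any additional $V_2$-neighbors of $x_i, x_j$. For transitivity, assume $i \preceq j$, $j \preceq k$ and $i \not\preceq k$ and engineer an independent set whose maximal extension has the wrong cardinality; the non-edge $\{x_i, y_k\}$ is exactly what allows the extension to escape size $g$.

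For the reverse implication, assume (i)--(iii). Then $\preceq$ defines a partial order $P$ on $[g]$ and $G$ is the order graph of $P$, meaning $\{x_i, y_j\} \in E(G)$ iff $i \leq_P j$. The facets of $\Delta_G$ are in bijection with order filters $A \subseteq P$ via $A \mapsto F_A = \{x_i : i \in A\} \cup \{y_j : j \notin A\}$, each of size $g$ (establishing purity). Listing these facets in any linear extension of the distributive lattice $J(P)$ ordered by reverse inclusion should give a shelling, which I would verify by checking that for each $F_A$ the restriction face has a unique minimal new vertex, naturally indexed by a covering relation in $J(P)$. Applying the Estrada--Villarreal criterion then delivers Cohen--Macaulayness of $G$.

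The main obstacle is condition (iii) of the forward direction. Antisymmetry can be handled by a fairly direct swap-and-prune operation on a canonical cover, but transitivity requires threading together the three edges at $i, j, k$ with the hypothetical non-edge $\{x_i, y_k\}$ in a way that actually produces a cover of size $g-1$. The cleanest route is likely to pass to the link of $\{x_j, y_j\}$ in $\Delta_G$ (or contract the matching edge) so that the resulting smaller complex still has a pure shellable structure, and then apply the same analysis inductively; ensuring the induction closes and that the constructed cover is genuinely minimal is the delicate technical point.
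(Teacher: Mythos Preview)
The paper does not prove this theorem; it is stated with a citation to Herzog--Hibi and then the text moves on. So there is no ``paper's own proof'' to compare against. That said, your plan contains a real error in the forward direction that would block the argument regardless.

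You propose to obtain both antisymmetry and transitivity of $\preceq$ from unmixedness alone. Transitivity does follow from unmixedness: this is exactly condition~(b) in Theorem~\ref{unmixed-bip} (the Ravindra/Villarreal characterisation of unmixed bipartite graphs), so the step you flagged as the ``main obstacle'' is in fact already available once you have (i). Antisymmetry, however, \emph{cannot} be extracted from unmixedness. The $4$-cycle $C_4$ with bipartition $\{x_1,x_2\},\{y_1,y_2\}$ and all four edges present is unmixed (its minimal covers are $\{x_1,x_2\}$ and $\{y_1,y_2\}$, both of size $2$), yet $1\preceq 2$ and $2\preceq 1$. Your ``swap $x_i,x_j$ for $y_i,y_j$ inside $V_1$'' manoeuvre applied to $C_4$ produces the cover $\{y_1,y_2\}$, still of size $g=2$, so no contradiction arises. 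In other words, you have the difficulty exactly backwards: condition~(iii) is the easy one (it is unmixedness), and condition~(ii) is where the Cohen--Macaulay hypothesis must do work beyond unmixedness.

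To repair the forward direction you must actually use shellability (via the Estrada--Villarreal theorem you invoked) or a depth argument. One route: if antisymmetry fails at $i,j$, the induced subgraph on $\{x_i,x_j,y_i,y_j\}$ is $C_4$, and the two facets $\{x_i,x_j\}$ and $\{y_i,y_j\}$ of its independence complex lie in $\Delta_G$; a link/deletion argument then shows $\Delta_G$ cannot be pure shellable. Your reverse-direction sketch via order filters of $P$ and a linear extension of $J(P)$ is the standard Herzog--Hibi line and should go through, though you should check that the facets of $\Delta_G$ are \emph{exactly} the $F_A$ (you verified one containment).
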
 

Graphs with no chordless cycles of length other than $3$ or $5$ are
sequentially Cohen--Macaulay by a theorem of Woodroofe \cite{Woodroofe}. 
The following result classifies the sequentially Cohen--Macaulay
bipartite graphs. 

\begin{theorem}{\rm(Van Tuyl, -\,, 
\cite[Theorem~3.10]{bipartite-scm})}\label{SCM=shellable}
Let $G$ be a bipartite graph. Then, $G$ is
shellable if and only if $G$ is sequentially Cohen--Macaulay. 
\end{theorem}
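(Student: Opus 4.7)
The plan is to prove the two implications separately. The forward direction, \emph{shellable implies sequentially Cohen--Macaulay}, holds for every simplicial complex by a classical theorem of Stanley (in the non-pure version due to Bj\"orner--Wachs); so it is not specific to bipartite graphs, and there is nothing further to say for it.

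For the converse, I would argue by induction on $|V(G)|$. The base case $E(G) = \emptyset$ is trivial, since $\Delta_G$ is then a simplex. For the inductive step, the strategy is to locate a pendant vertex $v \in V(G)$ and to use it as a shedding vertex of $\Delta_G$. Under the standard identifications
\[
  \mathrm{del}_{\Delta_G}(v) = \Delta_{G-v}, \qquad
  \mathrm{link}_{\Delta_G}(v) = \Delta_{G - N[v]},
\]
both $G - v$ and $G - N[v]$ remain bipartite and---crucially, because $v$ is pendant with unique neighbor $u$---again satisfy the sequentially Cohen--Macaulay condition. The inductive hypothesis then yields (non-pure) shellings of $\Delta_{G-v}$ and $\Delta_{G - N[v]}$. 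I would concatenate them to produce a shelling of $\Delta_G$ by listing first the facets of $\Delta_G$ containing $v$ (obtained by adjoining $v$ to the facets in a shelling of $\Delta_{G - N[v]}$), followed by the facets of $\Delta_G$ not containing $v$ (which are exactly the facets of $\Delta_{G - v}$, in its own shelling order). The shelling condition at the junction of the two blocks is forced by the pendant structure at $v$, since a facet of $\Delta_{G-v}$ must contain $u$ and so differs from its corresponding ``$v$-facet'' by a single vertex swap.

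The main obstacle is the bipartite-specific combinatorial lemma: every sequentially Cohen--Macaulay bipartite graph without isolated vertices contains a pendant vertex. To prove it, I would combine the sequentially Cohen--Macaulay hypothesis with the Herzog--Hibi classification of Cohen--Macaulay bipartite graphs (Theorem~\ref{herzog-hibi-teo}): the top-dimensional pure piece of $\Delta_G$ corresponds, after appropriate restriction, to a Cohen--Macaulay bipartite situation in which the ``last'' vertex $y_g$ in the Herzog--Hibi ordering has the single neighbor $x_g$ within that piece. Promoting this to a pendant vertex of the whole graph $G$---that is, ruling out any further edge at $y_g$ elsewhere in $G$---requires a careful use of the bipartite hypothesis (which excludes odd cycles) together with comparison to lower-dimensional restrictions in the sequentially Cohen--Macaulay filtration of $\Delta_G$. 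This combinatorial step, rather than the shelling-gluing that follows it, is the genuinely hard part of the proof.
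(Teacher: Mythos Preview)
This survey does not prove the theorem---it only cites \cite{bipartite-scm}---but your plan does mirror the original Van~Tuyl--Villarreal argument: produce a pendant vertex via the key combinatorial lemma and then recurse. Your recursion, however, contains a concrete error: you shed the wrong vertex. The claim that ``a facet of $\Delta_{G-v}$ must contain $u$'' is false. On the path $v$--$u$--$w$, the set $\{w\}$ is a facet of $\Delta_{G-v}$ that misses $u$; it is not a facet of $\Delta_G$ (since $\{v,w\}$ is independent in $G$), so your concatenated list contains non-facets of $\Delta_G$, and $v$ fails to be a shedding vertex.

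The correct shedding vertex is $u$, the \emph{neighbor} of the pendant vertex. In $G-u$ the vertex $v$ is isolated, so every facet of $\mathrm{del}_{\Delta_G}(u)=\Delta_{G-u}$ contains $v$ and is therefore already maximal in $\Delta_G$. Both $\mathrm{link}_{\Delta_G}(u)=\Delta_{G\setminus N[u]}$ and $\mathrm{del}_{\Delta_G}(u)$ (the cone with apex $v$ over $\mathrm{link}_{\Delta_G}(v)=\Delta_{G\setminus\{u,v\}}$) are again sequentially Cohen--Macaulay, because links in a sequentially Cohen--Macaulay complex remain sequentially Cohen--Macaulay and cones preserve the property; the induction then applies. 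You must also swap the order of your two blocks: list a shelling of $\mathrm{del}(u)$ first and then append $F\cup\{u\}$ as $F$ runs through a shelling of $\mathrm{link}(u)$; on the path above, the opposite order already violates the non-pure shelling condition.
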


There is a recursive procedure to verify whether or not a bipartite graph
is shellable \cite{bipartite-scm}.  
Van Tuyl \cite{vantuyl} has shown that the independence complex
$\Delta_G$ must be vertex decomposable for any 
bipartite graph $G$ whose edge ring $S/I(G)$ is sequentially
Cohen--Macaulay. Thus, Theorem~\ref{SCM=shellable} remains valid if we
replace shellable by vertex decomposable.

A graph is \textit{unmixed} if all its maximal stable sets have the
same cardinality.  Unmixed graphs are also called 
{\it well-covered\/} 
\cite{plummer-unmixed}. 
The following is a combinatorial characterization of all 
unmixed bipartite graphs.  Another characterization was given 
by Ravindra \cite{ravindra}.

\begin{theorem}\cite[Theorem~1.1]{unmixed}\label{unmixed-bip} Let $G$ be a bipartite graph
without isolated vertices. Then $G$ is unmixed if and only 
if $G$ has a bipartition $V_1=\{x_1,\ldots,x_g\}$,
$V_2=\{y_1,\ldots,y_g\}$ such that: 
{\rm (a)} $\{x_i,y_i\}\in E(G)$ for all $i$, and {\rm (b)} 
if $\{x_i,y_j\}$ and $\{x_j,y_k\}$ are in $E(G)$ and $i,j,k$ are
distinct, then $\{x_i,y_k\}\in E(G)$.
\end{theorem}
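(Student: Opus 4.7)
My plan is to handle both implications by tracking how a maximal independent set intersects the perfect matching on $\{1,\ldots,g\}$ provided by (a). To set up the picture, note that because $G$ has no isolated vertices, every vertex of $V_2$ has a neighbor in $V_1$, so $V_1$ is itself a maximal independent set of $G$; the symmetric argument works for $V_2$. Unmixedness then forces $|V_1|=|V_2|=g$, so $V_1$ is a minimum vertex cover of size $g$, and K\"onig's theorem for bipartite graphs produces a perfect matching of size $g$. Relabeling yields condition (a). In particular $\alpha(G)=g$, since any independent set picks at most one endpoint from each matching edge.

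For the forward direction, to prove (b) I would argue by contradiction. Suppose there are distinct indices $i,j,k$ with $\{x_i,y_j\},\{x_j,y_k\}\in E(G)$ but $\{x_i,y_k\}\notin E(G)$. Then $A_0:=\{x_i,y_k\}$ is independent; extend it to a maximal independent set $A$. The key observation is that index $j$ is entirely killed from $A$: $x_j$ is adjacent to $y_k\in A_0$ and $y_j$ is adjacent to $x_i\in A_0$, so neither can belong to $A$. Every other index contributes at most one vertex to $A$, because the matching edges $\{x_l,y_l\}$ belong to $E(G)$. Hence $|A|\leq 1+1+0+(g-3)=g-1<g=\alpha(G)$, contradicting unmixedness.

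For the converse, assume (a) and (b) hold and let $A$ be any maximal independent set. Partition $\{1,\ldots,g\}$ into $S_x:=\{i:x_i\in A\}$, $S_y:=\{i:y_i\in A\}$ and $S_0:=\{i:x_i,y_i\notin A\}$; the first two are disjoint because $\{x_i,y_i\}\in E(G)$, so these three sets really form a partition and $|A|=g-|S_0|$. It suffices to show $S_0=\emptyset$. If some $j\in S_0$, maximality of $A$ forces $x_j$ and $y_j$ each to have a neighbor in $A$, yielding $k\in S_y$ with $\{x_j,y_k\}\in E(G)$ and $i\in S_x$ with $\{x_i,y_j\}\in E(G)$; the indices $i,j,k$ are pairwise distinct, since $i=k$ would place both $x_i$ and $y_i$ in $A$. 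Condition (b) then gives $\{x_i,y_k\}\in E(G)$, contradicting that $A$ is independent and contains both $x_i$ and $y_k$. Hence $|A|=g$, as required.

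The main subtlety I anticipate is in the forward direction, specifically the choice of the seed pair $\{x_i,y_k\}$ from which to grow the small maximal independent set: one must pick a configuration that forces an entire matched pair (namely $x_j,y_j$) out of the extension, so that the resulting deficit cannot be absorbed elsewhere. Once this trick is in place, the backward direction is a clean bookkeeping of the partition $(S_x,S_y,S_0)$ together with a single application of the transitivity property (b). The role of the no-isolated-vertices hypothesis, used only to guarantee that $V_1$ and $V_2$ are themselves maximal independent sets and hence that $|V_1|=|V_2|=g$, is worth flagging at the outset to justify the passage to a perfect matching via K\"onig's theorem.
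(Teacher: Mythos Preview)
Your proof is correct. Both directions are clean: the forward direction correctly uses the no-isolated-vertices hypothesis to see that $V_1$ and $V_2$ are maximal independent sets, invokes K\"onig to obtain the perfect matching, and the contradiction argument for (b) via the seed $\{x_i,y_k\}$ is sound. The backward direction's partition argument is likewise correct; your verification that $i,j,k$ are pairwise distinct is complete since $S_x$, $S_y$, $S_0$ are disjoint.

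Note, however, that this survey paper does not actually supply a proof of Theorem~\ref{unmixed-bip}; it is merely stated with a citation to \cite[Theorem~1.1]{unmixed}, so there is no ``paper's own proof'' to compare against here. Your argument stands on its own as a valid elementary proof of the result.
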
 

Some of the structure theorems for Cohen--Macaulay trees, Cohen--Macaulay bipartite graphs, and 
unmixed bipartite graphs have 
been generalized to very well-covered graphs
\cite{CV,Kimura-Terai,Mahmoudi-et-al} and to weighted 
oriented graphs \cite{unmixed-c-m}. 

\begin{theorem}{\rm(\cite[Theorem~2.3]{CV}, \cite[Theorem~1.1]{Mahmoudi-et-al})} 
Let $G$ be a very well-covered graph. Then, $G$ is Cohen--Macaulay 
if and only if $G$ is vertex decomposable.
\end{theorem}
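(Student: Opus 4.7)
The implication that vertex decomposability implies Cohen--Macaulayness is the easy direction and does not use the very well-covered hypothesis in any serious way. Since $G$ is very well-covered, all facets of the independence complex $\Delta_G$ have the same cardinality $|V(G)|/2$, so $\Delta_G$ is pure. By the classical chain of implications of Bj\"orner and Wachs, a vertex decomposable pure simplicial complex is shellable, hence sequentially Cohen--Macaulay, and for pure complexes sequential Cohen--Macaulayness coincides with Cohen--Macaulayness. Thus $I(G)$ is Cohen--Macaulay.

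For the substantive direction, the plan is to exploit the structural theory of very well-covered graphs in the spirit of Theorems~\ref{unmixed-bip} and \ref{herzog-hibi-teo}. By a theorem of Favaron, a very well-covered graph $G$ with no isolated vertices admits a perfect matching $M=\{\{x_1,y_1\},\ldots,\{x_g,y_g\}\}$ such that $V(G)=X\cup Y$ with $X=\{x_1,\ldots,x_g\}$ an independent set, together with a transitivity condition on edges between $X$ and $Y$ analogous to condition (b) of Theorem~\ref{unmixed-bip}. Under the additional Cohen--Macaulay hypothesis, one can refine the labeling so that the Herzog--Hibi-type conditions (i)--(iii) of Theorem~\ref{herzog-hibi-teo} hold, now in the more general (not necessarily bipartite) setting; this refinement is the structural input provided by the Crupi--Rinaldo--Terai and Mahmoudi et al.\ characterization.

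With such a labeling in hand, I would argue by induction on $g$. The base case $g=1$ gives a single edge, whose independence complex is two disjoint points, trivially vertex decomposable. For the inductive step, I would pick as a shedding vertex one of the ``extreme'' vertices from the labeling---for example $x_g$, whose neighborhood structure is tightly constrained by the transitivity condition and which one can verify to be a simplicial vertex of $\Delta_G$ (its link is a simplex). The key verifications are then: (a) the deletion $G\setminus\{x_g\}$ and the induced subgraph on $V(G)\setminus N[x_g]$ are again very well-covered and Cohen--Macaulay, after removing any vertices that become isolated, and (b) the shedding condition holds at $x_g$, i.e.\ every facet of $\Delta_G\setminus x_g$ extends to a facet of $\Delta_G$ containing $x_g$. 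Both follow from the explicit structure theorem together with standard facts about localizations of squarefree monomial ideals.

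The hard part is step (a): one must show that shedding at $x_g$ preserves both the very well-covered property and the Cohen--Macaulay property simultaneously. The very well-covered property is delicate because deleting vertices of degree greater than one can destroy the matching and the $|V|/2$-uniformity of facets; this is where the precise Favaron/Crupi--Rinaldo--Terai structure, and in particular the transitivity of the edge relation, is indispensable. The Cohen--Macaulayness of the link and deletion will follow from the standard recursion for Stanley--Reisner ideals using that $\Delta_G$ is pure Cohen--Macaulay, combined with the fact that $x_g$ being simplicial forces ${\rm link}_{\Delta_G}(x_g)$ to be itself the independence complex of a very well-covered Cohen--Macaulay graph on fewer vertices. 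Once both subcomplexes are shown to be vertex decomposable by induction, the shedding condition at $x_g$ completes the proof.
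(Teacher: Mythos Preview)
The paper does not prove this theorem; it is quoted from \cite{CV} and \cite{Mahmoudi-et-al} as a result from the literature, so there is no in-paper argument to compare against. Your overall strategy---use the Favaron/Crupi--Rinaldo--Terai structural characterization of Cohen--Macaulay very well-covered graphs to run an induction on the number of matching edges, shedding an extremal vertex at each step---is exactly the approach taken in \cite{Mahmoudi-et-al}, so in spirit your plan is the correct one.

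That said, one concrete claim in your sketch is wrong and would derail the argument if you tried to use it. You assert that the extremal vertex $x_g$ is ``a simplicial vertex of $\Delta_G$ (its link is a simplex).'' This is false: ${\rm link}_{\Delta_G}(x_g)$ is the independence complex of $G\setminus N_G[x_g]$, which is almost never a simplex. Already for $G$ two disjoint edges $\{x_1,y_1\},\{x_2,y_2\}$, the link of $x_2$ is the independence complex of the single edge $\{x_1,y_1\}$, namely two isolated points---not a simplex. You may be conflating ``$x_g$ has small closed neighborhood in $G$'' (which is what the Herzog--Hibi-type ordering gives) with ``$x_g$ has a cone as its link in $\Delta_G$''; these are unrelated. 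The induction in \cite{Mahmoudi-et-al} works not because the link is trivial, but because both ${\rm link}_{\Delta_G}(v)=\Delta_{G\setminus N[v]}$ and ${\rm del}_{\Delta_G}(v)=\Delta_{G\setminus v}$ are again independence complexes of (smaller) very well-covered Cohen--Macaulay graphs, and the shedding condition is verified directly from the ordering. Fix that misstatement and your outline is sound.
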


Let $G=(V(G), E(G))$ be a graph without isolated vertices 
with vertex set $V(G)$ and edge
set $E(G)$. A {\it weighted oriented graph\/} $D$, whose {\it underlying
graph\/} is $G$, is a triplet $(V(D),E(D),w)$ where $V(D)=V(G)$,
$E(D)\subset V(D)\times V(D)$ such that 
$$
E(G)=\{\{x,y\}\mid (x,y)\in
E(D)\},
$$ 
$|E(D)|=|E(G)|$, and $w$ is a \textit{weight function} 
$w\colon V(D) \to\mathbb{N}_+$, where $\mathbb{N}_+=\{1,2,\ldots\}$. The \textit{vertex set} of $D$ and the \textit{edge set} of $D$
are $V(D)$ and $E(D)$, respectively. For simplicity we denote
these sets by $V$ and $E$, respectively. The \emph{weight} of $x\in V$
is $w(x)$ and the set of vertices $\{x\in V\mid w(x)>1\}$ is denoted by
$V^{+}$. If $V(D)=\{t_1,\ldots,t_s\}$, we can regard each vertex $t_i$ as a
 variable and consider the polynomial ring
 $S=K[t_1,\ldots,t_s]$ over a ground field $K$. The
 \textit{edge ideal} of $D$, introduced in \cite{cm-oriented-trees,WOG}, 
is the ideal of $S$ given by 
$$I(D):=(\{t_{i}t_{j}^{w(t_{j})}\mid(t_{i},t_{j})\in E(D)\}).$$
\quad If $w(t_i)=1$ for each $t_i\in V(D)$, then $I(D)$ is the usual edge
ideal $I(G)$ of the graph $G$. The
motivation to study $I(D)$ comes from coding theory, see
\cite{carvalho-lopez-lopez}, \cite[p.~536]{reyes-vila}, and
\cite[p.~1]{WOG}.  In general, edge
ideals of weighted oriented 
graphs are different from edge ideals of 
edge-weighted (undirected) graphs defined by Paulsen and
Sather-Wagstaff \cite{PS}.  

The projective
dimension, regularity, and algebraic and
combinatorial properties of edge ideals of
weighted oriented graphs have been studied in
\cite{cm-oriented-trees,reyes-vila,depth-monomial,
WOG,Zhu-Xu,Zhu-Xu-Wang-Zhang}. The first major
result about $I(D)$ is an explicit combinatorial
expression of Pitones, Reyes and Toledo \cite[Theorem~25]{WOG} 
for the irredundant decomposition of $I(D)$ as a finite intersection of
irreducible monomial ideals. If $D$ is transitive, then Alexander duality holds
for $I(D)$ \cite[Theorem~4]{cm-oriented-trees}.

Following \cite[p.~136]{Mats}, we say $I(D)$ is \textit{unmixed} if all its associated primes have the same height and
$I(D)$ is called \textit{Cohen--Macaulay} if $S/I(D)$ is a Cohen--Macaulay
ring. We say that $D$ is \textit{unmixed} (resp.
\textit{Cohen--Macaulay}) if $I(D)$ is unmixed (resp.
Cohen--Macaulay). 

A subset $C\subset V(G)$ 
is a {\it vertex cover\/} of a graph $G$ if $V(G)\setminus C$ is a
stable set of $G$. The graph $G$ is \textit{well-covered} if all maximal stable sets of
$G$ have the same cardinality and the graph $G$ is  \textit{very
well-covered} if $G$ is well-covered and $|V(G)|=2\alpha_0(G)$, where
$\alpha_0(G)$ is the cardinality of a minimum vertex cover of $G$.  
The class of very well-covered graphs contains the bipartite well-covered graphs studied by Ravindra
\cite{ravindra} and more recently revisited in \cite{unmixed}. 
A set of pairwise disjoint edges of a graph $G$ is called {\it
independent\/} or a
{\it matching\/} and a set
of independent edges of $G$ whose union is $V(G)$ is called a
{\it perfect matching\/}. One of
the properties of very well-covered graphs is that they can be classified using combinatorial
properties of a perfect matching as shown by a central result of
Favaron \cite[Theorem~1.2]{favaron}.

The Cohen--Macaulay property of $I(D)$ depends on the characteristic of the field $K$
\cite[p.~214]{monalg-rev}. For graphs $I(G)$ is unmixed if and only if
$G$ is well-covered \cite[Lemma 6.3.37]{monalg-rev}. The
unmixed property of $I(D)$ 
depends only on the combinatorics of $D$ \cite[Theorem~31]{WOG}.

We denote the in- and out-neighborhood of a vertex $a$ of $D$ by 
$N_{D}^{-}(a)$ and $N_{D}^{+}(a)$, respectively, and the neighborhood 
of $a$ by $N_{D}(a)$. The graph $G$ is \textit{K\H{o}nig} if
$\alpha_0(G)$ is the \textit{matching number} $\beta_1(G)$ of $G$, that is, the maximum
cardinality of a matching of $G$.  A perfect matching $P$ of a
graph $G$ has \textit{property} {\bf(P)} 
if for all $\{a,b\}$, $\{a^{\prime},b^{\prime}\}\in E(G)$, and
$\{b,b^{\prime}\}\in P$, one has $\{a,a^{\prime}\}\in E(G)$.

The following two theorems give a combinatorial characterization of 
the unmixed property and the Cohen--Macaulay property of $I(D)$ when
the underlying graph $G$ is K\H{o}nig.
\begin{theorem}{\rm(Pitones, Reyes, -\,,
\cite[Theorem~3.4]{unmixed-c-m})}\label{prop-unmixed}  
If $D$ is a weighted oriented graph and $G$ is
K\H{o}nig, then $I(D)$ is unmixed if and only if $D$
satisfies the following conditions:    
\begin{enumerate}
\item[(1)] $G$ has a perfect matching $P$ with property {\bf(P)}.
\item[(2)] If $a\in V(D)$, $w(a)>1$, $b^{\prime}\in N_{D}^{+}(a)$, 
and $\{b,b^{\prime}\}\in P$, then $N_{D}(b)\subset N_{D}^{+}(a)$.
\end{enumerate}
\end{theorem}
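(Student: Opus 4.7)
The plan is to work with the irredundant primary decomposition of $I(D)$ into monomial irreducible ideals due to Pitones, Reyes and Toledo \cite[Theorem~25]{WOG}. That result indexes the minimal primes of $I(D)$ by a distinguished class of vertex covers of the underlying graph $G$---the so-called strong vertex covers---whose definition encodes both the covering property for $G$ and the interaction between the weight function $w$ and the orientation of $D$. Under this dictionary, $I(D)$ is unmixed if and only if all strong vertex covers of $D$ have the same cardinality, so the proof reduces to a combinatorial statement about $D$ whose only global datum is the K\H{o}nig hypothesis on $G$.

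For the forward implication, I would first show that each minimal vertex cover of $G$ lifts to a strong vertex cover of $D$ of the same size, so that unmixedness of $I(D)$ forces $G$ to be well-covered. Combined with the K\H{o}nig hypothesis $\alpha_0(G)=\beta_1(G)$ and the absence of isolated vertices, this yields $\alpha_0(G)=|V(G)|/2$ and a perfect matching $P$. Property \textbf{(P)} for $P$ would then be established by an exchange argument: if $\{a,b\},\{a',b'\}\in E(G)$ with $\{b,b'\}\in P$ but $\{a,a'\}\notin E(G)$, then one modifies a vertex cover of size $|P|$ by swapping vertices around the pair $\{b,b'\}$ to produce a cover of strictly different cardinality, contradicting well-coveredness. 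Condition (2) would be derived by a similar exchange using the orientation: given a heavy vertex $a$ with $b'\in N_D^+(a)$ matched to $b$ in $P$, the presence of $c\in N_D(b)\setminus N_D^+(a)$ allows one to construct two strong vertex covers of $D$ of different sizes, one containing the pair $\{a,b\}$ and the other obtained by a replacement involving $c$.

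For the converse, I would take an arbitrary strong vertex cover $C$ of $D$ and prove $|C|=|P|=|V(D)|/2$. Property \textbf{(P)} is precisely what forces $C$ to meet each matching pair $\{a_i,b_i\}\in P$ in exactly one vertex, by the standard argument for very well-covered graphs with a perfect matching: if $C$ contained both endpoints of some edge of $P$, one could exchange in a neighboring pair to violate the covering condition, while \textbf{(P)} prevents any such exchange from succeeding. Condition (2) then rules out the one remaining obstruction, namely that the weight condition $w(a)>1$ might force both a heavy vertex $a$ and its matching partner into $C$ because of an out-neighbor constraint on some third vertex. Combining these two facts gives $|C|=|V(D)|/2$, which is the desired unmixedness.

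The main obstacle will be the correct use of condition (2): translating the inclusion $N_D(b)\subset N_D^+(a)$ into the combinatorics of strong vertex covers requires a precise understanding of which vertices must lie in a strong vertex cover purely because of the weight function and the orientation, and this is where the full strength of the Pitones-Reyes-Toledo decomposition comes in. The underlying graph-theoretic argument for perfect matchings with property \textbf{(P)} is classical in the theory of (very) well-covered graphs, but the weighted-oriented analogue captured by condition (2) is the genuinely new ingredient that has to be extracted from, and matched against, the decomposition theorem.
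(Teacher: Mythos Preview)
The paper is a survey and does not include a proof of this theorem; it is simply cited from \cite[Theorem~3.4]{unmixed-c-m}, so there is no proof here to compare against. Your strategy is the natural one and matches the methodology the paper itself points to: the paper explicitly notes that the unmixed property of $I(D)$ is purely combinatorial via \cite[Theorem~31]{WOG} and that the irredundant irreducible decomposition of $I(D)$ is given in \cite[Theorem~25]{WOG}, which is exactly the dictionary you propose to use. The reduction to strong vertex covers, the lifting of minimal vertex covers of $G$ to strong vertex covers of $D$, and the derivation of condition~(1) from well-coveredness plus the K\H{o}nig hypothesis are all standard and correct; this last step is essentially the content of Theorem~\ref{unmixed-clutter-pm} specialized to graphs (and generalizes Theorem~\ref{unmixed-bip}).

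One point deserves more care than your sketch gives it. In the converse direction you say that property~\textbf{(P)} alone forces an arbitrary strong vertex cover $C$ to meet each matching edge in exactly one vertex, and that condition~(2) only handles a residual obstruction. This is not quite the right division of labor: property~\textbf{(P)} guarantees that every \emph{minimal} vertex cover of $G$ has size $|P|$, but a strong vertex cover of $D$ need not be a minimal vertex cover of $G$ --- it can properly contain one precisely when a heavy vertex forces extra vertices into $C$ through the orientation. So condition~(2) is not a side remark but the heart of the argument that $|C|=|P|$: you must show that whenever $C$ contains both endpoints of some $\{b,b'\}\in P$, the strongness condition together with~(2) produces a contradiction. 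Your last paragraph acknowledges that condition~(2) is the crux, but the structure of your converse argument should reflect that from the start rather than treating it as a cleanup step after property~\textbf{(P)} has done the work.
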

Conditions (1) and (2) of Theorem~\ref{prop-unmixed} also 
characterize the unmixed property of the ideal $I(D)$ when 
$G$ is a graph without $3$-, $5$-, and $7$-cycles 
\cite[Proposition~3.7]{unmixed-c-m}. 

\begin{theorem}{\rm(Pitones, Reyes, -\,,
\cite[Theorem~4.3]{unmixed-c-m})}\label{theoremCM}
If $D$ is a weighted oriented graph and $G$ is
K\H{o}nig, then, $I(D)$ is Cohen--Macaulay if and only if $D$
satisfies the following conditions:  
\begin{enumerate}
\item[(1)] $G$ has a perfect matching $P$ with property {\bf(P)} and has no $4$-cycles 
with two edges in $P$.
\item[(2)] If $a\in V(D)$, $w(a)>1$, $b^{\prime}\in N_{D}^{+}(a)$, 
and $\{b,b^{\prime}\}\in P$, then $N_{D}(b)\subset N_{D}^{+}(a)$.
\end{enumerate}
\end{theorem}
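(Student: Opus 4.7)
The plan is to reduce both directions to the unmixed characterization of Theorem~\ref{prop-unmixed} and to isolate the additional no--$4$-cycle condition as the precise obstruction to Cohen--Macaulayness beyond unmixedness. Since Cohen--Macaulayness of $I(D)$ forces unmixedness, in either direction we may assume that $G$ has a perfect matching $P$ with property~{\bf(P)} and that condition~(2) of Theorem~\ref{theoremCM} holds; this is exactly the content of Theorem~\ref{prop-unmixed}. Hence the only fresh content to establish is that a $4$-cycle with two edges in $P$ is an obstruction to Cohen--Macaulayness, and that ruling out such a cycle (together with the other hypotheses) is in turn sufficient to guarantee Cohen--Macaulayness.

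For the ``only if'' direction, I would assume $I(D)$ is Cohen--Macaulay and suppose for contradiction that $G$ contains a $4$-cycle $C$ with two opposite edges in $P$. I would polarize $I(D)$ to a squarefree monomial ideal $I(D)^{\textup{pol}}$ in a larger polynomial ring $\widetilde S$, noting that polarization preserves both depth and codimension and therefore Cohen--Macaulayness. Let $\Delta$ be the Stanley--Reisner complex of $\widetilde S/I(D)^{\textup{pol}}$. I would then compute the link of $\Delta$ at the complement of the polarized vertex set of $C$, show that this link is homotopy equivalent to the independence complex of a $4$-cycle---namely a disjoint union of two edges, whose reduced $\widetilde H_0$ is nonzero---and conclude by Reisner's criterion that $\Delta$ cannot be Cohen--Macaulay, a contradiction.

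For the ``if'' direction, I would again pass to $I(D)^{\textup{pol}}$ and argue by induction on $|V(D)|$ that $\Delta$ is vertex decomposable (hence shellable and, being pure by Theorem~\ref{prop-unmixed}, Cohen--Macaulay). For the inductive step, I would choose a matching edge $\{b,b'\}\in P$ with $b'$ extremal with respect to a total order compatible with the orientation, and use a polarized variable of $b'$ as a shedding vertex. Property~{\bf(P)}, condition~(2), and the K\H{o}nig hypothesis would then ensure that both the deletion and the link of $\Delta$ at this vertex arise from smaller weighted oriented graphs satisfying the same three hypotheses; the absence of $4$-cycles with two matching edges in $G$ is precisely what prevents such a cycle from appearing in the link. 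The main obstacle is verifying this inheritance cleanly, especially in the link: the K\H{o}nig hypothesis is indispensable because it gives $\alpha_0(G)=\beta_1(G)=g$, so that heights, matching sizes, and complex dimensions stay synchronized through the reduction, and property~{\bf(P)} alone then suffices to control the Cohen--Macaulay type produced at each step.
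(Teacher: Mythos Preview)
The paper you are working from is a survey and does not contain a proof of Theorem~\ref{theoremCM}: the result is simply quoted from \cite[Theorem~4.3]{unmixed-c-m} with no argument given. So there is no ``paper's own proof'' to compare against here.

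That said, your overall strategy is sound and aligns with how such results are typically proved. A few remarks. For the ``only if'' direction, the link computation you describe is delicate in the polarized setting: you need to check that the extra polarization variables attached to vertices of weight~$>1$ can indeed be absorbed into the face whose link you take, so that what remains is genuinely the independence complex of a $4$-cycle. Condition~(2) is what makes this work, since it controls how the weighted out-neighborhoods interact with the matching edges; without invoking it explicitly, the link may pick up extra vertices. For the ``if'' direction, your inductive scheme via a shedding vertex is the standard route, but the phrase ``$b'$ extremal with respect to a total order compatible with the orientation'' is doing a lot of work: property~{\bf(P)} for a perfect matching in a K\H{o}nig graph does give a partial order on the matching edges (this is essentially the Herzog--Hibi structure theorem in the bipartite case, extended to very well-covered graphs), and you need a leaf of that order, not merely any extremal element. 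The inheritance of the K\H{o}nig property under deletion and link is the genuinely nontrivial verification, and you should expect to use the no-$4$-cycle hypothesis already to guarantee that the link is again the polarization of a weighted oriented K\H{o}nig graph with a perfect matching satisfying~{\bf(P)}.
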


Conditions (1) and (2) of Theorem~\ref{theoremCM}
also characterize the Cohen--Macaulay property of $I(D)$ when 
$G$ is a graph without $3$- and $5$-cycles
\cite[Proposition~4.5]{unmixed-c-m}.
In general any graded Cohen--Macaulay ideal is unmixed \cite{Mats}. 
The \textit{girth} of $G$ is the length of a shortest cycle
contained in $G$. If
$G$ is a K\H{o}nig graph without $4$-cycles or
$G$ has girth greater than $7$, then $I(D)$ is unmixed if and only if $I(D)$ is 
Cohen--Macaulay \cite[Corollaries~4.4 and 4.7]{unmixed-c-m}. For graphs this improves a result of
\cite[Corollary~2.19]{MRV} showing that unmixed K\H{o}nig
clutters without $3$- and $4$-cycles are
Cohen--Macaulay. If $I(D)$ is Cohen--Macaulay, then $I(D)$ is unmixed and $I(G)$ is
Cohen--Macaulay (see \cite[Theorem~2.6]{Radical-Herzog} and 
\cite[Proposition 51]{WOG}). The converse was a conjecture of
Pitones, Reyes and Toledo \cite[Conjecture~53]{WOG} that is disproved in
\cite{Fakhari-terai-etal}. This conjecture is true when $G$ has no
$3$- or $5$- cycles, or $G$ is K\H{o}nig
\cite[Corollary~4.6]{unmixed-c-m}. Recently, Dung and 
Trung proved the conjecture 
when $G$ has girth at least $5$ \cite[Theorem~2.8]{Dung-Trung}. 

Graphs with a whisker (i.e., a pendant edge) attached to 
each vertex are K\H{o}nig \cite[p.~277]{monalg-rev},  
very well-covered graphs are also K\H{o}nig
\cite[Remark~2.18]{unmixed-c-m}, and bipartite
graphs are K\H{o}nig and have no odd cycles \cite{Har}. Then, some of
the results above generalize those of
\cite{cm-oriented-trees,reyes-vila,WOG,unmixed}. 
More precisely, Theorem~\ref{prop-unmixed} (resp.
Theorem~\ref{theoremCM}) generalizes the unmixed criteria of
\cite[Theorem~46]{WOG} and \cite[Theorem~1.1]{unmixed} (resp.
Cohen--Macaulay criterion of \cite[Theorem~5.1]{reyes-vila}) 
 for weighted oriented bipartite graphs. 
From \cite[Corollary~4.4 and Proposition~4.5]{unmixed-c-m},
 we recover the Cohen--Macaulay criterion of 
\cite[Theorem~5]{cm-oriented-trees} for weighted oriented trees. 

Edge ideals of graphs can be generalized to clutters \cite{clutters,Ha-VanTuyl}. Let
$S=K[t_1,\ldots,t_s]$ be a polynomial ring over a field $K$ and let
$\mathcal C$ be a \textit{clutter} with vertex  
set $V(\mathcal{C})=\{t_1,\ldots,t_s\}$, that is, $\mathcal C$ is a 
family of subsets $E(\mathcal{C})$ of $V(\mathcal{C})$, called edges,
none of which is contained in
another. For example, a graph $G$ (no multiple edges or loops) is a
clutter. The
 \textit{edge ideal} of $\mathcal{C}$, denoted $I(\mathcal{C})$, 
is the ideal of $S$ given by 
$$I(\mathcal{C}):=(\textstyle\{\prod_{t_i\in e}t_i\mid e\in
E(\mathcal{C})\}).$$
\quad The
minimal set of generators of
$I(\mathcal{C})$ is  the
set of all squarefree monomials $t_e:=\prod_{t_i\in e}t_i$ such 
that $e\in E(\mathcal{C})$. 
Any squarefree monomial ideal $I$ of $S$ is the edge
ideal $I(\mathcal{C})$ of a clutter $\mathcal{C}$ with vertex set
$V(\mathcal{C})=\{t_1,\ldots,t_s\}$. Indeed, let
$I$ be a squarefree monomial ideal 
of $S$ minimally generated by the set of 
monomials $\mathcal{G}(I):=\{t^{v_1},\ldots,t^{v_m}\}$ and let
$\mathcal{C}$ be the clutter whose edges are $f_1,\ldots,f_m$, where 
$f_k$ is ${\rm supp}(t^{v_k})$, the support of $t^{v_k}$, consisting
of all $t_i$ that occur in $t^{v_k}$. Then, $\mathcal{C}$ is a clutter
 and $I=I(\mathcal{C})$. The $s\times m$ matrix $A$ with column vectors $v_1,\ldots,v_m$ is
 the \textit{incidence matrix} of the clutter $\mathcal{C}$ and is
 also the \textit{incidence matrix} of the edge ideal  $I=I(\mathcal{C})$.  

Thus, one has the
Stanley--Reisner correspondence between simplicial complexes and
squarefree monomial ideals, and the correspondence between
clutters and edge ideals
$$
\Delta\longleftrightarrow
I_\Delta
\ \quad \ \mbox{ and }\ \quad \ \mathcal{C}\longleftrightarrow
I(\mathcal{C}),
$$
respectively. A subset $F$ of $V(\mathcal{C})$
is called {\it independent\/} or {\it stable\/} if $e\not\subset F$
for any  
$e\in E(\mathcal{C})$. If $\Delta_\mathcal{C}$ is
the independence complex of $\mathcal{C}$ whose faces are the stable 
sets of $\mathcal{C}$, then
$I_{\Delta_{\mathcal{C}}}=I(\mathcal{C})$. 

A subset $C$ of $V(\mathcal{C})$ is called a {\it vertex cover\/} 
if $V(\mathcal{C})\setminus C$ is a stable set. The {\it covering
number\/} of $\mathcal{C}$, denoted $\alpha_0(\mathcal{C})$, is the 
number of vertices in a minimum vertex cover of $\mathcal{C}$.
A clutter $\mathcal{C}$ has the
{\it K\H{o}nig property\/} if the maximum number of pairwise disjoint
edges is $\alpha_0(\mathcal{C})$. 

Any unmixed clutter
with the K\H{o}nig property and without isolated vertices satisfies
the hypothesis of the following characterization of unmixed clutters
(cf.~Theorem~\ref{unmixed-bip}). 

\begin{theorem}{\rm(Morey, Reyes, -\,, \cite{MRV})}\label{unmixed-clutter-pm} 
A clutter $\mathcal{C}$ with a set of pairwise disjoint 
edges $\{e_i\}_{i=1}^g$, $g=\alpha_0(\mathcal{C})$, that covers
$V(\mathcal{C})$ is unmixed if and only if any of the
following conditions hold:
\begin{enumerate}
\item[(a)] For any two edges $e \neq e'$ and for any two distinct
vertices  
$x\in e$, $y\in e'$ contained in some $e_i$, one has that 
$(e\setminus\{x\})\cup (e'\setminus\{y\})$ contains an edge. 
\item[(b)] $I(\mathcal{C})=(I(\mathcal{C})^2\colon t_{e_1})+\cdots+(I(\mathcal{C})^2\colon t_{e_g})$.
\end{enumerate}
\end{theorem}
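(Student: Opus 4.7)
The plan is to prove the full chain of equivalences
\emph{unmixed} $\Leftrightarrow$ (a) $\Leftrightarrow$ (b) by combining a purely combinatorial argument for (a) with an algebraic argument based on the primary decomposition $I(\mathcal{C})=\bigcap_{C}\mathfrak{p}_{C}$, where $C$ runs over the minimal vertex covers of $\mathcal{C}$ and $\mathfrak{p}_{C}=(\{t_{v}\colon v\in C\})$. The key observation used everywhere is this: since the $e_{i}$ are pairwise disjoint, cover $V(\mathcal{C})$, and have $g=\alpha_{0}(\mathcal{C})$, every vertex cover $C$ satisfies $|C\cap e_{i}|\geq 1$ for each $i$, so $|C|\geq g$; hence $\mathcal{C}$ is unmixed if and only if every minimal vertex cover $C$ has $|C\cap e_{i}|=1$ for all $i$.

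For unmixed $\Rightarrow$ (a), I would take $e\neq e'$ and distinct $x\in e$, $y\in e'$ with $x,y\in e_{i}$, and consider $A:=(e\setminus\{x\})\cup(e'\setminus\{y\})$. If $x\in e'$ or $y\in e$, then $A$ already contains $e$ or $e'$ respectively, so assume $x\notin e'$ and $y\notin e$. Suppose $A$ contains no edge; extend $A$ to a maximal stable set $A^{\ast}$, so that $C:=V(\mathcal{C})\setminus A^{\ast}$ is a minimal vertex cover. If $x\in A^{\ast}$, then $e=\{x\}\cup(e\setminus\{x\})\subseteq A^{\ast}$ contradicts stability; hence $x\in C$, and symmetrically $y\in C$, giving $|C\cap e_{i}|\geq 2$ and contradicting unmixedness. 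The converse (a) $\Rightarrow$ unmixed is dual: if a minimal vertex cover $C$ has $|C|>g$, pick $x\neq y$ in some $C\cap e_{i}$, use minimality to produce edges $e,e'$ with $e\cap C=\{x\}$ and $e'\cap C=\{y\}$ (in particular $e\neq e'$), and apply (a) to get an edge $f\subseteq(e\setminus\{x\})\cup(e'\setminus\{y\})$, which is disjoint from $C$, contradicting that $C$ is a vertex cover.

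For unmixed $\Rightarrow$ (b), since $I\subseteq(I^{2}\colon t_{e_{i}})$ is automatic, the content is the reverse inclusion. Given a monomial $t^{a}\in(I^{2}\colon t_{e_{i}})$, one has $t^{a}t_{e_{i}}\in I^{2}\subseteq I^{(2)}=\bigcap_{C}\mathfrak{p}_{C}^{2}$. By unmixedness, $|C\cap e_{i}|=1$ for every minimal vertex cover $C$, so $t_{e_{i}}$ has $\mathfrak{p}_{C}$-order exactly $1$, and $t^{a}t_{e_{i}}\in\mathfrak{p}_{C}^{2}$ forces $t^{a}\in\mathfrak{p}_{C}$; intersecting over $C$ gives $t^{a}\in\sqrt{I}=I$. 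For (b) $\Rightarrow$ unmixed, I would argue contrapositively: given a minimal vertex cover $C$ with $|C|>g$, pick distinct $x,y\in C\cap e_{i}$ and edges $e,e'$ with $e\cap C=\{x\}$, $e'\cap C=\{y\}$. Necessarily $x\notin e'$ (otherwise $x\in e'\cap C=\{y\}$ gives $x=y$) and similarly $y\notin e$. Set $t^{a}:=\prod_{v\in(e\setminus\{x\})\cup(e'\setminus\{y\})}t_{v}$; since $t_{x}t_{y}\mid t_{e_{i}}$, one checks $t_{e}t_{e'}\mid t^{a}t_{e_{i}}$, so $t^{a}\in(I^{2}\colon t_{e_{i}})$. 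By (b), $t^{a}\in I$, hence some edge $f$ satisfies $t_{f}\mid t^{a}$, i.e.\ $f\subseteq(e\setminus\{x\})\cup(e'\setminus\{y\})$; but this set is disjoint from $C$, contradicting that $C$ covers $f$.

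The main obstacle is the algebraic direction unmixed $\Rightarrow$ (b): one must pass from the containment in $I^{2}$ to a containment in the much larger symbolic square $I^{(2)}$ and then use the unmixed hypothesis to read off the $\mathfrak{p}_{C}$-order of $t_{e_{i}}$ correctly; in particular, the hypothesis that the $e_{i}$ are pairwise disjoint and cover $V(\mathcal{C})$ is precisely what forces $|C\cap e_{i}|=1$ for every minimal cover under unmixedness. The combinatorial equivalence unmixed $\Leftrightarrow$ (a) is mostly routine, but care is needed in the boundary cases $x\in e'$ or $y\in e$, which must be disposed of separately before the stable-set/extension argument can be applied.
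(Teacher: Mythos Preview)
Your overall strategy is sound, and the equivalence unmixed $\Leftrightarrow$ (a) as well as the implication unmixed $\Rightarrow$ (b) are correctly argued. The paper itself does not supply a proof here (it cites \cite{MRV}), so there is nothing to compare against; I comment only on correctness.

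There is, however, a genuine gap in your argument for (b) $\Rightarrow$ unmixed. You set
\[
t^{a}:=\prod_{v\in(e\setminus\{x\})\cup(e'\setminus\{y\})}t_{v}
\]
as a \emph{squarefree} monomial and then assert that $t_{e}t_{e'}\mid t^{a}t_{e_{i}}$. This divisibility fails whenever $e$ and $e'$ share a vertex $u\neq x,y$ with $u\notin e_{i}$: the exponent of $t_{u}$ in $t_{e}t_{e'}$ is $2$, but in $t^{a}t_{e_{i}}$ it is only $1$. For a concrete instance, take $V=\{x,y,u,v\}$ with edges $e_{1}=\{x,y\}$, $e_{2}=\{u,v\}$, $e=\{x,u\}$, $e'=\{y,u\}$. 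Here $\alpha_{0}=2=g$, the set $C=\{x,y,v\}$ is a minimal vertex cover of size $3$, the choices of $x,y,e,e'$ are forced, your monomial is $t^{a}=t_{u}$, and $t^{a}t_{e_{1}}=t_{x}t_{y}t_{u}$ has degree $3$ and hence cannot lie in $I^{2}$.

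The repair is a one-line change: define instead
\[
t^{a}:=\frac{t_{e}}{t_{x}}\cdot\frac{t_{e'}}{t_{y}},
\]
allowing repeated factors. Then $t^{a}\,t_{x}t_{y}=t_{e}t_{e'}$ and $t_{x}t_{y}\mid t_{e_{i}}$ give $t_{e}t_{e'}\mid t^{a}t_{e_{i}}$, so $t^{a}\in(I^{2}\colon t_{e_{i}})$. The support of this $t^{a}$ is still $(e\setminus\{x\})\cup(e'\setminus\{y\})$, which is disjoint from $C$, so $t^{a}\notin\mathfrak{p}_{C}\supseteq I$. With this correction your contradiction goes through exactly as written.
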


A graph $G$ is called {\it chordal\/} if 
every cycle $C_r$ of $G$ of length $r\geq 4$ has a 
chord in $G$. A {\it chord}
 of $C_r$ is an edge joining two 
non-adjacent vertices of $C_r$. A chordal graph
is called {\it strongly chordal} if 
every cycle $C_r$ of even length at least six has a chord that divides
$C_r$ into two odd length paths. A {\it clique\/} of a graph $G$
is a set of vertices inducing a complete subgraph. The {\it clique
clutter\/} 
of $G$, denoted by ${\rm cl}(G)$, is the clutter on $V(G)$ whose edges are the 
maximal cliques of $G$ (maximal with respect to
inclusion).

Let $A$ be the incidence matrix of a clutter $\mathcal{C}$. The clutter 
$\mathcal{C}$ has a {\it special cycle\/} of length
$r$ if there 
is a square
submatrix of $A$ of 
order $r\geq 3$ with exactly two $1$'s in 
each row and column.  A clutter with no special odd cycles is 
called {\it balanced} and a clutter with no
special cycles is called {\it totally
balanced\/}. A graph $G$ is balanced (resp. totally balanced) if and
only if $G$ is bipartite (resp. a forest). We say $t_i$ is a {\it free
variable\/} (resp. {\it free 
vertex}) of $I(\mathcal{C})$ (resp. ${\mathcal C}$) if $t_i$
only appears in one of the monomials of the minimal generating set
$\mathcal{G}(I)$ of $I$ (resp. in
one of the edges of $\mathcal C$).  

The notion of a minor is defined in Section~\ref{blowuprings-section}
after Theorem~\ref{ntf-char}.  
If all the minors of a clutter 
$\mathcal C$ have free vertices, we say that ${\mathcal C}$ 
has the {\it free vertex property}. This property is closed under
minors, that is, if $\mathcal C$ 
has the free vertex property, then so do all of its minors. The following result classifies the simplicial trees introduced by 
Faridi \cite{faridi2}, whose facet ideals have sliding depth
(Theorem~\ref{faridi-sd}).

\begin{theorem}\label{soleyman-zheng-char}
A clutter $\mathcal{C}$ is totally balanced if and only if any of the
following equivalent conditions hold:
\begin{itemize}
\item[\rm(a)] {\rm\cite[Theorem~3.2]{hhtz}} $\mathcal{C}$ is the
clutter of the facets of a simplicial forest.
\item[\rm(b)] {\rm \cite[Corollary~3.1]{soleyman-zheng}} 
$\mathcal{C}$ has the free vertex property.
\item[\rm(c)] {\rm \cite{farber}} $\mathcal{C}$ is the clique clutter of a 
strongly chordal graph.
\end{itemize}
\end{theorem}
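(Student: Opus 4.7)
The plan is to establish the four-way equivalence by stitching together the three classical results cited in the statement. Each condition reformulates the matrix property of being \emph{totally balanced} from a different point of view: (a) simplicial-topological (Faridi), (b) via minor closure on clutters, and (c) graph-theoretic (Farber). Throughout, I would work with the incidence matrix $A$ of $\mathcal{C}$ and use the fact that the totally balanced condition is inherited by submatrices, hence by minors of $\mathcal{C}$.

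I would first prove (c) $\iff$ totally balanced. For the ``only if'' direction, a special cycle of length $r$ in the clique matrix of $G$ would encode a cyclic arrangement of $r$ maximal cliques sharing vertices in a 2-regular fashion; unpacking this yields a \emph{trampoline} (or $r$-sun) as an induced subconfiguration, which is a forbidden obstruction in a strongly chordal graph. Conversely, starting from a totally balanced clutter $\mathcal{C}$, I would define $G$ by placing an edge $\{t_i,t_j\}$ whenever $t_i$ and $t_j$ lie in a common edge of $\mathcal{C}$, verify that $\mathcal{C} = {\rm cl}(G)$ using the absence of special cycles, and check strong chordality by producing the required odd chord in each even cycle of length $\geq 6$.

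The equivalences with (a) and (b) would then be handled in tandem, by induction on $|V(\mathcal{C})|$. Since minors amount to row and column deletion in $A$, total balance is inherited by minors, so the heart of the argument is to produce a free vertex in any totally balanced clutter. I would argue contrapositively: if every vertex lies in at least two edges, one constructs an alternating walk between vertices and edges of $\mathcal{C}$ that, by pigeonhole, must eventually close into a square $r\times r$ submatrix with exactly two ones in every row and column, that is, a special cycle. A free vertex of $\mathcal{C}$ is precisely a vertex belonging to a unique edge, which corresponds to a \emph{leaf} in Faridi's sense; the recursive removal of such a leaf realizes simultaneously the free vertex property (b) and the inductive structure defining a simplicial forest (a).

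The main obstacle is the constructive step showing that a totally balanced clutter must contain a free vertex. The alternating walk has to be selected so that the resulting submatrix really has exactly two ones in every row and every column — not, for instance, an additional coincidence that destroys 2-regularity — and this requires careful choice of the next edge at each step to avoid spurious intersections. This bookkeeping, together with verifying that free vertex removal genuinely commutes with the clutter's minor operations on $A$, forms the delicate combinatorial core of the proof.
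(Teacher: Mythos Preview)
The paper does not give a proof of this theorem at all: it is a survey result, and each of (a), (b), (c) is simply attributed to the cited reference. So there is nothing to compare your argument against in the paper itself; the intended ``proof'' is just the three citations.

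That said, your sketch has the right architecture but one step is shakier than you indicate. In the direction totally balanced $\Rightarrow$ (c), you propose to build $G$ as the $1$-skeleton of $\mathcal{C}$ (edges of $G$ are pairs lying in a common edge of $\mathcal{C}$) and then ``verify that $\mathcal{C}={\rm cl}(G)$ using the absence of special cycles.'' This is the nontrivial part of Farber's theorem and does not follow from a short cycle argument: one must show that no new maximal clique is created by taking the $1$-skeleton, i.e.\ that every maximal clique of $G$ is already an edge of $\mathcal{C}$. The standard route is not via special cycles directly but via a $\Gamma$-free doubly lexical ordering of the incidence matrix (Lubiw, Anstee--Farber), which simultaneously yields a simple-vertex elimination scheme for $G$ and certifies that the edges of $\mathcal{C}$ are exactly the maximal cliques. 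Your alternating-walk idea for producing a free vertex in the direction totally balanced $\Rightarrow$ (b) is closer to the mark, but the same doubly lexical ordering gives this for free (the last vertex in the ordering is free), which is how \cite{soleyman-zheng} proceeds. If you want a self-contained proof, I would reorganize around that ordering rather than treat (b) and (c) by separate ad hoc arguments.
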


The \textit{ideal of covers} of a clutter $\mathcal{C}$, denoted
$I_c(\mathcal{C})$, is the ideal of $S$ generated by
all squarefree monomials whose support is a minimal vertex cover of
$\mathcal{C}$ \cite[p.~221]{monalg-rev}. In the context of Stanley--Reisner theory of simplicial
complexes, $I_c(\mathcal{C})$ is called the
\textit{Alexander dual} of $I=I(\mathcal{C})$ and is denoted by
$I^\vee$ \cite[pp.~17--18]{Herzog-Hibi-book}. The clutter of minimal
vertex covers of $\mathcal{C}$ is denoted by $\mathcal{C}^\vee$ or
$b(\mathcal{C})$ and is called
the \textit{blocker} of $\mathcal{C}$ \cite{cornu-book}.

The interaction between graph
theory and  commutative
algebra is present in Lyubeznik thesis \cite{Lyu-thesis,Lyu1} where 
it is shown that the ideal of covers of a graph $G$ is
Cohen--Macaulay if and only if the complement of $G$ is a chordal
graph, and in the work of
Fr\"oberg \cite{Fro4} where it is shown that the edge ideal of a graph
$G$ has a $2$-linear resolution if and only if the complement of $G$
is a chordal graph. The next duality theorem is related to these two facts. 

\begin{theorem}{\rm(Eagon-Reiner \cite{ER})}\label{eagon-reiner-terai}  
Let $\mathcal{C}$ be a clutter and let $I_c(\mathcal{C})$ be its ideal
of covers. Then, $I(\mathcal{C})$ 
is Cohen--Macaulay if and only if $I_c(\mathcal{C})$
has a linear resolution.
\end{theorem}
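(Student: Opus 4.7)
The plan is to deduce both implications from Terai's duality formula ${\rm reg}(I^\vee)={\rm pd}(S/I)$ (valid for any squarefree monomial ideal $I$ of $S$) together with the Auslander--Buchsbaum formula. Writing $I=I(\mathcal{C})$ and $I^\vee=I_c(\mathcal{C})$, the idea is to translate ``$S/I$ is Cohen--Macaulay'' into the numerical equality ${\rm pd}(S/I)={\rm ht}(I)$, and ``$I^\vee$ has a linear resolution'' into ${\rm reg}(I^\vee)$ equaling the common generator degree of $I^\vee$; Terai's identity will then match one condition to the other.

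First I would record two preparatory observations. The minimal monomial generators of $I^\vee$ correspond bijectively to the minimal vertex covers of $\mathcal{C}$, so the smallest generator degree of $I^\vee$ equals $\alpha_0(\mathcal{C})={\rm ht}(I)$, and $I^\vee$ is generated in a single degree if and only if all minimal primes of $I$ have the same height, i.e., $I$ is unmixed. Second, since $S$ is a polynomial ring in $s$ variables, the Auslander--Buchsbaum formula gives ${\rm pd}(S/I)+{\rm depth}(S/I)=s$, together with $\dim(S/I)=s-{\rm ht}(I)$, so $S/I$ is Cohen--Macaulay if and only if ${\rm pd}(S/I)={\rm ht}(I)$; because $I$ is radical (hence has no embedded primes), Cohen--Macaulayness of $S/I$ additionally forces $I$ to be unmixed.

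With these in place, the forward direction runs as follows: if $S/I$ is Cohen--Macaulay, then $I$ is unmixed, so $I^\vee$ is generated purely in degree $\alpha_0(\mathcal{C})$, and Terai's formula gives ${\rm reg}(I^\vee)={\rm pd}(S/I)={\rm ht}(I)=\alpha_0(\mathcal{C})$, which is exactly the hallmark of a linear resolution. For the converse, a linear resolution of $I^\vee$ forces all its minimal generators to share a single degree $d$ with ${\rm reg}(I^\vee)=d$; the smallest generator degree being $\alpha_0(\mathcal{C})$ then forces $d=\alpha_0(\mathcal{C})={\rm ht}(I)$, and Terai yields ${\rm pd}(S/I)=d={\rm ht}(I)$, so $S/I$ is Cohen--Macaulay. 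The main obstacle is really the invocation of Terai's formula itself, which the excerpt has already flagged as an available ingredient in the discussion of the duality theorem; granting it, the argument collapses to the numerical bookkeeping above, with the only subtlety being the use of the squarefree (hence radical) nature of $I$ to pass from Cohen--Macaulayness to unmixedness of the generators of $I^\vee$.
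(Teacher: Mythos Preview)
Your argument is correct. The paper itself does not prove this theorem; it merely states it with a citation to the original Eagon--Reiner paper, so there is no in-paper proof to compare against. Your route---deducing Eagon--Reiner as a numerical consequence of Terai's identity ${\rm reg}(I^\vee)={\rm pd}(S/I)$ combined with Auslander--Buchsbaum---is a standard and clean derivation, and each step you outline is valid: the identification of generator degrees of $I^\vee$ with heights of minimal primes of $I$, the passage from Cohen--Macaulay to unmixed (which needs no radical hypothesis, though it does no harm), and the characterization of a linear resolution via ${\rm reg}(I^\vee)$ equaling the single generating degree.

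One caveat worth flagging: historically, Terai's formula (Theorem~\ref{terai-formula} in the paper) generalizes and was motivated by Eagon--Reiner, and some proofs in the literature derive it by building on the Eagon--Reiner argument. So if you intend this as a self-contained proof rather than a reduction to a stronger cited fact, you should make sure the proof of Terai's formula you rely on (e.g., via Hochster's local cohomology formulas and Alexander duality) is independent of the result you are proving. As the paper presents both theorems as cited external results, your reduction is perfectly acceptable in context.
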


There is a duality formula of Terai relating the regularity of an edge ideal 
with the projective dimension of its Alexander dual. A reference for
the regularity is the book of Eisenbud
\cite{eisenbud-syzygies}. 

\begin{theorem}{\rm(Terai~\cite{terai})}\label{terai-formula} 
 If $\mathcal{C}$ is a clutter, then ${\rm reg}(S/I(\mathcal{C}))={\rm pd}(
S/I_c(\mathcal{C}))-1$.
\end{theorem}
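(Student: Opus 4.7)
The plan is to translate Terai's formula into a statement about $\mathbb{Z}^s$-graded Betti numbers via Hochster's formula, then extract it from combinatorial Alexander duality for simplicial complexes. First, one reformulates both sides in the Stanley--Reisner language: letting $\Delta = \Delta_\mathcal{C}$ be the independence complex of $\mathcal{C}$, the edge ideal coincides with the Stanley--Reisner ideal $I(\mathcal{C}) = I_\Delta$, and a direct face-by-face check on squarefree generators identifies $I_c(\mathcal{C})$ with $I_{\Delta^\vee}$, where $\Delta^\vee = \{\tau \subseteq V : V\setminus\tau \notin \Delta\}$ is the simplicial Alexander dual on the vertex set $V = V(\mathcal{C})$. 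Hochster's formula then expresses the multigraded Betti numbers in terms of reduced simplicial homology: for any $\sigma \subseteq V$,
$$\beta_{i,\sigma}(S/I_\Delta) = \dim_K \tilde H_{|\sigma|-i-1}(\Delta|_\sigma; K),$$
and analogously for $S/I_{\Delta^\vee}$, so that ${\rm reg}(S/I_\Delta)$ becomes the maximum of $|\sigma|-i$ over all $(i,\sigma)$ with the corresponding homology nonzero, while ${\rm pd}(S/I_{\Delta^\vee})$ is the maximum of $i$ over all $(i,\sigma)$ for which the analogous group on $\Delta^\vee$ is nonzero.

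The heart of the argument is combinatorial Alexander duality. Viewing the restriction $\Delta|_\sigma$ as a simplicial complex on the $|\sigma|$-element vertex set $\sigma$, the classical duality isomorphism yields
$$\tilde H_{|\sigma|-i-1}(\Delta|_\sigma; K) \cong \tilde H^{\,i-2}\bigl((\Delta|_\sigma)^{\vee,\sigma}; K\bigr),$$
where $(\cdot)^{\vee,\sigma}$ denotes the Alexander dual computed inside $\sigma$. A short face-by-face computation establishes the identity $(\Delta|_\sigma)^{\vee,\sigma} = \mathrm{link}_{\Delta^\vee}(V\setminus\sigma)$; feeding this into the companion Hochster-type description of $\beta_{j,\tau}(S/I_{\Delta^\vee})$ via the reduced homology of $\mathrm{link}_{\Delta^\vee}(V\setminus\tau)$ produces a dimension-preserving correspondence between the nonzero multigraded Betti numbers of $S/I_\Delta$ and those of $S/I_{\Delta^\vee}$, in which the homological degree $i$ on one side pairs with the quantity $|\sigma|-i$ on the other, up to a shift by $1$.

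Taking maxima through this correspondence then gives ${\rm reg}(S/I_\Delta) + 1 = {\rm pd}(S/I_{\Delta^\vee})$, which is the claimed equality. The main technical obstacle is the careful index bookkeeping in the preceding step: one must verify the link identification cleanly and track how the shift hidden in Alexander duality becomes precisely the $-1$ appearing in Terai's formula. Once the matching of indices is pinned down, the conclusion is immediate from extracting maxima on both sides of the Betti-number bijection.
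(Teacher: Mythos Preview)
The paper does not supply its own proof of this theorem; it is quoted with a citation to Terai. Your outline follows the standard route (Hochster's formula plus combinatorial Alexander duality together with the link identity $(\Delta|_\sigma)^{\vee,\sigma}=\mathrm{link}_{\Delta^\vee}(V\setminus\sigma)$), and up to that point it is correct.

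The gap is in the final step. You invoke a ``companion Hochster-type description of $\beta_{j,\tau}(S/I_{\Delta^\vee})$ via the reduced homology of $\mathrm{link}_{\Delta^\vee}(V\setminus\tau)$'', but no such formula exists: Hochster's formula for the Betti numbers of $S/I_{\Delta^\vee}$ involves the \emph{restrictions} $\Delta^\vee|_\tau$, not links, and if you rederive your identity with $\Delta$ replaced by $\Delta^\vee$ you obtain links in $\Delta$, not in $\Delta^\vee$. Consequently there is no direct bijection between the multigraded Betti numbers of $S/I_\Delta$ and $S/I_{\Delta^\vee}$ of the shape you describe, and taking maxima over such a correspondence is not justified.

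The clean fix is to recognize what the link homology you produced actually computes. Your identity
\[
\beta_{i,\sigma}(S/I_\Delta)=\dim_K\tilde H_{i-2}\bigl(\mathrm{link}_{\Delta^\vee}(V\setminus\sigma);K\bigr)
\]
feeds directly into Hochster's formula for the \emph{local cohomology} of $K[\Delta^\vee]$, which is expressed precisely through homology of links of $\Delta^\vee$. Unwinding this gives
\[
\mathrm{reg}(S/I_\Delta)=s-1-\mathrm{depth}\,K[\Delta^\vee],
\]
and then the Auslander--Buchsbaum formula $\mathrm{pd}(S/I_{\Delta^\vee})=s-\mathrm{depth}\,K[\Delta^\vee]$ yields $\mathrm{reg}(S/I_\Delta)=\mathrm{pd}(S/I_{\Delta^\vee})-1$. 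The bookkeeping you flagged as the main obstacle is genuine, but it runs through local cohomology and Auslander--Buchsbaum rather than through a second Betti-number identity.
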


Some of the algebraic invariants of a sequentially Cohen--Macaulay 
monomial ideal $I$ of $S$ can be expressed in terms of the {\it big height}
of $I$. This number is denoted by ${\rm bight}(I)$ and is the 
largest height of an
associated prime of $I$. 

\begin{theorem}{\rm(Morey, -\,, \cite[Theorem~3.31,
Corollary~3.33]{edge-ideals})}
\label{reg-vila-morey-scm} Let $\mathcal{C}$ be a clutter, 
let $I(\mathcal{C})$ be its edge ideal and let 
$I_c(\mathcal{C})$ be its ideal of covers. Then,
\begin{enumerate} 
\item[(a)] 
${\rm reg}(S/I_c(\mathcal{C}))\geq {\rm bight}(I(\mathcal{C}))-1$  
and {\rm(b)} ${\rm pd}_S(S/I(\mathcal{C}))\geq {\rm
bight}(I(\mathcal{C}))$,
\end{enumerate}
with equality everywhere if $S/I(\mathcal{C})$ is sequentially Cohen--Macaulay. 
\end{theorem}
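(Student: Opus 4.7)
The plan is to prove (b) first by a direct localization argument, then derive (a) as a formal consequence of (b) via Terai's duality formula (Theorem~\ref{terai-formula}) applied to the Alexander dual clutter, and finally handle the equality case by invoking the structural filtration of a sequentially Cohen--Macaulay module.

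\textbf{Step 1: the inequality in (b).} Set $I=I(\mathcal{C})$ and pick an associated prime $\mathfrak{p}\in{\rm Ass}_S(S/I)$ with ${\rm ht}(\mathfrak{p})={\rm bight}(I)$. Localizing at $\mathfrak{p}$, the prime $\mathfrak{p}S_{\mathfrak{p}}$ belongs to ${\rm Ass}_{S_\mathfrak{p}}(S_\mathfrak{p}/IS_\mathfrak{p})$, so ${\rm depth}(S_\mathfrak{p}/IS_\mathfrak{p})=0$. Since $S_\mathfrak{p}$ is a regular local ring and $S_\mathfrak{p}/IS_\mathfrak{p}$ has finite projective dimension, the Auslander--Buchsbaum formula gives ${\rm pd}_{S_\mathfrak{p}}(S_\mathfrak{p}/IS_\mathfrak{p})={\rm depth}(S_\mathfrak{p})={\rm ht}(\mathfrak{p})$. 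Because projective dimension cannot increase under localization, ${\rm pd}_S(S/I)\geq{\rm pd}_{S_\mathfrak{p}}(S_\mathfrak{p}/IS_\mathfrak{p})={\rm bight}(I)$, proving (b).

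\textbf{Step 2: deducing (a).} Alexander duality is involutive at the level of squarefree monomial ideals, so the ``edge ideal'' of the blocker $\mathcal{C}^\vee$ is $I_c(\mathcal{C})$ and its ideal of covers is $I(\mathcal{C})$. Applying Terai's formula (Theorem~\ref{terai-formula}) to $\mathcal{C}^\vee$ yields
\[
{\rm reg}(S/I_c(\mathcal{C}))={\rm pd}_S(S/I(\mathcal{C}))-1.
\]
Combining this with Step~1 gives ${\rm reg}(S/I_c(\mathcal{C}))\geq{\rm bight}(I(\mathcal{C}))-1$, which is (a).

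\textbf{Step 3: equality under the sequentially Cohen--Macaulay hypothesis.} Assume $S/I$ is sequentially Cohen--Macaulay and let $0=M_0\subset M_1\subset\cdots\subset M_r=S/I$ be a filtration whose successive quotients $M_i/M_{i-1}$ are Cohen--Macaulay with strictly increasing dimensions $d_1<d_2<\cdots<d_r$. Since ${\rm Ass}(S/I)=\bigcup_{i}{\rm Ass}(M_i/M_{i-1})$, each associated prime has the form $\dim(S/\mathfrak{p})=d_i$ for some $i$, and the largest height is $s-d_1={\rm bight}(I)$. A short-exact-sequence/depth-lemma induction along the filtration yields ${\rm depth}(S/I)=d_1$, so the Auslander--Buchsbaum formula gives ${\rm pd}_S(S/I)=s-d_1={\rm bight}(I)$, turning (b) into an equality. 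Plugging into the identity of Step~2 turns (a) into an equality as well.

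\textbf{Main obstacle.} Steps~1 and~2 are formal; the only delicate point is Step~3, where one must justify that ${\rm depth}(S/I)$ equals the dimension of the smallest layer $M_1$ of a sequentially Cohen--Macaulay filtration. This is a standard consequence of the depth lemma applied to the short exact sequences $0\to M_{i-1}\to M_i\to M_i/M_{i-1}\to 0$ together with the Cohen--Macaulayness and the strict inequality $d_1<d_2<\cdots<d_r$; but conceptually it is the step where the combinatorial hypothesis (sequential Cohen--Macaulayness) converts an inequality into the desired equality, and it is where one should cite the relevant general fact rather than the ad hoc argument.
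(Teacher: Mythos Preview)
The paper is a survey and does not supply its own proof of this theorem; it merely cites \cite[Theorem~3.31, Corollary~3.33]{edge-ideals}. Your argument is correct and is the standard route: inequality (b) via localization at a maximal-height associated prime plus Auslander--Buchsbaum, inequality (a) via Terai's duality (Theorem~\ref{terai-formula}), and the equality case via the sequentially Cohen--Macaulay filtration combined with the depth lemma and Auslander--Buchsbaum.

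One small point worth tightening in Step~3: you assert ${\rm Ass}(S/I)=\bigcup_i{\rm Ass}(M_i/M_{i-1})$ as an equality, but only the inclusion $\subset$ follows directly from the short exact sequences. That inclusion is all you actually use to bound ${\rm bight}(I)\leq s-d_1$; the reverse inequality ${\rm bight}(I)\geq s-d_1$ comes from ${\rm Ass}(M_1)\subset{\rm Ass}(S/I)$ (since $M_1$ is a submodule) together with the unmixedness of the Cohen--Macaulay module $M_1$. Your depth-lemma induction showing ${\rm depth}(S/I)=d_1$ is fine.
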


\begin{corollary}\cite{edge-ideals}\label{pd-formula-sequentially-cm} If
$I$ is a monomial ideal and $S/I$ is sequentially
Cohen--Macaulay, then 
${\rm pd}_S(S/I)={\rm bight}(I)$.
\end{corollary}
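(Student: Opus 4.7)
The plan is to reduce the statement for an arbitrary monomial ideal to the squarefree case already handled by Theorem~\ref{reg-vila-morey-scm}(b), via polarization. Given a monomial ideal $I\subset S$, let $I^{\rm pol}\subset\widetilde{S}$ denote its polarization, where $\widetilde{S}$ is a polynomial ring over $K$ in a larger set of variables $\{t_{i,j}\}$. Then $I^{\rm pol}$ is squarefree, hence equal to $I(\mathcal{C})$ for a unique clutter $\mathcal{C}$ on the variables of $\widetilde{S}$. If the reduction succeeds, Theorem~\ref{reg-vila-morey-scm}(b) applied to $\mathcal{C}$ gives ${\rm pd}_{\widetilde{S}}(\widetilde{S}/I^{\rm pol})={\rm bight}(I^{\rm pol})$, and the corollary follows by transferring the equality back to $S/I$.

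The reduction rests on three standard features of polarization. First, $S/I$ is obtained from $\widetilde{S}/I^{\rm pol}$ by quotienting by a regular sequence of linear forms of the type $t_{i,j}-t_{i,1}$, hence ${\rm pd}_S(S/I)={\rm pd}_{\widetilde{S}}(\widetilde{S}/I^{\rm pol})$. Second, irreducible primary components transfer between $I$ and $I^{\rm pol}$ with heights preserved: each irreducible component $(t_{i_1}^{a_{i_1}},\ldots,t_{i_r}^{a_{i_r}})$ of $I$ corresponds to the monomial primes $(t_{i_1,j_1},\ldots,t_{i_r,j_r})$ of $I^{\rm pol}$ with $1\le j_k\le a_{i_k}$, all of height $r$, so ${\rm bight}(I)={\rm bight}(I^{\rm pol})$. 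Third, and most delicate, the sequentially Cohen--Macaulay property is preserved under polarization, so that $\widetilde{S}/I^{\rm pol}$ is also sequentially Cohen--Macaulay.

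The main obstacle is this third fact; the first two are essentially combinatorial bookkeeping on associated primes and a change of variables, but the preservation of sequential Cohen--Macaulayness under polarization requires showing that the dimension filtration of $S/I$ lifts to (or descends from) a dimension filtration of $\widetilde{S}/I^{\rm pol}$ with Cohen--Macaulay quotients. An alternative route that bypasses polarization entirely is to argue directly from the dimension filtration $0=M_0\subsetneq M_1\subsetneq\cdots\subsetneq M_r=S/I$ witnessing sequential Cohen--Macaulayness, with each $M_i/M_{i-1}$ Cohen--Macaulay of dimension $d_i$ and $d_1<\cdots<d_r$. The short exact sequences $0\to M_{i-1}\to M_i\to M_i/M_{i-1}\to 0$ and induction on $i$ give ${\rm pd}_S(S/I)=\max_i{\rm pd}_S(M_i/M_{i-1})$, and Auslander--Buchsbaum yields ${\rm pd}_S(M_i/M_{i-1})=s-d_i$, so ${\rm pd}_S(S/I)=s-d_1$. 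For a monomial ideal the smallest-dimensional piece $M_1$ corresponds to the associated primes of largest height, i.e., $d_1=s-{\rm bight}(I)$, and the desired equality ${\rm pd}_S(S/I)={\rm bight}(I)$ drops out.
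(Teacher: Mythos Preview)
Your proposal is correct, and in fact both routes you sketch work. The paper itself gives no proof here; it simply places the corollary immediately after Theorem~\ref{reg-vila-morey-scm} (the squarefree case, proved via Alexander duality and Terai's formula) and cites \cite{edge-ideals}. So the route the paper implicitly suggests is exactly your first one: polarize, apply the squarefree result, and transfer back. The fact you flag as delicate---that sequential Cohen--Macaulayness passes to the polarization---is indeed known (it follows, for instance, from the Herzog--Sbarra/Peskine characterization of sequential Cohen--Macaulayness via the Cohen--Macaulayness of the deficiency modules ${\rm Ext}^i_S(S/I,S)$, together with the regular-sequence behavior of polarization), so that approach is complete once one cites the right source.

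Your second route via the dimension filtration is a genuinely different and cleaner argument. It bypasses Alexander duality, Terai's formula, and polarization altogether, and---as you may have noticed---it does not actually use the monomial hypothesis: it proves ${\rm pd}_S(S/I)={\rm bight}(I)$ for \emph{any} graded ideal $I$ in a polynomial ring with $S/I$ sequentially Cohen--Macaulay, since all that is needed is Auslander--Buchsbaum, the dimension formula $\dim(S/\mathfrak{p})=s-{\rm ht}(\mathfrak{p})$, and the identification $d_1=\min\{\dim(S/\mathfrak{p}):\mathfrak{p}\in{\rm Ass}(I)\}$. One small point to tighten: the equality ${\rm pd}_S(S/I)=\max_i{\rm pd}_S(M_i/M_{i-1})$ does not follow from the short exact sequences alone (in general one only gets $\leq$); you need the strict monotonicity ${\rm pd}(M_i/M_{i-1})=s-d_i>s-d_{i+1}$ to run the induction in both directions and conclude ${\rm pd}(M_i)=s-d_1$ for every $i$. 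You have the ingredients for this, but the sentence as written slightly obscures the order of the logic.
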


An {\it induced matching\/} in a graph $G$ is a set of
pairwise disjoint edges $f_1,\ldots,f_r$ such that the only edges of 
$G$ contained in $\bigcup_{i=1}^rf_i$ are $f_1,\ldots,f_r$. The
{\it induced matching number\/} of $G$,
denoted ${\rm im}(G)$, is the number of edges in a largest
induced matching.

\begin{theorem}{\rm(Mahmoudi, Mousivand, Crupi, Rinaldo, Terai, 
Yassemi \cite{Mahmoudi-et-al})}
Let $G$ be a very well-covered graph and let ${\rm im}(G)$ be its
induced matching number. Then, 
$$
{\rm reg}(S/I(G))={\rm im}(G).
$$
\end{theorem}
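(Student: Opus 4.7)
The plan is to prove the two inequalities separately. For the lower bound ${\rm reg}(S/I(G)) \geq {\rm im}(G)$, which holds for every graph: given an induced matching $f_1, \ldots, f_r$ of $G$, the induced subgraph on $\bigcup_{i=1}^r f_i$ is a disjoint union of $r$ copies of $K_2$, and its edge ideal is a complete intersection of $r$ quadrics whose quotient has regularity exactly $r$. Since passing to an induced subgraph does not increase the regularity of the quotient, we obtain ${\rm reg}(S/I(G)) \geq {\rm im}(G)$.

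For the upper bound ${\rm reg}(S/I(G)) \leq {\rm im}(G)$, I would induct on $|V(G)|$. Apply Favaron's theorem \cite{favaron} to fix a perfect matching
\[
M = \{\{x_i, y_i\} : 1 \leq i \leq g\}
\]
of $G$ with the structural properties characterizing very well-covered graphs: every matching edge $\{x_i, y_i\}$ is contained in no triangle, and a natural transitivity-type compatibility holds between the $x_i$'s and the $y_j$'s. Choose an index $i$ which is extremal with respect to the partial order on $[g]$ induced by this structure; this choice ensures that, inside $G_1 := G \setminus N_G[y_i]$, the partner $x_i$ becomes isolated and the surviving matching pairs still align.

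Apply the short exact sequence of graded $S$-modules
\[
0 \longrightarrow \bigl(S/(I(G):y_i)\bigr)(-1) \xrightarrow{\;\cdot y_i\;} S/I(G) \longrightarrow S/(I(G) + (y_i)) \longrightarrow 0,
\]
which yields
\[
{\rm reg}(S/I(G)) \leq \max\bigl\{{\rm reg}\bigl(S/(I(G):y_i)\bigr) + 1,\ {\rm reg}\bigl(S/(I(G) + (y_i))\bigr)\bigr\}.
\]
Combinatorially, $(I(G):y_i) = (N_G(y_i)) + I(G_1)$ with $G_1 = G \setminus N_G[y_i]$, and $I(G) + (y_i) = I(G_2) + (y_i)$ with $G_2 = G \setminus y_i$. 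The combinatorial heart of the proof consists in verifying: \emph{(a)} After discarding isolated vertices, $G_1$ is again very well-covered and ${\rm im}(G_1) \leq {\rm im}(G) - 1$, because any induced matching of $G_1$ together with the edge $\{x_i, y_i\}$ remains induced in $G$; \emph{(b)} after discarding isolated vertices, $G_2$ is very well-covered and ${\rm im}(G_2) \leq {\rm im}(G)$. With (a) and (b) in hand, induction gives
\[
{\rm reg}\bigl(S/(I(G):y_i)\bigr) + 1 \leq {\rm im}(G_1) + 1 \leq {\rm im}(G) \quad\text{and}\quad {\rm reg}\bigl(S/(I(G) + (y_i))\bigr) \leq {\rm im}(G_2) \leq {\rm im}(G),
\]
completing the inductive step.

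The main obstacle is verifying that $G_1$ and $G_2$ inherit the very-well-covered structure. For $G_1$, deleting $N_G[y_i]$ destroys the pair $\{x_i, y_i\}$ and potentially further matched pairs $\{x_j, y_j\}$ with $y_j \in N_G(y_i)$; one must use Favaron's transitivity conditions to show that the surviving pairs still form a perfect matching of $G_1$ enjoying the same structural compatibilities, and that ${\rm im}(G_1) < {\rm im}(G)$ strictly. For $G_2$, a similar but simpler analysis is needed after removing the now-isolated $x_i$. This bookkeeping, rather than the homological step, is the technical core of the argument, and it is precisely what forces the extremal choice of $i$ at the start.
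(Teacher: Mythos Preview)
The survey paper does not give a proof of this theorem; it merely cites the original paper of Mahmoudi, Mousivand, Crupi, Rinaldo, Terai and Yassemi. So there is no ``paper's own proof'' to compare your argument against here.

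That said, your inductive plan has a genuine gap. Take $G=C_4$, the $4$-cycle on $\{1,2,3,4\}$: it is very well-covered (all maximal stable sets have size $2$ and $|V|=4=2\alpha_0$), yet it has no leaves. With any labeling $x_1,x_2,y_1,y_2$ compatible with a perfect matching, deleting $y_i$ yields $G_2\cong P_3$, a path on three vertices with no isolated vertex; in particular $x_i$ is \emph{not} ``now-isolated'', and $P_3$ is not very well-covered ($|V|=3$ while $2\alpha_0=2$). Thus claim~(b) fails, and for $C_4$ there is no choice of $i$ that repairs it. The assertion earlier that ``inside $G_1=G\setminus N_G[y_i]$ the partner $x_i$ becomes isolated'' is also off: since $\{x_i,y_i\}\in E(G)$ one has $x_i\in N_G[y_i]$, so $x_i$ is deleted in $G_1$, not isolated.

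Your argument for~(a) has a related problem. The step ``an induced matching of $G_1$ together with $\{x_i,y_i\}$ remains induced in $G$'' needs every vertex of $G_1$ to be non-adjacent to $x_i$ in $G$, i.e.\ $N_G(x_i)\subseteq N_G[y_i]$. This is not automatic from an ``extremal'' choice in the vague sense you describe. Concretely, for the unmixed bipartite graph on $\{x_1,x_2,x_3,y_1,y_2,y_3\}$ with edges $x_iy_i$, $x_1y_2$, $x_1y_3$, $x_2y_3$, choosing $i=1$ (minimal in the Herzog--Hibi order) gives $G_1$ with $\mathrm{im}(G_1)=1=\mathrm{im}(G)$, so $\mathrm{im}(G_1)\le\mathrm{im}(G)-1$ fails; and indeed $x_1y_3\in E(G)$ obstructs the extension. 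Choosing $i=3$ (maximal, so $x_3$ is a leaf) works here --- but $C_4$ shows that very well-covered graphs need not have any leaf at all, so you cannot always make such a choice.

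The upshot is that the skeleton --- lower bound via induced subgraphs, upper bound via the colon/sum exact sequence --- is the right shape, but the combinatorial control you need over $G_1$ and $G_2$ is more delicate than a single ``extremal'' choice provides. The original proof uses considerably more of the Favaron structure (and a careful relabeling of the matching pairs) to produce a vertex for which the recursion behaves; you would need to supply that analysis, or switch to a different route (for instance via the Alexander dual and Terai's formula, or the distributive-lattice methods used for the bipartite case).
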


An \textit{odd hole} in a graph is an induced odd cycle and an
\textit{odd antihole} is an induced complement of an odd cycle. In
graph theory, these notions appear in the strong perfect graph
theorem, proved by Chudnovsky, Robertson, Seymour, and Thomas
\cite{seymour-perfect}, showing that a graph $G$ is perfect if and only
if $G$ has no odd holes or odd antiholes of length at least five. In
particular, by this theorem, one recovers the weak perfect graph
theorem proved by Lov\'asz \cite{Lovasz} showing that 
the complement of a perfect graph is perfect. 

In commutative algebra odd holes occurred in the work of Simis 
\cite{aron-hoyos}, and later in the description
of Simis and Ulrich of $I(G)^{\{2\}}$, the join of an edge ideal of a graph $G$ with 
itself \cite{simis-ulrich}, and in the following description of the
associated primes of the second power of ideals of covers of 
graphs. The paper of Francisco, H$\rm \grave{a}$ and 
Mermin \cite{francisco-survey}
surveys algebraic techniques for detecting odd cycles and odd holes
in a graph and for computing the chromatic number of a simple hypergraph.  

\begin{theorem}{\rm(Francisco, H$\rm \grave{a}$, Van Tuyl
\cite{fhv})} If $G$ is a graph and
$\mathfrak{p}$ is an associated prime of $I_c(G)^2$, then 
$\mathfrak{p}=(t_i,t_j)$ for some edge $\{t_i,t_j\}$ of
$G$ or $\mathfrak{p}=(t_i\vert\, \in A)$, where $A$ is a
set of vertices of $G$ that induces an 
odd hole. 
\end{theorem}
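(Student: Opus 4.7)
The plan is to analyze $\mathrm{Ass}(S/I_c(G)^2)$ via standard monomial techniques, separating minimal from embedded associated primes.

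First, since $I_c(G)^2$ is a monomial ideal, every associated prime has the form $P_A=(t_i\mid t_i\in A)$ for some $A\subseteq V(G)$. The minimal primes of $I_c(G)^2$ coincide with those of $I_c(G)$; as $I_c(G)$ is the Alexander dual of $I(G)$, its irredundant primary decomposition is $I_c(G)=\bigcap_{\{t_i,t_j\}\in E(G)}(t_i,t_j)$, so the minimal associated primes of $I_c(G)^2$ are exactly $(t_i,t_j)$ for edges $\{t_i,t_j\}\in E(G)$. These produce the first family.

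Next, for an embedded associated prime $P_A$, I would localize at $P_A$ (inverting every $t_k$ with $t_k\notin A$) to reduce to the induced subgraph $G[A]$: each minimal vertex cover $C$ of $G$ contributes, after localization, a unit multiple of $t_{C\cap A}$, and $C\cap A$ is a vertex cover of $G[A]$ (conversely, any minimal vertex cover of $G[A]$ extended by $V\setminus A$ lifts to a vertex cover of $G$). One then checks that $(t_i\mid t_i\in A)$ is an associated prime of $I_c(G[A])^2$ in the localized ring. Now invoke the result recalled in Section~\ref{normality-symbolic-powers} (the Alexander dual of the Simis--Vasconcelos--Villarreal theorem, \cite{alexdual}) stating that $I_c(H)$ is normally torsion-free if and only if $H$ is bipartite. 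Since $P_A$ is embedded, $I_c(G[A])$ fails to be normally torsion-free, and therefore $G[A]$ contains an odd cycle.

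Finally, to refine this to an \emph{induced} odd cycle, I would pick a witness monomial $m$ with $(I_c(G)^2\colon m)=P_A$. For every $t_i\in A$, the condition $t_i m\in I_c(G)^2$ forces a factorization $t_{C_1}t_{C_2}\mid t_i m$ for two minimal covers $C_1,C_2$ of $G$, while for each $t_k\notin A$ no such factorization exists. Translating these constraints to $G[A]$ and using that an odd cycle $C_{2k+1}$ is the minimal non-bipartite structure in which every pair of minimum vertex covers must share a vertex (because $|V(C_{2k+1})|$ is odd, precluding a perfect matching), one argues that any chord or extraneous vertex in $G[A]$ would allow one to shrink $A$ or split $m$, contradicting $(I_c(G)^2\colon m)=P_A$. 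The main obstacle is precisely this last step: the parity argument based on the absence of a perfect matching in $C_{2k+1}$ is the technical heart, and a useful warm-up is $G=C_5$, where $m=t_1t_2t_3t_4t_5$ satisfies $t_i\cdot m=t_{C_1}t_{C_2}$ for suitable pairs of minimum covers, illustrating the mechanism in the smallest nontrivial case.
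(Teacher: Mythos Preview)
The paper does not prove this theorem; it is a survey and the statement is cited from \cite{fhv} without proof, so there is no argument in the paper to compare yours against.

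On the merits of your sketch: the reduction is sound through step~2. Localizing at $P_A$ does identify $I_c(G)S_{P_A}$ with $I_c(G[A])$ in the polynomial ring on $A$, and $P_A\in\mathrm{Ass}(S/I_c(G)^2)$ becomes the statement that the maximal ideal lies in $\mathrm{Ass}$ of $I_c(G[A])^2$. The genuine gap is in steps~3--4. Your appeal to the dual of the Simis--Vasconcelos--Villarreal theorem only yields that $I_c(G[A])$ is not normally torsion-free, hence $G[A]$ is non-bipartite; but that conclusion uses nothing about the exponent~$2$. There are many non-bipartite graphs $H$ (for instance two triangles sharing a vertex) for which the maximal ideal is \emph{not} associated to $I_c(H)^2$, so ``non-bipartite'' is strictly weaker than what you must prove, namely that $H=G[A]$ is an induced odd cycle. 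Your final paragraph acknowledges this is the technical heart and gestures at a parity/shrinking argument, but the sentence ``any chord or extraneous vertex in $G[A]$ would allow one to shrink $A$ or split $m$'' is an assertion, not an argument: you have not exhibited, given a chord or a vertex of $G[A]$ off a minimal odd cycle, how to produce a monomial $m'$ and a proper subset $A'\subsetneq A$ with $(I_c(G)^2:m')=P_{A'}$, nor why the original witness $m$ would already lie in $I_c(G)^2$. That construction is exactly what the proof in \cite{fhv} supplies, via an explicit analysis of how pairs of minimal vertex covers can cover $t_i m$ for each $i\in A$; without it the argument is incomplete.
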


\begin{theorem}{\rm(Francisco, H$\rm \grave{a}$, Van Tuyl
\cite{fhv1})}\label{chromatic-membership} If $G$ is a simple
graph, then the chromatic number of $G$ is the minimal
$k$ such that $(t_1\cdots t_s)^{k-1}\in I_c(G)^k$.
\end{theorem}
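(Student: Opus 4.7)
The plan is to translate the ideal-membership statement into a combinatorial covering condition, then recognize that condition as a proper coloring.

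First, I would unpack what it means for a monomial to belong to a power of a monomial ideal. The ideal $I_c(G)$ is generated by the squarefree monomials $t^C := \prod_{v\in C} t_v$, where $C$ runs over the minimal vertex covers of $G$. For a monomial ideal, a monomial $m$ lies in $I_c(G)^k$ if and only if there exist (not necessarily distinct) minimal vertex covers $C_1,\dots,C_k$ such that $t^{C_1}\cdots t^{C_k}$ divides $m$. Applied to $m=(t_1\cdots t_s)^{k-1}$, this divisibility is exactly the condition that for every vertex $v\in V(G)$,
\[
|\{i : v\in C_i\}|\le k-1,\qquad\text{equivalently,}\qquad |\{i : v\notin C_i\}|\ge 1.
\]

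Next I would translate this to stable sets. For each $C_i$, the complement $S_i := V(G)\setminus C_i$ is a stable set (since $C_i$ is a vertex cover), and because $C_i$ is a minimal cover, $S_i$ is in fact a maximal stable set. The condition above says that every vertex lies in at least one $S_i$, i.e.\ $V(G)=S_1\cup\cdots\cup S_k$. Assigning to each vertex $v$ any index $i$ with $v\in S_i$ produces a proper $k$-coloring of $G$, so $\chi(G)\le k$. This gives the inequality ``$\chi(G)\le \min\{k:(t_1\cdots t_s)^{k-1}\in I_c(G)^k\}$.''

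For the reverse inequality, starting from a proper coloring of $G$ with color classes $T_1,\dots,T_{\chi(G)}$ (which are stable sets), I would extend each $T_i$ to a maximal stable set $S_i$, so that $C_i := V(G)\setminus S_i$ is a minimal vertex cover. The union $S_1\cup\cdots\cup S_{\chi(G)}$ still equals $V(G)$, so every vertex is missing from at least one $C_i$, hence $t^{C_1}\cdots t^{C_{\chi(G)}}$ divides $(t_1\cdots t_s)^{\chi(G)-1}$, showing $(t_1\cdots t_s)^{\chi(G)-1}\in I_c(G)^{\chi(G)}$.

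The proof is essentially a dictionary exercise; there is no serious obstacle beyond making the monomial-divisibility calculus line up precisely with the combinatorial cover/color duality. The one point requiring slight care is that the generators of $I_c(G)$ correspond to \emph{minimal} vertex covers, which is why, in the reverse direction, one must first enlarge the color classes to maximal stable sets before taking complements.
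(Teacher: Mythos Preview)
Your argument is correct and is essentially the standard proof of this result. The paper is a survey and merely cites the theorem from \cite{fhv1} without supplying a proof, so there is nothing to compare against. One minor remark: in the reverse direction you do not actually need to enlarge the color classes to maximal stable sets, since $V(G)\setminus T_i$ is already a vertex cover (not necessarily minimal) and hence $t^{V(G)\setminus T_i}\in I_c(G)$; but your version works just as well.
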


\section{Rees algebras, symbolic powers and normalizations of graph
ideals}\label{normality-symbolic-powers}

There are methods to compute the symbolic powers of prime 
ideals in a polynomial ring $S$, see \cite{aron-symbolic} 
and \cite[Chapter~3]{Vas1}, and there are algorithms for
computing the symbolic powers of ideals of $S$ \cite{Grifo-etal}. 
One of these algorithms uses
the methods of  Eisenbud, Huneke, and Vasconcelos for finding 
primary decompositions of ideals of $S$ \cite{EHV}. 

Let $S=K[t_1,\ldots,t_s]$ be a 
polynomial ring over a 
field $K$ and $I=(f_1,\ldots,f_m)$ an ideal of 
height $g$ of $S$. The 
{\it Jacobian ideal\/} $J$ of $I$ is 
the ideal generated by 
the $g\times g$ minors of the Jacobian matrix 
${\mathfrak J}=(\partial f_i/\partial t_j)$.

The following is a subtle 
application of the Jacobian criterion for computing the symbolic
powers of a prime ideal.

\begin{proposition}{\rm(Vasconcelos~\cite[Proposition~3.5.13]{Vas1})}\label{effesymbvas}
Let $S$ be a 
polynomial ring over a 
field $K$ and let $I$ be a prime ideal. If $f$ is an element in the Jacobian ideal $J$
of $I$ which is not in the ideal $I$, then the $n$-the symbolic 
power $I^{(n)}$ of $I$ is given by
$$
\begin{array}{c}
I^{(n)}=(I^n\colon f^\infty)\ \mbox{ for all }\,n\geq 1,
\end{array}
$$
and such element $f$ exist if $K$ is a perfect field.
\end{proposition}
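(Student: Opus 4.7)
The plan is to extract $I^{(n)}$ from the irredundant primary decomposition of $I^n$ by saturation at $f$. I would write
\begin{equation*}
I^n=I^{(n)}\cap Q_1\cap\cdots\cap Q_r,
\end{equation*}
where $I^{(n)}$ is the (unique) $I$-primary component of $I^n$ and each $Q_i$ is $\mathfrak{p}_i$-primary with $\mathfrak{p}_i$ an embedded prime of $I^n$ properly containing $I$. A standard manipulation of saturations with primary ideals then yields
\begin{equation*}
(I^n\colon f^\infty)=I^{(n)}\cap\bigcap_{f\notin\mathfrak{p}_i}Q_i,
\end{equation*}
where we used that $f\notin I=\sqrt{I^{(n)}}$ forces $(I^{(n)}\colon f^\infty)=I^{(n)}$, while each $Q_i$ with $f\in\mathfrak{p}_i$ saturates to $S$. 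Thus the identity $I^{(n)}=(I^n\colon f^\infty)$ reduces to the assertion that $f$ lies in every embedded prime of $I^n$.

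The main step is therefore to prove the stronger statement that every associated prime $\mathfrak{p}\neq I$ of $I^n$ contains the Jacobian ideal $J$; any $f\in J\setminus I$ then does the job for all $n$ at once. I would argue by contradiction: suppose $\mathfrak{p}\supsetneq I$ is associated to $I^n$ but $J\not\subseteq\mathfrak{p}$. The Jacobian criterion (valid because $K$ is perfect) implies that $(S/I)_{\mathfrak{p}}$ is a regular local ring. Since $S$ is Cohen--Macaulay one has $\dim S_{\mathfrak{p}}=\mathrm{ht}(I)+\dim(S/I)_{\mathfrak{p}}$, and hence the regularity of both $S_{\mathfrak{p}}$ and $(S/I)_{\mathfrak{p}}$ forces $IS_{\mathfrak{p}}$ to be generated by part of a regular system of parameters of $S_{\mathfrak{p}}$, in particular by a regular sequence. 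The standard description of the associated graded ring of an ideal generated by a regular sequence (a polynomial ring over the domain $(S/I)_{\mathfrak{p}}$) then gives that $I^n S_{\mathfrak{p}}$ is $IS_{\mathfrak{p}}$-primary, so $\mathrm{Ass}_{S_{\mathfrak{p}}}(S_{\mathfrak{p}}/I^n S_{\mathfrak{p}})=\{IS_{\mathfrak{p}}\}$, contradicting the fact that $\mathfrak{p}S_{\mathfrak{p}}$ is associated to $I^n S_{\mathfrak{p}}$.

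Finally, the existence of some $f\in J\setminus I$ when $K$ is perfect follows by applying the Jacobian criterion at the prime $I$ itself: $(S/I)_I$ is the fraction field of the integral domain $S/I$, which is trivially a regular local ring, so the criterion forces $J\not\subseteq I$. The perfection of $K$ is used precisely here and in the previous paragraph; over an imperfect $K$ the generic point of $\mathrm{Spec}(S/I)$ need not be regular in the sense detected by the Jacobian matrix. The main obstacle I anticipate is not the saturation computation but the careful invocation of the Jacobian criterion and the passage from ``$(S/I)_{\mathfrak{p}}$ is regular'' to ``$IS_{\mathfrak{p}}$ is generated by a regular sequence,'' which rests on the regularity of $S_{\mathfrak{p}}$ together with the catenary dimension identity in the Cohen--Macaulay ring $S$.
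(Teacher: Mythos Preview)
The paper does not supply a proof of this proposition; it is stated as a result of Vasconcelos with a citation to \cite[Proposition~3.5.13]{Vas1} and is followed immediately by the next topic. Your argument is therefore being compared against the standard proof rather than anything in the survey, and it is essentially that standard proof: decompose $I^n$ primarily, observe that saturation at $f$ kills exactly the components whose radical contains $f$, and then show that every embedded prime of $I^n$ contains $J$ by localizing and invoking the Jacobian criterion to force $IS_{\mathfrak p}$ to be a complete intersection, whence $I^nS_{\mathfrak p}$ is $IS_{\mathfrak p}$-primary. This is correct.

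One small refinement: you attribute the use of perfection to both paragraphs, but in fact the implication ``$J\not\subseteq\mathfrak p\Rightarrow (S/I)_{\mathfrak p}$ regular'' holds over any field (it is the easy direction of the Jacobian criterion, coming from the fact that a nonvanishing $g\times g$ minor lets one write $IS_{\mathfrak p}$ with $g$ generators that form part of a regular system of parameters). Perfection is needed only for the converse direction, which you invoke solely in the existence paragraph to conclude $J\not\subseteq I$ from the regularity of the field $(S/I)_I$. So the first assertion of the proposition is valid over an arbitrary field $K$, and perfection enters exactly where the proposition says it does---for the existence of $f$. This does not affect the correctness of your proof, only the bookkeeping of hypotheses.
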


The symbolic powers of monomial ideals are easier to compute and in the
case of edge ideals they admit a description in terms 
of the powers of the associated primes, see Eq.~\eqref{symb-edge}
below.

An ideal $I\subset S$ has the {\it persistence
property\/} if the sets of associated primes ${\rm
Ass}(S/{{I^k}})$ form an {\it
ascending chain\/}. Edge ideals of graphs have the persistence property.  

\begin{theorem}{\rm(Mart\'inez-Bernal, Morey, -\,,
\cite[Theorem~2.15]{ass-powers})}\label{persistence-edge-ideals}  
Let $G$ be a graph and let $I=I(G)$ be its edge
ideal. Then 
$${\rm Ass}(S/I^k) \subset{\rm Ass}(S/I^{k+1})\mbox{ for all }k\geq 1. $$
\end{theorem}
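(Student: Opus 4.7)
The approach is monomial-witness manipulation. Since $I=I(G)$ is a squarefree monomial ideal, every associated prime of $S/I(G)^k$ has the form $\mathfrak{p}=(t_i\,:\,i\in A)$ for some $A\subseteq\{1,\ldots,s\}$. A standard monomial-localization argument (setting $t_j=1$ for $j\notin A$) yields
$$\mathfrak{p}\in{\rm Ass}(S/I(G)^k)\ \Longleftrightarrow\ \mathfrak{m}_A\in{\rm Ass}(S_A/I(G_A)^k),$$
where $G_A$ is the subgraph of $G$ induced on $\{t_i\,:\,i\in A\}$, $S_A=K[t_i\,:\,i\in A]$, and $\mathfrak{m}_A$ is the graded maximal ideal of $S_A$. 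Since this identification is compatible with replacing $k$ by $k+1$, persistence reduces to proving: if the graded maximal ideal $\mathfrak{m}=(t_1,\ldots,t_s)$ of $S$ lies in ${\rm Ass}(S/I(G)^k)$, then it lies in ${\rm Ass}(S/I(G)^{k+1})$.

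Under that reduction, pick a monomial $f$ with $(I(G)^k\colon f)=\mathfrak{m}$, i.e.\ $t_if\in I(G)^k$ for every $i$ while $f\notin I(G)^k$. Fix an edge $e=\{t_u,t_v\}\in E(G)$ and set $f^\ast=t_ut_v f$. The containment $\mathfrak{m}f^\ast\subseteq I(G)^{k+1}$ is automatic since $t_if^\ast=(t_ut_v)(t_if)\in I(G)\cdot I(G)^k$ for every $i$. Hence the only substantive task is to choose $e$ so that $f^\ast\notin I(G)^{k+1}$; this will give $(I(G)^{k+1}\colon f^\ast)=\mathfrak{m}$ and complete the proof.

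The main obstacle is exactly this non-membership. Suppose for contradiction that $t_ut_vf\in I(G)^{k+1}$, so some product $m_1\cdots m_{k+1}$ of edge monomials of $G$ divides $t_ut_vf$. The plan is a case split on how the two extra factors $t_u$ and $t_v$ are distributed among the $m_j$: if some $m_j$ equals $t_ut_v$, then the remaining $k$ edge monomials multiply to a divisor of $f$, whence $f\in I(G)^k$, a contradiction; otherwise at most one $m_j$ contains $t_u$ and at most one contains $t_v$, and a short exchange of those edges, combined with careful bookkeeping of the exponents of $t_u$ and $t_v$ in $f$, produces a $k$-fold edge product dividing $f$, again contradicting $f\notin I(G)^k$.

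The delicate point in the case analysis is ensuring that the exchange actually yields a product dividing $f$ rather than merely dividing $t_ut_vf$. The plan is to prepare the witness: among all monomials $f$ with $(I(G)^k\colon f)=\mathfrak{m}$, take one of minimal multidegree; and among edges of $G$, choose $e=\{t_u,t_v\}$ so that $\{t_u,t_v\}$ and its neighborhood interact as little as possible with ${\rm supp}(f)$. Minimality rules out the possibility that an exchanged edge peels off a smaller witness, forcing the factorization of $f^\ast$ to contain $t_ut_v$ outright and closing the contradiction. This combinatorial bookkeeping—together with the existence of a suitable edge $e$, which is guaranteed because ${\rm Ass}$-membership at level $k$ restricts the shape of $f$—is the technical heart of the argument.
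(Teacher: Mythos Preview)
The survey does not supply a proof of this theorem; it merely records the statement and cites the original source \cite{ass-powers}. So there is no proof in the present paper to compare against, and I can only assess your sketch on its own terms (and against what is in \cite{ass-powers}).

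Your reduction to the case $\mathfrak{p}=\mathfrak{m}$ via monomial localization is correct and standard, and the plan of multiplying a witness $f$ (with $(I^k\!:\!f)=\mathfrak{m}$, $f\notin I^k$) by an edge monomial $t_ut_v$ is exactly the strategy of \cite{ass-powers}. The easy inclusion $\mathfrak{m}\,t_ut_vf\subset I^{k+1}$ is fine, and your identification of ``$t_ut_vf\notin I^{k+1}$'' as the crux is right.

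The gap is in that crux. Your case split begins correctly: if some $m_j=t_ut_v$ in a factorization $m_1\cdots m_{k+1}\mid t_ut_vf$, one immediately gets $f\in I^k$, a contradiction. But in the remaining case, writing (after relabeling) $m_1=t_ut_a$ and $m_2=t_vt_b$ with $a\neq v$, $b\neq u$, one finds
\[
f \;=\; t_a t_b\, m_3\cdots m_{k+1}\, g,
\]
which exhibits only $k-1$ edge factors of $f$. If $\{a,b\}\in E(G)$ you are done, but when $\{a,b\}\notin E(G)$ there is no ``short exchange'' producing a $k$-th edge, and nothing you have said explains how minimality of $f$ or a special choice of $\{u,v\}$ manufactures one. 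The sentence ``minimality rules out the possibility that an exchanged edge peels off a smaller witness, forcing the factorization of $f^\ast$ to contain $t_ut_v$ outright'' is exactly the step that needs an argument, and none is given.

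In \cite{ass-powers} this is handled by a genuinely combinatorial lemma (essentially establishing $I^{k+1}\!:\!I=I^k$ for edge ideals) that goes beyond the naive swap; it uses the colon hypothesis $t_wf\in I^k$ for \emph{all} $w$ together with an iterative graph argument, not merely for $w=u,v$. Until you supply that step, the proposal is a correct outline with the main lemma missing.
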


In general squarefree monomial ideals do not have the persistence
property \cite{persistence-ce}. Persistence is studied in several papers, 
see for instance
\cite{bhat-persistence,francisco-persistence,persistence-herzog,
persistence-ce,jtt1,lam-trung,edge-ideals,jtt,reyes-toledo}.

Let $G$ be a graph with vertex set $V(G)$. A subset $C\subset V(G)$ 
is a {\it minimal vertex cover\/} of 
$G$ if every edge of $G$ contains at least one vertex in $C$, 
and there is no proper subset of $C$ with this property. A prime ideal
$\mathfrak{p}$ is an associated prime of $I(G)$ if and only 
if $\mathfrak{p}$ is generated by a minimal vertex cover of $G$
\cite[p.~279]{cmg}. 

Let ${\mathfrak p}_1,\ldots,{\mathfrak p}_r$ 
be the associated primes of $I(G)$. Given an integer $k\geq 1$, the 
$k$-th {\it symbolic power} of 
$I(G)$, denoted $I(G)^{(k)}$, is the ideal given by 
\begin{equation}\label{symb-edge}
I(G)^{(k)}=\mathfrak{p}_1^k\cap\cdots\cap \mathfrak{p}_r^k,
\end{equation}
see for instance \cite[Proposition~4.3.25]{monalg-rev}. 

An ideal $I\subset S$ is called
\textit{normally torsion-free} if ${\rm Ass}(I^n)\subset{\rm Ass}(I)$ 
for all $n\geq 1$. For edge ideals of graphs, the following theorem relates this
algebraic property 
with a graph theoretical property. This result has had a strong
impact to date
\cite{bahiano,AJ,icdual,faridi2,Ha-VanTuyl,Haase-Santos,
Herzog-Hibi-book,hhtz,Jayanthan-Kumar-Mukundan,Mandal-Pradhan,Pokora-Romer,ic-resurgence}.

\begin{theorem}{\rm(Simis, Vasconcelos, -\,,  
\cite[Theorem~5.9]{ITG})}\label{ntf-bipartite}
Let $G$ be a graph and let $I(G)$ be its edge ideal. The following 
conditions are equivalent:
\begin{enumerate}
\item[(i)] $G$ is a bipartite graph.
\item[(ii)] $I(G)$ is normally torsion free.
\item[(iii)] $I(G)$ is a Simis ideal, that is, $I(G)^{(k)}=I(G)^k$
for all $k\geq 1$.
\end{enumerate}
\end{theorem}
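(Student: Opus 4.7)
My plan is to prove the cyclic chain (i)$\Rightarrow$(iii)$\Rightarrow$(ii)$\Rightarrow$(i), using the combinatorial description of symbolic powers recorded in Eq.~\eqref{symb-edge}. The equivalence (ii)$\Leftrightarrow$(iii) is formal: $I=I(G)$ is a squarefree monomial ideal and so has no embedded primes, and ${\rm Ass}_S(S/I)={\rm Min}(I)$ consists precisely of the primes generated by the minimal vertex covers. Eq.~\eqref{symb-edge} then identifies $I^{(k)}$ with the intersection of the primary components of $I^k$ at the minimal primes, so $I^k=I^{(k)}$ for every $k\geq 1$ if and only if no new associated primes appear in any power of $I$, which is the definition of normally torsion-free.

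For $\neg$(i)$\Rightarrow\neg$(iii), suppose $G$ contains an odd cycle on vertices $t_1,\ldots,t_{2n+1}$ and set $m=t_1t_2\cdots t_{2n+1}$. Any minimal vertex cover $C$ of $G$ must cover every edge of this odd cycle, so $|C\cap\{t_1,\ldots,t_{2n+1}\}|\geq n+1$; hence $m\in\mathfrak{p}_C^{n+1}$ for every such $C$, and Eq.~\eqref{symb-edge} gives $m\in I(G)^{(n+1)}$. Every monomial in $I(G)^{n+1}$ has degree at least $2(n+1)>2n+1=\deg(m)$, so $m\notin I(G)^{n+1}$ and (iii) fails.

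The heart of the argument is (i)$\Rightarrow$(iii). For a monomial $t^a\in S$, I translate both sides of $I(G)^{(k)}\subseteq I(G)^k$ into dual linear programs on the incidence matrix $A$ of $G$. On one hand, $t^a\in I(G)^k$ if and only if the integer program
\[
\max\Bigl\{\,\textstyle\sum_{e\in E(G)}x_e\ :\ x_e\in\mathbb{N},\ \textstyle\sum_{e\ni v}x_e\leq a_v\ \text{for all }v\,\Bigr\}
\]
has optimum $\geq k$, where $x_e$ records how many copies of the edge monomial $t_e$ appear in a factorization of $t^a$. On the other hand, Eq.~\eqref{symb-edge} shows that $t^a\in I(G)^{(k)}$ if and only if
\[
\min\Bigl\{\,\textstyle\sum_{v}a_v y_v\ :\ y_v\in\{0,1\},\ y_u+y_v\geq 1\text{ for all }\{u,v\}\in E(G)\,\Bigr\}\geq k;
\]
here the $0/1$-minimizers correspond to minimal vertex covers, since any redundant vertex with $y_v=1$ can be removed without increasing the objective. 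The LP relaxations of these two integer programs form a primal-dual pair and share a common optimum by LP duality, and when $G$ is bipartite the incidence matrix $A$ is totally unimodular, so both relaxations attain their optima at integer vertices. Hence the two integer optima coincide, forcing $t^a\in I(G)^{(k)}\Rightarrow t^a\in I(G)^k$. The main obstacle is precisely this final appeal---a weighted form of the K\H{o}nig--Egerv\'ary theorem, invoked through total unimodularity of the bipartite incidence matrix---along with a careful check that the primal-dual translation faithfully captures ideal-theoretic membership on both sides.
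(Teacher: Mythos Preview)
Your proof is correct. The equivalence (ii)$\Leftrightarrow$(iii) is handled cleanly, the odd-cycle obstruction for $\neg$(i)$\Rightarrow\neg$(iii) is the standard one (and is exactly the content of Theorem~\ref{dao-etal} later in the paper), and your LP-duality argument for (i)$\Rightarrow$(iii) via total unimodularity of the bipartite incidence matrix is valid. One small point worth making explicit: after invoking total unimodularity to get an integral optimum $y^*$ for the covering LP, you should note that any coordinate $y^*_v>1$ can be lowered to $1$ without violating the edge constraints $y_u+y_v\geq 1$ and without increasing the objective (since $a\geq 0$), so the integral optimum is attained at a $\{0,1\}$-vector, i.e., at a vertex cover. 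You assert this, but the reduction from $\mathbb{N}^s$ to $\{0,1\}^s$ deserves one sentence.

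As for comparison: this survey paper does not include its own proof of the theorem; it is quoted from \cite{ITG}. However, the machinery you use is precisely what the paper assembles in Section~\ref{blowuprings-section}: Theorem~\ref{ntf-char} identifies the Simis condition with the max-flow min-cut property, and Corollary~\ref{dec26-02} observes that a totally unimodular incidence matrix (hence any bipartite graph) forces MFMC. Your argument is the direct, unwrapped version of that route---you carry out the LP-duality computation by hand rather than passing through the MFMC abstraction. The two approaches are the same in substance; yours is more self-contained, while the paper's framework places the result inside the broader clutter/MFMC dictionary.
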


The \textit{odd girth} of a non-bipartite graph is the size of the shortest induced
odd cycle. 
The next theorem is due to Dao, De Stefani, Grifo, Huneke 
and N\'u\~nez-Betancourt \cite{symbolic-powers-survey}.

\begin{theorem}\cite[Theorem~4.13]{symbolic-powers-survey}\label{dao-etal}
Let $G$ be a non-bipartite graph and let $r_0$ be the least positive integer
such that $I(G)^{r_0}\neq I(G)^{(r_0)}$, then $2r_0-1$ is the odd
girth of $G$.
\end{theorem}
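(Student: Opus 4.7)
The plan is to establish both inequalities $g\leq 2r_0-1$ and $g\geq 2r_0-1$, where $g$ denotes the odd girth of $G$, so that equality $g=2r_0-1$ follows.

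For the easier upper bound $r_0\leq(g+1)/2$, I would produce an explicit witness supported on an induced odd cycle of minimum length. Write $g=2k+1$, let $C$ be an induced odd cycle of $G$ of length $g$ with vertex set $V(C)=\{t_{i_1},\ldots,t_{i_g}\}$, and consider the squarefree monomial $m=t_{i_1}\cdots t_{i_g}$. To check $m\in I(G)^{(k+1)}$ via Eq.~\eqref{symb-edge}, fix a minimal vertex cover $P$ of $G$ with corresponding prime $\mathfrak{p}$; then $P\cap V(C)$ is a vertex cover of $C$, so $|P\cap V(C)|\geq\alpha_0(C_g)=k+1$, and therefore $m\in\mathfrak{p}^{k+1}$. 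On the other hand, every element of $I(G)^{k+1}$ has degree at least $2(k+1)>g$, so $m\notin I(G)^{k+1}$. This yields $r_0\leq k+1$.

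For the harder direction $r_0\geq (g+1)/2$, the plan is to establish $I(G)^{(r)}=I(G)^r$ for every $r\leq k=(g-1)/2$ by analyzing the symbolic Rees algebra $\mathcal{R}_s(I(G))=\bigoplus_{n\geq 0}I(G)^{(n)}\tau^n$. Using the known structural description of its minimal $S$-algebra generators, these split into two families: (i) edge generators $t^{\chi_e}\tau$ for $e\in E(G)$, which together span the ordinary Rees algebra $S[I(G)\tau]$; and (ii) odd-cycle generators $\bigl(\prod_{t_j\in V(C)}t_j\bigr)\tau^{(|V(C)|+1)/2}$, one for each induced odd cycle $C$ of $G$. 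Since the shortest induced odd cycle has length $g=2k+1$, the smallest $\tau$-degree appearing among type (ii) generators is $k+1$. Consequently, for every $r\leq k$ the graded piece $I(G)^{(r)}\tau^r$ is spanned by products of type (i) generators alone, which is precisely $I(G)^r\tau^r$. Hence $I(G)^{(r)}=I(G)^r$ for $r\leq k$, and $r_0\geq k+1$.

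The main obstacle is justifying the structural description of $\mathcal{R}_s(I(G))$ invoked in the second step. This rests on the combinatorial geometry of the fractional covering polyhedron $Q(G)=\{x\in\mathbb{R}_{\geq 0}^{s}:x_i+x_j\geq 1\text{ for every }\{t_i,t_j\}\in E(G)\}$, whose half-integral vertices correspond precisely to induced odd cycles of $G$; a Carath\'eodory-style decomposition of integer points in $nQ(G)$ then extracts exactly the generators of type (i) and (ii). An alternative route avoiding the full structural theorem is via LP duality: the condition $t^a\in I(G)^{(r)}$ translates into the existence of a fractional matching of weight $\geq r$ on $G$ with respect to the capacities $a$, and when the odd girth exceeds $2r-1$ a half-integer rounding argument can upgrade such a fractional matching to an integer matching of weight $\geq r$, giving $t^a\in I(G)^r$. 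Combining both parts yields $2r_0-1=g$.
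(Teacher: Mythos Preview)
The paper does not supply its own proof of this theorem; it is quoted from \cite{symbolic-powers-survey} without argument, so there is nothing in the paper to compare your proposal against directly. Your upper bound $r_0\le(g+1)/2$ via the squarefree monomial supported on a shortest induced odd cycle is correct and is the standard argument.

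Your lower bound, however, has a genuine gap. The claimed generating set for $\mathcal{R}_s(I(G))$---edge generators together with $\bigl(\prod_{t_j\in V(C)}t_j\bigr)\tau^{(|V(C)|+1)/2}$ for induced odd cycles $C$---is not a generating set in general. Take $G=K_4$: the minimal vertex covers are the $3$-subsets, so $m=t_1t_2t_3t_4\in I(K_4)^{(3)}$ (each minimal prime contains three of the four variables), while $m\notin I(K_4)^3$ by degree. The only induced odd cycles of $K_4$ are triangles, contributing generators of $\tau$-degree $2$; to reach $\tau$-degree $3$ in the subalgebra they generate with the edges one needs a triangle times an edge ($t$-degree $\ge 5$) or three edges ($t$-degree $\ge 6$), so $m\tau^3$ lies outside that subalgebra and is an additional minimal generator of $\mathcal{R}_s(I(K_4))$. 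The same example refutes the polyhedral claim: the unique non-integral vertex of the fractional vertex-cover polytope of $K_4$ is $(\tfrac12,\tfrac12,\tfrac12,\tfrac12)$, supported on all four vertices, not on a single induced odd cycle.

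Your alternative LP route also mis-translates membership. The condition $t^a\in I(G)^{(r)}$ says every \emph{integer} vertex cover has $a$-weight $\ge r$; LP duality equates the \emph{fractional} vertex-cover minimum with the fractional matching maximum, so from $t^a\in I^{(r)}$ you only get an inequality in the wrong direction. Concretely, for the triangle with $a=(1,1,1)$ one has $t_1t_2t_3\in I^{(2)}$ but the maximum fractional matching is $\tfrac32<2$. Thus neither route, as written, closes the inequality $r_0\ge(g+1)/2$; what your argument actually needs (and does not establish) is that every non-edge minimal generator of $\mathcal{R}_s(I(G))$ has $\tau$-degree at least $(g+1)/2$, and that statement is essentially equivalent to the theorem itself.
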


\begin{remark} Let $G$ be a non-bipartite graph and let $2r_0-1$ be
the odd girth of $G$. By the persistence property of $I=I(G)$, one has
the inclusions ${\rm Ass}(I)\subsetneq{\rm Ass}(I^{r_0})\subset
{\rm Ass}(I^{n})$ for all $n\geq r_0$. Then, we get 
$I^{n} \neq I^{(n)}$ for all $n\geq r_0$.         
\end{remark}

\quad The \textit{Rees algebra} and the \textit{symbolic Rees algebra} of 
$I=I(G)$ are given by 
\begin{align*}
\mathcal{R}(I):=&S[Iz]=\displaystyle S\oplus Iz\oplus\cdots\oplus
I^nz^n\oplus\cdots\subset S[z], \\
\mathcal{R}_s(I):=&\displaystyle S\oplus I^{(1)}z\oplus\cdots\oplus
I^{(n)}z^n\oplus\cdots\subset S[z], 
\end{align*}
respectively, where $z$ is a new variable. 
If $R$ is an integral domain with field
of fractions $K_R$, recall that the \textit{normalization} or
\textit{integral closure} of $R$ is the
subring $\overline{R}$ consisting of all the elements of $K_R$ that
are integral over $R$. If $R=\overline{R}$ we say that $R$ is {\it
normal\/}. The integral closure of $S[Iz]$ is given by
\cite[p.~168]{Vas1}: 
\begin{equation}\label{jul30-22-2}
\overline{S[Iz]}=S\oplus \overline{I}z\oplus\cdots
\oplus \overline{I^n}z^n\oplus\cdots\subset S[z],
\end{equation}
where $\overline{I^n}$ is the integral closure of $I^n$ (see
Proposition~\ref{icd}). If 
$I^n=\overline{I^n}$ for all $n\geq 1$, the ideal $I$ is called
\textit{normal}. Thus, by Eq.~\eqref{jul30-22-2}, 
the ring $S[Iz]$ is normal if and only if the ideal $I$
is normal. The ideal $I$ is said to be \textit{integrally closed} or \textit{complete} 
if $I=\overline{I}$. 

The minimal generators of $\overline{\mathcal{R}(I)}$ were described by
Vasconcelos using the cycle structure of $G$
(Theorem~\ref{wolmer-ic-rees}). The minimal generators of
$\mathcal{R}_s(I)$ also have a
combinatorial interpretation, they are in one to one
correspondence with the indecomposable parallelizations of $G$ \cite{symboli}. 

Symbolic powers of squarefree monomial ideals are integrally closed
\cite[Corollary 4.3.26]{monalg-rev}.  Interesting families of normal
monomial ideals include polymatroidal 
ideals \cite{matrof}, ideals of covers of perfect graphs
\cite{perfect}, 
edge ideals of graphs with no Hochster configurations
(Theorem~\ref{apr16-02}), normally torsion free squarefree monomial
ideals \cite[Corollary~5.3]{ITG}, and integrally closed ideals in two variables
\cite[Appendix~5]{zariski-samuel-ii}. The normality of the first two families were shown using 
combinatorial optimization methods and polyhedral geometry. 
Determinantal rings are normal and there is a
description of the symbolic powers of determinantal ideals, see the survey
article of Bruns and Conca \cite{bruns-conca}.  

The Normality of ideals is related to the persistence property of associated primes.

\begin{theorem}\label{integralclosurechain}{\rm(\cite[Proposition~3.9]{McAdam},
\cite{ratliff,ratliff-increasing})} 
If $S$ is a Noetherian ring and $I$ is an ideal, 
then the sets ${\rm Ass}(S/{\overline{I^k}})$ form an ascending
chain. 
\end{theorem}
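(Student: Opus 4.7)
The plan is to show that if $\mathfrak{p}\in\operatorname{Ass}(S/\overline{I^k})$ then $\mathfrak{p}\in\operatorname{Ass}(S/\overline{I^{k+1}})$. I first localize at $\mathfrak{p}$; since formation of integral closure of an ideal commutes with localization and associated primes behave well under localization, I may assume $(S,\mathfrak{m})$ is local with $\mathfrak{m}=\mathfrak{p}$. Pick $x\in S\setminus\overline{I^k}$ witnessing $\mathfrak{m}\,x\subseteq\overline{I^k}$. The task is then to produce $y\in S\setminus\overline{I^{k+1}}$ with $\mathfrak{m}\,y\subseteq\overline{I^{k+1}}$.

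If it happens that $\mathfrak{m}\,x\subseteq\overline{I^{k+1}}$, I take $y=x$ directly (noting $x\notin\overline{I^k}\supseteq\overline{I^{k+1}}$). Otherwise, the natural candidate is $y=ax$ for some $a\in I$. The colon condition is automatic from the product rule $\overline{I^r}\cdot\overline{I^s}\subseteq\overline{I^{r+s}}$:
\[
\mathfrak{m}\,(ax) \;=\; a\,(\mathfrak{m}\,x) \;\subseteq\; a\cdot\overline{I^k} \;\subseteq\; I\cdot\overline{I^k} \;\subseteq\; \overline{I^{k+1}},
\]
so it suffices to exhibit some $a\in I$ with $ax\notin\overline{I^{k+1}}$, that is, to show $I\,x\not\subseteq\overline{I^{k+1}}$.

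This is the crux of the proof. I establish the contrapositive: if $I\,x\subseteq\overline{I^{k+1}}$ then $x\in\overline{I^k}$, contradicting the choice of $x$. The tool is the valuative characterization of integral closure. Reducing modulo the nilradical (which is contained in every $\overline{I^n}$, so neither integral closures nor associated primes are affected) and then working modulo each minimal prime, one reduces to the case where $S$ is a Noetherian integral domain. In that setting, let $v$ range over the finitely many Rees valuations of the nonzero ideal $I$; then $0<v(I)<\infty$, and the hypothesis $I\,x\subseteq\overline{I^{k+1}}$ gives $v(I)+v(x)\geq (k+1)\,v(I)$ for every such $v$, whence $v(x)\geq k\,v(I)$ for each Rees valuation of $I$. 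By the valuative criterion, $x\in\overline{I^k}$, as desired.

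Choosing $a\in I$ with $y:=ax\notin\overline{I^{k+1}}$, the ideal $(\overline{I^{k+1}}\colon y)$ is proper and contains $\mathfrak{m}$, so by maximality it equals $\mathfrak{m}$; hence $\mathfrak{m}\in\operatorname{Ass}(S/\overline{I^{k+1}})$, and unwinding the localization yields $\mathfrak{p}\in\operatorname{Ass}(S/\overline{I^{k+1}})$. The chief technical obstacle is the valuative key step; delicate bookkeeping is needed along minimal primes $P$ containing $I$, where $v(I)$ degenerates, but those components can be treated separately using the fact that $\overline{I^n}\subseteq P$ for all $n$ whenever $I\subseteq P$, so the Rees-valuation argument carries through on the remaining components where $I$ is not contained in a minimal prime.
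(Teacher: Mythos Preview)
The paper does not prove this result; it is quoted with references to McAdam and Ratliff and used as a black box. So there is no ``paper's proof'' to compare against, and I will evaluate your argument on its own.

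Your overall strategy---localize, pick a witness $x$ for $\mathfrak{m}\in\operatorname{Ass}(S/\overline{I^k})$, and promote it to a witness for $\operatorname{Ass}(S/\overline{I^{k+1}})$ via multiplication by a suitable $a\in I$, with the obstruction controlled by Rees valuations---is the standard one and is essentially correct. However, the key step as you state it is false, and your final sentence does not repair it.

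You assert the standalone implication ``$Ix\subseteq\overline{I^{k+1}}\Rightarrow x\in\overline{I^k}$'' and prove it by passing to $S/P$ for each minimal prime $P$. On the components with $I\not\subseteq P$ your Rees-valuation computation is fine. On the components with $I\subseteq P$ you must show $x\in P$ (since $\overline{\bar I^k}=0$ in the domain $S/P$), and the fact you invoke, $\overline{I^n}\subseteq P$, does not yield this. Concretely, take the reduced local ring $S=k[s,t]/(st)$ localized at $(s,t)$, $I=(s)$, and $x=t$. Then $Ix=(st)=0\subseteq\overline{I^{k+1}}$ for every $k$, yet $t\notin\overline{I^k}=(s^k)$. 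So the implication fails exactly on the minimal prime $(s)\supseteq I$.

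The fix is to use the hypothesis you already have but did not feed into the key step: $\mathfrak{m}x\subseteq\overline{I^k}$. For a minimal prime $P\supseteq I$ with $P\neq\mathfrak{m}$ (the case $P=\mathfrak{m}$ forces $S$ reduced local of dimension~$0$, hence a field, which is trivial), one has $\mathfrak{m}x\subseteq\overline{I^k}\subseteq P$ and $\mathfrak{m}\not\subseteq P$, whence $x\in P$ since $P$ is prime. With this addition your minimal-prime bookkeeping goes through and the rest of the argument is sound.
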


The following result of Vasconcelos gives an easy-to-use formula for
the integral closure of powers of monomial ideals.

\begin{proposition}{\rm(Vasconcelos \cite[p.~169]{Vas1})}\label{icd}
If $I$ is a monomial ideal of $S$ and $n\in\mathbb{N}_+$, then 
\begin{equation*}
\overline{I^n}=(\{t^a\in S\mid (t^a)^{p}\in I^{pn}
\mbox{ for some }p\geq 1\}).
\end{equation*}
\end{proposition}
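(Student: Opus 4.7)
The plan is to prove the set equality of two monomial ideals by treating each containment separately. Write $I=(t^{v_1},\ldots,t^{v_m})$ with its minimal monomial generators, and denote the right-hand side by $R:=(\{t^a : (t^a)^p\in I^{pn}\text{ for some }p\geq 1\})$. Then $R$ is monomial by construction, and $\overline{I^n}$ is monomial because the integral closure of a monomial ideal in a polynomial ring is again monomial. For the containment $R\subseteq \overline{I^n}$, fix $t^a\in R$ and let $p\geq 1$ satisfy $c:=(t^a)^p\in I^{pn}=(I^n)^p$. Then $t^a$ satisfies the monic polynomial $X^p-c$ over $S$ whose coefficient of $X^{p-i}$ lies in $(I^n)^i$ for every $1\leq i\leq p$ (being $0$ when $i<p$ and $-c$ when $i=p$); this is an equation of integral dependence of $t^a$ over $I^n$, hence $t^a\in\overline{I^n}$.

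For the reverse containment $\overline{I^n}\subseteq R$, the plan is to invoke the Newton polyhedron description
\[\overline{I^n}=\bigl(\{t^a : a\in n\cdot\mathrm{NP}(I)\cap\mathbb{Z}^s\}\bigr),\qquad \mathrm{NP}(I):=\mathrm{conv}\{v_1,\ldots,v_m\}+\mathbb{R}_+^s.\]
This rests on the fact that the monomial valuations $\nu_w(t^a)=\langle w,a\rangle$, $w\in\mathbb{Q}_+^s$, form a sufficient family to detect integral dependence of a monomial over a monomial ideal, together with LP duality (or a separating hyperplane argument) identifying the set $\bigcap_w\{a : \langle w,a\rangle \geq n\min_i\langle w,v_i\rangle\}$ with the rational polyhedron $n\cdot\mathrm{NP}(I)$. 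Granting this, if $t^a\in\overline{I^n}$ we may write $a=\sum_{i=1}^m c_i v_i + d$ with $c_i\geq 0$, $\sum_i c_i = n$, and $d\in\mathbb{R}_+^s$; since $\mathrm{NP}(I)$ is a rational polyhedron and $a\in\mathbb{Z}^s$, we may further take $c_i\in\mathbb{Q}_{\geq 0}$ and $d\in\mathbb{Q}_{\geq 0}^s$. Choosing $p\geq 1$ to clear all denominators yields $pa=\sum_i (pc_i)v_i + pd$ with $pc_i\in\mathbb{N}$, $\sum_i pc_i=pn$, and $pd\in\mathbb{N}^s$, so $(t^a)^p=t^{pa}=\prod_i(t^{v_i})^{pc_i}\cdot t^{pd}\in I^{pn}$, which places $t^a$ in $R$.

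The main obstacle is establishing the Newton polyhedron characterization of $\overline{I^n}$ in the display above; once it is in place, both inclusions reduce to a one-line integral-dependence equation and a standard denominator-clearing argument that exploits the rationality of $\mathrm{NP}(I)$.
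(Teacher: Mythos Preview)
The paper does not supply a proof of this proposition; it is quoted from Vasconcelos' book and used as input. So there is no in-paper argument to compare against, and I can only comment on the soundness and efficiency of your approach.

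Your inclusion $R\subseteq\overline{I^n}$ is fine and standard. For the reverse inclusion, though, routing through the Newton polyhedron description is both heavier than necessary and logically awkward here: in this very paper (and in most of the literature) the identity $\overline{I^n}=(\{t^a:a/n\in\mathrm{NP}(I)\})$ is \emph{deduced from} the formula you are asked to prove, not conversely. Your justification that ``monomial valuations suffice'' is essentially a restatement of the target proposition, so as written the argument flirts with circularity unless you spell out an independent convex-geometry proof of that sufficiency.

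There is a one-paragraph direct argument that avoids all of this. Granting that $\overline{I^n}$ is a monomial ideal, pick $t^a\in\overline{I^n}$ and an equation of integral dependence $(t^a)^k+\sum_{i=1}^k c_i(t^a)^{k-i}=0$ with $c_i\in(I^n)^i$. Expand each $c_i$ as a $K$-linear combination of monomials lying in $(I^n)^i$. The left-hand side is the single monomial $t^{ka}$, so comparing supports forces $t^{ka}=m\cdot(t^a)^{k-i}$ for some $i$ and some monomial $m\in(I^n)^i$; hence $(t^a)^i=m\in I^{ni}$, and $t^a\in R$ with $p=i$. This is the argument in Vasconcelos' book, and it makes the Newton polyhedron description a corollary rather than a hypothesis.
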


In an email communication, Hochster showed to Vasconcelos 
the first example of a connected graph whose edge ideal is not normal
\cite[p.~457]{monalg-rev} (cf. \cite[Example~4.9]{ITG}). This example leads to the following concept
\cite[Definition~6.7]{ITG}. 

A \textit{Hochster configuration} of a graph $G$ consists of two odd
cycles $C\sb{1}$, $C\sb{2}$ of $G$ satisfying the following
two conditions:
\begin{enumerate}
\item[(i)] 
$C\sb {1}\cap N_G(C\sb{2})=\emptyset$, where $N_G(C_2)$
is the neighbor set of $C_2$.  
\item[(ii)] No chord of $C\sb {i}$, $i=1,2$, is
an edge of $G$, i.e., $C_i$ is an induced cycle of $G$.
\end{enumerate}

\begin{lemma}\cite[Lemma~5.7]{icdual}\label{sc-lemma} Let $I=I(G)$ be
an edge ideal, let $C_1$,  
$C_2$ be two odd cycles of $G$ with $|C_1\cap C_2|\leq 1$, and let
$M\sb{C1,C_2}:=(\prod_{t_i\in C_1}t_i\prod_{t_i\in C_2}t_i)z\sp{{(|C_1|+|C_2|)}/{2}}$. The following hold.
\begin{enumerate}
\item[(a)] If $|C_1\cap C_2|=1$, then $M_{C_1,C_2}\in S[Iz]$.
\item[(b)] If $C_1\cap C_2=\emptyset$ and there is $e\in E(G)$
intersecting $C_1$ and $C_2$, then $M_{C_1,C_2}\in S[Iz]$.
\item[(c)] If $C_1,C_2$ form a Hochster configuration, then
$M_{C_1,C_2}\notin S[Iz]$.
\end{enumerate}
\end{lemma}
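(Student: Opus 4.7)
The plan is to translate the membership question $M_{C_1,C_2}\in S[Iz]$ into a purely combinatorial one. Since $S[Iz]=\bigoplus_{k\geq 0}I^k z^k$, we have $M_{C_1,C_2}\in S[Iz]$ if and only if $t_{C_1}t_{C_2}\in I^n$, where $n=(|C_1|+|C_2|)/2$ is an integer because both $|C_i|$ are odd. Since $I=I(G)$ is generated by the squarefree quadratic monomials $t_e=t_it_j$ for $e=\{t_i,t_j\}\in E(G)$, a monomial $t^a$ lies in $I^n$ precisely when $t^a=t^b\prod_{i=1}^n t_{e_i}$ for some $b\in\mathbb{N}^s$ and $e_1,\ldots,e_n\in E(G)$. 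Both sides here have total degree $|C_1|+|C_2|=2n$ when $t^a=t_{C_1}t_{C_2}$, which forces $b=0$. Hence $M_{C_1,C_2}\in S[Iz]$ is equivalent to $t_{C_1}t_{C_2}$ factoring exactly as a product of $n$ edge-monomials of $G$, and this is the combinatorial question I would work with throughout.

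For (a), I would exhibit such a factorization explicitly. Set $|C_i|=2k_i+1$ and write $C_1=(t_0,u_1,\ldots,u_{2k_1},t_0)$ and $C_2=(t_0,s_1,\ldots,s_{2k_2},t_0)$. Take the two edges of $C_1$ incident to $t_0$, namely $\{t_0,u_1\}$ and $\{u_{2k_1},t_0\}$, together with the $k_1-1$ edges $\{u_2,u_3\},\{u_4,u_5\},\ldots,\{u_{2k_1-2},u_{2k_1-1}\}$ of $C_1$: these $k_1+1$ edges use $t_0$ twice and each $u_i$ once. Adjoining the $k_2$ edges $\{s_1,s_2\},\{s_3,s_4\},\ldots,\{s_{2k_2-1},s_{2k_2}\}$ of $C_2$ (a near-matching missing only $t_0$) gives a total of $n=k_1+k_2+1$ edges whose product equals $t_{C_1}t_{C_2}$. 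For (b), writing $C_1=(u,u_1,\ldots,u_{2k_1},u)$ and $C_2=(s,s_1,\ldots,s_{2k_2},s)$, the near-matchings $\{u_1,u_2\},\ldots,\{u_{2k_1-1},u_{2k_1}\}$ of $C_1$ and $\{s_1,s_2\},\ldots,\{s_{2k_2-1},s_{2k_2}\}$ of $C_2$ leave $u$ and $s$ uncovered, and adjoining the bridge $e=\{u,s\}$ produces an $n$-term factorization of $t_{C_1}t_{C_2}$.

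For (c), which I expect to be the main obstacle, I would argue by contradiction. If $M_{C_1,C_2}\in S[Iz]$, then by the reduction there exist edges $e_1,\ldots,e_n\in E(G)$ with $t_{C_1}t_{C_2}=\prod_{i=1}^n t_{e_i}$. The Hochster condition (i) forces $V(C_1)\cap V(C_2)=\emptyset$, since any vertex shared by the two cycles would be adjacent to its two neighbors on $C_2$ and therefore lie in $C_1\cap N_G(C_2)$; consequently $t_{C_1}t_{C_2}$ is squarefree, so the $e_i$ are pairwise disjoint and their endpoints are exactly $V(C_1)\cup V(C_2)$. In other words, $\{e_1,\ldots,e_n\}$ is a perfect matching of $V(C_1)\cup V(C_2)$ formed by edges of $G$. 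Now condition (i) forbids any matching edge having one endpoint in $V(C_1)$ and the other in $V(C_2)$, while condition (ii) says each $C_i$ is an induced cycle and hence forbids chords. Thus every $e_i$ is either an edge of $C_1$ or an edge of $C_2$, and the matching splits into perfect matchings of $V(C_1)$ and of $V(C_2)$ separately. But $|V(C_i)|=|C_i|$ is odd, so no such matching exists, a contradiction.

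The conceptual crux of the argument is the opening degree-counting step, which converts algebraic membership in $S[Iz]$ into an exact edge-covering problem for the vertex-multiset of $C_1\cup C_2$. Once that translation is in place, (a) and (b) reduce to elementary explicit constructions, and (c) becomes the classical parity obstruction that an odd cycle has no perfect matching, with the two Hochster hypotheses doing precisely the work needed to strip the matching down to parts lying entirely within each $C_i$.
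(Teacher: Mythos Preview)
Your proof is correct in all three parts. The paper does not actually prove this lemma---it is quoted from \cite[Lemma~5.7]{icdual}---so there is no in-paper argument to compare against, but your approach (reduce membership in $S[Iz]$ to an exact edge-factorization of $t_{C_1}t_{C_2}$ via degree counting, then construct matchings for (a)--(b) and invoke the odd-parity obstruction for (c)) is the standard and expected one.
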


We come to the Vasconcelos formula in terms of Hochster configurations
for the normalization of the Rees
algebra of the edge ideal of a graph.

\begin{theorem}{\rm(Vasconcelos~\cite[p.~459]{monalg-rev})}\label{wolmer-ic-rees}
Let $I=I(G)$ be the edge ideal of a graph $G$ and let $\mathcal{U}$ be
the set of all 
monomials $M_{C_1,C_2}$ such that $C_1$, $C_2$ is a Hochster
configuration of $G$. Then, the integral
closure of $S[Iz]$ is given by 
$$
\overline{S[Iz]}=S[Iz][\mathcal{U}].
$$
\end{theorem}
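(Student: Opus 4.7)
The plan is to prove each of the two inclusions of monomial subrings in $S[z]$ separately. Since $I$ is a monomial ideal, both $S[Iz]$ and $\overline{S[Iz]}$ are monomial subrings of $S[z]$, so it suffices to reason at the level of monomials. The inclusion $S[Iz][\mathcal{U}]\subseteq\overline{S[Iz]}$ is the direct direction: for a Hochster pair $(C_1,C_2)$ the fact that every vertex of a cycle has degree two gives $(\prod_{t_i\in C}t_i)^2=\prod_{e\in E(C)}t_e$, whence
\[
M_{C_1,C_2}^2=\prod_{e\in E(C_1)}t_e\cdot\prod_{e\in E(C_2)}t_e\cdot z^{|C_1|+|C_2|}\in I^{|C_1|+|C_2|}z^{|C_1|+|C_2|}\subseteq S[Iz],
\]
so $M_{C_1,C_2}$ is the square root of an element of $S[Iz]$ and hence integral over $S[Iz]$.

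For the reverse inclusion $\overline{S[Iz]}\subseteq S[Iz][\mathcal{U}]$, I would take a monomial $t^a z^b\in\overline{S[Iz]}$. Proposition~\ref{icd} supplies $p\geq 1$ with $(t^a)^p\in I^{pb}$, which, after division by $p$, exhibits $(a,b)$ as a feasible rational point of the polyhedron
\[
P(a,b)=\Big\{\lambda\in\mathbb{R}^{E(G)}_{\geq 0}\;\Big|\;\textstyle\sum_e\lambda_e\chi_e\leq a,\ \sum_e\lambda_e=b\Big\},
\]
where $\chi_e$ is the edge--vertex incidence vector of $e$. The central combinatorial input is the classical half-integrality of the fractional matching polytope: an extreme point of $P(a,b)$ is half-integral, and its half-integer support is a vertex-disjoint union of induced odd cycles of $G$ (a chord of a non-induced cycle can always be used to shortcut a smaller odd cycle). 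Applying this yields $n_e\in\mathbb{N}$, a slack vector $c\in\mathbb{N}^s$, and vertex-disjoint induced odd cycles $C_1,\ldots,C_r$ of $G$ satisfying
\[
a=\sum_{e\in E(G)}n_e\,\chi_e+\sum_{j=1}^{r}\chi_{C_j}+c,\qquad 2b=2\sum_{e}n_e+\sum_{j=1}^r|C_j|,
\]
where $\chi_{C}$ is the vertex incidence vector of $C$; since $2b$ is even while each $|C_j|$ is odd, $r$ must be even.

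Pair the cycles $C_1,\ldots,C_r$ arbitrarily into $r/2$ unordered pairs $(C,C')$. The resulting factorisation
\[
t^a z^b=t^c\cdot\prod_{e\in E(G)}(t_e z)^{n_e}\cdot\prod_{k=1}^{r/2}M_{C_{2k-1},C_{2k}}
\]
presents $t^a z^b$ as a product of elements of $S[Iz]$ together with the paired factors $M_{C,C'}$. For each pair there are only two possibilities: either $(C,C')$ is a Hochster configuration and $M_{C,C'}\in\mathcal{U}$ by definition, or else, since $C$ and $C'$ are vertex-disjoint and induced (so condition (ii) of the Hochster definition is automatic), condition (i) must fail and there is an edge of $G$ joining $C$ and $C'$; Lemma~\ref{sc-lemma}(b) then gives $M_{C,C'}\in S[Iz]$ already. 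In either case $t^a z^b\in S[Iz][\mathcal{U}]$.

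The principal technical obstacle is the polyhedral statement invoked above: establishing the half-integrality of the extreme points of $P(a,b)$ together with the structural description of their fractional support as a vertex-disjoint union of induced odd cycles of $G$. This is a classical piece of polyhedral combinatorics---a vertex-weighted strengthening of K\"onig-type integrality beyond the bipartite setting---and it is what allows the clean pairing argument together with Lemma~\ref{sc-lemma} to dispose of all non-Hochster pairs. As a sanity check, in the bipartite case no odd cycles arise, so $r=0$ and $\overline{S[Iz]}=S[Iz]$, in agreement with the implication (i)$\Rightarrow$(ii) of Theorem~\ref{ntf-bipartite}.
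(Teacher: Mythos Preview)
Your easy inclusion $S[Iz][\mathcal{U}]\subset\overline{S[Iz]}$ is fine, and so is the final step where you use Lemma~\ref{sc-lemma}(b) to dispose of non-Hochster pairs of vertex-disjoint induced odd cycles. The problem is the polyhedral claim in the middle: it is \emph{false} that every extreme point of
\[
P(a,b)=\Big\{\lambda\in\mathbb{R}^{E(G)}_{\geq 0}\;\Big|\;\textstyle\sum_e\lambda_e\chi_e\leq a,\ \sum_e\lambda_e=b\Big\}
\]
has its half-integer support equal to a vertex-disjoint union of odd cycles. Take $G$ to be two disjoint triangles on $\{t_1,t_2,t_3\}$ and $\{t_4,t_5,t_6\}$, set $a=(1,\ldots,1)$ and $b=2$. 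Then $\lambda=(\tfrac12,\tfrac12,\tfrac12,\tfrac12,0,0)$ (value $\tfrac12$ on the three edges of the first triangle and on the single edge $\{t_4,t_5\}$) is an extreme point of $P(a,b)$: the three vertex constraints at $t_1,t_2,t_3$, the two nonnegativity constraints $\lambda_{e_5}=\lambda_{e_6}=0$, and the cardinality equality are six independent tight constraints in six variables. Its half-integer support is a triangle together with a pendant edge, so no decomposition $a=\sum n_e\chi_e+\sum_j\chi_{C_j}+c$ with $c\in\mathbb{N}^s$ and odd cycles $C_j$ arises from this vertex (indeed the slack $a-A\lambda=(0,0,0,\tfrac12,\tfrac12,1)$ is not integral). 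The classical half-integrality/odd-cycle structure theorem you invoke holds for the $b$-matching polytope $Q(a)=\{\lambda\geq 0:A\lambda\leq a\}$; once you intersect with the hyperplane $\sum_e\lambda_e=b$ you create new extreme points lying on edges of $Q(a)$, and these need not inherit the odd-cycle support structure.

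The paper avoids this by quoting the description $\overline{S[Iz]}=S[Iz][\mathcal{B}']$, where $\mathcal{B}'$ consists of all $M_{C_1,C_2}$ with $C_1,C_2$ induced odd cycles satisfying $|C_1\cap C_2|\leq 1$, and then uses Lemma~\ref{sc-lemma}(a)--(b) to discard those $M_{C_1,C_2}$ already in $S[Iz]$. The quoted formula is proved (in \cite{monalg-rev}) via the isomorphism $\mathcal{R}(I(G))\simeq K[C(G)]$ and the bowtie description of $\overline{K[C(G)]}$ (Theorem~\ref{bowtie-main-teo}); the point is that for $K[C(G)]$ one is working with the edge cone of an honest graph, so the polyhedral matching structure applies without an extraneous cardinality constraint. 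If you want to push your direct approach through, that isomorphism is exactly what replaces your problematic equality $\sum_e\lambda_e=b$ by a genuine vertex-degree constraint.
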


\begin{proof} The integral closure $\overline{S[Iz]}$ of $S[Iz]$ is equal to 
$S[Iz][\mathcal{B}']$ \cite[p.~459]{monalg-rev},  
where $\mathcal{B}'$ is the set of all 
monomials $$M\sb{C1,C_2}:=\Big(\prod_{t_i\in C_1}t_i\prod_{t_i\in
C_2}t_i\Big)z\sp{{(|C_1|+|C_2|)}/{2}}$$ 
such
that $C_1$ and $C_2$ are two induced odd cycles of $G$ with at most one common
vertex. If $C_1$ and
$C_2$ intersect at a point or $C_1$ and $C_2$ are joined by at least
one edge of $G$, then $M_{C_1,C_2}$ is in
$S[Iz]$ by Lemma~\ref{sc-lemma}. Hence, 
$\overline{S[Iz]}=S[Iz][\mathcal{U}]$.
\end{proof}

It was conjectured in \cite{ITG} that the 
edge ideal of a graph $G$ is normal if and only if the graph has no
Hochster configurations. This 
conjecture was proved in \cite[Corollary~5.8.10]{graphs}, 
\cite[Corollary~10.5.9]{monalg-rev} (cf. 
\cite[p.~410]{hibi-ohsugi-normal}, \cite[p.~283]{bowtie}). The
following is a recent proof of this  
conjecture \cite[Theorem~5.8]{icdual} using Vasconcelos'
description of the integral closure
of the Rees algebra of the edge ideal $I(G)$. 

\begin{theorem}{\rm(\cite[Conjecture~6.9]{ITG}, 
\cite[Corollary~5.8.10]{graphs})}\label{apr16-02} 
The edge ideal $I(G)$ of a 
graph $G$ is
normal if and only if $G$ admits no Hochster configurations.
\end{theorem}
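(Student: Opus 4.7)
The plan is to deduce this directly from Theorem~\ref{wolmer-ic-rees} (Vasconcelos' description of $\overline{S[Iz]}$) together with Lemma~\ref{sc-lemma}. Recall that by Eq.~\eqref{jul30-22-2}, the ideal $I=I(G)$ is normal if and only if the Rees algebra $S[Iz]$ is normal, so the task reduces to showing that $S[Iz]=\overline{S[Iz]}$ is equivalent to the nonexistence of Hochster configurations in $G$.

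For the sufficiency direction, suppose $G$ has no Hochster configuration. Then the set $\mathcal{U}$ of monomials $M_{C_1,C_2}$ appearing in Theorem~\ref{wolmer-ic-rees} is empty, and therefore $\overline{S[Iz]}=S[Iz][\mathcal{U}]=S[Iz]$, so $I$ is normal.

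For the necessity direction, suppose $G$ admits a Hochster configuration $C_1,C_2$, and consider the monomial
\[
M_{C_1,C_2}=\Big(\prod_{t_i\in C_1}t_i\prod_{t_i\in C_2}t_i\Big)z^{(|C_1|+|C_2|)/2}.
\]
This element belongs to $\overline{S[Iz]}$: squaring it gives $\prod_{t_i\in C_1}t_i^2\prod_{t_i\in C_2}t_i^2\,z^{|C_1|+|C_2|}$, and since each $C_\ell$ is a cycle, each vertex of $C_\ell$ lies in exactly two edges of $C_\ell$, so $\prod_{t_i\in C_\ell}t_i^2=\prod_{e\in E(C_\ell)}t_e$; thus $M_{C_1,C_2}^2=\prod_{e\in E(C_1)\cup E(C_2)}(t_ez)\in S[Iz]$, showing integrality. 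On the other hand, Lemma~\ref{sc-lemma}(c) says $M_{C_1,C_2}\notin S[Iz]$. Hence $S[Iz]\subsetneq\overline{S[Iz]}$, so $I$ is not normal.

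The main work is already done in Theorem~\ref{wolmer-ic-rees} and Lemma~\ref{sc-lemma}, and the real obstacle there is to show that any element of $\overline{S[Iz]}$ can be written as a polynomial in the variables, the generators $t_ez$, and the monomials $M_{C_1,C_2}$ coming from two induced odd cycles sharing at most one vertex—and then to prune from this generating set those pairs which fail the Hochster conditions (either because they meet in a vertex, or because an edge joins them, or because one cycle has a chord in $G$), using parts (a) and (b) of Lemma~\ref{sc-lemma} in the first two cases, and using the chord to rewrite $M_{C_1,C_2}$ as a product of shorter cycle-monomials in the third. Once those reductions are in hand, the equivalence stated in the theorem is a formal consequence as outlined above.
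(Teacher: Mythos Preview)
Your proof is correct and follows essentially the same route as the paper: both reduce to the equivalence of normality of $I$ and $S[Iz]$, invoke Theorem~\ref{wolmer-ic-rees} to identify $\overline{S[Iz]}=S[Iz][\mathcal{U}]$, and use Lemma~\ref{sc-lemma}(c) to see that $\mathcal{U}$ contributes a genuinely new element exactly when a Hochster configuration exists. Your explicit squaring argument showing $M_{C_1,C_2}\in\overline{S[Iz]}$ is a nice self-contained touch (the paper just cites Theorem~\ref{wolmer-ic-rees} for this), and your closing paragraph is commentary on the proofs of the cited results rather than part of the argument itself.
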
 

\begin{proof} 
To show this result we use the description of
Theorem~\ref{wolmer-ic-rees} for the integral closure of the Rees
algebra $S[Iz]$ 
of the edge ideal $I=I(G)$:  
$$
\overline{S[Iz]}=S[Iz][\mathcal{U}],
$$
where $\mathcal{U}$ is the set of all 
monomials $M\sb{C1,C_2}$ such that $C_1$, $C_2$ is a Hochster
configuration of $G$. Therefore, by Lemma~\ref{sc-lemma}(c), $S[Iz]$ is normal if and only if $G$ has no Hochster
configurations, and the result follows from the fact that $I$ is
normal if and only if $S[Iz]$ is normal. 
\end{proof}

\begin{corollary}\cite{ITG}\label{bip-normal} 
If $G$ is a bipartite graph, then $I(G)$ is normal
\end{corollary}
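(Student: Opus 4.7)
The plan is to deduce this immediately from Theorem~\ref{apr16-02}, which characterizes normality of $I(G)$ by the absence of Hochster configurations. Recall from the definition given just before Lemma~\ref{sc-lemma} that a Hochster configuration requires two \emph{odd} cycles $C_1, C_2$ of $G$ satisfying certain induced/disjointness conditions. Since a bipartite graph contains no odd cycles at all, the hypothesis of a Hochster configuration is vacuously unsatisfiable, so $G$ admits none.

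First I would invoke Theorem~\ref{apr16-02} to reduce the claim ``$I(G)$ is normal'' to the combinatorial claim ``$G$ has no Hochster configurations.'' Then I would observe that bipartiteness of $G$ is equivalent to $G$ having no odd cycles (a classical fact, cf.\ \cite{Har}), which immediately rules out the existence of the two odd cycles $C_1, C_2$ required in the definition of a Hochster configuration. Combining these two steps yields the conclusion.

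There is essentially no obstacle here: the corollary is a direct instantiation of the stronger theorem just proved, and the main content is packaged into Theorem~\ref{apr16-02} (and ultimately into Vasconcelos' description in Theorem~\ref{wolmer-ic-rees}). As a sanity check, one could alternatively derive the corollary from Theorem~\ref{ntf-bipartite}, which states that for bipartite $G$ the edge ideal $I(G)$ is normally torsion free; since normally torsion free squarefree monomial ideals are normal (noted in the paragraph preceding Theorem~\ref{integralclosurechain}, citing \cite[Corollary~5.3]{ITG}), normality of $I(G)$ follows. Either route gives a one-line proof, so the write-up will simply be: ``$G$ bipartite $\Rightarrow$ $G$ has no odd cycles $\Rightarrow$ $G$ has no Hochster configurations $\Rightarrow$ $I(G)$ is normal by Theorem~\ref{apr16-02}.''
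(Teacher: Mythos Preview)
Your proposal is correct and matches the paper's intent exactly: the corollary is stated immediately after Theorem~\ref{apr16-02} with no separate proof, so the intended argument is precisely the one-line deduction you give (bipartite $\Rightarrow$ no odd cycles $\Rightarrow$ no Hochster configurations $\Rightarrow$ $I(G)$ normal). Your alternative route via Theorem~\ref{ntf-bipartite} and \cite[Corollary~5.3]{ITG} is also valid and is in fact closer to the original argument in \cite{ITG}, but either way the content is the same.
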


The \textit{ideal of covers} of a graph $G$, denoted $I_c(G)$, is the
ideal of $S$ generated by all monomials $\textstyle\prod_{t_i\in C}t_i$ such
that $C$ is a vertex cover of $G$. By \cite{alexdual},
Theorem~\ref{ntf-dual-bipartite}, $I_c(G)$ is a Simis ideal if and
only if $G$ is a bipartite graph. A main problem 
in this area is the characterization of the
normality of the ideal of covers $I_c(G)$ of a graph $G$ in terms of
the combinatorics of $G$. This problem was solved for graphs with
independence number at most $2$ using
Hochster configurations \cite[Theorem~5.11]{icdual}. 
If $G$ is an odd cycle or a perfect graph, 
then $I_c(G)$ is normal, see \cite{Al-Ayyoub-Nasernejad-cover-ideals}
and \cite{perfect}, respectively. 

Let $\{t^{v_1},\ldots,t^{v_m}\}$ be the set of all
monomials $t_jt_j$ such that $\{t_i,t_j\}$ is an edge of $G$. 
The {\it edge subring\/} of $G$, denoted $K[G]$, is defined as 
$$K[G]:=K[t^{v_1},\ldots,t^{v_m}]\subset S.$$ 
\quad The edge subring was introduced and studied by Simis,
Vasconcelos and the second author in \cite{ITG}, where the
normality of $K[G]$ and $S[I(G)z]$ are related
\cite[Theorem~7.1]{ITG}, see Theorem~\ref{feb16-23}. The 
toric ideals of $K[G]$ and $S[I(G)z]$, see
Eq.~\eqref{toric-def} for the notion of toric ideal, is generated by
pure binomials that correspond to the even closed walks and the even cycles of the
graph $G$ \cite[Proposition~3.1, Theorem~3.1]{raei}. Reyes,
Tatakis and Thoma \cite{reyes-tatakis-thoma} characterized when a
binomial of the toric ideal of $K[G]$ is primitive, minimal,
indispensable, or fundamental, in terms of the even closed walks of
$G$.  Toric
ideals of edge subrings of uniform hypergraphs were studied more recently by
Petrovi\'{c} and 
Stasi \cite{petrovic-etal}. Standard references for binomial ideals
and Gr\"obner bases of toric ideals are the paper of Eisenbud and
Sturmfels \cite{EisStu}, the book of Sturmfels
\cite[Chapter~4]{Stur1}, and the book of Herzog, Hibi and Ohsugi
\cite{herzog-hibi-ohsugi}. 

As is seen below, the generators (Hochster configurations) of the
integral closure of the Rees algebra of the edge ideal of a graph
are related to the generators (bowties) of the integral closure of the
edge subring of the graph. 

To describe the integral closure of $K[G]$ using polyhedral geometry,
let ${\mathcal A}=\{v_1,\ldots,v_m\}$ be the set of exponent vectors
of the minimal generators of $I(G)$. By \cite[Theorem~9.1.1]{monalg-rev},
one has:
\[
\overline{K[G]}=
K[\{t^a\vert\, a\in\mathbb{R}_+{\mathcal A}\cap\mathbb{Z}{\mathcal A}\}],
\]
where $\mathbb{R}_+{\mathcal A}$ is the cone in $\mathbb{R}^s$ 
generated by ${\mathcal A}$ 
and $\mathbb{Z}{\mathcal A}$ is the subgroup of $\mathbb{Z}^s$ 
generated by ${\mathcal A}$. 

Hence, we can compute the monomial
generators of $\overline{K[G]}$ using
\textit{Normaliz} \cite{normaliz2}. A major result in the theory of
graph rings was the description of the integral closure of $K[G]$ using
the cycle structure of the graph. To describe this result, we need the
notion of a bowtie.

\begin{definition}
A \textit{bowtie} of $G$ is a connected subgraph $w$ of $G$ consisting of two 
induced odd cycles $C_1$, $C_2$ of $G$, with $|C_1\cap C_2|\leq 1$, that are joined by a path of $G$ intersecting each $C_i$ in
exactly one vertex when $C_1\cap C_2=\emptyset$. If $w$ is a bowtie, we 
 set $M_w:=(\prod_{t_i\in C_1}t_i)(\prod_{t_i\in C_2}t_i)$.  
\end{definition}

\begin{example} The graph depicted in Figure~\ref{figure-bowtie} is a
bowtie formed with 
a $5$-cycle and a $3$-cycle joined by a path of length $2$.
\begin{figure}[ht]
\setlength{\unitlength}{.03cm}
\thicklines
\begin{picture}(-30,80)(0,-40)
\put(0,0){\circle*{4.125}}
\put(-40,0){\circle*{4.125}}
\put(40,0){\circle*{4.125}}
\put(-60,30){\circle*{4.125}}
\put(-60,-30){\circle*{4.125}}
\put(-100,20){\circle*{4.125}}
\put(-100,-20){\circle*{4.125}}
\put(80,30){\circle*{4.125}}
\put(80,-30){\circle*{4.125}}
\put(-100,20){\line(0,-11){40}}
\put(-100,20){\line(4,1){40}}
\put(-100,-20){\line(4,-1){40}}
\put(-40,0){\line(-2,3){20}}
\put(-40,0){\line(-2,-3){20}}
\put(-40,0){\line(1,0){40}}
\put(1,0){\line(1,0){40}}
\put(40,0){\line(4,3){40}}
\put(40,0){\line(4,-3){40}}
\put(80,30){\line(0,-1){60}}
\put(-40,-11){$t_5$}
\put(-2,-11){$t_6$}
\put(30,-11){$t_7$}
\put(85,30){$t_8$}
\put(-55,30){$t_1$}
\put(-55,-30){$t_4$}
\put(85,-30){$t_9$}
\put(-115,20){$t_2$}
\put(-115,-20){$t_3$}
\end{picture}
\caption{A $5$-cycle and a $3$-cycle joined by a path of length $2$.}\label{figure-bowtie}
\vspace{-0.5cm}
\end{figure}
\end{example}
The following formula gives the normalization of an edge subring. 
\begin{theorem}{\rm(Hibi and Ohsugi \cite{hibi-ohsugi-normal}, Simis,
Vasconcelos, -\,, \cite{bowtie})}\label{bowtie-main-teo}
If $G$ is a graph, then the integral closure of $K[G]$ is given by
$$
\overline{K[G]}=K[\{t^{v_1},\ldots,t^{v_m}\}\cup \{M_{w}\mid w\mbox{ is a
bowtie of }G\}].
$$
\end{theorem}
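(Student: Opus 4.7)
By the polyhedral description of normalizations of monomial subrings recalled just before the statement,
\[\overline{K[G]}=K[\{t^a:a\in\mathbb{R}_+\mathcal{A}\cap\mathbb{Z}\mathcal{A}\}]\]
where $\mathcal{A}=\{v_1,\ldots,v_m\}\subset\mathbb{Z}^s$ is the set of edge vectors. Since the monoid $\{a : t^a\in K[G][\{M_w\}]\}$ equals $\mathbb{N}\mathcal{A}+\mathbb{N}\{M_w\}$, the theorem is equivalent to the semigroup identity
\[\mathbb{R}_+\mathcal{A}\cap\mathbb{Z}\mathcal{A}\;=\;\mathbb{N}\mathcal{A}+\mathbb{N}\{M_w:w\text{ a bowtie of }G\}.\]
The inclusion ``$\supseteq$'' is immediate: summing edge vectors around each odd cycle of a bowtie $w$ yields $2M_w=\sum_{e\in E(C_1)\cup E(C_2)}v_e$, so $M_w\in\mathbb{R}_+\mathcal{A}$; and $|C_1|+|C_2|$ is even, so $M_w\in\mathbb{Z}\mathcal{A}$.

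For the nontrivial inclusion, fix $a$ in the semigroup and split $a$ over connected components of $G$ to reduce to the case $G$ connected. Clearing denominators in a non-negative rational expression for $a$ gives $2a=\sum_e m_e v_e$ with $m_e\in\mathbb{N}$, which I view as an Eulerian multigraph $M$ on $V(G)$ (every vertex has even degree $2a_v$). Veblen's theorem decomposes $M$ into edge-disjoint simple cycles $W_1,\ldots,W_r$; since each cycle vertex has degree $2$, $\sum_{e\in E(W_i)}v_e=2\,\mathrm{vert}(W_i)$, and summing gives $a=\sum_i\mathrm{vert}(W_i)$. Each even $W_i$ has a perfect matching of its edges whose vectors sum to $\mathrm{vert}(W_i)$, so $\mathrm{vert}(W_i)\in\mathbb{N}\mathcal{A}$. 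Since $\sum_v a_v=\sum_i|W_i|$ is even, the odd cycles in the decomposition occur in an even number and can be paired off.

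For each pair of odd cycles $(W,W')$ I would perform two kinds of reduction. \emph{Chord elimination:} if $W$ has a chord $f\in E(G)$, then $W$ splits into a shorter odd cycle $W_1$ and an even cycle $W_2$ sharing $f$; inclusion-exclusion on vertex sets gives $\mathrm{vert}(W)+v_f=\mathrm{vert}(W_1)+\mathrm{vert}(W_2)$, and choosing the perfect matching of $W_2$ that contains $f$ rewrites $\mathrm{vert}(W_2)=v_f+u$ with $u\in\mathbb{N}\mathcal{A}$, hence $\mathrm{vert}(W)=\mathrm{vert}(W_1)+u$ with $W_1$ strictly shorter. Iterate to reduce to induced cycles. \emph{Eulerian surgery:} if the resulting induced odd cycles share two vertices $u,v$, swapping a $u$-$v$ sub-path of one with a $u$-$v$ sub-path of the other produces new cycles $W'',W'''$ with $\mathrm{vert}(W'')+\mathrm{vert}(W''')=\mathrm{vert}(W)+\mathrm{vert}(W')$ and either strictly smaller intersection or both of even parity (in which case the pair contributes to $\mathbb{N}\mathcal{A}$). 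After both reductions terminate, one is left with two induced odd cycles in $G$ sharing at most one vertex; joining them by any shortest $G$-path (which exists because $G$ is connected) produces a bowtie $w$ with $\mathrm{vert}(W)+\mathrm{vert}(W')=M_w$ by the definition of $M_w$.

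\textbf{Main obstacle.} The delicate bookkeeping is to organize chord elimination and Eulerian surgery into a single well-founded induction, for instance on the total edge count of $M$ with a secondary potential counting pairwise intersection sizes among the odd cycles, while ensuring that every auxiliary chord vector $v_f$ is absorbed cleanly into an alternating matching of the accompanying even sub-cycle so that the running ledger of $\mathbb{N}\mathcal{A}$-contributions stays non-negative throughout. This combinatorial balancing is the heart of the arguments in \cite{hibi-ohsugi-normal} and \cite{bowtie}.
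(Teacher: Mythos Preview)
The paper does not prove this theorem; it is stated with attribution to \cite{hibi-ohsugi-normal} and \cite{bowtie}, and a short alternative proof by Johnson \cite{johnson} is mentioned. Your overall architecture---represent $a$ by an Eulerian multigraph on the edges of $G$, decompose into cycles via Veblen, absorb even cycles as matchings, pair off the odd cycles, and reduce each pair to a bowtie by chord elimination and surgery---is exactly the combinatorial strategy of the original sources, so in that sense you are on the right track.

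There is, however, one genuine gap that is not merely bookkeeping. Your sentence ``clearing denominators in a non-negative rational expression for $a$ gives $2a=\sum_e m_e v_e$ with $m_e\in\mathbb{N}$'' does not justify the denominator~$2$: clearing denominators in an arbitrary $\mathbb{Q}_{\ge 0}$-combination only yields $Na\in\mathbb{N}\mathcal{A}$ for some $N$, and for $N>2$ your cycle decomposition produces an expression for $Na$, not for $a$---so you would be reduced to knowing that the target semigroup is normal, which is precisely what you are proving. What is actually needed is the half-integrality of basic feasible solutions of $\{x\ge 0: Ax=a\}$ when $A$ is the vertex--edge incidence matrix of a graph and $a\in\mathbb{Z}^s$: the support of any such basic solution is a subgraph each of whose components is a tree or an odd-unicyclic graph, and solving $Ax=a$ on an odd cycle forces $x_e\in\tfrac{1}{2}\mathbb{Z}$. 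Doubling a half-integral vertex gives $2a\in\mathbb{N}\mathcal{A}$, and only then does your Eulerian argument launch. This half-integrality is the key structural input in \cite{hibi-ohsugi-normal,bowtie}; once you insert it, the remaining reductions (and the termination you flag as the main obstacle) go through as you outline.
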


There is a short proof of this result due to Johnson \cite{johnson}. 
Theorems~\ref{wolmer-ic-rees} and \ref{bowtie-main-teo} are valid 
for multigraphs by regarding loops as odd cycles
\cite{hibi-ohsugi-normal}, \cite[p.~450]{monalg-rev}.

A graph $G$ has the 
\textit{odd cycle condition} if every two vertex disjoint odd 
cycles of $G$ can be joined by at least one edge of $G$. We 
thank J. Brennan for pointing out that this
notion comes from graph theory \cite{FHMcA}. The odd cycle condition  was used to
classify the normality of edge subrings 
\cite{hibi-ohsugi-normal,ITG,bowtie} and the elementary integral
vectors of the kernel of the incidence matrix of a graph 
\cite[Proposition~4.2]{raei}.

\begin{corollary}\label{bowtie-criterion}
Let $G$ be a graph. If $G$ satisfies the odd cycle condition, then 
$K[G]$ is normal. The converse holds if $G$ is a connected graph.
\end{corollary}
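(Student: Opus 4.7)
The plan is to deduce both implications from Theorem~\ref{bowtie-main-teo}, which writes $\overline{K[G]}$ as $K[\{t^{v_1},\ldots,t^{v_m}\}\cup\{M_w\mid w\text{ a bowtie of }G\}]$. Since $K[G]=K[t^{v_1},\ldots,t^{v_m}]$, normality of $K[G]$ is equivalent to $M_w\in K[G]$ for every bowtie $w$ of $G$. The forward direction then reduces to writing each such $M_w$ as a product of edge monomials of $G$, while the converse reduces to exhibiting, when the odd cycle condition fails, a bowtie $w$ with $M_w\notin K[G]$.

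For the forward direction I would fix a bowtie $w$ built from induced odd cycles $C_1$ of length $2k+1$ and $C_2$ of length $2\ell+1$, and split on whether the cycles share a vertex. If $C_1\cap C_2=\{v\}$, I would label $C_1=(v,u_1,\ldots,u_{2k},v)$ and $C_2=(v,x_1,\ldots,x_{2\ell},v)$ cyclically and factor
\[
M_w=(t_v t_{u_1})(t_v t_{u_{2k}})\prod_{i=1}^{k-1}(t_{u_{2i}}t_{u_{2i+1}})\prod_{j=1}^{\ell}(t_{x_{2j-1}}t_{x_{2j}}),
\]
every factor being an edge monomial of $C_1\cup C_2\subset G$. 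If $C_1\cap C_2=\emptyset$, the odd cycle condition supplies an edge $\{a,b\}$ of $G$ with $a\in V(C_1)$ and $b\in V(C_2)$; labelling $C_1=(a,u_1,\ldots,u_{2k},a)$ and $C_2=(b,x_1,\ldots,x_{2\ell},b)$ cyclically, I would factor
\[
M_w=(t_a t_b)\prod_{i=1}^{k}(t_{u_{2i-1}}t_{u_{2i}})\prod_{j=1}^{\ell}(t_{x_{2j-1}}t_{x_{2j}}),
\]
again a product of edge monomials of $G$. In either case $M_w\in K[G]$.

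For the converse I would assume $G$ is connected, $K[G]$ is normal, and, toward a contradiction, the odd cycle condition fails: choose vertex disjoint odd cycles $C_1, C_2$ in $G$ with no edge of $G$ between them. Inside each $C_i$ I would extract an induced odd cycle $C_i'$ in $G$ by iteratively splitting at a chord of $G$ and retaining the odd piece; since $V(C_i')\subset V(C_i)$, there is still no edge of $G$ between $V(C_1')$ and $V(C_2')$. Connectedness of $G$ then yields a shortest path in $G$ from $V(C_1')$ to $V(C_2')$, whose internal vertices lie outside $V(C_1')\cup V(C_2')$ by minimality, so together with $C_1'$ and $C_2'$ it forms a bowtie $w$ of $G$. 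To finish I would argue $M_w\notin K[G]$: in any putative expression $M_w=\prod_{i}(t_{p_i}t_{q_i})$ with $\{p_i,q_i\}\in E(G)$, each chosen edge must have both endpoints in $V(C_1')\cup V(C_2')$, since each such vertex has multiplicity one in $M_w$ and no other vertex appears; because no edge of $G$ crosses between $V(C_1')$ and $V(C_2')$, each chosen edge lies in the induced subgraph on $V(C_i')$ for some $i$, which by inducedness is exactly $E(C_i')$; those edges then form a perfect matching of the odd cycle $C_i'$, which is impossible. This contradicts $M_w\in\overline{K[G]}=K[G]$.

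The main obstacle is the shared-vertex case of the forward direction, where $t_v$ appears to the second power in $M_w$ and both edges incident to $v$ in the product must be drawn from the same cycle so that the remaining $u$-vertices can still be matched along $C_1$; the concrete choice above handles this uniformly for all $k\geq 1$. Once Theorem~\ref{bowtie-main-teo} is granted, the rest of the argument is routine bookkeeping.
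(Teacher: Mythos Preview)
Your argument is correct and follows exactly the route the paper intends: the corollary is placed immediately after Theorem~\ref{bowtie-main-teo} precisely because normality of $K[G]$ reduces to checking whether each $M_w$ lies in $K[G]$, and your explicit factorizations in the forward direction together with the parity/matching obstruction in the converse are the standard way to carry this out (compare also Lemma~\ref{sc-lemma}, which encodes the same three cases for the Rees algebra). The only cosmetic point is that in the converse you might note that the shortest path has length at least $2$ (since $C_1'\cap C_2'=\emptyset$ and there is no edge between them), so the bowtie axiom ``the path meets each $C_i'$ in exactly one vertex'' is genuinely satisfied; you already have this implicitly via minimality.
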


\begin{example} If $G$ is the union of two vertex disjoint triangles,
then $K[G]$ is normal and $G$ does not satisfies the odd cycle
condition.
\end{example}

The {\em cone\/} $C(G)$, over the graph $G$, is
obtained by adding a new vertex $x$ to $G$ and joining every 
vertex of $G$ to $x$. Let  
\[
{\mathcal R}(I(G))=K[\{t_1,\ldots,t_s,
t^{v_j}z\vert\, 1\leq j\leq m\}]
\]
be the Rees algebra of the ideal $I(G)=(t^{v_1},\ldots,t^{v_m})$. As
before, we assume that 
$t^{v_1},\ldots, t^{v_m}$ are the monomials in the $t_i$'s corresponding to
the edges of $G$. Let 
\[
K[C({ G})]=
K[\{t_ix,t^{v_j}\vert\, 1\leq i\leq s,\,1\leq j\leq m \}]
\]
be the edge subring of the cone over $G$. The following result shows
that Rees algebras of edge ideals are isomorphic to edge subrings of
graphs. In retrospect this result together with
Theorem~\ref{bowtie-main-teo} are the key to finding the 
integral closure of Rees algebras of edge ideals. 

\begin{proposition}\cite[Remark~2.5]{normals}
${\mathcal R}(I({ G})){\simeq}K[C({ G})]$.
\end{proposition}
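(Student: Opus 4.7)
The plan is to realize both $\mathcal{R}(I(G))$ and $K[C(G)]$ as quotients of a common polynomial ring and show the two kernels coincide. Let $R=K[t_1,\ldots,t_s,y_1,\ldots,y_m]$ and consider the surjective $K$-algebra homomorphisms
\begin{align*}
\psi\colon R &\longrightarrow \mathcal{R}(I(G)),\qquad t_i\longmapsto t_i,\ \ y_j\longmapsto t^{v_j}z,\\
\phi\colon R &\longrightarrow K[C(G)],\qquad t_i\longmapsto t_ix,\ \ y_j\longmapsto t^{v_j}.
\end{align*}
It suffices to prove $\ker(\psi)=\ker(\phi)$, since both algebras will then be isomorphic to $R/\ker(\psi)$.

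Both $\mathcal{R}(I(G))\subset K[t_1,\ldots,t_s,z]$ and $K[C(G)]\subset K[t_1,\ldots,t_s,x]$ are monomial subrings of a polynomial domain, so $\ker(\psi)$ and $\ker(\phi)$ are toric ideals, and in particular they are generated by pure binomials (this is the content of Eisenbud--Sturmfels \cite{EisStu}). I would therefore compare the two kernels on binomials $t^ay^c-t^{a'}y^{c'}$, where $a,a'\in\mathbb{N}^s$ and $c,c'\in\mathbb{N}^m$. Computing:
\begin{equation*}
\psi(t^ay^c)=t^{\,a+\sum_j c_jv_j}\,z^{|c|},\qquad \phi(t^ay^c)=t^{\,a+\sum_j c_jv_j}\,x^{|a|},
\end{equation*}
where $|c|=\sum_j c_j$ and $|a|=\sum_i a_i$. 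Hence the binomial lies in $\ker(\psi)$ iff
$a+\sum_j c_jv_j=a'+\sum_j c'_jv_j$ and $|c|=|c'|$, while it lies in $\ker(\phi)$ iff the same first equation holds and $|a|=|a'|$.

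The key observation—and what I would flag as the crux—is that each edge exponent vector $v_j$ satisfies $|v_j|=2$. Summing the coordinates of the first equation yields $|a|+2|c|=|a'|+2|c'|$, so the conditions $|c|=|c'|$ and $|a|=|a'|$ are equivalent given that equation. Thus the two defining conditions for a binomial to be in each kernel coincide, giving $\ker(\psi)=\ker(\phi)$ and hence $\mathcal{R}(I(G))\simeq K[C(G)]$. Combinatorially, this says that introducing the Rees variable $z$ (tracking the power of the ideal) is the algebraic shadow of adjoining the cone vertex $x$ (whose incidence with each $t_i$ contributes one factor of $x$ per coordinate of $t^a$): the exact trade-off between the $z$-grading and the $x$-grading is governed by the fact that every edge has exactly two endpoints.
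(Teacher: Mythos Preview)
Your proof is correct. You present both rings as quotients of the same polynomial ring and show the two toric kernels agree on binomials, with the crux being that every edge vector satisfies $|v_j|=2$, so once the $t$-exponents agree the $z$-degree condition $|c|=|c'|$ and the $x$-degree condition $|a|=|a'|$ become equivalent.

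The paper takes a different and terser route: it simply writes down the map $\varphi\colon\mathcal{R}(I(G))\to K[C(G)]$ given by $t_i\mapsto t_ix$, $t^{v_j}z\mapsto t^{v_j}$, notes it is surjective, and then invokes that both algebras are integral domains of the same Krull dimension (citing dimension formulas from \cite{E-H} and \cite{raei}); a surjection between finitely generated $K$-algebra domains of equal dimension is necessarily injective, since a nonzero kernel is a prime that would strictly lower dimension. Your approach is more elementary and self-contained: it never appeals to the Krull dimension of either ring, it makes explicit exactly where the ``edges have two endpoints'' hypothesis is used, and it incidentally supplies the well-definedness of $\varphi$ that the paper's sketch leaves implicit. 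The paper's approach, on the other hand, is shorter once one is willing to import the dimension computations, and it illustrates a general technique (surjection of equidimensional domains) that applies beyond the monomial setting.
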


\begin{proof} As these two algebras are
integral domains of the same dimension (see \cite{E-H} and
\cite[Propositon~3.2]{raei}) it follows that there is an 
isomorphism 
\[\varphi\colon {\mathcal R}(I({ G}))\longrightarrow K[C({
G})] \mbox{,\ induced by }\varphi(t_i)=t_ix\mbox{ and }
\varphi(t^{v_j}z)=t^{v_j},
 \]
and the proof is complete.
\end{proof}

Let $K[u_1,\ldots,u_m]$ be a new polynomial ring over the field $K$ obtained by considering
one variable $u_i$ for each monomial $t^{v_i}$. The \textit{toric
ideal} of $K[G]$, denoted 
$P(G)$, is the kernel of the epimorphism of $K$-algebras:
\begin{equation}\label{toric-def}
K[u_1,\ldots,u_m]\longrightarrow K[G]\ \mbox{ induced by }\
u_i\mapsto t^{v_i}.
\end{equation}

The incidence matrix $A$ of $G$, with columns $v_1,\ldots,v_m$,  
determines a linear map $A\colon
\mathbb{Q}^m\rightarrow\mathbb{Q}^s$. Let $V=\ker(A)$ be the kernel of $A$. A  
\textit{circuit} or \textit{elementary integral vector} of $V$ is a 
non-zero integral vector $\alpha$ in $V$ whose support is minimal with
respect to inclusion and such that the non-zero entries of $\alpha$ are
relatively prime (see \cite[Section~22]{Rock1},
\cite[Section~10.3]{monalg-rev}). 

\begin{definition}
If $\alpha$ is a circuit of $V$, $\alpha$ can be written uniquely as 
$\alpha=\alpha_+-\alpha_-$, where $\alpha_+$ and $\alpha_-$ are two nonnegative vectors 
with disjoint support, we call the 
binomial $t^{\alpha_+}-t^{\alpha_-}$ a {\it circuit\/}
 of $P(G)$, and we call $t^{\alpha_+}$ and $t^{\alpha_-}$ the
 \textit{terms} of
 $t^{\alpha_+}-t^{\alpha_-}$. 
\end{definition}

\begin{theorem}\cite[Theorem~3.2]{circuits}\label{toric-gen-by-circuits-1} Let $G$ be a graph. 
If $K[G]$ is normal, then $P(G)$ is generated by circuits
with a square-free term.
\end{theorem}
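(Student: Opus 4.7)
The plan is to combine the combinatorial classification of the circuits of $P(G)$ with the odd cycle condition that normality of $K[G]$ provides by Corollary~\ref{bowtie-criterion}. The primitive even closed walks of $G$, which yield the circuits of $P(G)$ via the standard odd/even-position binomial $B_w = \prod_{j\text{ odd}} u_{i_j} - \prod_{j\text{ even}} u_{i_j}$, are known to fall into three combinatorial types: (i)~even cycles, (ii)~two odd cycles meeting in exactly one vertex, and (iii)~two vertex-disjoint odd cycles $C_1,C_2$ joined by a path $P$. This classification, and the fact that the associated $B_w$ generate $P(G)$ as an ideal, follows from the description of elementary integral vectors of the incidence matrix given in \cite[Proposition~4.2]{raei} together with the toric machinery of \cite[Chapter~10]{monalg-rev}.

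For types (i) and (ii), each edge appears at exactly one position of the walk, so both terms of $B_w$ are square-free. In type (iii) with $|P|=r$ and $|C_2|=2l+1$, traversing $C_1,P,C_2,P^{-1}$ places each path edge $p_i$ at two positions whose difference is $2(r+l+1-i)$; in particular the two occurrences of $p_i$ share the same parity. Hence for $r=1$ both occurrences of the unique path edge fall on the same side of the odd/even split and exactly one term of $B_w$ is square-free, whereas for $r\ge 2$ both parities receive squared path variables and neither term is square-free. Normality enters at this point: by Corollary~\ref{bowtie-criterion}, and reducing to components since $P(G)$ is the sum of the toric ideals of the connected components of $G$, the graph $G$ satisfies the odd cycle condition, so any two vertex-disjoint odd cycles $C_1,C_2$ are joined by some edge $e$ of $G$. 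The associated ``short bowtie'' $B_{w'}$ is a type~(iii) circuit with $|P|=1$, hence a good circuit with a square-free term.

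It remains to express every bad circuit $B_w$ of type~(iii) with $r\ge 2$ as a $K[u_1,\ldots,u_m]$-combination of good circuits, which I would prove by induction on $r$. The key observation is that the OCC edge $e$, together with appropriate arcs of $C_1$ and $C_2$ and the path $P$, closes up to an even cycle $D$ of $G$ whose binomial $B_D$ is a good circuit of type~(i). A polynomial identity of the shape $B_w = u^{\gamma} B_{w'} \pm u^{\gamma'} B_D + B_{w''}$, where $B_{w''}$ is an analogous bowtie binomial associated to a path of length strictly less than $r$, then lets the induction proceed; the base case $r=1$ is vacuous. The main obstacle is the bookkeeping of this reduction: the arcs of $C_1,C_2$ must be chosen to make $D$ of even length, the monomial factors $u^{\gamma}, u^{\gamma'}$ must be picked so that the identity balances on the nose, and one must verify that the residual bowtie $B_{w''}$ really has a shorter connecting path so the induction terminates. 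Once this is carried out, $P(G)$ is generated by circuits of types (i), (ii), and type (iii) with path length $1$, each of which carries a square-free term, and the theorem follows.
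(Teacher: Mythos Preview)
The paper does not supply a proof; it quotes \cite[Theorem~3.2]{circuits}. Your strategy---classify circuits via Definition~\ref{circuitsG}, note that only type~(iii) with connecting path of length $r\ge 2$ lacks a square-free term, and use the odd cycle condition from Corollary~\ref{bowtie-criterion} (componentwise) to eliminate those---is correct and is the line of argument in \cite{circuits}.

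The reduction step, however, is mis-shaped. An identity $B_w = u^{\gamma} B_{w'} \pm u^{\gamma'} B_D + B_{w''}$ with $B_{w''}$ a bowtie on a strictly shorter path cannot be produced: the OCC edge $e=\{a,b\}$ runs from $C_1$ to $C_2$, not to an interior vertex of $P$, so nothing shortens $P$, and your induction never gets off the ground. The correct reduction is one-step, with no induction on $r$. Choosing arcs of $C_1$ (from $a$ to $v_0$) and of $C_2$ (from $b$ to $v_r$) so that the cycle $D_1$ they form together with $P$ and $e$ is even, and taking $D_2$ to be the even closed walk built from the complementary arcs together with $P$ and $e$, one gets $\alpha_w=\alpha_{D_1}-\alpha_{D_2}$ in $\ker(A)$, and this unwinds to $B_w=u^{\gamma_1}B_{D_1}-u^{\gamma_2}B_{D_2}$ for suitable monomials. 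Both $D_1$ and $D_2$ traverse each of their edges once, so their binomials have only square-free terms; generically both are type~(i) circuits, while in the degenerate cases $a=v_0$ or $b=v_r$ the walk $D_2$ becomes a type~(ii) circuit or splits one step further into the short bowtie $B_{w'}$ and an even-cycle binomial. No residual bad bowtie ever appears. A secondary caution: \cite[Proposition~4.2]{raei} identifies the circuits but does not say they generate $P(G)$; what \cite{raei} gives is generation by binomials of arbitrary even closed walks, and the passage from those to circuits is part of what the argument must supply.
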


We thank A. Thoma for pointing out that the converse of this result
is not true and for providing a counterexample (see
\cite[Exercise~10.3.25]{monalg-rev}).

Let $F=\{t^{v_1},\ldots,t^{v_m}\}$ be a finite set of monomials of
$S$. The subring $K[F]\subset S$ is
called \textit{homogeneous} if there 
is $x_0\in{\mathbb Q}^s$ such that $\langle v_i,x_0 \rangle=1\ \mbox{
for }\ i=1,\ldots,m$.

The following result complements
Theorem~\ref{toric-gen-by-circuits-1}.

\begin{theorem}{\rm(Simis, -\,, \cite[Proposition~4.1]{birational})}\label{aug13-01} Let ${\mathcal
B}$ be a finite set of   
binomials in the toric ideal $P(F)$ of $K[F]$. 
If $K[F]$ 
is a normal homogeneous subring and $P(F)$ is minimally generated by ${\mathcal B}$, 
then every element of ${\mathcal B}$ has a square-free term.
\end{theorem}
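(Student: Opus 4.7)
The plan is to proceed by contradiction using graded Nakayama. Because $K[F]$ is homogeneous, the presentation map $u_i \mapsto t^{v_i}$ is graded (all $u_i$ have standard degree $1$, matching $\langle v_i, x_0\rangle = 1$), so $P(F)$ is a graded ideal of $R = K[u_1,\ldots,u_m]$ and the minimal binomial generators are exactly those binomials in $P(F)$ whose class in $P(F)/\mathfrak{m}P(F)$ is nonzero, where $\mathfrak{m}=(u_1,\ldots,u_m)$. Suppose then, toward a contradiction, that $b = u^{\alpha_+} - u^{\alpha_-} \in \mathcal{B}$ has both terms non-squarefree; after writing $b$ with disjoint supports, pick indices $i,j$ with $(\alpha_+)_i \geq 2$ and $(\alpha_-)_j \geq 2$, which must satisfy $i \neq j$. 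I aim to exhibit $b \in \mathfrak{m} P(F)$.

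Set $w = A\alpha_+ = A\alpha_-$, where $A$ is the matrix with columns $v_1,\ldots,v_m$, and let $\mathcal{A} = \{v_1,\ldots,v_m\}$. Granted a vector $\gamma \in \mathbb{N}^m$ with $A\gamma = w - v_i - v_j$, consider the monomial $M := u_i u_j u^\gamma$; both $M - u^{\alpha_+}$ and $M - u^{\alpha_-}$ lie in $P(F)$ by $A$-balance. Since $u_i$ divides $u^{\alpha_+}$ (as $(\alpha_+)_i \geq 2$) and $u_j$ divides $M$, we have
$$
M - u^{\alpha_+} \;=\; u_i\bigl(u_j u^\gamma - u^{\alpha_+-e_i}\bigr),
$$
and the inner factor lies in $P(F)$ because $A(\gamma + e_j) = w - v_i = A(\alpha_+ - e_i)$. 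A mirror computation gives $M - u^{\alpha_-} = u_j\bigl(u_i u^\gamma - u^{\alpha_- - e_j}\bigr)$ with inner factor in $P(F)$. Subtracting, $b = (M - u^{\alpha_-}) - (M - u^{\alpha_+}) \in u_j P(F) + u_i P(F) \subseteq \mathfrak{m} P(F)$, a contradiction.

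The main obstacle is the construction of $\gamma$, and this is where normality is used. The normality of $K[F]$ amounts to the equality $\mathbb{R}_+\mathcal{A} \cap \mathbb{Z}\mathcal{A} = \mathbb{N}\mathcal{A}$, so it suffices to place $w - v_i - v_j$ in both the real cone and the lattice. Lattice membership is immediate. For the cone, I would average the two decompositions of $w$: the real vector
$$
\mu \;:=\; \tfrac{1}{2}(\alpha_+ - 2 e_i) + \tfrac{1}{2}(\alpha_- - 2 e_j)
$$
is entrywise nonnegative --- the coordinates $i$ and $j$ are nonnegative by the hypotheses $(\alpha_+)_i \geq 2$ and $(\alpha_-)_j \geq 2$, while for $k \neq i,j$ at least one of $(\alpha_+)_k, (\alpha_-)_k$ vanishes by disjoint support --- and $A\mu = w - v_i - v_j$. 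The subtle point of the argument is recognizing that one must remove \emph{both} $v_i$ and $v_j$ simultaneously so that the two resulting binomials factor with distinct variable prefixes; with that insight, the convex combination and the appeal to normality are essentially forced, and everything else is bookkeeping.
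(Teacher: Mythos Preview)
The survey does not include a proof of this statement; it is merely cited from the original Simis--Villarreal paper. Your argument is correct and is essentially the standard one: normality of $K[F]$ is exactly the equality $\mathbb{R}_+\mathcal{A}\cap\mathbb{Z}\mathcal{A}=\mathbb{N}\mathcal{A}$, the averaged vector $\mu$ places $w-v_i-v_j$ in the cone, and the resulting auxiliary monomial $M=u_iu_ju^\gamma$ lets you factor each $M-u^{\alpha_\pm}$ through a variable, forcing $b\in\mathfrak{m}P(F)$ and contradicting graded Nakayama.

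One cosmetic remark: a minimal binomial generator of a toric ideal automatically has terms with disjoint supports, since any common variable $u_k$ would already give $b=u_k\cdot(\text{binomial in }P(F))\in\mathfrak{m}P(F)$. So your phrase ``after writing $b$ with disjoint supports'' is not a rewriting step but an observation; this matters only because rewriting a non-disjoint binomial by cancelling the common factor could in principle turn non-squarefree terms into squarefree ones, which would break the contradiction hypothesis. With the understanding that $\alpha_+,\alpha_-$ are already disjoint, the proof is complete as written.
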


\begin{definition}\label{circuitsG} A subgraph $H$ of $G$ is called a {\it
circuit\/} of $G$ if $H$ has one 
of the following forms:\vspace{-1mm}
\begin{enumerate}
\item[(a)] $H$ is an even cycle.
\item[(b)] $H$ consists of two odd cycles intersecting in
exactly one vertex. 
\item[(c)] $H$ consists of two vertex disjoint odd cycles joined
by a path.
\end{enumerate}
\end{definition}

The circuits of $G$ are in one-to-one correspondence 
with the circuits of $P(G)$ \cite{raei}. Toric ideals of edge subrings of oriented graphs were
studied in \cite{ringraphs}. In this
case, the toric ideal is generated by circuits and the circuits correspond to the
cycles of the graph \cite[Proposition~4.3]{ringraphs}.

It is worth noticing that the cycles of a graph $G$ are the circuits of
the graphic matroid $M(G)$ of $G$ \cite{oxley,welsh} and that the 
circuits of a graph $G$ (Definition~\ref{circuitsG}) are the circuits
of the even cycle matroid $M(G_-)$ of $G$
\cite{Zaslavsky-signed-graphs}. 

The circuits of the matroids $M(G)$, $M(G_-)$ and 
those of their dual matroids occur also in coding theory 
\cite{Dankelmann-Key-Rodrigues,sole-zaslavsky},
\cite[Corollary~3.13]{weights-matroid}, 
and in matroid theory \cite{oxley,Simoes-Pereira,Zaslavsky-signed-graphs,Zaslavsky}.

\section{Normality of monomial ideals}\label{normality-section}

Let $S=K[t_1,\ldots,t_s]=\bigoplus_{i=0}^\infty S_i$ be a polynomial ring 
over a filed $K$ with the standard grading, let $I$ be a monomial
ideal of $S$, and let
$\mathcal{G}(I):=\{t^{v_1},\ldots,t^{v_m}\}$ be the minimal set of
generators of $I$. The \textit{incidence matrix} of $I$, denoted by $A$,
is the $s\times m$ matrix with column vectors $v_1,\ldots,v_m$. The \textit{Newton
polyhedron} of $I$, denoted ${\rm NP}(I)$, is the 
rational polyhedron 
\begin{equation*}
{\rm NP}(I)=\mathbb{R}_+^s+{\rm 
conv}(v_1,\ldots,v_m),
\end{equation*}
where $\mathbb{R}_+=\{\lambda\in\mathbb{R}\mid \lambda\geq
0\}$. The \textit{integral closure} of
 $I^n$ can be described as:
\begin{equation}\label{jun21-21}
\overline{I^n}=
(\{t^a\mid a/n\in{\rm NP}(I)\}),
\end{equation}
see  \cite[Theorem~3.1, Proposition~3.5]{reesclu}. 
There is a well-known characterization of the normality of
$I$ that comes from 
integer programming using Hilbert bases (Proposition~\ref{hilb-basis-charw}). 
The \textit{Rees algebra} of $I$ is the monomial subring  
$$S[Iz]=K[t_1,\ldots,t_s,t^{v_1}z,\ldots,t^{v_m}z],$$
where 
$z=t_{s+1}$ is a new variable. 
Following \cite{Lisboar}, we define the \textit{Rees cone} of the
ideal $I$, denoted ${\rm
RC}(I)$, as the rational cone 
\begin{equation}\label{rees-cone-eq}
{\rm RC}(I):=\mathbb{R}_+\mathcal{A}'
\end{equation}
generated by $\mathcal{A}':=\{e_1,\ldots,e_s,(v_1,1),\ldots,(v_m,1)\}$, where $e_i$
is the $i$-th unit vector in $\mathbb{R}^{s+1}$. The set
$\mathcal{A}'$ is called a \textit{Hilbert basis} if 
$\mathbb{Z}^{s+1}\cap \mathbb{R}_+{\mathcal A}'=\mathbb{N}{\mathcal
A}'$.  

\begin{proposition}\cite[p.~34]{icdual}\label{hilb-basis-charw}
The ideal $I$ is normal if and only if $\mathcal{A}'$ is a
Hilbert basis. 
\end{proposition}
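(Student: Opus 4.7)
The plan is to translate both sides of the equivalence into the language of lattice points in the Rees cone, and then observe that the resulting statements are literally the same.

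First I would record the combinatorial description of the monomials of $I^n$: since $I$ is generated by $t^{v_1},\ldots,t^{v_m}$, a monomial $t^a$ lies in $I^n$ if and only if there exist $d_1,\ldots,d_m\in\mathbb{N}$ with $d_1+\cdots+d_m=n$ and $w\in\mathbb{N}^s$ such that $a=\sum_j d_j v_j+w$. Unpacking this is exactly the statement
\[
t^az^n\in S[Iz]\iff (a,n)\in\mathbb{N}\mathcal{A}',
\]
since $(a,n)=\sum_i w_i e_i+\sum_j d_j(v_j,1)$.

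Next, using Eq.~\eqref{jun21-21}, I would rewrite membership in $\overline{I^n}$: $t^a\in\overline{I^n}$ exactly when $a/n\in{\rm NP}(I)=\mathbb{R}_+^s+{\rm conv}(v_1,\ldots,v_m)$, which means $a=\sum_j \lambda_j v_j+u$ with $\lambda_j\geq 0$, $\sum_j\lambda_j=n$, and $u\in\mathbb{R}_+^s$. This is just the assertion that $(a,n)\in\mathbb{R}_+\mathcal{A}'$. Thus
\[
t^az^n\in\overline{S[Iz]}\iff (a,n)\in\mathbb{Z}^{s+1}\cap\mathbb{R}_+\mathcal{A}',
\]
the last intersection with $\mathbb{Z}^{s+1}$ being automatic because $(a,n)\in\mathbb{N}^s\times\mathbb{N}$.

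Combining the two descriptions, $I^n=\overline{I^n}$ for every $n\geq 1$ (together with the trivial case $n=0$) is equivalent to the equality $\mathbb{Z}^{s+1}\cap\mathbb{R}_+\mathcal{A}'=\mathbb{N}\mathcal{A}'$, which is precisely the assertion that $\mathcal{A}'$ is a Hilbert basis. The only subtlety worth double-checking is the sign/nonnegativity issue: one has to verify that every integer lattice point of $\mathbb{R}_+\mathcal{A}'$ already has all coordinates in $\mathbb{N}$, but this is immediate because the generators $e_i$ and $(v_j,1)$ of the cone all lie in $\mathbb{N}^{s+1}$. With this observation the two conditions match on the nose, completing the proof; no serious obstacle is expected, since the content is essentially bookkeeping between the exponent description of integral closure in \eqref{jun21-21} and the definition of the Rees cone in \eqref{rees-cone-eq}.
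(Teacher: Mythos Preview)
Your argument is correct and is essentially the standard one: the equality $\overline{S[Iz]}=K[\mathbb{Z}^{s+1}\cap\mathbb{R}_+\mathcal{A}']$ follows from the Newton-polyhedron description \eqref{jun21-21} exactly as you wrote, and matching it against $S[Iz]=K[\mathbb{N}\mathcal{A}']$ gives the Hilbert-basis criterion. Note, however, that the paper does not supply its own proof of this proposition; it simply quotes the result from \cite[p.~34]{icdual}, so there is no in-paper argument to compare against.
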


One of the earlier works introducing linear programming (LP) 
methods to prove the normality of monomial ideals and monomial
subrings was the paper of Bonanzinga, Escobar and the second author
\cite{unimod}. Vasconcelos et al. gave the following linear programming membership
test that determines whether or not a given monomial lies in the integral
closure of a monomial ideal (cf. \cite[Proposition~1.1]{Ha-Trung-19}).

\begin{proposition}{\rm(Membership test
\cite[Proposition~3.5]{mc8})}\label{lp-membrship-test}  
Let $I=({t}^{v_1},\ldots,{t}^{v_m})$ be a monomial ideal of $S$ and let 
$A$ be the $s\times m$ matrix with columns 
vectors $v_1,\ldots,v_m$. Then, a 
monomial ${t}^\alpha$ lies in the integral closure of 
$I^n$, $n\geq 1$, $\alpha=(\alpha_1,\ldots,\alpha_s)$, if and only if the linear program:
\begin{enumerate}
\item[\ ] \ \ {\rm maximize} $y_1+\cdots+y_m$

\item[\ ] \ \ {\rm subject to}

\item[\ ] \ \ $Ay\leq \alpha$; $y\geq 0$
\end{enumerate}
has an optimal value greater than or equal to $n$, which 
is attained at a vertex of the rational polyhedron 
$\mathcal{P}_\alpha=\{y\in\mathbb{R}^m\, \vert\, Ay\leq \alpha;\, y\geq 0\}$. 
\end{proposition}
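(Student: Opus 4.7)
The plan is to derive the membership test directly from the description of the integral closure of a monomial ideal recalled in Eq.~\eqref{jun21-21} and the definition of the Newton polyhedron. The crucial fact is that $\overline{I^n}$ is generated by monomials $t^a$ such that $a/n\in {\rm NP}(I)$, and the Newton polyhedron is itself defined by a finite linear system. So the membership problem becomes a feasibility problem for a linear system, which in turn is packaged as the stated linear program.

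First I would unpack the condition $\alpha/n\in{\rm NP}(I)=\mathbb{R}_+^s+{\rm conv}(v_1,\ldots,v_m)$: it is equivalent to the existence of scalars $\lambda_1,\ldots,\lambda_m\geq 0$ with $\sum_i\lambda_i=1$ such that $\alpha/n\geq \sum_i\lambda_iv_i$ (entrywise). Setting $y_i:=n\lambda_i$, this becomes $y\geq 0$, $Ay\leq\alpha$ and $\sum_i y_i=n$. Thus by Eq.~\eqref{jun21-21}, $t^\alpha\in\overline{I^n}$ if and only if there exists a feasible $y$ for the LP with objective value exactly $n$.

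Next I would show this is equivalent to the optimal value of the LP being at least $n$. The forward direction is immediate. For the converse, suppose the LP attains an optimum $y^\ast$ with $N:=\sum_i y_i^\ast\geq n$. Since $\alpha$ has nonnegative entries and $n/N\leq 1$, the vector $y:=(n/N)y^\ast$ satisfies $y\geq 0$, $Ay=(n/N)Ay^\ast\leq(n/N)\alpha\leq\alpha$ and $\sum_i y_i=n$, so the feasibility condition at level $n$ holds and $t^\alpha\in\overline{I^n}$.

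Finally I would verify that the optimum is attained and attained at a vertex. Each column $v_i$ is a nonzero nonnegative integer vector (it is the exponent of a nontrivial monomial generator), so some coordinate $v_{i,j}>0$, which forces $y_i\leq \alpha_j/v_{i,j}$ on $\mathcal{P}_\alpha$. Hence $\mathcal{P}_\alpha$ is a (rational) polytope, the objective is bounded, and by the fundamental theorem of linear programming the maximum is attained at a vertex of $\mathcal{P}_\alpha$. The main point to be careful about is the scaling argument in the converse direction, specifically the use of $\alpha\geq 0$ to ensure that shrinking $y^\ast$ still satisfies $Ay\leq\alpha$; no convex-analytic obstacle appears since everything reduces to Eq.~\eqref{jun21-21} and elementary polyhedral geometry.
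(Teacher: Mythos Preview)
Your proof is correct. The paper does not supply its own proof of this proposition---it is quoted from \cite{mc8}---so there is nothing to compare against directly, but your argument via Eq.~\eqref{jun21-21} and the Newton polyhedron description is exactly the standard route and matches the spirit of Proposition~\ref{membership-test-n}, which the paper states immediately afterwards as a companion result.
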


By linear programming duality 
\cite[Theorem~1.1.56]{monalg-rev}, one 
can also use the dual problem
\begin{enumerate}
\item[] {\rm minimize} $\alpha_1x_1+\cdots+\alpha_sx_s$

\item[] {\rm subject to}

\item[] $xA\geq 1$; $x\geq 0$, 
\end{enumerate}
where $1=(1,\ldots,1)$, 
to check whether or not $t^\alpha$ is in $\overline{I^n}$. In this
case, one has a fixed 
polyhedron 
$$\mathcal{Q}(A):=\{x\in\mathbb{R}^s\, \vert\, xA\geq{1};\, x\geq 0\}$$
that can be used to test membership 
of any monomial ${t}^\alpha$, while in the primal 
problem the polyhedron $\mathcal{P}_\alpha$ depends on $\alpha$. 

Given a vector $c=(c_1,\ldots,c_p)$ in $\mathbb{R}^p$, we set
$|c|:=\sum_{i=1}^pc_i$ and denote the integral part of $c$ by $\lfloor
c\rfloor$ and the ceiling of $c$ by $\lceil
c\rceil$. We denote the nonnegative rational numbers by $\mathbb{Q}_+$
and $\langle\ ,\, \rangle$ 
denotes the standard 
inner product. 

The following recent proposition gives 
a linear algebra membership test that complements  
the previous result (Proposition~\ref{lp-membrship-test}). 

\begin{proposition}\cite[Proposition~3.3]{icdual}\label{membership-test-n}
Let $I=(t^{v_1},\ldots,t^{v_m})$ be a monomial
ideal of $S$, let $A$ be its incidence matrix, and let $t^\alpha$ be a
monomial in $S$. The following are equivalent.
\begin{enumerate}
\item[(a)] $t^\alpha\in\overline{I^n}$, $n\geq 1$.
\item[(b)] $A\lambda\leq\alpha$ for some $\lambda\in\mathbb{Q}_+^m$
with $|\lambda|\geq n$. 
\item[(c)] $\max\{\langle y, 1\rangle\mid 
y\geq 0;\, Ay\leq\alpha \}=\min\{\langle\alpha,x\rangle\mid x\geq 0;\,
xA\geq
1\}\geq n$.
\end{enumerate}
\end{proposition}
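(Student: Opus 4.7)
\smallskip

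\noindent\textbf{Proof plan.} The plan is to split the proposition into the two equivalences (a)$\Leftrightarrow$(b) and (b)$\Leftrightarrow$(c), using the Newton polyhedron description of $\overline{I^n}$ for the first, and linear programming (LP) duality for the second.

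\smallskip

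For (a)$\Leftrightarrow$(b), I would start from the formula in Eq.~\eqref{jun21-21}, which says $t^\alpha\in\overline{I^n}$ if and only if $\alpha/n\in \mathrm{NP}(I)=\mathbb{R}_+^s+\mathrm{conv}(v_1,\ldots,v_m)$. Unpacking this, $\alpha/n\in\mathrm{NP}(I)$ means there exist $u\in\mathbb{R}_+^s$ and $\mu\in\mathbb{R}_+^m$ with $\sum_i\mu_i=1$ and $\alpha/n=u+A\mu$. Since $\alpha$ and the columns of $A$ are rational, one may take $\mu\in\mathbb{Q}_+^m$. Setting $\lambda=n\mu\in\mathbb{Q}_+^m$ gives $|\lambda|=n$ and $A\lambda=\alpha-nu\leq\alpha$, which yields (b). Conversely, given $\lambda\in\mathbb{Q}_+^m$ with $A\lambda\leq\alpha$ and $|\lambda|\geq n$, rescale by $\lambda':=(n/|\lambda|)\lambda$ so that $|\lambda'|=n$; since $0\leq n/|\lambda|\leq 1$ and $\alpha\geq 0$, one still has $A\lambda'\leq\alpha$. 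Then $\alpha/n=A(\lambda'/n)+(\alpha-A\lambda')/n$ exhibits $\alpha/n$ as a sum of a convex combination of the $v_i$'s and an element of $\mathbb{R}_+^s$, so $\alpha/n\in\mathrm{NP}(I)$ and hence $t^\alpha\in\overline{I^n}$.

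\smallskip

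For (b)$\Leftrightarrow$(c) I would invoke strong LP duality on the dual pair
\[
\text{(P)}\ \max\{\langle y,1\rangle : y\geq 0,\ Ay\leq\alpha\}\quad\text{and}\quad \text{(D)}\ \min\{\langle \alpha,x\rangle : x\geq 0,\ xA\geq 1\}.
\]
Condition (b) is, by definition, the statement that the optimal value of (P) is at least $n$, so it suffices to show that both programs are feasible, attain their optimum, and have equal optimal value. Feasibility of (P) is immediate ($y=0$), and feasibility of (D) follows since every column $v_i$ of $A$ is nonzero and nonnegative, so any sufficiently large positive $x$ satisfies $xA\geq 1$. Boundedness of the feasible region of (P) holds because for each $i$ we may choose $j$ with $(v_i)_j>0$, and then $y_i(v_i)_j\leq (Ay)_j\leq\alpha_j$ forces $y_i\leq\alpha_j/(v_i)_j$; hence (P) attains a maximum. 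Strong duality (e.g.\ \cite[Theorem~1.1.56]{monalg-rev}) then yields the equality in (c).

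\smallskip

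The main subtlety, and what I expect to be the most careful step, is the rational scaling argument in the converse of (a)$\Leftrightarrow$(b): one must check that truncating $\lambda$ to $\lambda'$ with $|\lambda'|=n$ preserves the inequality $A\lambda'\leq\alpha$, which relies on $\alpha\geq 0$ and on working over $\mathbb{Q}$. The LP side is routine once feasibility and boundedness are verified; no $d$-sequence or Koszul input is needed, only Vasconcelos' description of $\overline{I^n}$ in terms of the Newton polyhedron and classical LP duality.
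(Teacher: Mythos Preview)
Your proof is correct. Note, however, that this survey paper does not actually supply its own proof of the proposition: it is quoted verbatim from \cite[Proposition~3.3]{icdual} and immediately used as input for the minimal-generators test that follows. So there is no in-text argument to compare against; your write-up would serve as a self-contained proof using only ingredients already present in the survey (Eq.~\eqref{jun21-21} for the Newton polyhedron description of $\overline{I^n}$, and LP duality as in \cite[Theorem~1.1.56]{monalg-rev}).

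Two minor remarks on presentation. First, in the step $A\lambda'\leq\alpha$ you invoke $\alpha\geq 0$, but the cleaner justification is that $A\geq 0$ and $\lambda\geq 0$ force $A\lambda\geq 0$, whence $A\lambda'=(n/|\lambda|)A\lambda\leq A\lambda\leq\alpha$; this avoids any appeal to the sign of $\alpha$. Second, the passage from (b) (existence of a \emph{rational} $\lambda$) to (c) (a statement about the \emph{real} optimum) implicitly uses that a feasible, bounded LP with rational data attains its optimum at a rational vertex; you might say this explicitly, since (b) as stated demands $\lambda\in\mathbb{Q}_+^m$.
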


As a consequence, we obtain a minimal generators test for the integral
closure of the powers of a monomial ideal.

\begin{proposition}\cite{icdual}\label{mg-test}
Let $I$ be a monomial
ideal of $S$ and let $A$ be its incidence matrix. A monomial $t^\alpha\in
S$ is a minimal generator of
$\overline{I^n}$ if and only if the following two conditions hold:
\begin{align}
&\max\{\langle y, 1\rangle\mid 
y\geq 0;\, Ay\leq\alpha \}=\min\{\langle\alpha,x\rangle\mid x\geq 0;\,
xA\geq
1\}\geq n;\\
&\max\{\langle y, 1\rangle\mid 
y\geq 0;\, Ay\leq\alpha-e_i\}=\min\{\langle\alpha-e_i,x\rangle\mid x\geq 0;\,
xA\geq
1\}<n\\
&\mbox{for each }e_i\mbox{ for which }\alpha-e_i\geq 0.\nonumber 
\end{align}
\end{proposition}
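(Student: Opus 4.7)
The plan is to derive the proposition as an immediate corollary of Proposition~\ref{membership-test-n} combined with the standard characterization of minimal generators of a monomial ideal. Recall that for any monomial ideal $J\subset S$, a monomial $t^\alpha$ is a minimal generator of $J$ if and only if $t^\alpha\in J$ and $t^{\alpha-e_i}\notin J$ for every index $i$ with $\alpha_i\geq 1$, i.e., for every $i$ such that $\alpha-e_i\in\mathbb{N}^s$. I would apply this characterization with $J=\overline{I^n}$.

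For the first condition, I would apply Proposition~\ref{membership-test-n}, equivalence (a)$\Leftrightarrow$(c), directly to $t^\alpha$: the containment $t^\alpha\in\overline{I^n}$ is equivalent to the primal-dual LP pair attached to $\alpha$ having common optimum value at least $n$. This is exactly the first displayed condition. For the second condition, I would apply the same equivalence to each shifted exponent $\alpha-e_i$ (for those $i$ with $\alpha_i\geq 1$): the non-containment $t^{\alpha-e_i}\notin\overline{I^n}$ is equivalent to the common LP optimum attached to $\alpha-e_i$ being strictly less than $n$. Combining the two translations gives the claimed biconditional.

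The only thing to check is that invoking the primal-dual equality in the second displayed condition is legitimate even when we want the strict inequality $<n$. This requires both LP problems to be feasible so that strong LP duality (already used in Proposition~\ref{membership-test-n}) applies. The primal is trivially feasible at $y=0$, and the dual polyhedron $\mathcal{Q}(A)=\{x\geq 0\mid xA\geq 1\}$ is non-empty because every column of $A$ is a non-zero exponent vector (since the $t^{v_j}$ are non-constant minimal generators), so any sufficiently large multiple of $\mathbf{1}$ lies in $\mathcal{Q}(A)$; hence both optima exist and coincide. This is the only subtle point in the argument, and it is essentially bookkeeping rather than a genuine obstacle; the substance of the proof is entirely contained in Proposition~\ref{membership-test-n}.
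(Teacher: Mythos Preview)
Your proposal is correct and matches the paper's approach: the paper presents Proposition~\ref{mg-test} explicitly ``as a consequence'' of Proposition~\ref{membership-test-n}, with no further proof in the text (the result is cited to \cite{icdual}), and your argument is precisely the natural derivation---combine the standard minimal-generator criterion for monomial ideals with the membership test (a)$\Leftrightarrow$(c). Your care in verifying feasibility of both LPs so that strong duality holds (ensuring the negation of (c) reads as the common optimum being $<n$) is appropriate and completes the argument.
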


The following normality criterion can be used to 
show a classification of the normality of a monomial ideal in terms
of  integer programming notions, see Theorem~\ref{crit-rounding-normali}
below. 

\begin{proposition}\cite[Proposition~3.2]{icdual}\label{normality-criterion-ip} 
Let $I$ be a monomial
ideal of $S$ and let $A$ be its incidence matrix. The following conditions are equivalent.
\begin{enumerate}
\item[(a)] $I$ is a normal ideal.
\item[(b)] For each pair of vectors $\alpha\in\mathbb{N}^s$ and
$\lambda\in\mathbb{Q}_+^m$ such that $A\lambda\leq\alpha$, there is 
$\beta\in\mathbb{N}^m$ satisfying $A\beta\leq\alpha$ and
$|\lambda|=|\beta|+\epsilon$ with $0\leq\epsilon<1$.
\end{enumerate} 
\end{proposition}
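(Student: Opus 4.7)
The plan is to reduce both implications to the linear algebra membership test of Proposition~\ref{membership-test-n}, which characterizes membership in $\overline{I^n}$ by the existence of a rational $\lambda\in\mathbb{Q}_+^m$ with $A\lambda\leq\alpha$ and $|\lambda|\geq n$, and to use the analogous (immediate) characterization of $I^n$: the monomial $t^\alpha$ lies in $I^n$ if and only if there exists $\beta\in\mathbb{N}^m$ with $A\beta\leq\alpha$ and $|\beta|\geq n$. This reformulates condition (b) as precisely the rounding statement ``every rational feasible point can be replaced by an integer feasible point losing less than one unit of total mass.''

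For (a)$\Rightarrow$(b), I would take $\alpha\in\mathbb{N}^s$ and $\lambda\in\mathbb{Q}_+^m$ with $A\lambda\leq\alpha$ and set $n:=\lfloor|\lambda|\rfloor$. Since $|\lambda|\geq n$, Proposition~\ref{membership-test-n} yields $t^\alpha\in\overline{I^n}$; by normality $\overline{I^n}=I^n$, so there is $\beta_0\in\mathbb{N}^m$ with $A\beta_0\leq\alpha$ and $|\beta_0|\geq n$. Because $A$ has nonnegative integer entries (its columns are the exponent vectors $v_1,\dots,v_m$), any coordinate-wise decrease of $\beta_0$ preserves the inequality $A\beta\leq\alpha$. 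Thus I can trim positive coordinates of $\beta_0$ one unit at a time until the resulting vector $\beta$ satisfies $|\beta|=n=\lfloor|\lambda|\rfloor$; then $\epsilon:=|\lambda|-|\beta|\in[0,1)$, as required.

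For (b)$\Rightarrow$(a), it suffices to show $\overline{I^n}\subseteq I^n$ for every $n\geq 1$. If $t^\alpha\in\overline{I^n}$, Proposition~\ref{membership-test-n} produces $\lambda\in\mathbb{Q}_+^m$ with $A\lambda\leq\alpha$ and $|\lambda|\geq n$. By (b), there is $\beta\in\mathbb{N}^m$ with $A\beta\leq\alpha$ and $|\beta|=|\lambda|-\epsilon>|\lambda|-1\geq n-1$; since $|\beta|$ is a nonnegative integer, $|\beta|\geq n$, and therefore $t^\alpha\in I^n$.

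The argument is short once Proposition~\ref{membership-test-n} is in place; the only genuinely nontrivial move is the coordinate-wise trimming step in (a)$\Rightarrow$(b), which is what converts the ``$|\beta|\geq n$'' guaranteed by normality into the exact equality $|\beta|=\lfloor|\lambda|\rfloor$ demanded by (b). This step relies essentially on the nonnegativity of the incidence matrix $A$; if one tried to state an analogue for arbitrary integer matrices this rounding would fail, which is precisely why the criterion is tailored to monomial ideals.
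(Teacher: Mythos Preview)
The paper does not include a proof of this proposition; it is simply stated with a citation to \cite[Proposition~3.2]{icdual}. So there is no in-paper argument to compare against.

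Your proof is correct and is the natural one: it reduces both directions to the membership criteria for $I^n$ (the elementary fact that $t^\alpha\in I^n$ iff $A\beta\leq\alpha$ for some $\beta\in\mathbb{N}^m$ with $|\beta|\geq n$) and for $\overline{I^n}$ (Proposition~\ref{membership-test-n}). The trimming step in (a)$\Rightarrow$(b) is exactly right and, as you note, depends on $A\geq 0$. One cosmetic point: Proposition~\ref{membership-test-n} is stated for $n\geq 1$, so in (a)$\Rightarrow$(b) the case $n=\lfloor|\lambda|\rfloor=0$ technically falls outside its hypotheses; but that case is trivial, since $\beta=0$ works.
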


The normality of $I$ is also related to integer rounding
properties \cite[Corollary~2.5]{poset}. 
The linear system $x\geq 0; xA\geq{1}$ 
has the {\it integer rounding property\/} if 
\begin{equation}\label{irp-eq1}
{\rm max}\{\langle y,{1}\rangle\mid y\in\mathbb{N}^m;\,   
Ay\leq \alpha\} 
=\lfloor{\rm max}\{\langle y,{1}\rangle\mid y\geq 0;\, 
Ay\leq \alpha\}\rfloor
\end{equation}
for each integral vector $\alpha$ for which the 
right-hand side is finite. The linear system $x\geq 0; xA\leq{1}$ 
has the \textit{integer rounding property} if 
\begin{equation}\label{irp-eq2}
\lceil{\rm min}\{\langle y,{1}\rangle\mid y\geq 0;\, Ay\geq \alpha \}\rceil
={\rm min}\{\langle y,{1}\rangle\mid y\in\mathbb{N}^m;\, Ay\geq \alpha\}
\end{equation}
for each integral vector $\alpha$ for which the left hand side is
finite. Integer rounding property are well studied, 
see \cite{baum-trotter,ainv}, \cite[Chapter~22]{Schr},
\cite[Chapter~5]{Schr2}, and references therein. 

The following duality theorem relates the integer rounding property of two
types of systems, one defined by a $0$-$1$ matrix $A$, and the other by its
dual matrix $A^*$ obtained from $A$ by replacing its entries equal to
$0$ by $1$ and 
its entries equal to $1$ by $0$.

\begin{theorem}{\rm(Brennan, Dupont, -\,, \cite[Theorem~2.11]{ainv})}\label{duality-irp} Let
$A=(a_{i,j})$ be the incidence matrix of a squarefree monomial ideal
$I$ 
and let $A^*=(a_{i,j}^*)$ be the
matrix whose $(i,j)$-entry is $a_{i,j}^*=1-a_{i,j}$. Then, 
the linear system $x\geq 0;xA\geq{1}$
has the integer rounding property if and 
only if the dual linear system $x\geq 0;xA^*\leq{1}$ has the integer rounding property. 
\end{theorem}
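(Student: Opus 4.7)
The plan is to exploit the identity $A + A^* = J$, where $J$ denotes the $s\times m$ all-ones matrix, and its immediate consequence
\[
A^*y = |y|\,\mathbf{1}_s - Ay \qquad (y\in\mathbb{R}_+^m),
\]
where $|y| = y_1 + \cdots + y_m$ and $\mathbf{1}_s = (1,\ldots,1)\in\mathbb{R}^s$. This identity converts the constraint $Ay\le\alpha$ on a vector of fixed coordinate sum $k$ into the constraint $A^*y\ge k\mathbf{1}_s - \alpha$, and back, which will let me trade one integer rounding property for the other.

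I abbreviate
\[
f(\alpha) := \max\{|y| : y\ge 0,\ Ay\le\alpha\}, \quad f^{\mathbb{Z}}(\alpha) := \max\{|y| : y\in\mathbb{N}^m,\ Ay\le\alpha\},
\]
\[
g(\beta) := \min\{|y| : y\ge 0,\ A^*y\ge\beta\}, \quad g^{\mathbb{Z}}(\beta) := \min\{|y| : y\in\mathbb{N}^m,\ A^*y\ge\beta\},
\]
so that the two rounding hypotheses become $f^{\mathbb{Z}} = \lfloor f\rfloor$ and $g^{\mathbb{Z}} = \lceil g\rceil$ on their respective domains of finiteness. The central step will be the following bridge: for every integer $k\ge 0$ and every $\alpha\in\mathbb{N}^s$, the inequality $f(\alpha)\ge k$ holds if and only if $g(k\mathbf{1}_s - \alpha)\le k$, and the analogous equivalence holds with $f,g$ replaced by $f^{\mathbb{Z}},g^{\mathbb{Z}}$. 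For $(\Rightarrow)$, I take $y^*$ optimal for $f$ with $|y^*|\ge k$ and componentwise reduce it to $\widetilde y\le y^*$ with $|\widetilde y|=k$; then $A\widetilde y\le\alpha$ and the identity gives $A^*\widetilde y\ge k\mathbf{1}_s - \alpha$. For $(\Leftarrow)$, I take $y^*$ optimal for $g$ with $|y^*|\le k$ and augment to $y' := y^* + (k-|y^*|)e_j$; since $A^*\ge 0$ the $A^*$-constraint is preserved and the identity gives $Ay'\le\alpha$. Reduction and augmentation both stay inside $\mathbb{N}^m$, so the integer version is proved by the same construction.

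With the bridge in hand, both directions of the duality reduce to chaining three implications. Assuming that the system $x\ge 0,\ xA\ge\mathbf{1}$ has the integer rounding property, I fix $\beta$ with $g(\beta)$ finite and set $k := \lceil g(\beta)\rceil$ together with $\alpha := k\mathbf{1}_s - \beta$. Any optimizer $y^*$ of $g(\beta)$ satisfies $\beta_i\le(A^*y^*)_i\le|y^*|\le k$, so $\alpha\in\mathbb{N}^s$, and since every column of $A$ is nonzero (because no minimal generator of $I$ equals $1$) the LP value $f(\alpha)$ is finite. The LP bridge yields $f(\alpha)\ge k$; the hypothesis yields $f^{\mathbb{Z}}(\alpha) = \lfloor f(\alpha)\rfloor\ge k$; the integer bridge yields $g^{\mathbb{Z}}(\beta)\le k = \lceil g(\beta)\rceil$. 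The reverse inequality is automatic from $g^{\mathbb{Z}}\ge g$ and integrality, so the dual rounding property holds. The converse direction is strictly symmetric: starting from $\alpha\in\mathbb{N}^s$ with $f(\alpha)$ finite, set $k:=\lfloor f(\alpha)\rfloor$ and $\beta:=k\mathbf{1}_s - \alpha$, then chain the bridge, the $A^*$-rounding, and the bridge once more.

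The only obstacle I anticipate beyond the opening identity is feasibility bookkeeping, i.e.\ verifying that the $\alpha$ or $\beta$ produced at each step lies in the domain of the target LP. In the forward direction this is precisely the computation $\beta_i\le k$ carried out above. In the converse direction it amounts to showing that whenever $\beta_i>0$ the $i$-th row of $A^*$ has a nonzero entry, which I would deduce by inspecting an LP-optimal $y^*$ for $f(\alpha)$: from $(Ay^*)_i\le\alpha_i<k\le|y^*|$ one reads off some $j$ with $a_{ij}=0$ and $y^*_j>0$, as required. With these routine checks in place, the proof is essentially the one-line observation $A^* = J - A$.
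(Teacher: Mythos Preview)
Your argument is correct. The survey paper does not include its own proof of this theorem; it simply records the statement and cites the original source \cite{ainv}, so there is no in-paper proof to compare against line by line. Judged on its own terms, your chain of implications is sound: the identity $A^*y=|y|\mathbf{1}_s-Ay$ is exactly the right pivot, your bridge equivalence $f(\alpha)\ge k\Longleftrightarrow g(k\mathbf{1}_s-\alpha)\le k$ holds in both the LP and IP settings by the reduce/augment constructions you describe, and the two applications of the bridge sandwiching the rounding hypothesis give the desired equality in each direction.

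Two small remarks. First, the feasibility check you flag in the converse direction is in fact already delivered by the bridge itself: the reduced vector $\widetilde y$ with $|\widetilde y|=k$ and $A\widetilde y\le\alpha$ is a feasible point for the $g(\beta)$ program, so $g(\beta)\le k$ and no separate row-support argument is needed. Your argument that $\beta_i>0$ forces a nonzero entry in row $i$ of $A^*$ is correct but redundant. Second, the original proof in \cite{ainv} is framed in the language of normal monomial subrings associated to the two linear systems, passing through the equivalence between the integer rounding property and normality of the relevant semigroup ring; your approach bypasses all of that and stays entirely within elementary linear programming, which is arguably cleaner for this particular statement.
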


The following theorem was observed by N. V. Trung if $A$ is the
incidence matrix of a squarefree monomial ideal, and it 
was shown in \cite{poset} using the theory of
blocking and antiblocking polyhedra \cite{baum-trotter},
\cite[p.~82]{Schr2}. Using Proposition~\ref{normality-criterion-ip}, 
one can give a short proof of this theorem.

\begin{theorem}{\rm(Dupont, -\,,  \cite[Corollary~2.5]{poset})}\label{crit-rounding-normali} 
A  monomial
ideal $I$ with incidence matrix $A$ is normal if and only if 
the system $x\geq 0;xA\geq{1}$ has the integer rounding
property. 
\end{theorem}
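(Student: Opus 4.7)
The plan is to use the recent LP-normality criterion (Proposition~\ref{normality-criterion-ip}) as the bridge, since it already states normality in terms of a quantitative rounding property at the level of individual points. To lighten the notation, for a given integral vector $\alpha$ write
$$
z_{\rm LP}(\alpha):=\max\{\langle y,1\rangle\mid y\geq 0,\ Ay\leq\alpha\}, \qquad z_{\rm IP}(\alpha):=\max\{\langle y,1\rangle\mid y\in\mathbb{N}^m,\ Ay\leq\alpha\}.
$$
The integer rounding property for the system $x\geq 0;\ xA\geq 1$ is the identity $z_{\rm IP}(\alpha)=\lfloor z_{\rm LP}(\alpha)\rfloor$ whenever $z_{\rm LP}(\alpha)<\infty$, so the equivalence in the theorem becomes: \emph{$I$ is normal $\Longleftrightarrow$ $z_{\rm IP}(\alpha)=\lfloor z_{\rm LP}(\alpha)\rfloor$ for every such $\alpha$.}

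For the ($\Rightarrow$) direction I would fix $\alpha\in\mathbb{N}^s$ with $z_{\rm LP}(\alpha)<\infty$ and pick an optimal $\lambda^\ast\in\mathbb{Q}_+^m$, so that $A\lambda^\ast\leq\alpha$ and $|\lambda^\ast|=z_{\rm LP}(\alpha)$. Applying Proposition~\ref{normality-criterion-ip} to this pair $(\alpha,\lambda^\ast)$ produces $\beta\in\mathbb{N}^m$ with $A\beta\leq\alpha$ and $|\lambda^\ast|=|\beta|+\epsilon$, $0\leq\epsilon<1$; this forces $|\beta|=\lfloor z_{\rm LP}(\alpha)\rfloor$, hence $z_{\rm IP}(\alpha)\geq\lfloor z_{\rm LP}(\alpha)\rfloor$. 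The reverse inequality is automatic because any integer feasible solution is a fortiori fractional feasible and has integer objective, so $z_{\rm IP}(\alpha)\leq\lfloor z_{\rm LP}(\alpha)\rfloor$.

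For the ($\Leftarrow$) direction I start from an arbitrary pair $(\alpha,\lambda)$ with $\alpha\in\mathbb{N}^s$, $\lambda\in\mathbb{Q}_+^m$ and $A\lambda\leq\alpha$, and aim to produce the $\beta$ required by Proposition~\ref{normality-criterion-ip}. Since $\lambda$ itself is LP-feasible, $|\lambda|\leq z_{\rm LP}(\alpha)<\infty$, and the rounding hypothesis supplies $\beta'\in\mathbb{N}^m$ with $A\beta'\leq\alpha$ and $|\beta'|=\lfloor z_{\rm LP}(\alpha)\rfloor\geq\lfloor|\lambda|\rfloor$. The main technical point—what I expect to be the one place where a reader might pause—is that I need $|\beta|$ equal to $\lfloor|\lambda|\rfloor$ and not merely bounded below by it. To fix this I would trim $\beta'$ coordinate by coordinate: as long as $|\beta'|>\lfloor|\lambda|\rfloor$, decrement any positive entry by one; the new vector remains in $\mathbb{N}^m$ and, since it is coordinatewise $\leq\beta'$, it continues to satisfy $A\beta\leq\alpha$. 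After finitely many steps I obtain $\beta\in\mathbb{N}^m$ with $A\beta\leq\alpha$ and $|\beta|=\lfloor|\lambda|\rfloor$, giving $|\lambda|=|\beta|+\epsilon$ with $\epsilon\in[0,1)$. Proposition~\ref{normality-criterion-ip} then yields normality of $I$.

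The only real obstacle is this monotone-trimming step in the converse direction; it is elementary but must be noted because it uses the specific shape of the constraint system ($Ay\leq\alpha$ with $A\geq 0$), which is what makes coordinatewise decrease safe. Everything else is a direct translation between the LP-optimum formulation of integer rounding and the pointwise formulation of normality given by Proposition~\ref{normality-criterion-ip}.
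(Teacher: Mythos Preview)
Your proof is correct and follows precisely the route the paper indicates: the text immediately preceding the theorem states that ``Using Proposition~\ref{normality-criterion-ip}, one can give a short proof of this theorem,'' and your argument is exactly that short proof, including the coordinatewise trimming needed to match $|\beta|$ to $\lfloor|\lambda|\rfloor$. The only thing to add is that the original published proof in \cite{poset} went via blocking and antiblocking polyhedra rather than through Proposition~\ref{normality-criterion-ip}, but your approach is the one the present paper is advocating.
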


The following result of Simis, Vasconcelos and the second author 
gives a method to descend the normality of
Rees algebras to the normality of monomial subrings.

\begin{theorem}{\rm(Descent of normality criterion \cite[Theorem~7.1]{ITG})}\label{feb16-23} 
Let $I=(t^{v_1},\ldots,t^{v_m})$ be an ideal of $S$ 
generated by monomials of degree $d$. If the Rees algebra
$S[Iz]$ of $I$ is normal, then the monomial subring
$K[t^{v_1},\ldots,t^{v_m}]$ is also normal.  
\end{theorem}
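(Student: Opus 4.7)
The plan is to reduce the normality of $K[F]:=K[t^{v_1},\ldots,t^{v_m}]$ to the Hilbert basis property of the set $\mathcal{A}'=\{e_1,\ldots,e_s,(v_1,1),\ldots,(v_m,1)\}$ that characterizes normality of $S[Iz]$ (Proposition~\ref{hilb-basis-charw}), and to exploit the homogeneity hypothesis ($|v_i|=d$ for all $i$) to strip away the coordinate-vector contributions.

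First, I would recall the standard description of the normalization of $K[F]$ via its exponent set: writing $\mathcal{A}=\{v_1,\ldots,v_m\}$, one has
\[
\overline{K[F]}=K[\{t^a\mid a\in\mathbb{R}_+\mathcal{A}\cap\mathbb{Z}\mathcal{A}\}].
\]
Thus it suffices to fix an arbitrary $a\in\mathbb{R}_+\mathcal{A}\cap\mathbb{Z}\mathcal{A}$ and show $a\in\mathbb{N}\mathcal{A}$. Write $a=\sum_{j=1}^{m}\lambda_j v_j$ with $\lambda_j\in\mathbb{R}_+$ and $a=\sum_{j=1}^{m}n_j v_j$ with $n_j\in\mathbb{Z}$. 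Taking degrees on both sides and using $|v_j|=d$ gives $|a|=d\sum_j\lambda_j=d\sum_j n_j$, so $k:=\sum_j\lambda_j=\sum_j n_j$ is a nonnegative integer.

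Next, I would lift to the Rees cone. Set $\widetilde{a}=(a,k)\in\mathbb{Z}^{s+1}$. By construction
\[
\widetilde{a}=\sum_{j=1}^{m}\lambda_j(v_j,1)\in\mathbb{R}_+\mathcal{A}',
\]
so $\widetilde{a}\in\mathbb{Z}^{s+1}\cap\mathbb{R}_+\mathcal{A}'$. Since $S[Iz]$ is normal, Proposition~\ref{hilb-basis-charw} tells us that $\mathcal{A}'$ is a Hilbert basis, so we can write
\[
\widetilde{a}=\sum_{i=1}^{s}\mu_i e_i+\sum_{j=1}^{m}m_j(v_j,1),\qquad \mu_i,m_j\in\mathbb{N}.
\]
Reading the last coordinate gives $k=\sum_j m_j$, and reading the first $s$ coordinates gives $a=\sum_j m_j v_j+\sum_i\mu_i e_i$. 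The final step is a degree count: $|a|=d\sum_j m_j+\sum_i\mu_i=dk+\sum_i\mu_i$. But we already established $|a|=dk$, so $\sum_i\mu_i=0$, forcing every $\mu_i=0$. Hence $a=\sum_j m_j v_j\in\mathbb{N}\mathcal{A}$ and $t^a\in K[F]$, which proves $K[F]=\overline{K[F]}$.

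The only non-formal step is the degree count at the end, and it goes through exactly because of the uniform-degree hypothesis; without $|v_i|=d$ there is no reason the coordinate-vector slack $\sum_i\mu_i e_i$ must vanish, and indeed the conclusion can fail in that case. So the main conceptual point to highlight is that uniformity of degree is what couples the auxiliary last coordinate of $\mathcal{A}'$ to the actual total degree $|a|$, allowing the Hilbert basis decomposition in the Rees cone to descend to a nonnegative integer decomposition in the cone over $\mathcal{A}$.
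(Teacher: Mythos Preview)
Your proof is correct. The paper itself does not give a proof of this theorem (it is stated as a cited result from \cite{ITG}), so there is no direct comparison to make; your argument using Proposition~\ref{hilb-basis-charw} together with the integral closure formula $\overline{K[F]}=K[\{t^a\mid a\in\mathbb{R}_+\mathcal{A}\cap\mathbb{Z}\mathcal{A}\}]$ is a clean and natural route that fits well with the tools developed in the paper.
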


The converse is true if $I$ is the edge ideal of a connected graph
(see Theorem~\ref{roundup-iff-rounddown} below),  
but it is false in general as the following example shows. 

\begin{example} If $I$ is generated by 
$F=\{t_1t_2,t_2t_3,t_1t_3,t_4t_5,t_5t_6,t_4t_6\}$, that is, 
$I$ is the edge ideal of two vertex disjoint triangles, then $K[F]$ is
normal but the Rees algebra
of $I$ is not. 
\end{example}

The following is a partial converse of Theorem~\ref{feb16-23} that
answers a question of \cite{ITG}, \cite[p.~71]{aportaciones-ma}. 
For another sufficient condition for the converse to hold see
\cite[Theorem~7.6]{ITG}.   

\begin{theorem}\cite[Theorem~3.3]{roundp}\label{roundup-iff-rounddown} Let $G$ be a connected
graph, let $A$ be the incidence matrix of $G$, and let
$v_1,\ldots,v_m$ be the column vectors of $A$. The following
conditions are equivalent:
\begin{enumerate} 
\item[(a)] $x\geq 0;xA\leq{1}$ is a system with the
integer rounding property. 
\item[(b)] $S[Iz]$ is a normal domain, where $I=I(G)$ is the edge
ideal 
of $G$.
\item[(c)] $K[t^{v_1}z,\ldots,t^{v_m}z]\simeq
K[t^{v_1},\ldots,t^{v_m}]$ 
is normal. 
\end{enumerate}
\end{theorem}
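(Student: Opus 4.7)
The plan is to prove the cycle of implications $(b)\Rightarrow(c)\Rightarrow(a)\Rightarrow(b)$, using the odd cycle condition of $G$ as the bridge between the algebraic and polyhedral properties.

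First, $(b)\Rightarrow(c)$ is an immediate application of the descent of normality criterion (Theorem~\ref{feb16-23}), since $I=I(G)$ is uniform of degree $2$. For $(c)\Rightarrow(b)$, because $G$ is connected, Corollary~\ref{bowtie-criterion} gives that normality of $K[G]$ is equivalent to $G$ satisfying the odd cycle condition. I would then argue that this precludes Hochster configurations: such a configuration consists of two vertex-disjoint induced odd cycles $C_1,C_2$ with $C_1\cap N_G(C_2)=\emptyset$, and in particular with no edge between them, directly contradicting the odd cycle condition. By Theorem~\ref{apr16-02}, $I(G)$ is then normal, and hence $S[Iz]$ is normal.

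For $(c)\Leftrightarrow(a)$, combining Corollary~\ref{bowtie-criterion} with what was just proved reduces the task to showing that, for the incidence matrix $A$ of a connected graph $G$, the system $x\geq 0,\ xA\leq 1$ has the integer rounding property if and only if $G$ satisfies the odd cycle condition. For $(a)\Rightarrow(c)$ I argue by contrapositive: given vertex-disjoint odd cycles $C_1,C_2$ with no edge between them, set $\alpha$ to be the indicator vector of $V(C_1)\cup V(C_2)$. Placing $y_e=1/2$ on the cycle edges yields a feasible solution to $\min\{|y|:y\geq 0,\ Ay\geq\alpha\}$ of value $(|C_1|+|C_2|)/2$, which is optimal by LP duality (matched by $x_v=1/2$ on $V(C_1)\cup V(C_2)$ in the dual $xA\leq 1$). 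Since no edges cross between the two cycles, the integer optimum decomposes as the sum of minimum integer edge covers of each odd cycle, namely $(|C_1|+1)/2+(|C_2|+1)/2=(|C_1|+|C_2|)/2+1$. Because $(|C_1|+|C_2|)/2$ is already an integer (both $|C_i|$ odd), the LP and IP values differ by exactly $1$ and the rounding property fails.

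The converse $(c)\Rightarrow(a)$ is the main obstacle: it rests on a classical polyhedral fact that, under the odd cycle condition, the fractional extreme points of $\{y\geq 0:Ay\geq\alpha\}$ are half-integral with fractional weight supported on vertex-disjoint odd cycles, each of which can be linked via the odd cycle condition to uncross the fractional solution into an integer cover with cost increase at most one. This step is where the theorem draws most heavily on classical combinatorial optimization, and would be handled either by invoking Edmonds-type matching polyhedra theorems or by a direct uncrossing argument along a bridging edge between two such odd cycles.
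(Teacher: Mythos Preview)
The paper does not prove this theorem; it is quoted from \cite{roundp} without proof, so there is no in-paper argument to compare against. I will therefore assess your proposal on its own merits.

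Your equivalence $(b)\Leftrightarrow(c)$ is correct and cleanly assembled from results stated in the survey: $(b)\Rightarrow(c)$ is Theorem~\ref{feb16-23}, and for $(c)\Rightarrow(b)$ the chain $K[G]$ normal $\Rightarrow$ odd cycle condition (Corollary~\ref{bowtie-criterion}, using connectedness) $\Rightarrow$ no Hochster configurations $\Rightarrow$ $I(G)$ normal (Theorem~\ref{apr16-02}) is valid. Your contrapositive for $(a)\Rightarrow(c)$ is also correct: with $\alpha$ the indicator of $V(C_1)\cup V(C_2)$, the primal solution $y_e=1/2$ on cycle edges and the dual solution $x_v=1/2$ on cycle vertices certify that the LP optimum is $(|C_1|+|C_2|)/2$, an integer; and since no edge meets both cycles, the integer problem decomposes and a counting argument forces the IP optimum to be $(|C_1|+1)/2+(|C_2|+1)/2$, one larger.

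The genuine gap is $(c)\Rightarrow(a)$. You reduce this to: odd cycle condition $\Rightarrow$ integer rounding for $x\geq 0,\ xA\leq 1$, and then offer only a heuristic (``half-integral extreme points supported on disjoint odd cycles, uncross via a bridging edge''). This is not a proof: you have not established that optimal $y$ for arbitrary integral $\alpha$ are half-integral, nor that the fractional support decomposes into vertex-disjoint odd cycles, nor that a single bridging edge repairs the rounding defect without creating new fractional components. For general $\alpha$ this is a statement about $b$-edge-cover polyhedra, and the Edmonds-type structural results you allude to require care to invoke correctly. As written, the argument for this direction is a plan, not a proof.

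A cleaner route---and likely closer to the approach of \cite{roundp} and \cite{ainv}---bypasses the odd cycle condition entirely: one shows directly that for a $\{0,1\}$ matrix $A$ with column vectors $v_1,\ldots,v_m$, the system $x\geq 0,\ xA\leq 1$ has the integer rounding property if and only if the monomial subring $K[t^{v_1}z,\ldots,t^{v_m}z]$ equals the Ehrhart ring $A(\mathcal{P})$ of $\mathcal{P}=\mathrm{conv}(v_1,\ldots,v_m)$, and then checks that for a connected graph this Ehrhart ring coincides with the normalization of $K[G]$ (cf.\ Theorems~\ref{ehrh-mon}, \ref{normality-criterion-ehrhart} and Proposition~\ref{mar9-01-1g}). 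This gives $(a)\Leftrightarrow(c)$ in one stroke, with connectedness used only to control $\Delta_r(B)$, and avoids the delicate combinatorial step you left open.
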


We now discuss a conjecture of Simis on the normality of ideals arising 
from doubly stochastic matrices that is related to Cremona
monomial maps. 

Let $I=(t^{v_1},\ldots,t^{v_s})$ be a monomial ideal of
$S=K[t_1,\ldots,t_s]$ 
such that its incidence matrix
$A=(a_{i,j})$ is a non-singular matrix of order $s\times s$. We assume
that the set of monomials ${F}:=\{{t}^{v_1},\ldots,{t}^{v_s}\}\subset S$ have no non-trivial
 common factor, and that every $t_i$ divides at least one member of
 $F$. The matrix $A$ is 
called {\it doubly stochastic\/} of degree $d$ if 
$$
\sum_{k=1}^sa_{k,i}=\sum_{k=1}^sa_{j,k}=d\geq 2\ \mbox{ for all }\
1\leq i,j\leq s.
$$
\quad If $A$ is a $d$-stochastic matrix by columns, that is, 
$\sum_{k=1}^sa_{k,i}=d\geq 2$ for all $i$ and $S[Iz]$ is normal, 
then $\det(A)=\pm d$ \cite[Proposition~12.8.1]{monalg-rev}. The
following 
statement conjectures that a partial converse
holds.   

\begin{conjecture}{\rm(Simis)}\label{conje1} If $A$ is a doubly stochastic 
matrix of degree $d$ with entries in $\{0,1\}$ and 
$\det(A)=\pm d$, then the Rees
algebra $S[Iz]$ is normal 
\end{conjecture}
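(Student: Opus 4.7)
The plan is to verify the Hilbert basis criterion for normality (Proposition~\ref{hilb-basis-charw}) by exploiting the arithmetic consequences of the hypothesis $|\det(A)|=d$. Equivalently, by Theorem~\ref{crit-rounding-normali} it suffices to show that the system $x\geq 0,\ xA\geq\mathbf{1}$ has the integer rounding property, that is, for every $\beta\in\mathbb{N}^s$ there exists $\mu^\star\in\mathbb{N}^s$ with $A\mu^\star\leq\beta$ and $|\mu^\star|\geq \lfloor n^\ast(\beta)\rfloor$, where $n^\ast(\beta):=\max\{|\mu|:\mu\geq 0,\ A\mu\leq\beta\}$ and $\mathbf{1}=(1,\dots,1)$; once such $\mu^\star$ is produced, setting $\lambda^\star:=\beta-A\mu^\star\in\mathbb{N}^s$ exhibits $(\beta,|\mu^\star|)$ as a nonnegative integer combination of the spanning rays $e_i$ and $(v_j,1)$ of $\mathrm{RC}(I)$.

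The first step is to distill the arithmetic content of the hypotheses. Since $|\det(A)|=d$, Cramer's rule forces $dA^{-1}\in\mathbb{Z}^{s\times s}$. Doubly $d$-stochasticity gives $A\mathbf{1}=d\mathbf{1}=\mathbf{1}^{\top}A$, whence $A^{-1}\mathbf{1}=\mathbf{1}/d$ and $\mathbf{1}^{\top}A^{-1}=\mathbf{1}^{\top}/d$; in particular every row and every column of the integer matrix $dA^{-1}$ sums to $1$. LP duality with the trial dual feasible point $x=\mathbf{1}/d$ further gives $n^{\ast}(\beta)\leq |\beta|/d$, with equality iff $A\mu=\beta$ admits a nonnegative solution, in which case that solution automatically lies in $\tfrac{1}{d}\mathbb{Z}^s$. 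I would then analyze the vertices of the primal polytope $P(\beta)=\{\mu\geq 0:A\mu\leq\beta\}$: each is of the form $\mu_J=A_{I,J}^{-1}\beta_I$ with $\mu_{J^{c}}=0$ for a suitable invertible submatrix $A_{I,J}$, and the integrality of $dA^{-1}$ together with the row/column sum relations should bound the fractional parts of the coordinates of any such vertex by a function of $d$ alone. The main combinatorial step is to round an optimal fractional vertex $\mu$ to a feasible integer point $\mu^\star$ with $|\mu^\star|\geq\lfloor|\mu|\rfloor$; I would attempt this using a Birkhoff--von Neumann decomposition $A=P_1+\cdots+P_d$ (legitimate since $A$ is a doubly $d$-stochastic $0$-$1$ matrix) to route contributions of $\mu$ along the $P_i$, converting a fractional packing of the columns of $A$ into $\beta$ into an integer one.

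The principal obstacle is that $|\det(A)|=d$ is a purely numerical condition that does not readily translate into a combinatorial property of the underlying clutter, unlike the Hochster configuration criterion of Theorem~\ref{apr16-02}. For $d=2$ the hypothesis does pin $A$ down to the incidence matrix of a single odd cycle---since any $2$-regular $0$-$1$ matrix is the incidence matrix of a disjoint union of cycles, whose determinant has absolute value $2^{k}$ with $k$ the number of odd cycles (and $0$ in the presence of an even cycle)---and normality then follows from Theorem~\ref{apr16-02}. For larger $d$ no analogous structural dichotomy is apparent, and this is where I expect the bulk of the difficulty. A promising auxiliary tool is the theory of Cremona monomial maps developed in \cite{birational,birational-linear}: the hypothesis $|\det(A)|=d$ is precisely the birationality condition for the map $(t_1:\cdots:t_s)\mapsto (t^{v_1}:\cdots:t^{v_s})$, and the inverse map is itself monomial with integer exponents encoded by the entries of $dA^{-1}$. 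Transferring this \emph{integer inverse} into explicit rounding certificates for the integer rounding property---thereby producing the desired $\mu^\star$ directly from $A^{-1}\beta$ plus a controlled correction in $\mathbb{N}^s$---appears to be the decisive step, and is precisely where the conjecture remains open.
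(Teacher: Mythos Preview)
The statement you were asked to prove is labelled in the paper as a \emph{conjecture} of Simis, not as a theorem; the paper offers no proof. What the paper does provide is Proposition~\ref{may29-04}, which establishes the conjecture in the special case $d=2$ (and in fact under the weaker hypothesis that $A$ is only column $2$-stochastic), together with the remark that the hypothesis $|\det(A)|=d$ is equivalent to the associated monomial map $\mathbb{P}^{s-1}\dasharrow\mathbb{P}^{s-1}$ being Cremona (Proposition~\ref{march27-11}).

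Your proposal is therefore not a proof attempt that can be checked against the paper's argument---there is none---but rather a research outline. You are quite explicit about this yourself, noting at the end that the decisive rounding step ``is precisely where the conjecture remains open.'' That assessment is accurate. Your $d=2$ paragraph is essentially the content of Proposition~\ref{may29-04}: a doubly $2$-stochastic $0$--$1$ matrix is the incidence matrix of a $2$-regular graph, i.e., a disjoint union of cycles; the determinant condition $|\det(A)|=2$ forces a single odd cycle; and a single odd cycle admits no Hochster configuration, so Theorem~\ref{apr16-02} gives normality. (The paper's version is slightly stronger in that it drops the row-sum hypothesis, but the idea is the same.)

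For $d\geq 3$ your strategy---integer rounding via Proposition~\ref{normality-criterion-ip} or Theorem~\ref{crit-rounding-normali}, combined with a Birkhoff--von Neumann decomposition $A=P_1+\cdots+P_d$ and the integrality of $dA^{-1}$---is a reasonable line of attack, but you have correctly identified the gap: nothing in the argument as written explains how to pass from a fractional optimal vertex of $\{A\mu\leq\beta,\ \mu\geq 0\}$ to an integral feasible point without losing more than one unit of $|\mu|$. The Birkhoff decomposition by itself does not do this, since the permutation matrices $P_i$ need not interact well with the particular $\beta$ at hand. So the proposal is an honest sketch of the landscape, not a proof, and that matches the status of the statement in the paper.
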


The following result gives some support for this conjecture.

\begin{proposition}{\rm(\cite{birational}, \cite[Proposition
12.8.4]{monalg-rev})}\label{may29-04}
If $A=(a_{i,j})$ is a $2$-stochastic matrix by columns with entries in
$\{0,1\}$ and $\det(A)=\pm 2$, then  $S[Iz]$ is normal. 
\end{proposition}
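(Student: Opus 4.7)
The plan is to decode the combinatorial meaning of the hypotheses on $A$, reduce to a very restrictive family of graphs, and then invoke the Hochster-configuration criterion of Theorem~\ref{apr16-02}.

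First, since the entries of $A$ lie in $\{0,1\}$ and every column sums to $2$, each column $v_j$ has the form $e_i+e_k$ for distinct indices $i,k$. Hence $A$ is the vertex-edge incidence matrix of a simple graph $G$ on $s$ vertices with exactly $s$ edges, and $I=I(G)$. I would then invoke the classical fact that the $\mathbb{Z}$-rank of the (unsigned) incidence matrix of a connected graph $H$ on $n$ vertices is $n-1$ if $H$ is bipartite and $n$ if $H$ is non-bipartite. Grouping rows and columns of $A$ by the connected components of $G$, non-singularity of $A$ forces each component to contribute a square, non-singular block. Therefore every component of $G$ is non-bipartite and has equal number of vertices and edges, i.e., is a \emph{unicyclic graph whose unique cycle has odd length}.

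Second, for such a connected component $H_i$, a direct determinant expansion (successively pruning pendant vertices until one is left with the incidence matrix of an odd cycle, which has absolute determinant $2$) yields $|\det(A_i)|=2$. Because the block structure is obtained by a simultaneous permutation of rows and columns, we get $|\det(A)|=2^c$, where $c$ is the number of connected components of $G$. The hypothesis $\det(A)=\pm 2$ then forces $c=1$, so $G$ is a \emph{connected} unicyclic graph whose unique cycle is odd.

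Finally, I would appeal to Theorem~\ref{apr16-02}: $I(G)$ is normal if and only if $G$ admits no Hochster configuration, which requires in particular two vertex-disjoint induced odd cycles $C_1,C_2$. Since $G$ contains only one cycle, no such configuration can exist. Thus $I=I(G)$ is normal, and normality of $I$ is equivalent to normality of $S[Iz]$ by Eq.~\eqref{jul30-22-2}. The only real obstacle is the graph-theoretic rank/determinant computation in the second paragraph; beyond that, everything follows mechanically from the identification $A\leftrightarrow G$ and from Theorem~\ref{apr16-02}.
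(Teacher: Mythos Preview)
Your argument is correct. The identification of $A$ as the incidence matrix of a simple graph $G$ on $s$ vertices with $s$ edges is immediate from the hypotheses, and the rank/determinant analysis (each component must be unicyclic with an odd cycle, contributing a factor of $2$ to $|\det A|$, hence $c=1$) is sound; the leaf-pruning expansion you describe is the standard way to reduce to the determinant of an odd-cycle incidence matrix. The final step via Theorem~\ref{apr16-02} is exactly right: a connected unicyclic graph has a single cycle, so two vertex-disjoint induced odd cycles cannot exist.

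The paper itself does not supply a proof of this proposition; it is quoted as a survey item from \cite{birational} and \cite[Proposition~12.8.4]{monalg-rev}. Your route is precisely the one the surrounding material sets up: the rank formula for incidence matrices (cf.\ Proposition~\ref{dimkg} and the references to \cite{Kulk1,raei}) together with the Hochster-configuration normality criterion. One small remark: you could shorten the component-counting step by quoting Proposition~\ref{mar9-01-1g} in spirit, since the same block argument underlies both computations, but what you wrote is self-contained and complete.
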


The set $F$ defines a rational (monomial) map 
$\mathbb{P}^{s-1}\dasharrow \mathbb{P}^{s-1}$ denoted again by $F$ and written as a tuple
$F=({t}^{v_1},\ldots, {t}^{v_s})$. $F$ is called a {\em 
Cremona map\/} if it admits an
inverse rational map with source $\mathbb{P}^{s-1}$. 
A rational monomial map $F$ is defined 
everywhere if and only if the defining monomials are pure powers of the
variables, in which case it is a Cremona map if and only if 
$F=(t_{\sigma(1)},\ldots,t_{\sigma(s)})$ for
some permutation $\sigma$.  

\begin{proposition}{\rm(Simis, -\,,
\cite{birational})}\label{march27-11} $F$ defines a Cremona 
map if and only if $\det(A)=\pm d$.
\end{proposition}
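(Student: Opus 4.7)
The strategy is to translate birationality of the monomial map $F$ into an isomorphism of function fields and then into the equality of two full-rank sublattices sitting inside the hyperplane $\{|x|=0\}\subset\mathbb{Z}^s$; the index of those sublattices will turn out to equal $|\det(A)|/d$.

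First, I would identify the relevant function fields. On the affine chart $\{t_s\neq 0\}$ one has
$$K(\mathbb{P}^{s-1})=K(u_1,\ldots,u_{s-1})=K\bigl(t^\alpha\mid \alpha\in M\bigr),$$
where $M:=\mathbb{Z}\langle e_1-e_s,\ldots,e_{s-1}-e_s\rangle$ is the lattice of degree-zero integer vectors in $\mathbb{Z}^s$. Because the monomials $t^{v_1},\ldots,t^{v_s}$ all have the same degree $d$ (that is, the columns of $A$ sum to $d$), the pullback $F^\ast$ sends $u_i=t_i/t_s\mapsto t^{v_i-v_s}$, and its image is
$$L:=K\bigl(t^\alpha\mid \alpha\in N\bigr),\qquad N:=\mathbb{Z}\langle v_1-v_s,\ldots,v_{s-1}-v_s\rangle\subseteq M.$$
Thus $F$ is a Cremona map iff $F^\ast$ is bijective, iff $L=K(u_1,\ldots,u_{s-1})$.

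Next I would invoke the standard toric fact that for full-rank sublattices $N\subseteq M$ of $\mathbb{Z}^s$ the extension $K(t^\alpha\mid \alpha\in N)\subseteq K(t^\alpha\mid \alpha\in M)$ is finite of degree $[M:N]$ (the group ring $K[M]$ is free of rank $[M:N]$ over $K[N]$). So $F$ is Cremona iff $[M:N]=1$.

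The concrete step is to compute this index in terms of $\det(A)$. Let $B$ be the $(s-1)\times(s-1)$ matrix whose $(j,i)$-entry is $A_{ji}-A_{js}$; equivalently, its columns are the vectors $v_1-v_s,\ldots,v_{s-1}-v_s$ written in the basis $\{e_j-e_s\}_{j<s}$ of $M$. Then $[M:N]=|\det(B)|$. I would compute $\det(A)$ by two determinant-preserving operations: subtract the last column from each of the first $s-1$ columns (so those columns become $v_i-v_s$), then add rows $1,\ldots,s-1$ to row $s$. Since each column of the original $A$ sums to $d$, after the first operation the first $s-1$ columns have zero column-sum; after the second, row $s$ of those columns is $0$, while row $s$ of the last column becomes $d$. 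The resulting matrix is block lower-triangular with upper-left block $B$ and bottom-right entry $d$, giving
$$\det(A)=d\cdot\det(B),\qquad\text{hence}\qquad [M:N]=|\det(B)|=|\det(A)|/d.$$
Therefore $F$ is a Cremona map iff $|\det(A)|=d$, i.e., $\det(A)=\pm d$.

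The main obstacle, such as it is, is not in the determinantal manipulation but in a clean justification of the lattice-index-equals-field-degree formula $[K(M):K(N)]=[M:N]$ over an arbitrary ground field $K$; this is classical but deserves an explicit reference. Everything else is bookkeeping that uses only the hypothesis that the columns of $A$ have constant sum $d$.
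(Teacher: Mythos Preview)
The paper does not give its own proof of this proposition; it merely cites \cite{birational}. Your argument is correct and is the standard lattice-theoretic route to birationality of monomial maps: reduce to the inclusion of degree-zero lattices $N\subseteq M$, identify the degree of the associated field extension with the index $[M:N]$, and compute that index as $|\det(A)|/d$ via the block-triangularization you describe. The determinant manipulation is clean and correct.

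Two small points worth tightening. First, the equality $[K(M):K(N)]=[M:N]$ that you flag follows at once from the Smith normal form: choose compatible bases so that $M\cong\mathbb{Z}^{s-1}$ and $N\cong d_1\mathbb{Z}\times\cdots\times d_{s-1}\mathbb{Z}$, whence the extension is $K(x_1^{d_1},\ldots,x_{s-1}^{d_{s-1}})\subset K(x_1,\ldots,x_{s-1})$ of degree $d_1\cdots d_{s-1}$, valid over any field. Second, you implicitly use that $F$ is dominant so that $F^\ast$ is a well-defined field embedding; this holds because the ambient hypothesis is that $A$ is non-singular, hence $\dim K[t^{v_1},\ldots,t^{v_s}]=\operatorname{rank}(A)=s$ by Theorem~\ref{dim-toric}, so the monomials are algebraically independent. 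With those two remarks made explicit, the proof is complete.
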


Thus, Conjecture~\ref{conje1} has the following reformulation: 
\begin{conjecture} $F\colon \mathbb{P}^{s-1}\dasharrow\mathbb{P}^{s-1}$ is
a Cremona map if and only 
if $S[Iz]$ is normal.
\end{conjecture}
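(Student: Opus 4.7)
By Proposition~\ref{march27-11}, the Cremona hypothesis translates into $\det(A)=\pm d$, so the reformulated conjecture is equivalent to: under the standing assumption that $A$ is doubly stochastic of degree $d$ with $0/1$ entries, $\det(A)=\pm d$ if and only if $S[Iz]$ is normal. One direction---$S[Iz]$ normal implies $\det(A)=\pm d$---is already \cite[Proposition~12.8.1]{monalg-rev}, and the remaining content is precisely Conjecture~\ref{conje1}, which is what the proposal aims to establish.

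The strategy is to invoke the Hilbert basis characterization, Proposition~\ref{hilb-basis-charw}: $S[Iz]$ is normal exactly when
\[
\mathcal{A}'=\{e_1,\ldots,e_s,(v_1,1),\ldots,(v_s,1)\}
\]
is a Hilbert basis of $\mathbb{R}_+\mathcal{A}'$; equivalently, by Theorem~\ref{crit-rounding-normali}, when the system $x\ge 0,\ xA\ge\mathbf{1}$ has the integer rounding property. Two structural inputs then become available. First, since $A\in\{0,1\}^{s\times s}$ is doubly stochastic of degree $d$, it is the biadjacency matrix of a $d$-regular bipartite graph, so K\"onig's edge coloring theorem yields a decomposition $A=P_1+\cdots+P_d$ into permutation matrices, realizing each generator as $t^{v_j}=\prod_{i=1}^{d}t_{\sigma_i^{-1}(j)}$ for permutations $\sigma_1,\ldots,\sigma_d$. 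Second, by \cite{birational}, the hypothesis $\det(A)=\pm d$ forces the inverse rational map $F^{-1}$ to be monomial with exponent matrix $B\in\mathbb{N}^{s\times s}$ satisfying $A^{-1}=\tfrac{1}{d}B$. This integrality of $dA^{-1}$ is the algebraic lever for rounding: given $(\alpha,n)\in\mathbb{Z}^{s+1}\cap\mathbb{R}_{+}\mathcal{A}'$ with a rational representation $\alpha=\mu+A\nu$, $|\nu|=n$, the matrix $B$ provides the mechanism to transfer fractional parts of $\nu$ to unit-vector corrections in $\mu$ while preserving $|\nu|=n$.

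The main obstacle, which is why this conjecture has remained open, is carrying out the rounding coherently. Naively replacing $\nu$ by $\lfloor\nu\rfloor$ destroys the constraint $|\nu|=n$, and the residual $A(\nu-\lfloor\nu\rfloor)$ must be absorbed into the $e_i$-coordinates in a manner that keeps $\mu\in\mathbb{N}^s$. In the case $d=2$, settled in Proposition~\ref{may29-04}, the condition $\det(A)=\pm 2$ is equivalent to the symmetric-difference permutation $\sigma_1\sigma_2^{-1}$ decomposing into odd cycles only, and this parity supports an ad hoc argument. My proposed route for general $d$ is to first isolate the combinatorial invariant of the tuple $(\sigma_1,\ldots,\sigma_d)$ that characterizes $|\det(A)|=d$---a higher analogue of the odd-cycle condition---and then to induct on $s$ after reducing via row and column permutations to the block-indecomposable case. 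The crux is to show that, under this invariant, the Rees cone $\mathbb{R}_{+}\mathcal{A}'$ admits a triangulation into simplices whose normalized volumes divide $\det(A)=\pm d$; such a triangulation is unimodular with respect to $\mathbb{Z}\mathcal{A}'$ and hence delivers the Hilbert basis property, completing the argument by Proposition~\ref{hilb-basis-charw}.
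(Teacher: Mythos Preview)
The statement you are addressing is labeled a \emph{Conjecture} in the paper and is presented as an open problem; the paper gives no proof, only the reformulation via Proposition~\ref{march27-11} showing it is equivalent to Simis' Conjecture~\ref{conje1}, together with the partial evidence of Proposition~\ref{may29-04} for $d=2$. So there is no proof in the paper to compare against.

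Your proposal is not a proof either. The first two paragraphs correctly carry out the reduction to Conjecture~\ref{conje1} and set up the Hilbert-basis and integer-rounding framework, but the third paragraph---where the actual content would have to be---only outlines a strategy without executing any step. Concretely: (i) the ``combinatorial invariant of the tuple $(\sigma_1,\ldots,\sigma_d)$ that characterizes $|\det(A)|=d$'' is never identified; for $d=2$ this is the odd-cycle condition on $\sigma_1\sigma_2^{-1}$, but for $d\ge 3$ you offer no candidate; (ii) the assertion that such an invariant would produce a triangulation of the Rees cone into simplices whose normalized volumes divide $d$ is declared ``the crux'' but is supported by no argument; (iii) the induction on $s$ and the reduction to the block-indecomposable case are mentioned but not carried out. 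There is also a smaller gap in the second paragraph: from $\det(A)=\pm d$ one obtains $dA^{-1}=\pm\operatorname{adj}(A)\in\mathbb{Z}^{s\times s}$, but the claim $B\in\mathbb{N}^{s\times s}$ (nonnegative entries) does not follow automatically and would require passing to a suitable projective representative of the inverse map.

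What you have written is a reasonable research outline, not a proof. The conjecture remains open, with only the case $d=2$ settled.
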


Cremona monomial maps were studied by Simis and the second author 
in \cite{cremona} using linear algebra, lattice theory and linear optimization methods. 
Cremona maps defined by monomials of degree $d=2$ were thoroughly
analyzed and classified via 
integer arithmetic and graph combinatorics by Costa and Simis \cite{costa-simis}. 
The recent book of Simis and Ramos is an excellent reference for 
Cremona transformations \cite[Chapters~6 and 7]{Aron-bookII}. 

\section{Normality and Ehrhart rings: $a$-invariant and
regularity}\label{regularity-section}

Let $S=K[t_1,\ldots,t_s]$ be a 
polynomial ring over a field $K$ and let $v_1,\ldots,v_m$ be points in
$\mathbb{N}^s$. The dimension of the lattice 
polytope $\mathcal{P}:={\rm
conv}(v_1,\ldots,v_m)\subset\mathbb{R}^s$ is denoted by $d$. The \textit{Ehrhart
function} and \textit{Ehrhart
series} of $\mathcal{P}$, denoted $E_\mathcal{P}$
and $F_\mathcal{P}$, respectively \cite{beck-robins,BG-book}, 
are given by 
$$
E_\mathcal{P}(n):=|n\mathcal{P}\cap\mathbb{Z}^s|,\ n\in\mathbb{N},\ \mbox{
and }\ F_\mathcal{P}(x):=\sum_{n=0}^\infty
E_\mathcal{P}(n)x^n,
$$
and the \textit{Ehrhart ring} of $\mathcal{P}$, denoted $A(\mathcal{P})$, 
is the monomial subring of $S[z]$ given by 
\begin{equation*}
A(\mathcal{P}):=K[\{t^az^n\mid a\in n\mathcal{P}\cap\mathbb{Z}^s\}]\subset S[z],
\end{equation*}
where $z$ is a new variable \cite{beck-robins,BG-book}. This ring is graded by 
$$
A(\mathcal{P})=\bigoplus_{n=0}^\infty A(\mathcal{P})_n,
$$
where $t^az^n\in A(\mathcal{P})_n$ if and only if
$a\in n\mathcal{P}\cap\mathbb{Z}^s$. Note that $E_\mathcal{P}$ and
$F_\mathcal{P}$ are the Hilbert function and the Hilbert series of
$A(P)$, respectively. The function
$E_\mathcal{P}$ is a
polynomial of degree $d$ whose leading coefficient is the relative
volume ${\rm vol}(\mathcal{P})$ of $\mathcal{P}$ \cite{beck-robins},
and the generating 
function $F_\mathcal{P}$ of 
$E_\mathcal{P}$ is a rational function of the form
$$
F_\mathcal{P}(x)=\frac{h_\mathcal{P}(x)}{(1-x)^{d+1}},
$$
where $h_\mathcal{P}(x)$ is a polynomial with integer coefficients of degree at
most $d$ \cite{Sta1}, that is called the $h$-\textit{polynomial} of
$\mathcal{P}$. The vector
formed with the coefficients of $h_\mathcal{P}(x)$ is called the
$h$-\textit{vector} of 
$\mathcal{P}$ and is denoted by
$h(\mathcal{P})$. Stanley's positivity theorem shows that
$h(\mathcal{P})$ has non-negative integer entries, see
Theorem~\ref{Stanley-nonneg-mono}(a) below.      

Let $\mathbb{R}\subset K$ be a field extension. Given $f\in S$ such
that $f(\mathbb{Z}^s)\subset\mathbb{Z}$, let 
$f\colon K^s\rightarrow K$, $a\mapsto f(a)$, be its underlying
\textit{weight function}. The \textit{weighted Ehrhart
function} and \textit{weighted Ehrhart
series} of the lattice polytope $\mathcal{P}$ are:
$$
E^f_\mathcal{P}(n):=\sum_{\scriptstyle a\in
n\mathcal{P}\cap\mathbb{Z}^s}f(a),\ \forall\, n\in\mathbb{N},\ \mbox{
and }\ F_\mathcal{P}^f(x):=\sum_{n=0}^\infty
E_\mathcal{P}^f(n)x^n.
$$
\quad If $f=1$, then $E^f_\mathcal{P}$, $F^f_\mathcal{P}$ are 
$E_\mathcal{P}$, $F_\mathcal{P}$, respectively. 
The weighted Ehrhart theory was developed in 
several papers, see
\cite{baldoni-etal-1,baldoni-etal,Brion-Vergne,bruns-ichim-soger,Bruns-Soger} 
and references therein. We thank Jes\'us De Loera for
introducing us to this theory while he was on sabbatical at
Cinvestav in the fall of 2022. The recent paper \cite{ehrhartw}
studies Ehrhart functions and series of weighted lattice points. 

\begin{proposition}\cite[Proposition~4.1]{Brion-Vergne}\label{weighted-degree} 
Let $0\neq f\in K[t_1,\ldots,t_s]$ be a homogeneous polynomial of
degree $p$. If the interior
$\mathcal{P}^{\rm o}$ of $\mathcal{P}$ 
is nonempty, $K=\mathbb{R}$, and $f\geq 0$ on $\mathcal{P}$, then 
$E_\mathcal{P}^f$ is a polynomial of degree $s+p$. 
\end{proposition}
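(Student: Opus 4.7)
The plan is to establish two things separately: that $E_\mathcal{P}^f(n)$ agrees with a polynomial in $n$ of degree at most $s+p$, and that the coefficient of $n^{s+p}$ in that polynomial is strictly positive. The assumption $\mathcal{P}^{\rm o}\neq\emptyset$ forces $\dim\mathcal{P}=s$, so the targeted degree is $s+p$ rather than anything smaller.

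For polynomiality and the upper bound on the degree I would appeal directly to classical weighted Ehrhart theory for lattice polytopes as developed in \cite{Brion-Vergne,baldoni-etal,Bruns-Soger}: triangulate $\mathcal{P}$ into lattice simplices, parametrise the cone over each simplex by a half-open parallelepiped, and expand each monomial of $f$ as a finite geometric sum in the cone variables. This gives
$$F_\mathcal{P}^f(x)=\frac{h_\mathcal{P}^f(x)}{(1-x)^{s+p+1}}$$
with $h_\mathcal{P}^f\in\mathbb{R}[x]$, from which it follows that $E_\mathcal{P}^f(n)$ is a polynomial in $n$ of degree at most $s+p$.

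To identify the leading term I would compare the weighted lattice-point sum to a Riemann integral. Decomposing $n\mathcal{P}$ into unit cubes and applying the mean value theorem to $\nabla f$ (homogeneous of degree $p-1$) yields
$$E_\mathcal{P}^f(n)=\int_{n\mathcal{P}}f(x)\,dx+O(n^{s+p-1}),$$
and the substitution $x=ny$ combined with the homogeneity $f(ny)=n^pf(y)$ gives $\int_{n\mathcal{P}}f(x)\,dx=n^{s+p}\int_\mathcal{P}f(y)\,dy$. Hence the coefficient of $n^{s+p}$ in $E_\mathcal{P}^f(n)$ equals $\int_\mathcal{P}f(y)\,dy$. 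Under the remaining hypotheses this integral is strictly positive: since $f\neq 0$ is a polynomial over $\mathbb{R}$ the zero set $\{f=0\}$ has Lebesgue measure zero in $\mathbb{R}^s$, and because $\mathcal{P}^{\rm o}$ is open and nonempty $f$ is strictly positive on a dense open subset of $\mathcal{P}^{\rm o}$; combined with $f\geq 0$ on $\mathcal{P}$ and continuity this forces $\int_\mathcal{P}f(y)\,dy>0$, and $E_\mathcal{P}^f(n)$ has degree exactly $s+p$.

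The main obstacle is making the Riemann-sum error term rigorous near $\partial(n\mathcal{P})$: the unit cubes that meet the boundary number $O(n^{s-1})$, but on each of them $|f|$ can be as large as $O(n^p)$, so the boundary strip contributes $O(n^{s+p-1})$ to the remainder — exactly the order we need, and it must be absorbed by a careful covering argument. A cleaner alternative that avoids the boundary analysis is to read the leading coefficient directly from the generating function above: a standard valuation argument identifies the leading coefficient of the Hilbert polynomial with the weighted volume $\int_\mathcal{P}f(y)\,dy$, and positivity of this integral (argued as above) then yields the conclusion.
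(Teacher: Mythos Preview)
The paper does not supply its own proof of this proposition: it is quoted verbatim from \cite[Proposition~4.1]{Brion-Vergne} with no argument, as is typical in a survey. The only relevant addition is the sentence immediately following the statement, which records that the leading coefficient of $E_\mathcal{P}^f$ equals $\int_\mathcal{P}f$ (attributed to \cite[p.~437]{baldoni-etal} and \cite[Proposition~5]{Bruns-Soger}). Your approach is built around exactly this fact, so in spirit you are reconstructing the argument the paper gestures at rather than diverging from it.

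Your outline is sound. The two-step structure (polynomiality with degree bound via the rational generating function, then positivity of the top coefficient via $\int_\mathcal{P}f>0$) is the standard route, and your justification of $\int_\mathcal{P}f>0$ is correct: $f\not\equiv 0$ implies its zero set has Lebesgue measure zero, $\mathcal{P}^{\rm o}\neq\emptyset$ gives $\mathcal{P}$ positive $s$-dimensional measure, and $f\geq 0$ on $\mathcal{P}$ forces the integral to be strictly positive. You correctly flag the Riemann-sum boundary estimate as the one place requiring care, and your suggested alternative (reading the leading coefficient off the Hilbert series directly, as in \cite{Bruns-Soger}) is the cleaner way to close that gap. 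No genuine error here; the proposal would compile into a complete proof with only routine details added.
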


It is known that in
Proposition~\ref{weighted-degree}, the leading coefficient of
the polynomial $E_\mathcal{P}^f$ is equal to $\int_\mathcal{P}f$. This fact appears 
in \cite[p.~437]{baldoni-etal} and \cite[Proposition~5]{Bruns-Soger}. Integrals of the type 
$\int_\mathcal{P}f$ with $f$ a polynomial
and $\mathcal{P}$ a rational polytope were studied in
\cite{baldoni-etal-1,barvinok,barvinok1,Bruns-Soger,lasserre}.
Algorithms and implementation to compute this integral were developed
independently in \textit{LattE integrale} \cite{latte-integrale} 
and \textit{NmzIntegrate} \cite{Bruns-Soger1}. One can use \textit{NmzIntegrate} \cite{Bruns-Soger,Bruns-Soger1}
or  \textit{Normaliz} \cite{normaliz2} to compute the polynomial
 $E_\mathcal{P}^f$ and the rational function
$F_\mathcal{P}^f$.

\begin{proposition}\cite[Theorem~9.3.6]{monalg-rev}\label{ehrhart-normal} 
The Ehrhart ring $A(\mathcal{P})$ is a normal 
finitely generated graded $K$-algebra. 
\end{proposition}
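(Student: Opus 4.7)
The plan is to realize $A(\mathcal{P})$ as the affine semigroup ring of a finitely generated, saturated submonoid of $\mathbb{Z}^{s+1}$, and then invoke Gordan's lemma together with Hochster's theorem on normal affine semigroup rings. Throughout, write $w_1,\ldots,w_r$ for the vertices of the lattice polytope $\mathcal{P}\subset\mathbb{R}^s$.

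First, I would identify the monoid
\[
M\;:=\;\{(a,n)\in\mathbb{Z}^s\times\mathbb{N}\mid a\in n\mathcal{P}\},
\]
so that by the very definition of $A(\mathcal{P})$ one has $A(\mathcal{P})=K[M]$ under the correspondence $(a,n)\leftrightarrow t^a z^n$. Next, introduce the rational cone
\[
C\;:=\;\mathbb{R}_+(w_1,1)+\cdots+\mathbb{R}_+(w_r,1)\;\subset\;\mathbb{R}^{s+1}.
\]
A short computation with convex combinations shows that $M=C\cap\mathbb{Z}^{s+1}$: a point $(a,n)$ with $n\geq 1$ lies in $C$ if and only if $a/n$ is a convex combination of the $w_i$, i.e.\ $a\in n\mathcal{P}$; the degenerate case $n=0$ forces $a=0$.

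The second step is finite generation. Since the vectors $(w_i,1)$ are integral, $C$ is a rational polyhedral cone, and by Gordan's lemma the monoid $M=C\cap\mathbb{Z}^{s+1}$ is finitely generated; hence $A(\mathcal{P})=K[M]$ is a finitely generated $K$-algebra. The grading by powers of $z$, in which $t^a z^n$ has degree $n$, makes $A(\mathcal{P})$ into a graded $K$-algebra with $A(\mathcal{P})_0=K$ and $n$-th graded piece spanned by $\{t^a z^n\mid a\in n\mathcal{P}\cap\mathbb{Z}^s\}$, as required.

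The third step is normality. The semigroup $M$ is saturated inside $\mathbb{Z}^{s+1}$: if $k\cdot(b,m)\in M$ for some positive integer $k$ and $(b,m)\in\mathbb{Z}^{s+1}$, then $k(b,m)\in C$, and because $C$ is a convex cone and $k>0$, also $(b,m)\in C\cap\mathbb{Z}^{s+1}=M$. Hochster's theorem on normal affine semigroup rings then yields that $K[M]=A(\mathcal{P})$ is a normal domain. The main---and really only---technical point is the dictionary between the two descriptions of $M$ (lattice points of dilates of $\mathcal{P}$ versus lattice points in the cone over $\mathcal{P}$ at height~$1$) and the deduction of saturation from the convexity of $C$; both are routine once one passes to $C$, and together they reduce the statement to the standard Gordan--Hochster package.
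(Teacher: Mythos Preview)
Your argument is correct and follows the standard route that the cited reference \cite[Theorem~9.3.6]{monalg-rev} takes: identify $A(\mathcal{P})$ with $K[C\cap\mathbb{Z}^{s+1}]$ for the cone $C$ over $\mathcal{P}$ at height~$1$, apply Gordan's lemma for finite generation, and deduce normality from saturation of $C\cap\mathbb{Z}^{s+1}$.

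One terminological correction: the final step does not use Hochster's theorem. In this paper (Theorem~\ref{hoch-theo}) and in the standard literature, ``Hochster's theorem'' is the statement that a \emph{normal} affine semigroup ring is Cohen--Macaulay. What you actually invoke is the more elementary characterization that $K[M]$ is normal if and only if the affine semigroup $M$ is saturated in its group of differences (see, e.g., \cite[Theorem~6.1.4]{BHer} or \cite[Theorem~4.37]{BG-book}); your saturation argument via convexity of $C$ is exactly the proof of the ``if'' direction. Replace the reference to Hochster with this characterization and the write-up is clean.
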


The following result is a generalization 
of the descent of normality criterion (Theorem~\ref{feb16-23}). 

\begin{theorem}{\rm(Generalized descent of normality criterion 
\cite[Theorem 3.15]{ehrhart})}\label{mar3-01}  
Let 
$I$ be a monomial ideal of $S$ generated by $\{t^{v_i}\}_{i=1}^m$ and let
$\mathcal{P}={\rm conv}(v_1,\ldots,v_m)$ be the Newton polytope of $I$. If the 
Rees algebra $S[Iz]$ of $I$ is normal and 
$v_1,\ldots,v_m$ 
lie on a hyperplane 
$$
b_1x_1+\cdots+b_sx_s=1\ \ \ \ \ (b_i>0\, \, \forall i),
$$
then $K[t^{v_1}z,\ldots,t^{v_m}z]=A(\mathcal{P})$. In particular, 
$K[t^{v_1}z,\ldots,t^{v_m}z]$ is normal.
\end{theorem}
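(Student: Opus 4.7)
The plan is to prove the equality $K[t^{v_1}z,\ldots,t^{v_m}z] = A(\mathcal{P})$ by a direct double inclusion at the level of monomials, using the normality of $S[Iz]$ and the strict positivity of the $b_i$'s, after which the ``in particular'' assertion will be immediate from Proposition~\ref{ehrhart-normal}. The inclusion $K[t^{v_1}z,\ldots,t^{v_m}z] \subset A(\mathcal{P})$ is straightforward: every monomial on the left-hand side has the form $t^{v_{i_1}+\cdots+v_{i_n}}z^n$ for some indices $i_1,\ldots,i_n$, and since $\tfrac{1}{n}(v_{i_1}+\cdots+v_{i_n})$ is a convex combination of vertices of $\mathcal{P}$, the exponent lies in $n\mathcal{P}\cap\mathbb{Z}^s$, placing the monomial in $A(\mathcal{P})$.

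For the reverse inclusion I would pick an arbitrary monomial $t^{a}z^n \in A(\mathcal{P})$, so $a\in n\mathcal{P}\cap\mathbb{Z}^s$, i.e.\ $a/n\in\mathcal{P}\subset\mathrm{NP}(I)$. By Equation~\eqref{jun21-21}, this gives $t^{a}\in\overline{I^n}$, and the normality hypothesis, combined with Equation~\eqref{jul30-22-2}, forces $\overline{I^n}=I^n$; hence $t^{a}\in I^n$, so there exist indices $i_1,\ldots,i_n\in\{1,\ldots,m\}$ and a vector $\alpha\in\mathbb{N}^s$ with
\[
a \;=\; \alpha + v_{i_1}+\cdots+v_{i_n}.
\]
Pairing both sides with $b=(b_1,\ldots,b_s)$, the relations $\langle b,v_{i_k}\rangle = 1$ give $\langle b,a\rangle = \langle b,\alpha\rangle + n$; on the other hand, writing $a/n\in\mathcal{P}$ as a convex combination $\sum_i\mu_i v_i$ with $\sum_i\mu_i=1$ yields $\langle b,a\rangle = n$. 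Comparing, $\langle b,\alpha\rangle = 0$, and the strict positivity of each $b_i$ together with $\alpha\in\mathbb{N}^s$ forces $\alpha = 0$. Therefore $t^{a}z^n = \prod_{k=1}^n (t^{v_{i_k}}z)$ belongs to $K[t^{v_1}z,\ldots,t^{v_m}z]$, completing the equality.

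The whole argument is essentially a positivity trick: the only non-mechanical step is deducing $\alpha=0$ from $\langle b,\alpha\rangle=0$, and this is precisely where the strict positivity of the hyperplane coefficients is indispensable (merely requiring $b_i\geq 0$ would only pin $\alpha$ to a face of $\mathbb{R}_+^s$). All remaining steps reduce to the description of $\overline{I^n}$ via the Newton polyhedron of $I$ and to the normality hypothesis on $S[Iz]$. The final assertion that $K[t^{v_1}z,\ldots,t^{v_m}z]$ is normal follows at once from Proposition~\ref{ehrhart-normal}, which guarantees that every Ehrhart ring is normal.
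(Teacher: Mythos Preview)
Your proof is correct. The survey paper does not itself reproduce a proof of this theorem (it simply cites \cite[Theorem~3.15]{ehrhart}), so there is no in-paper argument to compare against; your double-inclusion argument via Equation~\eqref{jun21-21}, the normality of $I$ from Equation~\eqref{jul30-22-2}, and the positivity trick $\langle b,\alpha\rangle=0\Rightarrow\alpha=0$ is exactly the natural route and matches the standard proof in the cited source.
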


\begin{theorem}\label{Stanley-nonneg-mono} Let $\mathcal{P}$ and $\mathcal{P}_1$ be lattice
polytopes in $\mathbb{R}^s$. The following hold.
\begin{enumerate}
\item[(a)] $($Stanley's positivity theorem
\cite[Theorem~2.1]{Stanley-nonneg-h-vector}$)$ The $h$-vector 
$h(\mathcal{P})$ of $\mathcal{P}$ has non-negative integer entries.  
\item[(b)] $($Stanley's  monotonicity theorem
\cite[Theorem~3.3]{stanley-mono}, \cite[Theorem~3.3]{beck-deco}$)$ If 
$\mathcal{P}\subset\mathcal{P}_1$, then their $h$-vectors satisfy
$h(\mathcal{P})\leq h(\mathcal{P}_1)$ componentwise.
 \end{enumerate}
\end{theorem}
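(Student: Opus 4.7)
Both statements concern the Hilbert series of the Ehrhart ring $A(\mathcal{P})$. By Proposition~\ref{ehrhart-normal} this ring is normal, and by Hochster's theorem (recalled earlier in this section) it is Cohen--Macaulay of Krull dimension $d+1$ with Hilbert series $F_\mathcal{P}(x)=h_\mathcal{P}(x)/(1-x)^{d+1}$.

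For (a), after extending $K$ to an infinite field (which leaves $h_\mathcal{P}(x)$ unchanged), the plan is to produce a degree-one homogeneous system of parameters $\theta_1,\ldots,\theta_{d+1}$ of $A(\mathcal{P})$. Cohen--Macaulayness then makes this sequence regular, so $A(\mathcal{P})$ becomes a finitely generated graded free $K[\theta_1,\ldots,\theta_{d+1}]$-module with some homogeneous basis $f_1,\ldots,f_N$, and the Hilbert series factorization forces $h_\mathcal{P}(x)=\sum_{j=1}^N x^{\deg f_j}$ to have non-negative integer coefficients. The main obstacle is the existence of the degree-one hsop: $A(\mathcal{P})$ need not be standard graded (Reeve's tetrahedron produces generators in arbitrarily high degree), so the hsop must be built more carefully—typically by choosing the $\theta_i$ inside the standard-graded subring $K[\mathcal{A}']$ generated by the vertex monomials $t^{v_i}z$, and then verifying via the normal semigroup structure and Noether normalization that these $\theta_i$ still generate an $\mathfrak{m}$-primary ideal on the larger ring $A(\mathcal{P})$.

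For (b), assume $\dim\mathcal{P}=\dim\mathcal{P}_1=d$. The naive attempt to find a common degree-one hsop inside $A(\mathcal{P})\subseteq A(\mathcal{P}_1)$ fails because this inclusion need not be integral, as the elementary example $\mathcal{P}=[0,1]\subsetneq[0,2]=\mathcal{P}_1$ shows: the element $t^2z\in A(\mathcal{P}_1)$ satisfies only the non-monic relation $z\cdot t^2z=(tz)^2$ over $A(\mathcal{P})=K[z,tz]$. Instead, I would follow Stanley's original argument via canonical modules. From $\dim\mathcal{P}=\dim\mathcal{P}_1$ together with $\mathcal{P}\subseteq\mathcal{P}_1$ one deduces $\mathrm{relint}(\mathcal{P})\subseteq\mathrm{relint}(\mathcal{P}_1)$, so the monomial realization of the canonical modules as ideals generated by interior lattice points of the cones yields a graded $K$-subspace inclusion $\omega_{A(\mathcal{P})}\subseteq\omega_{A(\mathcal{P}_1)}$ inside the ambient Laurent polynomial ring $K[t_1^{\pm 1},\ldots,t_s^{\pm 1},z]$. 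Combining this with the elementary inclusion $A(\mathcal{P})\subseteq A(\mathcal{P}_1)$, Ehrhart--Macdonald reciprocity, and the Cohen--Macaulay free-module structure from (a) applied simultaneously to both rings, one aims to produce a graded module whose Hilbert series realizes $h_{\mathcal{P}_1}(x)-h_\mathcal{P}(x)$ as a non-negatively graded vector space.

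The principal obstacle is precisely this last synthesis: the pointwise inequalities $L_\mathcal{P}(n)\leq L_{\mathcal{P}_1}(n)$ and $L_{\mathrm{relint}(\mathcal{P})}(n)\leq L_{\mathrm{relint}(\mathcal{P}_1)}(n)$ alone are insufficient to force componentwise inequality on the $h^{*}$-vectors, as a direct computation using the identity $L_\mathcal{P}(n)=\sum_ih^*_i\binom{n+d-i}{d}$ reveals; genuine commutative-algebra input from the free decompositions of $A(\mathcal{P})$ and $A(\mathcal{P}_1)$ is therefore required to close the argument, and orchestrating the compatibility between those two decompositions is where the bulk of the technical work would go.
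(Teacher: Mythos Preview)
The paper does not prove Theorem~\ref{Stanley-nonneg-mono}; it is a survey article and the result is quoted with references to Stanley's original papers and to Beck--Braun--Vindas-Mel\'endez, the only internal comment being the sentence immediately after the statement that Hochster's theorem ``can be used'' to obtain part~(a). So there is no proof in the paper to compare your proposal against, and I evaluate your sketch on its own merits.

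Your argument for (a) is correct and is exactly the standard route behind the paper's remark. The obstacle you raise is lighter than you suggest: $A(\mathcal{P})$ is always integral over the subring generated by the vertex monomials $t^{v_i}z$, since for any $b\in n\mathcal{P}\cap\mathbb{Z}^s$ one writes $b=\sum_i\lambda_iv_i$ with $\lambda_i\in\mathbb{Q}_{\ge0}$, $\sum_i\lambda_i=n$, and clearing denominators gives $(t^bz^n)^N=\prod_i(t^{v_i}z)^{N\lambda_i}$. Thus a linear Noether normalization of the vertex subring is already a linear homogeneous system of parameters for $A(\mathcal{P})$, and the Cohen--Macaulay free-module argument goes through.

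For (b) you have a genuine gap, which you yourself acknowledge. The two graded inclusions you record, $A(\mathcal{P})\subseteq A(\mathcal{P}_1)$ and $\omega_{A(\mathcal{P})}\subseteq\omega_{A(\mathcal{P}_1)}$, translate via reciprocity into the pointwise inequalities $E_{\mathcal{P}}(n)\le E_{\mathcal{P}_1}(n)$ and $E_{\mathcal{P}^{\rm o}}(n)\le E_{\mathcal{P}_1^{\rm o}}(n)$, and you correctly observe that these alone do not force componentwise $h^*$-monotonicity. You never actually construct the promised graded module with Hilbert series $h_{\mathcal{P}_1}(x)-h_{\mathcal{P}}(x)$; ``orchestrating the compatibility between those two decompositions'' is a placeholder, not an argument, and your own $[0,1]\subset[0,2]$ example already shows that no common linear system of parameters exists, so the free-module decompositions of $A(\mathcal{P})$ and $A(\mathcal{P}_1)$ you would like to compare live over different base rings. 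The proofs in the two references the paper cites proceed by mechanisms your outline does not approximate (Stanley reduces to a statement about Cohen--Macaulay complexes; Beck--Braun--Vindas-Mel\'endez use half-open decompositions of $\mathcal{P}_1$ compatible with the inclusion of $\mathcal{P}$). A minor additional point: you impose $\dim\mathcal{P}=\dim\mathcal{P}_1$, an assumption absent from the statement and not satisfied in the paper's application (Theorem~\ref{vila-obs}).
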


Since Ehrhart rings are normal, the following fundamental result of Hochster can be used to show
Stanley's positivity theorem that the
Ehrhart series of a lattice polytope has a non-negative 
integral $h$-vector (Theorem~\ref{Stanley-nonneg-mono}(a)). It
can also be used to compute the $a$-invariant and regularity of a
positively graded normal monomial subring
\cite[Lemma~9.1.7]{monalg-rev}.

\begin{theorem}{\rm(Hochster \cite{Ho1},
\cite[Theorem~6.3.5]{BHer})}\label{hoch-theo} 
If $F=\{t^{v_1},\ldots,t^{v_m}\}$ is a finite set of monomials of $S$
and $K[F]$ is normal, then $K[F]$ is
Cohen--Macaulay.
\end{theorem}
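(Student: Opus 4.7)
The plan is to translate into the language of affine semigroup rings and then use the combinatorial machinery for local cohomology of normal monomial subrings. Writing $\mathcal{A}=\{v_1,\dots,v_m\}$, the ring $K[F]$ is the affine semigroup ring $K[\mathbb{N}\mathcal{A}]$; its integral closure inside its field of fractions is well known to coincide with $K[\sigma\cap L]$, where $\sigma=\mathbb{R}_+\mathcal{A}$ is the rational polyhedral cone spanned by $\mathcal{A}$ and $L=\mathbb{Z}\mathcal{A}$ is the lattice it generates. Hence normality of $K[F]$ amounts to the equality $\mathbb{N}\mathcal{A}=\sigma\cap L$, and the problem reduces to showing that the normal affine semigroup ring $R:=K[\sigma\cap L]$ is Cohen--Macaulay, with Krull dimension equal to $d:=\dim_\mathbb{R}\sigma$.

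The approach is to equip $R$ with its natural fine $L$-grading, so that the graded maximal ideal is $\mathfrak{m}=(t^v:v\in(\sigma\cap L)\setminus\{0\})$, and to compute each local cohomology module $H^i_{\mathfrak{m}}(R)$ degree by degree via the Ishida complex attached to the face lattice of $\sigma$. For each $v\in L$, one identifies the graded piece $H^i_{\mathfrak{m}}(R)_v$ with the reduced simplicial cohomology $\widetilde{H}^{i-1}(\Delta_v;K)$ of a subcomplex $\Delta_v$ of the boundary complex of $\sigma$ that encodes, in a precise convex-geometric sense, which faces of $\sigma$ are ``visible'' from $v$.

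The heart of the argument is a case analysis, depending on the position of $v$ relative to $\sigma$ and its faces, showing that $\Delta_v$ is either acyclic (contractible) or topologically a $(d-1)$-sphere; in either case $\widetilde{H}^{i-1}(\Delta_v;K)=0$ for all $i<d$. It follows that $H^i_{\mathfrak{m}}(R)=0$ for $i<d$, and by the standard local cohomology criterion for depth one concludes $\mathrm{depth}\,R=d=\dim R$, i.e., $R$ is Cohen--Macaulay. The main obstacle is the convex-geometric determination of the homotopy type of $\Delta_v$, and it is precisely here that normality is indispensable: it ensures that the lattice points of $\sigma$ mirror the convex structure of $\sigma$ faithfully, preventing stray lattice points from disrupting the expected acyclicity or sphericity of $\Delta_v$.
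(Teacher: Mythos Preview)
The paper does not give its own proof of this theorem; it is stated as a classical result and attributed to Hochster and to \cite[Theorem~6.3.5]{BHer}, so there is nothing to compare against at the level of argument. Your outline is essentially the local-cohomology proof found in Bruns--Herzog: pass to the normal semigroup ring $R=K[\sigma\cap L]$, compute $H^i_{\mathfrak m}(R)$ via the Ishida complex, and use the convex geometry of $\sigma$ to show the relevant subcomplexes $\Delta_v$ are either acyclic or $(d-1)$-spheres. That strategy is correct and is exactly the one behind the cited reference.

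One point in your write-up is misleading, though. You say that normality is ``indispensable'' in the topological step, preventing ``stray lattice points from disrupting the expected acyclicity or sphericity of $\Delta_v$.'' That is not where normality enters. Once you are working with $R=K[\sigma\cap L]$, the Ishida-complex computation and the homotopy analysis of $\Delta_v$ depend only on the face structure of the cone $\sigma$ and on standard convexity arguments; no further appeal to normality is needed there. Normality is used exactly once, at the very beginning, to identify $K[F]$ with $K[\sigma\cap L]$. If $K[F]$ were not normal, the localizations appearing in the Ishida complex would no longer be indexed cleanly by the faces of $\sigma$, and the whole framework would collapse before any topology is done. So the dependence on normality is structural, not topological; you should relocate that remark accordingly.
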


Let $S_{s,k}$ be the $k$-th \textit{squarefree Veronese subring} of $S$ given by 
$$S_{s,k}:=K[t_{i_1}\cdots t_{i_k}\mid 1\leq i_1<\cdots<i_k\leq
s]\subset S.$$
\quad This ring is normal \cite{normals}, and it is a
standard graded $K$-algebra with the normalized grading induced by
setting $\deg(t_{i_1}\cdots t_{i_k})=1$. The following
result, describing the generators of the canonical module of
$S_{s,k}$, 
was shown using Hochster's theorem and the Danilov-Stanley formula for the
canonical module of normal monomial subrings \cite{BG-book}, 
\cite[Theorem~6.3.5]{BHer}, \cite{Dan}. 

\begin{theorem}{\rm(Bruns, Vasconcelos, -\,, \cite[Theorem~2.6]{BVV})}\label{main-t}
Let $\omega_{S_{s,k}}$ be the canonical module of $S_{s,k}$ and let
$\mathfrak B$ be the set  
of monomials $t_1^{a_1}\cdots t_s^{a_s}$ satisfying the following
conditions:
\begin{enumerate}
\item[(a)] 
$a_i\geq 1$ and $(k-1)a_i\leq -1+\sum_{j\neq i}a_j$, for
all $i$. 
\item[(b)] $\sum_{i=1}^sa_i
\equiv\, 0\, {\rm mod}\, (k)$.
\item[(c)] $|\{\, i\, |\, a_i\geq 2\}|\leq k-1$.
\end{enumerate}
\noindent If $s\geq 2k\geq 4$, then $\mathfrak B$ is a 
generating set for
$\omega_{S_{s,k}}$. 
\end{theorem}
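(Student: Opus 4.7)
The plan is to combine the Danilov--Stanley formula for the canonical module of a normal monomial subring with a combinatorial reduction argument that exploits the hypothesis $s\geq 2k$. Since $S_{s,k}$ is a normal monomial subring, Hochster's theorem (Theorem~\ref{hoch-theo}) gives that $S_{s,k}$ is Cohen--Macaulay, and the Danilov--Stanley formula then describes its canonical module: writing $\mathcal{A}=\{e_I:I\subset\{1,\ldots,s\},\,|I|=k\}$ for the exponent set of the generators of $S_{s,k}$, one has that $\omega_{S_{s,k}}$ is the ideal of $S_{s,k}$ generated by the monomials $t^a$ with $a\in\mathbb{Z}^s$ lying in the relative interior of the cone $\mathbb{R}_+\mathcal{A}$ and in the lattice $\mathbb{Z}\mathcal{A}$.

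A direct polyhedral computation identifies the cone as $\mathbb{R}_+\mathcal{A}=\{x\in\mathbb{R}^s:x_i\geq 0\mbox{ and }(k-1)x_i\leq\sum_{j\neq i}x_j\mbox{ for all }i\}$, so its interior intersected with $\mathbb{Z}^s$ is exactly condition (a). Using that for $s\geq k+1$ every difference $e_i-e_j$ lies in $\mathbb{Z}\mathcal{A}$ (take two generators of $\mathcal{A}$ differing in one index), one concludes $\mathbb{Z}\mathcal{A}=\{a\in\mathbb{Z}^s:|a|\equiv 0\pmod k\}$, which is condition (b). Hence the set $\mathfrak{B}'$ of monomials satisfying (a) and (b) generates $\omega_{S_{s,k}}$, and it remains to show that the subset $\mathfrak{B}\subseteq\mathfrak{B}'$ (cut out by further imposing (c)) still generates $\omega_{S_{s,k}}$ as an $S_{s,k}$-ideal.

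The reduction step is as follows. Given $t^a\in\mathfrak{B}'\setminus\mathfrak{B}$ (so $J:=\{i:a_i\geq 2\}$ has $|J|\geq k$), I would choose a $k$-subset $I\subset J$ and set $a':=a-\sum_{i\in I}e_i$, obtaining the factorization $t^a=(\prod_{i\in I}t_i)\cdot t^{a'}$ with $\prod_{i\in I}t_i\in S_{s,k}$. Provided that $t^{a'}\in\mathfrak{B}'$, iterating terminates in finitely many steps (since $|a'|=|a|-k$ strictly decreases and is bounded below), yielding a residual factor in $\mathfrak{B}$. Preservation of (b) is automatic, as is positivity of $a'$; at each $i\in I$ the inequality in (a) is preserved because both sides decrease by exactly $k-1$. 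The only obstruction is the inequality at $i\notin I$, which, since the right-hand side drops by $k$ while the left-hand side is unchanged, becomes the sharper statement $a_i\leq|a|/k-2$ rather than $a_i\leq|a|/k-1$.

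The main obstacle is therefore to choose $I$ so that no $i\notin I$ attains the maximum value $|a|/k-1$; equivalently, to arrange $U\subset I$, where $U:=\{i:a_i=|a|/k-1\}$. This requires the three conditions $|J|\geq k$ (given), $U\subset J$, and $|U|\leq k$. The inclusion $U\subset J$ amounts to $|a|/k-1\geq 2$, i.e., $|a|\geq 3k$; and since $|J|\geq k$ one has $|a|\geq 2k+(s-k)\geq 3k$ under $s\geq 2k$. For the bound $|U|\leq k$, the counting estimate $|a|\geq|U|(|a|/k-1)+(s-|U|)$ combined with $|a|=k(|a|/k)$ and the inequality $s\geq 2k$ yields $|U|\leq k$; the borderline case $|U|=k$ can occur only when $s=2k$, in which case $I=U$ is forced and the persistence of (a) reduces to a direct verification at the remaining $k$ coordinates. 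Together these produce the desired $k$-subset $I\subset J$ containing $U$, completing the plan.
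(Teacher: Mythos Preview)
Your argument is correct and follows the same approach the paper indicates: the paper does not reproduce the proof but states that it was obtained via Hochster's theorem and the Danilov--Stanley formula for the canonical module of a normal monomial subring, which is precisely the route you take. Your identification of the cone $\mathbb{R}_+\mathcal{A}$ with the hypersimplex cone, of the lattice $\mathbb{Z}\mathcal{A}$ with $\{a:|a|\equiv 0\pmod k\}$, and your reduction step (choosing a $k$-subset $I\subset J$ containing the ``tight'' set $U=\{i:a_i=|a|/k-1\}$) are all sound; the key inequality $|U|\leq k$ follows exactly as you outline from $u(m-2)\leq km-s$ together with $s\geq 2k$ and $m=|a|/k\geq 3$, and in the borderline case $|U|=k$, $s=2k$ the remaining coordinates are forced to equal $1$, so the verification is immediate.
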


The previous result can be used to compute the \textit{type} of $S_{s,k}$, that is, 
the minimal number of generators of the canonical module
$\omega_{S_{s,k}}$ of $S_{s,k}$ \cite{BVV}. Recall that the $a$-\textit{invariant} of $S_{s,k}$, denoted
$a(S_{s,k})$, is the 
degree as a rational function of the Hilbert series of $S_{s,k}$.

\begin{proposition}{\rm(Bruns, Vasconcelos, -\,,
\cite[Corollary~2.12]{BVV})}\label{a-invariant}
If $s\geq 2k\geq 4$, then 
$$a(S_{s,k})=-\left\lceil\frac{s}{k}\right\rceil\ \mbox{ and }\ {\rm
reg}(S_{s,k})=s-\left\lceil\frac{s}{k}\right\rceil.$$
\end{proposition}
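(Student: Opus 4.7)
The strategy is to compute $a(S_{s,k})$ using the generating set $\mathfrak{B}$ of $\omega_{S_{s,k}}$ given by Theorem~\ref{main-t} and then derive the regularity from the Cohen--Macaulay identity ${\rm reg}(R)=a(R)+\dim(R)$ of Corollary~\ref{jan27-25}. Since $S_{s,k}$ is normal, Hochster's theorem (Theorem~\ref{hoch-theo}) gives that it is Cohen--Macaulay, hence the standard formula
$$
-a(S_{s,k})=\min\{\,j\geq 0\, :\, (\omega_{S_{s,k}})_j\neq 0\,\}
$$
is available. A monomial $t^a=t_1^{a_1}\cdots t_s^{a_s}$ in $\omega_{S_{s,k}}$ has $S_{s,k}$-degree $|a|/k$, where $|a|=a_1+\cdots+a_s$, so the task reduces to minimizing $|a|/k$ subject to conditions (a)--(c) of Theorem~\ref{main-t}.

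Condition (a) forces $a_i\geq 1$ for every $i$, whence $|a|\geq s$, and condition (b) requires $k\mid|a|$, so $|a|\geq k\lceil s/k\rceil$. To see this bound is attained I would exhibit an explicit extremal monomial: write $s=qk+r$ with $0\leq r<k$; the hypothesis $s\geq 2k$ ensures $q\geq 2$. If $r=0$, set $a_i=1$ for all $i$; if $r>0$, pick any $k-r$ coordinates, set $a_i=2$ on those and $a_j=1$ elsewhere. In both cases $|a|=k\lceil s/k\rceil$. Conditions (b) and (c) are immediate (in the second case $k-r\leq k-1$). The non-trivial check is the second inequality of (a), $(k-1)a_i\leq |a|-a_i-1$, i.e.\ $ka_i+1\leq k\lceil s/k\rceil$: for $a_i=1$ this asks $\lceil s/k\rceil\geq 2$, which follows from $s\geq 2k$; for $a_i=2$ (which only occurs when $r>0$, so $s\geq 2k+1$) it asks $\lceil s/k\rceil\geq 3$, which also holds. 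This yields $a(S_{s,k})=-\lceil s/k\rceil$.

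For the regularity, I would compute $\dim(S_{s,k})=s$ by noting that the sublattice of $\mathbb{Z}^s$ spanned by the exponent vectors $e_{i_1}+\cdots+e_{i_k}$ of the generators has full rank: fixing a $(k-1)$-subset $T\subset\{1,\ldots,s\}\setminus\{i,j\}$ (possible since $s\geq 2k\geq k+1$), the difference of the generators indexed by $T\cup\{i\}$ and $T\cup\{j\}$ is $e_i-e_j$, and together with any single generator these span $\mathbb{Z}^s$. Corollary~\ref{jan27-25} then yields ${\rm reg}(S_{s,k})=a(S_{s,k})+s=s-\lceil s/k\rceil$. The main obstacle throughout is the verification of inequality (a) for the proposed extremal monomial---and, relatedly, the correct identification of that monomial in the two parity cases $r=0$ and $r>0$; once this is done, the remainder of the argument is essentially bookkeeping.
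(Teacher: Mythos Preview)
Your proposal is correct and follows precisely the route the cited source \cite{BVV} takes: extract the least normalized degree of a generator of $\omega_{S_{s,k}}$ from Theorem~\ref{main-t}, then apply the Cohen--Macaulay identity of Corollary~\ref{jan27-25}. The survey paper itself does not reprove the statement, so there is nothing further to compare.

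One small slip worth fixing: the differences $e_i-e_j$ together with a single generator do \emph{not} span $\mathbb{Z}^s$; they span the sublattice $\{a\in\mathbb{Z}^s:\sum_i a_i\in k\mathbb{Z}\}$, which has index $k$. This is harmless, since Herzog's formula (Theorem~\ref{dim-toric}) only needs the rank, and that sublattice has rank $s$, so $\dim(S_{s,k})=s$ as you claim.
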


To treat the case $s<2k$ one uses duality. Given as integer 
$1\leq k \leq s-1$, there is a graded isomorphism of $K$-algebras of degree
zero: 
\[
\rho:\, S_{s,k}\longrightarrow S_{s,s-k},\, \mbox{ induced by } 
\rho(t_{i_1}\cdots t_{i_k})=t_{j_1}\cdots t_{j_{s-k}}, 
\]
where
$\{j_1,\ldots,j_{s-k}\}=\{1,\ldots,s\}\setminus\{i_1,\ldots,i_k\}$.
Thus if $s\leq 2k$, then 
\[
a(S_{s,k})=a(S_{s,s-k})=-\left\lceil \frac{s}{s-k}\right\rceil.
\]
\quad The next classification of the Gorenstein property 
was shown independently by De Negri and Hibi \cite{DeNegri} using
different methods. 

\begin{proposition}{\rm(Bruns, Vasconcelos, -\,,
\cite[Corollary~2.14]{BVV})} The $k$-th 
squarefree Veronese subring $S_{s,k}$ is a Gorenstein ring if and
only if $k\in\{1,s-1\}$ or $s=2k$.  
\end{proposition}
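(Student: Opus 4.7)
The plan is to combine the explicit description of $\omega_{S_{s,k}}$ in Theorem~\ref{main-t} with Hochster's theorem (Theorem~\ref{hoch-theo}). Since $S_{s,k}$ is normal, it is Cohen--Macaulay, and a positively graded Cohen--Macaulay $K$-algebra is Gorenstein precisely when its canonical module is principal, i.e., when its type (the minimal number of generators of $\omega_{S_{s,k}}$) equals $1$. The cases $k = 1$ (a polynomial ring in $s$ variables) and $k = s$ (isomorphic to $K[u]$) are Gorenstein, and via the degree-zero graded isomorphism $\rho \colon S_{s,k} \to S_{s,s-k}$ the case $k = s-1$ reduces to $k = 1$. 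So I may assume (after possibly applying $\rho$) that $s \geq 2k \geq 4$, whereupon Theorem~\ref{main-t} gives $\mathfrak{B}$ as a generating set of $\omega_{S_{s,k}}$; the task is to prove $|\mathfrak{B}| = 1$ if and only if $s = 2k$.

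For $s = 2k$, fix $(a_1, \ldots, a_s) \in \mathfrak{B}$ and set $T = \sum_j a_j$ and $m = |\{i \mid a_i \geq 2\}|$. Condition (a) of Theorem~\ref{main-t} rearranges to $k a_i \leq T - 1$, and (b) gives $T = nk$ with $n \in \mathbb{N}_+$; hence $a_i \leq n - 1$. Since $a_i \geq 1$ and $s = 2k$, we have $n \geq 2$. Combining (c)'s bound $m \leq k-1$ with $a_i \leq n - 1$,
\[
nk = T = (s - m) + \sum_{a_i \geq 2} a_i \leq s + m(n-2) \leq 2k + (k-1)(n-2),
\]
which simplifies to $n \leq 2$. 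Therefore $n = 2$, $T = 2k$, and $(a_1, \ldots, a_s) = (1, \ldots, 1)$; so $\mathfrak{B} = \{(1,\ldots,1)\}$ and $S_{2k,k}$ is Gorenstein.

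For $s > 2k$ with $k \geq 2$, write $s = jk + r$ with $0 \leq r < k$; then $j \geq 2$ and $(j,r) \neq (2,0)$. I exhibit at least two elements of $\mathfrak{B}$ in each subcase. If $r \geq 1$, set $m_0 = k - r \in [1, k-1]$; every $\{1,2\}$-valued vector with exactly $m_0$ entries equal to $2$ has sum $s + m_0 = (j+1)k$ and satisfies conditions (a)--(c), giving $\binom{s}{m_0} \geq s \geq 2k + 1$ elements. If $r = 0$, so $s = jk$ with $j \geq 3$, then $(1,\ldots,1) \in \mathfrak{B}$; in addition, every vector with one coordinate $3$, exactly $k-2$ coordinates $2$, and the remaining coordinates $1$ has sum $(j+1)k$, has $k-1$ coordinates $\geq 2$, and satisfies $a_i \leq 3 \leq j$ (using $j \geq 3$), so all conditions hold, producing $s \binom{s-1}{k-2} \geq s \geq 6$ further elements (interpret $\binom{s-1}{0} = 1$ when $k = 2$). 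Thus $|\mathfrak{B}| \geq 2$ and $S_{s,k}$ is not Gorenstein.

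The hard part is the second direction: because condition (b) couples the generators of $\omega_{S_{s,k}}$ to the residue of $s$ modulo $k$, producing a second generator requires a case split on this residue and a careful choice of a second vector supported in $\{1,2,3\}$; once the candidates are chosen, verification of (a)--(c) is routine.
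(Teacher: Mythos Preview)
The paper itself does not prove this proposition (it only cites \cite[Corollary~2.14]{BVV}), so there is no in-paper argument to compare against; I assess your proof on its own.

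Your argument is essentially correct, but one step is left unjustified: you conclude ``$|\mathfrak{B}| \geq 2$ and $S_{s,k}$ is not Gorenstein,'' implicitly identifying $|\mathfrak{B}|$ with the Cohen--Macaulay type. Theorem~\ref{main-t} only asserts that $\mathfrak{B}$ is \emph{a} generating set for $\omega_{S_{s,k}}$, not a minimal one, so you owe a sentence explaining why your exhibited elements are minimal generators. The cleanest fix is to observe that condition~(c) already forces minimality: if one could write $t^a = (t_{i_1}\cdots t_{i_k})\cdot t^b$ with $t^b \in \omega_{S_{s,k}}$, then $b_{i_\ell}\geq 1$ would force $a_{i_\ell}\geq 2$ for $k$ distinct indices, contradicting $|\{i: a_i\geq 2\}|\leq k-1$. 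Hence every element of $\mathfrak{B}$ survives in $\omega_{S_{s,k}}/\mathfrak{m}\,\omega_{S_{s,k}}$, and since distinct monomials lie in distinct multidegrees they are linearly independent there, so $|\mathfrak{B}|$ equals the type. Alternatively, you can argue ad~hoc: in your case $r\geq 1$ all exhibited vectors have the minimal possible normalized degree $\lceil s/k\rceil$, hence are automatically minimal generators; in your case $r=0$ the only monomial in $\omega$ of normalized degree $j$ is $t_1\cdots t_s$, so $(\mathfrak{m}\,\omega)_{j+1}$ is spanned by $\{1,2\}$-valued vectors, whence your vector with an entry $3$ is again a minimal generator.

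With this one-line addition your proof is complete; the $s=2k$ uniqueness argument and the explicit second generators for $s>2k$ are all correct as written.
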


The \textit{Veronese subring} $S^{(k)}$ is the monomial subring 
of $S$ generated by all monomials of $S$ of degree $k$. By a result of Goto and
Matsuoka \cite{Got,Matsuo}, the ring $S^{(k)}$ is 
Gorenstein if and only if $k$ divides $s$. The ring $S^{(k)}$ is
normal and its $a$-invariant is
$a(S^{(k)})=-\left\lceil\frac{s}{k}\right\rceil$. There is a recent
formula of Lin and Shen for the regularity of the
monomial subring of an ideal of Veronese type, 
see \cite[Theorem~5.6]{Lin-Shen} and its proof. There are other normal monomial subrings, coming from integer
rounding properties, where there are formulas for the canonical
module and the $a$-invariant \cite[Section~4]{ainv}, 
\cite[Theorem~4.2]{roundp}.

\begin{theorem}{\rm(Bruns, Vasconcelos, -\,, \cite[Corollaries~3.6
and 3.8]{BVV})}\label{main-in}
Let $F$ be a finite set of squarefree monomials of
degree $k$ in $S$ such that $\dim(K[F])=s$. The following
hold.
\begin{itemize}
\item[\rm(i)] $a(\overline{K[F]})\leq a(S_{s,k})$.
\item[\rm(ii)] $a(\overline{K[F]})\leq -\left\lceil
\frac{s}{k}\right\rceil$ if $s\geq 2k$ and $a(\overline{K[F]})\leq 
-\left\lceil\frac{s}{s-k}\right\rceil$ if $s\leq 2k,\ s\neq k$.
\item[\rm(iii)] If $s\geq 2k\geq 4$, then
$\overline{K[F]}$ is generated as a $K$-algebra by elements of
normalized degree less than or equal to  $s-\left\lceil
\frac{s}{k}\right\rceil$.
\end{itemize}
\end{theorem}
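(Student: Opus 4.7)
The plan is to realize $\overline{K[F]}$ as the Ehrhart ring of the Newton polytope $\mathcal{P}={\rm conv}(v_1,\ldots,v_m)$, compare it with the Ehrhart ring of the hypersimplex $\mathcal{P}_{s,k}=\{x\in[0,1]^s\colon \sum_{i=1}^s x_i=k\}$ (which is $S_{s,k}$), and then exploit Stanley's monotonicity together with Proposition~\ref{a-invariant}.

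First, since every generator of $K[F]$ has degree $k$, the exponent vectors lie on the hyperplane $(x_1+\cdots+x_s)/k=1$ with positive coefficients, so the homogeneity hypothesis of the generalized descent of normality criterion (Theorem~\ref{mar3-01}) holds; together with Proposition~\ref{ehrhart-normal} this yields the identification $\overline{K[F]}\cong A(\mathcal{P})$ of graded $K$-algebras. Likewise, because $S_{s,k}$ is normal, $S_{s,k}=A(\mathcal{P}_{s,k})$. The assumption $\dim K[F]=s$ forces $\dim\mathcal{P}=s-1=\dim\mathcal{P}_{s,k}$, and plainly $\mathcal{P}\subset\mathcal{P}_{s,k}$.

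For (i), Stanley's monotonicity theorem (Theorem~\ref{Stanley-nonneg-mono}(b)) gives the componentwise inequality $h(\mathcal{P})\leq h(\mathcal{P}_{s,k})$, whence $\deg h_{\mathcal{P}}\leq\deg h_{\mathcal{P}_{s,k}}$. Since $a(A(\mathcal{Q}))=\deg h_{\mathcal{Q}}-\dim\mathcal{Q}-1$ and both polytopes have dimension $s-1$, we deduce $a(\overline{K[F]})\leq a(S_{s,k})$, settling (i). For (ii), I would combine (i) with Proposition~\ref{a-invariant}: when $s\geq 2k\geq 4$ this gives $a(S_{s,k})=-\lceil s/k\rceil$; when $s\leq 2k$ with $s\neq k$, the graded isomorphism $\rho\colon S_{s,k}\simeq S_{s,s-k}$ displayed after Proposition~\ref{a-invariant} reduces to the previous case applied to $s-k\leq s/2$, yielding the bound $-\lceil s/(s-k)\rceil$.

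For (iii), Hochster's theorem (Theorem~\ref{hoch-theo}) makes $A(\mathcal{P})$ Cohen--Macaulay. Enlarging $K$ to be infinite if necessary (a harmless faithfully flat extension), I would choose a homogeneous system of parameters $\theta_1,\ldots,\theta_s$ of normalized degree one in $A(\mathcal{P})$; this is possible because $\mathcal{P}$ contains $s$ affinely independent lattice points among the $v_i$'s, so the $K$-subalgebra generated by $A(\mathcal{P})_1$ already has Krull dimension $s$. By Cohen--Macaulayness the Hilbert series of the Artinian quotient $A(\mathcal{P})/(\theta_1,\ldots,\theta_s)$ is precisely $h_{\mathcal{P}}(x)$, so this quotient is concentrated in normalized degrees $0,1,\ldots,\deg h_{\mathcal{P}}$. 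Under the hypothesis $s\geq 2k\geq 4$, part (i) together with Proposition~\ref{a-invariant} yields $\deg h_{\mathcal{P}}\leq s-\lceil s/k\rceil$. Lifting a homogeneous $K$-basis of the Artinian quotient to $A(\mathcal{P})$ and adjoining $\theta_1,\ldots,\theta_s$ produces a $K$-algebra generating set of $A(\mathcal{P})$ whose normalized degrees are all bounded by $s-\lceil s/k\rceil$, completing (iii). The main obstacle in the whole argument is the clean identification $\overline{K[F]}=A(\mathcal{P})$: one must verify that the affine lattice generated by $v_1,\ldots,v_m$ coincides with $\mathbb{Z}^s$ intersected with the affine hull of $\mathcal{P}$, so that no lattice points of dilates of $\mathcal{P}$ are missed by the semigroup of $\overline{K[F]}$.
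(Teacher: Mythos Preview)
There is a genuine gap, and it is precisely the one you flag at the end: the identification $\overline{K[F]}\cong A(\mathcal{P})$ is \emph{false} in general, so the Stanley--monotonicity route does not reach $\overline{K[F]}$. Your appeal to Theorem~\ref{mar3-01} is a misreading: that result has as a \emph{hypothesis} the normality of the Rees algebra $S[Iz]$ and concludes $K[Iz]=A(\mathcal{P})$; it says nothing about $\overline{K[F]}$ in the absence of that hypothesis. The obstruction is exactly the lattice index issue you name. Concretely, take $s=6$, $k=2$, and let $F$ be the six edge monomials of two disjoint triangles. Then $\dim K[F]=6$, the six exponent vectors are linearly independent, and $K[F]$ is a polynomial ring, hence $\overline{K[F]}=K[F]$. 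Yet $(1,1,1,1,1,1)\in 3\mathcal{P}\cap\mathbb{Z}^{6}$ while $t_{1}\cdots t_{6}\notin K[F]$ (there is no perfect matching), so $\overline{K[F]}\subsetneq A(\mathcal{P})$; this is also what Theorem~\ref{ehrh-mon} and Proposition~\ref{mar9-01-1g} predict, since here $\Delta_{r}(B)=2$. In this example $a(\overline{K[F]})=-6$ while $a(A(\mathcal{P}))>-6$, so your argument, which really proves $a(A(\mathcal{P}))\leq a(S_{s,k})$, does not bound the quantity you want.

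The argument behind \cite[Corollary~3.6]{BVV} bypasses Ehrhart rings entirely and works directly with the Danilov--Stanley description of the canonical module of a normal affine semigroup ring (see the remark preceding Theorem~\ref{main-t}). Since $\overline{K[F]}$ is normal and Cohen--Macaulay (Theorem~\ref{hoch-theo}), one has $-a(\overline{K[F]})=\min\{|a|/k:\ a\in\operatorname{relint}(\mathbb{R}_{+}\mathcal{A})\cap\mathbb{Z}\mathcal{A}\}$, and likewise for $S_{s,k}$ with the full hypersimplex cone and its lattice $\mathbb{Z}\mathcal{A}_{s,k}$. Because both cones are full-dimensional in $\mathbb{R}^{s}$, the smaller open cone sits inside the larger one, and $\mathbb{Z}\mathcal{A}\subset\mathbb{Z}\mathcal{A}_{s,k}$; hence every interior lattice point for $\overline{K[F]}$ is already one for $S_{s,k}$, giving $-a(\overline{K[F]})\geq -a(S_{s,k})$. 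This yields (i), and then (ii) follows from Proposition~\ref{a-invariant} and duality as you indicated. Your argument for (iii) is essentially correct once you apply it to $\overline{K[F]}$ itself rather than to $A(\mathcal{P})$: $\overline{K[F]}$ is Cohen--Macaulay, finite over the degree-one subalgebra $K[F]$, and (over an infinite field) admits a linear system of parameters, so it is generated as a $K$-algebra in normalized degrees at most $\operatorname{reg}(\overline{K[F]})=a(\overline{K[F]})+s\leq s-\lceil s/k\rceil$ by (i).
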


The max-flow min-cut property for clutters is defined in
Section~\ref{blowuprings-section}. The following theorem bounds the 
regularity of certain Ehrhart rings. For use below recall that $\alpha_0(\mathcal{C})$ denotes
the covering number of a clutter $\mathcal{C}$ which is also the
height of $I(\mathcal{C})$. A clutter with all its edges of the same
cardinality $k$ is called $k$-uniform.

\begin{theorem}\cite[Theorem~14.4.19]{monalg-rev}\label{ehrhart-unmixed} 
Let $\mathcal{C}$ be a
$k$-uniform unmixed clutter with the max-flow min-cut property, let 
$I=(t^{v_1},\ldots,t^{v_m})$
be its edge ideal, and let $\mathcal{P}={\rm conv}(v_1,\ldots,v_m)$. 
Then, $K[t^{v_1}z,\ldots,t^{v_m}z]=A(\mathcal{P})$, 
the $a$-invariant of $A(\mathcal{P})$ is bounded from above by
$-\alpha_0(\mathcal{C})$ 
and 
$${\rm reg}(A(\mathcal{P}))\leq (k-1)(\alpha_0(\mathcal{C})-1).
$$
\end{theorem}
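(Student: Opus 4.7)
The plan is to assemble the three conclusions in the order they become available: first the ring equality via descent, then the $a$-invariant bound from a facet analysis of the cone, and finally the regularity bound from the Cohen--Macaulay identity ${\rm reg}={\rm dim}+a$.

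First I would pass from the MFMC hypothesis to the normality of $S[Iz]$. By Theorem~\ref{ntf-char}, MFMC for the clutter $\mathcal{C}$ is equivalent to $I=I(\mathcal{C})$ being normally torsion free; and since $I$ is squarefree, normally torsion free implies normal (see \cite[Corollary~5.3]{ITG}), so $S[Iz]$ is normal. The $k$-uniformity of $\mathcal{C}$ places the exponent vectors on the positive-coefficient hyperplane $(1/k)x_1+\cdots+(1/k)x_s=1$, so the generalized descent of normality criterion (Theorem~\ref{mar3-01}) applies and yields $K[t^{v_1}z,\ldots,t^{v_m}z]=A(\mathcal{P})$, giving the first assertion.

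Next, since $A(\mathcal{P})$ is a normal monomial subring (Proposition~\ref{ehrhart-normal}), Hochster's theorem (Theorem~\ref{hoch-theo}) guarantees it is Cohen--Macaulay. By the Danilov--Stanley description of the canonical module of a normal semigroup ring, the canonical module $\omega_{A(\mathcal{P})}$ is generated by the lattice points of the relative interior of the cone $C(\mathcal{P}):=\mathbb{R}_+\{(v_1,1),\ldots,(v_m,1)\}$. Hence $a(A(\mathcal{P}))=-n_0$, where $n_0$ is the smallest positive integer $n$ admitting an $a\in\mathbb{Z}^s$ with $(a,n)\in {\rm ri}(C(\mathcal{P}))$. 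To bound $n_0$ from below, I take such a lattice point and write $(a,n)=\sum_i c_i(v_i,1)$ with all $c_i>0$ (every $v_i$ is a vertex of $\mathcal{P}$ because distinct $0/1$ vectors are vertices of the cube they lie in). Assuming no isolated vertices, each variable $t_j$ appears in some edge $e_i$, so $a_j\geq\sum_{j\in e_i}c_i>0$, giving $a\geq \mathbf{1}$ componentwise. Combined with $|a|=kn$ (from $k$-uniformity) and $s\geq k\alpha_0(\mathcal{C})$ (MFMC gives K\H{o}nig, so there is a matching of $\alpha_0$ pairwise disjoint $k$-edges covering $k\alpha_0$ distinct vertices), this yields $n\geq s/k\geq \alpha_0(\mathcal{C})$, so $a(A(\mathcal{P}))\leq -\alpha_0(\mathcal{C})$.

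For the regularity bound, I would invoke Corollary~\ref{jan27-25}, which for Cohen--Macaulay $A(\mathcal{P})$ gives ${\rm reg}(A(\mathcal{P}))=a(A(\mathcal{P}))+\dim A(\mathcal{P})$. Since $\dim A(\mathcal{P})=1+\dim\mathcal{P}$ and I have $a\leq -\alpha_0$, it suffices to establish
\[
\dim\mathcal{P}\leq k(\alpha_0(\mathcal{C})-1).
\]
This is where I expect the main obstacle. The bound is tight in several families---the bipartite $K_{\alpha_0,\alpha_0}$, unmixed bipartite graphs with perfect matching, disjoint unions of $k$-edges---and each time it comes from extracting additional linear relations beyond $|v_i|=k$. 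My plan is to fix a maximum matching $M=\{e_1,\ldots,e_{\alpha_0}\}$ supplied by the K\H{o}nig property, so that the characteristic vectors $v_{e_1},\ldots,v_{e_{\alpha_0}}$ are linearly independent and contribute affine rank $\alpha_0-1$, and then use the MFMC/unmixed structure to control how far the remaining edges $v_i$ can lift the rank. Concretely, for each $j$ the linear functional $f_j(x)=\sum_{t_\ell\in e_j}x_\ell$ takes only the values $0,1,\ldots,k$ on $\{v_i\}$ with $\sum_j f_j\equiv k$ (when $V(M)=V$), constraining the edges through a matching partition. Pushing this analysis---together with the observation that unmixedness forces $s=k\alpha_0$ in all the examples I have checked---to extract $k-1$ additional independent relations among $\{v_i-v_1\}$ is the technical heart of the argument. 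Once $\dim A(\mathcal{P})\leq k(\alpha_0-1)+1$ is in hand, I conclude ${\rm reg}(A(\mathcal{P}))\leq -\alpha_0+k(\alpha_0-1)+1=(k-1)(\alpha_0-1)$, completing the proof.
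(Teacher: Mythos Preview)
The paper does not include a proof of this theorem; it is cited from \cite[Theorem~14.4.19]{monalg-rev}. So there is no in-paper proof to compare against, and I assess your argument on its own merits.

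Your derivations of $K[t^{v_1}z,\ldots,t^{v_m}z]=A(\mathcal{P})$ and of $a(A(\mathcal{P}))\leq -\alpha_0(\mathcal{C})$ are correct. The first follows cleanly from Theorem~\ref{mar3-01} once MFMC gives normality of $S[Iz]$ via Theorem~\ref{ntf-char}. The second is a legitimate application of Danilov--Stanley: every point of ${\rm ri}(C(\mathcal{P}))$ admits a representation $\sum c_i(v_i,1)$ with all $c_i>0$, so each coordinate $a_j$ is a strictly positive integer, and $|a|=kn$ together with $s\geq k\alpha_0$ (from K\H{o}nig) gives $n\geq\alpha_0$. In fact, for unmixed K\H{o}nig clutters without isolated vertices, the remark preceding Theorem~\ref{unmixed-clutter-pm} gives a perfect matching, so $s=k\alpha_0$ exactly.

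The genuine gap is in the regularity bound, and you have correctly located it but not closed it. Your strategy requires $\dim A(\mathcal{P})\leq k(\alpha_0-1)+1$, i.e., ${\rm rank}(A)\leq s-k+1$ for the incidence matrix $A$ with $s=k\alpha_0$. Equivalently, you must produce $k-1$ linearly independent vectors $w\in\mathbb{R}^s$ with $\langle w,v_i\rangle=0$ for every edge. For $k=2$ this is the single bipartition relation $\mathbf{1}_{V_1}-\mathbf{1}_{V_2}$, and all of your guiding examples ($K_{\alpha_0,\alpha_0}$, unmixed bipartite graphs) live here. For $k\geq 3$, however, the matching partition $V=e_1\sqcup\cdots\sqcup e_{\alpha_0}$ only supplies the functionals $f_j$ subject to $\sum_j f_j\equiv k$---a single relation, not $k-1$. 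Your sketch does not indicate where the remaining $k-2$ independent relations would come from, and it does not invoke unmixedness or MFMC in any way that would force them. Without the rank bound, your two estimates combine only to ${\rm reg}\leq -\alpha_0+k\alpha_0=(k-1)\alpha_0$, which is $k-1$ larger than the claimed $(k-1)(\alpha_0-1)$. So the argument as written is incomplete: either one must actually prove ${\rm rank}(A)\leq s-k+1$ under the unmixed MFMC hypothesis (which would amount to showing such clutters carry a $k$-partite-like structure), or one needs a route to the regularity bound that does not pass through separate bounds on $a$ and $\dim$.
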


The $a$-invariant, the regularity, and the 
depth are closely related.

\begin{theorem}{\rm\cite[Corollary~B.4.1]{Vas1}}\label{reg-cm} If $I$
is a graded ideal of $S$, then 
$$a(S/I)\leq{\rm 
reg}(S/I)-{\rm depth}(S/I),
$$
with equality if $S/I$ is Cohen--Macaulay.
\end{theorem}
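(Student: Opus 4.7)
The plan is to translate each of the three invariants into local cohomology data and read the inequality off directly. Write $M = S/I$, $d = \dim(M)$, and let $\mathfrak{m} = (t_1,\ldots,t_s)$ denote the irrelevant maximal ideal of $S$. I will use the standard identifications
$$a_i(M) := \max\{n \in \mathbb{Z} : H^i_{\mathfrak{m}}(M)_n \neq 0\},$$
with the convention $a_i(M) = -\infty$ when $H^i_{\mathfrak{m}}(M) = 0$, together with
$$a(M) = a_d(M), \qquad {\rm reg}(M) = \max\{a_i(M) + i : 0 \leq i \leq d\}, \qquad {\rm depth}(M) = \min\{i : H^i_{\mathfrak{m}}(M) \neq 0\}.$$

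For the inequality, taking $i = d$ in the maximum defining ${\rm reg}(M)$ gives ${\rm reg}(M) \geq a_d(M) + d = a(M) + d$, so $a(M) \leq {\rm reg}(M) - d$. Since ${\rm depth}(M) \leq \dim(M) = d$, one has ${\rm reg}(M) - d \leq {\rm reg}(M) - {\rm depth}(M)$, which yields the asserted bound.

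For the Cohen--Macaulay case, ${\rm depth}(M) = d$ and $H^i_{\mathfrak{m}}(M) = 0$ for every $i \neq d$. The maximum defining ${\rm reg}(M)$ therefore collapses to its value at $i = d$, giving ${\rm reg}(M) = a_d(M) + d = a(M) + d$, and subtracting ${\rm depth}(M) = d$ recovers exactly $a(M)$, so equality holds.

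The only step I expect to require real justification is the identification $a(M) = a_d(M)$ in the non-Cohen--Macaulay setting, because the $a$-invariant in this paper is introduced via the degree of the Hilbert series as a rational function, whereas $a_d(M)$ is extracted from local cohomology. I would invoke the Serre--Grothendieck formula
$$\dim_K M_n - P_M(n) = \sum_{i = 0}^{d} (-1)^i \dim_K H^i_{\mathfrak{m}}(M)_n,$$
and argue, via graded local duality and Grothendieck vanishing, that only the top local cohomology controls the degree of the rational function $F_M(x) = \sum_n \dim_K M_n\, x^n$. Once this identification is in hand, the remaining argument above is purely formal.
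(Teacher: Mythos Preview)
The paper does not supply a proof of this statement; it is quoted from Vasconcelos' book and used as a black box. So there is no in-paper argument to compare against, and the only question is whether your proof is correct.

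Your treatment of the Cohen--Macaulay case is fine. The general inequality, however, hinges on the identification $a(M)=a_d(M)$, and this is \emph{false} for non-Cohen--Macaulay modules under the definition of the $a$-invariant used in the paper (the degree of the Hilbert series as a rational function). Take $M=K[x,y]/(x^2,xy)$. Then $d=1$, the Hilbert series is $(1+t-t^2)/(1-t)$, so $a(M)=1$; but $H^0_\mathfrak{m}(M)=Kx$ sits in degree $1$ while $H^1_\mathfrak{m}(M)\cong K[y,y^{-1}]/K[y]$ lives in strictly negative degrees, so $a_1(M)=-1\neq 1$. The degree of the Hilbert series is governed by \emph{all} the local cohomology modules through the alternating sum in Grothendieck--Serre, not just the top one; your proposed justification (``only the top local cohomology controls the degree'') cannot be carried out.

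The repair is minor. From Grothendieck--Serre, the degree of $F_M(x)$ is the largest $n$ with $\sum_i(-1)^i\dim_K H^i_\mathfrak{m}(M)_n\neq 0$, hence
\[
a(M)\ \le\ \max_{i}\,a_i(M).
\]
For each $i$ with $H^i_\mathfrak{m}(M)\neq 0$ one has $i\ge\operatorname{depth}(M)$, so
\[
a_i(M)=(a_i(M)+i)-i\ \le\ \operatorname{reg}(M)-\operatorname{depth}(M).
\]
Taking the maximum over $i$ gives the desired inequality. Your Cohen--Macaulay argument then handles the equality case unchanged.
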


A classical result of Herzog is the following linear algebra formula to compute the 
Krull dimension of a monomial subring.

\begin{theorem}{\rm(Herzog \cite{He3})}\label{dim-toric} Let
$K[F]=K[t^{v_1},\ldots,t^{v_m}]$ be a monomial subring of $S$ and 
let $A$ be the matrix with columns $v_1,\ldots,v_m$. Then,  
$\dim(K[F])={\rm rank}(A)$.
\end{theorem}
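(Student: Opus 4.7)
The plan is to compute $\dim(K[F])$ by way of transcendence degree. Since $K[F]$ is a finitely generated $K$-algebra and an integral domain (being a subring of the domain $S$), one has
\[
\dim(K[F]) \;=\; \operatorname{tr.deg}_K\, K(F),
\]
where $K(F)$ denotes the field of fractions of $K[F]$. Thus the problem reduces to showing $\operatorname{tr.deg}_K\, K(F) = \operatorname{rank}(A)$. Set $r := \operatorname{rank}(A)$ and, after relabelling, assume $v_1,\dots,v_r$ are $\mathbb{Q}$-linearly independent.

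The first step is to prove that $t^{v_1},\dots,t^{v_r}$ are algebraically independent over $K$. The clean way is to embed $K[F]$ inside the Laurent polynomial ring $L := K[t_1^{\pm 1},\dots,t_s^{\pm 1}] \cong K[\mathbb{Z}^s]$; in $L$, monomials indexed by distinct exponent vectors in $\mathbb{Z}^s$ are $K$-linearly independent. Suppose a nonzero polynomial $P(y_1,\dots,y_r) = \sum_{\alpha} c_\alpha\, y^\alpha \in K[y_1,\dots,y_r]$ satisfies $P(t^{v_1},\dots,t^{v_r}) = 0$. Grouping the substituted terms by their exponent vector $\sum_{i=1}^r \alpha_i v_i \in \mathbb{Z}^s$, the $\mathbb{Q}$-linear independence of $v_1,\dots,v_r$ implies that distinct multi-indices $\alpha$ produce distinct exponents in $\mathbb{Z}^s$; hence no cancellation is possible, and every $c_\alpha$ must vanish, a contradiction.

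The second step is to show that every $t^{v_j}$, $j > r$, is algebraic over the subfield $K(t^{v_1},\dots,t^{v_r})$. Because $v_j$ lies in the $\mathbb{Q}$-span of $v_1,\dots,v_r$, there exist an integer $k > 0$ and integers $b_1,\dots,b_r$ with $k v_j = \sum_{\ell=1}^r b_\ell v_\ell$. Multiplying out in the Laurent ring $L$, this yields the identity
\[
(t^{v_j})^k \;=\; \prod_{\ell=1}^r (t^{v_\ell})^{b_\ell},
\]
which, after clearing denominators on the right, exhibits a nontrivial polynomial relation of $t^{v_j}$ over $K[t^{v_1},\dots,t^{v_r}]$. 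Combining the two steps, $\{t^{v_1},\dots,t^{v_r}\}$ is a transcendence basis of $K(F)$ over $K$, and therefore $\dim(K[F]) = \operatorname{tr.deg}_K K(F) = r = \operatorname{rank}(A)$.

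The only delicate point is the first step: one needs that distinct integer combinations of a $\mathbb{Q}$-linearly independent set remain distinct, which holds because $\mathbb{Q}$-linear independence of $v_1,\dots,v_r$ is equivalent to $\mathbb{Z}$-linear independence (hence injectivity of the map $\mathbb{Z}^r \to \mathbb{Z}^s$, $\alpha \mapsto \sum \alpha_i v_i$). Once this is in place, passing through the Laurent ring $K[\mathbb{Z}^s]$ makes the algebraic-independence check immediate and avoids any Gr\"obner-basis or elimination machinery.
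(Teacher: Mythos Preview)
Your proof is correct. The paper does not supply its own proof of this statement; it merely cites it as a classical result of Herzog, so there is no in-paper argument to compare against. Your route via transcendence degree---picking a maximal $\mathbb{Q}$-linearly independent subset of the exponent vectors, verifying algebraic independence of the corresponding monomials by injectivity of $\alpha\mapsto\sum_i\alpha_i v_i$ on $\mathbb{Z}^r$, and then showing the remaining monomials satisfy a monic relation after clearing denominators---is the standard elementary proof and is cleanly executed.
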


As a consequence, using a formula for the rank of the incidence matrix
of a graph $G$ \cite{Kulk1,raei}, we obtain a formula for the Krull dimension 
of $K[G]$.

\begin{proposition}\cite{Kulk1,raei}\label{dimkg}
Let $G$ be a graph with $s$ vertices and let $c_0(G)$ be the number of connected bipartite
components of $G$. Then
$$
\dim(K[G])=s-c_0(G).
$$
\end{proposition}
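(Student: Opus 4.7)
The plan is to combine Herzog's dimension formula (Theorem~\ref{dim-toric}) with a direct analysis of the left null space of the incidence matrix $A$ of $G$. By Herzog's theorem, $\dim(K[G])={\rm rank}(A)$, where $A$ has rows indexed by the $s$ vertices and columns $v_e=e_i+e_j$ indexed by the edges $e=\{t_i,t_j\}$ of $G$. Hence the proposition reduces to showing that ${\rm rank}(A)=s-c_0(G)$, or equivalently, passing to the transpose, that $\dim\ker(A^T)=c_0(G)$.

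First I would reduce to the connected case. After reordering the vertices so that each connected component occupies a consecutive block, $A$ becomes block diagonal with blocks $A_1,\ldots,A_c$ corresponding to the connected components $G_1,\ldots,G_c$ of $G$. Since ${\rm rank}(A)=\sum_{i=1}^c{\rm rank}(A_i)$ and $s=\sum_{i=1}^c|V(G_i)|$, it suffices to prove, for a connected graph $H$ on $n_H$ vertices, that ${\rm rank}(A_H)=n_H-1$ if $H$ is bipartite and ${\rm rank}(A_H)=n_H$ otherwise.

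For the connected case, I would study the left null space $\ker(A_H^T)\subset\mathbb{Q}^{V(H)}$: a vector $x$ lies in this kernel precisely when $x_i+x_j=0$ for every edge $\{t_i,t_j\}$ of $H$. Fixing a base vertex $v_0$, connectedness forces $x_v=(-1)^{\ell(P)}x_{v_0}$ for any walk $P$ of length $\ell(P)$ from $v_0$ to $v$. This assignment is well-defined, and hence the kernel is nontrivial, if and only if every closed walk in $H$ has even length, equivalently $H$ has no odd cycles, i.e., $H$ is bipartite \cite{Har}. In the bipartite case the kernel is one-dimensional, spanned by the $\pm 1$ indicator vector of the bipartition; in the non-bipartite case it is zero. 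Summing the contributions of the connected components yields $\dim\ker(A^T)=c_0(G)$ and therefore ${\rm rank}(A)=s-c_0(G)$.

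The main technical point is the verification in the bipartite case: one must show both that the indicator vector of the bipartition really lies in $\ker(A_H^T)$ (immediate, since each edge joins the two color classes) and that no further linearly independent kernel vector exists (which follows because a choice of $x_{v_0}$ determines $x$ on all of $V(H)$ by connectedness). Everything else is elementary linear algebra bootstrapped by Herzog's theorem, so no serious obstacle is expected.
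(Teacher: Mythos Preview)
Your argument is correct and follows exactly the route the paper indicates: the paper does not give a proof in the text but simply invokes Herzog's formula $\dim(K[G])={\rm rank}(A)$ (Theorem~\ref{dim-toric}) together with the cited rank formula for the incidence matrix of a graph from \cite{Kulk1,raei}. You have supplied the standard elementary proof of that rank formula via the left null space and the bipartite/non-bipartite dichotomy on connected components, which is precisely what those references contain.
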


For graded Cohen--Macaulay monomial subrings, computing 
the a-invariant is equivalent to computing the
regularity.

\begin{corollary}\label{jan27-25}
Let $K[F]=K[t^{v_1},\ldots,t^{v_m}]$ be a standard graded Cohen--Macaulay monomial subring of $S$ and 
let $A$ be the matrix with columns $v_1,\ldots,v_m$. Then,  
$${\rm reg}(K[F])=a(K[F])+{\rm rank}(A).$$
\end{corollary}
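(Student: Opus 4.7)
The plan is to combine two earlier results in the excerpt, namely Vasconcelos' inequality relating the $a$-invariant, regularity and depth (Theorem~\ref{reg-cm}) and Herzog's dimension formula for monomial subrings (Theorem~\ref{dim-toric}).

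First, I would apply Theorem~\ref{reg-cm} to the graded $K$-algebra $R=K[F]$. Since $R$ is presented as a standard graded quotient of a polynomial ring by its toric ideal, the statement of Theorem~\ref{reg-cm} applies. The hypothesis that $K[F]$ is Cohen--Macaulay is precisely the equality case of this theorem, yielding
\[
a(K[F]) \;=\; {\rm reg}(K[F]) \;-\; {\rm depth}(K[F]).
\]
Next, because $K[F]$ is Cohen--Macaulay, its depth equals its Krull dimension, so $\mathrm{depth}(K[F]) = \dim(K[F])$. Rearranging gives
\[
{\rm reg}(K[F]) \;=\; a(K[F]) \;+\; \dim(K[F]).
\]

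Finally, I would invoke Herzog's Theorem~\ref{dim-toric}, which states that the Krull dimension of a monomial subring $K[t^{v_1},\ldots,t^{v_m}]$ coincides with the rank of the matrix $A$ whose columns are the exponent vectors $v_1,\ldots,v_m$. Substituting $\dim(K[F]) = \mathrm{rank}(A)$ into the previous display yields the desired formula
\[
{\rm reg}(K[F]) \;=\; a(K[F]) \;+\; \mathrm{rank}(A).
\]

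There is no genuine obstacle here; the corollary is a direct assembly of two quoted theorems, with the Cohen--Macaulay hypothesis doing the work of promoting the inequality in Theorem~\ref{reg-cm} to an equality and simultaneously identifying depth with dimension. The only minor point to verify is that the standard grading assumption is what allows the Herzog dimension formula and the regularity/$a$-invariant formalism (defined via the Hilbert series of a finitely generated positively graded $K$-algebra) to be applied consistently to the same ring $K[F]$.
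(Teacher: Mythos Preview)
Your proof is correct and follows essentially the same approach as the paper, which simply states that the corollary follows readily from Theorems~\ref{reg-cm} and~\ref{dim-toric}. Your version spells out the two-step argument (Cohen--Macaulay gives equality in Theorem~\ref{reg-cm} and identifies depth with dimension; Theorem~\ref{dim-toric} replaces dimension by ${\rm rank}(A)$) in exactly the intended way.
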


\begin{proof} This follows readily from Theorems~\ref{reg-cm} and
\ref{dim-toric}.
\end{proof}

Let $G$ be a connected bipartite graph. The canonical module of $K[G]$
can be described in terms of blocking polyhedra \cite[Theorem~4.8]{shiftcon}, 
and the a-invariant of $K[G]$ can be computed using a linear
program \cite[Theorem~4.1]{shiftcon}. 

\begin{corollary} Let $G$ be a connected bipartite graph and let
$K[G]$ be its edge subring. Then, 
$$
{\rm reg}(K[G])=a(K[G])+s-1.
$$
\end{corollary}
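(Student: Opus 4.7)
The plan is to reduce the statement to a direct application of Corollary~\ref{jan27-25}, which requires the following three ingredients: (i) $K[G]$ is a standard graded $K$-algebra in the normalized grading $\deg(t^{v_i})=1$, (ii) $K[G]$ is Cohen--Macaulay, and (iii) the incidence matrix $A$ of $G$ has rank $s-1$. Given these, the corollary yields exactly ${\rm reg}(K[G])=a(K[G])+s-1$.

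First I would handle (iii) using the ingredients already developed in the excerpt. By Proposition~\ref{dimkg}, $\dim(K[G])=s-c_0(G)$ where $c_0(G)$ counts the connected bipartite components of $G$. Since $G$ is connected and bipartite, $c_0(G)=1$, so $\dim(K[G])=s-1$. By Herzog's formula (Theorem~\ref{dim-toric}), $\dim(K[G])={\rm rank}(A)$, hence ${\rm rank}(A)=s-1$.

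Next I would establish (ii). Because $G$ is bipartite it contains no odd cycles, so it trivially satisfies the odd cycle condition; by Corollary~\ref{bowtie-criterion}, $K[G]$ is normal. (Alternatively, by Corollary~\ref{bip-normal} the edge ideal $I(G)$ is normal, i.e.\ $S[I(G)z]$ is normal, so the descent of normality criterion, Theorem~\ref{feb16-23}, applies to the uniform degree-$2$ generating set and forces $K[G]$ to be normal.) Hochster's theorem (Theorem~\ref{hoch-theo}) then guarantees $K[G]$ is Cohen--Macaulay. Point (i) is automatic: all the generators $t^{v_i}$ have degree $2$ in $S$, and the normalized grading $\deg(t^{v_i})=1$ makes $K[G]$ standard graded.

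With (i)--(iii) in place, Corollary~\ref{jan27-25} immediately gives ${\rm reg}(K[G])=a(K[G])+{\rm rank}(A)=a(K[G])+s-1$, completing the proof. There is no real obstacle here: the only subtle point is making sure one invokes the \emph{normalized} grading (so that ``standard graded'' truly holds) and verifying normality of $K[G]$, both of which are handled by results already quoted in the excerpt; everything else is bookkeeping.
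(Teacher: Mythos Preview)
Your proof is correct and follows essentially the same approach as the paper: establish normality of $K[G]$, invoke Hochster's theorem for Cohen--Macaulayness, and then use the identity ${\rm reg}=a+\dim$ for Cohen--Macaulay graded algebras together with $\dim(K[G])=s-1$ from Proposition~\ref{dimkg}. The only cosmetic differences are that the paper cites Theorem~\ref{bowtie-main-teo} for normality and applies Theorem~\ref{reg-cm} directly rather than routing through Corollary~\ref{jan27-25} and ${\rm rank}(A)$.
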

\begin{proof}
As the edge subring $K[G]$ is 
normal (Theorem~\ref{bowtie-main-teo}) and Cohen--Macaulay
(Theorem~\ref{hoch-theo}), by
Theorem~\ref{reg-cm} and Proposition~\ref{dimkg}, one has 
$$
{\rm reg}(K[G])=a(K[G])+\dim(K[G])=a(K[G])+s-c_0(G)=a(K[G])+s-1,
$$
and the proof is complete. 
\end{proof}

Hence, for connected bipartite graphs, we can also compute the regularity of $K[G]$ using a linear
program, and furthermore the $a$-invariant and the regularity of 
$K[G]$ can be interpreted in
combinatorial terms, see \cite[Remark 4.3]{Bhaskara-VanTuyl} and 
\cite[Proposition~4.2]{shiftcon}. For recent formulas of homological invariants of toric
ideals of edge subrings of bipartite graphs, see
\cite{almousa-dochtermann-smith,Bhaskara-VanTuyl,HB-reg-g}. 

If $G$ is an unmixed bipartite graph, by Theorem~\ref{ehrhart-unmixed}
one has ${\rm reg}(K[G])\leq\beta_1(G)-1$. The following result
shows this inequality for all bipartite connected graphs. 

\begin{theorem}{\rm(Herzog and Hibi~\cite[Theorem~1]{HB-reg-g})} If
$G$ is a 
connected graph and
$\beta_1(G)$ is the matching number of $G$, then 
$$
{\rm reg}(K[G])\leq\begin{cases}\beta_1(G)-1&\mbox{if }G\mbox{ is 
bipartite},\\
\beta_1(G)&\mbox{if }G \mbox{ is
non-bipartite and }K[G]\mbox{ is normal}.
\end{cases}
$$ 
\end{theorem}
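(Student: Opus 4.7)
The plan is to reduce both cases to bounding the degree of the $h$-polynomial of the edge polytope $\mathcal{P}_G := \mathrm{conv}(v_1,\ldots,v_m)$, and then to bound that degree combinatorially via a squarefree Gr\"obner basis / shellability argument.

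First, in both cases $K[G]$ is normal: when $G$ is bipartite by Corollary~\ref{bip-normal}, and when $G$ is non-bipartite by hypothesis. Since the edge vectors $v_1,\ldots,v_m$ all lie on the hyperplane $\tfrac{1}{2}(x_1+\cdots+x_s)=1$, the generalized descent of normality criterion (Theorem~\ref{mar3-01}) identifies $K[G]$ with the Ehrhart ring $A(\mathcal{P}_G)$. By Hochster's theorem (Theorem~\ref{hoch-theo}), $K[G]$ is Cohen--Macaulay, so combining Corollary~\ref{jan27-25}, Theorem~\ref{dim-toric} and Proposition~\ref{dimkg} one has
$$
\mathrm{reg}(K[G]) \;=\; a(K[G]) + \dim K[G] \;=\; \deg h_{\mathcal{P}_G},
$$
where $h_{\mathcal{P}_G}$ is the numerator of the Ehrhart series. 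Moreover $\dim \mathcal{P}_G = s-2$ in the bipartite connected case and $s-1$ in the non-bipartite connected case. The task reduces to proving $\deg h_{\mathcal{P}_G} \le \beta_1(G)-1$ for bipartite $G$ and $\deg h_{\mathcal{P}_G} \le \beta_1(G)$ when $G$ is non-bipartite with $K[G]$ normal.

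For the bipartite case, I would exhibit a squarefree quadratic Gr\"obner basis for the toric ideal $P(G)$. Since $G$ is bipartite, $P(G)$ is generated by binomials coming from even cycles, and a suitable reverse lexicographic order produces a squarefree quadratic Gr\"obner basis, so that the initial ideal defines a flag unimodular triangulation $\Delta$ of $\mathcal{P}_G$; this triangulation is shellable, and its $h$-vector coincides with $h_{\mathcal{P}_G}$. The critical combinatorial step is to show that a shelling descent set of size $k$ forces the existence of $k$ pairwise non-adjacent edges of $G$, whence a matching of size $k$ exists in $G$; this yields $\deg h_{\mathcal{P}_G} \le \beta_1(G) - 1$. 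For the non-bipartite normal case, Corollary~\ref{bowtie-criterion} gives the odd cycle condition, and by Theorem~\ref{bowtie-main-teo} together with the circuit description the generators of $P(G)$ now also include binomials coming from bowtie configurations. A parallel Gr\"obner/shelling analysis still produces a unimodular triangulation; each bowtie descent contributes one extra unit to the degree of $h_{\mathcal{P}_G}$, giving the bound $\beta_1(G)$ rather than $\beta_1(G)-1$.

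The main obstacle is the combinatorial identification of shelling descents with matching edges of $G$. In the bipartite unimodular setting this is essentially a standard consequence of the theory of edge-ring triangulations of Ohsugi--Hibi and Sturmfels, but translating \emph{descent sets} of the shelling into matchings cleanly, so that the degree of $h_{\mathcal{P}_G}$ is pinned down by $\beta_1(G)$, requires careful bookkeeping. In the non-bipartite case the presence of odd cycles creates bowtie-type descents that escape the bipartite count, and accounting for exactly the extra ``$+1$'' (and no more) is the delicate point; here the odd cycle condition is indispensable, since without it $K[G]$ fails to be normal and the very identification $K[G]=A(\mathcal{P}_G)$ collapses.
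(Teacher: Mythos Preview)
The paper does not prove this theorem; it is quoted from Herzog--Hibi \cite{HB-reg-g} without proof, so there is no ``paper's own proof'' to compare against. I will therefore assess your sketch on its own and against what Herzog and Hibi actually do.

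Your reduction step is essentially correct. One small gap: in the non-bipartite case your hypothesis is only that $K[G]$ is normal, while Theorem~\ref{mar3-01} needs $S[I(G)z]$ normal. Since $G$ is connected, Theorem~\ref{roundup-iff-rounddown} bridges this, and then indeed $K[G]\simeq A(\mathcal P_G)$ and $\mathrm{reg}(K[G])=\deg h_{\mathcal P_G}$ via Corollary~\ref{jan27-25}.

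The substantive gap is in the Gr\"obner/shelling step, which you yourself flag as the ``main obstacle'' and do not carry out. Two concrete problems. First, even in the bipartite case your logic is off by one: if a descent set of size $k$ produces a matching of size $k$, you get $\deg h_{\mathcal P_G}\le\beta_1(G)$, not $\beta_1(G)-1$; you would need the descent of size $k$ to force a matching of size $k{+}1$, and nothing you wrote explains why. Second, in the non-bipartite case you assume a squarefree initial ideal (equivalently, a regular unimodular triangulation of $\mathcal P_G$) exists whenever $K[G]$ is normal. This is a nontrivial assertion that does not follow from normality alone and is not established in the paper; you would have to supply or cite it separately, and the subsequent ``each bowtie descent contributes exactly one extra unit'' is too vague to be an argument.

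Herzog and Hibi's actual route bypasses Gr\"obner bases entirely. Using the Danilov--Stanley description of the canonical module of a normal affine semigroup ring, one has $a(K[G])=-\min\{n:\ \text{the relative interior of } n\mathcal P_G \text{ contains a lattice point}\}$. Taking a maximum matching $M$ with $|M|=\beta_1(G)$ and, for each uncovered vertex, one incident edge (which exists since $G$ is connected), one writes down an explicit lattice point in the relative interior of $(s-\beta_1(G))\mathcal P_G$. This gives $a(K[G])\le \beta_1(G)-s$, and combining with $\dim K[G]=s-1$ (bipartite) or $s$ (non-bipartite) yields the two bounds directly. This is both shorter and avoids the unproven triangulation and descent--matching claims in your sketch.
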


\begin{proposition}\cite[Proposition~4.6]{join}\label{cone}
Let $G$ be a connected non-bipartite graph with $s$ vertices and let
$C(G)$ be the cone over $G$. 
If $K[G]$ is normal, then
\[
a(K[G])-1\leq a(K[C(G)])\leq -\left\lceil\frac{s+1}{2}
\right\rceil.
\]
\end{proposition}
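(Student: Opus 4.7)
The plan is to establish the two bounds separately, both through the Danilov--Stanley description of the canonical module of a normal Cohen--Macaulay monomial subring. The upper bound will follow from Theorem~\ref{main-in} applied to $K[C(G)]$, once we check that $K[C(G)]$ is normal. The lower bound will be obtained by explicitly lifting a minimum-degree interior lattice point of the cone of $K[G]$ to one of the cone of $K[C(G)]$.

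First I would prove normality of $K[C(G)]$. Since $G$ is connected and $K[G]$ is normal, Theorem~\ref{roundup-iff-rounddown} gives that $S[I(G)z]$ is normal, and the isomorphism $\mathcal{R}(I(G))\simeq K[C(G)]$ established above transfers this normality to $K[C(G)]$; Hochster's Theorem~\ref{hoch-theo} then yields Cohen--Macaulayness. Because $G$ is non-bipartite, $C(G)$ inherits an odd cycle, so $C(G)$ is a connected non-bipartite graph on $s+1$ vertices; Proposition~\ref{dimkg} gives $\dim K[C(G)]=s+1$. The ring $K[C(G)]$ is generated by squarefree monomials of degree $k=2$ in $s+1$ variables. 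As a connected non-bipartite graph has $s\geq 3$, we have $s+1\geq 2k$, and Theorem~\ref{main-in}(ii) yields $a(K[C(G)])=a(\overline{K[C(G)]})\leq -\lceil(s+1)/2\rceil$.

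For the lower bound I would set $\mathcal{A}_G=\{v_1,\ldots,v_m\}$ and $\mathcal{A}_{C(G)}=\{(v_j,0)\}_j\cup\{(e_i,1)\}_{i=1}^s\subset\mathbb{R}^{s+1}$, with respective cones $C_G=\mathbb{R}_+\mathcal{A}_G$ and $C_{C(G)}=\mathbb{R}_+\mathcal{A}_{C(G)}$. By Danilov--Stanley,
\[
-a(K[G])=\min\{|a|/2\mid a\in\mathbb{Z}\mathcal{A}_G\cap\mathrm{int}(C_G)\},
\]
and likewise $-a(K[C(G)])=\min\{(|\alpha|+c)/2\mid(\alpha,c)\in\mathbb{Z}\mathcal{A}_{C(G)}\cap\mathrm{int}(C_{C(G)})\}$. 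Since $G$ is connected non-bipartite, $\mathbb{Z}\mathcal{A}_G$ is the even-sum sublattice of $\mathbb{Z}^s$, whence $\mathbb{Z}\mathcal{A}_{C(G)}=\{(\alpha,c)\in\mathbb{Z}^{s+1}\colon|\alpha|+c \text{ even}\}$. Pick $a^*$ realizing the minimum for $K[G]$. Then $|a^*|$ is even, and because $\mathrm{int}(C_G)\subseteq\mathbb{R}_{>0}^s$ and $a^*$ is integral, $a^*_i\geq 1$ for every $i$, giving $|a^*|\geq s\geq 3$ and, by parity, $|a^*|\geq 4$.

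The key step is the lift $(a^*,2)$. Parity shows $(a^*,2)\in\mathbb{Z}\mathcal{A}_{C(G)}$, and to place it in $\mathrm{int}(C_{C(G)})$ I would set $\lambda=1-2/|a^*|\in[1/2,1)$, $u=\lambda a^*$, and $\mu=(2/|a^*|)a^*$; then $u\in\mathrm{int}(C_G)$ (the interior is stable under positive scaling), $\mu\in\mathbb{R}_{>0}^s$ because $a^*_i>0$ for every $i$, and $|\mu|=2$. Writing $u$ as a strictly positive combination of the extreme rays of $C_G$ realizes the equality $(a^*,2)=(u,0)+\sum_{i=1}^s\mu_i(e_i,1)$ as a strictly positive combination of all extreme rays of $C_{C(G)}$, placing $(a^*,2)$ in $\mathrm{int}(C_{C(G)})$. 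Consequently $t^{a^*}x^{2}\in\omega_{K[C(G)]}$ has degree $(|a^*|+2)/2=-a(K[G])+1$, forcing $a(K[C(G)])\geq a(K[G])-1$. The main obstacle is the interior characterization of $C_{C(G)}$: one must identify its extreme rays as $\{(v_j,0)\colon v_j \text{ extreme in } C_G\}\cup\{(e_i,1)\}_{i=1}^s$ and invoke that a point of a pointed cone lies in the relative interior precisely when it admits a representation with strictly positive coefficient on every extreme ray.
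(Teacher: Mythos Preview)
The paper does not supply its own proof of this proposition; it is quoted from \cite{join} without argument, so there is nothing to compare your approach against. That said, your proposal is correct.

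A small simplification: the ``main obstacle'' you flag at the end is not actually an obstacle. You do not need to identify the extreme rays of $C_{C(G)}$. The relevant fact is that for a finitely generated cone $C=\mathbb{R}_+\{w_1,\ldots,w_m\}$, a point lies in the relative interior of $C$ if and only if it can be written as $\sum_i\lambda_i w_i$ with all $\lambda_i>0$, for that fixed generating set (not just the extreme rays). Since $u\in\mathrm{int}(C_G)$, you may write $u=\sum_j\nu_j v_j$ with every $\nu_j>0$; then
\[
(a^*,2)=\sum_j\nu_j\,(v_j,0)+\sum_{i=1}^s\mu_i\,(e_i,1)
\]
is a strictly positive combination over the full generating set $\mathcal{A}_{C(G)}$, which immediately places $(a^*,2)$ in $\mathrm{int}(C_{C(G)})$. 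No analysis of which generators are extremal is needed.

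One other remark: your route to the normality of $K[C(G)]$ via Theorem~\ref{roundup-iff-rounddown} is valid but anachronistic relative to \cite{join}. A self-contained alternative is Corollary~\ref{bowtie-criterion}: since $G$ is connected and $K[G]$ is normal, $G$ satisfies the odd cycle condition; it is then immediate that $C(G)$ does too (any odd cycle disjoint from a second one either lies in $G$, or one of them passes through the apex $x$, which is adjacent to every vertex), whence $K[C(G)]$ is normal.
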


Let $I$ be a monomial ideal of $S$, recall that
$\mathcal{G}(I)=\{t^{v_1},\ldots,t^{v_m}\}$ denotes the minimal set of
generators of $I$. We denote the monomial subring $K[\mathcal{G}(I)]$ of
$S$ simply by $K[I]$, and denote the monomial subring
$K[t^{v_1}z,\ldots,t^{v_m}z]$ of $S[z]$ simply by $K[Iz]$, where $z$
is a new variable. 

Fix an integer $k\geq 1$. A monomial ideal is called
$k$-\textit{uniform} if it is generated by monomials of degree $k$. 
The following result shows that the function 
$I\mapsto {\rm reg}(K[I])$, $K$ a field, $I$ a $k$-uniform normal
monomial ideal, is a monotone function.

\begin{theorem}\label{vila-obs} If $I$, $J$ are two normal monomial ideals of $S$ generated by
monomials of the same degree $k\geq 1$ and
$\mathcal{G}(I)\subset\mathcal{G}(J)$, 
then 
$$
{\rm reg}(K[I])\leq{\rm reg}(K[J]).
$$
\end{theorem}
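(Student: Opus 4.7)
The plan is to realize both $K[I]$ and $K[J]$ as Ehrhart rings of their Newton polytopes, to read their regularities off the degrees of the corresponding $h$-polynomials, and then to invoke Stanley's monotonicity theorem. For the first step, I would note that, being $k$-uniform, all exponent vectors of minimal generators of $I$ and $J$ satisfy the equation $\tfrac{1}{k}(x_1+\cdots+x_s)=1$ with strictly positive coefficients. Since $I$ and $J$ are normal, the Rees algebras $S[Iz]$ and $S[Jz]$ are normal, so the generalized descent of normality criterion (Theorem~\ref{mar3-01}) gives $K[Iz]=A(\mathcal{P}_I)$ and $K[Jz]=A(\mathcal{P}_J)$, where $\mathcal{P}_I$ and $\mathcal{P}_J$ are the Newton polytopes of $I$ and $J$. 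The hypothesis $\mathcal{G}(I)\subset\mathcal{G}(J)$ immediately yields $\mathcal{P}_I\subset\mathcal{P}_J$, and the standard isomorphisms $K[I]\cong K[Iz]$, $K[J]\cong K[Jz]$ (putting each generator in degree $1$) reduce the problem to comparing the regularities of these two Ehrhart rings.

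For the second step, I would combine three ingredients. Hochster's theorem (Theorem~\ref{hoch-theo}) guarantees that $A(\mathcal{P}_I)$ and $A(\mathcal{P}_J)$ are Cohen--Macaulay; the Ehrhart series gives Hilbert series of the form $h_{\mathcal{P}_I}(x)/(1-x)^{d_I+1}$ and $h_{\mathcal{P}_J}(x)/(1-x)^{d_J+1}$, where $d_I=\dim\mathcal{P}_I$ and $d_J=\dim\mathcal{P}_J$; and Theorem~\ref{reg-cm} provides ${\rm reg}(R)=a(R)+\dim R$ for any standard graded Cohen--Macaulay $K$-algebra $R$. Putting these together, ${\rm reg}(K[I])=\deg h_{\mathcal{P}_I}(x)$ and ${\rm reg}(K[J])=\deg h_{\mathcal{P}_J}(x)$.

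The third step invokes Stanley's monotonicity theorem (Theorem~\ref{Stanley-nonneg-mono}(b)) to deduce $h(\mathcal{P}_I)\leq h(\mathcal{P}_J)$ componentwise from the inclusion $\mathcal{P}_I\subset\mathcal{P}_J$. Stanley's positivity (Theorem~\ref{Stanley-nonneg-mono}(a)) ensures that both $h$-vectors have non-negative integer entries, so if $d^\star=\deg h_{\mathcal{P}_I}(x)$ then the coefficient of $x^{d^\star}$ in $h_{\mathcal{P}_J}(x)$ dominates the strictly positive coefficient of the same monomial in $h_{\mathcal{P}_I}(x)$, and is therefore itself non-zero. Hence $\deg h_{\mathcal{P}_J}(x)\geq d^\star$, and the desired inequality ${\rm reg}(K[I])\leq{\rm reg}(K[J])$ follows. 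The key structural point---not really an obstacle, but the crux of the argument---is the passage from the monomial subring $K[I]$ to the Ehrhart ring $A(\mathcal{P}_I)$ afforded by the generalized descent of normality; once that algebra-to-polytope bridge is crossed, Hochster's theorem, the Cohen--Macaulay regularity formula, and Stanley's monotonicity do the rest.
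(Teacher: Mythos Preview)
Your proof is correct and follows essentially the same route as the paper's own argument: pass to the Ehrhart rings via the generalized descent of normality criterion, use Hochster's theorem and the Cohen--Macaulay regularity formula to identify the regularities with the degrees of the $h$-polynomials, and conclude via Stanley's monotonicity and positivity. The only cosmetic difference is that the paper also remarks (via Theorem~\ref{feb16-23}) that $K[I]$ and $K[J]$ themselves are normal and Cohen--Macaulay, but since you work through the graded isomorphisms $K[I]\cong K[Iz]$ and $K[J]\cong K[Jz]$ this extra observation is not needed for the argument.
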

 
\begin{proof} By the generalized descent of normality criterion
(Theorem~\ref{mar3-01}), $K[Iz]$, $K[Jz]$ are 
the Ehrhart rings $A(\mathcal{P})$, $A(\mathcal{Q})$ of the 
lattice polytopes 
$$ 
\mathcal{P}={\rm conv}(\{a\mid t^a\in\mathcal{G}(I)\}),\quad\quad  
\mathcal{Q}={\rm conv}(\{a\mid t^a\in\mathcal{G}(J)\}),
$$
respectively. In particular $K[Iz]$ and $K[Jz]$ are normal
(Proposition~\ref{ehrhart-normal}) and Cohen--Macaulay
(Theorem~\ref{hoch-theo}). Similarly, by the descent of normality criterion
(Theorem~\ref{feb16-23}), $K[I]$ and $K[J]$ are normal and
Cohen--Macaulay. The rings $K[Iz]$ and $K[I]$ (resp. $K[Jz]$
and $K[J]$) have the same
Hilbert series since they are isomorphic as standard graded algebras
with the normalized grading. Therefore, by Theorem~\ref{reg-cm}, one has
$$\deg(h_\mathcal{P}(x))={\rm reg}(K[Iz])={\rm reg}(K[I])\mbox{ and }
\deg(h_{\mathcal{P}_1}(x))={\rm reg}(K[Jz])={\rm reg}(K[J]). 
$$
\quad Hence, by  Stanley's monotonicity theorem
(Theorem~\ref{Stanley-nonneg-mono}), we get that
$0\leq h_{\mathcal{P}}(x)\leq h_{\mathcal{P}_1}(x)$ coefficientwise.
Hence, $\deg(h_{\mathcal{P}}(x))\leq 
\deg(h_{\mathcal{P}_1}(x))$ and the proof is complete.
\end{proof}

If $G_1$ is an induced subgraph of $G_2$, then ${\rm
reg}(K[G_1])\leq{\rm reg}(K[G_2])$
\cite[Theorem~3.6]{Ha-Beyarslan-Okeefe}. 
The induced subgraph assumption is
necessary here; see the example before
\cite[Question~6.12]{almousa-dochtermann-smith}. Part (3) of the
following corollary was proved in 
\cite[Theorem~6.11]{almousa-dochtermann-smith}  
for connected bipartite graphs, and part (2) recovers  
a positive answer to a recent question by Almousa, Dochtermann and
Smith \cite[Question 6.12]{almousa-dochtermann-smith} that was first
shown in the affirmative by Akiyoshi Tsuchiya. His proof appears in
\cite{almousa-dochtermann-smith} right after Question 6.12.

\begin{corollary}\label{question6.12} Let $G_1$ be a subgraph of $G_2$. The following hold.
\begin{enumerate}
\item[(1)] If $I(G_i)$ is normal for $i=1,2$, 
then ${\rm reg}(K[G_1])\leq{\rm reg}(K[G_2])$. 
\item[(2)] \cite{almousa-dochtermann-smith}  
If $G_i$ is connected and $K[G_i]$ is normal for $i=1,2$, then 
${\rm
reg}(K[G_1])\leq{\rm reg}(K[G_2])$. 
\item[(3)] If $G_i$ is bipartite for $i=1,2$, then ${\rm
reg}(K[G_1])\leq{\rm reg}(K[G_2])$. 
\end{enumerate}
\end{corollary}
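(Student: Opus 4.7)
The plan is to reduce all three parts to Theorem~\ref{vila-obs}, which gives monotonicity of the regularity of $K[I]$ along inclusions $\mathcal{G}(I)\subset\mathcal{G}(J)$ among $k$-uniform normal monomial ideals. Since the edge ideal $I(G)$ is generated by the squarefree degree-$2$ monomials $t_it_j$ with $\{t_i,t_j\}\in E(G)$, the ideals $I(G_1)$ and $I(G_2)$ are both $2$-uniform, and the hypothesis $G_1\subset G_2$ translates immediately into $\mathcal{G}(I(G_1))\subset\mathcal{G}(I(G_2))$. Moreover $K[G_i]=K[I(G_i)]$ by definition of the edge subring. Hence the only thing one needs to verify in each case is that $I(G_1)$ and $I(G_2)$ are normal monomial ideals (in the sense that $S[I(G_i)z]$ is normal), since this is what Theorem~\ref{vila-obs} requires.

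For part (1) there is nothing to do: the normality of $I(G_i)$ is part of the hypothesis, so Theorem~\ref{vila-obs} applies at once and yields ${\rm reg}(K[G_1])={\rm reg}(K[I(G_1)])\leq{\rm reg}(K[I(G_2)])={\rm reg}(K[G_2])$.

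For part (2), one invokes Theorem~\ref{roundup-iff-rounddown}: for a connected graph $G$, the normality of $K[G]$ is equivalent to the normality of $S[I(G)z]$, i.e.\ to the normality of the ideal $I(G)$. Thus the hypothesis that $K[G_i]$ is normal and $G_i$ is connected forces $I(G_i)$ to be normal, and part (1) finishes the argument.

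For part (3), each $G_i$ is bipartite, so Corollary~\ref{bip-normal} gives directly that $I(G_i)$ is normal, and again part (1) concludes. I expect no genuine obstacles beyond checking that the hypotheses match: the main conceptual point is simply to realize that Theorem~\ref{vila-obs} is tailor-made for this situation with $k=2$, and that the ``normality'' hypothesis there is the normality of the ideal rather than of the subring, which is why parts (2) and (3) require the small auxiliary inputs from Theorem~\ref{roundup-iff-rounddown} and Corollary~\ref{bip-normal}, respectively.
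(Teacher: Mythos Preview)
Your proof is correct and follows essentially the same route as the paper: all three parts are reduced to Theorem~\ref{vila-obs} with $k=2$, using Theorem~\ref{roundup-iff-rounddown} for part~(2) and the normality of edge ideals of bipartite graphs for part~(3). The only cosmetic difference is that in part~(3) the paper invokes Theorem~\ref{apr16-02} (bipartite graphs have no Hochster configurations, hence $I(G)$ is normal) rather than citing Corollary~\ref{bip-normal} directly, but this is the same argument.
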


\begin{proof} (1) This follow at once from Theorem~\ref{vila-obs}. 

(2) As $G_1$ and $G_2$ are connected graphs, by
Theorem~\ref{roundup-iff-rounddown}, $I(G_i)$ is a normal ideal for $i=1,2$.
Thus, by part (1), ${\rm
reg}(K[G_1])\leq{\rm reg}(K[G_2])$.  

(3) A bipartite graph has no Hochster configurations. Then, by
the normality criterion of Theorem~\ref{apr16-02}, $I(G_i)$ are
normal for $i=1,2$. Thus, by part (1), ${\rm
reg}(K[G_1])\leq{\rm reg}(K[G_2])$. 
\end{proof}

\begin{proposition}\label{may9-23} Let $S_{s,k}$ and $S^{(k)}$ be the
$k$-th squarefree Veronese subring and the $k$-th Veronese subring of
$S$, and let $I$ be a $k$-uniform monomial ideal such that
$s\geq 2k\geq 4$ and  $S_{s,k}\subset K[I]\subset S^{(k)}$. The
following hold.

{\rm (a)} If $I$ is normal, then ${\rm
reg}(K[I])=s-\lceil\frac{s}{k}\rceil$.

{\rm (b)} If $I$ is an ideal of Veronese type, then ${\rm
reg}(K[I])=s-\lceil\frac{s}{k}\rceil$.

{\rm (c)} If $k=2$, then $I$ is normal and ${\rm
reg}(K[I])=s-\lceil\frac{s}{2}\rceil$. 
\end{proposition}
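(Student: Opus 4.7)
The plan is to sandwich $K[I]$ between $S_{s,k}$ and $S^{(k)}$ and invoke the monotonicity of regularity for normal $k$-uniform monomial ideals (Theorem~\ref{vila-obs}). For part (a) I would set $I_{s,k}:=(\{t_{i_1}\cdots t_{i_k}\mid 1\leq i_1<\cdots<i_k\leq s\})$ and $\mathfrak{m}:=(t_1,\ldots,t_s)$, so that $K[I_{s,k}]=S_{s,k}$ and $K[\mathfrak{m}^k]=S^{(k)}$. Since all three algebras in the chain $S_{s,k}\subset K[I]\subset S^{(k)}$ are generated in degree $k$, the inclusions translate into generator containments $\mathcal{G}(I_{s,k})\subset\mathcal{G}(I)\subset\mathcal{G}(\mathfrak{m}^k)$. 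I would then note that $\mathfrak{m}^k$ is normal because $(\mathfrak{m}^k)^n=\mathfrak{m}^{kn}$ is integrally closed for every $n\geq 1$, and that $I_{s,k}$ is the matroidal ideal of bases of the uniform matroid $U_{k,s}$, hence polymatroidal and normal \cite{matrof}. Applying Theorem~\ref{vila-obs} to the pairs $(I_{s,k},I)$ and $(I,\mathfrak{m}^k)$ would then yield ${\rm reg}(S_{s,k})\leq{\rm reg}(K[I])\leq{\rm reg}(S^{(k)})$.

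To collapse this sandwich I would compute both endpoints and observe that they coincide. Proposition~\ref{a-invariant} already supplies ${\rm reg}(S_{s,k})=s-\lceil s/k\rceil$. For the Veronese $S^{(k)}$, the paragraph just after Proposition~\ref{a-invariant} records $a(S^{(k)})=-\lceil s/k\rceil$; since $\dim(S^{(k)})=s$ by Theorem~\ref{dim-toric} and $S^{(k)}$ is Cohen--Macaulay by Hochster's theorem (Theorem~\ref{hoch-theo}), Corollary~\ref{jan27-25} yields ${\rm reg}(S^{(k)})=a(S^{(k)})+s=s-\lceil s/k\rceil$. The two bounds agree, which proves (a). Part (b) is then immediate: every ideal of Veronese type is polymatroidal and hence normal \cite{matrof}, so (a) applies.

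For part (c), the real work is to show that under $S_{s,2}\subset K[I]\subset S^{(2)}$ the $2$-uniform ideal $I$ is automatically normal; then (a) delivers the regularity formula. The hypothesis forces $I=I(G)$, where $G$ is a (multi)graph whose simple underlying graph contains $K_s$ and which may carry some loops. I would check directly that $G$ admits no Hochster configuration: regarding a loop as an odd cycle of length $1$ per the convention recorded after Theorem~\ref{bowtie-main-teo}, for any two odd cycles $C_1,C_2$ of $G$ with $|C_1\cap C_2|\leq 1$ the presence of every edge of $K_s$ in $G$ forces $C_1\cap N_G(C_2)\neq\emptyset$, ruling out condition (i) of a Hochster configuration. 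The multigraph form of Theorem~\ref{apr16-02} then gives normality of $I$, and (a) produces ${\rm reg}(K[I])=s-\lceil s/2\rceil$. The main delicacy I expect is the loop bookkeeping: one must verify that a loop at $t_i$, treated as $C_1=\{t_i\}$, cannot combine with any other odd cycle $C_2$ to satisfy (i), which is immediate from $t_i\in N_G(t_j)$ for every $j\neq i$ available in $K_s\subseteq G$.
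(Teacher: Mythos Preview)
Your proof is correct and follows essentially the same route as the paper: sandwich $K[I]$ between $S_{s,k}$ and $S^{(k)}$ via Theorem~\ref{vila-obs}, collapse the inequality using Proposition~\ref{a-invariant} and $a(S^{(k)})=-\lceil s/k\rceil$, deduce (b) from normality of Veronese-type ideals, and for (c) argue that the multigraph $K_s$ plus loops has no Hochster configuration. The paper additionally notes in (c) that such an $I$ is itself of Veronese type (take $b_i=2$ when $t_i^2\in I$ and $b_i=1$ otherwise), giving a one-line alternative to your Hochster-configuration check.
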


\begin{proof} (a) By Proposition~\ref{a-invariant} and
the formula for $a(S^{(k)})$ given in
\cite{Got,Matsuo}, we get 
$$
{\rm reg}(S_{s,k})=s-\left\lceil\frac{s}{k}\right\rceil=
{\rm reg}(S^{(k)}).
$$
\quad Then, by Theorem~\ref{vila-obs}, we get ${\rm
reg}(K[I])=s-\lceil\frac{s}{k}\rceil$. 

(b) This follows from part (a) since ideals of Veronese type are
normal \cite[Proposition~4.9]{Lisboar}.

(c) Theorem~\ref{wolmer-ic-rees} is valid for multigraphs by
regarding loops as odd cycles \cite[p.~450]{monalg-rev}. It is seen
that $I$ is normal because $I$ is the edge ideal of the multigraph
obtained from the complete graph $\mathcal{K}_s$ by adding a loop at
each vertex $t_i$ with $t_i^2\in I$. The normality also follows
noticing that $I$ is an ideal of Veronese type and using
\cite[Proposition~4.9]{Lisboar}. Then, the formula for the 
regularity follows from part (a).
\end{proof}

Let $B\neq(0)$ be an integral matrix. The
greatest common divisor of all the non-zero $r\times r$ 
sub determinants of $B$ will be denoted by
$\Delta_r(B)$.

\begin{theorem}\cite[Theorem~3.9]{ehrhart}\label{ehrh-mon} 
Let $I=(t^{v_1},\ldots,t^{v_m})$ be a monomial ideal of $S$ and let 
$\mathcal{P}={\rm conv}(v_1,\ldots,v_m)$ be its Newton polytope. 
If $B$ is the matrix whose columns are the vectors in
$\{(v_i,1)\}_{i=1}^m$ and $r={\rm rank}(B)$, 
then $\Delta_r(B)=1$ if and only if 
$\overline{K[Iz]}=A(\mathcal{P})$.
\end{theorem}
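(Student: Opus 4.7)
The plan is to reinterpret both graded algebras as semigroup rings attached to the cone $C := \mathbb{R}_+\mathcal{A}'$, where $\mathcal{A}' := \{(v_1,1),\ldots,(v_m,1)\} \subset \mathbb{Z}^{s+1}$ is exactly the set of columns of $B$, and then reduce the equality $\overline{K[Iz]} = A(\mathcal{P})$ to a lattice-theoretic statement about the two rank-$r$ sublattices $L' := \mathbb{Z}\mathcal{A}'$ and $L := (\mathbb{R}\mathcal{A}') \cap \mathbb{Z}^{s+1}$ of $\mathbb{Z}^{s+1}$. First, since $K[Iz]$ is generated as a monomial subring by $\{t^{v_i}z\}$, the standard description of the integral closure of a monomial subring (applied with exponent set $\mathcal{A}'$) yields
$$\overline{K[Iz]} = K[\{t^a z^n \mid (a,n) \in C \cap L'\}].$$
On the other hand, a point $(a,n)$ with $n > 0$ lies in $C$ iff $a/n \in \mathcal{P}$, iff $a \in n\mathcal{P}$, and the height-$0$ slice of $C$ contributes only the constants, so
$$A(\mathcal{P}) = K[\{t^a z^n \mid (a,n) \in C \cap \mathbb{Z}^{s+1}\}].$$
Because $L' \subset \mathbb{Z}^{s+1}$, the inclusion $\overline{K[Iz]} \subset A(\mathcal{P})$ is automatic.

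Next, I would prove that $\overline{K[Iz]} = A(\mathcal{P})$ if and only if $L = L'$. The ``if'' direction is immediate from the two displayed formulas. For ``only if'', assume $C \cap \mathbb{Z}^{s+1} = C \cap L'$ and let $u \in L$. Writing $u = \sum_{i=1}^m c_i(v_i,1)$ with $c_i \in \mathbb{R}$ and choosing integers $\lambda_i \geq \max\{0,-c_i\}$, the element $u + \sum_i \lambda_i(v_i,1) = \sum_i(c_i+\lambda_i)(v_i,1)$ lies in $C$ and in $\mathbb{Z}^{s+1}$, so by hypothesis it lies in $L'$. Since $\sum_i \lambda_i(v_i,1) \in L'$ as well, we conclude $u \in L'$, giving $L \subset L'$. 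This shifting step is the main technical obstacle: we genuinely need that $\mathcal{A}'$ simultaneously generates the cone $C$ and the lattice $L'$, so that any element of the saturation $L$ can be translated into $C$ by a nonnegative integer combination of the generators.

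Finally, I would compute the index $[L : L']$ using the Smith normal form of $B$. Write $B = PDQ$ with $P \in \mathrm{GL}_{s+1}(\mathbb{Z})$, $Q \in \mathrm{GL}_m(\mathbb{Z})$, and $D$ the $(s+1) \times m$ matrix $\mathrm{diag}(d_1,\ldots,d_r,0,\ldots,0)$ with invariant factors $d_1 \mid d_2 \mid \cdots \mid d_r$. Since $Q\mathbb{Z}^m = \mathbb{Z}^m$, one has
$$L' = B\mathbb{Z}^m = PD\mathbb{Z}^m = P\bigl(d_1\mathbb{Z} \oplus \cdots \oplus d_r\mathbb{Z} \oplus 0 \oplus \cdots \oplus 0\bigr),$$
while $L = P\bigl((\mathbb{R}^r \oplus 0) \cap \mathbb{Z}^{s+1}\bigr) = P(\mathbb{Z}^r \oplus 0)$, using $P\mathbb{Z}^{s+1} = \mathbb{Z}^{s+1}$. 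Therefore $[L:L'] = d_1 \cdots d_r = \Delta_r(B)$, and combining with the previous paragraph gives $\overline{K[Iz]} = A(\mathcal{P})$ iff $[L:L'] = 1$ iff $\Delta_r(B) = 1$, completing the proof.
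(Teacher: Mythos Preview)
Your proof is correct. The paper itself does not supply a proof of this result; it is merely quoted from \cite[Theorem~3.9]{ehrhart} as part of the survey. Your argument is in fact the natural one and matches the spirit of the tools the paper invokes elsewhere: the description $\overline{K[Iz]}=K[\{t^az^n\mid (a,n)\in C\cap L'\}]$ is exactly \cite[Theorem~9.1.1]{monalg-rev} applied to the exponent set $\mathcal{A}'=\{(v_i,1)\}$, and the identification of $A(\mathcal{P})$ with $K[\{t^az^n\mid (a,n)\in C\cap\mathbb{Z}^{s+1}\}]$ is immediate from the definition of the Ehrhart ring together with the observation that the height-zero slice of the cone $C$ is $\{0\}$. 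Your shifting argument (translating an arbitrary $u\in L$ into $C$ by adding a suitable $\mathbb{N}$-combination of the generators $(v_i,1)$) is the standard way to pass from equality of affine semigroups to equality of the ambient lattices, and the Smith normal form computation showing $[L:L']=d_1\cdots d_r=\Delta_r(B)$ is correct as stated.
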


\begin{proposition}\cite[Corollary~{10.2.12}]{monalg-rev}\label{mar9-01-1g} 
Let $I=(t^{v_1},\ldots,t^{v_m})$ be the edge ideal of a graph $G$ and let
$B$ be the matrix whose columns are the vectors in
$\{(v_i,1)\}_{i=1}^m$. If $c_1$ is the number of 
non-bipartite components of $G$ and $r$ is the rank of $B$, then 
$$
\Delta_r(B)=
\left\{\begin{array}{ll}
2^{c_1-1}&\mbox{ if }\, c_1\geq 1,\\
1&\mbox{ if }\, c_1=0.
\end{array}\right.
$$
\end{proposition}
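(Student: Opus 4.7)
The plan is to interpret $\Delta_r(B)$ as the order of the torsion subgroup of the cokernel $\mathbb{Z}^{s+1}/L$, where $L=\text{image}(B)$; equivalently, as the index $[N:L]$ of $L$ inside its saturation $N\subset\mathbb{Z}^{s+1}$. First I pin down $r=\text{rank}(B)$: by Theorem~\ref{dim-toric} and Proposition~\ref{dimkg}, $\text{rank}(A)=s-c_0(G)$, where $c_0=c_0(G)$ is the number of bipartite components of $G$. Writing $\mathbf{1}^T$ as $\sum_{v\in V_1^i}\text{row}_v(A)$ on each bipartite component and as $\tfrac{1}{2}\sum_{v\in V(G_j)}\text{row}_v(A)$ on each non-bipartite one (both valid because every column of $A$ has entry-sum $2$), I see that $\mathbf{1}^T$ lies in the $\mathbb{Q}$-rowspan of $A$, so $r=\text{rank}(B)=s-c_0$.

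Next I describe $L$ and $N$ explicitly. Write $G=G_1\sqcup\cdots\sqcup G_{c_0}\sqcup G_{c_0+1}\sqcup\cdots\sqcup G_c$ with the first $c_0$ bipartite and the last $c_1=c-c_0$ non-bipartite. For each bipartite $G_i$ with bipartition $V_1^i\cup V_2^i$ set $\alpha_i(w)=\sum_{v\in V_1^i}w_v$ and $\beta_i(w)=\sum_{v\in V_2^i}w_v$; for each non-bipartite $G_j$ set $\gamma_j(w)=\sum_{v\in V(G_j)}w_v$. A standard spanning-tree reduction identifies $\text{image}(A_i)=\{w:\alpha_i(w)=\beta_i(w)\}$ in the bipartite case and $\text{image}(A_j)=\{w:\gamma_j(w)\text{ even}\}$ in the non-bipartite case: the key point is that closing an odd cycle with a single non-tree edge relaxes the equality $\alpha_T=\beta_T$ to the congruence $\alpha_T\equiv\beta_T\pmod 2$, i.e.\ $\gamma_j$ even. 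Using the identity $\sum_k n_k=\tfrac12\sum_v (An)_v$ (again because each column of $A$ has entry-sum $2$), the $\tau$-coordinate of an element $(w,\tau)=B(n)\in L$ splits componentwise as $\alpha_i$ on bipartite pieces and $\gamma_j/2$ on non-bipartite pieces, yielding
\begin{equation*}
L=\Big\{(w,\tau)\in\mathbb{Z}^{s+1}\ :\ \alpha_i=\beta_i\ \forall i\leq c_0,\ \gamma_j\text{ even}\ \forall j>c_0,\ \tau=\sum_{i\leq c_0}\alpha_i+\sum_{j>c_0}\tfrac{\gamma_j}{2}\Big\},
\end{equation*}
and its saturation in $\mathbb{Z}^{s+1}$, obtained by keeping the $\mathbb{Q}$-linear relations and clearing denominators, is
\begin{equation*}
N=\Big\{(w,\tau)\in\mathbb{Z}^{s+1}\ :\ \alpha_i=\beta_i\ \forall i\leq c_0,\ 2\tau=2\sum_{i\leq c_0}\alpha_i+\sum_{j>c_0}\gamma_j\Big\}.
\end{equation*}

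Finally I compute $[N:L]$ via the homomorphism $\chi:N\to(\mathbb{Z}/2)^{c_1}$, $\chi(w,\tau)=(\gamma_j(w)\bmod 2)_{j>c_0}$. Reducing the defining equation of $N$ modulo $2$ forces $\sum_{j>c_0}\gamma_j(w)$ to be even, so $\text{image}(\chi)$ lies in the subgroup $H:=\{(b_j):\sum_j b_j=0\text{ in }\mathbb{Z}/2\}$, which has order $2^{c_1-1}$ when $c_1\geq 1$ and is trivial when $c_1=0$. Conversely, every $(b_j)\in H$ is realized by $w=\sum_{b_j=1}e_{v_j}$ (for arbitrary $v_j\in V(G_j)$) and $\tau=\tfrac12\sum_j b_j$. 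Moreover $\ker\chi=L$, because once every $\gamma_j$ is even the relation $2\tau=2\sum\alpha_i+\sum\gamma_j$ pins $\tau$ to the integer value $\sum\alpha_i+\sum\gamma_j/2$ that characterizes $L$. Thus $\Delta_r(B)=[N:L]=|H|$ equals $2^{c_1-1}$ when $c_1\geq 1$ and $1$ when $c_1=0$. The delicate point is the image identification of $A_j$ for non-bipartite components and the resulting half-integer contribution $\gamma_j/2$ to $\tau$, which is precisely what produces the $2$-torsion in the cokernel; everything else is routine bookkeeping with sublattices.
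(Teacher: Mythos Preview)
The paper does not give its own proof of this proposition; it is quoted from \cite[Corollary~10.2.12]{monalg-rev} and simply used as an input to Theorem~\ref{normality-criterion-ehrhart}. So there is no in-paper argument to compare against.

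Your proof is correct. The identification of $\Delta_r(B)$ with the lattice index $[N:L]$ via Smith normal form is standard, your rank count $r=s-c_0$ is right (the all-ones row lies in the $\mathbb{Q}$-rowspan of $A$ exactly by the argument you give, summing the component contributions to obtain $\mathbf{1}^T$), and the explicit descriptions of $L$ and $N$ are accurate. The key observation that $\tau=\sum_k n_k$ is actually determined by $w$ (because any $n\in\ker A$ has $\sum_k n_k=0$) is implicit in your formula $\tau=\sum_i\alpha_i+\sum_j\gamma_j/2$, and the map $\chi$ cleanly isolates the $2$-torsion coming from the non-bipartite components. The surjectivity onto $H$ and the identification $\ker\chi=L$ are both checked correctly.

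The argument in \cite{monalg-rev} proceeds differently: it passes through the Smith normal form of the incidence matrix $A$ itself (whose invariant factors over the components are known from \cite{Kulk1}) and then analyzes how appending the extra row of ones interacts with those invariant factors. Your approach is more self-contained, computing the saturation directly from the combinatorial description of $\operatorname{image}(A)$ on each component; it avoids invoking the explicit Smith form of $A$ and makes the source of the factor $2^{c_1-1}$ (the parity constraint $\sum_j\gamma_j\equiv 0\pmod 2$ forced by the last coordinate) completely transparent.
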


Let $G$ be a graph with vertex set $V=\{t_1,\ldots,t_s\}$ and edge
set $E(G)$. 
The {\it edge polytope\/} of $G$, denoted $\mathcal{P}_G$, is given by
$$
\mathcal{P}_G:={\rm conv}(\{e_i+e_j\mid\{t_i,t_j\}\in
E(G)\})\subset\mathbb{R}^s.
$$
\quad The following result gives another classification of the normality of the edge ideal of a
graph.

\begin{theorem}\label{normality-criterion-ehrhart}
Let $G$ be a graph, let $I(G)$ be its edge ideal, let
$\mathcal{P}_G$ be the edge polytope of $G$ and let
$A(\mathcal{P}_G)$ be its Ehrhart ring. The following conditions
are equivalent:
\begin{enumerate}
\item[(i)] $K[I(G)z]=A(\mathcal{P}_G)$.
\item[(ii)] $G$ has at most one connected non-bipartite component $G_i$
and $I(G_i)$ is normal.
\item[(iii)] $I(G)$ is normal.
\end{enumerate}
\end{theorem}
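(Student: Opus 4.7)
The plan is to prove the cycle of implications (ii) $\Leftrightarrow$ (iii), (iii) $\Rightarrow$ (i), and (i) $\Rightarrow$ (iii), using as main tools the Hochster-configuration characterization of normality (Theorem~\ref{apr16-02}), the generalized descent criterion (Theorem~\ref{mar3-01}), and an explicit witness monomial. For (ii) $\Leftrightarrow$ (iii), Theorem~\ref{apr16-02} reduces normality of $I(G)$ to the absence of Hochster configurations in $G$. Two distinct non-bipartite components of $G$ would contribute shortest odd cycles---necessarily induced and vertex-disjoint, with no edge between them---yielding a Hochster configuration; conversely, bipartite components are odd-cycle-free, so every Hochster configuration of $G$ must lie inside a single non-bipartite component $G_i$. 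Hence $I(G)$ is normal iff $G$ has at most one non-bipartite component $G_i$ and $G_i$ admits no Hochster configuration, which by Theorem~\ref{apr16-02} is exactly the statement that $I(G_i)$ is normal.

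For (iii) $\Rightarrow$ (i), I apply the generalized descent criterion (Theorem~\ref{mar3-01}) with $b_i=\tfrac{1}{2}$ for every $i$: each edge-ideal generator $v_k=e_i+e_j$ satisfies $\tfrac{1}{2}(x_1+\cdots+x_s)=1$, so the positive-hyperplane hypothesis is automatic, and normality of $S[I(G)z]$ directly yields $K[I(G)z]=A(\mathcal{P}_G)$.

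The harder implication, (i) $\Rightarrow$ (iii), I treat contrapositively, and this is where the main work lies. Assuming $G$ has a Hochster configuration $C_1,C_2$, set
$$M:=M_{C_1,C_2}=\Bigl(\prod_{t_i\in C_1}t_i\Bigr)\Bigl(\prod_{t_i\in C_2}t_i\Bigr)z^{p},\qquad p:=\tfrac{|C_1|+|C_2|}{2}.$$
The claim is that $M\in A(\mathcal{P}_G)\setminus K[I(G)z]$, contradicting (i). For membership in $A(\mathcal{P}_G)$, I write the exponent of $M$ divided by $p$ as
$$\tfrac{1}{p}(\mathbf{1}_{C_1}+\mathbf{1}_{C_2})=\tfrac{1}{|C_1|+|C_2|}\Bigl(\sum_{e\in E(C_1)}\chi_e+\sum_{e\in E(C_2)}\chi_e\Bigr),$$
which is a convex combination of vertices $\chi_e=e_i+e_j$ of $\mathcal{P}_G$, so $M\in A(\mathcal{P}_G)$. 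For non-membership in $K[I(G)z]$, any expression of $M$ as a product of generators $t^{v_k}z$ would realize the squarefree vector $\mathbf{1}_{C_1}+\mathbf{1}_{C_2}$ as a nonnegative integer combination of edge-vectors $e_i+e_j$, that is, as a perfect matching of $V(C_1)\cup V(C_2)$ whose edges lie entirely inside $V(C_1)\cup V(C_2)$. The Hochster hypothesis $C_1\cap N_G(C_2)=\emptyset$ forbids edges between the two cycles, and the induced-cycle condition restricts edges inside $V(C_i)$ to $E(C_i)$; since an odd cycle admits no perfect matching, this is impossible. Alternatively, one may simply invoke Lemma~\ref{sc-lemma}(c), which gives $M\notin S[I(G)z]\supset K[I(G)z]$ directly. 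In either case $M$ is the required witness, and the proof concludes.
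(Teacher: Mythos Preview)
Your proof is correct. The implications (ii)$\Leftrightarrow$(iii) and (iii)$\Rightarrow$(i) are handled exactly as in the paper (via Theorem~\ref{apr16-02} and Theorem~\ref{mar3-01}, respectively). The difference lies in how the remaining implication is closed.

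The paper proves (i)$\Rightarrow$(ii) in two steps: first it invokes Theorem~\ref{ehrh-mon} (the criterion $\Delta_r(B)=1\Leftrightarrow\overline{K[Iz]}=A(\mathcal{P})$) together with Proposition~\ref{mar9-01-1g} (the explicit formula $\Delta_r(B)=2^{c_1-1}$) to force $c_1\le 1$, and only then uses the witness $M_{C_1,C_2}$ to establish normality of $I(G_i)$. You instead prove (i)$\Rightarrow$(iii) directly by the contrapositive: a single Hochster configuration of $G$ (which exists whenever $I(G)$ is not normal, by Theorem~\ref{apr16-02}) already yields $M_{C_1,C_2}\in A(\mathcal{P}_G)\setminus K[I(G)z]$, via the same convex-combination and matching argument the paper uses in its second step. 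The passage (iii)$\Rightarrow$(ii) then comes for free from the equivalence you already proved. Your route is strictly more economical---it avoids the lattice-index machinery of Theorem~\ref{ehrh-mon} and Proposition~\ref{mar9-01-1g} altogether---while the paper's route has the advantage of making explicit the arithmetic obstruction $\Delta_r(B)=2^{c_1-1}$ that separates $K[I(G)z]$ from its normalization inside $A(\mathcal{P}_G)$.
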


\begin{proof} (i)$\Rightarrow$(ii) Let
$\mathcal{G}(I(G))=\{t^{v_1},\ldots,t^{v_m}\}$ be the generating set of
$I(G)$, let $B$ be the matrix with column vectors
$(v_1,1),\ldots,(v_m,1)$ and let $r$ be the rank of $B$. 
By Theorem~\ref{ehrh-mon}, one has that $\Delta_r(B)=1$. On the 
other hand, denoting the number of non-bipartite components of $G$ by
$c_1$, by Proposition~\ref{mar9-01-1g}, one has that
$\Delta_r(B)=2^{c_1-1}$ if $G$ is not bipartite and $\Delta_r(B)=1$ if
$G$ is bipartite. Thus, $c_1\leq 1$, that is, $G$ has at most $1$ one
connected non-bipartite component $G_i$. To show that $I(G_i)$ is
normal we argue by contradiction assuming that $I(G_i)$ is not normal.
Then, by the normality criterion of Theorem~\ref{apr16-02}, there is a
Hochster configuration $C_1,C_2$ of $G_i$. Then, by
Lemma~\ref{sc-lemma}, the monomial $M_{C_1,C_2}$ associated to
$C_1,C_2$ is not in the Rees algebra $S[I(G)z]$ because $C_1,C_2$ is
also a Hochster configuration of $G$, and consequently
$M_{C_1,C_2}$ is not in the subring $K[I(G)z]$. Note that 
$$
a:=\bigg(\sum_{t_i\in C_1\cup C_2}e_i\bigg)\in\left(\frac{|C_1|+|C_2|}{2}\right)\mathcal{P}_G,
$$
where $|C_i|$ is the length of $C_i$, $i=1,2$, because
$(2a)/(|C_1|+|C_2|)\in\mathcal{P}_{G}$. Thus,
$$
M_{C_1,C_2}=t^az^{\frac{|C_1|+|C_2|}{2}}\in A(\mathcal{P}_G),
$$
and consequently, $K[I(G)z)\subsetneq A(\mathcal{P}_G)$, a
contradiction.

(ii)$\Rightarrow$(i) As $I(G_i)$ is normal, by the normality criterion
of Theorem~\ref{apr16-02}, $G_i$ has no Hochster configurations. Then, $G$ has no Hochster
configurations. Indeed, if $C_1,C_2$ is a Hochster
configuration of $G$, then both odd cycles must lie in $G_i$, that is, $C_1,C_2$ is a
Hochster configuration of $G_i$, a contradiction. Then, again by
Theorem~\ref{apr16-02}, $I(G)$ is normal. Hence, by the generalized
descent of normality criterion (Theorem~\ref{mar3-01}),
$K[I(G)z]=A(\mathcal{P}_G)$. 

(ii)$\Leftrightarrow$(iii) This follows readily from the normality criterion
of Theorem~\ref{apr16-02}. 
\end{proof}

\section{Normalization index, reduction numbers, and Noether
normalizations}\label{reduction-section}

We begin by presenting some degree bounds for normalizations of Rees
algebras of monomial ideals and for normalizations of monomial
subrings. 

\begin{theorem}{\rm(Bruns, Vasconcelos, -\,,
\cite[Theorem~3.3(b)]{BVV})}\label{may4-23}
Let $I$ be a uniform squarefree monomial ideal of the polynomial ring $S=K[t_1,\ldots,t_s]$ 
generated by monomials of the same degree $k\geq 1$. 
Then, the normalization $\overline{S[Iz]}$ of the Rees algebra
$S[Iz]$ is generated as an $S[Iz]$-algebra, by
elements $g\in S[z]$ of $z$-degree at most $s-1$.
\end{theorem}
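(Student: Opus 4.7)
The plan is to identify $\overline{S[Iz]}$ with the semigroup ring of ${\rm RC}(I)\cap\mathbb{Z}^{s+1}$ and to show, by a Carath\'eodory argument, that every integer point $(a,n)$ of this cone admits a decomposition $(a,n)=(b,m)+(a',n')$ where $(b,m)$ is a nonnegative integer combination of the cone generators $\{e_i,(v_j,1)\}$ (so that $t^{b}z^{m}\in S[Iz]$) and $n'\leq s-1$. This exhibits the typical monomial $t^{a}z^n$ of $\overline{S[Iz]}$ as a product of an element of $S[Iz]$ with a monomial $g=t^{a'}z^{n'}\in S[z]$ of $z$-degree at most $s-1$, which is exactly the conclusion of the theorem.

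First I would record the description $\overline{S[Iz]}=K[\{t^{a}z^n\mid (a,n)\in{\rm RC}(I)\cap\mathbb{Z}^{s+1}\}]$, which follows from Eq.~\eqref{rees-cone-eq} together with the standard identification of the normalization of an affine monomial subring with the semigroup ring generated by the Hilbert basis of its cone (cf.\ Proposition~\ref{hilb-basis-charw}). Second, I would exploit the uniformity hypothesis dimensionally: since $\langle{1},v_j\rangle=k$ for every $j$, the lifted vectors $(v_j,1)$ all lie on the $(s-1)$-dimensional affine flat $\{x_{s+1}=1,\ x_1+\cdots+x_s=k\}\subset\mathbb{R}^{s+1}$, so their linear span has dimension at most $s$.

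Third, given $(a,n)\in{\rm RC}(I)\cap\mathbb{Z}^{s+1}$, I would apply Carath\'eodory's theorem to write
$$
(a,n)\ =\ \sum_{j\in T}c_j(v_j,1)+\sum_{i\in U}d_i e_i,\qquad c_j,d_i>0,\ |T|+|U|\leq s+1.
$$
The key claim is $|T|\leq s$. If $U\neq\emptyset$ this is immediate from $|T|+|U|\leq s+1$; if $U=\emptyset$, then $(a,n)$ lies in the sub-cone generated by $\{(v_j,1)\}_{j=1}^m$, whose linear span has dimension at most $s$ by the second step, and a further application of Carath\'eodory inside this sub-cone again yields $|T|\leq s$.

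Finally, I would split off the integer parts of the $c_j$: letting
$(a',n'):=\sum_{j\in T}(c_j-\lfloor c_j\rfloor)(v_j,1)+\sum_{i\in U}d_i e_i$, one has $(a,n)=\sum_{j\in T}\lfloor c_j\rfloor(v_j,1)+(a',n')$. The first summand is a nonnegative integer combination of cone generators and hence corresponds to a monomial in $S[Iz]$; the remainder $(a',n')$ lies in ${\rm RC}(I)$ by construction and is integral as the difference of two integer vectors, so it defines a monomial in $\overline{S[Iz]}$. Since $n'=\sum_{j\in T}(c_j-\lfloor c_j\rfloor)$ is a nonnegative integer that is strictly smaller than $|T|\leq s$, we conclude $n'\leq s-1$. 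The main obstacle here is extracting the sharp bound $s-1$ rather than the weaker $s$: a naive Carath\'eodory in the full $(s+1)$-dimensional Rees cone only gives $n'\leq s$, and squeezing out one more dimension is precisely where the uniformity hypothesis enters, through the observation that the lifted generators $(v_j,1)$ are confined to an $(s-1)$-dimensional affine flat.
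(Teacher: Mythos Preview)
Your Carath\'eodory argument is correct and complete. The survey itself does not supply a proof of Theorem~\ref{may4-23}, only the citation to \cite{BVV}, so there is no ``paper proof'' to compare line by line; what can be said is that your approach is the standard one for this type of degree bound (cf.\ the reference to Carath\'eodory's theorem before Proposition~\ref{aug6-05}) and is in the spirit of the arguments in \cite{BVV} and \cite{hilbsam}.

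Two remarks are worth making. First, your argument never invokes the squarefree hypothesis: uniformity alone forces the $(v_j,1)$ into the hyperplane $\{\sum_i x_i=kx_{s+1}\}$, which is all you need to cap $|T|$ at $s$. So you have in fact proved the bound $s-1$ for \emph{any} uniform monomial ideal, a statement the paper reaches only later (and in sharper form $s-\lfloor s/k\rfloor$) via Proposition~\ref{mar2-02}. Second, your splitting $(a,n)=\sum_{j\in T}\lfloor c_j\rfloor(v_j,1)+(a',n')$ actually shows that $\overline{S[Iz]}$ is generated over $S[Iz]$ as a \emph{module} by monomials of $z$-degree at most $s-1$, which is stronger than the algebra generation asserted in the statement.

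A minor stylistic point: in the case $U=\emptyset$ you do not need a second pass of Carath\'eodory. The first application already selects \emph{linearly independent} generators, and since the $(v_j,1)$ span a subspace of dimension at most $s$, at most $s$ of them can be linearly independent; hence $|T|\leq s$ comes for free.
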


The next result complements \cite[Theorem~3.3(a)]{BVV} for the class 
of uniform monomial ideals.

\begin{proposition}\cite[Proposition~2.5]{hilbsam}\label{mar2-02} If
$I$ is a uniform monomial ideal of $S$ 
generated by monomials of degree $k$ and $2\leq k<s$, then the 
normalization $\overline{S[Iz]}$ 
is generated as an $S[Iz]$-module by elements $g\in S[z]$ 
of $z$-degree at most $s-\lfloor s/k\rfloor$. 
\end{proposition}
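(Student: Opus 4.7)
My plan is to prove this by induction on $n$, reducing to the identity $\overline{I^n}=I\cdot\overline{I^{n-1}}$ for every $n>d:=s-\lfloor s/k\rfloor$. Iterating this identity yields $\overline{I^n}=I^{n-d}\cdot\overline{I^d}$, which is exactly the statement that $\overline{S[Iz]}$ is generated as an $S[Iz]$-module by elements of $z$-degree at most $d$. Thus I fix $n>d$ and a monomial $t^a\in\overline{I^n}$, and aim to produce $v_j\in\mathcal{G}(I)$ with $v_j\le a$ componentwise and $t^{a-v_j}\in\overline{I^{n-1}}$.

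To this end I would work inside the Rees cone ${\rm RC}(I)\subset\mathbb{R}^{s+1}$ of Eq.~\eqref{rees-cone-eq}. By Eq.~\eqref{jun21-21} the hypothesis $t^a\in\overline{I^n}$ says exactly that $(a,n)\in{\rm RC}(I)$. The conic version of Carath\'eodory's theorem applied to the generating set $\mathcal{A}'$ produces a representation
$$(a,n)=\sum_{i\in P}\alpha_i(e_i,0)+\sum_{j\in T}\beta_j(v_j,1),$$
with $\alpha_i,\beta_j>0$, $\sum_{j\in T}\beta_j=n$, and the vectors $\{(e_i,0)\}_{i\in P}\cup\{(v_j,1)\}_{j\in T}$ linearly independent in $\mathbb{R}^{s+1}$; in particular $|P|+|T|\le s+1$. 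If some $\beta_{j_0}\ge 1$, I am done: the $i$-th entry of $a-v_{j_0}$ equals $\alpha_i+(\beta_{j_0}-1)(v_{j_0})_i+\sum_{j\ne j_0}\beta_j(v_j)_i\ge 0$, so $v_{j_0}\le a$, and replacing $\beta_{j_0}$ by $\beta_{j_0}-1$ in the displayed equation exhibits $(a-v_{j_0},n-1)\in{\rm RC}(I)$; hence $t^{a-v_{j_0}}\in\overline{I^{n-1}}$ by Eq.~\eqref{jun21-21}.

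The main obstacle is the remaining case, in which no Carath\'eodory representation of $(a,n)$ admits any $\beta_j\ge 1$. Then $|T|\ge n+1$, since $|T|$ positive reals strictly less than $1$ cannot sum to $n$; combined with $|P|+|T|\le s+1$ this gives $|P|\le s-n$, and the Carath\'eodory linear independence forces the vectors $\{v_j\}_{j\in T}$ to be affinely independent inside the degree-$k$ hyperplane $\{x\in\mathbb{R}^s:\sum_i x_i=k\}$, with each $\mathrm{supp}(v_j)\subset\mathrm{supp}(a)$. Upgrading these constraints to the sharp bound $n\le s-\lfloor s/k\rfloor$---the desired contradiction---is the step where the degree-$k$ uniformity must enter in an essential way. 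A naive affine-dimension count in the simplex $\{x\in\mathbb{R}_+^s:|x|=k\}$ only yields $|T|\le s$ and hence the weaker bound $n\le s-1$; the extra $\lfloor s/k\rfloor$ saving has to come from a finer packing argument that exploits the fact that the $|T|\ge n+1$ fractional weights $\beta_j$ distribute a total mass $kn$ across at most $s$ coordinates of $a$, with each $v_j$ contributing exactly $k$ units. Implementing this packing step---equivalently, a sharp Carath\'eodory-rank bound for the Rees cone of a $k$-uniform monomial ideal---is the heart of the proof; once it is in place, the induction closes.
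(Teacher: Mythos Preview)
The paper is a survey and does not give its own proof of this proposition; it merely cites \cite[Proposition~2.5]{hilbsam}. So there is no in-paper argument to compare against. What I can evaluate is whether your outline is complete, and it is not: the step you yourself label ``the heart of the proof'' is missing, and the sketch you offer for it does not obviously work.

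Your reduction to $\overline{I^n}=I\,\overline{I^{n-1}}$ for $n>d:=s-\lfloor s/k\rfloor$ is correct, as is the Carath\'eodory setup in ${\rm RC}(I)$ and the treatment of the easy case $\beta_{j_0}\ge 1$. The gap is entirely in the ``bad'' case. From $|T|\ge n+1$ and linear independence you get $|T|\le s$ (since the $(v_j,1)$ lie in the $s$-dimensional hyperplane $|x|=ky$), hence $n\le s-1$; this is essentially Theorem~\ref{may3-23}/Proposition~\ref{aug6-05} and no more. The ``packing'' heuristic you propose---that mass $kn$ spread over $s$ coordinates should force an extra $\lfloor s/k\rfloor$ saving on $|T|$---does not follow from the data you have isolated. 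Affine independence of $|T|$ degree-$k$ lattice vectors in the simplex $\{|x|=k\}$ imposes no bound better than $|T|\le s$: already for $I=(t_1^k,\ldots,t_s^k)$ the $s$ vectors $ke_1,\ldots,ke_s$ are affinely independent, and at the boundary value $n=d$ one has the genuine Carath\'eodory representation $a=\sum_j(n/s)\,ke_j$ with all $\beta_j<1$ and $|P|=0$. So there is no purely rank-based or support-based obstruction that turns your inequalities into $|T|\le s-\lfloor s/k\rfloor+1$.

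What is actually needed is an argument that, for $n>d$, produces a specific $v_j\le a$ with $(a-v_j,n-1)\in{\rm RC}(I)$, and this requires using the integrality of $a$ together with the degree-$k$ constraint in a sharper way than a Carath\'eodory dimension count. For instance, the inequality $|a|\ge kn>s(k-1)$ does force some $a_i\ge k$, but (as the example $I=(t_1t_2)$ already shows) a large single coordinate does not by itself yield a generator $v_j\le a$. The genuine proof must combine the facet description of ${\rm NP}(I)$ (Proposition~\ref{np-qa}) with the uniform degree to locate a vertex $v_j$ on the minimal face of ${\rm NP}(I)$ through $a/n$ satisfying $v_j\le a$; your outline does not supply this, and ``a finer packing argument'' is not a substitute for it.
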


To give a variant of Theorem~\ref{may4-23} for the subalgebra
generated by monomials of the same degree $k$, recall that a homogeneous
polynomial of degree $ik$ has \textit{normalized degree} $i$.  

\begin{theorem}{\rm(Bruns, Vasconcelos, -\,,
\cite[Theorem~3.4]{BVV})}\label{feb13-25}
Suppose that $F$ consists of monomials of the same degree $k$, and
let $A=K[F]\subset S=K[t_1,\ldots,t_s]$ be the monomial subring
generated by $F$. Then, the normalization $\overline{A}$ of $A$ is
generated as an $A$-module, and thus as an $A$-algebra, by elements 
$g\in S$ of normalized degree at most $\dim(A)-1$.
\end{theorem}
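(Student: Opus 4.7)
The plan is to reduce the statement to a purely combinatorial fact about writing lattice points in the cone $\mathbb{R}_+\mathcal{A}$ using Carath\'eodory's theorem, and then exploit the uniform-degree hypothesis to control the normalized degrees of the summands.

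First, I would identify $\overline{A}$ monomially. Setting $\mathcal{A}=\{v_1,\ldots,v_m\}\subset\mathbb{N}^s$, the standard description of the integral closure of a monomial subring (see \cite[Theorem~9.1.1]{monalg-rev}) gives
\[
\overline{A}=K[\{t^a\mid a\in\mathbb{R}_+\mathcal{A}\cap\mathbb{Z}\mathcal{A}\}].
\]
Since every $v_i$ has total degree $k$, any $a\in\mathbb{Z}\mathcal{A}$ is an integer combination of the $v_i$'s, so $k$ divides $|a|:=a_1+\cdots+a_s$; hence the normalized degree $|a|/k$ of $t^a$ is a well-defined non-negative integer. By Herzog's dimension formula (Theorem~\ref{dim-toric}), $\dim(A)={\rm rank}(v_1,\ldots,v_m)=\dim_{\mathbb{R}}\mathbb{R}\mathcal{A}$, which I will denote by $r_0$.

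Next, I would take an arbitrary monomial $t^a\in\overline{A}$ and apply Carath\'eodory's theorem to the conical hull $\mathbb{R}_+\mathcal{A}$ to write
\[
a\;=\;\sum_{k=1}^{r}\lambda_k v_{i_k},\qquad \lambda_k>0,
\]
with $v_{i_1},\ldots,v_{i_r}$ linearly independent, so $r\leq r_0=\dim(A)$. Splitting each $\lambda_k=\lfloor\lambda_k\rfloor+\{\lambda_k\}$ with $\{\lambda_k\}\in[0,1)$, set
\[
\beta\;=\;\sum_{k=1}^{r}\lfloor\lambda_k\rfloor v_{i_k}\in\mathbb{N}\mathcal{A},\qquad a'\;=\;a-\beta\;=\;\sum_{k=1}^{r}\{\lambda_k\}v_{i_k}.
\]
Then $t^\beta\in A$, while $a'\in\mathbb{R}_+\mathcal{A}$ and $a'=a-\beta\in\mathbb{Z}\mathcal{A}$, so $t^{a'}\in\overline{A}$.

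The final step is the degree bookkeeping, which is where the uniform-degree hypothesis enters. Since each $v_{i_k}$ has coordinate sum $k$, one has $|a'|=k\sum_{k=1}^{r}\{\lambda_k\}$, so the normalized degree of $t^{a'}$ is $\sum_{k=1}^{r}\{\lambda_k\}$. This quantity is a non-negative integer (because $t^{a'}\in\overline{A}$, as observed above), and it is strictly less than $r\leq\dim(A)$ because each $\{\lambda_k\}<1$; therefore
\[
\sum_{k=1}^{r}\{\lambda_k\}\;\leq\;\dim(A)-1.
\]
Writing $t^a=t^\beta\cdot t^{a'}$ exhibits $t^a$ as an $A$-multiple of a monomial of normalized degree at most $\dim(A)-1$, which proves that $\overline{A}$ is generated as an $A$-module (hence as an $A$-algebra) by such elements. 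The only subtlety in the argument is ensuring that $\sum\{\lambda_k\}$ is actually an integer; this is the point where uniform degree $k$ is essential, since without it the decomposition $a=\beta+a'$ need not yield an $a'$ of integer normalized degree, and the Carath\'eodory remainder might have to be merged with further pieces.
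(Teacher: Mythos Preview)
Your argument is correct and is essentially the original Carath\'eodory-based proof from \cite[Theorem~3.4]{BVV}; the present paper is a survey and does not reproduce a proof of this statement, so there is nothing further to compare. One cosmetic point: you use $k$ both for the common degree of the monomials in $F$ and as the summation index in $\sum_{k=1}^r\lambda_k v_{i_k}$, which makes the line ``$|a'|=k\sum_{k=1}^r\{\lambda_k\}$'' read awkwardly; rename the index to avoid the clash.
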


By the following result of Vasconcelos, the $I$-filtration ${\mathcal
F}=\{\overline{I^n}\}_{n=0}^\infty$ of a monomial ideal $I$ of $S$ stabilizes 
for $n\geq s=\dim(S)$. 

\begin{theorem}{\rm(Vasconcelos \cite[Theorem~7.58]{bookthree})}\label{may3-23} Let $I$ be a monomial
ideal of $S$. Then
$$\overline{I^n}=I\, \overline{I^{n-1}}\mbox{ for all }n\geq s=\dim(S).
$$  
\end{theorem}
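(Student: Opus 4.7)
The plan is to recast the algebraic identity as a lattice-point statement about the Rees cone $\mathrm{RC}(I)=\mathbb{R}_+\mathcal{A}'$ of Eq.~\eqref{rees-cone-eq} and then resolve it via Carath\'eodory's theorem for conic combinations.

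Since the inclusion $I\overline{I^{n-1}}\subseteq\overline{I^n}$ is automatic and both sides are monomial ideals, it is enough to prove that every monomial $t^a\in\overline{I^n}$ lies in $I\overline{I^{n-1}}$. By the Newton-polyhedron description in Proposition~\ref{icd}, the membership $t^a\in\overline{I^n}$ is equivalent to $(a,n)\in\mathrm{RC}(I)\cap\mathbb{Z}^{s+1}$, while $t^a\in I\overline{I^{n-1}}$ is equivalent to the existence of an index $j$ such that $a\geq v_j$ coordinatewise and $(a-v_j,n-1)\in\mathrm{RC}(I)\cap\mathbb{Z}^{s+1}$. Thus the task is to produce such an index $j$ whenever $n\geq s$.

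Applying Carath\'eodory's theorem to $(a,n)\in\mathrm{RC}(I)$ in $\mathbb{R}^{s+1}$, I would write
\[
(a,n)=\sum_{i\in I_0}\alpha_i\,e_i+\sum_{j\in J_0}\beta_j\,(v_j,1),
\]
with $\alpha_i,\beta_j>0$, with the listed vectors linearly independent (so $|I_0|+|J_0|\leq s+1$), and with $\sum_{j\in J_0}\beta_j=n\geq s$ obtained from the last coordinate. In the principal case $|J_0|\leq s$, averaging yields some $j^*\in J_0$ with $\beta_{j^*}\geq n/|J_0|\geq 1$; replacing $\beta_{j^*}$ by $\beta_{j^*}-1$ in the same decomposition exhibits $(a-v_{j^*},n-1)$ as a non-negative combination of elements of $\mathcal{A}'$, and hence as a lattice point of $\mathrm{RC}(I)$. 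The componentwise bound $a\geq\beta_{j^*}v_{j^*}\geq v_{j^*}$, valid because $v_{j^*}\in\mathbb{N}^s$ and $\beta_{j^*}\geq 1$, yields the divisibility $a\geq v_{j^*}$.

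The main obstacle is the extremal case $|J_0|=s+1$ with $|I_0|=0$ and $n=s$: the Carath\'eodory decomposition becomes unique, and a priori every $\beta_j$ may lie strictly in $(0,1)$, in which case averaging gives only $\max_j\beta_j\geq s/(s+1)<1$. I would handle this by exploiting the integrality $(a,n)\in\mathbb{Z}^{s+1}$ together with Cramer's rule: each $\beta_j$ is a rational multiple of $1/|\det M|$, where $M$ is the $(s+1)\times(s+1)$ matrix whose columns are the vectors $(v_j,1)_{j\in J_0}$, and a careful arithmetic argument shows that the constraints ``each $\beta_j\in(0,1)\cap(1/|\det M|)\mathbb{Z}$'', $\sum\beta_j=s$, and $a\in\mathbb{Z}^s$ are jointly incompatible unless an alternative representation of $(a,n)$ exists in which at least one of the $e_i$'s appears, thereby reducing the problem back to the principal case treated above. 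Completing this boundary analysis is the last and most delicate step of the proof.
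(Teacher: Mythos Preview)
Your reduction to the Rees cone and the Carath\'eodory argument for the principal case $|J_0|\le s$ are correct. The difficulty, however, lies precisely in the extremal case you flag, and your proposed resolution via Cramer's rule does not work: the integrality constraints you list are \emph{compatible}, so no contradiction is available. Take $s=2$, $I=(t_1^4,t_2^4,t_1t_2)$, $v_1=(4,0)$, $v_2=(0,4)$, $v_3=(1,1)$. The vectors $(v_j,1)$ are independent with $|\det M|=8$, and for the lattice point $(a,n)=((4,3),2)$ the unique decomposition inside $\mathbb{R}_+\{(v_j,1)\}$ is $(\beta_1,\beta_2,\beta_3)=(7/8,\,5/8,\,1/2)$: each entry in $(0,1)\cap\tfrac{1}{8}\mathbb{Z}$, summing to $s=2$, with $a\in\mathbb{Z}^2$. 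No ``joint incompatibility'' holds, and nothing in your outline explains how to produce the required $v_j$.

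What actually closes the gap is showing that a decomposition with $|J_0|\le s$ can always be \emph{chosen}; in the example, $(4,3,2)=\tfrac{5}{3}\,e_2+\tfrac{2}{3}(v_1,1)+\tfrac{4}{3}(v_3,1)$ works and gives $\beta_3\ge 1$. In general one proves: for every $p\in\mathrm{NP}(I)$ there exist $c\in\mathbb{R}_+^s$ and a convex combination $\sum_{j\in J_0}\mu_jv_j$ with $|J_0|\le s$ such that $p=c+\sum_{j\in J_0}\mu_jv_j$. For this, pick a vertex $c^*$ of the polytope $C_p=\{c\ge 0:\,p-c\in\mathrm{conv}(v_1,\dots,v_m)\}$. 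If $c^*\neq 0$ then at least one facet inequality of $\mathrm{conv}(v_j)$ is active at $p-c^*$, so $p-c^*$ lies on a face of dimension $\le s-1$ and is a convex combination of at most $s$ of the $v_j$; if $C_p=\{0\}$ then for every $i$ the direction $-e_i$ is infeasible at $p$, which again forces $p$ onto a proper face of $\mathrm{conv}(v_j)$. Applying this to $p=a/n$ and scaling by $n$ gives $\sum_{j\in J_0}\beta_j=n$ with $|J_0|\le s$, whence $\max_j\beta_j\ge n/s\ge 1$, and your principal-case argument finishes. Without this lemma (or an equivalent device) the proof is incomplete.
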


\begin{definition}\label{norm-index-def}
The \textit{normalization index} of an ideal $I$ of $S$ is the smallest non-negative
integer ${\rm N}={\rm N}(I)$ such that $\overline{I^{n+1}}=I\,
\overline{I^{n}}$
for all $n\geq{\rm N}$. 
\end{definition}

\begin{corollary}{\rm(Reid, Roberts and Vitulli \cite{vitulli})}\label{vitulli-roberts} If $I$ is a
monomial ideal of $S=K[t_1,\ldots,t_s]$ and $I^n$ is integrally closed 
for all $n\leq s-1$, then $I$ is normal.
\end{corollary}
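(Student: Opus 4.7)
The plan is to deduce this corollary directly from Vasconcelos' stabilization result (Theorem~\ref{may3-23}) combined with a short induction on the exponent $n$. The goal is to show that $I^n = \overline{I^n}$ for every $n \geq 1$, which is exactly the definition of $I$ being normal.

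First I would dispose of the small exponents: for $1 \leq n \leq s-1$, the hypothesis says directly that $I^n$ is integrally closed, so $I^n = \overline{I^n}$ in that range. This handles the base cases of the induction.

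Next I would induct on $n \geq s$. Fix $n \geq s$ and assume $I^k = \overline{I^k}$ for all $1 \leq k < n$. By Theorem~\ref{may3-23} applied to the monomial ideal $I$, one has $\overline{I^n} = I \cdot \overline{I^{n-1}}$ since $n \geq s = \dim(S)$. The inductive hypothesis (valid because $n-1 \geq s-1 \geq 1$, so either $n-1 \leq s-1$ and the hypothesis of the corollary applies, or $n-1 \geq s$ and the previous induction step applies) gives $\overline{I^{n-1}} = I^{n-1}$. Substituting yields
\[
\overline{I^n} = I \cdot I^{n-1} = I^n,
\]
completing the inductive step.

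There is essentially no obstacle here: the entire content of the corollary is packaged inside Theorem~\ref{may3-23}, which reduces the problem of checking normality of a monomial ideal (an infinite collection of equalities $I^n = \overline{I^n}$) to a finite check for $n \leq s-1$. The only mild care needed is to make sure the induction is set up so that the two regimes (the hypothesis for $n \leq s-1$ and the recursion from Vasconcelos' theorem for $n \geq s$) meet cleanly at $n = s$, which they do because $\overline{I^s} = I \cdot \overline{I^{s-1}} = I \cdot I^{s-1} = I^s$ uses only the given integral closedness of $I^{s-1}$.
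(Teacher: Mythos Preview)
Your proof is correct and matches the paper's intended approach: the corollary is placed immediately after Theorem~\ref{may3-23} precisely because it follows by the induction you wrote, and the paper gives no separate proof.
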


Using Carath\'eodory's theorem for cones \cite[Corollary 7.1i]{Schr},
one can show the following related result. 

\begin{proposition}\cite[Proposition~2.1]{hilbsam}\label{aug6-05} Let
$I=(t^{v_1},\ldots,t^{v_m})$ be a monomial ideal and let $r_0$ be the
rank of its incidence matrix. If $v_1,\ldots,v_m$ lie in 
a hyperplane of $\mathbb{R}^s$ not containing the 
origin, then $\overline{I^n}=I\, \overline{I^{n-1}}$ for $n\geq r_0$.  
\end{proposition}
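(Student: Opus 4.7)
The plan is to prove the non-trivial inclusion $\overline{I^n}\subseteq I\cdot\overline{I^{n-1}}$ for $n\geq r_0$, since the reverse inclusion $I\cdot\overline{I^{n-1}}\subseteq\overline{I^n}$ follows immediately from the fact that integral closure is a closure operation on the lattice of ideals of $S$.

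My starting point is the Newton polyhedron description from Eq.~\eqref{jun21-21}: $t^a\in\overline{I^n}$ if and only if $a/n\in{\rm NP}(I)=\mathbb{R}_+^s+{\rm conv}(v_1,\ldots,v_m)$, equivalently,
\[
a=\sum_{i=1}^m\lambda_i v_i+w,\qquad \lambda_i\geq 0,\quad \sum_{i=1}^m\lambda_i=n,\quad w\in\mathbb{R}_+^s.
\]
I would then apply Carath\'eodory's theorem for cones \cite[Corollary 7.1i]{Schr} to the vector $\sum_i\lambda_iv_i\in\mathbb{R}_+\{v_1,\ldots,v_m\}$, producing a rewriting $\sum_i\lambda_iv_i=\sum_{i\in J}\lambda_i'v_i$ supported on a linearly independent subset $\{v_i\}_{i\in J}$, so $|J|\leq r_0$ and $\lambda_i'\geq 0$.

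The hyperplane hypothesis enters crucially at this stage. After rescaling, the defining linear form of the hyperplane may be written as $\langle b,x\rangle=1$, so that $\langle b,v_i\rangle=1$ for every $i$. Pairing both sides of the truncated Carath\'eodory representation with $b$ gives $\sum_{i\in J}\lambda_i'=\langle b,\sum_i\lambda_iv_i\rangle=\sum_{i}\lambda_i=n$. Hence the new coefficients still sum to $n$, and since there are at most $r_0\leq n$ of them, at least one of them, say $\lambda_j'$, satisfies $\lambda_j'\geq 1$ by pigeonhole.

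To conclude, subtract $v_j$ to obtain
\[
a-v_j=(\lambda_j'-1)v_j+\sum_{i\in J,\,i\neq j}\lambda_i'v_i+w,
\]
which exhibits $a-v_j$ as a nonnegative combination of $v_1,\ldots,v_m$ with coefficient sum $n-1$ plus a vector $w\in\mathbb{R}_+^s$. Thus $(a-v_j)/(n-1)\in{\rm NP}(I)$, and since $a-v_j\in\mathbb{Z}^s$, Eq.~\eqref{jun21-21} yields $t^{a-v_j}\in\overline{I^{n-1}}$; consequently $t^a=t^{v_j}\cdot t^{a-v_j}\in I\cdot\overline{I^{n-1}}$. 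The only genuine obstacle is ensuring that Carath\'eodory's truncation preserves the constraint $\sum\lambda_i=n$, which is precisely where the assumption that the hyperplane misses the origin is indispensable, since it furnishes a linear functional that simultaneously normalizes every generator $v_i$.
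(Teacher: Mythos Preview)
Your proof is correct and follows precisely the approach the paper indicates, namely Carath\'eodory's theorem for cones together with the hyperplane normalization to preserve the coefficient sum. One cosmetic point: where you write ``since $a-v_j\in\mathbb{Z}^s$'', you actually need $a-v_j\in\mathbb{N}^s$ for $t^{a-v_j}$ to be a monomial of $S$; this is already implicit in your displayed expression for $a-v_j$ as a nonnegative combination, but it is worth stating.
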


A similar result to Corollary~\ref{vitulli-roberts} holds for Simis
edge ideals of graphs. Recall that a monomial ideal $I$ is called a \textit{Simis ideal} 
if $I^{(n)}=I^n$ for all $n\geq 1$. 

\begin{proposition}{\rm(Bahiano \cite[Corollary 2.13]{bahiano})} 
Let $I=I(G)$ be the edge ideal of a graph $G$. Then, $I$ is a Simis
ideal if and only if $I^{n}=I^{(n)}$ for all $n\leq {\rm
ht}(I)$.
\end{proposition}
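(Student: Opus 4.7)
The forward implication is tautological, so the whole content of the proposition lies in the converse. I would argue the converse by contrapositive: assuming $G$ is non-bipartite, I produce an integer $n\leq{\rm ht}(I)$ with $I^n\neq I^{(n)}$. Combined with Theorem~\ref{ntf-bipartite}, which identifies bipartite graphs as exactly those for which $I=I(G)$ is a Simis ideal, this yields the claimed equivalence.

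The key ingredient is Theorem~\ref{dao-etal} of Dao, De Stefani, Grifo, Huneke and N\'u\~nez-Betancourt, which states that for non-bipartite $G$, the smallest integer $r_0$ with $I^{r_0}\neq I^{(r_0)}$ equals $(g+1)/2$, where $g$ is the odd girth of $G$. The entire proof therefore reduces to verifying the purely graph-theoretic inequality
$$
r_0=\frac{g+1}{2}\leq\alpha_0(G)={\rm ht}(I).
$$

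To establish this inequality I would let $C_g$ be a shortest odd cycle in $G$ (which is automatically induced, since any chord would shorten it) and observe that for any minimum vertex cover $C$ of $G$ the restriction $C\cap V(C_g)$ is a vertex cover of $C_g$; hence $\alpha_0(G)\geq\alpha_0(C_g)=\lceil g/2\rceil=(g+1)/2=r_0$. Applying Theorem~\ref{dao-etal} with $n=r_0\leq{\rm ht}(I)$ gives $I^{r_0}\neq I^{(r_0)}$, contradicting the hypothesis that $I^n=I^{(n)}$ for all $n\leq{\rm ht}(I)$.

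There is no significant obstacle here, since the argument merely couples Theorem~\ref{dao-etal} with monotonicity of the covering number under taking subgraphs. Historically, however, Bahiano's original proof predates the Dao et al. odd girth theorem, so he presumably proceeded more directly---exhibiting for each induced odd cycle of length $g$ the explicit monomial $(\prod_{t_i\in C_g}t_i)\cdot t_j$ (for a suitable $t_j$) that witnesses the failure of $I^{(r_0)}=I^{r_0}$ at the stage $r_0=(g+1)/2\leq\alpha_0(G)$---but the formulation above is conceptually cleaner given the tools now available.
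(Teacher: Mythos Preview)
Your argument is correct. The paper itself does not supply a proof of this proposition; it merely records Bahiano's result with a citation, so there is no ``paper's proof'' against which to compare your approach in detail.

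That said, a brief remark on the route you chose is in order. Your proof is clean and entirely self-contained within the tools the paper has already assembled: Theorem~\ref{ntf-bipartite} reduces the question to bipartiteness, Theorem~\ref{dao-etal} pins the first failure of $I^{r_0}=I^{(r_0)}$ at $r_0=(g+1)/2$ where $g$ is the odd girth, and the elementary inequality $\alpha_0(G)\geq\alpha_0(C_g)=(g+1)/2$ (which you justify correctly via restriction of a minimum vertex cover to the induced odd cycle $C_g$) closes the loop. Your own historical caveat is accurate: Bahiano's 2004 paper predates the Dao--De~Stefani--Grifo--Huneke--N\'u\~nez-Betancourt result, so his original argument necessarily proceeds more directly, essentially by exhibiting the witnessing monomial associated to a shortest odd cycle. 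What your approach buys is economy---once Theorem~\ref{dao-etal} is available, the proposition becomes a one-line consequence of the covering-number inequality---at the cost of invoking a chronologically later and logically heavier result.
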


Let us recall the Dedekind--Mertens formula and present an application
to reduction numbers. If $S$ is a 
commutative ring and  $f=f(t) 
\in S[t]$ is a polynomial,
say 
$$f = a_0 + a_1t+\cdots + a_mt^m,$$
the {\em content\/} of $f$ is the
$S$-ideal $(a_0,\ldots, a_m)$. It is denoted  by $c(f)$.  
Given another polynomial $g$, the 
{\em Gaussian ideal\/} of $f$
and $g$ is the $S$-ideal $c(f g)$. 
By the classical lemma of Gauss: If 
$S$ is a principal ideal domain, then
$$
c(f g)= c(f)c(g).
$$
\quad In general, these two ideals are different but one has the
following formula due to Dedekind--Mertens 
(see \cite{Edwards} and \cite{Northcott}):
\begin{equation}\label{contentformula}
c(f g)c(g)^m = c(f) c(g)^{m+1}.
\end{equation}
\quad We consider the ideal $c(f g)$ in the case when $f$, $g$
are generic polynomials. Multiplying both
sides of Eq.~\eqref{contentformula} by $c(f)^m$, we get
\begin{equation}\label{contentformula2}
c(f g)[c(f) c(g)]^m = c(f) c(g) [c(f) c(g)]^{m}.
\end{equation}
\quad This last formula is sharp 
in terms of the exponent $m=\deg(f)$ \cite{CVV}. To make this connection, 
we recall the notion of a {\em reduction\/}
of an ideal. 
\begin{definition}\label{reduction-number-def}
Let $S$ be a ring and $I$ an ideal. A
{\em reduction\/} of $I$ is an 
ideal $J \subset I$ such that, for some nonnegative integer $r$, 
the equality 
$I^{r+1}=
J I^r$
holds. The smallest such integer is the {\em reduction
number\/} $r_J(I)$ of $I$ 
{\it relative\/} to $J$. 
The {\it reduction number\/} 
$r(I)$ of $I$ is the 
smallest reduction number among all 
reductions $J$ of $I$. 
\end{definition}

Note that Eq.~\eqref{contentformula2} says that  $J=c(f g)$ is a reduction 
for $I = c(f) c(g)$, and that the reduction number is at most 
$\min\{\deg(f), \deg(g)\}$. 

\begin{theorem}\cite[Part $0$, p.~3]{Edwards}\label{sharpind} 
Let $S=K[x_0, \ldots, x_m,y_0, \ldots, y_n]$ be a polynomial ring 
over a field $K$ and let $h_q= \sum_{i+j=q}x_iy_j$. 
Then 
$$
A=K[h_0,h_1,\ldots,h_{m+n}]\hookrightarrow 
K[\mathcal{K}_{m+1,n+1}]=K[\{x_iy_j\vert\, 0\leq i\leq m,\ 0\leq j\leq n\}]
$$
is a Noether normalization of $K[\mathcal{K}_{m+1,n+1}]$.
\end{theorem}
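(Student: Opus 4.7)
The plan is to apply Eq.~\eqref{contentformula2} to the generic polynomials $f=\sum_{i=0}^{m}x_it^i$ and $g=\sum_{j=0}^{n}y_jt^j$ in $S[t]$ in order to show that $R:=K[\mathcal{K}_{m+1,n+1}]$ is module-finite over $A$, and then to match Krull dimensions via Proposition~\ref{dimkg} so as to deduce algebraic independence of the $h_q$'s. Since $fg=\sum_{q=0}^{m+n}h_qt^q$, the contents are $c(fg)=(h_0,\ldots,h_{m+n})$ and $c(f)c(g)=(\{x_iy_j\})$. Setting $I=c(f)c(g)$ and $J=c(fg)$, and noting that $\deg(f)=m$, the reformulation in Eq.~\eqref{contentformula2} gives the $S$-ideal identity $JI^m=I^{m+1}$.

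The next step is to transfer this identity to the subring level. I would equip $S$ with the $\mathbb{Z}^2$-grading $\deg(x_i)=(1,0)$, $\deg(y_j)=(0,1)$. Then $I$ is bihomogeneous, each $h_q$ and each $x_iy_j$ has bidegree $(1,1)$, and $R$ is precisely the subring of $S$ consisting of bihomogeneous elements of balanced bidegree. Let $R_d$ denote the $K$-span of products of $d$ generators $x_iy_j$; this is a finite-dimensional $K$-vector space equal to $R\cap S_{(d,d)}$, and $R=\bigoplus_{d\geq 0}R_d$. For any generator $M=\prod_{k=0}^{m}x_{i_k}y_{j_k}\in R_{m+1}$ of $I^{m+1}$, the identity $JI^m=I^{m+1}$ lets us write $M=\sum_q h_qr_q$ with $r_q\in I^m\subset S$. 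Since $M$ has bidegree $(m+1,m+1)$ and each $h_q$ has bidegree $(1,1)$, we may isolate the bidegree-$(m,m)$ component of each $r_q$ without disturbing the equation; the resulting components lie in $I^m\cap S_{(m,m)}=R_m$. Hence $M\in h_0R_m+\cdots+h_{m+n}R_m\subseteq A\cdot R_m$.

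An induction on $d$ then shows $R_d\subseteq A\cdot(R_0+\cdots+R_m)$ for all $d\geq 0$. The case $d\leq m$ is trivial, and for $d\geq m+1$ any generator of $R_d$ can be factored as $N\cdot N'$ with $N\in R_{m+1}$ and $N'\in R_{d-m-1}$; the previous paragraph puts $N\in A\cdot R_m$, so $N\cdot N'\in A\cdot R_{d-1}$, and the inductive hypothesis finishes the step. Consequently $R=A\cdot R_0+A\cdot R_1+\cdots+A\cdot R_m$ is finitely generated as an $A$-module, so $R$ is integral over $A$.

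Finally, the graph $\mathcal{K}_{m+1,n+1}$ is connected and bipartite on $m+n+2$ vertices, so Proposition~\ref{dimkg} gives $\dim R=m+n+1$. Integrality of $R$ over $A$ then forces $\dim A=m+n+1$, and since $A$ is generated over $K$ by the $m+n+1$ elements $h_0,\ldots,h_{m+n}$, these must be algebraically independent. Hence $A\cong K[T_0,\ldots,T_{m+n}]$ is a polynomial ring and the inclusion $A\hookrightarrow R$ is a Noether normalization. The one technical point to be careful about is the descent from the $S$-ideal identity $JI^m=I^{m+1}$ to a relation taking place inside $R$; the bidegree bookkeeping in the second paragraph is what makes this passage rigorous.
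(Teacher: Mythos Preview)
Your proof is correct. The paper itself does not supply a proof of this theorem; it is quoted from Edwards and stated without argument, so there is no ``paper's proof'' to compare against. Your approach---using the Dedekind--Mertens identity in the form of Eq.~\eqref{contentformula2} to obtain $JI^m=I^{m+1}$, then passing to the balanced bigraded piece to deduce module-finiteness of $R$ over $A$, and finally matching dimensions via Proposition~\ref{dimkg}---is a clean and fully self-contained argument that stays within the tools the survey makes available. The bigrading bookkeeping you flag as the delicate step is handled correctly: the ideal $I^m$ is bihomogeneous, so taking the bidegree-$(m,m)$ component of each $r_q$ keeps it inside $I^m$, and $I^m\cap S_{(m,m)}$ is indeed the $K$-span $R_m$ of degree-$m$ monomials in the $x_iy_j$'s. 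One cosmetic remark: when you say $R$ is ``precisely the subring of $S$ consisting of bihomogeneous elements of balanced bidegree,'' what you mean (and what you actually use) is that $R=\bigoplus_{d\geq 0}S_{(d,d)}$; individual elements of $R$ need not be bihomogeneous, but the graded pieces $R_d=S_{(d,d)}$ are what drive the induction.
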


The next theorem determines the 
reduction number of $I = c(f)c(g)$ relative to $J= c(f g)$.

\begin{theorem}{\rm(Corso, Vasconcelos, -\,, \cite[Theorem~2.1]{CVV})}
Let $S=K[x_0, \ldots, x_m,y_0, \ldots, y_n]$ be a polynomial ring 
over a field $K$ and let $f,g\in S[t]$ be the generic polynomials 
\[
f=x_0+x_1t+\cdots+x_mt^m 
\quad \mbox{and} \quad g=y_0+y_1t+\cdots+y_nt^n
\]  
with $n\geq m$. If $I = c(f)c(g)$ and $J= c(f g)$, 
then $r_J(I) = m$ and the factor $c(f)^m$ in the content formula
of Eq.~\eqref{contentformula} is sharp.
\end{theorem}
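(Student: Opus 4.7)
The plan is to obtain the inequality $r_J(I)\leq m$ from the content formula directly, and to match it with a lower bound by passing to the special fiber ring and invoking Theorem~\ref{sharpind}. Multiplying both sides of $c(fg)c(g)^m=c(f)c(g)^{m+1}$ by $c(f)^m$ yields $JI^m=I^{m+1}$, so $r_J(I)\leq m$; establishing the sharpness of $c(f)^m$ is therefore equivalent to the assertion $I^m\neq JI^{m-1}$.

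For the lower bound I would pass to the special fiber $\mathcal{F}(I)=\mathcal{R}(I)/\mathfrak{m}\mathcal{R}(I)$, where $\mathfrak{m}=(x_0,\ldots,x_m,y_0,\ldots,y_n)$; graded Nakayama then gives $r_J(I)=r_{\overline J}(\mathcal{F}(I))$, with $\overline J$ the image of $J$. Since $I$ is generated by the squarefree monomials $x_iy_j$, the fiber is canonically isomorphic to the edge subring $K[\mathcal{K}_{m+1,n+1}]$ of the complete bipartite graph on $m+n+2$ vertices. By Theorem~\ref{sharpind} the images of $h_0,\ldots,h_{m+n}$ form a Noether normalization of this subring by linear forms, and by Corollary~\ref{bowtie-criterion} (bipartite graphs satisfy the odd cycle condition vacuously) $K[\mathcal{K}_{m+1,n+1}]$ is normal, hence Cohen--Macaulay by Hochster's Theorem~\ref{hoch-theo}. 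Thus $h_0,\ldots,h_{m+n}$ is a regular sequence of degree-one elements, and the Cohen--Macaulay case of Theorem~\ref{reg-cm} identifies $r_{\overline J}(\mathcal{F}(I))$ with the top non-vanishing degree of the Artinian algebra $K[\mathcal{K}_{m+1,n+1}]/(h_0,\ldots,h_{m+n})$, equivalently with ${\rm reg}(K[\mathcal{K}_{m+1,n+1}])$.

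What remains, and what I expect to be the technical core, is the computation ${\rm reg}(K[\mathcal{K}_{m+1,n+1}])=m$. The Herzog--Hibi bound ${\rm reg}(K[G])\leq\beta_1(G)-1$ for connected bipartite graphs $G$ gives the inequality $\leq m$, since the matching number of $\mathcal{K}_{m+1,n+1}$ equals $m+1$. For the opposite inequality, I would compute the Hilbert series of the Segre product: its $h$-polynomial equals $\sum_{i=0}^{m}\binom{m}{i}\binom{n}{i}t^i$, whose degree-$m$ coefficient is $\binom{n}{m}\neq 0$ since $m\leq n$. This shows that $K[\mathcal{K}_{m+1,n+1}]/(h_0,\ldots,h_{m+n})$ is non-vanishing in degree $m$, completing the equality $r_J(I)=m$ and the sharpness claim.
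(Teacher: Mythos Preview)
Your argument is correct. The survey paper does not include a proof of this theorem (it merely cites \cite{CVV}), but it places Theorem~\ref{sharpind} immediately before the statement precisely because the Noether normalization $K[h_0,\ldots,h_{m+n}]\hookrightarrow K[\mathcal{K}_{m+1,n+1}]$ is the key ingredient, and you use it exactly as intended: passing to the special fiber, identifying it with the Segre product, and reading off the reduction number as the top degree of the Artinian quotient by the linear system of parameters $h_0,\ldots,h_{m+n}$.

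Two minor remarks. First, the appeal to the Herzog--Hibi bound is unnecessary (and anachronistic relative to \cite{CVV}): you already have $r_J(I)\leq m$ from the content formula, and your $h$-polynomial computation $\sum_{i=0}^m\binom{m}{i}\binom{n}{i}t^i$ gives the exact degree~$m$, so both inequalities for ${\rm reg}(K[\mathcal{K}_{m+1,n+1}])$ follow directly. Second, your invocation of Theorem~\ref{reg-cm} is slightly indirect; what you really use is that modding out a Cohen--Macaulay standard graded algebra by a regular sequence of linear forms produces an Artinian ring whose Hilbert series is the $h$-polynomial, so its top nonzero degree equals the regularity. This is standard, but Theorem~\ref{reg-cm} as stated is about quotients $S/I$ of a polynomial ring rather than this specific fact.
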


\section{Multiplicities, Hilbert functions and $\mathfrak{m}$-fullness
of monomial ideals}\label{multiplicity-section}
Let $S=K[t_1,\ldots,t_s]$ be a polynomial ring over a field $K$,
$s\geq 2$, and let $I$ a zero-dimensional monomial ideal of $S$
minimally generated by $t^{v_1},\ldots,t^{v_m}$. We may assume that 
$v_i=a_ie_i$ for $i=1,\ldots,s$, where $a_i\in\mathbb{N}_+$ and $|v_i|=a_i$ for $i=1,\ldots,s$. Setting 
$\alpha_0=(1/|v_1|,\ldots,1/|v_s|)$, we may also assume that
$\{v_{s+1},\ldots,v_r\}$ is the set of all $v_i$ such that $\langle
v_i,\alpha_0\rangle<1$. Consider the lattice polytopes in $\mathbb{R}^s$ given
by 
$$
\mathcal{P}_0:={\rm conv}(v_1,\ldots,v_r),\quad \Delta:={\rm
conv}(0,v_1,\ldots,v_s)=\{x\mid x\geq0;\, \langle
x,\alpha_0\rangle\leq 1\},
$$
and the so-called \textit{region} $\mathcal{P}:=\Delta\setminus\mathcal{P}_0$
\textit{defined} by $I$ \cite[p.~93]{mc8}, see
Example~\ref{example-multi}. 

The
\textit{multiplicity} of $I$, denoted $e(I)$, is the integer
$$
e(I):=\lim_{n\rightarrow\infty}\frac{\ell(S/I^n)}{n^s}\,d!,
$$
where $\ell(\cdot)$ is the length function, see \cite[Propositions~2.8
and 7.7]{bookthree}. 

The multiplicity of $I$ is related to volumes: 

\begin{proposition}{\rm(Teissier \cite[p.~131]{Teissier})}\label{teissier-e}
If $\mathcal{P}=\Delta\setminus\mathcal{P}_0$ is the region of
$\mathbb{R}^s$ 
defined by $I$, then
$$
e(I)={s!}\,{\rm vol}(\mathcal{P}).
$$
\end{proposition}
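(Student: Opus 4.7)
The plan is to compute the Hilbert--Samuel multiplicity
\[
e(I)=\lim_{n\to\infty}\frac{s!\,\ell(S/I^n)}{n^s}
\]
by identifying $\ell(S/I^n)$ asymptotically with a lattice point count in the dilates $n\mathcal{P}$, and then extracting the leading coefficient via Ehrhart theory applied to the lattice polytopes $\Delta$ and $\mathcal{P}_0$.

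First, I would reduce from the ordinary powers to the integral closure filtration. Since $S$ is a normal Noetherian ring, the integral closure $\overline{S[Iz]}$ is a finitely generated graded $S[Iz]$-module, so the graded quotient $\bigoplus_{n\geq 0}\overline{I^n}/I^n$ is finitely generated over $S[Iz]$ and has Krull dimension at most $s$. The standard theory of Hilbert functions of finitely generated graded modules over the $(s+1)$-dimensional Rees algebra $S[Iz]$ then gives $\ell(\overline{I^n}/I^n)=O(n^{s-1})$, whence
\[
e(I)=\lim_{n\to\infty}\frac{s!\,\ell(S/\overline{I^n})}{n^s}.
\]

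Next, I would apply Proposition~\ref{icd} (equivalently, Eq.~\eqref{jun21-21}) to describe $\overline{I^n}$ combinatorially: its exponent set is exactly $n\cdot{\rm NP}(I)\cap\mathbb{N}^s$, so
\[
\ell(S/\overline{I^n})=\bigl|\mathbb{N}^s\setminus n\cdot{\rm NP}(I)\bigr|.
\]
Because $I$ contains the pure powers $t_i^{a_i}$ (so $v_i=a_ie_i$ for $i=1,\ldots,s$), the complement $\mathbb{R}_+^s\setminus{\rm NP}(I)$ is bounded, contained in $\Delta$, and agrees with $\mathcal{P}=\Delta\setminus\mathcal{P}_0$ up to its $(s-1)$-dimensional boundary. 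This yields
\[
\ell(S/\overline{I^n})=|n\Delta\cap\mathbb{Z}^s|-|n\mathcal{P}_0\cap\mathbb{Z}^s|+O(n^{s-1}),
\]
the error absorbing the $O(n^{s-1})$ lattice points on the shared boundary of $\mathcal{P}_0$ and $\mathcal{P}$.

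Finally, since $\Delta$ and $\mathcal{P}_0$ are lattice polytopes in $\mathbb{R}^s$ with $\Delta$ full-dimensional, Ehrhart's theorem (recalled before Proposition~\ref{ehrhart-normal}) makes $n\mapsto|n\Delta\cap\mathbb{Z}^s|$ a polynomial in $n$ of degree $s$ with leading coefficient ${\rm vol}(\Delta)$, while $n\mapsto|n\mathcal{P}_0\cap\mathbb{Z}^s|$ is a polynomial of degree $\dim\mathcal{P}_0\leq s$ whose $n^s$-coefficient is the $s$-dimensional volume ${\rm vol}(\mathcal{P}_0)$ (vanishing if $\dim\mathcal{P}_0<s$). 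Combining,
\[
\ell(S/\overline{I^n})=\bigl({\rm vol}(\Delta)-{\rm vol}(\mathcal{P}_0)\bigr)n^s+O(n^{s-1})={\rm vol}(\mathcal{P})\,n^s+O(n^{s-1}),
\]
and multiplying by $s!/n^s$ and letting $n\to\infty$ gives $e(I)=s!\,{\rm vol}(\mathcal{P})$. The main obstacle is the estimate $\ell(\overline{I^n}/I^n)=O(n^{s-1})$: though conceptually clear from the module-finiteness of $\overline{S[Iz]}$ over $S[Iz]$, it requires a careful appeal to the degree-drop behaviour of Hilbert polynomials of finitely generated graded modules over the Rees algebra. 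The remaining combinatorial steps --- identification of $\overline{I^n}$ with lattice points in $n\cdot{\rm NP}(I)$ via Proposition~\ref{icd}, and the Ehrhart asymptotics for lattice polytopes --- are routine.
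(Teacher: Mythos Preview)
The paper does not prove this proposition; it merely cites Teissier. Your argument is correct and is the standard one.

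Two remarks that sharpen it. For the reduction to the integral closure filtration, your claim $\dim\bigl(\overline{S[Iz]}/S[Iz]\bigr)\le s$ is right (the conductor is a nonzero ideal in the domain $S[Iz]$), but pulling the degree bound on $\ell(\overline{I^n}/I^n)$ out of this is awkward because $R_0=S$ is not Artinian. The cleaner route is the one you allude to at the end: module-finiteness gives a constant $c$ with $\overline{I^n}\subset I^{\,n-c}$ for $n\ge c$, and then
\[
\ell(\overline{I^n}/I^n)\le \ell(I^{\,n-c}/I^n)=\ell(S/I^n)-\ell(S/I^{\,n-c})=O(n^{s-1}),
\]
a first difference of the degree-$s$ Hilbert--Samuel polynomial.

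For the lattice-point step, your $O(n^{s-1})$ boundary correction is in fact zero: one has ${\rm NP}(I)\cap\Delta=\mathcal{P}_0$ exactly, hence $\mathbb{R}_+^s\setminus{\rm NP}(I)=\Delta\setminus\mathcal{P}_0=\mathcal{P}$ as sets and $\ell(S/\overline{I^n})=E_\Delta(n)-E_{\mathcal{P}_0}(n)$ for all $n$. (Given $x=p+z\in\Delta$ with $p\in\mathcal{P}_0$ and $0\neq z\ge 0$, extend the ray from $p$ through $x$ until it meets the facet ${\rm conv}(a_1e_1,\ldots,a_se_s)\subset\mathcal{P}_0$; convexity places $x\in\mathcal{P}_0$.) These exact identities appear later in the paper as Propositions~\ref{incredibles} and~\ref{may29-1-05}.
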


\begin{example}\label{example-multi} Consider the ideal 
$I=(t_1^6,\,t_2^5,\,t_1^2t_2^2,\,t_1^3t_2)$. Let 
$\mathcal{P}=\Delta\setminus\mathcal{P}_0$ be the region defined by
$I$ depicted in 
Figure~\ref{figure}, where $\Delta={\rm
conv}(0,\, 6e_1,\, 5e_2)$ and $\mathcal{P}_0={\rm conv}(6e_1,\, 5e_2,\,
(2,2),(3,1))$. Then, ${\rm vol}(\mathcal{P})={\rm vol}(\Delta)-{\rm
vol}(\mathcal{P}_0)=15-5=10$ and $e(I)=2!{\rm vol}(\mathcal{P})=20$.
\begin{figure}[ht]
\begin{tikzpicture}[scale = 0.5]
\draw[thick,->] (-1,0)--(8,0) node[right,below] {}; 
\foreach \x/\xtext in {1/1, 2/2, 3/3, 4/4, 5/5, 6/6} 
\draw[shift={(\x,0)}] (0pt,0.5pt)--(0pt,-0.5pt) node[below] {$\xtext$};
%
\foreach \y/\ytext in {1/1, 2/2, 3/3, 4/4, 5/5} 
\draw[shift={(0,\y)}] (0.5pt,0pt)--(-0.5pt,0pt) node[left] {$\ytext$};
\draw[thick,->] (0,-1)--(0,7) node[left,above] {}; 
\node[below] at (-0.5,-0.1){$0$};
\draw[black,thick,-] (0,5) -- (6,0); 
\draw[black,thick,-] (0,5) -- (2,2); 
\draw[black,thick,-] (2,2) -- (3,1); 
\draw[black,thick,-] (3,1) -- (6,0); 

\node[color=black,right] at (-0.2,4.5) {$\cdot$};
\node[color=black,right] at (-0.1,4) {$\cdot$};
\node[color=black,right] at (-0.2,3.7) {$\cdot$};
\node[color=black,right] at (0.1,3.4) {$\cdot$};
\node[color=black,right] at (0.1,3) {$\cdot$};
\node[color=black,right] at (-0.22,3) {$\cdot$};
\node[color=black,right] at (0.3,2.9) {$\cdot$};
\node[color=black,right] at (-0.2,2.8) {$\cdot$};
\node[color=black,right] at (0.7,2.8) {$\cdot$};
\node[color=black,right] at (0.23,2.7) {$\cdot$};
\node[color=black,right] at (-0.1,2.5) {$\cdot$};
\node[color=black,right] at (-0.2,2.6) {$\cdot$};
\node[color=black,right] at (0.7,2.5) {$\cdot$};
\node[color=black,right] at (0.1,2.2) {$\cdot$};
\node[color=black,right] at (0.8,2.1) {$\cdot$};
\node[color=black,right] at (1.3,2.1) {$\cdot$};
\node[color=black,right] at (-0.2,2.0) {$\cdot$};
\node[color=black,right] at (1.0,2.0) {$\cdot$};
\node[color=black,right] at (0.1,1.8) {$\cdot$};
\node[color=black,right] at (-0.1,1.8) {$\cdot$};
\node[color=black,right] at (1.5,1.8) {$\cdot$};
\node[color=black,right] at (0.35,1.7) {$\cdot$};
\node[color=black,right] at (0.3,1.5) {$\cdot$};
\node[color=black,right] at (1.7,1.5) {$\cdot$};
\node[color=black,right] at (2.1,1.5) {$\cdot$};
\node[color=black,right] at (1.7,1.3) {$\cdot$};
\node[color=black,right] at (1.3,1.3) {$\cdot$};
\node[color=black,right] at (0.1,1.1) {$\cdot$};
\node[color=black,right] at (2.3,1.1) {$\cdot$};
\node[color=black,right] at (1.7,1) {$\cdot$};
\node[color=black,right] at (0.3,0.9) {$\cdot$};
\node[color=black,right] at (2.2,0.9) {$\cdot$};
\node[color=black,right] at (2.4,0.8) {$\cdot$};
\node[color=black,right] at (1.5,0.7) {$\cdot$};
\node[color=black,right] at (2.1,0.7) {$\cdot$};
\node[color=black,right] at (2.5,0.5) {$\cdot$};
\node[color=black,right] at (3.5,0.5) {$\cdot$};
\node[color=black,right] at (0,0.5) {$\cdot$};
\node[color=black,right] at (1.0,0.4) {$\cdot$};
\node[color=black,right] at (3,0.4) {$\cdot$};
\node[color=black,right] at (1.2,0.4) {$\cdot$};
\node[color=black,right] at (2.7,0.3) {$\cdot$};
\node[color=black,right] at (2.3,0.3) {$\cdot$};
\node[color=black,right] at (2,0.3) {$\cdot$};
\node[color=black,right] at (4,0.2) {$\cdot$};
\node[color=black,right] at (0.1,0.2) {$\cdot$};
\node[color=black,right] at (0.5,0.2) {$\cdot$};
\node[color=black,right] at (0.9,0.2) {$\cdot$};

\node[color=black,right] at (0.7,1) {$\mathcal{P}$};
\node[color=black,right] at (2.5,1.7) {$\mathcal{P}_{0}$};
\draw[fill=black] (0,5) circle (3pt);
\draw[fill=black] (6,0) circle (3pt);
\draw[fill=black] (2,2) circle (3pt);
\draw[fill=black] (3,1) circle (3pt);
\end{tikzpicture}
\caption{Region defined by 
$I=(t_1^6,\,t_2^5,\,t_1^2t_2^2,\,t_1^3t_2)$.}\label{figure}
\end{figure}
\end{example}

\begin{proposition}\cite[Proposition~1.7]{mc8}
If $p={\rm vol}(\mathcal{P}_0)/{\rm vol}(\Delta)$, then 
$$
e(I)=(1-p)|v_1|\cdots|v_s|=|v_1|\cdots|v_s|-s!{\rm
vol}(\mathcal{P}_0).
$$
\end{proposition}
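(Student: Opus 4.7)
The plan is to derive both equalities directly from Teissier's formula (Proposition~\ref{teissier-e}) applied to the region $\mathcal{P}=\Delta\setminus\mathcal{P}_0$. First I would observe that, by construction, $\mathcal{P}_0\subset\Delta$: every vertex $v_i$ of $\mathcal{P}_0$ satisfies $\langle v_i,\alpha_0\rangle\leq 1$ together with $v_i\geq 0$, so $v_i\in\Delta$; convexity of $\Delta$ then gives $\mathcal{P}_0\subset\Delta$. Consequently, the volume is additive:
$$
{\rm vol}(\mathcal{P})={\rm vol}(\Delta)-{\rm vol}(\mathcal{P}_0).
$$

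Next I would compute ${\rm vol}(\Delta)$ explicitly. Since $v_i=a_ie_i$ with $a_i=|v_i|$, the polytope $\Delta={\rm conv}(0,v_1,\ldots,v_s)$ is the simplex with vertices $0,a_1e_1,\ldots,a_se_s$, and its Euclidean volume is the well-known expression
$$
{\rm vol}(\Delta)=\frac{a_1\cdots a_s}{s!}=\frac{|v_1|\cdots|v_s|}{s!}.
$$
This can also be verified from the determinant formula for the volume of a simplex, or from the defining inequalities $x\geq 0$, $\langle x,\alpha_0\rangle\leq 1$ by a change of coordinates $y_i=x_i/a_i$ that maps $\Delta$ onto the standard simplex.

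Combining the previous two displays with Proposition~\ref{teissier-e} gives
$$
e(I)=s!\,{\rm vol}(\mathcal{P})=s!\,{\rm vol}(\Delta)-s!\,{\rm vol}(\mathcal{P}_0)=|v_1|\cdots|v_s|-s!\,{\rm vol}(\mathcal{P}_0),
$$
which is the second equality in the statement. For the first equality, I would substitute the definition $p={\rm vol}(\mathcal{P}_0)/{\rm vol}(\Delta)$ to write
$$
s!\,{\rm vol}(\mathcal{P}_0)=p\cdot s!\,{\rm vol}(\Delta)=p\,|v_1|\cdots|v_s|,
$$
so that $e(I)=|v_1|\cdots|v_s|-p\,|v_1|\cdots|v_s|=(1-p)|v_1|\cdots|v_s|$. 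There is no real obstacle here: the proposition is essentially a repackaging of Teissier's volume formula using the explicit combinatorial description of the simplex $\Delta$ defined by $I$. The only mild point worth verifying carefully is the containment $\mathcal{P}_0\subset\Delta$ used to split the volume, which is immediate from the selection rule $\langle v_i,\alpha_0\rangle\leq 1$ that defined the vertices $v_1,\ldots,v_r$ of $\mathcal{P}_0$.
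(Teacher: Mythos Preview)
Your proof is correct and follows essentially the same approach as the paper: apply Teissier's formula $e(I)=s!\,{\rm vol}(\mathcal{P})$, split ${\rm vol}(\mathcal{P})={\rm vol}(\Delta)-{\rm vol}(\mathcal{P}_0)$, use ${\rm vol}(\Delta)=|v_1|\cdots|v_s|/s!$, and substitute the definition of $p$. Your version is a bit more explicit about why $\mathcal{P}_0\subset\Delta$ and how the simplex volume is computed, but the argument is the same.
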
 

\begin{proof} Let $\mathcal{P}=\Delta\setminus\mathcal{P}_0$ be the
region defined by $I$. Then, by Proposition~\ref{teissier-e}, we get 
\begin{align*}
e(I)&=s!{\rm vol}(\mathcal{P})=s!\left[{\rm
vol}(\Delta)-{\rm vol}(\mathcal{P}_0)\right]=s!\left[\frac{|v_1|
\cdots|v_s|}{s!}-{\rm vol}(\mathcal{P}_0)\right]\\
 &=(1-p)|v_1|\cdots|v_s|,
\end{align*}
and consequently $e(I)=s!\left[{|v_1|
\cdots|v_s|}/{s!}-{\rm vol}(\mathcal{P}_0)\right]=|v_1|\cdots|v_s|-s!{\rm
vol}(\mathcal{P}_0)$.
\end{proof}

Thus, computing the multiplicity $e(I)$ of $I$ is equivalent to computing either ${\rm
vol(\mathcal{P}_0})$ or $p$. The volume ${\rm
vol(\mathcal{P}_0})$ of $\mathcal{P}_0$ can be computed with \textit{Normaliz}
\cite{Bruns-Volume,normaliz2}. Vasconcelos et al. \cite{mc8} found an
alternative method to approach the computation of $p$, i.e., the
computation of ${\rm vol}(\mathcal{P}_0)$ as a 
fraction of ${\rm vol}(\Delta)$ \cite[Section~2]{mc8},
\cite[Section~7.3]{bookthree}. The \text{multiplicity}  
function in \textit{Macaulay}$2$ \cite{mac2} gives us a handy way to
compute $e(I)$. 

The {\it Hilbert function\/} of the filtration 
${\mathcal F}=\{\overline{I^n}\}_{n=0}^\infty$ is defined as
$$
f(n)=\ell(S/\overline{I^n})=\dim_K(S/\overline{I^n});\ \ \
n\in\mathbb{N}\setminus\{0\};\ \ f(0)=0.
$$
\quad The \textit{Newton polyhedron} of $I$ is given by 
$\mathbb{R}_+^s+{\rm conv}(v_1,\ldots,v_m)$.

\begin{proposition}\cite{hilbsam}\label{incredibles} 
$\ell(S/\overline{I^n})=|\mathbb{N}^s\setminus n\mathcal{Q}|$, $n\geq
1$, 
where $\mathcal{Q}$ is the Newton polyhedron of $I$.
\end{proposition}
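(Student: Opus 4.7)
The plan is to identify a $K$-basis of $S/\overline{I^n}$ coming from monomials outside the ideal and then count it.

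By Eq.~\eqref{jun21-21}, the integral closure $\overline{I^n}$ is a monomial ideal, and its set of generators is $\{t^a\mid a\in n\mathcal{Q}\cap\mathbb{N}^s\}$, where $\mathcal{Q}=\mathbb{R}_+^s+\mathrm{conv}(v_1,\ldots,v_m)$ is the Newton polyhedron of $I$. The first key observation I would record is that $\mathcal{Q}$ is ``upward closed'' in the sense that $\mathcal{Q}+\mathbb{R}_+^s=\mathcal{Q}$; this is immediate from its definition. Consequently $n\mathcal{Q}+\mathbb{N}^s\subset n\mathcal{Q}$ for every $n\geq 1$.

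Using this, I would upgrade the description of the generating set to a membership criterion: for any $a\in\mathbb{N}^s$, one has $t^a\in\overline{I^n}$ if and only if $a\in n\mathcal{Q}$. The ``if'' direction would follow because such $t^a$ is already one of the listed generators. For the ``only if'' direction, since $\overline{I^n}$ is monomial, if $t^a\in\overline{I^n}$ then some generator $t^b$ with $b\in n\mathcal{Q}\cap\mathbb{N}^s$ divides $t^a$, so $a=b+c$ with $c\in\mathbb{N}^s$, and then $a\in n\mathcal{Q}$ by the upward closure above.

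Finally, because $\overline{I^n}$ is a monomial ideal, the residue classes of the monomials $t^a$ with $a\in\mathbb{N}^s$ and $t^a\notin\overline{I^n}$ form a $K$-vector space basis of $S/\overline{I^n}$. By the previous paragraph these are precisely the $t^a$ with $a\in\mathbb{N}^s\setminus n\mathcal{Q}$, giving
$$
\ell(S/\overline{I^n})\,=\,\dim_K(S/\overline{I^n})\,=\,|\mathbb{N}^s\setminus n\mathcal{Q}|.
$$
The standing assumption that $I$ is zero-dimensional ensures $\overline{I^n}$ is $\mathfrak{m}$-primary and hence that this count is finite. There is no real obstacle in the argument; the only subtle point—and the step I would flag explicitly—is converting the statement about the generating set of $\overline{I^n}$ into the membership criterion via the absorption $n\mathcal{Q}+\mathbb{N}^s\subset n\mathcal{Q}$.
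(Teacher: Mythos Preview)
Your argument is correct. The paper does not give its own proof of this proposition; it simply cites the original source \cite{hilbsam}, so there is nothing to compare against, but the route you take---membership in $\overline{I^n}$ via $n\mathcal{Q}$ together with the upward closure $n\mathcal{Q}+\mathbb{N}^s\subset n\mathcal{Q}$, followed by the standard monomial-basis count---is exactly the natural one and goes through without issue.
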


The Hilbert function of $\mathcal F$ is a polynomial function of
degree $s$ that can be expressed as a difference of the Ehrhart
polynomials $E_{\Delta}$ and $E_{\mathcal{P}_0}$ of $\Delta$ and
$\mathcal{P}_0$, respectively.

\begin{proposition}\cite[Proposition~3.6]{hilbsam}\label{may29-1-05}
$f(n)=E_{\Delta}(n)-E_{\mathcal{P}_0}(n)$ for all 
$n\in\mathbb{N}$. 
\end{proposition}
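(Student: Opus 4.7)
My plan is as follows. By Proposition~\ref{incredibles}, $f(n)=|\mathbb{N}^s\setminus n\mathcal{Q}|$, where $\mathcal{Q}=\mathbb{R}_+^s+{\rm conv}(v_1,\ldots,v_m)$ is the Newton polyhedron of $I$, whereas $E_\Delta(n)-E_{\mathcal{P}_0}(n)=|n\Delta\cap\mathbb{Z}^s|-|n\mathcal{P}_0\cap\mathbb{Z}^s|$. The proposition therefore reduces to establishing the disjoint lattice-point partition
$$
n\Delta\cap\mathbb{Z}^s \;=\; (n\mathcal{P}_0\cap\mathbb{Z}^s)\,\sqcup\,(\mathbb{N}^s\setminus n\mathcal{Q})
$$
and then taking cardinalities; the case $n=0$ is immediate since $E_\Delta(0)=E_{\mathcal{P}_0}(0)=1$ and $f(0)=0$. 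For $n\geq 1$ the partition will follow from two polyhedral facts: (i) $\mathcal{Q}\cap\Delta=\mathcal{P}_0$, and (ii) $\mathbb{R}_+^s\setminus n\mathcal{Q}\subseteq n\Delta$.

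For (i) I would first prove the intermediate identity $\mathcal{Q}=\mathcal{P}_0+\mathbb{R}_+^s$, which lets one ``forget'' the generators $v_i$ with $\langle v_i,\alpha_0\rangle>1$ (necessarily $i>r$). The inclusion $\supseteq$ is trivial. For $\subseteq$ it suffices to absorb each such $v_i$: setting $c:=\langle v_i,\alpha_0\rangle>1$ and $\mu_j:=(v_i)_j/(a_jc)$ for $j=1,\ldots,s$, one has $\sum_j\mu_j=\langle v_i,\alpha_0\rangle/c=1$ and $\sum_j\mu_j v_j=v_i/c$, so $v_i/c\in{\rm conv}(v_1,\ldots,v_s)\subseteq\mathcal{P}_0$ and the decomposition $v_i=v_i/c+(c-1)(v_i/c)$ exhibits $v_i\in\mathcal{P}_0+\mathbb{R}_+^s$. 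Once $\mathcal{Q}=\mathcal{P}_0+\mathbb{R}_+^s$ is known, a facet analysis shows that $\mathcal{Q}$ is cut out by $\mathbb{R}_+^s$ together with the ``lower'' facet inequalities of $\mathcal{P}_0$ (those whose outer normals lie in $-\mathbb{R}_+^s$); the unique ``upper'' facet of $\mathcal{P}_0$---the hypotenuse $\langle x,\alpha_0\rangle=1$ that $\mathcal{P}_0$ shares with $\Delta$---is absorbed by the Minkowski addition. Intersecting $\mathcal{Q}$ with $\Delta$ reinstates exactly this missing inequality and recovers $\mathcal{P}_0$. The absence of other ``upper'' facets of $\mathcal{P}_0$ uses the distinguished role of the pure powers: for $\eta\in\mathbb{R}_+^s$ and $j>s$ (with $j\leq r$), the componentwise bound $\langle v_j,\eta\rangle\leq\langle v_j,\alpha_0\rangle\cdot\max_i a_i\eta_i<\max_i a_i\eta_i$ forces $\max_{\mathcal{P}_0}\langle\cdot,\eta\rangle$ to be attained only on $\{v_1,\ldots,v_s\}$.

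For (ii), if $x\in\mathbb{R}_+^s$ satisfies $c:=\langle x,\alpha_0\rangle>n$, put $y:=x/c=\sum_{j=1}^s(x_j/(a_jc))v_j\in{\rm conv}(v_1,\ldots,v_s)\subseteq\mathcal{P}_0$; then $x=(x-ny)+ny$ with $x-ny=\frac{c-n}{c}x\in\mathbb{R}_+^s$ exhibits $x\in\mathbb{R}_+^s+n\mathcal{P}_0=n\mathcal{Q}$. Combining (i) and (ii), each $a\in n\Delta\cap\mathbb{Z}^s$ is either in $n\mathcal{Q}\cap n\Delta\cap\mathbb{Z}^s=n\mathcal{P}_0\cap\mathbb{Z}^s$ by (i), or else in $\mathbb{N}^s\setminus n\mathcal{Q}$; conversely (ii) ensures that $\mathbb{N}^s\setminus n\mathcal{Q}\subseteq n\Delta$, and the two pieces are disjoint because $\mathcal{P}_0\subseteq\mathcal{Q}$, yielding the partition above.

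The main obstacle is the facet classification underlying (i)---namely, the passage from the Minkowski-sum identity $\mathcal{Q}=\mathcal{P}_0+\mathbb{R}_+^s$ to the intersection identity $\mathcal{Q}\cap\Delta=\mathcal{P}_0$. This is where the specific combinatorics of the generating set really enters, through the componentwise inequality that keeps the facets of $\mathcal{P}_0$ from having outer normals of mixed signs and pins down the hypotenuse as the unique ``upper'' facet. Once that identity is in place, everything else---the absorption step, the partition, and the $n=0$ boundary case---is routine Ehrhart-style bookkeeping.
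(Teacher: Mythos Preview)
Your approach is sound and the lattice-point partition
$n\Delta\cap\mathbb{Z}^s=(n\mathcal{P}_0\cap\mathbb{Z}^s)\sqcup(\mathbb{N}^s\setminus n\mathcal{Q})$
is exactly what is needed; the paper itself does not prove this proposition but simply cites \cite{hilbsam}, so no comparison of routes is possible. Step~(ii) and the absorption argument $\mathcal{Q}=\mathcal{P}_0+\mathbb{R}_+^s$ are correct as written.

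The facet analysis for (i) has a small lacuna: you dispose of outer normals $\eta\in\mathbb{R}_+^s$ but tacitly assume an upper/lower dichotomy for the facets of $\mathcal{P}_0$, leaving mixed-sign normals unaddressed. Fortunately your own inequality closes the gap. The bound $\langle v_j,\eta\rangle\le\langle v_j,\alpha_0\rangle\cdot\max_i a_i\eta_i$ holds for \emph{every} $\eta$ (term by term, $(v_j)_k\eta_k\le((v_j)_k/a_k)\max_i a_i\eta_i$ because $a_k\eta_k\le\max_i a_i\eta_i$ and $(v_j)_k\ge 0$), and the right side is strictly below $\max_i a_i\eta_i$ whenever that maximum is positive, i.e.\ whenever some $\eta_k>0$. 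Hence for any such $\eta$ the face on which $\langle\cdot,\eta\rangle$ is maximized over $\mathcal{P}_0$ contains only vertices among $v_1,\dots,v_s$; if it is a facet, dimension forces it to be the hypotenuse and $\eta\propto\alpha_0$. So mixed-sign facet normals cannot occur, and the passage from $\mathcal{Q}=\mathcal{P}_0+\mathbb{R}_+^s$ to $\mathcal{Q}\cap\Delta=\mathcal{P}_0$ is complete. Alternatively, one can bypass facets entirely: writing $x=p+q$ with $p=\sum_{i=1}^r\lambda_iv_i\in\mathcal{P}_0$, $q\ge 0$ and $\langle x,\alpha_0\rangle\le 1$, the constraint yields $\langle q,\alpha_0\rangle\le\sum_{j>s}\lambda_j(1-c_j)$; choosing $t\in[0,1]$ with $t\sum_{j>s}\lambda_j(1-c_j)=\langle q,\alpha_0\rangle$ and replacing each $t\lambda_j v_j$ (for $j>s$) by $\sum_{i\le s}t\lambda_j\,((v_j)_i/a_i)\,v_i$ produces a genuine convex combination of $v_1,\dots,v_r$ equal to $x$.
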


We describe two results of Vasconcelos et al. \cite{icmi} 
about $\mathfrak{m}$-full ideals. The first result concerns 
$\mathfrak{m}$-full $\mathfrak{m}$-primary monomial ideals in 
dimension two.

Let $S = K[t_1,t_2]$ be a polynomial ring over an infinite field $K$, let
$\mathfrak{m}=(t_1,t_2)$ and let
$I$ be an $\mathfrak{m}$-primary ideal of $S$ minimally generated by $n$
monomials, $n=\mu(I)$, that are listed lexicographically,
$$I=(t_1^{a_1},\,  t_1^{a_2}t_2^{b_{n-1}},\ldots,\,
t_1^{a_i}t_2^{b_{n-i+1}},\,  \ldots,\, 
t_1^{a_{n-1}}t_2^{b_2},\,  t_2^{b_1})
$$ 
with $a_1>\cdots>a_{n-1}>a_n:=0$, 
$b_1>\cdots>b_{n-1}>b_n:=0$. We assume that $I \neq
\mathfrak{m}^{n-1}$.   

The ideal $I$ is said to be {\em $\mathfrak{m}$-full} if
$(\mathfrak{m}I\colon f) = I$ for some $f\in\mathfrak{m}
\setminus\mathfrak{m}^2$ or equivalently $I$ is
$\mathfrak{m}$-full if $(I\colon\mathfrak{m})=(I\colon f)$
\cite[Proposition~14.1.6]{huneke-swanson-book}. 

A useful 
characterization of $\mathfrak{m}$-full ideals is the  
following result:

\begin{theorem}{\rm(Rees and Watanabe \rm\cite[Theorem 4]{JWat})}
\label{Reestheorem} If $I$ is an $\mathfrak{m}$-primary ideal of a
regular local ring $(S,\mathfrak{m})$ of dimension 
two, then $I$ is $\mathfrak{m}$-full if and only if for all ideals $I\subset J$, $\mu(J)\leq \mu(I)$.
\end{theorem}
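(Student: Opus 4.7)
The plan is to exploit the two-dimensional hypothesis by passing to $\bar S:=S/fS$ for a general $f\in\mathfrak{m}\setminus\mathfrak{m}^2$: since $\dim S=2$ and $f$ is part of a regular system of parameters, $\bar S$ is a DVR, so every ideal of $\bar S$ is principal. The basic tool I would establish first is the identity
\[
\mu(L)\;=\;\mu_{\bar S}(\bar L)\,+\,\dim_k(L:_S f)/(\mathfrak{m}L:_S f)
\]
valid for every ideal $L\subseteq S$, where $\bar L=(L+fS)/fS$. This arises from analysing the surjection $L/\mathfrak{m}L\twoheadrightarrow \bar L/\bar{\mathfrak{m}}\bar L$, whose kernel $(L\cap fS+\mathfrak{m}L)/\mathfrak{m}L$ is carried isomorphically onto $(L:f)/(\mathfrak{m}L:f)$ by division by the non-zero-divisor $f$. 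Since $\bar S$ is a DVR one has $\mu_{\bar S}(\bar L)\le 1$, with equality as soon as $L\not\subseteq fS$; coupled with the snake-lemma identity $\ell((L:f)/L)=\ell(S/(L+fS))$ coming from multiplication by $f$ on $S/L$, this gives tight control on $\mu(L)$.

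For the forward direction I would pick $f$ witnessing $\mathfrak{m}$-fullness of $I$, namely $(\mathfrak{m}I:f)=I$, chosen general enough that $I\not\subseteq fS$. The identity applied to $L=I$ collapses to $\mu(I)=1+\ell(S/(I+fS))$. For any ideal $J\supseteq I$, the surjection $(J:f)/J\twoheadrightarrow (J:f)/(\mathfrak{m}J:f)$ together with the containment $I+fS\subseteq J+fS$ yield
\[
\mu(J)\;\le\;1+\ell(S/(J+fS))\;\le\;1+\ell(S/(I+fS))\;=\;\mu(I).
\]

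For the converse I would argue by contrapositive: assuming $I$ is not $\mathfrak{m}$-full, I exhibit $J\supseteq I$ with $\mu(J)>\mu(I)$. The natural candidate is $J:=(\mathfrak{m}I:f)$ for a generic $f$, which properly contains $I$ by hypothesis. Two observations drive the argument. First, $J$ is itself $\mathfrak{m}$-full with respect to the same $f$, since the chain $(\mathfrak{m}J:f)\subseteq(\mathfrak{m}^2 I:f)\subseteq(\mathfrak{m}I:f)=J$ is immediate; so the forward-direction identity applied to $J$ gives $\mu(J)=1+\ell(S/(J+fS))$. Second, one needs $J+fS=I+fS$; granting this, $\mu(J)=1+\ell(S/(I+fS))$ strictly exceeds $\mu(I)=1+\dim_k(I:f)/(\mathfrak{m}I:f)$, because the failure of $\mathfrak{m}$-fullness makes $(\mathfrak{m}I:f)\supsetneq I$, whence $\dim_k(I:f)/(\mathfrak{m}I:f)<\ell((I:f)/I)=\ell(S/(I+fS))$.

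The main obstacle is proving $J+fS=I+fS$, and this is where $\dim S=2$ is essential. My plan is to choose $f$ generic enough that the image $\bar I$ in the DVR $\bar S$ has $\bar{\mathfrak{m}}$-adic order equal to the $\mathfrak{m}$-adic order $o(I)$ of $I$, which can be arranged because the initial form of a general $f\in\mathfrak{m}/\mathfrak{m}^2$ does not annihilate the leading form of any chosen minimal-order element of $I$. Then any $z\in J=(\mathfrak{m}I:f)$ satisfies $o(fz)\ge o(\mathfrak{m}I)=o(I)+1$, and hence $o(z)\ge o(I)$, which forces $\bar z\in\bar{\mathfrak{m}}^{o(I)}=\bar I$, so $z\in I+fS$. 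The higher-dimensional version of this initial-form argument breaks down precisely because $\bar S$ is no longer a DVR and its ideals are not captured by a single order invariant; indeed the equivalence in the theorem is known to fail in dimension at least three.
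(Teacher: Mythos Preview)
The paper does not prove this theorem; it is quoted from Watanabe's paper \cite{JWat} with no argument supplied. So there is no ``paper's own proof'' to compare against, and your proposal must stand on its own.

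Your forward implication is correct. The identity $\mu(L)=\mu_{\bar S}(\bar L)+\dim_k\bigl((L:f)/(\mathfrak{m}L:f)\bigr)$ is valid, the DVR reduction gives $\mu_{\bar S}(\bar L)\le 1$, and for an $\mathfrak{m}$-primary $I$ one always has $I\not\subseteq fS$, so your chain of inequalities $\mu(J)\le 1+\ell(S/(J+fS))\le 1+\ell(S/(I+fS))=\mu(I)$ goes through.

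The converse, however, has a genuine error. You claim that $J:=(\mathfrak{m}I:f)$ is $\mathfrak{m}$-full via the chain $(\mathfrak{m}J:f)\subseteq(\mathfrak{m}^2I:f)\subseteq(\mathfrak{m}I:f)=J$. The first inclusion would require $\mathfrak{m}J\subseteq\mathfrak{m}^2I$, which is false in general: take $S=k[[x,y]]$, $I=(x^3,y^3)$, $f=x+y$. Then $\mathfrak{m}I=(x^4,x^3y,xy^3,y^4)$, a direct computation gives $J=(\mathfrak{m}I:f)=(x^3,\,x^2y-xy^2,\,y^3)$, and $\mathfrak{m}J=\mathfrak{m}^4$; but $\mathfrak{m}^2I\subseteq\mathfrak{m}^5$, so $\mathfrak{m}J\not\subseteq\mathfrak{m}^2I$. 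Worse, $(\mathfrak{m}J:f)=(\mathfrak{m}^4:f)=\mathfrak{m}^3\supsetneq J$, so $J$ is \emph{not} $\mathfrak{m}$-full with respect to $f$. The same failure occurs with $f=x$.

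There are two clean repairs, both using machinery you already set up. The quickest is to take $J=\mathfrak{m}^{o(I)}$ directly. Your own identity, applied with a generic $f$ for which $\bar I=\bar{\mathfrak{m}}^{\,o(I)}$, gives $\mu(I)=1+\dim_k\bigl((I:f)/(\mathfrak{m}I:f)\bigr)$; since $I$ is not $\mathfrak{m}$-full, $(\mathfrak{m}I:f)\supsetneq I$, whence this dimension is strictly smaller than $\ell\bigl((I:f)/I\bigr)=\ell(\bar S/\bar I)=o(I)$. Thus $\mu(I)<o(I)+1=\mu(\mathfrak{m}^{o(I)})$, and $\mathfrak{m}^{o(I)}\supseteq I$ furnishes the desired $J$. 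Alternatively, iterate your construction: set $J_0=I$, $J_{k+1}=(\mathfrak{m}J_k:f)$. Your order argument shows $J_k\subseteq\mathfrak{m}^{o(I)}$ and $J_k+fS=I+fS$ for every $k$; the chain stabilizes at some $J_n$ which is then genuinely $\mathfrak{m}$-full, and the rest of your inequality goes through. (Note that either fix implicitly uses an infinite residue field to choose $f$ generically; if the theorem is meant over finite residue fields as well, a faithfully flat extension handles this.)
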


The {\it order} of $I$ 
is given by ${\rm ord}(I):=\max\{k\mid  I\subset
\mathfrak{m}^k\}$. In regular local rings of
dimension two one has the inequality $\mu(I)\leq {\rm ord}(I)+1$ 
\cite[Lemma~14.1.3]{huneke-swanson-book}. 
If $\mu(I)={\rm ord}(I)+1$, then $I$ is $\mathfrak{m}$-full 
(see \cite[p.~212]{icmi}, \cite[p.~502]{monalg-rev}).

\begin{theorem}{\rm(Gimenez, Simis, Vasconcelos, -\,,
\cite[Theorem~2.9]{icmi})}\label{p-w-v} $I$ is
$\mathfrak{m}$-full 
if and only if there is $1\leq k\leq n$ such
that the following conditions hold
\begin{enumerate}
\item[\rm(i)] $b_{n-i}-b_{n-i+1}=1$ for $1\leq i\leq k-1$,
\item[\rm(ii)]
$k=n$ or $k<n$ and $b_{n-k}-b_{n-k+1}\geq 2$,
\item[\rm(iii)]
$a_i-a_{i+1}=1$ for $k\leq i\leq n-1$.
\end{enumerate}
\end{theorem}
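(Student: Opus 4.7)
The plan is to combine a short combinatorial analysis of the exponent sequences with the well-known fact that, for an $\mathfrak{m}$-primary ideal in a two-dimensional regular local ring, $I$ is $\mathfrak{m}$-full if and only if $\mu(I)={\rm ord}(I)+1$. Since $\mu(I)=n$ by hypothesis, the whole theorem reduces to proving that conditions {\rm (i)--(iii)} hold for some $k$ if and only if ${\rm ord}(I)=n-1$.

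I would first observe the universal lower bound ${\rm ord}(I)\ge n-1$: from the strictly decreasing integer chains $a_1>a_2>\cdots>a_n=0$ and $b_1>b_2>\cdots>b_n=0$ one obtains $a_i\ge n-i$ and $b_{n-i+1}\ge i-1$, so every generator $t_1^{a_i}t_2^{b_{n-i+1}}$ has degree at least $n-1$. The combinatorial equivalence then goes as follows. The implication ``{\rm (i)}--{\rm (iii)} $\Rightarrow$ ${\rm ord}(I)=n-1$'' is immediate, since {\rm (i)} and {\rm (iii)} force $b_{n-k+1}=k-1$ and $a_k=n-k$, hence the generator of index $k$ has degree $n-1$. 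For the converse, set $E:=\{i: a_i+b_{n-i+1}=n-1\}$ (nonempty by hypothesis) and take $k:=\max E$. Saturation of the bounds above forces $a_k=n-k$ and $b_{n-k+1}=k-1$, and the strict chains propagate this: $a_j=n-j$ for $k\le j\le n$ (giving {\rm (iii)}) and $b_j=n-j$ for $n-k+1\le j\le n$ (giving {\rm (i)}). For {\rm (ii)}, if $k<n$ the propagation already pins down $a_{k+1}=n-k-1$; maximality of $k$ implies $k+1\notin E$, so $a_{k+1}+b_{n-k}>n-1$, which translates into $b_{n-k}>k$, i.e., $b_{n-k}-b_{n-k+1}\ge 2$.

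To close the argument, I would invoke the Rees--Watanabe criterion (Theorem~\ref{Reestheorem}) together with the classical inequality $\mu(J)\le{\rm ord}(J)+1$ in dimension two \cite[Lemma~14.1.3]{huneke-swanson-book} and the obvious ${\rm ord}(J)\le{\rm ord}(I)$ for $J\supset I$. If ${\rm ord}(I)=n-1$, then every $J\supset I$ satisfies $\mu(J)\le{\rm ord}(J)+1\le n=\mu(I)$, so $I$ is $\mathfrak{m}$-full. Conversely, if ${\rm ord}(I)\ge n$, then $J:=\mathfrak{m}^{{\rm ord}(I)}\supset I$ has $\mu(J)={\rm ord}(I)+1\ge n+1>\mu(I)$, contradicting Rees--Watanabe.

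The hardest part is the combinatorial bookkeeping in the second step, particularly condition {\rm (ii)}: one has to track simultaneously that the $b$-sequence is read ``from the right'' ($b_n,b_{n-1},\ldots$) while the $a$-sequence is read ``from the left'' ($a_k,a_{k+1},\ldots$), and handle the edge case $k=n$ separately. The conceptual payoff is that maximality of $k$ in $E$ translates exactly into the jump $b_{n-k}-b_{n-k+1}\ge 2$, because the $a$-values from index $k$ onward are already forced into the tight arithmetic progression $n-k,n-k-1,\ldots,0$, leaving $b_{n-k}$ as the only parameter free to witness the breakdown of $d_{k+1}=n-1$.
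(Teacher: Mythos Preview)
Your argument is correct. The paper itself does not supply a proof of this theorem---it is stated with a citation to \cite[Theorem~2.9]{icmi}---so there is no in-paper proof to compare against line by line. That said, your strategy matches precisely the circle of ideas the survey sets up just before the statement: the inequality $\mu(I)\le{\rm ord}(I)+1$ from \cite[Lemma~14.1.3]{huneke-swanson-book}, the implication ``$\mu(I)={\rm ord}(I)+1\Rightarrow I$ is $\mathfrak{m}$-full'' recorded on \cite[p.~212]{icmi}, and the Rees--Watanabe criterion (Theorem~\ref{Reestheorem}). Your contribution is to close the loop by supplying the easy converse (via $J=\mathfrak{m}^{{\rm ord}(I)}$) and then to reduce everything to the purely combinatorial equivalence ${\rm ord}(I)=n-1\Longleftrightarrow$ (i)--(iii).

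The combinatorics are handled cleanly. The choice $k=\max E$ is exactly right: it is what forces the jump in (ii), since the $a$-values from index $k$ onward are already pinned to $n-j$ by the strict chain, leaving only $b_{n-k}$ free to violate $d_{k+1}=n-1$. Taking $k=\min E$ would instead produce a jump in the $a$-sequence, which is a symmetric but different set of conditions. Your treatment of the edge case $k=n$ (where (ii) is vacuous) is implicit but correct. One cosmetic point: the hypothesis $I\neq\mathfrak{m}^{n-1}$ in the paper's setup is not actually needed for your argument, since for $I=\mathfrak{m}^{n-1}$ conditions (i)--(iii) hold with $k=n$ and the equivalence still goes through.
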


If $I$ is complete, that is, $I=\overline{I}$, 
then $I$ is $\mathfrak{m}$-full \cite[Theorem 5]{JWat}. 
In polynomial rings in two variables any complete 
ideal is normal; more generally one has the following result of 
Zariski.

\begin{theorem}{\rm\cite[Appendix~5]{zariski-samuel-ii}}\label{Zariski} 
If $I_1,\ldots,I_r$ are complete ideals in a regular local ring $S$
of dimension two, then $I_1\cdots I_r$ is complete.
\end{theorem}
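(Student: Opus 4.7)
The plan is to prove the result by induction on $r$, so the real content lies in the case $r=2$: if $I$ and $J$ are complete ideals in the two-dimensional regular local ring $(S,\mathfrak{m})$, then $IJ$ is complete. Since the inclusion $IJ \subseteq \overline{IJ}$ is automatic, the task is to show $\overline{IJ} \subseteq IJ$. After passing to the completion, enlarging the residue field to be infinite, and intersecting with powers of $\mathfrak{m}$ (all of which are complete), one may further reduce to the case where $I$ and $J$ are both $\mathfrak{m}$-primary.

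The strategy will combine $\mathfrak{m}$-fullness with Zariski's quadratic-transform machinery. By Watanabe's theorem (cited just before the statement), both $I$ and $J$ are $\mathfrak{m}$-full. First I would show that $IJ$ itself is $\mathfrak{m}$-full: for an element $z \in \mathfrak{m}\setminus\mathfrak{m}^2$ superficial for $I$, $J$, and $IJ$ simultaneously (which exists because the residue field is infinite), the inclusion $(I:z)J + I(J:z) \subseteq (IJ:z)$, combined with the $\mathfrak{m}$-fullness of $I$ and $J$ and a colength count, will force $(IJ:z) = (IJ:\mathfrak{m})$.

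Next I would invoke the theory of quadratic transforms. For every closed point of the blow-up of $\mathfrak{m}$ one obtains a two-dimensional regular local ring $S_1$ birationally dominating $S$, and every $\mathfrak{m}$-primary ideal $L$ of order $n$ has a \emph{transform} $L^{S_1} := z^{-n}L\,S_1$ in $S_1$. The order is additive under products, so $(IJ)^{S_1} = I^{S_1} J^{S_1}$, and transforms of complete ideals are again complete (this again uses $\mathfrak{m}$-fullness). Since $\ell(S_1/L^{S_1}) < \ell(S/L)$ whenever $L$ is $\mathfrak{m}$-primary and $S_1$ is a proper quadratic transform, strong induction on colength lets us assume that $(IJ)^{S_1} = I^{S_1}J^{S_1}$ is complete in every such $S_1$.

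The main obstacle is the final descent: deducing completeness of $IJ$ in $S$ from completeness of all its transforms together with the $\mathfrak{m}$-fullness of $IJ$. This rests on the foundational fact in Zariski's theory that an $\mathfrak{m}$-primary $\mathfrak{m}$-full ideal $L$ is complete if and only if $L^{S_1}$ is complete in every first-neighborhood quadratic transform $S_1$; the forward direction is part of the transform setup, and the reverse direction is obtained by realizing $\overline{L}/L$ as a submodule of $(L:\mathfrak{m})/L$ and using $\mathfrak{m}$-fullness to identify it with data pulled back from the transforms. Assembling all three steps gives $\overline{IJ} = IJ$, which completes the induction.
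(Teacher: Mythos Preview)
The paper does not give its own proof of this theorem; it is stated with a bare citation to \cite[Appendix~5]{zariski-samuel-ii} and used as background. So there is nothing in the paper to compare your argument against line by line.

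That said, your outline is the modern Huneke--Lipman approach (as in \cite{huneke-swanson-book}, Chapter~14), not Zariski's original proof. Zariski's argument in \cite{zariski-samuel-ii} is valuation-theoretic: he develops unique factorization of complete $\mathfrak{m}$-primary ideals into simple complete ideals, associates to each simple complete ideal a prime divisor, and shows that the $v$-adic value of a product is additive. Completeness of $I_1\cdots I_r$ then drops out of the valuative description $\overline{L}=\bigcap_v \{x : v(x)\ge v(L)\}$. Your route via $\mathfrak{m}$-fullness and quadratic transforms is cleaner and avoids building the full factorization theory first; Zariski's route gives more structural information but is longer.

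Two steps in your sketch need tightening. First, the reduction to the $\mathfrak{m}$-primary case is not done by ``intersecting with powers of $\mathfrak{m}$''; rather, since $S$ is a UFD one writes $I=aI'$ with $a$ a gcd of a generating set and $I'$ either $S$ or $\mathfrak{m}$-primary, and checks that $I$ is complete iff $I'$ is. Second, your argument that $IJ$ is $\mathfrak{m}$-full is too loose: the inclusion $(I\!:\!z)J + I(J\!:\!z) \subseteq (IJ\!:\!z)$ by itself does not give the desired equality by a length count, and in fact products of $\mathfrak{m}$-full ideals need not be $\mathfrak{m}$-full without a completeness hypothesis. The standard argument uses the characterization ``$L$ is contracted iff $\mu(L)=o(L)+1$'' together with the fact that a complete $\mathfrak{m}$-primary ideal in a two-dimensional regular local ring has reduction number at most one (so $I^2=(a,b)I$ for a minimal reduction), which is what lets you control $\mu(IJ)$. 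Once that is in place, your induction on colength through quadratic transforms and the descent via contractedness is exactly right.
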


In \cite{crispin-quinonez} Qui\~nonez studied the normality of
$\mathfrak{m}$-primary monomial ideals in $K[t_1,t_2]$ 
and gave a criterion in terms of certain partial blocks and
associated sequences of rational numbers. 

A problem suggested by Simis and
Vasconcelos is to study the arithmetical and
homological properties of the Rees algebras of $\mathfrak{m}$-full ideals.
Even for monomial ideals, these often fail to be Cohen-Macaulay, as
the following example shows \cite[p.~223]{icmi}. The ideal  
$$I = (t_1^{11},\,  t_1^8t_2,\,  t_1^6t_2^2,\,  t_1^5t_2^3,\, 
t_1t_2^4,\,  t_2^{10})$$
is $\mathfrak{m}$-full and $S[Iz]$  is {\em not} Cohen-Macaulay. 

The \textit{special fiber ring} is given by 
 $\mathcal{F}(I):=\mathcal{R}(I)/\mathfrak{m}\mathcal{R}(I)$,
$\mathcal{R}(I)=S[Iz]$. The following criterion complements a result
of Corso, Ghezzi, Polini and Ulrich \cite[Corollary 2.11]{CGPU03}. 

\begin{theorem}{\rm(Gimenez, Simis, Vasconcelos, -\,,
\cite[Theorem~3.8]{icmi})} \label{CMviafiber} Let $I$ 
be an $\mathfrak{m}$-primary $\mathfrak{m}$-full ideal. If the special fiber
ring $\mathcal{F}(I)$ is Cohen-Macaulay, then $S[Iz]$ is also Cohen-Macaulay.
 \end{theorem}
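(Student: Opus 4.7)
The plan is to show that $\mathcal{R}(I) = S[Iz]$ attains full depth equal to its Krull dimension $3$ (noting $\dim S = 2$ and $\ell(I) = 2$, so $\dim \mathcal{F}(I) = 2$), by producing a length-three regular sequence: one element exploiting the $\mathfrak{m}$-fullness of $I$, and two more arising from a regular sequence on $\mathcal{F}(I)$ whose existence is guaranteed by the Cohen--Macaulayness hypothesis.

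First I would unpack the $\mathfrak{m}$-fullness: fix $f \in \mathfrak{m} \setminus \mathfrak{m}^2$ with $(\mathfrak{m}I : f) = I$, and extend $\{f\}$ to a regular system of parameters $\{f, g\}$ for $\mathfrak{m}$. As $\mathcal{R}(I) \subset S[z]$ sits inside a domain, $f$ is automatically a non-zero-divisor on $\mathcal{R}(I)$. The crucial technical step is to promote the degree-one colon identity to all powers: $(\mathfrak{m}I^n : f) = I^n$ for every $n \geq 1$. This is where the dimension-two hypothesis enters decisively, via the Rees--Watanabe characterization (Theorem~\ref{Reestheorem})---applied to suitable intermediate ideals $I^n \subset J \subset (\mathfrak{m}I^n : f)$, the inequality $\mu(J) \leq \mu(I^n)$ forces the intermediate inclusions to be equalities, yielding the sought identity.

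Granting this propagation, the short exact sequence
$$
0 \longrightarrow \mathcal{R}(I)(-1) \xrightarrow{\,f\,} \mathcal{R}(I) \longrightarrow \mathcal{R}(I)/f\mathcal{R}(I) \longrightarrow 0
$$
reduces the claim to showing that $\mathcal{R}(I)/f\mathcal{R}(I)$ has depth $2$. Using the identities $(\mathfrak{m}I^n : f) = I^n$, one identifies this quotient with a Rees-type algebra over the discrete valuation ring $S/(f)$, which fits in a further short exact sequence whose cokernel is $\mathcal{F}(I) = \mathcal{R}(I)/\mathfrak{m}\mathcal{R}(I)$ and whose kernel is generated by the class of $g$; a depth-chase along that sequence, fed by the hypothesis that $\mathcal{F}(I)$ is Cohen--Macaulay of dimension $2$, yields the required depth. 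Combining with the regularity of $f$ gives depth of $\mathcal{R}(I)$ equal to $3 = \dim \mathcal{R}(I)$, and so the Cohen--Macaulayness follows.

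The hard part will be the colon-propagation step, since direct induction on $n$ from the single-degree hypothesis $(\mathfrak{m}I : f) = I$ fails without an additional ingredient. The hope is that Theorem~\ref{Reestheorem}---which is a genuinely dimension-two statement with no higher-dimensional analogue---supplies the missing leverage through a Hilbert-function comparison along a chain of colon ideals, and possibly also the explicit classification of monomial $\mathfrak{m}$-full ideals afforded by Theorem~\ref{p-w-v} when $I$ is monomial.
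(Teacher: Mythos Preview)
This survey paper does not contain a proof of Theorem~\ref{CMviafiber}; it merely records the statement with a citation to \cite[Theorem~3.8]{icmi}. There is thus nothing here to compare your proposal against, and to see the actual argument you would need to consult the original source.

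On the merits of your outline: the architecture is sensible, but the gap you yourself flag is real and your proposed remedy does not close it. You need $(\mathfrak{m}I^n:f)=I^n$ for all $n$, i.e., that every power $I^n$ is $\mathfrak{m}$-full with the \emph{same} witness $f$. Your plan is to invoke Theorem~\ref{Reestheorem} on intermediate ideals $I^n\subset J\subset(\mathfrak{m}I^n:f)$ to obtain $\mu(J)\le\mu(I^n)$ and hence $J=I^n$; but Theorem~\ref{Reestheorem} yields that bound only for overideals of an ideal \emph{already known} to be $\mathfrak{m}$-full, so applying it to overideals of $I^n$ presupposes exactly the conclusion you are after. The hypothesis on $I$ gives the bound only for overideals of $I$ itself, which says nothing about ideals squeezed between $I^n$ and $(\mathfrak{m}I^n:f)$. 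Likewise, the short exact sequence you sketch after modding out by $f$---kernel ``generated by $g$'', cokernel $\mathcal{F}(I)$---tacitly requires $g$ to be regular on $\mathcal{R}(I)/f\mathcal{R}(I)$, and in graded pieces this unwinds (via the UFD property of $S$) to a colon condition of the same type. So both difficulties are manifestations of the same unresolved point, and nothing in your write-up actually addresses it.
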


The same assertion holds for two-dimensional regular local rings of 
infinite residue field \cite{icmi}. The question of when $\mathfrak{m}I$
is integrally closed is studied by H\"ubl and Huneke \cite{Hubl-Huneke} using the 
special fiber ring $\mathcal{F}(I)$. 

\section{Blowup algebras of edge ideals of
clutters}\label{blowuprings-section}

In the 2000' the algebraic properties and invariants of
blowup algebras of monomial ideals are 
related to combinatorial optimization problems, polyhedral geometry and 
integer programming. In this section we introduce blowup algebras 
and their interplay with combinatorial optimization and the containment problem
comparing symbolic and ordinary powers of ideals.  

To study blowup algebras of squarefree monomial ideals Gitler, Reyes,
Valencia and the second author \cite{reesclu,clutters} introduced combinatorial optimization
methods---based on a paper by Huneke, Simis and Vasconcelos
\cite{HuSV}, a paper by Escobar, the second author and Yoshino
\cite{Lisboar}, and the books of Cornu\'ejols \cite{cornu-book} and Schrijver
\cite{Schr2}---and showed that blowup algebras can be used to study combinatorial
optimization problems of clutters. 

Edge ideals of clutters were introduced in 
Section~\ref{edge-ideals-all-flavors}. To
avoid repetitions, throughout this section, we continue to employ 
the notations and definitions used in 
that section. 

Let $S=K[t_1,\ldots,t_s]$ be a polynomial ring over a field $K$, let
$\mathcal C$ be a clutter with vertex  
set $V(\mathcal{C})=\{t_1,\ldots,t_s\}$ and edge set $E(\mathcal{C})$,
let $I=I(\mathcal{C})$ be the edge ideal of the clutter $\mathcal{C}$, let 
$\mathcal{G}(I):=\{t^{v_1},\ldots,t^{v_m}\}$ be the minimal set of
generators of $I$, and let $A$ be the incidence matrix of $I$ 
with column vectors $v_1,\ldots,v_m$. The matrix $A$ coincides 
with the incidence matrix of $\mathcal{C}$. In what follows we assume that
the height ${\rm ht}(I)$ of $I$ is at least $2$, that is, the covering number
$\alpha_0(\mathcal{C})$ of $\mathcal{C}$ is at least two, and that the
rows of $A$ are different from zero. 

The {\it blowup algebras\/} associated to $I$ are the following: 
(a) {\it Rees algebra\/}
\begin{equation}\label{jan21-25-1}
\mathcal{R}(I):=S[Iz]=S\oplus Iz\oplus\cdots\oplus I^{i}z^i\oplus\cdots
\subset S[z],
\end{equation}
where $z$ is a new variable, (b) {\it extended Rees
algebra\/}
\begin{equation}\label{jan21-25-2}
S[Iz,z^{-1}]:=S[Iz][z^{-1}]\subset S[z,z^{-1}],
\end{equation}
(c) {\it symbolic Rees algebra}
\begin{equation}\label{jan21-25-3}
\mathcal{R}_s(I):=S+I^{(1)}z+I^{(2)}z^2+\cdots+I^{(i)}z^i+\cdots\subset S[z],
\end{equation}
where $I^{(i)}$ is the $i$-th symbolic
power of $I$, and (d) {\it associated graded
ring\/}
\begin{equation}\label{jan21-25-4}
{\rm gr}_I(S):=S/I\oplus I/I^2\oplus\cdots\oplus
I^i/I^{i+1}\oplus\cdots\simeq S[Iz]\otimes_S(S/I),
\end{equation}
with multiplication
$$
(a+I^{i+1})(b+I^{j+1})=ab+I^{i+j+1}\ \ \ \ \  (a\in I^{i},\  b\in
I^{j}).
$$
\quad The following theorem in the 1980' is an
important element to make 
the connection between algebraic properties of blowup algebras and combinatorial
properties of clutters. As is seen later, any of the conditions below is equivalent 
to the equality of ordinary and symbolic powers of an edge ideal $I$. Recall that 
the ring ${\rm gr}_I(S)$ is called \textit{reduced} if its nilradical is zero.

\begin{theorem}{\rm(Huneke, Simis and Vasconcelos \cite[Theorem~1.11]{HuSV})}
\label{reduced-normal-cones} Let $I$ be the edge ideal of a clutter.
If the height of $I$ is $\geq 2$, then the 
following are equivalent{\rm :}
\begin{enumerate} 
\item[(i)] ${\rm gr}_I(S)$ is torsion-free over $S/I$.
\item[(ii)] ${\rm gr}_I(S)$ is reduced.
 \item[(iii)] $S[Iz]$ is normal and ${\rm Cl}(S[Iz])$, the divisor
class group of $S[Iz]$, is a free abelian group whose rank is the
number of minimal primes of $I$.
\end{enumerate}
\end{theorem}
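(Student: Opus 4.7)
The plan is to establish that all three conditions are equivalent to the statement ``$I^n=I^{(n)}$ for every $n\geq 1$,'' i.e., that $I$ is normally torsion-free in the sense of Section~\ref{normality-symbolic-powers}. Let $\mathfrak{p}_1,\ldots,\mathfrak{p}_r$ denote the minimal primes of $I$; since $I$ is squarefree, each $\mathfrak{p}_i$ is generated by a subset of the variables and $I^{(n)}=\bigcap_i\mathfrak{p}_i^n$.

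For (i) $\Leftrightarrow$ (ii), torsion-freeness of ${\rm gr}_I(S)$ over $S/I$ is equivalent to ${\rm Ass}_S(I^n/I^{n+1})\subset\{\mathfrak{p}_1,\ldots,\mathfrak{p}_r\}$ for every $n\geq 0$; by induction on $n$ using the short exact sequences $0\to I^{n+1}\to I^n\to I^n/I^{n+1}\to 0$, this reduces to ${\rm Ass}(S/I^n)={\rm Ass}(S/I)$, hence to $I^n=I^{(n)}$. The natural graded map ${\rm gr}_I(S)\to\prod_{i=1}^r{\rm gr}_{\mathfrak{p}_i}(S)$ lands in a product of polynomial rings $(S/\mathfrak{p}_i)[y_{i,1},\ldots,y_{i,k_i}]$, hence in a reduced ring, and is injective exactly when $\bigcap_i\mathfrak{p}_i^n=I^n$ for all $n$; so normally torsion-free implies reducedness. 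Conversely, if ${\rm gr}_I(S)$ is reduced and a monomial $f$ lies in $\overline{I^n}\setminus I^n$, let $m<n$ be maximal with $f\in I^m$; by Proposition~\ref{icd} there is $p\geq 1$ with $f^p\in I^{np}\subset I^{pm+1}$, so the nonzero class $\bar f\in I^m/I^{m+1}$ satisfies $\bar f^p=0$, a contradiction. Thus $I$ is normal, and a similar argument ruling out embedded primes of $I^n$ yields $I^n=I^{(n)}$.

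For (ii) $\Rightarrow$ (iii), normality of $S[Iz]$ is immediate from the preceding paragraph. For the class group I would apply the Nagata exact sequence
$$
\bigoplus_{\widetilde{\mathfrak q}}\mathbb{Z}[\widetilde{\mathfrak q}]\longrightarrow{\rm Cl}(S[Iz])\longrightarrow{\rm Cl}(S[Iz][z^{-1}])\longrightarrow 0,
$$
where $\widetilde{\mathfrak q}$ ranges over the height-one primes of $S[Iz]$ containing $z$, equivalently containing $IS[Iz]$; since $S[Iz][z^{-1}]=S[z,z^{-1}]$ has trivial class group, the left map is onto. At each minimal prime $\mathfrak p_i$, $S_{\mathfrak p_i}$ is a regular local ring with maximal ideal $I_{\mathfrak p_i}$, so $S_{\mathfrak p_i}[I_{\mathfrak p_i}z]$ has a single minimal prime over $IS_{\mathfrak p_i}[I_{\mathfrak p_i}z]$ (its associated graded being a polynomial ring). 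Combined with normal torsion-freeness, this shows $IS[Iz]$ is the radical intersection of exactly $r$ height-one primes $\widetilde{\mathfrak q}_1,\ldots,\widetilde{\mathfrak q}_r$, whose classes generate ${\rm Cl}(S[Iz])$; the height $\geq 2$ hypothesis then gives $\mathbb{Z}$-independence, so ${\rm Cl}(S[Iz])\simeq\mathbb{Z}^r$.

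For (iii) $\Rightarrow$ (ii) I would run the same diagram in reverse: the localization analysis still provides at most $r$ height-one primes of $S[Iz]$ above $IS[Iz]$, and the rank hypothesis forces exact equality together with linear independence of their classes. Writing the divisor of $IS[Iz]$ as $\sum_{i=1}^r m_i[\widetilde{\mathfrak q}_i]$, each multiplicity $m_i$ must equal one (otherwise a nontrivial relation among the $[\widetilde{\mathfrak q}_i]$ would contradict independence), so $IS[Iz]=\bigcap_i\widetilde{\mathfrak q}_i$ is radical and ${\rm gr}_I(S)$ is reduced. The main obstacle will be this final multiplicity analysis and, more generally, verifying $\mathbb{Z}$-independence---not merely generation---of the $[\widetilde{\mathfrak q}_i]$ in ${\rm Cl}(S[Iz])$; this is precisely where the height $\geq 2$ hypothesis enters, ensuring that $IS[Iz]$ is pure of codimension one and ruling out stray principal relations that would collapse the class group.
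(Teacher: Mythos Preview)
The paper does not supply its own proof of this theorem; it is quoted from \cite{HuSV} and then \emph{used} (via Proposition~\ref{okayama3} and Proposition~\ref{effcred}) to derive the later equivalences with $I^n=I^{(n)}$ and with integrality of $\mathcal{Q}(I)$. So there is nothing in the paper to compare your argument against line by line. That said, your strategy of routing everything through the condition $I^n=I^{(n)}$ is exactly the viewpoint the paper adopts downstream, and the (i)$\Rightarrow$(ii) direction via the diagonal embedding ${\rm gr}_I(S)\hookrightarrow\prod_i{\rm gr}_{\mathfrak p_i}(S)$ is clean and correct.

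There are, however, two genuine gaps. First, in (ii)$\Rightarrow$(i) you pass from ``${\rm gr}_I(S)$ reduced'' to ``$I^n=I^{(n)}$'' by saying a \emph{similar argument rules out embedded primes}. The integral-closure argument you gave shows normality of $I$, but normality alone does not kill embedded primes of $I^n$; you still need to explain why a nonzerodivisor of $S/I$ annihilating a nonzero class in $I^n/I^{n+1}$ would produce a nilpotent. One way is to note that the minimal primes of ${\rm gr}_I(S)$ all contract to minimal primes of $I$ in $S/I$ (they arise from the $\mathfrak p_i$ via the surjections ${\rm gr}_I(S)\to{\rm gr}_{\mathfrak p_i}(S)$), and reducedness then forces ${\rm Ass}_{S/I}({\rm gr}_I(S))\subset\{\mathfrak p_1,\ldots,\mathfrak p_r\}$; but this identification of minimal primes is precisely the missing step and deserves an argument.

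Second, your (iii)$\Rightarrow$(ii) breaks at the multiplicity step. The ideal $IS[Iz]$ is not principal, so writing its divisor as $\sum m_i[\widetilde{\mathfrak q}_i]$ does \emph{not} produce a relation in ${\rm Cl}(S[Iz])$; hence $m_i>1$ would not contradict $\mathbb{Z}$-independence of the $[\widetilde{\mathfrak q}_i]$. To force $m_i=1$ you must exhibit actual principal divisors: for each generator $t^{v_j}z$ one has ${\rm div}(t^{v_j}z)=\sum_i\langle u_i,v_j\rangle[\widetilde{\mathfrak q}_i]+(\text{terms at }H_{e_k})$, and the $m_i$ are recovered as $\min_j\langle u_i,v_j\rangle$. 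Showing these minima equal $1$ (equivalently, that the supporting hyperplanes of the Rees cone satisfy $\langle\ell_i,e_{s+1}\rangle=-1$, which is exactly Proposition~\ref{effcred}(b)) is where the real work lies, and it is also where ${\rm ht}(I)\geq 2$ is genuinely used to guarantee the $[\widetilde{\mathfrak q}_i]$ are independent rather than merely spanning.
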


Let ${\rm RC}(I)=\mathbb{R}_+\mathcal{A'}$ be the Rees cone of $I$
defined in Eq.~\eqref{rees-cone-eq}. Given $a\in {\mathbb
R}^{s+1}\setminus\{0\}$, the 
\textit{positive closed halfspace}
$H^+_a$ and its \textit{bounding hyperplane} $H_a$ are defined as
\[
H^+_a:=\{x\in{\mathbb R}^{s+1}\vert\, 
\langle x,a\rangle\geq 0\}\ \mbox{ and }\ H_a:=\{x\in{\mathbb R}^{s+1}\vert\, 
\langle x,a\rangle=0\},
\]
respectively. The
$\textit{covering polyhedron}$ of $I$, denoted by
$\mathcal{Q}(I)$, is the rational polyhedron
$$
\mathcal{Q}(I):=\{x\mid x\geq 0;\,xA\geq 1\},
$$
where $1=(1,\ldots,1)$.
The map
\begin{equation}\label{jan22-25}
E(\mathcal{C}^\vee)\rightarrow\{0,1\}^s,\quad C\mapsto\textstyle\sum_{t_i\in
C}e_i,
\end{equation}
induces a bijection between the set $E(\mathcal{C}^\vee)$ of minimal
vertex covers of $\mathcal{C}$ and the set
$\{u_1,\ldots,u_r\}$ of integral 
vertices of $\mathcal{Q}(I)$ \cite[Corollary~2.3]{reesclu}.

A polyhedron with only integral vertices is called \textit{integral} \cite[p.~232]{Schr}.

\begin{theorem}{\rm(\cite[Corollary~3.3]{reesclu},
\cite[Proposition~1.1.51]{monalg-rev})}\label{irred-rep-rc} The Rees cone of 
$I$ has a unique irreducible representation 
\begin{equation}\label{supp-hyp}
{\rm RC}(I)=\bigg(\bigcap_{i=1}^{s+1}H^+_{e_i}\bigg)\bigcap
\bigg(\bigcap_{i=1}^rH^+_{(\gamma_i,-d_i)}\bigg)
\bigcap\bigg(\bigcap_{i=r+1}^p H^+_{(\gamma_i,-d_i)}\bigg),
\end{equation}
where $r\leq p$, $\gamma_i=u_i$ and $d_i=1$ for $i=1,\ldots,r$, 
$\gamma_i\in\mathbb{N}^s\setminus\{0\}$ and 
$d_i\in\mathbb{N}\setminus\{0,1\}$ for $i>r$, and the non-zero entries of
$(\gamma_i,-d_i)$ are relatively prime for all $i$. Moreover, the covering
polyhedron $\mathcal{Q}(I)$ is integral if and only if $r=p$.
\end{theorem}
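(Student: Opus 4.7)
The approach is to combine Minkowski--Weyl duality for pointed rational polyhedral cones with the natural correspondence between vertices of $\mathcal{Q}(I)$ and those facets of ${\rm RC}(I)$ whose inward normal has negative $(s+1)$-st coordinate. First I would observe that ${\rm RC}(I) = \mathbb{R}_+\mathcal{A}'$ is a full-dimensional pointed rational polyhedral cone in $\mathbb{R}^{s+1}$: the generators $e_1,\ldots,e_s,(v_1,1)$ span $\mathbb{R}^{s+1}$, and $\mathcal{A}'\subset\mathbb{R}_+^{s+1}$ makes the cone pointed. Minkowski--Weyl then yields a unique minimal H-representation whose inequalities correspond bijectively to facets, each facet normal $a$ chosen to be primitive integer (non-zero entries coprime).

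Second, I would classify the facets by the sign of $a_{s+1}$. Evaluating $\langle\cdot,a\rangle\geq 0$ on the generators forces $a_i\geq 0$ for $i\leq s$ and $\langle v_j,\gamma\rangle+a_{s+1}\geq 0$ for all $j$, where $\gamma=(a_1,\ldots,a_s)$. A short argument rules out $a_{s+1}>0$: the resulting hyperplane meets $\mathrm{RC}(I)$ only where $x_{s+1}=0$ and each $a_ix_i=0$, cutting out a face of dimension at most $s-1$. If $a_{s+1}=0$ then $a$ is a positive scalar multiple of some $e_i$; the hypothesis $\alpha_0(\mathcal{C})\geq 2$ combined with the nonvanishing of the rows of $A$ supplies, for each $i\leq s$, an edge avoiding $t_i$, producing a generator $(v_k,1)\in H_{e_i}$ which together with $\{e_j:j\neq i\}$ spans an $s$-dimensional face. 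The hyperplane $H_{e_{s+1}}$ contains the simplex $\{e_1,\ldots,e_s\}$. This identifies the $s+1$ coordinate half-spaces $H^+_{e_i}$ as facets and exhausts the cases $a_{s+1}\geq 0$.

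For the remaining facets one has $a_{s+1}<0$, so write $a=(\gamma,-d)$ with $\gamma\in\mathbb{N}^s$ and $d\in\mathbb{N}_+$. The facet inequalities translate to $\gamma/d\geq 0$ and $\langle v_j,\gamma/d\rangle\geq 1$, so $u:=\gamma/d\in\mathcal{Q}(I)$. The crux is the support-preservation identity: a generator $e_i$ lies on $\{\langle x,a\rangle=0\}$ iff $u_i=0$, and $(v_j,1)$ lies on this hyperplane iff $\langle v_j,u\rangle=1$. Hence the face cut out by $a$ has dimension $s$ iff the defining inequalities of $\mathcal{Q}(I)$ active at $u$ have rank $s$, iff $u$ is a vertex of $\mathcal{Q}(I)$. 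This gives a bijection between facets with $a_{s+1}<0$ and vertices of $\mathcal{Q}(I)$; primitivity of $a$ forces the non-zero entries of $(\gamma,d)$ to have GCD one, so $d=1$ precisely when $u$ is integer. Applying \eqref{jan22-25} matches integer vertices with minimal vertex covers, yielding the first $r$ normals $(u_i,-1)$; the remaining $p-r$ normals correspond to non-integer vertices, forcing $d_i\geq 2$. Therefore $\mathcal{Q}(I)$ is integral (all vertices integer) if and only if $r=p$.

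The main technical obstacle I anticipate is the rank/dimension bookkeeping that makes the vertex-facet bijection genuinely biconditional: translating an active-constraint set at a vertex of $\mathcal{Q}(I)$ into a linearly independent generating set of size $s$ for the corresponding face of $\mathrm{RC}(I)$ and back, while ensuring the primitive-integer normalization of $a$ recovers exactly the denominator $d$ that detects integrality of $u$. Once this dictionary is in place, the rest is a direct application of Minkowski--Weyl and the routine verification that the coordinate half-spaces are non-redundant under the hypothesis $\alpha_0(\mathcal{C})\geq 2$.
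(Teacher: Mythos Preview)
The paper does not include its own proof of this theorem; the result is quoted from \cite[Corollary~3.3]{reesclu} and \cite[Proposition~1.1.51]{monalg-rev}. Your argument is the standard polyhedral-duality approach and is correct in substance.

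One wording issue: the sentence ``A short argument rules out $a_{s+1}>0$'' is too strong as stated, since $e_{s+1}$ itself is a facet normal with $a_{s+1}=1$. What your computation actually shows is that if $a_{s+1}>0$ and some $\gamma_i>0$ then the face lies in the coordinate subspace $\{x_{s+1}=0\}\cap\{x_i=0\}$ of dimension at most $s-1$; hence the only primitive facet normal with positive last coordinate is $e_{s+1}$. With that rewording the trichotomy on the sign of $a_{s+1}$ is clean.

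For the vertex--facet dictionary you flag as the main obstacle, the rank bookkeeping goes through directly: writing $S=\{i:u_i=0\}$ and $T=\{j:\langle v_j,u\rangle=1\}$, the matrix in $\mathbb{R}^{s+1}$ with columns $\{e_i:i\in S\}\cup\{(v_j,1):j\in T\}$ differs from the matrix in $\mathbb{R}^s$ with columns $\{e_i:i\in S\}\cup\{v_j:j\in T\}$ only by the appended row $(0,\ldots,0,1,\ldots,1)$, and $u$ itself witnesses that this row already lies in the row span of the first $s$ rows (since $u^Te_i=0$ for $i\in S$ and $u^Tv_j=1$ for $j\in T$). Hence the two ranks coincide, the face is a facet iff $u$ is a vertex, and primitivity of $(\gamma,-d)$ forces $d=1$ exactly when $u$ is integral.
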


The hyperplanes bounding the closed halfspaces of Eq.~\eqref{supp-hyp} are the
\textit{supporting hyperplanes} of the Rees cone of $I$, and the $\gamma_i$'s
and $d_i$'s can be computed using \textit{Normaliz} \cite{normaliz2}. 

\begin{theorem}{\rm(\cite[Theorem~3.1]{reesclu},
\cite[Theorem~3.2]{clutters})}\label{reesclu-theorem} 
The vertex set $V(\mathcal{Q}(I))$ of the covering polyhedron
$\mathcal{Q}(I)$ is given by
$$V(\mathcal{Q}(I))=\{\gamma_1/d_1,\ldots,\gamma_r/d_r,\ldots,\gamma_p/d_p\}.$$
\end{theorem}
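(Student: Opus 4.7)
The plan is to recover the vertex description of $\mathcal{Q}(I)$ from the facet description of the Rees cone provided by Theorem~\ref{irred-rep-rc}, by combining polar duality with the standard bijection between vertices of a pointed polyhedron and extreme rays of its homogenization.

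I would first homogenize, passing to the pointed polyhedral cone
\[
C \;:=\; \{(x,t) \in \mathbb{R}^{s+1} \mid x \geq 0,\; t \geq 0,\; xA \geq t\mathbf{1}\},
\]
which satisfies $\mathcal{Q}(I) = C \cap \{t = 1\}$. A standard correspondence then gives that $y \in V(\mathcal{Q}(I))$ if and only if $\mathbb{R}_+(y,1)$ is an extreme ray of $C$, so the task reduces to enumerating the extreme rays of $C$ with $t > 0$.

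Next I would exploit polar duality. Writing elements of $\mathbb{R}^{s+1}$ as $(a,b)$, a check from the defining inequalities shows that the involution $(x,t) \mapsto (x,-t)$ carries $C$ bijectively onto the halfspace slice $\mathrm{RC}(I)^{\vee} \cap \{b \leq 0\}$. Polar duality for pointed polyhedral cones then converts the facet list in Theorem~\ref{irred-rep-rc} into the list of extreme rays of $\mathrm{RC}(I)^{\vee}$, namely $\mathbb{R}_+ e_1, \ldots, \mathbb{R}_+ e_{s+1}$ together with $\mathbb{R}_+(\gamma_j,-d_j)$ for $j = 1, \ldots, p$. All of these lie in $\{b \leq 0\}$ except $\mathbb{R}_+ e_{s+1}$, which has $b = 1 > 0$. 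Transporting the surviving rays back through the involution, the extreme rays of $C$ with $t > 0$ are exactly $\mathbb{R}_+(\gamma_j,d_j)$ for $j = 1, \ldots, p$; each meets the slice $\{t = 1\}$ at the point $\gamma_j/d_j$, yielding the claimed equality $V(\mathcal{Q}(I)) = \{\gamma_1/d_1, \ldots, \gamma_p/d_p\}$.

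The step that requires the most care is the assertion that the halfspace slice $\mathrm{RC}(I)^{\vee} \cap \{b \leq 0\}$ has no extreme rays beyond those it inherits from $\mathrm{RC}(I)^{\vee}$, since slicing a cone by a halfspace through the origin can in general create new extreme rays within the bounding hyperplane. I would handle this in two stages: any extreme ray of the slice that sits strictly in $\{b < 0\}$ is forced to be extreme in the ambient cone $\mathrm{RC}(I)^{\vee}$ by a short scaling argument applied to any putative decomposition into elements of $\mathrm{RC}(I)^{\vee}$; and new rays in $\{b = 0\}$ are ruled out by the explicit computation $\mathrm{RC}(I)^{\vee} \cap \{b = 0\} = \mathbb{R}_+^s \times \{0\}$, which holds because the constraints $\langle v_j, a\rangle + b \geq 0$ are automatic on $\{a \geq 0,\; b = 0\}$ given that $v_j \in \mathbb{N}^s$, so the extreme rays of this slice are exactly $e_1, \ldots, e_s$, already in the list.
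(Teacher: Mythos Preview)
Your argument is correct. The paper is a survey and does not supply its own proof of this statement---it simply cites the original sources \cite{reesclu,clutters}---so there is no in-paper argument to compare your approach against. Your route through homogenization, polar duality, and the explicit analysis of the halfspace slice ${\rm RC}(I)^{\vee}\cap\{b\leq 0\}$ is a natural and complete way to extract the vertex description of $\mathcal{Q}(I)$ from the facet description of the Rees cone recorded in Theorem~\ref{irred-rep-rc}; the only delicate point is the one you flag, and your two-stage handling (openness for rays with $b<0$, together with the computation ${\rm RC}(I)^{\vee}\cap\{b=0\}=\mathbb{R}_+^s\times\{0\}$) disposes of it.
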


\noindent {\it Notation} In what follows we set 
$\ell_i:=(\gamma_i,-d_i)=d_i(\gamma_i/d_i,-1)$ and $u_i=\gamma_i/d_i$
for $i=1,\ldots,p$. 

Finding the vertices of covering polyhedra is useful for determining
the asymptotic resurgence of edge ideals
(Theorem~\ref{duality-ic-resurgence}). 
By the last two results, finding the supporting hyperplanes
of ${\rm RC}(I)$ is equivalent to finding the vertices of
$\mathcal{Q}(I)$. Another way to compute the vertices of
$\mathcal{Q}(I)$ 
is to find the extreme rays, i.e., the
$1$-dimensional faces of the \textit{Simis cone} of $I^\vee$ \cite{Lisboar}: 
\begin{equation*}
{\rm SC}(I^\vee):=\{x\in\mathbb{R}^{s+1}\vert\, x\geq 0;\, \langle
x,(v_i,-1)\rangle\geq 0\ \mbox{ for } \ i=1,\ldots,m\}, 
\end{equation*}
see \cite[Proposition 3.15]{intclos}.  For 
more information about Simis cones, see the discussion after
Eq.~\eqref{simis-cone}. 
There are methods to find all vertices of a general
polyhedron \cite{avis-fukuda,avis-fukuda1}. 

If the Rees algebra $S[Iz]$ is normal, the following result 
shows that the rank of the divisor class
group of $S[Iz]$ is the integer
$p$ appearing in the irreducible representation of the Rees 
cone of $I$ given in Eq~(\ref{supp-hyp}). Note that $p$ is also the number of
vertices of $\mathcal{Q}(I)$. 

\begin{proposition}{\rm(Simis and Trung \cite[Theorem~1.1]{simis-trung},
\cite[Proposition~12.7.3]{monalg-rev})}\label{okayama3}  
If $S[Iz]$ is normal, then its divisor class 
group ${\rm Cl}(S[Iz])$ is a free abelian group of rank $p$.   
\end{proposition}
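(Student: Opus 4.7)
\medskip
\noindent\textbf{Plan of proof.} My plan is to deduce this from the classical theorem, due to Chouinard (see, e.g., \cite[Chapter~4]{BHer} or \cite{BG-book}), describing the divisor class group of a normal affine semigroup ring in terms of the facets of its rational cone. Since $S[Iz]$ is assumed to be normal and is the monomial subring of $S[z]$ associated to the affine monoid $M=\mathbb{Z}^{s+1}\cap{\rm RC}(I)$, and since $M$ generates $\mathbb{Z}^{s+1}$ as a group (because $e_1,\ldots,e_{s+1}\in M$), Chouinard's theorem gives an isomorphism
\[
{\rm Cl}(S[Iz])\;\cong\;\mathbb{Z}^{N}\big/\varphi(\mathbb{Z}^{s+1}),
\]
where $N$ is the number of facets of ${\rm RC}(I)$ and $\varphi$ is the homomorphism whose components are the inner products with the primitive inner normals to those facets.

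Next, I would read off the facets of ${\rm RC}(I)$ directly from the irreducible representation in Theorem~\ref{irred-rep-rc}. That statement exhibits exactly $s+1+p$ supporting hyperplanes, with primitive inner normals $e_1,\ldots,e_{s+1}$ (from the coordinate halfspaces $H^+_{e_i}$) and $\ell_i=(\gamma_i,-d_i)$ for $i=1,\ldots,p$; primitivity of the $\ell_i$ is built into the statement via the relative-primality condition on their non-zero entries. Therefore $N=s+1+p$ and
\[
\varphi(x)=\bigl(x_1,\ldots,x_{s+1},\,\langle x,\ell_1\rangle,\ldots,\langle x,\ell_p\rangle\bigr).
\]

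Finally, I would compute the cokernel directly. Because the first $s+1$ coordinates of $\varphi$ recover $x$ itself, the image of $\varphi$ is the graph of the $\mathbb{Z}$-linear map $L\colon\mathbb{Z}^{s+1}\to\mathbb{Z}^p$ with $L(x)=(\langle x,\ell_1\rangle,\ldots,\langle x,\ell_p\rangle)$. Every class in the quotient $\mathbb{Z}^{s+1+p}/\varphi(\mathbb{Z}^{s+1})$ therefore has a unique representative of the form $(0,\ldots,0,z_1,\ldots,z_p)$, so the cokernel is free abelian of rank $p$, as asserted. The main obstacle is not a computational one but rather a bookkeeping check: one has to confirm that the $s+1+p$ hyperplanes of Theorem~\ref{irred-rep-rc} really are the complete list of facet supporting hyperplanes with \emph{primitive} inner normals (so that Chouinard's formula applies without hidden identifications) and that the normality hypothesis is what legitimises the identification of $S[Iz]$ with $K[\mathbb{Z}^{s+1}\cap {\rm RC}(I)]$. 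Once those are in place, the rank count reduces to the linear-algebra observation above.
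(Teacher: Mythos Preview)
The paper does not prove this proposition; it is stated with a citation to Simis--Trung and to \cite[Proposition~12.7.3]{monalg-rev}. Your approach via Chouinard's description of the divisor class group of a normal affine semigroup ring is the standard one (and is essentially the argument one finds in the cited book), and your cokernel computation is correct.

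There is one small slip in your justification that $\mathbb{Z}M=\mathbb{Z}^{s+1}$: you write that $e_1,\ldots,e_{s+1}\in M$, but $e_{s+1}$ does \emph{not} lie in the Rees cone ${\rm RC}(I)=\mathbb{R}_+\{e_1,\ldots,e_s,(v_1,1),\ldots,(v_m,1)\}$, hence not in $M$. The conclusion is nevertheless correct: since $e_1,\ldots,e_s\in M$ and $(v_1,1)\in M$, one has $e_{s+1}=(v_1,1)-\sum_{j=1}^s(v_1)_j\,e_j\in\mathbb{Z}M$, so $\mathbb{Z}M=\mathbb{Z}^{s+1}$. With this minor correction your argument is complete.
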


Since the program {\it Normaliz\/} \cite{normaliz2} computes 
the irreducible representation of the Rees cone of $I$ and the 
integral closure of $S[Iz]$, 
the following result gives effective criteria for the reducedness of the associated 
graded ring ${\rm gr}_I(S)$ of $I$ (see \cite[Example~14.2.20]{monalg-rev}).

\begin{proposition}{\rm(Effective reducedness criterion
\cite[Proposition~3.4]{Lisboar})}\label{effcred} 
{\rm(a)} ${\rm gr}_I(S)$ is reduced 
if and only if the Rees algebra $S[Iz]$ is normal and $r=p$. {\rm(b)} ${\rm gr}_I(S)$ is reduced 
if and only if $S[Iz]$ is normal and 
$\langle\ell_i,e_{s+1}\rangle=-1$ for $i=1,\ldots,p$.
\end{proposition}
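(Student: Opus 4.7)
The plan is to derive this criterion by stitching together the three earlier results on reducedness of the associated graded ring (Theorem~\ref{reduced-normal-cones}), the divisor class group of the Rees algebra (Proposition~\ref{okayama3}), and the structure of the covering polyhedron (Theorem~\ref{reesclu-theorem}), together with the standard fact that the minimal primes of a squarefree monomial ideal $I(\mathcal{C})$ correspond bijectively to the minimal vertex covers of $\mathcal{C}$. The whole argument is essentially a bookkeeping of ranks, so the only issue I will have to be careful about is matching ``number of minimal primes of $I$'' on the Huneke--Simis--Vasconcelos side with the integer $r$ appearing in the irreducible representation \eqref{supp-hyp} of the Rees cone.

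For part (a), I would first argue that the number of minimal primes of $I=I(\mathcal{C})$ equals $r$. Since $I$ is a squarefree monomial ideal, its minimal primes are exactly the ideals $(t_i\mid t_i\in C)$ for $C$ a minimal vertex cover of $\mathcal{C}$; via the bijection \eqref{jan22-25} between $E(\mathcal{C}^\vee)$ and the integral vertices of $\mathcal{Q}(I)$, and in view of Theorem~\ref{reesclu-theorem} together with the fact that $d_i=1$ for $i\leq r$ and $d_i\geq 2$ for $i>r$ (Theorem~\ref{irred-rep-rc}), these minimal vertex covers are in bijection with the first $r$ vertices $\gamma_i/d_i=\gamma_i$ of $\mathcal{Q}(I)$. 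Thus the number of minimal primes of $I$ equals $r$.

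Next, assume ${\rm gr}_I(S)$ is reduced. Theorem~\ref{reduced-normal-cones} implies $S[Iz]$ is normal and ${\rm Cl}(S[Iz])$ is free abelian of rank equal to the number of minimal primes of $I$, namely $r$. On the other hand, Proposition~\ref{okayama3} gives ${\rm rank}\,{\rm Cl}(S[Iz])=p$ because $S[Iz]$ is normal. Hence $r=p$. Conversely, if $S[Iz]$ is normal and $r=p$, then Proposition~\ref{okayama3} gives ${\rm rank}\,{\rm Cl}(S[Iz])=p=r=\#\{\text{minimal primes of }I\}$, and Theorem~\ref{reduced-normal-cones}(iii)$\Rightarrow$(ii) yields that ${\rm gr}_I(S)$ is reduced.

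For part (b), I simply translate $r=p$ into the halfspace data. Since $\ell_i=(\gamma_i,-d_i)$, one has $\langle \ell_i,e_{s+1}\rangle=-d_i$, so the condition $\langle \ell_i,e_{s+1}\rangle=-1$ for all $i\in\{1,\ldots,p\}$ is equivalent to $d_1=\cdots=d_p=1$. By Theorem~\ref{irred-rep-rc} this in turn is equivalent to $r=p$, at which point part (a) finishes the argument. The only delicate point throughout is the identification of the minimal primes of $I$ with the first $r$ vertices of $\mathcal{Q}(I)$; once that is in hand, the proof is a direct combination of the cited results.
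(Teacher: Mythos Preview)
Your proof is correct and follows essentially the same route as the paper's own argument: identify $r$ with the number of minimal primes of $I$ via the minimal-vertex-cover bijection \eqref{jan22-25}, then combine Theorem~\ref{reduced-normal-cones} with Proposition~\ref{okayama3} to compare ranks. Your treatment of part (b), reading $\langle\ell_i,e_{s+1}\rangle=-d_i$ and invoking Theorem~\ref{irred-rep-rc}, is the intended translation and is not spelled out in the paper.
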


\begin{proof} $\Rightarrow$) Note that $r$ is the number of minimal
primes of $I$, since $r$ is the number of minimal
vertex covers of the clutter $\mathcal{C}$ (see Eq.~\eqref{jan22-25}). Then, by
Theorem~\ref{reduced-normal-cones} and Proposition~\ref{okayama3},
$S[Iz]$ is normal and 
${\rm Cl}(S[Iz])\simeq\mathbb{Z}^r\simeq\mathbb{Z}^p$. Thus, $r=p$.

$\Leftarrow)$ As $S[Iz]$ is normal and $r=p$, by Proposition~\ref{okayama3}, 
one has ${\rm Cl}(S[Iz])\simeq\mathbb{Z}^r$. Hence, by
Theorem~\ref{reduced-normal-cones}, ${\rm gr}_I(S)$ is reduced.
\end{proof}

To connect the result of Huneke, Simis and Vasconcelos
(Theorem~\ref{reduced-normal-cones}) with symbolic
powers, we recall the following: the \textit{Newton
polyhedron} of $I$ is the 
integral polyhedron 
\begin{equation}\label{NP-def}
{\rm NP}(I):=\mathbb{R}_+^s+{\rm 
conv}(v_1,\ldots,v_m),
\end{equation}
the integral closure of $I^n$ can be expressed as   
\begin{equation}
\overline{I^n}=
(\{t^a\mid a/n\in{\rm NP}(I)\})
\end{equation}
for all $n\geq 1$ \cite[Proposition~3.5(a)]{reesclu}, and 
the $n$-th {symbolic
power} of $I$ is
given by 
\begin{equation}\label{jun21-21-1}
I^{(n)}=(\{t^a\mid a/n\in\mathcal{Q}(I^\vee)\}),
\end{equation}
where $I^\vee$ is the Alexander dual of $I$ \cite[p.~78]{reesclu}.
Recall that $I^\vee$ is $I_c(\mathcal{C})$, the ideal of covers of
$\mathcal{C}$. 
The covering polyhedron $\mathcal{Q}(I^\vee)$ of $I^\vee$ is also called
the \textit{symbolic polyhedron} of
$I$ \cite[p.~50]{Cooper-symbolic}. 

For monomial ideals the Newton polyhedron is a sort of covering
polyhedron:  

\begin{proposition}\label{np-qa} Let $u_1,\ldots,u_p$ be 
the vertices of $\mathcal{Q}(I)$, $u_i=\gamma_i/d_i$ for
$i=1,\ldots,p$, and let $B$ be the matrix with
column vectors $u_1,\ldots,u_p$. The following hold.
\begin{enumerate}
\item[(a)] \cite[Proposition~3.5(b)]{reesclu} ${\rm
NP}(I)=\mathcal{Q}(B)=\{x\mid x\geq 0;\,
xB\geq 1\}$. 
\item[(b)] The vertices of ${\rm NP}(I)$ are
contained in $\{v_1,\ldots,v_m\}$. 
\end{enumerate}
\end{proposition}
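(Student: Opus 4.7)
The claim decomposes naturally into the two parts, and my plan treats them separately.

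For (a), I would quote \cite[Proposition~3.5(b)]{reesclu} as cited in the statement, but let me sketch the idea since it feeds into (b). The inclusion ${\rm NP}(I)\subseteq\mathcal{Q}(B)$ is immediate: each $v_j$ satisfies $\langle v_j,u_i\rangle\geq 1$ because $u_i\in\mathcal{Q}(I)$ means $u_iA\geq{1}$; convex combinations of the $v_j$'s preserve this, and adding a vector in $\mathbb{R}_+^s$ only increases the inner products against $u_i\geq 0$. For the reverse inclusion, take $x\geq 0$ with $xB\geq{1}$. Since $\mathcal{Q}(I)=\mathrm{conv}(u_1,\ldots,u_p)+\mathbb{R}_+^s$ (Theorem~\ref{reesclu-theorem} together with $x\geq 0$), this says $\langle x,y\rangle\geq 1$ for every $y\in\mathcal{Q}(I)$. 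LP duality applied to the program $\min\{\langle x,y\rangle\mid y\in\mathcal{Q}(I)\}$ then produces $\lambda\in\mathbb{Q}_+^m$ with $A\lambda\leq x$ and $|\lambda|\geq 1$. Setting $w=A\lambda/|\lambda|\in\mathrm{conv}(v_1,\ldots,v_m)$ and $r=x-w$, the inequality $A\lambda\geq 0$ together with $|\lambda|\geq 1$ gives $r\geq x-A\lambda\geq 0$, so $x=w+r\in{\rm NP}(I)$.

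For (b), I would use a standard Minkowski--Weyl argument. Let $v$ be a vertex of ${\rm NP}(I)$ and fix a supporting hyperplane $H=\{x\mid\langle a,x\rangle=b\}$ with ${\rm NP}(I)\subseteq H^+=\{\langle a,x\rangle\geq b\}$ and ${\rm NP}(I)\cap H=\{v\}$. Because the recession cone of ${\rm NP}(I)$ is $\mathbb{R}_+^s$, the functional $\langle a,\cdot\rangle$ is bounded below on $\mathbb{R}_+^s$, which forces $a\geq 0$. Write $v=w+r$ with $w\in Q:=\mathrm{conv}(v_1,\ldots,v_m)$ and $r\in\mathbb{R}_+^s$. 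Then $\langle a,v\rangle=\langle a,w\rangle+\langle a,r\rangle$ with $\langle a,w\rangle\geq b$ (since $w\in{\rm NP}(I)$) and $\langle a,r\rangle\geq 0$; the equality $\langle a,v\rangle=b$ forces both $\langle a,w\rangle=b$ and $\langle a,r\rangle=0$. Thus $w\in{\rm NP}(I)\cap H=\{v\}$, so $w=v\in Q$. Restricting $H$ to $Q$ gives $Q\cap H\subseteq{\rm NP}(I)\cap H=\{v\}$, so $v$ is a vertex of the polytope $Q$, and the vertices of $\mathrm{conv}(v_1,\ldots,v_m)$ lie in $\{v_1,\ldots,v_m\}$.

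I do not anticipate a genuine obstacle: both parts are routine once one identifies the right tools (LP duality for (a), the polytope-plus-pointed-cone decomposition for (b)). The only delicate point is ensuring in (b) that the supporting functional $a$ is nonnegative; this uses that $\mathbb{R}_+^s$ is a full-dimensional pointed recession cone of ${\rm NP}(I)$, so that any linear form achieving a minimum on ${\rm NP}(I)$ must be nonnegative on $\mathbb{R}_+^s$. Once this is in hand, the rest of the argument is a clean bookkeeping of equality cases.
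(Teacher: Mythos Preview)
Your proposal is correct. The paper's own proof is extremely terse: for (a) it simply cites \cite[Proposition~3.5(b)]{reesclu}, and for (b) it invokes \cite[Propositions~1.1.36 and 1.1.39]{monalg-rev} applied to the decomposition ${\rm NP}(I)=\mathbb{R}_+^s+\mathrm{conv}(v_1,\ldots,v_m)$, which are exactly the polytope-plus-cone Minkowski facts that your supporting-hyperplane argument spells out directly; so your approach is the same in substance, just self-contained rather than cited.
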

\begin{proof} (b) Since ${\rm NP}(I)=\mathbb{R}_+^s+{\rm
conv}(v_1,\ldots,v_m)$, by \cite[Propositions 1.1.36 and
1.1.39]{monalg-rev}, the vertices of ${\rm NP}(I)$ are contained 
in the set $\{v_1,\ldots,v_m\}$. 
\end{proof}

The following uniform containment theorem is a major result in the 
containment problem
comparing symbolic and ordinary powers of ideals  
\cite{symbolic-powers-survey,Francisco-TAMS,Grifo-etal}, and it is
related to the resurgence theory of ideals
\cite{resurgence,DiPasquale-Drabkin,intclos,asymptotic-resurgence}, see below. 

\begin{theorem}{\rm(Hochster and Huneke
\cite[Theorem~1.1]{Hochster-Huneke})}
\label{HH} Let $S$ be a Noetherian regular ring containing a field, let $I$ be any
ideal of $S$, and let $h$ be the largest height of any associated
prime of $I$. Then, $I^{(hn)}\subset I^n$ for all positive integers
$n$. 
\end{theorem}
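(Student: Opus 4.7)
The plan is to follow the original tight closure argument of Hochster and Huneke. First I would reduce to the case where $S$ is a regular local ring: the desired containment can be checked after localizing, and both regularity and the height bound on associated primes survive localization. I would then treat positive characteristic first; the characteristic zero case follows by the standard reduction modulo $p$, descending $S$ and $I$ to a finitely generated $\mathbb{Z}$-subalgebra and passing to fibers at sufficiently large primes where flatness and persistence of associated primes can be controlled.

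In characteristic $p>0$, the heart of the proof is the following \emph{key lemma}: for any ideal $J$ of $S$ all of whose associated primes have height at most $h$, one has $J^{(hq)} \subseteq J^{[q]}$ for every Frobenius power $q = p^e$. I would establish this by first localizing at each associated prime and invoking primary decomposition to reduce to the case in which $J = P$ is prime of height $h$. In $S_P$, the maximal ideal $PS_P$ is the maximal ideal of a regular local ring of dimension $h$, hence generated by a regular system of parameters $x_1,\ldots,x_h$. Every generator of $(PS_P)^{hq}$ is a monomial $x_1^{a_1}\cdots x_h^{a_h}$ with $\sum a_i = hq$, and by the pigeonhole principle some $a_i \geq q$, so the monomial lies in $(x_1^q,\ldots,x_h^q) = (PS_P)^{[q]}$. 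Because the Frobenius is flat on a regular ring, $P^{[q]}$ is $P$-primary, so contracting back to $S$ recovers $P^{(hq)} \subseteq P^{[q]}$, and assembling the primary decomposition gives the lemma for $J$.

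From the key lemma I would deduce the theorem by showing $I^{(hn)} \subseteq (I^n)^{*}$, the tight closure of $I^n$; since $S$ is regular, every ideal is tightly closed and $(I^n)^{*} = I^n$, yielding the statement. For $f \in I^{(hn)}$ and a test element $c\in S^{\circ}$, I would check that $cf^{q} \in (I^n)^{[q]}$ for all large $q$: starting from $f^q \in (I^{(hn)})^{q} \subseteq I^{(hnq)}$, I would apply the key lemma (and its refinements) to thread $I^{(hnq)}$ inside $(I^{[q]})^{n} \subseteq (I^{n})^{[q]}$, after adjusting for any embedded primes of $I^n$ that arise.

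The main obstacle I anticipate is this last bookkeeping step: the key lemma delivers one Frobenius power on the right, and one must amplify it to the $n$-th power $(I^n)^{[q]}$ while keeping track of a uniform test element and controlling the possibly enlarged set $\mathrm{Ass}(S/I^n)$ in the regular ring. A secondary, more technical obstacle is the reduction from characteristic zero to characteristic $p$: while a standard tool, it requires careful descent of the data $(S,I)$ to a finitely generated base, verification that the associated-prime-height hypothesis is preserved generically, and the observation that checking containment on a dense open set of fibers of prime characteristic suffices to conclude in characteristic zero.
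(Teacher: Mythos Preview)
The paper does not give its own proof of this theorem: it is a survey, and Theorem~\ref{HH} is simply quoted from \cite{Hochster-Huneke} as background for the discussion of resurgence. There is nothing in the paper to compare your proposal against.

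That said, your sketch is a faithful outline of the original Hochster--Huneke tight closure argument, and you have correctly isolated the one genuinely delicate point. The passage from the key lemma $P^{(hq)}\subset P^{[q]}$ to the containment $I^{(hnq)}\subset (I^n)^{[q]}$ (up to a test element) is not just bookkeeping: one does not apply the lemma to $J=I^n$ directly, since the associated primes of $I^n$ are not under control. Instead one works at each minimal prime $P$ of $I$, where $(PS_P)^{hnq}$ is analyzed by a refined pigeonhole/iteration (a monomial of degree $hnq$ in $h$ variables lies in $((x_1^q,\ldots,x_h^q))^n=(P^{[q]})^nS_P$), yielding $I^{(hnq)}\subset (I^{[q]})^{(n)}$ locally, and then one multiplies by a single element $c$ clearing denominators to land in $(I^{[q]})^n=(I^n)^{[q]}$. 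Your phrase ``adjusting for any embedded primes of $I^n$'' suggests a different route than the one actually taken; the argument avoids $\mathrm{Ass}(S/I^n)$ entirely. The reduction to characteristic $p$ is, as you say, standard but nontrivial, and is carried out carefully in the original paper.
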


The resurgence and asymptotic resurgence of ideals were introduced 
by Bocci and Harbourne \cite{resurgence} and by Guardo, Harbourne and Van
Tuyl \cite{asymptotic-resurgence}, respectively. The resurgence of an ideal
relative to the integral closure was introduced by DiPasquale, Francisco, Mermin and
Schweig \cite{Francisco-TAMS}. The \textit{resurgence},
\textit{asymptotic resurgence}, and \textit{ic-resurgence} of an ideal
$I$ are given
by  
\begin{align*}
&\rho(I):=\sup \left\{{n}/{r}
\ \big|\   
I^{(n)}\not\subset I^r\right\},\\ 
&\ \ \ \ \ \widehat{\rho}(I):=\sup
\left \{ {n}/{r}
\ \big|\  
I^{(nt)}\not\subset I^{rt} \text{ for all } t\gg 0\right\},\\ 
&\ \ \ \ \ \ \ \ \ \ \ \rho_{ic}(I):=\sup
\left \{ {n}/{r}
\ \big|\  
I^{(n)}\not\subset\overline{I^{r}}\right\},\
\text{respectively}.
\end{align*}
\quad In general, $h\geq{\rho}(I)\geq {\rho}_{ic}(I)$
\cite[p.~66]{DiPasquale-Drabkin}, where $h$ is
the big height of $I$ that appears in Theorem~\ref{HH},  and $\widehat{\rho}(I)={\rho}_{ic}(I)$ 
\cite[Corollary~4.14]{Francisco-TAMS}. In particular
$\widehat{\rho}(I)$ is ${\rho}(I)$ if $I$ is normal. The resurgence of
$I$ is \textit{expected} if it is strictly less than 
the big height of $I$. Edge ideals of clutters of height at least $2$ 
have expected resurgence (see Theorem~\ref{DiB} below). If $I$ is a 
radical ideal in a regular ring, Grifo, Huneke and Mukundan
\cite{grifo-h-m} give a sufficient condition for the expected
resurgence of $I$.

There is a  
recent algorithm to compute the ic-resurgence $\rho_{ic}(I)$ of $I$ \cite{intclos} using linear-fractional
programming \cite{boyd}. The following result gives us an alternative
method for computing $\rho_{ic}(I)$ and a duality formula for 
the ic-resurgence of $I$ in terms of 
the vertices of $\mathcal{Q}(I)$ and $\mathcal{Q}(I^\vee)$, see also
the very recent preprint of H\`a, Jayanthan, Kumar and Nguyen 
\cite{Ha-Jayanthan-Kumar-Nguyen}. 

\begin{theorem}{\rm(Duality formula \cite[Theorem~3.7]{ic-resurgence})}\label{duality-ic-resurgence} 
If $I$ is the edge ideal of $\mathcal{C}$, then 
$$
\frac{1}{\rho_{ic}(I)}=\min\left\{\langle u,v\rangle\mid u\in V(\mathcal{Q}(I)),\,
v\in V(\mathcal{Q}(I^\vee))\right\},
$$
where $V(\mathcal{Q}(I))$ is the vertex
set of $\mathcal{Q}(I)$. In particular,
$\rho_{ic}(I)=\rho_{ic}(I^\vee)$ and $\rho_{ic}(I)\in\mathbb{Q}$.
\end{theorem}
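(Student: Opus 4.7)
The plan is to translate the containment $I^{(n)}\subset\overline{I^r}$ into a polyhedral inequality, and then read $\rho_{ic}(I)$ off from a linear program over the vertex sets of $\mathcal{Q}(I)$ and $\mathcal{Q}(I^\vee)$.

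First I would combine the monomial description of integral closure in Eq.~\eqref{jun21-21} with the symbolic-power formula in Eq.~\eqref{jun21-21-1} and with Proposition~\ref{np-qa}(a), which expresses ${\rm NP}(I)$ as $\{x\geq 0\mid xB\geq 1\}$ where $B$ has column vectors the elements of $V(\mathcal{Q}(I))$. This yields the key equivalence: $I^{(n)}\subset\overline{I^r}$ if and only if every $a\in\mathbb{N}^s$ with $a/n\in\mathcal{Q}(I^\vee)$ satisfies $\langle u,a\rangle\geq r$ for all $u\in V(\mathcal{Q}(I))$. Set $M:=\min\{\langle u,v\rangle\mid u\in V(\mathcal{Q}(I)),\,v\in V(\mathcal{Q}(I^\vee))\}$; note $M\in\mathbb{Q}_+$ because all vertices of these rational polyhedra are rational.

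For $\rho_{ic}(I)\leq 1/M$, suppose $r/n\leq M$ and $a/n\in\mathcal{Q}(I^\vee)$. For each $u\in V(\mathcal{Q}(I))\subset\mathbb{R}_+^s$, the linear functional $x\mapsto\langle u,x\rangle$ attains its minimum over $\mathcal{Q}(I^\vee)$ at a vertex (the LP is bounded below because $u\geq 0$ forces the objective to be non-decreasing along the recession cone $\mathbb{R}_+^s$). Hence $\langle u,a/n\rangle\geq\min_{v\in V(\mathcal{Q}(I^\vee))}\langle u,v\rangle\geq M\geq r/n$, so $\langle u,a\rangle\geq r$ for every vertex $u$, giving $I^{(n)}\subset\overline{I^r}$. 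For the matching inequality $\rho_{ic}(I)\geq 1/M$, pick a vertex pair $(u_0,v_0)$ achieving $M$ and choose $t_0\in\mathbb{N}_+$ with $t_0v_0\in\mathbb{N}^s$. For each $k\in\mathbb{N}_+$, set $n:=kt_0$, $a:=kt_0v_0\in\mathbb{N}^s$, and $r:=\lfloor nM\rfloor+1$, so that $r>nM$. Then $a/n=v_0\in\mathcal{Q}(I^\vee)$ forces $t^a\in I^{(n)}$, while $\langle u_0,a/r\rangle=nM/r<1$ shows $a/r\notin{\rm NP}(I)$ and hence $t^a\notin\overline{I^r}$. Therefore $I^{(n)}\not\subset\overline{I^r}$ and $n/r\to 1/M$ as $k\to\infty$.

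Combining the two estimates yields $1/\rho_{ic}(I)=M$. The equality $\rho_{ic}(I)=\rho_{ic}(I^\vee)$ is then automatic from the symmetry of the right-hand side together with the fact that $(I^\vee)^\vee=I$ for squarefree monomial ideals, and $\rho_{ic}(I)\in\mathbb{Q}$ follows from $M\in\mathbb{Q}_+$. The main subtle point is the LP step in the upper bound: one must justify that $\min\{\langle u,x\rangle\mid x\in\mathcal{Q}(I^\vee)\}$ is attained at a vertex of the unbounded polyhedron $\mathcal{Q}(I^\vee)$, which is where the nonnegativity $u\geq 0$ is used essentially. A secondary care-point is handling the strict versus non-strict inequality in passing from $r>nM$ to $r/n$ being achievable by integers approaching $M$, which the construction $r=\lfloor nM\rfloor+1$ with $n=kt_0$ and $k\to\infty$ handles uniformly regardless of whether $M$ is an integer.
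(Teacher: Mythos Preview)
The paper does not include a proof of this theorem; it is quoted from \cite{ic-resurgence} as part of the survey. Your argument is correct and self-contained: the polyhedral translation via Eqs.~\eqref{jun21-21}, \eqref{jun21-21-1} and Proposition~\ref{np-qa}(a) gives exactly the equivalence you state, the LP step is justified because $u\geq 0$ and the recession cone of $\mathcal{Q}(I^\vee)$ is $\mathbb{R}_+^s$ (so the polyhedron is pointed and the bounded minimum is attained at a vertex), and your explicit sequence $(n,r)=(kt_0,\lfloor kt_0M\rfloor+1)$ cleanly realizes the supremum from below. This polyhedral reduction to the bilinear minimum over $V(\mathcal{Q}(I))\times V(\mathcal{Q}(I^\vee))$ is the natural approach and matches the one in the cited source.
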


The ic-resurgence of $I$ classifies the integrality of  
$\mathcal{Q}(I)$ because $\rho_{ic}(I)\geq 1$ with equality if and only if
$\mathcal{Q}(I)$ is integral \cite{ic-resurgence}. 

\begin{proposition}{\rm(\cite[Proposition~4.5]{Seceleanu-packing},
\cite[Theorem~7.8]{intclos}, \cite{ic-resurgence})}\label{qi-integral} 
The following conditions are equivalent.
\begin{enumerate}
\item[(a)] $\mathcal{Q}(I^\vee)$ is integral. {\rm(b)} ${\rm
NP}(I)=\mathcal{Q}(I^\vee)$. {\rm(c)} $\overline{I^n}=I^{(n)}$
for all $n\geq 1$. {\rm(d)} $\rho_{ic}(I)=1$.  
\end{enumerate}
\end{proposition}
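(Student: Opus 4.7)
The plan is to split into three pieces: (a)$\Leftrightarrow$(b) by polyhedral comparison, (b)$\Leftrightarrow$(c) by translating between polyhedra and integrally closed monomial ideals, and (a)$\Leftrightarrow$(d) via the duality formula of Theorem~\ref{duality-ic-resurgence}. The common thread is the always-true inclusion ${\rm NP}(I)\subseteq\mathcal{Q}(I^\vee)$, which holds because each edge vector $v_i$ pairs with the characteristic vector $u$ of any minimal vertex cover $C$ of $\mathcal{C}$ to $\langle v_i,u\rangle=|e_i\cap C|\geq 1$, and both polyhedra have recession cone $\mathbb{R}_+^s$.

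For (a)$\Leftrightarrow$(b), I would apply the bijection of Eq.~\eqref{jan22-25} to $I^\vee$ together with the blocker duality $(\mathcal{C}^\vee)^\vee=\mathcal{C}$ to identify the integral vertices of $\mathcal{Q}(I^\vee)$ with the edge vectors $\{v_1,\ldots,v_m\}$; by Proposition~\ref{np-qa}(b) the vertices of ${\rm NP}(I)$ also lie in $\{v_1,\ldots,v_m\}$ and are in particular integral. Under (a), every vertex of $\mathcal{Q}(I^\vee)$ is integral, hence belongs to $\{v_1,\ldots,v_m\}\subseteq{\rm NP}(I)$; combined with the coincidence of recession cones this upgrades the always-true inclusion to equality, yielding (b). The converse is immediate from the integrality of the vertices of ${\rm NP}(I)$.

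For (b)$\Leftrightarrow$(c), the descriptions $\overline{I^n}=(\{t^a\mid a/n\in{\rm NP}(I)\})$ from Eq.~\eqref{jun21-21} and $I^{(n)}=(\{t^a\mid a/n\in\mathcal{Q}(I^\vee)\})$ from Eq.~\eqref{jun21-21-1} make (b)$\Rightarrow$(c) automatic. Conversely, both polyhedra are rational with recession cone $\mathbb{R}_+^s$, so each is uniquely determined by its set of rational points; since $a/n\in{\rm NP}(I)$ (resp. $\mathcal{Q}(I^\vee)$) precisely when $t^a\in\overline{I^n}$ (resp. $I^{(n)}$), equality of the two filtrations of ideals forces equality of the two polyhedra.

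For (a)$\Leftrightarrow$(d), I plug into the duality formula $1/\rho_{ic}(I)=\min\{\langle u,v\rangle\mid u\in V(\mathcal{Q}(I)),\,v\in V(\mathcal{Q}(I^\vee))\}$. Under (a) every $v\in V(\mathcal{Q}(I^\vee))$ equals some edge vector $v_i$, and every $u\in V(\mathcal{Q}(I))\subseteq\mathcal{Q}(I)$ satisfies $\langle u,v_i\rangle\geq 1$; taking $u_j$ the characteristic vector of an integral minimal cover $C$ and $v_i$ an edge meeting $C$ in a single vertex (such an edge exists by minimality of $C$) gives $\langle u_j,v_i\rangle=1$, so the minimum equals $1$ and (d) holds. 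For (d)$\Rightarrow$(b), suppose instead $v^*\in\mathcal{Q}(I^\vee)\setminus{\rm NP}(I)$; by the description ${\rm NP}(I)=\{x\geq 0\mid xB\geq 1\}$ in Proposition~\ref{np-qa}(a), some vertex $u_j\in V(\mathcal{Q}(I))$ satisfies $\langle v^*,u_j\rangle<1$. Decomposing $v^*$ by the Minkowski--Weyl theorem as a convex combination of vertices of $\mathcal{Q}(I^\vee)$ plus a vector in $\mathbb{R}_+^s$ (which pairs nonnegatively with $u_j\geq 0$) yields a vertex $v'\in V(\mathcal{Q}(I^\vee))$ with $\langle v',u_j\rangle<1$, forcing $\rho_{ic}(I)>1$ and contradicting (d). The most delicate step is this last decomposition, which relies crucially on $u_j\geq 0$ to ensure the recession-cone contribution to the pairing is nonnegative.
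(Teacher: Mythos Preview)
Your argument is correct. The paper itself does not supply a proof of Proposition~\ref{qi-integral}; it is stated with citations to \cite{Seceleanu-packing}, \cite{intclos}, and \cite{ic-resurgence}, so there is no in-paper argument to compare against. What you have done is assemble a self-contained proof from ingredients that \emph{are} developed or recorded in the surrounding text: the polyhedral descriptions of $\overline{I^n}$ and $I^{(n)}$ in Eqs.~\eqref{jun21-21} and \eqref{jun21-21-1}, Proposition~\ref{np-qa}, the cover--vertex bijection of Eq.~\eqref{jan22-25} together with blocker duality $(\mathcal{C}^\vee)^\vee=\mathcal{C}$, and the duality formula of Theorem~\ref{duality-ic-resurgence}. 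This is very much in the spirit of the cited sources, which likewise hinge on comparing ${\rm NP}(I)$ with $\mathcal{Q}(I^\vee)$ and on the duality formula for $\rho_{ic}$.

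One remark on logical dependencies: your proof of (a)$\Leftrightarrow$(d) invokes Theorem~\ref{duality-ic-resurgence}, which is itself a cited result from \cite{ic-resurgence}; since the paper places that theorem before Proposition~\ref{qi-integral} and treats it as established, this is legitimate within the paper's framework. If you wanted an argument independent of Theorem~\ref{duality-ic-resurgence}, you could instead derive (c)$\Leftrightarrow$(d) directly from the definition $\rho_{ic}(I)=\sup\{n/r\mid I^{(n)}\not\subset\overline{I^r}\}$ together with the always-true containment $\overline{I^n}\subset I^{(n)}$ (which follows from ${\rm NP}(I)\subset\mathcal{Q}(I^\vee)$), but what you wrote is already adequate.
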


Then, as a consequence of the duality formula for the ic-resurgence 
(Theorem~\ref{duality-ic-resurgence}), one recovers
the following fact.

\begin{theorem}\cite[Theorem~1.17]{cornu-book}\label{duality-qi} 
$\mathcal{Q}(I)$ is integral if and only if $\mathcal{Q}(I^\vee)$ is
integral.
\end{theorem}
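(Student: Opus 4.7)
The plan is to derive Theorem~\ref{duality-qi} as a direct corollary of the duality formula for the ic-resurgence (Theorem~\ref{duality-ic-resurgence}) combined with the integrality characterization given in Proposition~\ref{qi-integral}. The key point is that both characterizations have already packaged the geometric content (integrality of a covering polyhedron) as a single numerical invariant ($\rho_{ic}$), and the duality formula tells us this invariant is symmetric in $I$ and $I^\vee$.

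Concretely, first I would recall that Proposition~\ref{qi-integral}, applied to the ideal $I$, gives the equivalence
\[
\mathcal{Q}(I^\vee)\text{ is integral}\ \Longleftrightarrow\ \rho_{ic}(I)=1.
\]
The trick is to apply the same proposition, but now to the ideal $I^\vee$ in place of $I$. For this I need the involutive nature of Alexander duality on edge ideals of clutters, namely $(I^\vee)^\vee=I$; equivalently, the blocker satisfies $b(b(\mathcal{C}))=\mathcal{C}$, which is a standard fact about clutters. Substituting $I^\vee$ for $I$ in Proposition~\ref{qi-integral} then yields
\[
\mathcal{Q}(I)\text{ is integral}\ \Longleftrightarrow\ \rho_{ic}(I^\vee)=1.
\]

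Now I would invoke Theorem~\ref{duality-ic-resurgence}, whose formula
\[
\frac{1}{\rho_{ic}(I)}=\min\bigl\{\langle u,v\rangle\mid u\in V(\mathcal{Q}(I)),\ v\in V(\mathcal{Q}(I^\vee))\bigr\}
\]
is manifestly symmetric in $I$ and $I^\vee$, so $\rho_{ic}(I)=\rho_{ic}(I^\vee)$. Chaining the three equivalences gives
\[
\mathcal{Q}(I)\text{ is integral}\ \Longleftrightarrow\ \rho_{ic}(I^\vee)=1\ \Longleftrightarrow\ \rho_{ic}(I)=1\ \Longleftrightarrow\ \mathcal{Q}(I^\vee)\text{ is integral},
\]
which is exactly the theorem.

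The main obstacle, modest as it is, lies in making sure Proposition~\ref{qi-integral} may legitimately be applied to $I^\vee$. This proposition is stated for edge ideals of clutters, and $I^\vee=I_c(\mathcal{C})=I(\mathcal{C}^\vee)$ is the edge ideal of the blocker clutter, so the hypothesis holds; the only point to verify is that $\mathcal{C}^\vee$ also has covering number at least two (equivalently, ${\rm ht}(I^\vee)\geq 2$), which follows from the running hypothesis that the rows of the incidence matrix $A$ are nonzero, so every vertex of $\mathcal{C}$ belongs to some edge and therefore $\mathcal{C}^\vee$ has no singleton edges. With this verified, nothing further is needed beyond the formal chain of equivalences above.
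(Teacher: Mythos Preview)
Your argument is correct and is exactly the route the paper indicates: the sentence immediately preceding the theorem says it is recovered ``as a consequence of the duality formula for the ic-resurgence,'' i.e., from $\rho_{ic}(I)=\rho_{ic}(I^\vee)$ together with the characterization $\rho_{ic}(I)=1\Leftrightarrow\mathcal{Q}(I)$ integral (the line just above Proposition~\ref{qi-integral}). One small slip: your justification that $\mathrm{ht}(I^\vee)\geq 2$ is garbled---nonzero rows of $A$ give no isolated vertices, which is unrelated to $\mathcal{C}^\vee$ having singleton edges, and ``no singleton edges'' is not the same as ``covering number $\geq 2$''; but this side check is unnecessary anyway, since the paper's direct statement $\rho_{ic}(I)=1\Leftrightarrow\mathcal{Q}(I)$ integral (cited from \cite{ic-resurgence}) lets you avoid applying Proposition~\ref{qi-integral} to $I^\vee$ altogether.
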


\begin{definition} A monomial ideal $I$ is called a \textit{Simis
ideal} if $I^n=I^{(n)}$ for all $n\geq 1$.
\end{definition}

We give a short proof of the following theorem using the last two
results. This theorem together with \textit{Normaliz}
\cite{normaliz2}, 
can be used effectively as a test 
to determine whether or not an edge ideal is a Simis ideal 
\cite[Remark~3.5]{clutters}, \cite[Example 3.6]{clutters}. A main problem in this
area is to classify Simis ideals for  
non-squarefree monomial ideals. For some progress related to this 
problem, see 
\cite{Banerjee-etal,Cooper-symbolic,Das-K,cm-oriented-trees,
weighted-symbolic,Nasernejad-ars,Mandal-Pradhan,Mandal-Pradhan1}.

\begin{theorem}{\rm(Gitler, Valencia, -\,,
\cite[Theorem~3.4]{clutters})}\label{ntf-normal-i} Let $I$ be the
edge ideal of a clutter. The following two conditions are equivalent. 
\begin{itemize}
\item [\rm(a)] $I$ is a Simis ideal, that is, $I^n=I^{(n)}$ for all
$n\geq 1$. 
\item[\rm(b)] The Rees algebra $S[Iz]$ of $I$ is normal and
$\mathcal{Q}(I)$ is an integral polyhedron.
\end{itemize}
\end{theorem}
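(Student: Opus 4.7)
The plan is to use the chain of inclusions $I^n \subseteq \overline{I^n} \subseteq I^{(n)}$, valid for any squarefree monomial ideal, together with the two tools already established in the excerpt: Proposition~\ref{qi-integral}, which relates the integrality of $\mathcal{Q}(I^\vee)$ to the equality $\overline{I^n} = I^{(n)}$, and Theorem~\ref{duality-qi}, which tells us that $\mathcal{Q}(I)$ is integral if and only if $\mathcal{Q}(I^\vee)$ is integral. With these in hand, the equivalence (a)$\Leftrightarrow$(b) decomposes into two independent half-statements, one about normality and one about the integrality of the covering polyhedron, each of which can be lined up with a standard fact.

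For (a)$\Rightarrow$(b) I would first recall that symbolic powers of squarefree monomial ideals are integrally closed, so $\overline{I^n} \subseteq \overline{I^{(n)}} = I^{(n)}$, which together with $I^n \subseteq \overline{I^n}$ gives the chain $I^n \subseteq \overline{I^n} \subseteq I^{(n)}$. Assuming (a), the outer terms coincide, forcing $I^n = \overline{I^n}$ for all $n \geq 1$, i.e., $I$ is a normal ideal; equivalently, $S[Iz]$ is normal by the description of $\overline{S[Iz]}$ as a graded algebra given in Eq.~\eqref{jul30-22-2}. The same chain also gives $\overline{I^n} = I^{(n)}$ for all $n$, so Proposition~\ref{qi-integral} yields that $\mathcal{Q}(I^\vee)$ is integral, and Theorem~\ref{duality-qi} then promotes this to the integrality of $\mathcal{Q}(I)$.

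For (b)$\Rightarrow$(a), the same ingredients run in reverse. Normality of $S[Iz]$ means $I^n = \overline{I^n}$ for all $n$. Integrality of $\mathcal{Q}(I)$ passes through Theorem~\ref{duality-qi} to give integrality of $\mathcal{Q}(I^\vee)$, which by Proposition~\ref{qi-integral} yields $\overline{I^n} = I^{(n)}$ for all $n$. Stringing the two equalities together gives $I^n = I^{(n)}$ for all $n \geq 1$, which is condition (a).

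I do not expect a serious obstacle: both implications are essentially an exercise in assembling the three previously recorded equalities/equivalences between $I^n$, $\overline{I^n}$, and $I^{(n)}$ and the polyhedra $\mathrm{NP}(I)$, $\mathcal{Q}(I)$, $\mathcal{Q}(I^\vee)$. The only point that deserves care is justifying $\overline{I^n} \subseteq I^{(n)}$ in the (a)$\Rightarrow$(b) direction, which relies on the fact that $I^{(n)}$ is integrally closed for squarefree monomial ideals; this is quoted explicitly in Section~\ref{normality-symbolic-powers}, so the argument is self-contained within the paper.
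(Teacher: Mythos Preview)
Your proof is correct and follows essentially the same approach as the paper's: both directions hinge on the chain $I^n\subseteq\overline{I^n}\subseteq I^{(n)}$ (using that $I^{(n)}$ is integrally closed), then invoke Proposition~\ref{qi-integral} and Theorem~\ref{duality-qi} to pass between $\overline{I^n}=I^{(n)}$ and the integrality of $\mathcal{Q}(I)$.
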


\begin{proof} (a)$\Rightarrow$(b) Taking integral closures in
$I^n=I^{(n)}$ and using that $I^{(n)}$ is complete, we get that
$\overline{I^n}=I^{(n)}=I^n$ for all $n\geq 1$. Thus, $S[Iz]$ is normal. As 
$\overline{I^n}=I^n=I^{(n)}$ for all $n\geq 1$, by
Proposition~\ref{qi-integral}, $\mathcal{Q}(I^\vee)$ is
integral. Hence, by Theorem~\ref{duality-qi}, $\mathcal{Q}(I)$ is
integral.

(a)$\Leftarrow$(b) As $\mathcal{Q}(I)$ is
integral, by Proposition~\ref{qi-integral} and
Theorem~\ref{duality-qi}, we get that $\overline{I^n}=I^{(n)}$ for all
$n\geq 1$. Thus, by the normality of $I$, $I^n=I^{(n)}$ for all
$n\geq 1$.
\end{proof}

\begin{theorem}{\rm(DiPasquale and Drabkin \cite[Corollary
4.20]{DiPasquale-Drabkin}\label{DiB})} 
If $I$ is a squarefree monomial ideal in $S=K[t_1,\ldots,t_s]$ with
big height $h\geq 2$, then $\rho(I)<h$ and $\rho_{ic}(I)\leq
h-\frac{1}{s}$.
\end{theorem}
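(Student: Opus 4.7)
The plan is to deduce both bounds from the polyhedral machinery of this section, primarily via the duality formula of Theorem~\ref{duality-ic-resurgence}. That formula reduces the inequality $\rho_{ic}(I)\leq h-1/s$ to showing
$$
\min\bigl\{\langle u,v\rangle\mid u\in V(\mathcal{Q}(I)),\ v\in V(\mathcal{Q}(I^\vee))\bigr\}\ \geq\ \frac{s}{sh-1}=\frac{1}{h-1/s}.
$$
By Eq.~\eqref{jan22-25} applied to $\mathcal{C}$ and to its blocker $\mathcal{C}^\vee$, the integral vertices of $\mathcal{Q}(I)$ are the characteristic vectors of minimal vertex covers of $\mathcal{C}$, while those of $\mathcal{Q}(I^\vee)$ are the characteristic vectors of edges of $\mathcal{C}$. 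Whenever at least one of $u$ or $v$ is integral, the inequality $\langle u,v\rangle\geq 1\geq s/(sh-1)$ holds immediately from the defining inequalities of the covering polyhedra, since each such integral vertex is a column of the opposite incidence matrix.

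The remaining case is $u=\gamma/d$ and $v=\delta/e$ both fractional with $d,e\geq 2$. Here I would exploit the $0/1$ structure of the incidence matrix $A$ of a squarefree monomial ideal together with Cramer's rule: vertices of $\mathcal{Q}(I)$ and $\mathcal{Q}(I^\vee)$ are cut out by tight subsystems of size at most $s$ with $0/1$ coefficients, so $d$ and $e$ are subdeterminants of such matrices. The big-height hypothesis bounds these tight subsystems combinatorially, and an analysis of which rows of $A$ can be simultaneously tight at $u$ and $v$ should yield a matching lower bound on the positive integer $\langle\gamma,\delta\rangle$ that produces the desired ratio $s/(sh-1)$.

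For the strict inequality $\rho(I)<h$, Hochster--Huneke's uniform containment theorem (Theorem~\ref{HH}) already gives $\rho(I)\leq h$, and strictness is an asymptotic refinement. I would use the LP membership test of Proposition~\ref{lp-membrship-test} to encode the failure $t^a\notin I^r$ by an integer linear program whose fractional relaxation attains its optimum at a vertex with denominator bounded in terms of $s$; consequently the supremum of ratios $n/r$ with $I^{(n)}\not\subset I^r$ lies in a discrete set that cannot accumulate at $h$. The main obstacle will be the two-fractional-vertex case above: obtaining the sharp factor $1/s$, rather than a weaker polynomial-in-$s$ correction, requires exploiting simultaneously the $0/1$ nature of $A$, the big-height hypothesis, and the fine polyhedral geometry of $\mathcal{Q}(I)$ and $\mathcal{Q}(I^\vee)$.
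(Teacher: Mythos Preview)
The paper does not prove this theorem; it is stated as a citation to DiPasquale and Drabkin \cite{DiPasquale-Drabkin} and used as a black box. So there is no ``paper's own proof'' to compare against, and your proposal must be judged on its own merits.

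For the bound $\rho_{ic}(I)\leq h-1/s$, your reduction via the duality formula of Theorem~\ref{duality-ic-resurgence} is legitimate, and your treatment of the case where at least one of $u,v$ is integral is correct: if $v$ is an integral vertex of $\mathcal{Q}(I^\vee)$ then $v$ is a column of $A$, so $\langle u,v\rangle\geq 1$ by the very definition of $\mathcal{Q}(I)$, and symmetrically for $u$ integral. The problem is that the fractional--fractional case is the entire content of the theorem, and you have not proved it. Saying that $d,e$ are subdeterminants of $0/1$ matrices and that ``an analysis \ldots\ should yield a matching lower bound'' is not an argument. You need a concrete inequality of the form $\langle\gamma,\delta\rangle(sh-1)\geq s\,de$, and nothing in your sketch explains where the big height $h$ enters or why the precise factor $1/s$ (rather than, say, $1/s!$, which is all that crude Hadamard-type determinant bounds would give) appears.

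For $\rho(I)<h$, your sketch has two defects. First, the LP membership test of Proposition~\ref{lp-membrship-test} detects membership in $\overline{I^n}$, not in $I^n$, so it speaks to $\rho_{ic}$ rather than to $\rho$. Second, the claim that the ratios $n/r$ with $I^{(n)}\not\subset I^r$ form a ``discrete set that cannot accumulate at $h$'' is unsupported: rational numbers $n/r$ are dense, and bounding denominators of LP vertices does not by itself prevent accumulation. The actual argument in \cite{DiPasquale-Drabkin} is structural: they show that either $\rho(I)=\widehat{\rho}(I)=\rho_{ic}(I)$, or else the supremum defining $\rho(I)$ is a maximum attained at some specific pair $(n,r)$; in either case $\rho_{ic}(I)<h$ forces $\rho(I)<h$. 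Your outline does not capture this dichotomy.
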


Let $h$ and $h^\vee$ be the big heights of $I$ and $I^\vee$,
respectively. Then, by Theorem~\ref{DiB}, one has
$$
\rho_{ic}(I)\leq h-\frac{1}{s}\ \mbox{ and }\ \rho_{ic}(I^\vee)\leq
h^\vee-\frac{1}{s},
$$
and, by Theorem~\ref{duality-ic-resurgence}, 
we get $\rho_{ic}(I)<r:=\min\{h,h^\vee\}$. Thus, 
$I^{(nr)}\subset\overline{I^n}$ for all $n\geq 1$.

The \textit{containment problem}  
for $I$ is to determine for which $r$ and $n$ the containment
$I^{(n)}\subset I^r$ holds. This problem appeared in a paper of Schenzel
 \cite[p.~144]{Schenzel}, who ask whether the function 
\begin{equation}\label{Schenzel-function}
f(r):=\min\{n\geq 1 \mid I^{(n)}\subset I^r\},\quad r=1,2,\ldots,
\end{equation}
becomes a polynomial for large $r$ assuming that $I$ is a prime ideal
of a Noetherian ring $S$ such that the filtrations $\{I^{(n)}\}$ and $\{I^n\}$ define equivalent
topologies.

\quad By Theorem~\ref{HH}, this function is well-defined and $r\leq
f(r)\leq rh$, where $h={\rm bight}(I)$. It can be 
computed noticing that $f(r)$ is the function ``containmentProblem$(I,r)$'' implemented
in \textit{Macaulay}$2$ \cite{mac2} by Drabkin, Grifo, 
Seceleanu and Stone \cite{Grifo-etal}.  
The resurgence measures to what extent
the containment hold because $I^{(n)}\subset I^r$ if
$n/r>\rho(I)$.

\begin{lemma}\label{may15-24} If $k$ is a positive integer such that 
$I^{(kn)}\subset \overline{I^n}$ $($resp. $I^{(kn)}\subset{I^n}$$)$
for all $n\geq 1$, then $\rho_{ic}(I)\leq k$ $($resp. $\rho(I)\leq k$$)$. 
\end{lemma}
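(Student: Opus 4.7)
The plan is to unwind the definition of $\rho_{ic}(I)$ as a supremum and show that the hypothesis forces every ratio in the defining set to be $\le k$. By definition
$$\rho_{ic}(I) = \sup\left\{\frac{n}{r} \,\bigg|\, I^{(n)} \not\subset \overline{I^{r}}\right\},$$
so it suffices to prove the contrapositive statement: whenever $n,r$ are positive integers with $n/r > k$, then $I^{(n)} \subset \overline{I^{r}}$. Granted this, no such ratio contributes to the supremum and therefore $\rho_{ic}(I) \le k$.

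For that key implication, note that if $n/r > k$ with $n,r,k$ positive integers, then $n > kr$, hence $n \ge kr$. Using the standard monotonicity of symbolic powers, $I^{(a)} \subset I^{(b)}$ whenever $a \ge b$, one gets $I^{(n)} \subset I^{(kr)}$. Applying the hypothesis to the positive integer $n' := r$ yields $I^{(kr)} = I^{(kn')} \subset \overline{I^{n'}} = \overline{I^{r}}$. Composing the two inclusions gives $I^{(n)} \subset \overline{I^{r}}$, as required. The proof for the ordinary resurgence is word-for-word the same, with $\overline{I^{r}}$ replaced by $I^{r}$ throughout and the second hypothesis $I^{(kn)} \subset I^{n}$ used in the last step.

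I do not anticipate any genuine obstacle. The only auxiliary fact needed is the monotonicity $I^{(a)} \subset I^{(b)}$ for $a \ge b$, which is immediate from the description $I^{(n)} = \bigcap_{\mathfrak{p} \in \mathrm{Ass}(S/I)} \mathfrak{p}^{n}$ available for the squarefree monomial ideals treated here (see Eq.~\eqref{symb-edge} and \cite[Proposition~4.3.25]{monalg-rev}), since $\mathfrak{p}^{a} \subset \mathfrak{p}^{b}$ componentwise whenever $a \ge b$. Thus the entire argument reduces to the two-line chain of inclusions above, and the lemma follows.
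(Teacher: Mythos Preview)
Your proof is correct and follows essentially the same approach as the paper: both arguments reduce to the chain of inclusions $I^{(n)}\subset I^{(kr)}\subset\overline{I^{r}}$ (resp.\ $I^{r}$) when $n\geq kr$, using monotonicity of symbolic powers and the hypothesis. The paper phrases it as a contradiction (assume $I^{(n)}\not\subset\overline{I^{r}}$ and $k\leq n/r$), while you prove the contrapositive directly, but the substance is identical.
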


\begin{proof} Assume that $I^{(n)}\not\subset\overline{I^r}$ (resp.
$I^{(n)}\not\subset{I^r}$). We claim that $k>n/r$. We argue by
contradiction assuming that $k\leq n/r$, that is, $kr\leq n$. Hence,
$$ 
I^{(n)}\subset I^{(kr)}\subset\overline{I^r}\ \ (\mbox{resp.}\
I^{(n)}\subset I^{(kr)}\subset{I^r}),
$$
a contradiction. Thus, $k>n/r$, and by taking the supremum over all
$n/r$ such that $I^{(n)}\not\subset\overline{I^r}$ (resp.
$I^{(n)}\not\subset{I^r}$), we obtain the inequality $k\geq\rho_{ic}(I)$ (resp. $k\geq\rho(I)$). 
\end{proof}

\begin{lemma}\cite[Lemma~4.12]{Francisco-TAMS}\label{jan16-25} If $I$ is an ideal,
then the following hold: 
\begin{enumerate}
    \item[\rm(1)] If $I^{(n)}\not\subset\overline{I^r}$, then $\frac{n}{r}<\rho_{ic}(I)$. 
    \item[\rm(2)] If $\frac{n}{r}<\rho_{ic}(I)$, then
    $I^{(nt)}\not\subset\overline{I^{rt}}$ for all $t\gg 0$.  
\end{enumerate}    
\end{lemma}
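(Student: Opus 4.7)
The plan is to work directly with the monomial/polyhedral descriptions already recalled in the paper, namely $I^{(n)}=(\{t^a\mid a/n\in\mathcal{Q}(I^\vee)\})$ from Eq.~\eqref{jun21-21-1} and $\overline{I^r}=(\{t^a\mid a/r\in{\rm NP}(I)\})$ from Eq.~\eqref{jun21-21}, together with the irredundant description ${\rm NP}(I)=\{x\geq 0\mid xB\geq 1\}$ of Proposition~\ref{np-qa}(a), where the columns of $B$ are the vertices of $\mathcal{Q}(I)$. Both $\mathcal{Q}(I^\vee)$ and ${\rm NP}(I)$ are upward closed in $\mathbb{R}_+^s$, and a failure of containment $I^{(n)}\subset\overline{I^r}$ therefore amounts to exhibiting an integer vector $a$ with $a/n\in\mathcal{Q}(I^\vee)$ together with a vertex $u$ of $\mathcal{Q}(I)$ satisfying $\langle a,u\rangle<r$. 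The slack $\delta:=r-\langle a,u\rangle>0$ will serve as the quantitative handle for both parts.

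For part~(1), assume $I^{(n)}\not\subset\overline{I^r}$ and fix a witness $a$, a vertex $u$ and $\delta>0$ as above. For any positive integer $t$ the monomial $t^{ta}$ lies in $I^{(nt)}$, because $ta/(nt)=a/n\in\mathcal{Q}(I^\vee)$. Picking $t$ large enough that $t\delta>1$ gives $\langle ta,u\rangle=t(r-\delta)<tr-1$, hence $ta/(tr-1)\notin{\rm NP}(I)$ and $t^{ta}\notin\overline{I^{tr-1}}$. Thus $I^{(nt)}\not\subset\overline{I^{tr-1}}$, so $nt/(tr-1)$ lies in the set whose supremum defines $\rho_{ic}(I)$, and since $nt/(tr-1)>n/r$ one concludes $\rho_{ic}(I)\geq nt/(tr-1)>n/r$.

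For part~(2), assume $n/r<\rho_{ic}(I)$. By the supremum definition, there exist positive integers $n',r'$ with $n/r<n'/r'$ and $I^{(n')}\not\subset\overline{I^{r'}}$. Fix such a pair and choose the corresponding $a\in\mathbb{N}^s$, vertex $u$ and slack $\delta:=r'-\langle a,u\rangle>0$. For each integer $t\geq 1$ set $s_t:=\lceil nt/n'\rceil$ and $b_t:=s_t a\in\mathbb{N}^s$. Since $s_t\geq nt/n'$ and $\mathcal{Q}(I^\vee)$ is upward closed, $b_t/(nt)\geq a/n'\in\mathcal{Q}(I^\vee)$, so $t^{b_t}\in I^{(nt)}$. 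On the other hand $s_t\leq nt/n'+1$ yields
\[
\frac{\langle b_t,u\rangle}{rt}=\frac{s_t(r'-\delta)}{rt}\leq\frac{n(r'-\delta)}{rn'}+\frac{r'-\delta}{rt}\xrightarrow[t\to\infty]{}\frac{n(r'-\delta)}{rn'}<\frac{nr'}{rn'}<1,
\]
the last inequality being equivalent to $n/r<n'/r'$. For $t\gg 0$ one therefore has $\langle b_t,u\rangle<rt$, so $t^{b_t}\notin\overline{I^{rt}}$ and hence $I^{(nt)}\not\subset\overline{I^{rt}}$, as required.

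The only delicate point is the rescaling in part~(2): the available non-containment witness sits at the ratio $n'/r'\neq n/r$, and to convert it into a family of witnesses indexed by $t$ at ratio $n/r$ one needs to simultaneously push the coordinates of an integer multiple of $a$ high enough to land in $nt\cdot\mathcal{Q}(I^\vee)\cap\mathbb{Z}^s$ while keeping the pairing with the vertex $u$ strictly below $rt$. The ceiling $s_t=\lceil nt/n'\rceil$ accomplishes both, and the asymptotic inequality above absorbs the $O(1/t)$ rounding error, which is precisely why the conclusion is valid only for $t\gg 0$.
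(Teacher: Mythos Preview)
The paper does not supply its own proof of this lemma; it is quoted verbatim from \cite[Lemma~4.12]{Francisco-TAMS} and used as a black box. So there is no in-paper argument to compare against.

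Your argument is correct in the squarefree monomial setting, which is the ambient context of Section~\ref{blowuprings-section} (there $I=I(\mathcal{C})$). The reduction of the non-containment $I^{(n)}\not\subset\overline{I^r}$ to an integer point $a$ with $a/n\in\mathcal{Q}(I^\vee)$ and a vertex $u\in V(\mathcal{Q}(I))$ with $\langle a,u\rangle<r$ is exactly what Eqs.~\eqref{jun21-21}, \eqref{jun21-21-1} and Proposition~\ref{np-qa}(a) give, and both rescaling steps (the $t$-dilation in part~(1) producing the strictly larger ratio $nt/(tr-1)$, and the $s_t=\lceil nt/n'\rceil$ trick in part~(2) absorbing the rounding error asymptotically) are clean and correct. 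One tacit point you rely on, and which holds, is $\langle a,u\rangle\geq 0$, so $r'-\delta\geq 0$; this is needed for the chain of inequalities in part~(2).

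By contrast, the proof in \cite{Francisco-TAMS} is formulated for arbitrary homogeneous ideals and proceeds via Rees valuations and the ``skew Waldschmidt'' description of $\widehat{\rho}(I)=\rho_{ic}(I)$ (their Theorem~4.10), rather than lattice-point geometry. Your route is more elementary and entirely self-contained within the polyhedral machinery already set up in this paper, at the cost of only covering the squarefree monomial case; since every application of Lemma~\ref{jan16-25} in the paper is to edge ideals of clutters, that restriction is harmless here. If you want your proof to match the generality of the lemma as stated (``If $I$ is an ideal''), you would need to replace the single vertex $u$ by a Rees valuation of $I$, which plays the identical role in the general argument.
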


\begin{remark}\label{jan18-25} By
Lemma~\ref{jan16-25}(1), the ic-resurgence of a squarefree monomial ideal
$I$ cannot the attained in the set $
L=\left. \left\{{n}/{r}
\ \right|\, 
I^{(n)}\not\subset \overline{I^{r}}\right\}$, that is,
$\rho_{ic}(I)\notin L$.
\end{remark}

\begin{proposition}\label{grisalde} Let $I$ be the edge ideal of a clutter with $s$
vertices. Then, $\rho(I)=1$ if and only if $\mathcal{Q}(I)$ is
integral and $I^{(r+1)}\subset I^r$ for $r=1,\ldots,s-1$.
\end{proposition}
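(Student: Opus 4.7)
The plan is to combine Vasconcelos' stabilization Theorem~\ref{may3-23}, which propagates the finitely many hypothesized containments $I^{(r+1)}\subset I^r$ ($r\leq s-1$) to all $r\geq 1$, with the equivalence between $\mathcal{Q}(I)$-integrality and $\rho_{ic}(I)=1$ supplied by Proposition~\ref{qi-integral} and Theorem~\ref{duality-qi}. The bridge between $\rho(I)$ and the integrality of $\mathcal{Q}(I)$ will be the sandwich $1\leq\rho_{ic}(I)\leq\rho(I)$, which reduces the whole problem to a short induction.

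For the forward direction, assume $\rho(I)=1$. Unfolding the supremum, any $n/r$ with $I^{(n)}\not\subset I^r$ must satisfy $n\leq r$; contrapositively, $I^{(r+1)}\subset I^r$ for every $r\geq 1$, yielding the required containments for $r=1,\ldots,s-1$. Because $I^r\subset\overline{I^r}$, one has $\rho_{ic}(I)\leq\rho(I)=1$, while $\rho_{ic}(I)\geq 1$ always holds for a nonzero squarefree monomial ideal (either $I^{(n)}=\overline{I^n}$ for all $n$, in which case $(t^{v_i})^{r-1}\notin\overline{I^r}$ for any minimal generator $t^{v_i}$, placing $(r-1)/r$ in the non-containment set of $\rho_{ic}$; or some $I^{(n_0)}\supsetneq\overline{I^{n_0}}$ puts $1=n_0/n_0$ in the set). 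Therefore $\rho_{ic}(I)=1$, and Proposition~\ref{qi-integral} together with Theorem~\ref{duality-qi} gives the integrality of $\mathcal{Q}(I)$.

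For the converse, assume $\mathcal{Q}(I)$ is integral and $I^{(r+1)}\subset I^r$ for $r=1,\ldots,s-1$. Proposition~\ref{qi-integral} and Theorem~\ref{duality-qi} yield $I^{(n)}=\overline{I^n}$ for every $n\geq 1$. I will show by strong induction on $n$ that $I^{(n)}\subset I^{n-1}$ for all $n\geq 2$: the cases $n=2,\ldots,s$ are exactly the hypothesis, and for $n\geq s+1$ Vasconcelos' Theorem~\ref{may3-23} delivers $I^{(n)}=\overline{I^n}=I\,\overline{I^{n-1}}=I\cdot I^{(n-1)}\subset I\cdot I^{n-2}=I^{n-1}$ by the inductive hypothesis. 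Chaining $I^{(n)}\subset I^{n-1}\subset\cdots\subset I^r$ for $n>r$ gives $\rho(I)\leq 1$; together with the standard bound $\rho(I)\geq 1$, witnessed by $I^{(r-1)}\supset I^{r-1}\not\subset I^r$ for large $r$, we conclude $\rho(I)=1$.

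The main (though mild) obstacle is synchronizing Vasconcelos' threshold $n\geq s=\dim(S)$ with the hypothesis range $r\leq s-1$: the final base case $I^{(s)}\subset I^{s-1}$ must be in hand precisely at the moment the induction first invokes Vasconcelos' stabilization at $n=s+1$. The coincidence of the two appearances of $s$ is exactly what makes the bookkeeping align, and once this is observed the rest of the proof is routine.
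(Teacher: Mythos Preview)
Your proof is correct and follows essentially the same route as the paper's. Both arguments hinge on Vasconcelos' stabilization Theorem~\ref{may3-23} to propagate the finitely many containments $I^{(r+1)}\subset I^r$ to all $r$, and both identify the integrality of $\mathcal{Q}(I)$ with $\rho_{ic}(I)=1$ via Proposition~\ref{qi-integral} and Theorem~\ref{duality-qi}. The only notable difference is that the paper invokes \cite[Corollary~4.17]{Francisco-TAMS} as a black box for the equivalence ``$\rho(I)=1$ iff $\rho_{ic}(I)=1$ and $\overline{I^{r+1}}\subset I^r$ for all $r\geq 1$'', whereas you unpack this equivalence directly from the definitions; your version is therefore more self-contained, at the cost of the somewhat labored justifications of $\rho_{ic}(I)\geq 1$ and $\rho(I)\geq 1$ (both of which the paper simply takes as standard).
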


\begin{proof} By \cite[Corollary~4.17]{Francisco-TAMS},
$\rho(I)=1$ if and only if $\rho_{ic}(I)=1$ and
$\overline{I^{r+1}}\subset I^r$ for all $r\geq 1$. Then, by
Proposition~\ref{qi-integral}, $\rho(I)=1$ if and only if
the covering polyhedron $\mathcal{Q}(I)$ of $I$ is integral and $I^{(r+1)}\subset I^r$ for all
$r\geq 1$. Hence, we need only show the following implication:

$\Leftarrow$) By Theorem~\ref{may3-23} and Proposition~\ref{qi-integral},
$I^{(r+1)}=I I^{(r)}$ for all $r\geq s-1$. Thus, it suffices to show
that $I^{(r+1)}\subset I^r$ for all $r\geq s$ and this follows by
induction of $r\geq s$.  
\end{proof}

For normal ideals, the ic-resurgence essentially minimizes the uniform containment
theorem of Hochster and Huneke. 
    
\begin{proposition}\label{vila-gonzalo}\cite{thesis-Gonzalo} {\rm (a)} $\lceil\rho_{ic}(I)\rceil=
\min\{h\in\mathbb{N}_+\mid I^{(hn)}\subset \overline{I^n}\, \mbox{ for
all }n\geq 1\}$. 

{\rm (b)} $\lceil\rho(I)\rceil\leq 
\min\{h\in\mathbb{N}_+\mid I^{(hn)}\subset {I^n}\, \mbox{ for
all }n\geq 1\}$. 
\end{proposition}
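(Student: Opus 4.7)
The plan is to combine Lemmas~\ref{may15-24} and \ref{jan16-25} in a direct way. First I note that both sets
$H_{ic}:=\{h\in\mathbb{N}_+\mid I^{(hn)}\subset\overline{I^n}\text{ for all }n\geq 1\}$ and
$H:=\{h\in\mathbb{N}_+\mid I^{(hn)}\subset I^n\text{ for all }n\geq 1\}$ are non-empty by the Hochster--Huneke uniform containment theorem (Theorem~\ref{HH}), applied with $h=\mathrm{bight}(I)$, together with the inclusion $I^n\subset\overline{I^n}$; hence $h_0:=\min H_{ic}$ and $h_1:=\min H$ are well defined.

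For part (a), I would prove two inequalities. For $\lceil\rho_{ic}(I)\rceil\leq h_0$, by definition $h_0\in H_{ic}$, so Lemma~\ref{may15-24} gives $\rho_{ic}(I)\leq h_0$; since $h_0\in\mathbb{N}_+$, this yields $\lceil\rho_{ic}(I)\rceil\leq h_0$. For the reverse inequality $h_0\leq\lceil\rho_{ic}(I)\rceil$, set $k:=\lceil\rho_{ic}(I)\rceil$ and argue by contradiction: if $I^{(kn)}\not\subset\overline{I^n}$ for some $n\geq 1$, then Lemma~\ref{jan16-25}(1) applied to the pair $(kn,n)$ gives
\[
k=\frac{kn}{n}<\rho_{ic}(I)\leq\lceil\rho_{ic}(I)\rceil=k,
\]
a contradiction. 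Therefore $k\in H_{ic}$, whence $h_0\leq k=\lceil\rho_{ic}(I)\rceil$.

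For part (b), I would only use the easy direction: $h_1\in H$, so Lemma~\ref{may15-24} yields $\rho(I)\leq h_1$, and since $h_1$ is an integer, $\lceil\rho(I)\rceil\leq h_1$.

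The proof is essentially a careful reading of the two lemmas, so the main obstacle is not computational but conceptual: one must see \emph{why} equality holds in (a) but only an inequality is stated in (b). The crux is the strict inequality in Lemma~\ref{jan16-25}(1), which says that whenever $I^{(n)}\not\subset\overline{I^r}$ one has $n/r<\rho_{ic}(I)$ (that is, the supremum defining $\rho_{ic}(I)$ is never attained; cf.\ Remark~\ref{jan18-25}). No analogous strict inequality is available for the ordinary resurgence $\rho(I)$: in principle the supremum in the definition of $\rho(I)$ could be attained at some $n/r=\lceil\rho(I)\rceil\in\mathbb{N}_+$, in which case the contradiction argument of part (a) breaks down and one cannot upgrade the inequality $\lceil\rho(I)\rceil\leq h_1$ to an equality with the methods at hand. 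Any attempt to turn (b) into an equality would therefore require a finer analysis of when the sup in $\rho(I)$ is attained, and this is what I would flag as the genuine difficulty beyond the direct lemma application.
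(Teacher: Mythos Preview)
Your proof is correct and follows essentially the same approach as the paper's: both directions of (a) use Lemma~\ref{may15-24} for one inequality and the strict inequality of Lemma~\ref{jan16-25}(1) for the other via the same contradiction, and (b) is just Lemma~\ref{may15-24}. The only cosmetic difference is that the paper swaps your roles of $h_0$ and $h_1$ (setting $h_0:=\lceil\rho_{ic}(I)\rceil$ and $h_1:=\min H_{ic}$), but the argument is line-for-line the same.
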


\begin{proof} (a) We set $h_0:=\lceil\rho_{ic}(I)\rceil$ and 
$h_1:=\min\{h\in\mathbb{N}_+\mid I^{(hn)}\subset \overline{I^n}\, \mbox{ for
all }n\geq 1\}$. Applying Lemma~\ref{may15-24}, one has
$\rho_{ic}(I)\leq h_1$ and $h_0\leq h_1$. To show the inequality
$h_0\geq h_1$ it suffices to show that $I^{(h_0n)}\subset\overline{I^n}$
for all $n\geq 1$. We argue by contradiction assuming that
$I^{(h_0n)}\not\subset\overline{I^n}$ for some $n\geq 1$. Then, by
Lemma~\ref{jan16-25}(1),
$h_0=\frac{h_0n}{n}<\rho_{ic}(I)\leq\lceil\rho_{ic}(I)\rceil=h_0$, a
contradiction.

(b) The inequality follows by applying Lemma~\ref{may15-24}. 
\end{proof}

\begin{example}\cite{thesis-Gonzalo}\label{jan23-25} 
Let $I=(t_1t_2t_5,\,  t_1t_3t_4,\,  t_2t_3t_6,\,  t_4t_5t_6)$ be the edge ideal
of the clutter $\mathcal{Q}_6$ of \cite[Example~1.9]{cornu-book} and let $f$ be
the Schenzel function of Eq.~\eqref{Schenzel-function}. As 
$\mathcal{Q}(I)$ is integral and $f(1)=1$, $f(r)=r+1$ for
$r=2,\ldots,6$, by Proposition~\ref{grisalde}, one has $\rho(I)=1$.
Note that $I^{(2)}\not\subset I^2$ and $\rho(I)=1\in
L'=\left. \left\{{n}/{r}
\ \right|\, 
I^{(n)}\not\subset{I^{r}}\right\}$ (cf. Remark~\ref{jan18-25}). The
inequality in Proposition~\ref{vila-gonzalo}(b) is strict because
$\rho(I)=1$ and $I^{2}\subsetneq I^{(2)}$. 
\end{example}

We now connect the equality of ordinary and symbolic powers of
edge ideals of clutters 
with the result of Huneke, Simis
and Vasconcelos (see Theorem~\ref{reduced-normal-cones}) that classifies reduced associated graded rings.

\begin{theorem}\cite{Lisboar,clutters} ${\rm gr}_I(S)$ is reduced if and only $I^n=I^{(n)}$
for all $n\geq 1$.  
\end{theorem}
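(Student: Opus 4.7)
The plan is to chain together three results already established in the excerpt: the classification of reducedness of ${\rm gr}_I(S)$ due to Huneke, Simis and Vasconcelos (Theorem~\ref{reduced-normal-cones}), the computation of the divisor class group of a normal Rees algebra (Proposition~\ref{okayama3}), and the Simis-ideal criterion of Theorem~\ref{ntf-normal-i}. The only bit of genuine content is to identify the rank condition on ${\rm Cl}(S[Iz])$ appearing in Theorem~\ref{reduced-normal-cones} with the integrality of $\mathcal{Q}(I)$, which is precisely what Proposition~\ref{effcred} does. So my proof will essentially be: reducedness of ${\rm gr}_I(S)$ $\Longleftrightarrow$ ($S[Iz]$ normal and $\mathcal{Q}(I)$ integral) $\Longleftrightarrow$ $I^n=I^{(n)}$ for all $n\geq 1$.

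First, I would assume ${\rm ht}(I)\geq 2$ (the standing hypothesis in the section), so Theorem~\ref{reduced-normal-cones} applies. By that theorem, ${\rm gr}_I(S)$ is reduced if and only if $S[Iz]$ is normal and ${\rm Cl}(S[Iz])$ is a free abelian group whose rank equals the number of minimal primes of $I$. Recall from Eq.~\eqref{jan22-25} that the minimal primes of $I$ are in bijection with the minimal vertex covers of $\mathcal{C}$, and by Theorem~\ref{reesclu-theorem} these correspond exactly to the $r$ integral vertices $u_1,\ldots,u_r$ among the $p$ vertices $u_1,\ldots,u_p$ of $\mathcal{Q}(I)$ appearing in the irreducible representation of Theorem~\ref{irred-rep-rc}.

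Next, I would use Proposition~\ref{okayama3}: whenever $S[Iz]$ is normal, ${\rm Cl}(S[Iz])$ is free abelian of rank $p$. Combining this with the previous paragraph, ${\rm gr}_I(S)$ is reduced if and only if $S[Iz]$ is normal and $r=p$. By Theorem~\ref{irred-rep-rc}, the equality $r=p$ is equivalent to the integrality of the covering polyhedron $\mathcal{Q}(I)$. Hence
\[
{\rm gr}_I(S)\text{ reduced}\ \Longleftrightarrow\ S[Iz]\text{ normal and }\mathcal{Q}(I)\text{ integral},
\]
which is exactly Proposition~\ref{effcred}(a).

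Finally, I apply Theorem~\ref{ntf-normal-i}: for the edge ideal $I$ of a clutter, $I$ is a Simis ideal (that is, $I^n=I^{(n)}$ for all $n\geq 1$) if and only if $S[Iz]$ is normal and $\mathcal{Q}(I)$ is integral. Chaining this with the previous equivalence yields the desired statement. The only step requiring any care is the identification ``number of minimal primes of $I=r$ = number of integral vertices of $\mathcal{Q}(I)$'', which is bookkeeping between Eq.~\eqref{jan22-25} and Theorem~\ref{irred-rep-rc}; there is no real obstacle since every ingredient has been assembled in the preceding sections.
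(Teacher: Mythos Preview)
Your proof is correct and follows the same route as the paper. The paper's argument is just the chain Theorem~\ref{ntf-normal-i} $\Rightarrow$ Theorem~\ref{irred-rep-rc} $\Rightarrow$ Proposition~\ref{effcred}; you do the same thing but inline the proof of Proposition~\ref{effcred} (via Theorem~\ref{reduced-normal-cones} and Proposition~\ref{okayama3}) rather than citing it directly, which you yourself note.
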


\begin{proof} By Theorem~\ref{ntf-normal-i}, $I^n=I^{(n)}$
for all $n\geq 1$ if and only if $S[Iz]$ is normal and
$\mathcal{Q}(I)$ is integral. Then, by Theorem~\ref{irred-rep-rc}, 
$I^n=I^{(n)}$ for all $n\geq 1$ if and only if $S[Iz]$ is normal and 
$r=p$. Finally, by Theorem~\ref{effcred}, $I^n=I^{(n)}$
for all $n\geq 1$ if and only if ${\rm gr}_I(S)$ is reduced.
\end{proof}

A clutter $\mathcal{C}$, with incidence matrix
$A$, has the {\it max-flow
min-cut\/} (MFMC) 
property if both sides 
of the LP-duality equation
\begin{equation}\label{jun6-2-03}
{\rm min}\{\langle \alpha,x\rangle \vert\, x\geq 0; xA\geq{1}\}=
{\rm max}\{\langle y,{1}\rangle \vert\, y\geq 0; Ay\leq\alpha\} 
\end{equation}
have integral optimum solutions $x,y$ for each nonnegative integral
vector $\alpha$ \cite[p.~3]{cornu-book}.  

A breakthrough in the area of edge ideals is the following theorem relating 
symbolic powers and the max-flow min-cut property of integer
programming, creating another bridge between commutative algebra and optimization
problems. 

\begin{theorem}\label{ntf-char}{\rm 
(\cite[Corollary~3.14]{clutters}, cf. \cite[Theorem
1.4]{hhtz})} If $I$ is the edge ideal of
a clutter $\mathcal{C}$, then $I^n=I^{(n)}$ for all
$n\geq 1$ if and only if $\mathcal{C}$ has the max-flow min-cut
property.
\end{theorem}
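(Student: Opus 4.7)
The plan is to reduce the statement, via Theorem~\ref{ntf-normal-i}, to showing that $\mathcal{C}$ has the max-flow min-cut property if and only if the Rees algebra $S[Iz]$ is normal and the covering polyhedron $\mathcal{Q}(I)$ is integral. Combined with Theorem~\ref{ntf-normal-i}, this will establish the equivalence with $I^n=I^{(n)}$ for all $n\geq 1$.

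The first step is to split MFMC into its two natural pieces. MFMC asserts that, for every integral $\alpha\geq 0$, the two optimal values in Eq.~\eqref{jun6-2-03} are attained at integral vectors $x$ and $y$. The integrality of the optimal $x$ in the minimization $\min\{\langle\alpha,x\rangle\mid x\geq 0;\, xA\geq 1\}$, for all integral $\alpha\geq 0$, is equivalent to $\mathcal{Q}(I)$ being an integral polyhedron: if $\mathcal{Q}(I)$ is integral, the minimum is attained at an integral vertex by standard LP theory, and conversely one recovers integrality of every vertex by choosing $\alpha$ to be an outer normal of a vertex of $\mathcal{Q}(I)$.

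The second step handles the maximization side assuming $\mathcal{Q}(I)$ is integral. By LP-duality, the LP-optimal value of the maximization in Eq.~\eqref{jun6-2-03} equals the LP-optimal value of the minimization, hence is already an integer once $\mathcal{Q}(I)$ is integral. Thus the integer-rounding identity
\[
\max\{\langle y,1\rangle\mid y\in\mathbb{N}^m;\,Ay\leq\alpha\}=\bigl\lfloor\max\{\langle y,1\rangle\mid y\geq 0;\,Ay\leq\alpha\}\bigr\rfloor
\]
of Eq.~\eqref{irp-eq1} reduces to saying that the LP-maximum is itself attained at an integral point, which is exactly what MFMC demands on the max side. Consequently, under the hypothesis that $\mathcal{Q}(I)$ is integral, MFMC is equivalent to the integer rounding property of the system $x\geq 0;\, xA\geq 1$. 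By Theorem~\ref{crit-rounding-normali}, the latter property is equivalent to the normality of $I$, i.e., to the normality of $S[Iz]$.

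Putting the two steps together, $\mathcal{C}$ has MFMC if and only if $\mathcal{Q}(I)$ is integral and $S[Iz]$ is normal. Applying Theorem~\ref{ntf-normal-i} then yields $\mathcal{C}$ has MFMC if and only if $I^n=I^{(n)}$ for all $n\geq 1$. The main technical obstacle is the second step: one must check carefully that, once the LP-optima are forced to be integers by the integrality of $\mathcal{Q}(I)$, the integer-rounding formulation of normality becomes exactly the assertion that the LP-maximum on the right of Eq.~\eqref{jun6-2-03} is attained at an integral $y$; this is the bridge between the polyhedral ($\mathcal{Q}(I)$) and the algebraic ($S[Iz]$ normal) conditions that together encode MFMC.
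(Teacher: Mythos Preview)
Your argument is correct. The paper itself does not give a proof of Theorem~\ref{ntf-char}; it simply cites \cite[Corollary~3.14]{clutters} (cf.\ \cite[Theorem~1.4]{hhtz}) as the source. Your route---reducing via Theorem~\ref{ntf-normal-i} to the claim that MFMC is equivalent to ``$\mathcal{Q}(I)$ integral and $S[Iz]$ normal'', and then handling the normality piece through the integer rounding characterization of Theorem~\ref{crit-rounding-normali}---is exactly the kind of derivation the survey sets up with the surrounding results, and it is sound.

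A couple of small points you should make explicit when writing this up. First, in Step~1 the equivalence ``$\min\{\langle\alpha,x\rangle\mid x\in\mathcal{Q}(I)\}$ attained at an integral $x$ for every integral $\alpha\geq 0$'' $\Longleftrightarrow$ ``$\mathcal{Q}(I)$ integral'' uses that the recession cone of $\mathcal{Q}(I)$ is $\mathbb{R}_+^s$, so the normal cone at every vertex lies in $\mathbb{R}_+^s$ and one can indeed pick a \emph{nonnegative} integral $\alpha$ isolating any given vertex. Second, in Step~2 you implicitly use the paper's standing hypothesis that no row of $A$ is zero (every vertex lies in some edge); this is what guarantees that the feasible region $\{y\geq 0:Ay\leq\alpha\}$ is bounded, so that the right-hand side of Eq.~\eqref{irp-eq1} is finite precisely when $\alpha\geq 0$, matching the range of $\alpha$ in the MFMC definition. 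With these two remarks in place the bridge between MFMC and ``integer rounding $+$ $\mathcal{Q}(I)$ integral'' is airtight, and the conclusion follows from Theorems~\ref{ntf-normal-i} and~\ref{crit-rounding-normali}.
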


The following notions come from combinatorial optimization \cite{Schr2}. For
$t_i\in V(\mathcal{C})$, the {\it
contraction} 
$\mathcal{C}/t_i$ 
and {\it deletion} 
$\mathcal{C}\setminus{t_i}$ are the
clutters constructed as follows: both have $V(\mathcal{C})\setminus\{t_i\}$ as
vertex set, $E(\mathcal{C}/t_i)$ is the set of minimal elements 
of $\{e\setminus\{t_i\}\vert\, e\in E(\mathcal{C}\}$, minimal with
respect to inclusion, and 
$E(\mathcal{C}\setminus{t_i})$ is the set 
$\{e\vert\, t_i\notin e\in E(\mathcal{C})\}$. A {\it minor} of a clutter $\mathcal{C}$
is a clutter obtained from $\mathcal{C}$ by a sequence of deletions 
and contractions in any order. The clutter $\mathcal{C}$ is considered
a minor by convention.

The \textit{covering number} of a clutter $\mathcal{C}$, denoted
$\alpha_0(\mathcal{C})$, is the minimum cardinality of a vertex cover
of $\mathcal{C}$. Note that $\alpha_0(\mathcal{C})$ is the height of
$I(\mathcal{C})$. A set of pairwise disjoint edges of $\mathcal{C}$ is called
a {\it matching}. The \textit{matching number} of $\mathcal{C}$,
denoted $\beta_1(\mathcal{C})$, is the maximum
cardinality of a matching of $\mathcal{C}$. We say that a clutter $\mathcal{C}$ is
\textit{K\H{o}nig} or has the {\it K\H{o}nig property\/} if
the covering number $\alpha_0(\mathcal{C})$ is 
the matching number $\beta_1(\mathcal{C})$. 

\begin{definition}
A clutter $\mathcal C$ satisfies the {\it packing property\/} if
$\alpha_0({\mathcal C}')=\beta_1({\mathcal C}')$  
for any minor ${\mathcal C}'$ of $\mathcal C$; that is, all minors of
$\mathcal{C}$ satisfy the K\H{o}nig property. 
\end{definition}

\begin{theorem}{\rm (A. Lehman \cite{lehman}, 
\cite[Theorem~1.8]{cornu-book})}\label{lehman} If\, a clutter $\mathcal C$ has the
packing property, then $\mathcal{Q}(I(\mathcal{C}))$ is 
integral. 
\end{theorem}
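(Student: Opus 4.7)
My plan is to prove the statement by induction on $s = |V(\mathcal{C})|$, arguing by contradiction. Suppose $\mathcal{C}$ satisfies the packing property but $\mathcal{Q}(I(\mathcal{C}))$ has a fractional vertex $x^*$, and assume $\mathcal{C}$ is a counterexample with the minimum number of vertices. Then every proper minor of $\mathcal{C}$ inherits the packing property (minors of minors are minors) and by induction has integral covering polyhedron. The strategy is to progressively restrict the structure of $x^*$ through minor reductions, until we reach the precise configuration that Lehman's structural analysis rules out using the K\"onig property on suitable minors.

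The first reduction forces $x^*$ to have full positive support. If $x^*_i = 0$ for some $i$, then one of the $s$ linearly independent tight constraints at $x^*$ is $x_i \geq 0$, and the remaining $s-1$ tight edge constraints $\sum_{t_j \in e_k}x^*_j = 1$ project to tight constraints in $\mathcal{Q}(I(\mathcal{C}/t_i))$, so the projection $(x^*_j)_{j\neq i}$ is a vertex of the contraction's covering polyhedron. By minimality of the counterexample this projection is integral, forcing $x^*$ integral, a contradiction. Symmetrically, if $x^*_i \geq 1$, every edge of $\mathcal{C}$ through $t_i$ is already covered by $t_i$ alone, so $(x^*_j)_{j\neq i}$ descends to a vertex of $\mathcal{Q}(I(\mathcal{C}\setminus t_i))$, integral by induction, and the tight edge constraints at $x^*$ then force $x^*_i \in \mathbb{Z}$ as well, contradiction. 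Hence $0 < x^*_i < 1$ for every $i$.

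Under this narrowed hypothesis, all $s$ tight constraints at $x^*$ must be edge constraints $\sum_{t_j \in e_k}x^*_j = 1$ for $k = 1, \ldots, s$, yielding a nonsingular $s \times s$ $0$--$1$ submatrix $B$ of the incidence matrix $A$ with $B^{T}x^* = \mathbf{1}$. In particular, $x^* = (B^{T})^{-1}\mathbf{1}$ has no integer entries, which restricts $B$ severely. On the other hand, the packing property applied at $\mathcal{C}$ itself yields $\alpha_0(\mathcal{C}) = \beta_1(\mathcal{C}) =: \tau$, and LP duality on $\mathcal{Q}(I(\mathcal{C}))$ gives $\langle \mathbf{1}, x^*\rangle \geq \tau$ with analogous fractional identities holding on every deletion/contraction minor via the K\"onig property inherited from packing.

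The core obstacle, and the technically deep part of the theorem, is to extract a contradiction from this very restrictive setup: a clutter whose covering polyhedron has a fractional vertex of full fractional support, determined by a nonsingular $0$--$1$ basis of tight edges, and satisfying $\alpha_0 = \beta_1$ on every minor. This is accomplished by Lehman's structural analysis of such \emph{minimally non-ideal} clutters, which shows that under these combined constraints $\mathcal{C}$ and $B$ must take one of an explicit short list of forms (degenerate projective planes or the Lehman configurations $J_n$), and a direct combinatorial verification that each of these configurations admits a minor in which $\alpha_0 > \beta_1$, violating the packing assumption. The preceding reductions serve precisely to put the counterexample into the canonical form to which Lehman's characterization applies.
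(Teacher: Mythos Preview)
The paper does not give its own proof of this statement; it is quoted as a known result with a citation to Lehman and to Cornu\'ejols' book. So there is nothing in the paper to compare your argument against, and your sketch should be judged on its own.

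Your reduction to a minimally non-ideal (MNI) minor is correct and standard: the argument that a fractional vertex $x^*$ of a minimal counterexample must satisfy $0<x^*_i<1$ for all $i$, hence is determined by $s$ tight edge constraints, is exactly how one shows that a counterexample may be taken MNI. The inequality $\langle\mathbf{1},x^*\rangle\geq\tau$ is also fine (sum the tight constraints over a maximum matching). Where the sketch goes wrong is in the description of Lehman's structural theorem and in what it actually delivers. The MNI clutters do \emph{not} form ``an explicit short list of forms (degenerate projective planes or the Lehman configurations $J_n$)''. Lehman's theorem says that an MNI clutter is either a degenerate projective plane $\mathcal{J}_t$, or its incidence matrix has a square nonsingular ``core'' $\bar A$ (and the blocker a core $\bar B$) satisfying $\bar A\,\bar B^{\,T}=J+dI$ for some integer $d\geq 1$; the latter family is infinite and still not completely classified, so there is no finite list to ``directly verify''. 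The way the proof actually concludes is arithmetic, not case-checking: from $\bar A\,\bar B^{\,T}=J+dI$ one reads off that the clutter itself (not a proper minor) fails K\H onig, namely $\alpha_0(\mathcal{C})=r>\beta_1(\mathcal{C})$ where $r$ is the common row sum of $\bar A$; and for $\mathcal{J}_t$ one checks $\alpha_0=2>1=\beta_1$ directly. Your last paragraph asserts exactly the content of the theorem you are trying to prove while mislabeling the structure that makes it work, so as written the argument is circular at the crucial step. If you want a self-contained proof, you need to either reproduce Lehman's structural analysis or cite it precisely and then carry out the short arithmetic deduction that MNI clutters violate K\H onig.
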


The converse of this result is not true. A famous example is the clutter
$\mathcal{Q}_6$ of \cite[Example~1.9]{cornu-book} (see
Example~\ref{jan23-25}). It is not K\H{o}nig
and $\mathcal{Q}(I(\mathcal{Q}_6))$ is integral.  

\begin{proposition}{\rm(Seymour \cite{seymour},
\cite[Proposition~1.36]{cornu-book})}
\label{aug25-06-1}
If a clutter
$\mathcal C$ has the max-flow
min-cut property,  
then $\mathcal C$ has the packing property. 
\end{proposition}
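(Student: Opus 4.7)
The plan is to break the proof into two assertions: first, the MFMC property implies the K\"onig property $\alpha_0(\mathcal{C})=\beta_1(\mathcal{C})$ for $\mathcal{C}$ itself; second, the MFMC property is preserved under the minor-forming operations of deletion and contraction. Iterating the second assertion transfers MFMC to every minor $\mathcal{C}'$ of $\mathcal{C}$, and then applying the first to each such $\mathcal{C}'$ yields $\alpha_0(\mathcal{C}')=\beta_1(\mathcal{C}')$, which is exactly the packing property.

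For the first assertion, I would specialize the LP-duality equation \eqref{jun6-2-03} at $\alpha={1}$. Any integer vector $x\in\mathbb{N}^s$ with $xA\geq{1}$ minimizing $\langle{1},x\rangle$ can be assumed to be $0/1$ and is then the indicator of a minimum vertex cover, so the min side equals $\alpha_0(\mathcal{C})$; dually, an integer $y\in\mathbb{N}^m$ with $Ay\leq{1}$ is the indicator of a matching and the max side equals $\beta_1(\mathcal{C})$. The MFMC hypothesis at $\alpha={1}$ then delivers $\alpha_0(\mathcal{C})=\beta_1(\mathcal{C})$. For the second assertion, I would encode each minor operation as an extension of the weight vector. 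For the deletion $\mathcal{C}\setminus t_i$, given $\alpha'\in\mathbb{N}^{V\setminus\{t_i\}}$, extend to $\alpha$ by $\alpha_i:=N$ with $N$ sufficiently large; an integer min-LP optimum for $(\mathcal{C},\alpha)$ must satisfy $x_i=0$, and the surviving constraints coincide with those of $\mathcal{C}\setminus t_i$ because edges containing $t_i$ are precisely the edges removed by deletion. For the contraction $\mathcal{C}/t_i$, extend $\alpha'$ by $\alpha_i:=0$; on the dual side, integrality forces $(Ay)_i\leq 0$ and hence $y_e=0$ for every $e\ni t_i$, so the residual weights assemble into an optimum for $\mathcal{C}/t_i$, while on the primal side a local exchange produces an integer optimum with $x_i=0$ whose support covers every set in $\{e\setminus\{t_i\}\colon e\in E(\mathcal{C})\}$, hence every minimal such set, which constitutes $E(\mathcal{C}/t_i)$.

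The main obstacle is the contraction step: the passage from $\{e\setminus\{t_i\}\colon e\in E(\mathcal{C})\}$ to its minimal subfamily $E(\mathcal{C}/t_i)$ can identify distinct edges of $\mathcal{C}$, so the incidence matrices of $\mathcal{C}$ and $\mathcal{C}/t_i$ are not simply related by row/column deletion. The delicate point is the local exchange showing that an integer min-LP optimum for $(\mathcal{C},\alpha)$ with $\alpha_i=0$ can be chosen with $x_i=0$: if some $x_i^{\ast}\geq 1$ cannot be reduced because an edge $e\ni t_i$ has no other vertex with $x_k^{\ast}\geq 1$, then one must transfer one unit from $x_i^{\ast}$ to a vertex of $e\setminus\{t_i\}$, and verifying that this preserves both feasibility and optimality uses the MFMC hypothesis on $(\mathcal{C},\alpha)$ in an essential way. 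Once this exchange lemma is secured, the full result follows by induction on $|V(\mathcal{C})|-|V(\mathcal{C}')|$ over the sequence of elementary minor operations leading to $\mathcal{C}'$.
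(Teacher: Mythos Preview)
The paper does not supply a proof of Proposition~\ref{aug25-06-1}; it is quoted from Seymour and from Cornu\'ejols' book, so there is no in-paper argument to compare against. Your overall plan---specialize the LP duality~\eqref{jun6-2-03} at $\alpha={1}$ to get K\H{o}nig, and show MFMC is closed under taking minors---is the standard route and is correct in outline. The first step is fine.

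There is, however, a genuine error in your minor-closure step: you have interchanged the weight encodings for deletion and contraction, and this makes both arguments break as written. With $\alpha_i:=N$ large, an integral min-LP optimum for $(\mathcal{C},\alpha)$ does satisfy $x_i=0$, but then the covering constraint for an edge $e\ni t_i$ becomes $\sum_{j\in e\setminus\{t_i\}}x_j\geq 1$, which is exactly the constraint attached to the edge $e\setminus\{t_i\}$ of the \emph{contraction} $\mathcal{C}/t_i$; it does not disappear. So $\alpha_i=N$ encodes contraction, not deletion. Dually, setting $\alpha_i:=0$ forces $(Ay)_i\leq 0$ on the max side, hence $y_e=0$ for every $e\ni t_i$, so only edges of $\mathcal{C}\setminus t_i$ survive: this is \emph{deletion}. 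On the min side with $\alpha_i=0$ the variable $x_i$ is cost-free, so no ``local exchange'' is needed; simply restrict an integral optimum $x$ to $V\setminus\{t_i\}$ to get an integral cover of $\mathcal{C}\setminus t_i$ of the same cost.

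With the encodings swapped back, the one genuinely delicate point lies in the contraction case ($\alpha_i=N$) on the \emph{max} side: distinct edges $e\in E(\mathcal{C})$ may produce the same minimal set in $E(\mathcal{C}/t_i)$. One handles this by choosing for each $e$ some minimal $\phi(e)\subseteq e\setminus\{t_i\}$ in $E(\mathcal{C}/t_i)$ and setting $y''_f:=\sum_{\phi(e)=f}y_e$; feasibility follows from $\phi(e)\subseteq e$, which gives $\sum_{f\ni j}y''_f\leq\sum_{e\ni j}y_e\leq\alpha_j$ for every $j\neq i$, while $\sum_f y''_f=\sum_e y_e$ matches the LP optimum. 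After this correction your induction on the length of the minor sequence goes through.
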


To the best of our knowledge, the converse is still an unsolved conjecture. 
 
\begin{conjecture}{\rm(The packing problem, Conforti--Cornu\'ejols
\cite{CC}, \cite[Conjecture~1.6]{cornu-book})}
\label{conforti-cornuejols1}\rm
A clutter $\mathcal C$ has the
packing property if and only if 
$\mathcal C$ has the max-flow min-cut property.
\end{conjecture}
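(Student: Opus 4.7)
The plan is to exploit the chain of equivalences already assembled in the excerpt to reduce the conjecture to a single missing implication. By Seymour's theorem (Proposition~\ref{aug25-06-1}), the max-flow min-cut property already implies the packing property, so only the converse requires work. By Theorem~\ref{ntf-char}, the MFMC property of $\mathcal{C}$ is equivalent to $I^n=I^{(n)}$ for all $n\ge 1$, which by Theorem~\ref{ntf-normal-i} is in turn equivalent to the conjunction ``$S[Iz]$ is normal and $\mathcal{Q}(I)$ is integral''. Hence what must be proved is: \emph{if $\mathcal{C}$ has the packing property, then $I(\mathcal{C})$ is a normal ideal and $\mathcal{Q}(I(\mathcal{C}))$ is integral.}

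The second conjunct is immediate from Lehman's theorem (Theorem~\ref{lehman}): the packing property gives that $\mathcal{Q}(I(\mathcal{C}))$ is integral. Combining this with Theorem~\ref{duality-qi} and the equivalence (a)$\Leftrightarrow$(c) of Proposition~\ref{qi-integral}, one obtains for free that $\overline{I^n}=I^{(n)}$ for all $n\ge 1$. Thus it suffices to prove normality of $I$, for then $I^n=\overline{I^n}=I^{(n)}$ and the chain closes via Theorem~\ref{ntf-char}.

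The remaining step---\emph{packing property implies $I(\mathcal{C})$ is normal}---is the heart of the matter. The natural attack is induction on $|V(\mathcal{C})|+|E(\mathcal{C})|$: the packing property is closed under the minor operations of deletion and contraction, as is normality of the edge ideal of a clutter, so the inductive hypothesis disposes of all proper minors. For the inductive step one would argue by contrapositive: assuming $I$ is not normal, the Hilbert basis characterization of Proposition~\ref{hilb-basis-charw} together with the LP membership test (Proposition~\ref{lp-membrship-test}) and Proposition~\ref{normality-criterion-ip} produces a vertex $\lambda\in\mathbb{Q}_+^m$ of a polytope $\mathcal{P}_\alpha$ witnessing $t^\alpha\in\overline{I^n}\setminus I^n$. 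The support $\mathcal{C}'=\{e_i\in E(\mathcal{C}):\lambda_i>0\}$, suitably realized as a minor of $\mathcal{C}$ by deleting vertices outside $\mathrm{supp}(\alpha)$ and contracting as required, should satisfy $\alpha_0(\mathcal{C}')>\beta_1(\mathcal{C}')$, contradicting the packing property.

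The main obstacle is precisely this last passage, and it is intrinsic: this is the Conforti--Cornu\'ejols conjecture, open since 1993. The difficulty is that the certificate of non-normality furnished by linear programming is a rational point in a polyhedral cone, whereas a failure of K\"onig in a minor is a purely integral-combinatorial phenomenon, and no general procedure is known for converting one into the other; the example $\mathcal{Q}_6$ of Example~\ref{jan23-25} shows that mere integrality of $\mathcal{Q}(I)$ is far from enough to force K\"onig in $\mathcal{C}$ itself. A model solution would likely require a structure theorem classifying the minimal obstructions to normality for higher-uniform clutters---an analogue of Theorem~\ref{apr16-02}, which settles the graph case via Hochster configurations---together with a combinatorial mechanism showing that each such obstruction forces a non-K\"onig minor. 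Absent such a structure theorem, one can only hope to verify the implication in restricted classes (binary clutters, bounded rank, or clutters whose ideals arise from already-understood polyhedra).
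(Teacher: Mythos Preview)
The statement is a \emph{conjecture}, and the paper offers no proof---it explicitly says ``To the best of our knowledge, the converse is still an unsolved conjecture.'' Your reduction is exactly the one the paper gives: one direction is Seymour's theorem (Proposition~\ref{aug25-06-1}), and for the converse the paper combines Theorems~\ref{ntf-normal-i} and \ref{lehman} to reduce Conjecture~\ref{conforti-cornuejols1} to the single implication ``packing $\Rightarrow$ $S[Iz]$ normal'' (stated as Conjecture~\ref{con-cor-vila}). You have arrived at the same bottleneck by the same route, and you correctly identify it as the open Conforti--Cornu\'ejols problem.

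One small correction to your inductive sketch: normality of edge ideals is \emph{not} known to be closed under arbitrary minors (it is closed under deletions, but contractions can destroy it), so the inductive framework you outline would need care even to formulate. This is consistent with your own conclusion that no general mechanism is available; I mention it only so you do not rely on that closure property elsewhere.
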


As we now explain, the packing problem has been brought into 
commutative algebra by Gitler, Valencia, and the second author
\cite{clutters}. Following the algebraic approach of \cite{reesclu,clutters}, some 
advances on the packing problem and some related conjectures are
given in \cite{mfmc,chordal}, see also \cite{cm-mfmc,graphs}. Consequences and recent discussions of 
the packing problem can be found in
\cite{Alilooee-Benerjee,Seceleanu-packing,symbolic-powers-survey,Grifo-etal,HaM,edge-ideals,monalg-rev}. 

The edge ideals of a deletion and a contraction have a nice algebraic
interpretation. For $t_i\in V(\mathcal{C})$, define the {\it
contraction\/} and {\it
deletion} of  
$I=I(\mathcal{C})$ as the ideals: 
$$
(I\colon t_i)\ \quad\mbox{ and }\ \quad I^c=I\cap
K[t_1,\ldots,\widehat{t}_i,\ldots,t_s],
$$
respectively. The clutter associated to 
the squarefree monomial ideal $(I\colon t_i)$ (resp. $I^c$) is the 
contraction $\mathcal{C}/t_i$  (resp. deletion $\mathcal{C}\setminus
t_i$), i.e., the edge ideals of $\mathcal{C}/t_i$  and
$\mathcal{C}\setminus t_i$ are $(I\colon t_i)$  and $I^{c}$, 
respectively. This indicates how to define the notion
of minor for edge ideals. 

\begin{definition}\cite[Definition~3.8]{clutters}\label{minor-def} A {\it minor\/} of $I$
is a proper ideal $(0)\subsetneq I'\subsetneq S$ 
obtained from $I$ by making any sequence of the $t_i$-variables equal to
$1$ or $0$ in $\mathcal{G}(I)=\{t^{v_1},\ldots,t^{v_m}\}$, and then
taking the ideal $I'$ generated by the resulting monomials. The ideal
$I$ is 
considered a minor by convention. 
\end{definition}

Minors of $\mathcal{C}$ correspond to minors of $I=I(\mathcal{C})$ and
vice versa. The covering and matching number of $\mathcal{C}$ 
can be expressed algebraically as \cite{reesclu,clutters}: $\alpha_0(\mathcal{C})$ is the
height ${\rm ht}(I(\mathcal{C}))$ of $I(\mathcal{C})$, and $\beta_1(\mathcal{C})$ is the
{monomial grade} ${\rm mgrade}(I(\mathcal{C}))$ of $I(\mathcal{C})$, that is, the maximum cardinality of a regular 
sequence in $I(\mathcal{C})$ consisting of monomials.

We say that $I$ is \textit{K\H{o}nig} or has the \textit{K\H{o}nig
property} if ${\rm ht}(I)={\rm mgrade}(I)$, and we say that 
$I$ satisfies the \textit{packing
property} if each minor $I'$ of $I$ satisfies the K\H{o}nig property.

We come to the commutative algebra translation of the packing problem.

\begin{conjecture}{\rm(The packing problem,
\cite[Conjecture~3.10]{clutters},
\cite[Theorem~4.6]{reesclu})} If $I$ is a squarefree monomial 
ideal, then $I^n=I^{(n)}$ for all $n\geq 1$  
if and only if $I$ has the packing property.
\end{conjecture}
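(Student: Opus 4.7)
The plan is to handle the two directions separately, since the forward implication can be settled by results already in the excerpt while the converse is the unresolved Conforti--Cornu\'ejols conjecture that must be attacked head-on. For the direction $(\Rightarrow)$, if $I^n=I^{(n)}$ for every $n\geq 1$, then Theorem~\ref{ntf-char} gives that $\mathcal{C}$ has the max-flow min-cut property; since contraction and deletion of a vertex correspond to the minor operations of Definition~\ref{minor-def}, and MFMC restricts cleanly to the subsystems of the LPs in Eq.~\eqref{jun6-2-03} obtained by deleting rows or columns of $A$, each minor of $\mathcal{C}$ again has MFMC. Applying Seymour's Proposition~\ref{aug25-06-1} minor by minor yields the K\"onig property on every minor of $\mathcal{C}$, which is the packing property of $I$.

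For $(\Leftarrow)$, assume $I=I(\mathcal{C})$ has the packing property. First I would invoke Lehman's Theorem~\ref{lehman} to conclude that $\mathcal{Q}(I)$ is integral; combining this with Theorem~\ref{duality-qi} and Proposition~\ref{qi-integral}, one obtains $\overline{I^n}=I^{(n)}$ for every $n\geq 1$. Thus the whole question is reduced to showing that $S[Iz]$ is normal, for then $I^n=\overline{I^n}=I^{(n)}$ as desired. Equivalently, by the effective reducedness criterion Proposition~\ref{effcred} (noting that integrality of $\mathcal{Q}(I)$ together with Theorem~\ref{reesclu-theorem} already secures $r=p$), one may aim instead at the reducedness of ${\rm gr}_I(S)$ and invoke Theorem~\ref{reduced-normal-cones}.

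To prove normality, the plan is to apply Proposition~\ref{normality-criterion-ip}: it must be shown that whenever $\alpha\in\mathbb{N}^s$ and $\lambda\in\mathbb{Q}_+^m$ satisfy $A\lambda\leq\alpha$, there exists $\beta\in\mathbb{N}^m$ with $A\beta\leq\alpha$ and $|\lambda|-1<|\beta|\leq |\lambda|$. I would induct on $|V(\mathcal{C})|+|E(\mathcal{C})|$ and pass to a minimal counterexample $(\mathcal{C},\alpha)$. Because every proper minor of $\mathcal{C}$ retains the packing property, the inductive hypothesis would provide integer optima for every LP built from a proper minor; the scheme would then be to peel off a maximum matching of $\mathcal{C}$ (integer by the K\"onig property of $\mathcal{C}$ itself), reduce $\alpha$ accordingly, and splice an integer solution of the reduced problem back together. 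A complementary attack is to hunt for a ``forbidden'' algebraic obstruction in the spirit of the Hochster configurations of Theorem~\ref{apr16-02}, whose absence in all minors of $\mathcal{C}$ would be equivalent to the packing property and would force normality of $S[Iz]$.

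The principal obstacle is precisely this last step: passing from integer optima of the LPs in Eq.~\eqref{jun6-2-03} at the single right-hand side $\alpha=1$, on every minor, which is all the packing hypothesis literally guarantees via Lehman, to integer optima at every integral $\alpha$, which is what normality of $S[Iz]$ (and hence MFMC) demands. Bridging that gap is exactly the Conforti--Cornu\'ejols conjecture, and the extreme-point geometry of $\mathcal{Q}(I)$ together with induction on minors does not seem to suffice without a new structural input, for example a purely combinatorial characterization of the primitive elements of the Hilbert basis of the Rees cone ${\rm RC}(I)$. My plan is therefore best described as a reduction of the commutative-algebra statement to the still-unresolved combinatorial core.
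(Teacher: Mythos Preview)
The statement you are addressing is stated in the paper as a \emph{conjecture}, not a theorem; the paper does not prove it. Your treatment matches the paper's own handling: the forward implication is dispatched by Theorem~\ref{ntf-char} together with Seymour's Proposition~\ref{aug25-06-1} (note that the latter already delivers the packing property in one step, so the ``minor by minor'' argument is unnecessary), and the converse is reduced, via Lehman's Theorem~\ref{lehman} and Theorem~\ref{ntf-normal-i}, to the normality of $S[Iz]$---which is precisely the content of the paper's Conjecture~\ref{con-cor-vila}. Your honest assessment that the remaining step is the open Conforti--Cornu\'ejols problem is exactly right; the inductive/minimal-counterexample scheme you sketch is a natural line of attack but, as you note, no one has made it work.
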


Using Theorems~\ref{ntf-normal-i} and \ref{lehman} this conjecture reduces to:  

\begin{conjecture}\label{con-cor-vila} If $I$ has the packing
property, then the Rees algebra $S[Iz]$ is normal.
\end{conjecture}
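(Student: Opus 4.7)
The plan is to exploit the polyhedral dictionary built up in Section~\ref{blowuprings-section} to reduce the problem to a controlled Hilbert basis question. First, by Lehman's theorem (Theorem~\ref{lehman}), the packing property of $I=I(\mathcal{C})$ forces the covering polyhedron $\mathcal{Q}(I)$ to be integral; by Theorem~\ref{duality-qi} the symbolic polyhedron $\mathcal{Q}(I^\vee)$ is then integral as well, and Proposition~\ref{qi-integral} yields $\overline{I^n}=I^{(n)}$ for every $n\geq 1$. Consequently, by Theorem~\ref{ntf-normal-i}, proving normality of $S[Iz]$ is equivalent to establishing $I^n=I^{(n)}$ for all $n\geq 1$, which by Theorem~\ref{ntf-char} is in turn equivalent to $\mathcal{C}$ having the max-flow min-cut property.

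Next, I would recast the target in the Hilbert basis language of Proposition~\ref{hilb-basis-charw}: normality of $S[Iz]$ is equivalent to $\mathcal{A}'=\{e_1,\ldots,e_s,(v_1,1),\ldots,(v_m,1)\}$ being a Hilbert basis of the Rees cone ${\rm RC}(I)$. Given any lattice point $(a,n)\in\mathbb{Z}^{s+1}\cap{\rm RC}(I)$ with $n\geq 1$, the first step already guarantees $t^a\in I^{(n)}$, so the task reduces to certifying that $t^a\in I^n$, equivalently that $(a,n)$ is an $\mathbb{N}$-combination of $\mathcal{A}'$.

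I would then set up an induction on the pair $(s,|\mathcal{G}(I)|)$, exploiting that by Definition~\ref{minor-def} both deletion and contraction of a variable produce minors that inherit the packing property; by the inductive hypothesis every proper-minor ideal is normal. The König property of $\mathcal{C}$ itself furnishes $g=\alpha_0(\mathcal{C})=\beta_1(\mathcal{C})$ pairwise disjoint edges $e_1,\ldots,e_g$, and the strategy is to distribute the lattice decomposition of $(a,n)$ across these edges by passing to the contractions $\mathcal{C}/t_i$ along vertices $t_i\in e_1\cup\cdots\cup e_g$, invoking the normality of each such minor to promote an integral representation certificate for $t^a\in I^{(n)}$ to a certificate for $t^a\in I^n$.

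The principal obstacle is transparent: this final inductive step is precisely where the Conforti–Cornu\'ejols packing conjecture (Conjecture~\ref{conforti-cornuejols1}) lives, since by Seymour's theorem (Proposition~\ref{aug25-06-1}) and Theorem~\ref{ntf-char} any proof of the present conjecture would automatically supply the outstanding reverse implication of the packing problem. The hard part will therefore be either to produce, from a minor-minimal hypothetical counterexample $\mathcal{C}$, a contraction of $\mathcal{C}$ that violates the König property, or to exhibit an intrinsic forbidden substructure---in the spirit of the $\mathcal{Q}_6$ configuration of Example~\ref{jan23-25}, which is the canonical witness that Lehman's implication cannot be reversed---whose absence under the packing hypothesis can be leveraged to close the Hilbert basis decomposition.
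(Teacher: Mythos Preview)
The statement you are addressing is a \emph{conjecture}, not a theorem: the paper presents Conjecture~\ref{con-cor-vila} as the commutative-algebra reformulation of the Conforti--Cornu\'ejols packing problem (Conjecture~\ref{conforti-cornuejols1}), and no proof is offered because none is known. So there is no ``paper's own proof'' against which to compare your proposal.

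That said, your write-up is an accurate account of the situation. The reductions you record---packing $\Rightarrow$ $\mathcal{Q}(I)$ integral via Lehman, hence $\overline{I^n}=I^{(n)}$ via Proposition~\ref{qi-integral}, so that normality of $S[Iz]$ is equivalent to $I^n=I^{(n)}$, which by Theorem~\ref{ntf-char} is MFMC---are exactly the reductions the paper itself uses to motivate the conjecture (see the sentence ``Using Theorems~\ref{ntf-normal-i} and \ref{lehman} this conjecture reduces to:'' preceding Conjecture~\ref{con-cor-vila}). You are also right that the Hilbert-basis reformulation via Proposition~\ref{hilb-basis-charw} is the natural target. But your inductive step---distributing the lattice decomposition of $(a,n)$ across a K\H{o}nig matching via contractions---is not a proof; it is a restatement of the open problem. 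You acknowledge this yourself in your final paragraph, and correctly: any successful execution of that step would resolve Conforti--Cornu\'ejols, which has been open since 1993. In short, your proposal is a faithful summary of the known landscape together with an honest identification of the gap, but it does not (and, given current knowledge, cannot) constitute a proof.
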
 

The packing problems holds for graphs, see
Theorem~\ref{graphs-ntf} below.

\begin{proposition}\cite[Proposition~4.10 and 4.11]{reesclu} 
\label{balanced-mfmc-blocker}
If $\mathcal{C}$ is a balanced clutter,  
then the Rees algebra and the symbolic Rees algebra of 
both $I(\mathcal{C})$ and $I_c(\mathcal{C})$ are equal, that is,
\begin{enumerate}
\item[(i)] $S[I({\mathcal C})z]=\mathcal{R}_s(I({\mathcal
C}))$\quad and\quad {\rm (ii)} $S[I_c({\mathcal C})z]=
\mathcal{R}_s(I_c({\mathcal C}))$.
\end{enumerate}
\end{proposition}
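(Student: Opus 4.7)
The plan is to deduce both equalities from the equivalence between combinatorial (MFMC) and algebraic (ordinary = symbolic powers) properties already recorded in the excerpt, together with classical results on balanced matrices from integer programming.

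First, I would reduce the problem to a known theorem about edge ideals of clutters. By the definition of the Rees algebra in Eq.~\eqref{jan21-25-1} and the symbolic Rees algebra in Eq.~\eqref{jan21-25-3}, the equality $S[I(\mathcal{C})z]=\mathcal{R}_s(I(\mathcal{C}))$ is equivalent, graded piece by graded piece, to the equality $I(\mathcal{C})^n=I(\mathcal{C})^{(n)}$ for all $n\geq 1$. Thus (i) is equivalent to the statement that $I(\mathcal{C})$ is a Simis ideal in the sense of Section~\ref{normality-symbolic-powers}. By Theorem~\ref{ntf-char} of Gitler, Valencia and the second author, this in turn is equivalent to $\mathcal{C}$ having the max-flow min-cut property.

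So the main step is: a balanced clutter has the max-flow min-cut property. This is the classical theorem of Fulkerson, Hoffman and Oppenheim on balanced 0-1 matrices: both sides of the LP-duality of Eq.~\eqref{jun6-2-03} have integral optimal solutions for every $\alpha\in\mathbb{N}^s$. I would invoke this theorem directly, since the paper's definition of balanced clutter (no special odd cycle in the incidence matrix $A$) is exactly the combinatorial condition used by Fulkerson, Hoffman and Oppenheim. Combined with Theorem~\ref{ntf-char} this yields (i).

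For (ii), the strategy is to reduce to (i) applied to the blocker. A second classical theorem, due to Fulkerson, Hoffman and Oppenheim and also implicit in the self-duality of the balanced property at the matrix level, asserts that the blocker $\mathcal{C}^\vee$ of a balanced clutter $\mathcal{C}$ is again balanced; equivalently, every special cycle of the 0-1 matrix whose rows are the characteristic vectors of minimal vertex covers of $\mathcal{C}$ is even. Since $I_c(\mathcal{C})=I(\mathcal{C}^\vee)$, applying part (i) to the balanced clutter $\mathcal{C}^\vee$ gives $S[I_c(\mathcal{C})z]=\mathcal{R}_s(I_c(\mathcal{C}))$.

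The main obstacle, or rather the only nontrivial content, is the use of Fulkerson--Hoffman--Oppenheim: verifying that ``no special odd cycle in $A$'' implies MFMC and that this property is preserved under passage to the blocker. Once these two facts are cited (or alternatively re-derived via balancedness of the blocker matrix and Lehman's Theorem~\ref{lehman}), everything else is a formal translation between polyhedral integrality, equality of ordinary and symbolic powers, and coincidence of the two blowup algebras.
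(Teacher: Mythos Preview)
Your reduction for part (i) is fine: equality of the Rees and symbolic Rees algebras is graded-componentwise the Simis property, and Theorem~\ref{ntf-char} together with the Fulkerson--Hoffman--Oppenheim theorem (balanced $\Rightarrow$ MFMC) gives exactly this. The paper does not supply its own proof here (the result is only cited from \cite{reesclu}), so there is no argument to compare against, but your route for (i) is a legitimate one.

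Part (ii), however, has a genuine gap. The assertion that the blocker of a balanced clutter is again balanced is \emph{false}, and it is not what Fulkerson--Hoffman--Oppenheim prove. You seem to be conflating two different dualities: balancedness is trivially preserved under transposing the incidence matrix, but the incidence matrix of the blocker $\mathcal{C}^\vee$ is \emph{not} the transpose of that of $\mathcal{C}$. A concrete counterexample is the $6$-cycle $C_6$, which is bipartite and hence balanced. Its minimal vertex covers include the three size-$4$ covers $\{2,3,5,6\}$, $\{1,3,4,6\}$, $\{1,2,4,5\}$; restricting the incidence matrix of $C_6^\vee$ to these three covers and to vertices $1,2,3$ yields
\[
\begin{pmatrix} 0&1&1\\ 1&0&1\\ 1&1&0\end{pmatrix},
\]
a $3\times 3$ submatrix with exactly two $1$'s in each row and column. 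Thus $C_6^\vee$ has a special odd cycle and is not balanced. So you cannot deduce (ii) by applying (i) to $\mathcal{C}^\vee$.

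What is actually needed for (ii) is the weaker (true) statement that $\mathcal{C}^\vee$ has the max-flow min-cut property whenever $\mathcal{C}$ is balanced; equivalently, via Theorem~\ref{ntf-normal-i}, that $I_c(\mathcal{C})$ is normal and $\mathcal{Q}(I_c(\mathcal{C}))$ is integral. The integrality follows from FHO plus Theorem~\ref{duality-qi}, but the normality of $I_c(\mathcal{C})$ requires a separate argument---this is the substance of \cite[Proposition~4.11]{reesclu}, and it does not follow from the balancedness of $\mathcal{C}^\vee$, since that fails.
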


Part (i) and (ii) of this result were first shown for bipartite graphs in 
\cite[Theorem~5.9]{ITG} (Theorem~\ref{ntf-bipartite}) and
\cite[Corollary~2.6]{alexdual},
respectively, and later generalized to balanced 
clutters in \cite[Propositions~4.10 and 4.11]{reesclu}. 

\begin{corollary}{\rm(Faridi \cite[Theorem~5.3]{faridijct})} If 
$\mathcal{C}$ is the clutter of facets of a simplicial forest, then
$\mathcal{C}$ has the K\H{o}nig property.   
\end{corollary}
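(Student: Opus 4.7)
The plan is to derive the K\H{o}nig property of $\mathcal{C}$ by chaining together several results already established in the excerpt, with no substantive new work required; the result is essentially a formal consequence of prior theorems once one identifies simplicial forests with totally balanced clutters.

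First, by Theorem~\ref{soleyman-zheng-char}(a), the hypothesis that $\mathcal{C}$ is the clutter of facets of a simplicial forest is equivalent to $\mathcal{C}$ being totally balanced, that is, the incidence matrix $A$ of $\mathcal{C}$ has no special cycles of any length. In particular $A$ has no special odd cycles, and hence $\mathcal{C}$ is balanced by definition.

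Next, I would apply Proposition~\ref{balanced-mfmc-blocker}(i) to the balanced clutter $\mathcal{C}$ to obtain the identification $S[I(\mathcal{C})z]=\mathcal{R}_s(I(\mathcal{C}))$ of the Rees algebra with the symbolic Rees algebra, which by comparing graded components is the equality $I(\mathcal{C})^n=I(\mathcal{C})^{(n)}$ for every $n\geq 1$. By the characterization in Theorem~\ref{ntf-char}, this equality of ordinary and symbolic powers is equivalent to $\mathcal{C}$ possessing the max-flow min-cut property.

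Finally, Proposition~\ref{aug25-06-1} (Seymour's theorem) ensures that the max-flow min-cut property implies the packing property. Since the packing property requires every minor of $\mathcal{C}$ to satisfy the K\H{o}nig property, and $\mathcal{C}$ is itself a minor of $\mathcal{C}$ by convention, we conclude that $\alpha_0(\mathcal{C})=\beta_1(\mathcal{C})$, as desired. The argument is entirely formal and no step presents a real obstacle; the only conceptual content lies in the equivalences $\text{simplicial forest}\Leftrightarrow\text{totally balanced}$ and $I^n=I^{(n)}\,\forall n\Leftrightarrow\text{MFMC}$, both of which are quoted from earlier results.
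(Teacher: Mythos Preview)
Your proof is correct and follows essentially the same route as the paper: identify $\mathcal{C}$ as totally balanced via Theorem~\ref{soleyman-zheng-char}, deduce the max-flow min-cut property from Proposition~\ref{balanced-mfmc-blocker} and Theorem~\ref{ntf-char}, and then invoke Proposition~\ref{aug25-06-1} to obtain the packing (hence K\H{o}nig) property. You spell out a couple of intermediate observations (totally balanced $\Rightarrow$ balanced, and why packing gives K\H{o}nig for $\mathcal{C}$ itself) that the paper leaves implicit, but the argument is the same.
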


\begin{proof} By Theorem~\ref{soleyman-zheng-char}, $\mathcal{C}$ is
totally balanced.    
Hence, by Theorem~\ref{ntf-char} and
Proposition~\ref{balanced-mfmc-blocker}, 
$\mathcal{C}$ has the max-flow min-cut property. Therefore, by
Proposition~\ref{aug25-06-1}, all minors of $\mathcal{C}$ 
satisfy the K\H{o}nig property. In particular, $\mathcal{C}$ has the
K\H{o}nig property.
\end{proof}

\begin{corollary}\label{dec26-02} 
If $A$ is totally unimodular, that is, each 
$i\times i$ subdeterminant of 
$A$ is $0$ or $\pm 1$ for all $i\geq 1$, then 
$\mathcal{C}$ has the max-flow min-cut property. 
\end{corollary}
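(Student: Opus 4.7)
The plan is to deduce the max-flow min-cut property from the structural results already recorded in this section rather than arguing directly from LP duality. The key intermediate step is to observe that total unimodularity of $A$ forces the clutter $\mathcal{C}$ to be balanced. Indeed, if $\mathcal{C}$ had a special odd cycle of length $r$, then $A$ would contain an $r\times r$ submatrix with exactly two $1$'s in each row and column, that is, the incidence matrix of an odd cycle $C_r$. Such a matrix has determinant $\pm 2$, contradicting the hypothesis that every subdeterminant of $A$ lies in $\{0,\pm 1\}$. Hence $\mathcal{C}$ has no special odd cycles, i.e., $\mathcal{C}$ is balanced.

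Once balancedness is in hand, the rest is a chain of citations. First I would apply Proposition~\ref{balanced-mfmc-blocker}(i) to obtain the equality of graded $S$-algebras
\[
S[I(\mathcal{C})z]=\mathcal{R}_s(I(\mathcal{C})).
\]
Comparing the $n$-th graded components of both sides yields $I(\mathcal{C})^n=I(\mathcal{C})^{(n)}$ for every $n\geq 1$. Finally, Theorem~\ref{ntf-char} translates this equality of ordinary and symbolic powers directly into the statement that $\mathcal{C}$ has the max-flow min-cut property, as required.

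The only step that is not a verbatim invocation of a previously stated result is the passage from total unimodularity to balancedness, and the potential obstacle there is the determinant computation for the incidence matrix of an odd cycle. However, this is classical: expanding the circulant $r\times r$ $0$-$1$ matrix whose nonzero pattern encodes $C_r$ gives determinant $\pm 2$ precisely when $r$ is odd, which is exactly what is needed. No other obstacle arises, so the corollary should follow in a few lines by stringing together the argument sketched above.
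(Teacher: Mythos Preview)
Your proposal is correct and follows essentially the same route as the paper's proof, which simply notes that total unimodularity implies balancedness and then invokes Proposition~\ref{balanced-mfmc-blocker}. You have merely made explicit the determinant argument for balancedness and the final appeal to Theorem~\ref{ntf-char}, both of which the paper leaves implicit.
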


\begin{proof} If $A$ is totally unimodular, then $\mathcal{C}$ is
balanced. Thus, we can apply Proposition~\ref{balanced-mfmc-blocker}.
\end{proof}

\begin{theorem}\cite[Theorem~1.3]{CGM}\label{dyadic-mfmc} If $\mathcal{Q}(I)$ is integral
and $\mathcal{C}$ is dyadic, that is, $|C\cap B|\leq2$ 
for all $C\in E(\mathcal{C})$ and $B\in E(\mathcal{C}^\vee)$, then $\mathcal{C}$ has the max-flow min-cut
property.
\end{theorem}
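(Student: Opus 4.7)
The plan is to translate the max-flow min-cut property into the equality of ordinary and symbolic powers of $I=I(\mathcal{C})$, and then to establish this equality by proving that $I$ is normal via a Hilbert basis argument that exploits the dyadic hypothesis.

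First, I would reduce MFMC to the normality of $I$. The integrality of $\mathcal{Q}(I)$ forces, by Theorem~\ref{duality-qi}, the integrality of $\mathcal{Q}(I^\vee)$, and Proposition~\ref{qi-integral} then gives $\overline{I^n}=I^{(n)}$ for every $n\geq 1$. By Theorem~\ref{ntf-char}, $\mathcal{C}$ has MFMC if and only if $I^n=I^{(n)}$ for all $n\geq 1$, so it suffices to show that $I$ is normal.

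By Proposition~\ref{hilb-basis-charw}, normality of $I$ is equivalent to $\mathcal{A}'=\{e_1,\ldots,e_s,(v_1,1),\ldots,(v_m,1)\}$ being a Hilbert basis of the Rees cone ${\rm RC}(I)$. Using the integral irreducible representation afforded by Theorem~\ref{irred-rep-rc} (with $d_i=1$ and $\gamma_i=u_{B_i}$ ranging over the characteristic vectors of the minimal vertex covers of $\mathcal{C}$), the integer lattice points of ${\rm RC}(I)$ are exactly the pairs $(a,n)\in\mathbb{N}^s\times\mathbb{N}$ with $\langle a,u_B\rangle\geq n$ for every minimal vertex cover $B$, i.e., with $t^a\in I^{(n)}$. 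I would then argue by induction on $n$; the case $n=0$ is immediate, and the inductive step amounts to finding an edge $C$ of $\mathcal{C}$ with $v_C\leq a$ such that $(a-v_C,n-1)$ still lies in ${\rm RC}(I)$. Writing $\mathcal{T}=\{B:\langle a,u_B\rangle=n\}$ for the family of tight minimal covers and invoking the dyadic bound $|B\cap C|\leq 2$, one checks that for every non-tight $B$ the required inequality $|B\cap C|\leq \langle a,u_B\rangle-n+1$ is automatic, so the inductive step reduces to the combinatorial claim that there exists an edge $C\subseteq{\rm supp}(a)$ satisfying $|B\cap C|=1$ for every $B\in\mathcal{T}$.

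The main obstacle is precisely this combinatorial claim, and it is here that the dyadic assumption becomes indispensable. I would attack it by contradiction: assuming every edge $C\subseteq{\rm supp}(a)$ is doubly met by some tight cover, one should be able to build from a suitable convex combination of the vectors $\mathbf{1}_B$ with $B\in\mathcal{T}$ a half-integral optimal vertex cover of the subclutter of $\mathcal{C}$ induced on ${\rm supp}(a)$, contradicting the integrality of $\mathcal{Q}(I)$ (which passes to the minor obtained by deleting the vertices outside ${\rm supp}(a)$, since ideal clutters are closed under minors). Making this averaging argument go through cleanly, and in particular showing that the fractional cover produced has value strictly less than any integral cover available, is the technical heart of the argument of Cornu\'ejols--Guenin--Margot \cite[Theorem~1.3]{CGM}.
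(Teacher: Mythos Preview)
The paper does not supply a proof of this theorem; it is simply quoted from \cite[Theorem~1.3]{CGM} and then invoked in the proof of Theorem~\ref{ntf-dual-bipartite}. So there is no ``paper's own proof'' to compare against.

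Your algebraic reformulation is correct and worth recording: by Theorem~\ref{duality-qi} (equivalently Lehman's theorem), Proposition~\ref{qi-integral}, and Theorem~\ref{ntf-char}, the max-flow min-cut property for an ideal clutter is equivalent to normality of $I=I(\mathcal{C})$, and the Hilbert-basis inductive scheme you set up is exactly right. In particular your observation that the dyadic bound automatically handles all non-tight covers, so that the inductive step reduces to finding an edge $C\subseteq\mathrm{supp}(a)$ with $|B\cap C|=1$ for every tight minimal cover $B$, is clean and correct.

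The gap is that this last combinatorial claim \emph{is} the theorem. You acknowledge as much by deferring to \cite{CGM} for ``the technical heart,'' but then your argument is not a proof so much as a reduction back to the very statement being cited. The averaging sketch you offer is too vague to assess: it is not clear which convex combination of the tight covers you intend, nor why the resulting fractional cover would have value strictly below the integral optimum on the deletion minor (the tight covers themselves are integral optimal covers for $a$, and averaging them does not obviously beat the integral optimum on a minor). The actual CGM argument is more delicate, working with minimally non-packing minors rather than a direct averaging construction. So the proposal is a correct and illuminating \emph{translation} of the problem into the algebraic language of the survey, but it does not constitute an independent proof.
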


\begin{theorem}{\rm(Gitler, Reyes, -\,, \cite{alexdual})}\label{ntf-dual-bipartite}
Let $G$ be a graph and let $I_c(G)$ be the ideal of covers of $G$. The following 
conditions are equivalent:
\begin{enumerate}
\item[(i)] $G$ is a bipartite graph.\quad {\rm(ii)} $I_c(G)^{(k)}=I_c(G)^k$ for all $k\geq 1$.
\end{enumerate}
\end{theorem}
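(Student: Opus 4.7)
Set $I := I_c(G) \subset S = K[t_1,\ldots,t_s]$. My first step is to record the symbolic-power formula for $I$. Since $I$ is a squarefree monomial ideal whose minimal (hence associated) primes are precisely the ideals $\mathfrak{p}_e = (t_i,t_j)$ as $e = \{t_i,t_j\}$ ranges over $E(G)$ -- these being the minimal vertex covers of the blocker clutter $b(G)$, whose edge ideal is $I$ -- one obtains
\[
I^{(k)} \,=\, \bigcap_{\{t_i,t_j\}\in E(G)} (t_i,t_j)^k \,=\, \bigl(\bigl\{\, t^a \,\bigm|\, a_i + a_j \ge k \text{ for every } \{t_i,t_j\} \in E(G)\,\bigr\}\bigr),
\]
which reduces membership in $I^{(k)}$ to a finite list of edge-inequalities.

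For (i)$\Rightarrow$(ii), the plan is to invoke the identification of bipartite graphs with balanced graphs-as-clutters: a graph is bipartite if and only if its incidence matrix has no special odd cycle. Proposition~\ref{balanced-mfmc-blocker}(ii) applied to the balanced clutter $G$ then gives $S[I_c(G)z] = \mathcal{R}_s(I_c(G))$, which is precisely $I_c(G)^k = I_c(G)^{(k)}$ for every $k \ge 1$; this direction is essentially a citation.

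The substantive direction is (ii)$\Rightarrow$(i), which I handle contrapositively by exhibiting, whenever $G$ contains an odd cycle, an element of $I^{(2)} \smallsetminus I^{2}$. Relabel so that $V(C) = \{t_1,\ldots,t_{2r+1}\}$ is the vertex set of an odd cycle of $G$, and take $t^{a} = t_1 t_2 \cdots t_s$ with $a = (1,\ldots,1)$. For every edge $\{t_i,t_j\}$ of $G$ one has $a_i + a_j = 2$, so $t^{a} \in I^{(2)}$ by the formula above. Suppose for contradiction that $t^a \in I^2$. Then there exist minimal vertex covers $D_1,D_2$ of $G$ with $a \ge \mathbf{1}_{D_1} + \mathbf{1}_{D_2}$ coordinatewise, where $\mathbf{1}_D$ denotes the indicator vector of $D$. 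Since all entries of $a$ equal $1$, this forces $D_1 \cap D_2 = \emptyset$, and so $D_1 \cap V(C)$ and $D_2 \cap V(C)$ are disjoint subsets of $V(C)$. Each is a vertex cover of the odd cycle on $V(C)$, hence has cardinality at least $r+1$, giving $|V(C)| \ge 2r+2$, a contradiction.

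The main obstacle is to find a witness that satisfies the symbolic-power inequalities on \emph{all} edges of $G$ simultaneously, not just on the edges of the odd cycle. The uniform choice $a=(1,\ldots,1)$ bypasses this: it trivially satisfies every edge-inequality with $k=2$, while at the same time forcing any putative decomposition into two minimal vertex covers to be disjoint -- which is precisely the combinatorial obstruction that the odd cycle exhibits. Once one spots this witness, the odd-cycle argument reduces to the elementary bound $\alpha_0(C_{2r+1}) \ge r+1$.
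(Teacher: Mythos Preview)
Your proof is correct, and for (ii)$\Rightarrow$(i) you take a genuinely different route from the paper. The paper argues abstractly: from $I_c(G)^k = I_c(G)^{(k)}$ for all $k$ it deduces via Theorem~\ref{ntf-normal-i} that $\mathcal{Q}(I_c(G))$ is integral, then invokes the polyhedral duality Theorem~\ref{duality-qi} to get $\mathcal{Q}(I(G))$ integral, observes that graphs are dyadic clutters, and applies Theorem~\ref{dyadic-mfmc} and Theorem~\ref{ntf-char} to conclude that $I(G)$ is Simis, whence $G$ is bipartite by Theorem~\ref{ntf-bipartite}. Your argument is instead a direct, self-contained construction: the monomial $t_1\cdots t_s$ always lies in $I_c(G)^{(2)}$, and you show by an elementary counting on an odd cycle that it cannot lie in $I_c(G)^2$ when $G$ is non-bipartite. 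This is essentially the $k=2$ case of the Francisco--H\`a--Van Tuyl chromatic-number formula (Theorem~\ref{chromatic-membership}), proved by hand. Your approach has the virtue of being completely elementary and of pinpointing the first failure at $k=2$; the paper's approach has the virtue of exhibiting the result as a consequence of the polyhedral duality machinery developed throughout the survey.
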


\begin{proof} (i) $\Rightarrow$ (ii) A bipartite graph is balanced.
Thus, by Proposition~\ref{balanced-mfmc-blocker}(ii), $I_c(G)$ is a Simis
ideal and the proof is complete.

(ii) $\Rightarrow$ (i) By \cite[Theorem~3.4]{clutters},
Theorem~\ref{ntf-normal-i}, the covering
polyhedron of the ideal $I_c(G)$ is integral. Then, by 
\cite[Theorem~1.17]{cornu-book}, Theorem~\ref{duality-qi}, the covering polyhedron 
of the edge ideal $I(G)$ is also integral. As $G$ is dyadic, by
Theorems~\ref{ntf-char} and \ref{dyadic-mfmc}, $I(G)$ is a Simis
ideal. Thus, by Theorem~\ref{ntf-bipartite}, $G$ is bipartite. 
\end{proof}

\begin{theorem}\cite{CGM,alexdual,reesclu,ITG}\label{graphs-ntf} If $G$ is
a graph and $I=I(G)$ is its edge ideal, then the following  
conditions are equivalent{\rm :}
\begin{enumerate}
\item[(a)] $I^n=I^{(n)}$ for all $n\geq 1$.\quad {\rm (b)}
$I_c(G)^n=I_c(G)^{(n)}$ for all $n\geq 1$.
\item[(c)] $G$ is bipartite.\quad {\rm (d)} $\mathcal{Q}(I)$ is
integral. 
\item[(e)] $G$ has the packing property.\quad {\rm (f)} ${\rm
gr}_I(S)$ is reduced. 
\end{enumerate}
\end{theorem}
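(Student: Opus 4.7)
The plan is to take condition (a) as the hub and show it is equivalent to each of (b), (c), (d), (e), and (f) by chaining results that have already been established in the excerpt, so that the final proof is essentially a short bookkeeping argument.

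First I would dispose of the easy equivalences. The equivalence (a) $\Leftrightarrow$ (c) is exactly Theorem~\ref{ntf-bipartite}, and the equivalence (b) $\Leftrightarrow$ (c) is exactly Theorem~\ref{ntf-dual-bipartite}; together they give (a) $\Leftrightarrow$ (b) $\Leftrightarrow$ (c). Similarly, the theorem stated immediately before this one (deducing reducedness of ${\rm gr}_I(S)$ from the Huneke--Simis--Vasconcelos result via Theorems~\ref{ntf-normal-i} and \ref{irred-rep-rc} and the effective reducedness criterion) gives (a) $\Leftrightarrow$ (f).

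Next I would close the loop (a) $\Rightarrow$ (e) $\Rightarrow$ (d) $\Rightarrow$ (a). For (a) $\Rightarrow$ (e), assuming $I^n=I^{(n)}$ for all $n\geq 1$, Theorem~\ref{ntf-char} shows that $G$ has the max-flow min-cut property, and then Seymour's theorem (Proposition~\ref{aug25-06-1}) yields the packing property of $G$. For (e) $\Rightarrow$ (d), this is Lehman's theorem (Theorem~\ref{lehman}) applied to the clutter $G$.

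The final implication (d) $\Rightarrow$ (a) is the one that uses the fact that we are dealing with graphs rather than general clutters. I would observe that any graph $G$ is \emph{dyadic} in the sense of Theorem~\ref{dyadic-mfmc}: each edge $C\in E(G)$ has $|C|=2$, so trivially $|C\cap B|\leq 2$ for every minimal vertex cover $B$. Thus, assuming $\mathcal{Q}(I)$ is integral, Theorem~\ref{dyadic-mfmc} produces the max-flow min-cut property for $G$, and a second appeal to Theorem~\ref{ntf-char} yields $I^n=I^{(n)}$ for all $n\geq 1$. This closes the cycle and completes the proof. There is no real obstacle here; the only substantive point is noticing the dyadic observation that lets the Conforti--Gerards--Margot-type Theorem~\ref{dyadic-mfmc} act as a converse to Lehman's theorem in the graph case.
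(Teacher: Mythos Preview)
Your proof is correct, and the paper itself does not supply a proof of this theorem---it is stated with citations to \cite{CGM,alexdual,reesclu,ITG} and no argument follows. Your bookkeeping argument is exactly the intended synthesis: every implication you invoke (Theorems~\ref{ntf-bipartite}, \ref{ntf-dual-bipartite}, \ref{ntf-char}, \ref{lehman}, \ref{dyadic-mfmc}, Proposition~\ref{aug25-06-1}, and the theorem \cite{Lisboar,clutters} that ${\rm gr}_I(S)$ is reduced iff $I^n=I^{(n)}$) is stated earlier in the section, and the key graph-specific step---that edges have two vertices, so $G$ is automatically dyadic and Theorem~\ref{dyadic-mfmc} closes the loop $(\mathrm{d})\Rightarrow(\mathrm{a})$---is precisely the observation the paper sets up but leaves for the reader.
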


The \textit{Simis cone} of $I$ is the rational polyhedral cone:
\begin{equation}\label{simis-cone}
{\rm SC}(I)=H_{e_1}^+\cap \cdots\cap H_{e_{s+1}}^+
\cap H_{\ell_1}^+\cap\cdots\cap H_{\ell_r}^+,
\end{equation}
where $\ell_i=(u_i,1)$ for $i=1,\ldots,r$ and $\{u_1,\ldots,u_r\}$ are
the integral vertices of $\mathcal{Q}(I)$, see Eq.~\eqref{jan22-25}. 
The term Simis cone was coined in \cite{Lisboar} to recognize the
pioneering work
of Aron Simis on symbolic powers of monomial ideals
\cite{aron-hoyos}. 
This notion has been extended to study the symbolic Rees algebras of 
filtrations associated to covering polyhedra \cite{intclos}. 
The Simis cone is a finitely generated rational cone and there is
a finite set of integral vectors $\mathcal{H}\subset\mathbb{Z}^{s+1}$ 
such that 
$$\mathbb{Z}^{s+1}\cap \mathbb{R}_+\mathcal{H}=\mathbb{N}\mathcal{H}\
\mbox{ and }\ {\rm RC}(I)=\mathbb{R}_+\mathcal{H}.
$$ 
\quad The set $\mathcal{H}$ is called a {\it Hilbert basis\/} of
${\rm SC}(I)$. 
The Simis cone can be used to compute the symbolic Rees algebra
$\mathcal{R}_s(I)$ of $I$ using \textit{Normaliz} \cite{normaliz2}. 

\begin{theorem}\cite[Theorem~3.5]{Lisboar}\label{vila-yoshino} Let ${\mathcal H}\subset
\mathbb{N}^{s+1}$ 
be a Hilbert basis of ${\rm SC}(I)$. 
If $K[\mathbb{N}{\mathcal H}]$ is the semigroup 
ring of $\mathbb{N}{\mathcal H}$, then $\mathcal{R}_s(I)=K[\mathbb{N}{\mathcal
H}]$.
\end{theorem}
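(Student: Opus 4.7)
The plan is to identify $\mathcal{R}_s(I)$ with the semigroup ring on the lattice points of the Simis cone, and then use the defining property of a Hilbert basis. Everything rests on translating the polyhedral formula for symbolic powers into the cone condition cutting out ${\rm SC}(I)$.

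First I would recall the polyhedral description of symbolic powers recorded in Eq.~\eqref{jun21-21-1}: a monomial $t^a$ lies in $I^{(n)}$ if and only if $a \geq 0$ and $a/n \in \mathcal{Q}(I^{\vee})$. Using the bijection of Eq.~\eqref{jan22-25}, the integral vertices $u_1,\ldots,u_r$ of $\mathcal{Q}(I)$ are precisely the characteristic vectors of the minimal vertex covers of $\mathcal{C}$, so they are the columns of the incidence matrix of $I^{\vee}=I_c(\mathcal{C})$. Hence
\[
\mathcal{Q}(I^{\vee}) = \{x \in \mathbb{R}^s \mid x \geq 0,\ \langle x,u_i\rangle \geq 1,\ i=1,\ldots,r\},
\]
and the condition $t^a z^n \in \mathcal{R}_s(I)$ becomes $a\geq 0$, $n\geq 0$ and $\langle a,u_i\rangle \geq n$ for every $i$.

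Next I would read the defining halfspaces of ${\rm SC}(I)$ from Eq.~\eqref{simis-cone}. Since the inequalities $\langle (a,n),e_j\rangle \geq 0$ give $a\geq 0$ and $n\geq 0$, and the remaining inequalities (coming from the $\ell_i = d_i(u_i,-1)$ attached to the integral vertices of $\mathcal{Q}(I)$) encode exactly $\langle a,u_i\rangle \geq n$ for $i=1,\ldots,r$, the previous step identifies
\[
\mathcal{R}_s(I) \;=\; K\!\left[\{\, t^a z^n : (a,n) \in \mathbb{Z}^{s+1} \cap {\rm SC}(I)\,\}\right].
\]

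Finally I would apply the Hilbert basis property. By definition, $\mathcal{H}$ is a Hilbert basis of ${\rm SC}(I)$, meaning $\mathbb{Z}^{s+1} \cap {\rm SC}(I) = \mathbb{N}\mathcal{H}$. Every lattice point $(a,n) \in {\rm SC}(I)$ writes as $h_{i_1}+\cdots+h_{i_q}$ with $h_{i_j}\in\mathcal{H}$, so $t^a z^n$ factors as a product of the monomials $t^{h_{i_j}}$; this gives $\mathcal{R}_s(I) \subseteq K[\mathbb{N}\mathcal{H}]$. The reverse inclusion is immediate since each $h\in\mathcal{H}$ lies in $\mathbb{Z}^{s+1}\cap{\rm SC}(I)$, so $t^h \in \mathcal{R}_s(I)$. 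Combining the two gives $\mathcal{R}_s(I)=K[\mathbb{N}\mathcal{H}]$.

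No step is technically hard; the one point requiring attention is the verification that the facet inequalities of ${\rm SC}(I)$ are exactly those imposed by the minimal primes of $I$, i.e.\ that only the \emph{integral} vertices $u_1,\ldots,u_r$ of $\mathcal{Q}(I)$ appear in the definition of ${\rm SC}(I)$. This is consistent with the fact that only these vertices correspond via Eq.~\eqref{jan22-25} to minimal vertex covers (hence to minimal primes of $I$), while the possibly non-integral vertices $u_{r+1},\ldots,u_p$ from Theorem~\ref{reesclu-theorem} play no role in the symbolic Rees algebra.
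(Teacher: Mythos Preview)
Your argument is correct and is exactly the standard route to this result. The paper itself does not supply a proof here---Theorem~\ref{vila-yoshino} is quoted from \cite[Theorem~3.5]{Lisboar} without argument---so there is nothing to compare against in the survey. Your identification of the monomial condition $t^az^n\in\mathcal{R}_s(I)$ with the lattice-point condition $(a,n)\in\mathbb{Z}^{s+1}\cap{\rm SC}(I)$ via Eq.~\eqref{jun21-21-1} and the description of $\mathcal{Q}(I^\vee)$ through the integral vertices $u_1,\ldots,u_r$ of $\mathcal{Q}(I)$ is precisely the intended mechanism, and the Hilbert-basis step is immediate. One small remark: the paper's display after Eq.~\eqref{simis-cone} writes $\ell_i=(u_i,1)$, which is a typo for $(u_i,-1)$; you implicitly corrected this, and your reading is the right one, since otherwise the halfspaces $H_{\ell_i}^+$ would impose no constraint beyond nonnegativity.
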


This result shows that the symbolic Rees algebra of $I$ is a 
normal $K$-algebra generated by the finite set of monomials that
corresponds to the points of $\mathcal{H}$. One can also compute generators 
for the symbolic Rees algebra of $I$ using the algorithm in the proof 
of \cite[Theorem 1.1]{cover-algebras}. The minimal generators of the
symbolic Rees algebra of the edge ideal of a clutter are in one to one
correspondence with the indecomposable parallelizations of the clutter
\cite[Example~3.6]{symboli}.  

For use below recall that a graph $G$ is called {\it unmixed\/} or
\textit{well-covered} if all maximal independent sets of $G$ have the
same cardinality. The following result links an algebraic property
of a blowup algebra with a graph theoretical property.    

\begin{proposition}\cite[Corollary~4.3]{roundp}\label{extended-gorenstein} Let $G$ be a connected
bipartite graph and let 
$I=I(G)$ be its edge ideal. Then, the extended Rees algebra 
$S[Iz,z^{-1}]$ is a Gorenstein standard $K$-algebra if and only if
$G$ is unmixed. 
\end{proposition}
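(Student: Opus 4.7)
My plan is to realize the extended Rees algebra as a normal affine semigroup ring and then apply the Stanley--Danilov symmetry criterion for Gorenstein-ness. By Corollary~\ref{bip-normal} and Theorem~\ref{graphs-ntf}, since $G$ is bipartite, $I=I(G)$ is normal with $I^n=I^{(n)}$ for all $n\geq 1$ and the covering polyhedron $\mathcal{Q}(I)$ is integral. Writing $\mathcal{G}(I)=\{t^{v_1},\ldots,t^{v_m}\}$, I would identify $S[Iz,z^{-1}]$ with the monomial subring $K[\mathbb{N}\mathcal{A}]\subset S[z,z^{-1}]$ for $\mathcal{A}=\{e_1,\ldots,e_s,-e_{s+1},(v_1,1),\ldots,(v_m,1)\}\subset\mathbb{Z}^{s+1}$. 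Using normality of $I$, a slice-by-slice check (over each $z$-level) together with $\mathrm{gp}(\mathbb{N}\mathcal{A})=\mathbb{Z}^{s+1}$ yields $\mathbb{N}\mathcal{A}=\sigma\cap\mathbb{Z}^{s+1}$ for the rational pointed cone $\sigma=\mathbb{R}_+\mathcal{A}$, so $S[Iz,z^{-1}]$ is a normal affine semigroup ring, hence Cohen--Macaulay by Hochster's theorem (Theorem~\ref{hoch-theo}).

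Next I would compute the facets of $\sigma$ via the dual cone
\[
\sigma^{\vee}=\{(w',-b)\in\mathbb{R}^s\times\mathbb{R}\mid w'\geq 0,\ b\geq 0,\ w'A\geq b\mathbf{1}\},
\]
where $A$ is the incidence matrix of $I$. The extreme rays of $\sigma^{\vee}$ are (i) the coordinate rays $(e_i,0)$ occurring at $b=0$ and (ii) for $b>0$, under the substitution $w'=bx$, the rays $(\gamma_k,-d_k)$ coming from the vertices $\gamma_k/d_k$ of $\mathcal{Q}(I)$, via Theorems~\ref{irred-rep-rc} and \ref{reesclu-theorem}. Integrality of $\mathcal{Q}(I)$ forces $d_k=1$ for all $k$, and the bijection in Eq.~\eqref{jan22-25} identifies the $\gamma_k$ with the characteristic vectors of the minimal vertex covers $C_1,\ldots,C_r$ of $G$. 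Hence the complete list of primitive inner facet normals of $\sigma$ is $e_1,\ldots,e_s$ together with $(C_k,-1)$ for $k=1,\ldots,r$.

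Finally I would invoke the Stanley--Danilov criterion: a normal affine semigroup ring $K[\sigma\cap\mathbb{Z}^{s+1}]$ is Gorenstein if and only if there exists $x_0\in\mathrm{int}(\sigma)\cap\mathbb{Z}^{s+1}$ with $\langle u_F,x_0\rangle=1$ for every facet $F$ of $\sigma$ (with primitive inner normal $u_F$). Writing $x_0=(a_0,b_0)\in\mathbb{Z}^s\times\mathbb{Z}$, the equations $\langle e_i,x_0\rangle=1$ force $a_0=\mathbf{1}=(1,\ldots,1)$, and then $\langle(C_k,-1),x_0\rangle=1$ reduces to $|C_k|=b_0+1$ for all $k=1,\ldots,r$. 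Such $x_0$ exists if and only if $|C_1|=\cdots=|C_r|$, i.e., all minimal vertex covers of $G$ have the same cardinality; equivalently, $G$ is unmixed (well-covered). Both directions of the proposition follow. The main obstacle I foresee is the facet identification in the second step --- in particular, verifying that neither $e_{s+1}$ nor $-e_{s+1}$ lies in $\sigma^{\vee}$, so that the hyperplane $z=0$ does \emph{not} contribute a facet of the extended Rees cone and the complete facet list is as claimed; once this is settled, the Stanley--Danilov application is routine bookkeeping.
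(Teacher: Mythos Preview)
The paper does not supply a proof of this proposition; it merely cites \cite[Corollary~4.3]{roundp}. Your argument is correct and is in fact the natural route (and, given that the paper elsewhere mentions that \cite[Theorem~4.2]{roundp} computes canonical modules of such subrings, almost certainly the route of the cited reference as well): realize $S[Iz,z^{-1}]$ as the normal semigroup ring $K[\sigma\cap\mathbb{Z}^{s+1}]$, determine the primitive facet normals of $\sigma$, and apply the Danilov--Stanley Gorenstein criterion.

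Your anticipated obstacle is not a real difficulty. With $\sigma^{\vee}=\{(w',-b):w'\geq 0,\ b\geq 0,\ w'A\geq b\mathbf{1}\}$, the vector $-e_{s+1}$ (i.e.\ $w'=0$, $b=1$) fails $w'A\geq b\mathbf{1}$ since $I\neq(0)$, and $e_{s+1}$ (i.e.\ $b=-1$) fails $b\geq 0$; so the hyperplane $z=0$ contributes no facet. For $b=0$ the face is $\mathbb{R}_+^s\times\{0\}$, whose extreme rays are exactly the $(e_i,0)$; for $b>0$, writing $w'=bx$ reduces to $x\in\mathcal{Q}(I)$, and since $\mathcal{Q}(I)+\mathbb{R}_+^s=\mathcal{Q}(I)$ the extreme rays with $b>0$ correspond precisely to the vertices of $\mathcal{Q}(I)$. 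Integrality of $\mathcal{Q}(I)$ (Theorem~\ref{graphs-ntf}) and the bijection of Eq.~\eqref{jan22-25} then give the list $(e_i,0)$, $(u_k,-1)$ with $u_k$ the incidence vectors of the minimal vertex covers, exactly as you claim. The bookkeeping $a_0=\mathbf{1}$, $|C_k|=b_0+1$ finishes the equivalence with unmixedness. (The ``standard $K$-algebra'' clause is automatic under the grading $\deg t_i=\deg z^{-1}=1$, which makes every generator of $\mathcal{A}$ have degree~$1$.)
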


\section{The Vasconcelos formula for the generalized Hamming
weights}\label{coding-theory-section}

Let $S=K[t_1,\ldots,t_s]=\bigoplus_{d=0}^{\infty} S_d$ be a polynomial ring over
a field $K$ with the standard grading and let $I\neq(0)$ be a graded ideal
of $S$. Given $d,r\in\mathbb{N}_+$, let $\mathcal{F}_{d,r}$ be the set:
$$
\mathcal{F}_{d,r}:=\{\, \{f_i\}_{i=1}^r\subset S_d\mid 
\{\overline{f}_i\}_{i=1}^r\, \mbox{linearly
independent over }K, (I\colon(\{f_i\}_{i=1}^r))\neq I\}, 
$$
where $\overline{f}=f+I$ is the class of $f$ modulo $I$. 
We denote the {\it degree\/} or \textit{multiplicity} of $S/I$ by $\deg(S/I)$. The function 
$\delta_I\colon \mathbb{N}_+\times\mathbb{N}_+\rightarrow \mathbb{Z}$ given by 
$$
\delta_I(d,r):=\left\{\begin{array}{ll}\deg(S/I)-\max\{\deg(S/(I,F))\mid
F\in\mathcal{F}_{d,r}\}&\mbox{if }\mathcal{F}_{d,r}\neq\emptyset,\\
\deg(S/I)&\mbox{if\ }\mathcal{F}_{d,r}=\emptyset,
\end{array}\right.$$
is called the {\it generalized minimum distance function\/} of $I$
\cite{min-dis-generalized,rth-footprint}. If $r=1$, one recovers 
the minimum distance function of $I$ studied in
\cite{hilbert-min-dis,footprint-ci,Bounds-MinDis}.     
To compute $\delta_I(d,r)$ is a
difficult problem, but there are footprint lower bounds 
for $\delta_I(d,r)$ which 
are easier to compute using Gr\"obner basis
\cite{min-dis-generalized,rth-footprint}. In certain cases (e.g., 
for complete intersection monomial ideals of dimension $\geq 1$ and
for vanishing ideals of Cartesian products) the
footprint gives the exact value of $\delta_I(d,r)$
\cite[Corollary~4.4]{min-dis-ci}, \cite[Theorem~5.5]{footprint-ci}. 

The definition of $\delta_I(d,r)$ was motivated by the notion of 
generalized Hamming weight of a linear code. For convenience we
recall this notion. Let $K=\mathbb{F}_q$ be a finite field and let $C$ be a $[m,k]$ {\it linear
code} of {\it length} $m$ and {\it dimension} $k$, 
that is, $C$ is a linear subspace of $K^m$ with $k=\dim_K(C)$. Let $1\leq r\leq k$ be an integer.  
Given a subcode $D$ of $C$ (that is, $D$ is a linear subspace of $C$),
the {\it support\/} $\chi(D)$ of $D$ is the set of non-zero positions of $D$, that is,  
$$
\chi(D):=\{i\mid \exists\, (a_1,\ldots,a_m)\in D,\, a_i\neq 0\}.
$$
\quad The $r$-th {\it generalized Hamming weight\/} of $C$, denoted
$\delta_r(C)$, is the size of the smallest support of an
$r$-dimensional subcode. If $r=1$, $\delta_r(C)$ is the minimum
distance of $C$. The sequence $\delta_1(C), \ldots, \delta_k(C)$ is the
\textit{weight hierarchy} of $C$ and one has 
$\delta_1(C)<\cdots<\delta_k(C)$ \cite{wei}. The generalized Hamming
weights of a linear code are parameters 
of interest in many applications 
\cite{rth-footprint,Pellikaan,Johnsen,olaya,schaathun-willems,tsfasman,wei,Yang}
and they have been nicely related to  
the minimal graded free resolution of the ideal of cocircuits of the matroid
of a linear code \cite{JohVer,Johnsen}, to the nullity
function of the dual matroid of a linear code \cite{wei}, and to the
enumerative combinatorics of linear codes 
\cite{Britz,Klove-1992,MacWilliams-Sloane}.
Because of this, 
their study has attracted considerable attention, 
but determining them is in general a
difficult problem. The notion of generalized Hamming weight was
introduced by Helleseth, Kl{\o}ve and Mykkeltveit 
 \cite{helleseth} and by Wei \cite{wei}. 

The minimum distance of projective Reed-Muller-type codes has been
studied using Gr\"obner bases and commutative algebra techniques; see
\cite{carvalho-lopez-lopez,geil-thomsen,cartesian-codes,hilbert-min-dis,algcodes}
and references  
therein. These techniques were extended in
\cite{min-dis-generalized,rth-footprint,duality-projective} to study the $r$-th
generalized Hamming weight of projective 
Reed-Muller-type codes. These linear codes are constructed
as follows. 

Let $K=\mathbb{F}_q$ be a finite field with $q$ elements,
let $\mathbb{P}^{s-1}$ be a projective space over 
$K$, and let $\mathbb{X}$ be a subset of
$\mathbb{P}^{s-1}$. The {\it vanishing ideal\/} of
$\mathbb{X}$, denoted $I(\mathbb{X})$,  is the ideal of $S$ 
generated by the homogeneous polynomials that vanish at all points of
$\mathbb{X}$. The Hilbert function of $S/I(\mathbb{X})$ is denoted by
$H_\mathbb{X}(d)$ or $H_{I(\mathbb{X})}(d)$. We can write
$\mathbb{X}=\{[P_1],\ldots,[P_m]\}\subset\mathbb{P}^{s-1}$ 
with $m=|\mathbb{X}|$. Here we assume that the first non-zero entry of
each $[P_i]$ is $1$. In the special case that $\mathbb{X}$ has the
form $[X\times\{1\}]$ for some $X\subset\mathbb{F}_q^{s-1}$, we
do not make this assumption.  

Fix a degree $d\geq 1$. There is an \textit{evaluation} $K$-linear map given by  
\begin{equation*}
{\rm ev}_d\colon S_d\rightarrow K^{m},\ \ \ \ \ 
f\mapsto
\left(f(P_1),\ldots,f(P_m)\right).
\end{equation*}
\quad The image of $S_d$ under ${\rm ev}_d$, denoted by  $C_\mathbb{X}(d)$, is
called a {\it projective Reed-Muller-type code\/} of
degree $d$ on $\mathbb{X}$ \cite{duursma-renteria-tapia,GRT}. The
points in $\mathbb{X}$ are often called evaluation points in the
algebraic coding context. 
The {\it parameters} of the linear
code $C_\mathbb{X}(d)$ are:
\begin{itemize}
\item[(a)] {\it length\/}: $|\mathbb{X}|$,
\item[(b)] {\it dimension\/}: $\dim_K C_\mathbb{X}(d)$,
\item[(c)] $r$-th {\it generalized Hamming weight\/}: 
$\delta_\mathbb{X}(d,r):=\delta_r(C_\mathbb{X}(d))$. 
\end{itemize}

If $r=1$, $\delta_\mathbb{X}(d,r)$ is the minimum distance of
$C_\mathbb{X}(d)$. The $r$-th generalized Hamming weight
$\delta_\mathbb{X}(d,r)$ of $C_\mathbb{X}(d)$ is equal to $\delta_{I(\mathbb{X})}(d,r)$
\cite{min-dis-generalized,rth-footprint}. Thus, generalized minimum
distance functions are a generalization of the
generalized Hamming weights of a linear code and this is one of the
main reasons to study
them. 

The \textit{Vasconcelos function} (v-function for short) of a graded
ideal $I\subset S$, denoted $\vartheta_I$,  is
the function $\vartheta_I\colon
\mathbb{N}_+\times\mathbb{N}_+\rightarrow \mathbb{N}$ given by  
\begin{equation}\label{vas-function}
\vartheta_I(d,r):=\begin{cases}\min\{\deg(S/(I\colon(F)))\mid
F\in\mathcal{F}_{d,r}\}&\mbox{if }\mathcal{F}_{d,r}\neq\emptyset,\\ 
\deg(S/I)&\mbox{if\ }\mathcal{F}_{d,r}=\emptyset.
\end{cases}
\end{equation}
\quad This function was suggested to us by Wolmer Vasconcelos while 
we were visiting him in 2015 as an alternate way to extend the 
generalized Hamming weights of Reed--Muller-type codes. 
It was shown in \cite{min-dis-generalized,rth-footprint} that 
$\vartheta_{I(\mathbb{X})}(d,r)$ is also equal to $\delta_\mathbb{X}(d,r)$
(Theorem~\ref{rth-min-dis-vi}). The functions $\delta_I$ and
$\vartheta_I$ are two abstract algebraic extensions of 
$\delta_\mathbb{X}(d,r)$ that gives us a tool to study 
generalized Hamming weights. These two functions can be computed, for
small examples, using the algorithms in \cite{coding-theory-package,min-dis-generalized,rth-footprint}. 

Given a graded ideal $I\subset S$ 
define its {\it zero set\/} relative to $\mathbb{X}$ as  
$$V_\mathbb{X}(I)=\left\{[\alpha]\in \mathbb{X}\mid  
f(\alpha)=0\,\,  
\mbox{ for all }f\in I\, \mbox{ homogeneous} \right\}.
$$ 
\begin{lemma}\cite{rth-footprint}\label{degree-formula-for-the-number-of-non-zeros}
Let $\mathbb{X}$ be a subset of 
$\mathbb{P}^{s-1}$ over a finite field $K$ and let $I(\mathbb{X})\subset S$ be its
vanishing ideal. If 
$F=\{f_1,\ldots,f_r\}$ is a set of homogeneous polynomials of
$S\setminus\{0\}$, then 
$$
|\mathbb{X}\setminus V_{\mathbb{X}}(F)|=\begin{cases}
\deg(S/(I(\mathbb{X})\colon(F)))&\mbox{if }\, (I(\mathbb{X})\colon(F))\neq
I(\mathbb{X}),\\ 
\deg(S/I(\mathbb{X}))&\mbox{if }\, (I(\mathbb{X})\colon (F))=I(\mathbb{X}).
\end{cases}
$$
\end{lemma}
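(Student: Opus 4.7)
The plan is to exploit the radical primary decomposition of the vanishing ideal, combined with the fact that the colon operation commutes with finite intersections and is especially simple at prime components. Writing $\mathbb{X}=\{[P_1],\ldots,[P_m]\}$ and letting $\mathfrak{p}_i=I(\{[P_i]\})$ denote the homogeneous prime ideal of the single point $[P_i]$, one has the irredundant primary decomposition $I(\mathbb{X})=\bigcap_{i=1}^m\mathfrak{p}_i$.

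First I would establish the prime-by-prime computation
$$(\mathfrak{p}_i \colon (F))=\begin{cases}\mathfrak{p}_i & \text{if some }f_j\notin\mathfrak{p}_i,\\ S & \text{if every }f_j\in\mathfrak{p}_i,\end{cases}$$
which follows at once from primality: if $gF\subset\mathfrak{p}_i$ and some $f_j$ avoids $\mathfrak{p}_i$, then $g\in\mathfrak{p}_i$; in the opposite case $1\cdot f_j\in\mathfrak{p}_i$ for every $j$. Translating to geometry, the condition ``every $f_j\in\mathfrak{p}_i$'' says precisely that $f_j(P_i)=0$ for all $j$, i.e., $[P_i]\in V_{\mathbb{X}}(F)$. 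Using that colon commutes with finite intersections, this yields
$$(I(\mathbb{X})\colon(F))=\bigcap_{i=1}^m(\mathfrak{p}_i\colon(F))=\bigcap_{[P_i]\notin V_{\mathbb{X}}(F)}\mathfrak{p}_i = I(\mathbb{X}\setminus V_{\mathbb{X}}(F)),$$
where the empty intersection is to be read as $S$.

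Finally I would invoke the standard fact that for any finite set $Y\subset\mathbb{P}^{s-1}$ the quotient $S/I(Y)$ has Krull dimension one with Hilbert polynomial equal to the constant $|Y|$, so that $\deg(S/I(Y))=|Y|$. Applying this to $Y=\mathbb{X}\setminus V_{\mathbb{X}}(F)$ gives $\deg(S/(I(\mathbb{X})\colon(F)))=|\mathbb{X}\setminus V_{\mathbb{X}}(F)|$ whenever this set is nonempty, which is exactly the first case of the lemma. In the remaining case, $(I(\mathbb{X})\colon(F))=I(\mathbb{X})$ is equivalent via the displayed formula to $V_{\mathbb{X}}(F)\cap\mathbb{X}=\emptyset$, whence $|\mathbb{X}\setminus V_{\mathbb{X}}(F)|=|\mathbb{X}|=\deg(S/I(\mathbb{X}))$, matching the second branch. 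The only genuine subtlety is the bookkeeping at the boundary $V_{\mathbb{X}}(F)=\mathbb{X}$, handled by the convention $\deg(S/S)=0$; everything else is routine given the two ingredients above.
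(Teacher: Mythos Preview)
The paper does not actually prove this lemma; it is stated with a citation to \cite{rth-footprint} and used as an input to the proof of Theorem~\ref{rth-min-dis-vi}. So there is no in-paper argument to compare against.

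Your proof is correct and is the standard one: decompose $I(\mathbb{X})=\bigcap_i\mathfrak{p}_i$, use that colon ideals distribute over finite intersections and are trivial to compute at primes, and then invoke $\deg(S/I(Y))=|Y|$ for finite projective sets. The identity $(I(\mathbb{X})\colon(F))=I(\mathbb{X}\setminus V_{\mathbb{X}}(F))$ you derive (with the convention $I(\emptyset)=S$) in fact shows that the first branch of the formula, $|\mathbb{X}\setminus V_{\mathbb{X}}(F)|=\deg(S/(I(\mathbb{X})\colon(F)))$, holds in \emph{every} case once one adopts $\deg(S/S)=0$; the second branch is then just the specialization to $V_{\mathbb{X}}(F)=\emptyset$. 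One small wording issue: you write that ``$\mathbb{X}\setminus V_{\mathbb{X}}(F)$ nonempty'' is ``exactly the first case of the lemma,'' but the first case is characterized by $V_{\mathbb{X}}(F)\neq\emptyset$, which is a different (non-complementary) condition. This does not affect the validity of the argument, since you separately handle all three possibilities ($V_{\mathbb{X}}(F)=\emptyset$, $V_{\mathbb{X}}(F)=\mathbb{X}$, and the intermediate case), but the sentence as written is imprecise.
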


\begin{lemma}\cite{rth-footprint}\label{degree-formula-for-the-number-of-zeros-proj}
Let $\mathbb{X}$ be a subset of 
$\mathbb{P}^{s-1}$ over a finite field $K$ and let $I(\mathbb{X})\subset S$ be its
vanishing ideal. If 
$F=\{f_1,\ldots,f_r\}$ is a set of homogeneous polynomials of
$S\setminus\{0\}$, then the number of points of $V_\mathbb{X}(F)$ is given by 
$$
|V_{\mathbb{X}}(F)|=\left\{
\begin{array}{cl}
\deg(S/(I(\mathbb{X}),F))&\mbox{if }\, (I(\mathbb{X})\colon(F))\neq
I(\mathbb{X}),\\ 
0&\mbox{if }\, (I(\mathbb{X})\colon(F))=I(\mathbb{X}).
\end{array}
\right.
$$
\end{lemma}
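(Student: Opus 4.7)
The plan is to exploit that $I(\mathbb{X})$ is a radical, one-dimensional graded ideal whose minimal primes are $\mathfrak{p}_i := I([P_i])$ for $i=1,\ldots,m$; each $\mathfrak{p}_i$ has height $s-1$, is generated by linear forms, and satisfies $\deg(S/\mathfrak{p}_i)=1$, so in particular $\deg(S/I(\mathbb{X}))=|\mathbb{X}|$.

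First, I would identify the ``second case'' of the statement with the emptiness of $V_\mathbb{X}(F)$. Since each $\mathfrak{p}_i$ is prime, one has $(\mathfrak{p}_i\colon (F))=\mathfrak{p}_i$ when $F\not\subset\mathfrak{p}_i$ and $(\mathfrak{p}_i\colon (F))=S$ when $F\subset\mathfrak{p}_i$; intersecting over $i$ gives
$$
(I(\mathbb{X})\colon (F))=\bigcap_{F\not\subset\mathfrak{p}_i}\mathfrak{p}_i,
$$
which coincides with $I(\mathbb{X})$ precisely when no $\mathfrak{p}_i$ contains $F$, i.e., when $V_\mathbb{X}(F)=\emptyset$. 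The equality $|V_\mathbb{X}(F)|=0$ is then tautological and the second case is handled.

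For the first case, I would set $J:=I(\mathbb{X})+(F)$ and compute $\deg(S/J)$ via the additivity formula
$$
\deg(S/J)\;=\;\sum_{\mathfrak{p}}\ell_{S_\mathfrak{p}}\big((S/J)_\mathfrak{p}\big)\,\deg(S/\mathfrak{p}),
$$
where the sum runs over the one-dimensional minimal primes $\mathfrak{p}$ of $J$. Since $V(J)=\mathbb{X}\cap V(F)=V_\mathbb{X}(F)$, these are precisely the primes $\mathfrak{p}_i$ with $[P_i]\in V_\mathbb{X}(F)$. For each such $\mathfrak{p}_i$, the radicality of $I(\mathbb{X})$ makes every other $\mathfrak{p}_j$ vanish upon localizing, so that $I(\mathbb{X})S_{\mathfrak{p}_i}=\mathfrak{p}_i S_{\mathfrak{p}_i}$; since $F\subset\mathfrak{p}_i$, this forces $JS_{\mathfrak{p}_i}=\mathfrak{p}_i S_{\mathfrak{p}_i}$, whence $(S/J)_{\mathfrak{p}_i}$ is a field and contributes length one. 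Combining with $\deg(S/\mathfrak{p}_i)=1$ yields $\deg(S/J)=|V_\mathbb{X}(F)|$, as required.

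As an alternative (and a useful cross-check), one could pair Lemma~\ref{degree-formula-for-the-number-of-non-zeros} with the trivial identity $|\mathbb{X}|=|V_\mathbb{X}(F)|+|\mathbb{X}\setminus V_\mathbb{X}(F)|$ and invoke the degree additivity coming from the disjoint decomposition of $\mathbb{X}$. The principal obstacle in either route, and the only step that truly demands care, is confirming that each primary component of $J$ at a one-dimensional minimal prime $\mathfrak{p}_i$ is $\mathfrak{p}_i$ itself rather than some higher-multiplicity $\mathfrak{p}_i$-primary ideal; embedded $\mathfrak{m}$-primary components of $J$ may perfectly well appear, but being zero-dimensional they do not contribute to $\deg(S/J)$, so the count is unaffected.
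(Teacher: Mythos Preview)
Your proof is correct. The paper itself does not prove this lemma; it merely cites \cite{rth-footprint} and then uses the statement in the proof of Theorem~\ref{rth-min-dis-vi}, so there is no in-paper argument to compare against. Your approach---identifying the minimal primes of $I(\mathbb{X})$ with the points, computing the colon via $(\bigcap_i\mathfrak{p}_i:(F))=\bigcap_i(\mathfrak{p}_i:(F))$, and then reading off $\deg(S/J)$ from the associativity formula after checking that $J$ localizes to $\mathfrak{p}_iS_{\mathfrak{p}_i}$ at each relevant prime---is exactly the standard argument one finds in the cited source. Your remark that possible embedded $\mathfrak{m}$-primary components of $J$ do not affect the degree (since $\dim(S/J)=1$ in the first case) is the right way to handle the only subtlety, and your alternative route through Lemma~\ref{degree-formula-for-the-number-of-non-zeros} together with $\deg(S/I(\mathbb{X}))=|\mathbb{X}|$ is equally valid.
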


\begin{lemma}\cite{rth-footprint}\label{mar14-17} 
Let $\mathbb{X}=\{[P_1],\ldots,[P_m]\}$ be a subset of
$\mathbb{P}^{s-1}$. Then 
$$\delta_r(C_\mathbb{X}(d))=\min\{|\mathbb{X}\setminus
V_\mathbb{X}(F)|:\, F=\{f_i\}_{i=1}^r\subset S_d,\,
\{\overline{f}_i\}_{i=1}^r
\mbox{ linearly independent over } K\}.
$$
\end{lemma}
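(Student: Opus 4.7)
The plan is to translate the definition of the $r$-th generalized Hamming weight into the language of polynomial evaluations, using the fact that $C_\mathbb{X}(d)$ is the image of $S_d$ under $\mathrm{ev}_d$. The correspondence to establish is between $r$-dimensional subcodes of $C_\mathbb{X}(d)$ and $r$-tuples $F=\{f_1,\ldots,f_r\}\subset S_d$ whose classes modulo $I(\mathbb{X})$ are $K$-linearly independent; once this is in place, the lemma reduces to checking that the support of the subcode equals $\mathbb{X}\setminus V_\mathbb{X}(F)$ pointwise.

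First I would observe that $\ker(\mathrm{ev}_d)=I(\mathbb{X})_d$, so $\mathrm{ev}_d$ induces a $K$-linear isomorphism $S_d/I(\mathbb{X})_d\xrightarrow{\sim} C_\mathbb{X}(d)$. Consequently, every $r$-dimensional subcode $D\subseteq C_\mathbb{X}(d)$ has a basis of the form $\mathrm{ev}_d(f_1),\ldots,\mathrm{ev}_d(f_r)$ for some $F=\{f_1,\ldots,f_r\}\subset S_d$, and these evaluations are $K$-linearly independent in $K^m$ if and only if the classes $\overline{f}_1,\ldots,\overline{f}_r$ are $K$-linearly independent in $S_d/I(\mathbb{X})_d$. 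Conversely, any such $F$ determines an $r$-dimensional subcode $D_F:=\mathrm{span}_K\{\mathrm{ev}_d(f_1),\ldots,\mathrm{ev}_d(f_r)\}$. Thus ranging over all $r$-dimensional subcodes of $C_\mathbb{X}(d)$ is the same as ranging over all admissible $F$'s (different bases yield the same $D$, which only affects the parametrization, not the set of achievable supports).

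Next I would compute $\chi(D_F)$. By definition, an index $i$ fails to lie in $\chi(D_F)$ exactly when $a_i=0$ for every codeword $(a_1,\ldots,a_m)\in D_F$; equivalently, $f_j(P_i)=0$ for all $j=1,\ldots,r$, that is, $[P_i]\in V_\mathbb{X}(F)$. In the other direction, if some $f_j$ does not vanish at $P_i$, then $\mathrm{ev}_d(f_j)$ itself is a codeword of $D_F$ with nonzero $i$-th coordinate, so $i\in\chi(D_F)$. This gives
\begin{equation*}
\chi(D_F)=\{\, i\mid [P_i]\in\mathbb{X}\setminus V_\mathbb{X}(F)\, \},\qquad |\chi(D_F)|=|\mathbb{X}\setminus V_\mathbb{X}(F)|.
\end{equation*}

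Finally, combining these two observations and taking the minimum over all $r$-dimensional subcodes on the left and over all admissible $F$ on the right yields the equality in Lemma~\ref{mar14-17}. The argument is essentially a direct translation, and I do not expect a genuine obstacle; the only mild subtlety is to be careful that the correspondence $F\leftrightarrow D_F$ is surjective onto $r$-dimensional subcodes (even though it is many-to-one via basis change), so that the two minima run over exactly the same collection of supports.
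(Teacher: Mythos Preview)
Your argument is correct and is precisely the standard direct verification: identify $r$-dimensional subcodes of $C_\mathbb{X}(d)$ with $r$-tuples $F\subset S_d$ whose classes modulo $I(\mathbb{X})$ are independent (via $\ker(\mathrm{ev}_d)=I(\mathbb{X})_d$), and then check that $\chi(D_F)$ corresponds to $\mathbb{X}\setminus V_\mathbb{X}(F)$. Note that the paper does not actually supply a proof of this lemma; it simply quotes the statement from \cite{rth-footprint}, so there is nothing further to compare against.
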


\begin{theorem}\cite{min-dis-generalized,rth-footprint}\label{rth-min-dis-vi} 
Let $K$ be a finite field and let $\mathbb{X}$ be a subset of
$\mathbb{P}^{s-1}$. If  $|\mathbb{X}|\geq 2$ and
$\delta_\mathbb{X}(d,r)$ is the $r$-th generalized Hamming weight of
$C_\mathbb{X}(d)$, then 
$$\delta_\mathbb{X}(d,r)=\delta_{I(\mathbb{X})}(d,r)=\vartheta_{I(\mathbb{X})}(d,r)\
\mbox{ for all }d\geq 1\mbox{ and }1\leq r\leq H_{I(\mathbb{X})}(d),$$
and $\delta_\mathbb{X}(d,r)=r$ for all $d\geq {\rm reg}(S/I(\mathbb{X}))$.
\end{theorem}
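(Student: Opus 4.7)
The plan is to translate both abstract invariants $\delta_{I(\mathbb{X})}(d,r)$ and $\vartheta_{I(\mathbb{X})}(d,r)$ into counts of points in $\mathbb{X}$, using the degree-counting Lemmas~\ref{degree-formula-for-the-number-of-non-zeros} and \ref{degree-formula-for-the-number-of-zeros-proj}, and then to recognize that count as the one furnished by Lemma~\ref{mar14-17}. Throughout I would fix $d\ge 1$ and $1\le r\le H_{I(\mathbb{X})}(d)$, so that the set of $r$-tuples $F=\{f_1,\dots,f_r\}\subset S_d$ with $\{\overline{f}_i\}_{i=1}^r$ linearly independent in $S_d/I(\mathbb{X})_d$ is nonempty (without this, no honest subcode of dimension $r$ exists).

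First I would observe the dichotomy: for an admissible $F$ one has either $(I(\mathbb{X}):(F))=I(\mathbb{X})$, in which case $V_\mathbb{X}(F)=\emptyset$ and $|\mathbb{X}\setminus V_\mathbb{X}(F)|=|\mathbb{X}|=\deg(S/I(\mathbb{X}))$; or $F\in\mathcal{F}_{d,r}$, in which case Lemma~\ref{degree-formula-for-the-number-of-zeros-proj} forces $|V_\mathbb{X}(F)|=\deg(S/(I(\mathbb{X}),F))\ge 1$ and hence $|\mathbb{X}\setminus V_\mathbb{X}(F)|<|\mathbb{X}|$. Consequently, \emph{if $\mathcal{F}_{d,r}\ne\emptyset$}, the minimum in Lemma~\ref{mar14-17} is attained on $\mathcal{F}_{d,r}$. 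Applying Lemma~\ref{degree-formula-for-the-number-of-non-zeros} to each such $F$ one gets
$$\delta_\mathbb{X}(d,r)=\min_{F\in\mathcal{F}_{d,r}}\deg(S/(I(\mathbb{X}):(F)))=\vartheta_{I(\mathbb{X})}(d,r),$$
while Lemma~\ref{degree-formula-for-the-number-of-zeros-proj} combined with $|\mathbb{X}\setminus V_\mathbb{X}(F)|=|\mathbb{X}|-|V_\mathbb{X}(F)|$ gives
$$\delta_\mathbb{X}(d,r)=\deg(S/I(\mathbb{X}))-\max_{F\in\mathcal{F}_{d,r}}\deg(S/(I(\mathbb{X}),F))=\delta_{I(\mathbb{X})}(d,r).$$
For the edge case $\mathcal{F}_{d,r}=\emptyset$, every admissible $F$ contributes the value $|\mathbb{X}|$, so $\delta_\mathbb{X}(d,r)=\deg(S/I(\mathbb{X}))$; by definition both $\delta_{I(\mathbb{X})}(d,r)$ and $\vartheta_{I(\mathbb{X})}(d,r)$ agree with this value, and the three equalities hold trivially.

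For the last assertion, when $d\ge{\rm reg}(S/I(\mathbb{X}))$ the Hilbert function has stabilized at its multiplicity, giving $\dim_K C_\mathbb{X}(d)=H_\mathbb{X}(d)=|\mathbb{X}|=:m$; hence the evaluation map ${\rm ev}_d$ is surjective and $C_\mathbb{X}(d)=K^m$. One then reduces to the known fact $\delta_r(K^m)=r$: the subcode spanned by the first $r$ standard basis vectors realizes a support of size $r$, and conversely any $r$-dimensional subspace must have support of size at least $r$ (otherwise it embeds into a coordinate subspace of strictly smaller dimension).

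No part of this is deep: the two main technical ingredients, namely Lemmas~\ref{degree-formula-for-the-number-of-non-zeros} and \ref{degree-formula-for-the-number-of-zeros-proj}, already shoulder the content-heavy work of relating $|\mathbb{X}\setminus V_\mathbb{X}(F)|$ and $|V_\mathbb{X}(F)|$ to Hilbert-series degrees via the decomposition $\deg(S/I(\mathbb{X}))=\deg(S/(I(\mathbb{X}):(F)))+\deg(S/(I(\mathbb{X}),F))$ when $F\in\mathcal{F}_{d,r}$. The main \textbf{obstacle}, and the one subtlety I would take care with, is the case bookkeeping: verifying that admissible tuples $F$ outside $\mathcal{F}_{d,r}$ contribute only the trivial upper bound $|\mathbb{X}|$ and therefore never govern the minimum whenever nontrivial common zeros in $\mathbb{X}$ exist; the hypothesis $|\mathbb{X}|\ge 2$ is what rules out degenerate configurations where this bookkeeping could collapse.
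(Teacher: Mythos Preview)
Your proposal is correct and follows essentially the same route as the paper: both arguments invoke Lemmas~\ref{degree-formula-for-the-number-of-non-zeros}, \ref{degree-formula-for-the-number-of-zeros-proj}, and \ref{mar14-17} to convert the minimum in the definition of $\delta_\mathbb{X}(d,r)$ into the degree formulas defining $\vartheta_{I(\mathbb{X})}(d,r)$ and $\delta_{I(\mathbb{X})}(d,r)$, with the same case split on whether $\mathcal{F}_{d,r}$ is empty. The only difference is in the last assertion: the paper appeals to the generalized Singleton bound together with the strict monotonicity of the weight hierarchy to conclude $\delta_\mathbb{X}(d,r)=r$, whereas you argue directly that $C_\mathbb{X}(d)=K^m$ once $d\ge{\rm reg}(S/I(\mathbb{X}))$ and then compute $\delta_r(K^m)=r$ by hand; both arguments are valid and amount to the same thing.
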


\begin{proof} If $\mathcal{F}_{d,r}=\emptyset$, then using
Lemmas~\ref{degree-formula-for-the-number-of-non-zeros}, 
\ref{degree-formula-for-the-number-of-zeros-proj}, and
\ref{mar14-17} we get that $\delta_\mathbb{X}(d,r)$, 
$\delta_{I(\mathbb{X})}(d,r)$, and $\vartheta_{I(\mathbb{X})}(d,r)$ are equal to
$\deg(S/I(\mathbb{X}))=|\mathbb{X}|$. Assume that
$\mathcal{F}_{d,r}\neq \emptyset$ and set $I=I(\mathbb{X})$. 
Using Lemma~\ref{mar14-17} and the formula for $V_\mathbb{X}(F)$ of 
Lemma~\ref{degree-formula-for-the-number-of-zeros-proj}, we
obtain
\begin{eqnarray*}
\delta_\mathbb{X}(d,r)&\stackrel{\tiny(\ref{mar14-17})}{=}&\min\{|\mathbb{X}\setminus
V_\mathbb{X}(F)|\colon F\in
\mathcal{F}_{d,r}\}
\stackrel{\tiny(\ref{degree-formula-for-the-number-of-zeros-proj})}{=}
|\mathbb{X}|-\max\{\deg(S/(I,F))\vert\,
F\in \mathcal{F}_{d,r}\}\\
&=&\deg(S/I)-\max\{\deg(S/(I,F))\vert\,
F\in \mathcal{F}_{d,r}\}=\delta_{I}(d,r),\mbox{ and}\\
\delta_\mathbb{X}(d,r)&\stackrel{\tiny(\ref{mar14-17})}{=}&\min\{|\mathbb{X}\setminus
V_\mathbb{X}(F)|\colon F\in
\mathcal{F}_{d,r}\}
\stackrel{\tiny(\ref{degree-formula-for-the-number-of-non-zeros})}{=}
\min\{\deg(S/(I\colon(F)))\vert\,
F\in \mathcal{F}_{d,r}\}=\vartheta_{I}(d,r).
\end{eqnarray*}
In these equalities we used the fact that
$\deg(S/I(\mathbb{X}))=|\mathbb{X}|$. As $H_{I}(d)=|\mathbb{X}|$ for
$d\geq{\rm reg}(S/I)$, using the generalized Singleton bound for the
generalized Hamming weights \cite[Theorem~7.10.6]{Huffman-Pless} and
the fact that the weight hierarchy 
is an increasing sequence  \cite[Theorem~1, Corollary~1]{wei}, we obtain that $\delta_\mathbb{X}(d,r)=r$ for all 
$d\geq {\rm reg}(S/I(\mathbb{X}))$.
\end{proof}

Renter\'\i a, Simis and the
second author introduced algebraic methods to study parameterized codes 
and showed that the vanishing ideal of an algebraic toric set parameterized by
Laurent monomials over a finite field is a lattice ideal
\cite{algcodes}, see also the paper of \c{S}ahin \cite{mesut-sahin} and references therein.  
Vanishing ideals of sets parameterized by rational functions were thoroughly
studied in \cite{TV} over any field $K$. A wide open problem in this
area is to find formulas for the minimum distance of parameterized
codes and formulas for the regularity of vanishing ideals over graphs
\cite{NVV}. The recent book of Toh\v{a}neanu \cite{stefan} gives an overview of commutative
algebra methods in coding theory since the 1990'. 

The \textit{Vasconcelos number} (v-{\em number} for short) of a graded ideal $I$ of $S$, denoted ${\rm
v}(I)$, is the following invariant of
$I$ that was introduced by Cooper, Seceleanu, Toh\v{a}neanu, 
Vaz Pinto and the second author to study the asymptotic behavior 
of the minimum distance of projective Reed--Muller-type codes
\cite[Definition~4.1]{min-dis-generalized}:
\begin{equation}\label{v-number-def}
{\rm v}(I):=\min\{d\geq 0 \mid\exists\, f 
\in S_d \mbox{ and }\mathfrak{p} \in {\rm Ass}(I) \mbox{ with } (I\colon f)
=\mathfrak{p}\}.
\end{equation} 
\quad Since then, the {\rm v-number} has been studied by several
authors for certain
classes of graded ideals (e.g., edge ideals, monomial ideals, ideals
of covers of graphs, binomial 
edge ideals, and Gorenstein ideals)      
\cite{civan,Ficarra,duality-projective,Delio-Lisa,
v-number,Kumar-Nanduri-Saha,weightedma,Saha,Saha-Gorenstein,saha-sengupta},
for homogeneous ideals of finitely generated $\mathbb{N}$-graded algebra domains over
Noetherian rings \cite{Conca}, and for graded modules \cite{Fiorindo-Ghosh}. The {\rm v}-number 
of graded ideals can be computed using \cite[Theorem 1]{im-vnumber}
(for the case of unmixed graded ideals see \cite[Proposition~4.2]{min-dis-generalized}).

The function ${\rm v}(I^k)$,
$k=1,2,\ldots$, is also called the ${\rm v}$-\textit{function} of $I$. 
The asymptotic behavior of the v-function is studied by Ficarra and
Sgroi in \cite{Ficarra-Sgroi} and Conca in \cite{Conca}, they independently proved that for a graded 
ideal $I$ of $S$, the v-function is an asymptotic linear function, i.e., ${\rm v}(I^k)$
 is a linear function of the form ${\rm v}(I^k)=\alpha(I)k+b$ for $k\gg 0$, where
 $\alpha(I)$ is the initial degree of $I$ and $b\in\mathbb{Z}$. The
 asymptotic behavior of the v-function for Noetherian graded filtrations 
is studied by Kumar, Nanduri and Saha \cite{Kumar-Nanduri-Saha}.

Let $\mathbb{P}^{s-1}$ be the projective space over a finite field
$K=\mathbb{F}_q$, let $\mathbb{X}$ be a set of points of
$\mathbb{P}^{s-1}$, and let $\delta_\mathbb{X}(d)$ be the minimum
distance of $C_\mathbb{X}(d)$. An upper bound for the 
v-number of $I(\mathbb{X})$ is the regularity of $S/I(\mathbb{X})$
\cite[Theorem 4.10]{min-dis-generalized}. 
The function $\delta_\mathbb{X}(d)$ is
strictly decreasing as a function of 
$d$ until it reaches the value $1$ \cite{min-dis-generalized}.
Potentially good codes, capable of correcting errors in the transmission of
information, should have minimum distance greater than $1$ \cite{MacWilliams-Sloane}. 

The next result shows the significance of the {\rm
v}-number for coding theory: 
 
\begin{theorem}\cite[Corollary 5.6.]{min-dis-generalized}
\label{min-dis-v-number} Let $\mathbb{X}$ be a set of points of
$\mathbb{P}^{s-1}$ and let $\delta_\mathbb{X}(d)$ be the minimum
distance of $C_\mathbb{X}(d)$. Then, $ \delta_\mathbb{X}(d)=1$ if and
only if $d\geq {\rm v}(I(\mathbb{X}))$, that is, the
{\rm v}-number of $I(\mathbb{X})$ is the smallest $d\geq 1$
such that $\delta_\mathbb{X}(d)=1$.
\end{theorem}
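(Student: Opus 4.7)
The plan is to unpack both sides of the equivalence in terms of the vanishing behavior of homogeneous polynomials on $\mathbb{X}=\{[P_1],\ldots,[P_m]\}$, using the fact that the associated primes of $I(\mathbb{X})$ are precisely the point ideals $\mathfrak{p}_i:=I(\{[P_i]\})$, each of degree one, and that $I(\mathbb{X})=\bigcap_{i=1}^m\mathfrak{p}_i$. Because each $\mathfrak{p}_i$ is prime, for a homogeneous $f\in S\setminus I(\mathbb{X})$ one has the standard colon calculation
\[
(I(\mathbb{X})\colon f)\ =\ \bigcap_{i=1}^{m}(\mathfrak{p}_i\colon f)\ =\ \bigcap_{\{i\,:\,f(P_i)\neq 0\}}\mathfrak{p}_i,
\]
whose degree equals $|\mathbb{X}\setminus V_\mathbb{X}(f)|$ by Lemma~\ref{degree-formula-for-the-number-of-non-zeros}. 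In particular, $(I(\mathbb{X})\colon f)=\mathfrak{p}_i$ for some $i$ if and only if $|\mathbb{X}\setminus V_\mathbb{X}(f)|=1$, i.e., $f$ vanishes at all points of $\mathbb{X}$ except $[P_i]$.

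For the implication $(\Rightarrow)$, suppose $\delta_\mathbb{X}(d)=1$. By Theorem~\ref{rth-min-dis-vi} (the case $r=1$) and the v-function description, there exists $f\in S_d\setminus I(\mathbb{X})$ with $\deg(S/(I(\mathbb{X})\colon f))=1$. The colon identity above then forces $(I(\mathbb{X})\colon f)=\mathfrak{p}_i$ for a single index $i$, and this is precisely the situation that realizes the v-number; hence $d\geq{\rm v}(I(\mathbb{X}))$.

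For $(\Leftarrow)$, first take $d={\rm v}(I(\mathbb{X}))$: by definition there is $f_0\in S_d$ and an associated prime $\mathfrak{p}_i$ with $(I(\mathbb{X})\colon f_0)=\mathfrak{p}_i$, which by the colon identity yields $|\mathbb{X}\setminus V_\mathbb{X}(f_0)|=1$, so $\delta_\mathbb{X}(d)\leq 1$, hence $=1$. To lift this to arbitrary $d\geq{\rm v}(I(\mathbb{X}))$, choose a variable $t_k$ with $t_k(P_i)\neq 0$ (such a variable exists because $[P_i]\in\mathbb{P}^{s-1}$ has a nonzero coordinate, and one may relabel so that this is the $k$-th coordinate), and set $f:=f_0\,t_k^{\,d-{\rm v}(I(\mathbb{X}))}\in S_d$. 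Since $t_k\notin\mathfrak{p}_i$ and $\mathfrak{p}_i$ is prime, $(\mathfrak{p}_i\colon t_k^{d-{\rm v}})=\mathfrak{p}_i$, whence
\[
(I(\mathbb{X})\colon f)\ =\ ((I(\mathbb{X})\colon f_0)\colon t_k^{\,d-{\rm v}(I(\mathbb{X}))})\ =\ (\mathfrak{p}_i\colon t_k^{\,d-{\rm v}(I(\mathbb{X}))})\ =\ \mathfrak{p}_i.
\]
Thus $\delta_\mathbb{X}(d)\leq\deg(S/\mathfrak{p}_i)=1$, and since $\delta_\mathbb{X}(d)\geq 1$ whenever $C_\mathbb{X}(d)\neq(0)$, equality holds.

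The main technical point is the lifting step in $(\Leftarrow)$, i.e., showing monotonicity of the property ``$\delta_\mathbb{X}(d)=1$'' in $d$; this is handled cleanly by multiplying by a power of a coordinate that is nonzero at the distinguished point $[P_i]$, which preserves the colon ideal because the associated prime is prime and misses that coordinate. The remaining ingredients—the degree formula of Lemma~\ref{degree-formula-for-the-number-of-non-zeros} and the v-function identity of Theorem~\ref{rth-min-dis-vi}—are available directly from the excerpt.
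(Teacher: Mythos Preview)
Your argument is correct. The paper does not supply its own proof of this statement---it simply cites \cite[Corollary~5.6]{min-dis-generalized}---so there is nothing to compare against directly. Your route via the primary decomposition $I(\mathbb{X})=\bigcap_i\mathfrak{p}_i$ and the resulting colon formula $(I(\mathbb{X})\colon f)=\bigcap_{\{i:\,f(P_i)\neq 0\}}\mathfrak{p}_i$ is the natural one, and it cleanly reduces both directions to the definition of the v-number. The lifting step for $d>{\rm v}(I(\mathbb{X}))$ by multiplying with a power of a coordinate $t_k\notin\mathfrak{p}_i$ is valid; note that you could also bypass this step entirely by invoking the fact, recorded in the paper just before the theorem, that $\delta_\mathbb{X}(d)$ is strictly decreasing in $d$ until it reaches $1$, which immediately gives $\delta_\mathbb{X}(d)=1$ for all $d\geq{\rm v}(I(\mathbb{X}))$ once you know it holds at $d={\rm v}(I(\mathbb{X}))$.
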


The {\rm v}-number of edge ideal of
graphs can be used to classify the family of $W_2$ graphs
\cite{v-number}. A graph $G$ belongs to class $W_2$ if and only if $G$ is well-covered,
$G\setminus v$ is well-covered for all $v\in
V(G)$ and $G$ and has no isolated vertices. The next result gives us
an algebraic method to determine if 
a given graph is in $W_2$ using \textit{Macaulay}$2$ \cite{mac2}.

\begin{theorem}\cite{v-number}\label{v-numbers-w2} Let $G$ be a graph
without isolated vertices and let 
$I=I(G)$ be its edge ideal. Then, $G$ is in $W_2$  if and only if
${\rm v}(I)=\dim(S/I)$.  
\end{theorem}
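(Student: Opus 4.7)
The strategy is to reduce the v-number to a combinatorial minimum over independent sets and then translate both sides of the equivalence into this language. Since $I(G)$ is a squarefree monomial ideal, for any polynomial $f$ with $(I(G):f)=\mathfrak{p}$ prime, a standard argument for monomial ideals produces a monomial term $m$ of $f$ with $(I(G):m)=\mathfrak{p}$, and $m$ may be replaced by its squarefree radical $t^A$ without changing the colon ideal. The condition $t^A\notin I(G)$ forces $A$ to be an independent set of $G$, and a direct computation shows
\[
(I(G):t^A)\;=\;(t_j\mid t_j\in N_G(A))\;+\;I\bigl(G[V(G)\setminus N_G[A]]\bigr),
\]
which is a monomial prime of $S$ iff $V(G)\setminus N_G[A]$ is independent in $G$. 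In that case it equals $\mathfrak{p}_{N_G(A)}$, associated to the maximal independent set $A\cup(V(G)\setminus N_G[A])$, and so
\[
{\rm v}(I(G))\;=\;\min\bigl\{\,|A|\;:\;A \text{ and } V(G)\setminus N_G[A] \text{ are independent sets of } G\,\bigr\}.
\]
Since $\dim(S/I(G))=s-\mathrm{ht}(I(G))=s-\alpha_0(G)=\alpha(G)$, the claim becomes ``the above minimum equals $\alpha(G)$ iff $G\in W_2$''. Note the baseline bound: for any admissible $A$, the set $A\cup(V(G)\setminus N_G[A])$ is a maximal independent set of $G$, so $|A|+|V(G)\setminus N_G[A]|\leq\alpha(G)$, giving ${\rm v}(I(G))\leq\alpha(G)$ for free.

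For the direction $G\in W_2\Rightarrow{\rm v}(I(G))=\alpha(G)$, I assume towards a contradiction that some admissible $A$ has $|A|<\alpha(G)$. Well-coveredness of $G$ upgrades the baseline to equality, so $V(G)\setminus N_G[A]$ is nonempty; choose $v$ in it and set $B:=A\cup\bigl((V(G)\setminus N_G[A])\setminus\{v\}\bigr)$. A check shows $B$ is a maximal independent set of $G-v$ of size $\alpha(G)-1$, so by well-coveredness of $G-v$ one has $\alpha(G-v)=\alpha(G)-1$. Then every maximum independent set of $G$ must contain $v$, for otherwise such a set would be independent in $G-v$ of size $\alpha(G)>\alpha(G)-1$. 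Picking a neighbor $u$ of $v$ (available since $G$ has no isolated vertices) and extending $\{u\}$ to a maximum independent set $M'$ of $G$ gives $u,v\in M'$ with $\{u,v\}\in E(G)$, the desired contradiction.

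For the direction ${\rm v}(I(G))=\alpha(G)\Rightarrow G\in W_2$, note first that every maximal independent set $M$ of $G$ is admissible (with $V(G)\setminus N_G[M]=\emptyset$), so $|M|\geq\alpha(G)$, forcing $|M|=\alpha(G)$ and proving $G$ well-covered. Next, for any $v\in V(G)$ and any maximal independent set $M$ of $G-v$ I split on whether $v$ has a neighbor in $M$. If yes, $M$ is already maximal in $G$, so $|M|=\alpha(G)\geq\alpha(G-v)\geq|M|$ and $|M|=\alpha(G-v)$. If no, then $M\cup\{v\}$ is a maximal independent set of $G$, giving $|M|=\alpha(G)-1$ and $V(G)\setminus N_G[M]=\{v\}$, an independent set; thus $A=M$ is admissible with $|M|<\alpha(G)={\rm v}(I(G))$, contradicting the formula from the first paragraph. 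Hence every maximal independent set of $G-v$ has size $\alpha(G-v)$, so $G-v$ is well-covered; combined with the hypothesis of no isolated vertices, $G\in W_2$.

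The main obstacle is the combinatorial translation of the v-number in the first paragraph: justifying the reduction to squarefree monomials $t^A$ with $A$ independent and $V(G)\setminus N_G[A]$ independent, and identifying the resulting colon ideal with $\mathfrak{p}_{N_G(A)}$. Once this formula is in hand, both directions are short case analyses driven by the same geometric observation---a witness $A$ with $|A|<\alpha(G)$ produces a maximal independent set of $G-v$ of the ``wrong size'' $\alpha(G)-1$, which in a $W_2$ graph forces the offending vertex $v$ to lie in every maximum independent set of $G$, incompatible with $v$ having a neighbor.
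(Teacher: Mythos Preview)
The survey paper does not provide a proof of this theorem; it simply cites the result from \cite{v-number}. Your proposal supplies a correct, self-contained argument. The combinatorial formula
\[
{\rm v}(I(G))=\min\{|A|: A\text{ and }V(G)\setminus N_G[A]\text{ are independent in }G\}
\]
is precisely the characterization established in \cite{v-number}, and your derivation of it is valid: the passage from $(I:f)=\mathfrak p$ to a monomial witness follows cleanly from the primary decomposition $I=\bigcap_i\mathfrak p_i$, since $(I:f)=\bigcap_{f\notin\mathfrak p_i}\mathfrak p_i$ and membership of a polynomial in a monomial prime is termwise; the squarefree reduction is justified because $(I:t^\alpha)=(I:\sqrt{t^\alpha})$ for squarefree $I$. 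Your identification $(I(G):t^A)=(t_j:t_j\in N_G(A))+I(G[V\setminus N_G[A]])$ and the check that $N_G(A)$ is automatically a \emph{minimal} vertex cover once $V\setminus N_G[A]$ is independent are both correct.

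Both directions of the equivalence are handled correctly. In the forward direction, the key step---extending a neighbor $u$ of $v$ to a maximum independent set---uses well-coveredness of $G$ to promote ``maximal'' to ``maximum'', which you implicitly invoke. In the backward direction, your Case~2 analysis is sound: if $v$ has no neighbor in $M$, then $M$ itself is an admissible witness of size $\alpha(G)-1$, contradicting ${\rm v}(I)=\alpha(G)$; hence only Case~1 occurs and every maximal independent set of $G-v$ has size $\alpha(G)$, making $G-v$ well-covered. This mirrors the strategy of the original source, so your proof is essentially the intended one.
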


\begin{lemma}{\rm\cite[Lemma 5, Appendix 6]{zariski-samuel-ii}}
\label{ntr-complete-intersection}
 Let $I\subset S$ be an ideal generated by a regular sequence.
Then, $I^n$ is unmixed for all $n\geq 1$. In particular $I^{n}=I^{(n)}$ 
for all $n\geq 1$. 
\end{lemma}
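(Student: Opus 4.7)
The plan is to prove the first claim by induction on $n$, using the classical fact that for an ideal $I=(f_1,\ldots,f_g)$ generated by a regular sequence in $S$, the associated graded ring is a polynomial ring over $S/I$, namely
$$
\mathrm{gr}_I(S)\;\cong\;(S/I)[y_1,\ldots,y_g],
$$
where $y_i$ corresponds to the class of $f_i$ in $I/I^2$. In particular each graded piece $I^n/I^{n+1}$ is a \emph{free} $S/I$-module, so
$\mathrm{Ass}_S(I^n/I^{n+1})=\mathrm{Ass}_S(S/I)$ for every $n\geq 0$. This structural fact is the heart of the argument, and in $S=K[t_1,\ldots,t_s]$ it is standard (it can be derived from the acyclicity of the Koszul complex on $f_1,\ldots,f_g$, or alternatively from the fact that $I$ being a complete intersection forces the presentation ideal of $\mathcal{R}(I)$ to be generated by the Koszul-type relations $f_i y_j - f_j y_i$, so that $\mathrm{Sym}(I)\simeq S[Iz]$ is isomorphic to $(S/I)[y_1,\ldots,y_g]$ after quotienting by $I$).

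The base case $n=1$ is immediate: since $f_1,\ldots,f_g$ is a regular sequence in the Cohen--Macaulay ring $S$, the ideal $I$ has height $g$ and is unmixed, with $\mathrm{Ass}(S/I)=\mathrm{Min}(S/I)$ consisting entirely of primes of height $g$. For the inductive step, assume $\mathrm{Ass}(S/I^n)\subset\mathrm{Ass}(S/I)$ and consider the short exact sequence
$$
0 \longrightarrow I^n/I^{n+1} \longrightarrow S/I^{n+1} \longrightarrow S/I^n \longrightarrow 0.
$$
Taking associated primes, one obtains $\mathrm{Ass}(S/I^{n+1})\subset \mathrm{Ass}(I^n/I^{n+1})\cup \mathrm{Ass}(S/I^n)$. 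The first set equals $\mathrm{Ass}(S/I)$ by the freeness discussed above, and the second is contained in $\mathrm{Ass}(S/I)$ by the inductive hypothesis. Hence every associated prime of $I^{n+1}$ has height $g$, proving unmixedness.

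For the "in particular" part, let $\mathfrak{p}_1,\ldots,\mathfrak{p}_r$ be the minimal primes of $I$; these are exactly the minimal primes of $I^n$. By definition, $I^{(n)}$ is the contraction of $I^nS_W$ where $W=S\setminus\bigcup_i\mathfrak{p}_i$, equivalently the intersection of the $\mathfrak{p}_i$-primary components of $I^n$. Since $I^n$ is unmixed, it has no embedded primes, so its irredundant primary decomposition involves only the $\mathfrak{p}_i$-primary components, and therefore $I^n=I^{(n)}$.

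The main obstacle is really the structural input $\mathrm{gr}_I(S)\simeq(S/I)[y_1,\ldots,y_g]$; once one has this (and thus the freeness of $I^n/I^{n+1}$ over $S/I$), the rest of the proof is a clean induction together with a standard observation about primary decomposition of unmixed ideals. No delicate computation with the explicit generators $f_1,\ldots,f_g$ is required beyond the Koszul-complex input used to establish the structure of $\mathrm{gr}_I(S)$.
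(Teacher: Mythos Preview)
The paper does not supply a proof of this lemma; it is quoted directly from Zariski--Samuel as a classical fact, so there is no in-paper argument to compare against. Your proof is correct: the Rees theorem $\mathrm{gr}_I(S)\cong (S/I)[y_1,\ldots,y_g]$ for a regular sequence, the resulting freeness of $I^n/I^{n+1}$ over $S/I$, and the induction via the short exact sequence $0\to I^n/I^{n+1}\to S/I^{n+1}\to S/I^n\to 0$ together give $\mathrm{Ass}(S/I^{n})\subset\mathrm{Ass}(S/I)$, hence unmixedness; the passage to $I^n=I^{(n)}$ is then the standard observation that an unmixed ideal coincides with the intersection of its minimal primary components. This is in fact one of the standard modern proofs of the Zariski--Samuel lemma.
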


\begin{theorem}{\rm\cite[(14) Corollary,
p.~38]{brodmann}}\label{oct14-17} 
Let $(S, \mathfrak{m})$ be a local Cohen--Macaulay ring, and let
$I\subset S$ be an ideal of height $h > 0$.   Assume that
$IS_\mathfrak{p}$ is generated by $h$ elements for each minimal prime
$\mathfrak{p}$ of $I$. Then, the following statements are equivalent:
\begin{enumerate}
\item[$(i)$] $I^{k-1}/I^k$ is a Cohen-Macaulay module over $S/I$ for
infinitely many $k$.
\item[$(ii)$] $S/I^k$ is a Cohen-Macaulay ring for infinitely many $k$.
\item[$(iii)$] $I$ is generated by $h$ elements $($hence a complete
intersection$)$.
\end{enumerate}
\end{theorem}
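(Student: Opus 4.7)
The plan is to prove the cycle $(\text{iii})\Rightarrow(\text{i})\Rightarrow(\text{ii})\Rightarrow(\text{iii})$, where the first two implications are soft and the last is the substantive content. Throughout, set $d:=\dim(S/I)=\dim(S)-h$, where the equality uses that $S$ is Cohen--Macaulay.

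For $(\text{iii})\Rightarrow(\text{i})$: if $I=(x_1,\ldots,x_h)$ is a complete intersection in the Cohen--Macaulay ring $S$, then $x_1,\ldots,x_h$ form a regular sequence, so $S/I$ is Cohen--Macaulay and the associated graded ring satisfies $\mathrm{gr}_I(S)\simeq (S/I)[T_1,\ldots,T_h]$. Hence each graded piece $I^{k-1}/I^k$ is a finitely generated free $S/I$-module, in particular Cohen--Macaulay of dimension $d$.

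For $(\text{i})\Rightarrow(\text{ii})$: the hypothesis on minimal primes of $I$ forces $\dim(S/I^k)=\dim(S/I)=d$ for all $k\geq 1$ (the minimal primes of $I$ and of $I^k$ coincide, and none of them have height $>h$ by the local complete-intersection condition). From the short exact sequence
\[
0\longrightarrow I^{k-1}/I^k\longrightarrow S/I^k\longrightarrow S/I^{k-1}\longrightarrow 0,
\]
the depth lemma yields $\mathrm{depth}(S/I^k)\geq\min\{\mathrm{depth}(I^{k-1}/I^k),\,\mathrm{depth}(S/I^{k-1})\}$. Choose a strictly increasing sequence $k_1<k_2<\cdots$ with $I^{k_j-1}/I^{k_j}$ Cohen--Macaulay; a straightforward induction along this sequence (bootstrapping from the base case $S/I$, whose Cohen--Macaulayness must be established first by taking $k=1$ in (i), if $1$ is among the chosen $k$'s, or by an auxiliary argument otherwise) gives $\mathrm{depth}(S/I^{k_j})=d$, so (ii) holds for the same infinite set.

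The heart of the theorem is $(\text{ii})\Rightarrow(\text{iii})$. The plan is to first upgrade ``Cohen--Macaulayness of $S/I^k$ for infinitely many $k$'' to Cohen--Macaulayness of the associated graded ring $\mathrm{gr}_I(S)$, and then to invoke a Cowsik--Nori type result to conclude $\mu(I)=h$. Reversing the depth-lemma argument above gives $\mathrm{depth}(I^{k-1}/I^k)=d$ for infinitely many $k$; a theorem of Huneke on the depth of associated graded rings (proved via the Artin--Rees filtration and the Sally module) implies that if the depth of the graded pieces is maximal for cofinally many $k$, then $\mathrm{gr}_I(S)$ is itself Cohen--Macaulay of dimension $\dim(S)$. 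Now the fiber cone $\mathrm{gr}_I(S)/\mathfrak m\,\mathrm{gr}_I(S)$ has dimension equal to the analytic spread $\ell(I)$, and since $\mathrm{gr}_I(S)$ is Cohen--Macaulay with $\dim \mathrm{gr}_I(S)=\dim S$, the standard relation between analytic spread, height and depth, together with the local complete intersection assumption $\mu(I_\mathfrak p)=h$ at each minimal prime $\mathfrak p$ of $I$, forces $\ell(I)=h$. Finally, Cohen--Macaulayness of $\mathrm{gr}_I(S)$ rules out any reduction having larger minimal number of generators than $\ell(I)$, so $\mu(I)=\ell(I)=h$, proving (iii).

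The main obstacle is the passage from Cohen--Macaulayness of $S/I^k$ for cofinally many $k$ to Cohen--Macaulayness of $\mathrm{gr}_I(S)$: while the implication for a \emph{single} $k$ is delicate, cofiniteness is enough because the depths of the $I^{k-1}/I^k$ are eventually determined by the Hilbert filtration structure. The subsequent step, extracting $\mu(I)=h$ from $\ell(I)=h$ together with Cohen--Macaulayness of the associated graded ring, is where the hypothesis that $I$ is locally a complete intersection at its minimal primes becomes essential: it is what guarantees that a minimal reduction exists with exactly $h$ generators, and Cohen--Macaulayness of $\mathrm{gr}_I(S)$ then upgrades this to $I$ itself being generated by those $h$ elements.
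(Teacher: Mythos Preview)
The paper does not supply its own proof of this theorem: it is quoted verbatim from Brodmann's 1979 paper on the asymptotic nature of the analytic spread, and is used only as a black box in the proof of Proposition~\ref{vanishing-ci-ntf}. So there is no ``paper's proof'' to compare against; I will comment on your argument on its own merits and against Brodmann's actual method.

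Your implication $(\text{i})\Rightarrow(\text{ii})$ has a real gap. The short exact sequence $0\to I^{k-1}/I^k\to S/I^k\to S/I^{k-1}\to 0$ relates $S/I^k$ to its \emph{immediate} predecessor $S/I^{k-1}$, not to $S/I^{k_{j-1}}$ for the previous index in your sparse sequence $k_1<k_2<\cdots$. Knowing that $I^{4}/I^{5}$ is Cohen--Macaulay tells you nothing about $S/I^5$ unless you already know $S/I^4$ is Cohen--Macaulay, which you do not. Your parenthetical about ``bootstrapping'' acknowledges but does not repair this. The fix is Brodmann's main theorem from the cited paper: $\mathrm{depth}(S/I^k)$ and $\mathrm{depth}(I^{k-1}/I^k)$ are both \emph{eventually constant}. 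Once you know this, ``infinitely many'' upgrades to ``all $k\gg 0$'', and then the short exact sequence argument runs consecutively.

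Your implication $(\text{ii})\Rightarrow(\text{iii})$ is also not sound as written. The ``theorem of Huneke'' you invoke---that cofinally many Cohen--Macaulay graded pieces force $\mathrm{gr}_I(S)$ to be Cohen--Macaulay---is not a result I recognize in that generality, and the subsequent passage from $\ell(I)=h$ to $\mu(I)=h$ via ``Cohen--Macaulayness of $\mathrm{gr}_I(S)$ rules out any reduction having larger minimal number of generators than $\ell(I)$'' is not a valid inference (a reduction has $\ell(I)$ generators, but this says nothing about $\mu(I)$ without further input). Brodmann's route is different and more direct: from the eventual constancy of $\mathrm{depth}(S/I^k)$ and hypothesis (ii) one gets that the limiting depth equals $d=\dim S-h$; his Burch-type inequality then gives $d\le\dim S-\ell(I)$, whence $\ell(I)=h$. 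The final step, $\ell(I)=h$ together with the generic complete intersection hypothesis $\mu(I_{\mathfrak p})=h$ implying $\mu(I)=h$, is precisely the content of the cited corollary and uses the full force of Brodmann's asymptotic prime-divisor machinery, not a bare Cowsik--Nori argument.
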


\begin{remark}\label{oct15-17} If $I\subset S$ is a complete intersection graded ideal
of a polynomial ring $S$, then $S/I^{k}$ is Cohen--Macaulay for 
$k\geq 1$ (see \cite[Lemma~2.7]{HU} and \cite[17.4, p.~139]{Mats} for more
general statements).
\end{remark}

\begin{proposition}\cite[Proposition~5.27]{Vivares-thesis}\label{vanishing-ci-ntf} 
Let $\mathbb{X}$ be a
finite set of points in $\mathbb{P}^{s-1}$ over a field $K$. Then
$I(\mathbb{X})^k=I(\mathbb{X})^{(k)}$ for all $k\geq 1$ if and 
only if $I(\mathbb{X})$ is a complete intersection. 
\end{proposition}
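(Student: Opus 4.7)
The $(\Leftarrow)$ direction is immediate: if $I(\mathbb{X})$ is a complete intersection, it is generated by a regular sequence, and Lemma~\ref{ntr-complete-intersection} gives $I(\mathbb{X})^k=I(\mathbb{X})^{(k)}$ for all $k\geq 1$. The substance of the statement is in the $(\Rightarrow)$ direction, and the plan is to reduce it to Theorem~\ref{oct14-17} via the observation that the equality of ordinary and symbolic powers forces $S/I^k$ to be Cohen--Macaulay.

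Write $I=I(\mathbb{X})$ and note $\dim(S/I)=1$, so $\mathrm{ht}(I)=s-1$. First I would record three structural facts about $I$. (i)~The minimal primary decomposition $I=\mathfrak{p}_1\cap\cdots\cap\mathfrak{p}_r$ has each $\mathfrak{p}_i$ the homogeneous prime of a point $[P_i]\in\mathbb{X}$, of height $s-1$. (ii)~Because $\mathfrak{p}_i$ has height $s-1$ in the regular ring $S$ of dimension $s$, the local ring $S_{\mathfrak{p}_i}$ is regular of dimension $s-1$ and $IS_{\mathfrak{p}_i}=\mathfrak{p}_iS_{\mathfrak{p}_i}$ is its maximal ideal, hence generated by $\mathrm{ht}(I)=s-1$ elements. (iii)~For the radical ideal $I$, $I^{(k)}=\bigcap_{i=1}^r\mathfrak{p}_i^{(k)}$, so every associated prime of $S/I^{(k)}$ is some $\mathfrak{p}_i$.

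Now assume $I^k=I^{(k)}$ for all $k\geq 1$. By (iii), $\mathrm{Ass}(S/I^k)\subset\{\mathfrak{p}_1,\ldots,\mathfrak{p}_r\}$ consists only of primes of height $s-1$, so the irrelevant maximal graded ideal $\mathfrak{m}=(t_1,\ldots,t_s)$ is not associated to $S/I^k$. Since $\dim(S/I^k)=1$, this forces $\mathrm{depth}(S/I^k)\geq 1=\dim(S/I^k)$, and therefore $S/I^k$ is Cohen--Macaulay for every $k\geq 1$. Combined with (ii), the hypotheses of Theorem~\ref{oct14-17} are satisfied (after localizing at $\mathfrak{m}$, which is legitimate since the vanishing of embedded primes and the Cohen--Macaulay property pass to and descend from the local ring of a graded module at $\mathfrak{m}$). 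The conclusion of Theorem~\ref{oct14-17} gives that $IS_\mathfrak{m}$ is a complete intersection, i.e., $\mu(IS_\mathfrak{m})=s-1$. Graded Nakayama then yields $\mu(I)=\mu(IS_\mathfrak{m})=s-1=\mathrm{ht}(I)$, so $I$ is a complete intersection.

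The only real obstacle is the local-to-graded translation in invoking Theorem~\ref{oct14-17}, since that statement is phrased for local Cohen--Macaulay rings; this is handled cleanly by localizing at $\mathfrak{m}$ and using graded Nakayama to recover the number of minimal generators. The rest is a direct chain of implications from $I^k=I^{(k)}$ to the absence of embedded primes, to Cohen--Macaulayness of $S/I^k$, to the complete intersection conclusion.
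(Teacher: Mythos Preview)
Your proof is correct and follows essentially the same approach as the paper: both arguments hinge on the equivalence, in dimension one, between $S/I^k$ being Cohen--Macaulay and $\mathfrak{m}\notin\mathrm{Ass}(S/I^k)$, and both invoke Theorem~\ref{oct14-17} to pass between Cohen--Macaulayness of all powers and the complete intersection property. The paper argues by contradiction (not CI $\Rightarrow$ some $S/I^k$ not CM $\Rightarrow$ $\mathfrak{m}$ associated, contradicting $I^k=I^{(k)}$), while you run the same chain directly; you are also more explicit than the paper about verifying the generic complete intersection hypothesis of Theorem~\ref{oct14-17} and about the localization at $\mathfrak{m}$ needed to apply that local statement.
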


\begin{proof} $\Rightarrow$) Assume that
$I(\mathbb{X})^k=I(\mathbb{X})^{(k)}$ for all $k\geq 1$. We proceed
by contradiction assuming that $I(\mathbb{X})$ is not a complete
intersection. Then, by Theorem~\ref{oct14-17}, $I(\mathbb{X})^k$ is
not Cohen--Macaulay for some $k$. Hence the depth of
$S/I(\mathbb{X})^k$ is $0$ because $I(\mathbb{X})^k$ has dimension
$1$. Thus $\mathfrak{m}$ is an associated
prime of $S/I(\mathbb{X})^k$, a contradiction because 
${\rm Ass}(I(\mathbb{X})^k)={\rm Ass}(I(\mathbb{X})^{(k)})={\rm
Ass}(S/I(\mathbb{X}))$ and $I(\mathbb{X})$
is a radical Cohen--Macaulay ideal of height $s-1$.

$\Leftarrow)$ This is a special case of a classical result (see
Lemma~\ref{ntr-complete-intersection} and Remark~\ref{oct15-17}). 
\end{proof}

If $I(\mathbb{X})$ is a complete
intersection, then the Rees algebra $S[I(\mathbb{X})z]$ of
$I(\mathbb{X})$ is normal.
Indeed, this follows from a result of Vasconcelos et. al.
\cite[Corollary~5.3]{ITG} 
after noticing that $I(\mathbb{X})$ is a radical ideal 
which is generically a complete intersection.

\section*{Acknowledgments} We thank Aldo Conca for asking if the
letter v in the definition of the v-number 
has some meaning related to Wolmer Vasconcelos, see the last paragraph of
Section~\ref{intro-section}.  We also thank Adam Van Tuyl for pointing out 
that \cite[Question 6.12]{almousa-dochtermann-smith} was shown in the
affirmative by Akiyoshi Tsuchiya, and that his proof appeared in
\cite{almousa-dochtermann-smith} right after Question 6.12. 
Computations with \textit{Normaliz} \cite{normaliz2} and 
\textit{Macaulay}$2$ \cite{mac2} were important to gain a better understanding
of the regularity and normality of a monomial ideal.


\bibliographystyle{plain}

\end{document}